\numberwithin{equation}{section}
\numberwithin{figure}{section}
\newtheorem{theorem}{Theorem}[section]
\newtheorem{remark}[theorem]{Remark}
\newtheorem{lemma}[theorem]{Lemma}
\newtheorem{proposition}[theorem]{Proposition}
\newtheorem{corollary}[theorem]{Corollary}
\newcommand{\E}{\mathbf{E}}
\newcommand{\N}{\mathbf{N}}
\newcommand{\Z}{\mathbf{Z}}
\newcommand{\p}{\mathbf{P}}
\newcommand{\Q}{\mathbf{Q}}
\newcommand{\R}{\mathbf{R}}
\newcommand{\s}{\mathbf{S}}
\newcommand{\CB}{\mathcal {B}}
\newcommand{\CC}{\mathcal {C}}
\newcommand{\CD}{\mathcal {D}}
\newcommand{\CF}{\mathcal {F}}
\newcommand{\CI}{\mathcal {I}}
\newcommand{\CL}{\mathcal {L}}
\newcommand{\CR}{\mathcal {R}}
\newcommand{\CS}{\mathcal {S}}
\newcommand{\CT}{\mathcal {T}}
\newcommand{\CZ}{\mathcal {Z}}
\newcommand{\eps}{\epsilon}
\newcommand{\CG}{\mathcal {G}}
\newcommand{\CH}{\mathcal {H}}
\newcommand{\SLE}{{\rm SLE}}
\newcommand{\diam}{\text{diam}}
\newcommand{\cov}{\text{cov}}
\newcommand{\one}{{\bf 1}}
\newcommand{\wt}{\widetilde}
\newcommand{\wh}{\widehat}
\newcommand{\ol}{\overline}
\newcommand{\ul}{\underline}
\newcommand{\giv}{\,|\,}
\definecolor{slightblue}{rgb}{.8, .8, 1}
\definecolor{hair}{RGB}{100,225,190}
\definecolor{ruby}{RGB}{220,50,120}
\definecolor{grass}{RGB}{150,220,110}
\definecolor{ceruleanblue}{rgb}{0.16, 0.32, 0.75}
\definecolor{deepcarmine}{rgb}{0.66, 0.13, 0.24} 
\definecolor{otterbrown}{rgb}{0.4, 0.26, 0.13}
\definecolor{sapphire}{rgb}{0.03, 0.15, 0.4}
\definecolor{blue2}{rgb}{0.2, 0.2, 0.6}
\renewcommand{\sp}[1]{\mathfrak{#1}}
\newcommand{\gf}{{\mathrm {GF}}}
\newcommand{\X}{{\protect\scalebox{1.5}{$x$}}}
\newcommand{\fb}[3]{B_{#1}^\bullet(#2,#3)}
\newcommand{\dimH}{\dim_{{\mathrm H}}}
\newcommand{\distH}{d_{{\mathrm H}}}
\newcommand{\distHos}{d_{\mathrm H}^1}
\newcommand{\bandlaw}[2]{\mathbf{P}_{{\mathrm{Band}}}^{L=#1,W=#2}}
\newcommand{\slicelaw}[2]{\mathbf{P}_{\mathrm{Slice}}^{L=#1,W=#2}}
\newcommand{\bmlaw}[1]{\mu_{\mathrm{BM}}^{A=#1}}
\newcommand{\bminflaw}{\mu_{\mathrm{BM}}}
\newcommand{\innerboundary}{\partial_{\mathrm {In}}}
\newcommand{\outerboundary}{\partial_{\mathrm {Out}}}
\newcommand{\leb}{\mu_{\mathrm{Leb}}}
\begin{document}

\title[Geodesics in the Brownian map]{Geodesics in the Brownian map: \\ Strong confluence and geometric structure}

\author{Jason Miller}
\address{Statistical Laboratory, Center for Mathematical Sciences, University of Cambridge.}
\email {jpmiller@statslab.cam.ac.uk}

\author{Wei Qian}
\address{City University of Hong Kong. On leave from CNRS and Laboratoire de Math\'ematiques d'Orsay, Universit\'e Paris-Saclay.}
\email {wei.qian@universite-paris-saclay.fr}

\begin{abstract}
We study geodesics in the Brownian map $(\CS,d,\nu)$, the random metric measure space which arises as the Gromov-Hausdorff scaling limit of uniformly random planar maps.  Our results apply to \emph{all} geodesics including those between exceptional points.  \medskip

\noindent First, we prove a strong and quantitative form of the \emph{confluence of geodesics} phenomenon which states that any pair of geodesics which are sufficiently close in the Hausdorff distance must coincide with each other except near their endpoints.  \medskip

\noindent  Then, we show that the intersection of any two geodesics minus their endpoints is connected, the number of geodesics which emanate from a single point and are disjoint except at their starting point is at most $5$, and the maximal number of geodesics which connect any pair of points is $9$. For each $1\le k \le 9$, we obtain the Hausdorff dimension of the pairs of points connected by exactly $k$ geodesics. For $k=7,8,9$, such pairs have dimension zero and are countably infinite. 
Further, we classify the (finite number of) possible configurations of geodesics between any pair of points in $\CS$, up to homeomorphism, and give a dimension upper bound for the set of endpoints in each case.  \medskip

\noindent Finally, we show that every geodesic can be approximated arbitrarily well and in a strong sense by a geodesic connecting $\nu$-typical points.  In particular, this gives an affirmative answer to a conjecture of Angel, Kolesnik, and Miermont that the \emph{geodesic frame} of $\CS$, the union of all of the geodesics in $\CS$ minus their endpoints, has dimension one, the dimension of a single geodesic.  
\end{abstract}

\maketitle

\setcounter{tocdepth}{1}

\let\oldtocsection=\tocsection
 
\let\oldtocsubsection=\tocsubsection
 
\let\oldtocsubsubsection=\tocsubsubsection
 
\renewcommand{\tocsection}[2]{\hspace{0em}\oldtocsection{#1}{#2}}
\renewcommand{\tocsubsection}[2]{\hspace{2em}\oldtocsubsection{#1}{#2}}

\tableofcontents

\parindent 0 pt
\setlength{\parskip}{0.20cm plus1mm minus1mm}

\section{Introduction}
\label{sec:intro}

\subsection{Background and overview}
\label{subsec:background_intro}

The Brownian map is in a certain sense the canonical model for a metric space chosen ``uniformly at random'' among metric spaces which have the topology of the two-dimensional sphere~$\s^2$, and has been a subject of extensive study in recent years.  More specifically, the Brownian map $(\CS, d, \nu)$ is a random geodesic metric space equipped with a measure~$\nu$, which arises as the Gromov-Hausdorff scaling limit of several natural classes of planar maps chosen uniformly at random.  Recall that a planar map is a graph together with an embedding into $\s^2$ defined up to orientation preserving homeomorphism.  If one makes the restriction that each face of the map has $p$ adjacent edges (a $p$-angulation), and fixes the total number of faces, then there are only a finite number of possibilities, so one can pick one uniformly at random.  This is the simplest example of a \emph{random planar map}.  A planar map can be viewed as a metric space by equipping it with its graph metric.  The Brownian map was proved independently by Le Gall \cite{lg2013bm} and Miermont \cite{m2013bm} to be the Gromov-Hausdorff scaling limit of uniformly random quadrangulations with $n$ faces as $n \to \infty$ (as well as triangulations and $2p$-angulations for all integers $p\ge 2$ in \cite{lg2013bm}).  It was subsequently shown to be the limit of a number of other classes of discrete random maps (e.g., \cite{MR3256874,MR3498001,MR3706731,ada2019oddangulations}).  It was proved by Le Gall and Paulin \cite{lgp2008sphere} to be homeomorphic to $\s^2$  (also see a later proof by Miermont \cite{m2008sphere}), and by Le Gall \cite{MR2336042}  to have Hausdorff dimension  $4$  (even though its topological dimension is $2$). 
The Brownian map is also equivalent as a metric measure space to  $\sqrt{8/3}$-Liouville quantum gravity (LQG) \cite{ms2015lqgtbm1,ms2016lqgtbm2,ms2016lqgtbm3}, which serves to equip it with a canonical embedding into $\s^2$.

The present work is devoted to the study of geodesics in the Brownian map, with the aim of providing a global description of the behavior of \emph{all} geodesics at the same time. Geodesics in the Brownian map which emanate from $\nu$-typical points (i.e., $\nu$ a.e.\ point) are now well understood \cite{lg2010geodesics}, thanks to the Brownian snake encoding of the rooted Brownian map \cite{MR2031225, MR2294979, MR2336042}. In contrast, much less is known about geodesics between exceptional points (i.e., points that are not typical, and belong to a set with zero $\nu$ measure). For example, it was not known (before the present work) whether there exist in the Brownian map any two points which are connected by \emph{infinitely} many geodesics; nor was it known whether there exists any point from which \emph{infinitely} many disjoint (except at the starting point) geodesics emanate.  In this work, we aim to fill this gap by proving precise results about the geometric structure of all geodesics together in the Brownian map.

\begin{figure}[h!]
\centering
\includegraphics[width=.7\textwidth]{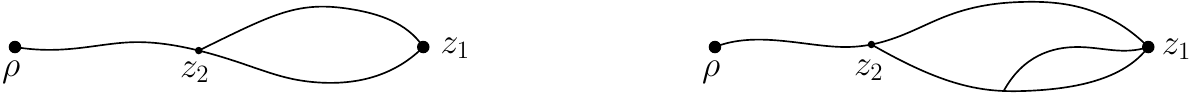}
\caption{Topology of the geodesics from a point which has $2$ (left) or $3$ (right) distinct geodesics to the root $\rho$.}
\label{fig:root_geodesics}
\end{figure}

In the pioneering work \cite{lg2010geodesics},  Le Gall completely classified all geodesics in the Brownian map starting from a distinguished point $\rho\in\CS$ called the root. 
Among other things, he showed that it is a.s.\ the case that for $\nu$ a.e.\ point in $\CS$ there is a unique geodesic connecting this point to the root. 
He also characterized the set of points which are connected to the root by more than one geodesic. These points constitute a dense set with zero $\nu$-measure. In particular, he showed that  it is a.s.\ the case that  every point in $\CS$ is connected to the root by at most~$3$ distinct geodesics. His results also imply that if a point is connected to the root by $2$ or $3$ geodesics, then these geodesics must have the topology of Figure~\ref{fig:root_geodesics}, namely they start being disjoint (except at the starting point) before merging into the same geodesic ending at the root.

Furthermore, Le Gall identified an important feature of the Brownian map, called the \emph{confluence of geodesics} phenomenon.  It states that it is a.s.\ the case that for all $z_1,z_2 \in \CS \setminus \{\rho\}$ any pair of geodesics from $z_1,z_2$ to the root $\rho$ intersect and coalesce before reaching $\rho$ (see Figure~\ref{fig:confluence}).  This property plays a major role in the works \cite{lg2013bm,m2013bm} that identify the Brownian map as the scaling limit of uniform random maps, as well as in the proof of the equivalence of $\sqrt{8/3}$-LQG with the Brownian map \cite{ms2015lqgtbm1,ms2016lqgtbm2,ms2016lqgtbm3}.  
Since the Brownian map is invariant in law under re-rooting \cite{lg2010geodesics}, the root of the map is just a point sampled independently according to $\nu$.
Consequently, the above results hold for geodesics starting from $\nu$ a.e.\ point in $\CS$ (we call them \emph{typical} points). 

\begin{figure}[h]
\centering
\begin{minipage}[b]{.57\textwidth}
  \centering
  \includegraphics[width=.42\linewidth]{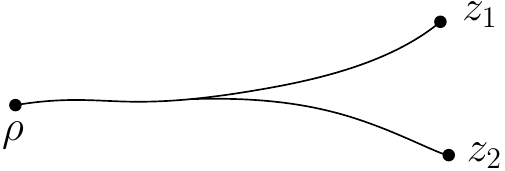}
  \caption{Confluence of geodesics at the root.}
  \label{fig:confluence}
\end{minipage}\hfill
\begin{minipage}[b]{.43\textwidth}
  \centering
  \includegraphics[width=.7\linewidth]{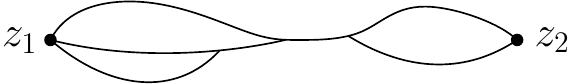}
  \caption{A normal $(3,2)$-network.}
  \label{fig:network}
\end{minipage}
\end{figure}

However, these results do not describe the behavior of geodesics whose endpoints are both not typical (such points constitute a set with zero $\nu$-measure and we call them \emph{exceptional} points).
In \cite{akm2017stability}, Angel, Kolesnik, and Miermont studied the set of pairs of points which are connected by a collection of geodesics with a specified topology which they call a \emph{normal network}.  See Figure~\ref{fig:network}. Two points $z_1, z_2\in\CS$ are said to be connected by a normal $(j,k)$-network if there are $j$ disjoint geodesics (disjoint except at the starting point) which emanate from $z_1$ and then all coalesce into the same geodesic, before branching into $k$ disjoint geodesics (disjoint except at the ending point) ending at $z_2$. They showed that the set of pairs of points that are connected by a normal $(j,k)$-network is non-empty if and only if $j, k \in\{1,2,3\}$, and computed their Hausdorff dimensions. 
Note that if $j,k\ge 2$, then both endpoints of a normal $(j,k)$-network are exceptional points, since the confluence of geodesics phenomenon does not occur at these points. 

Nevertheless, the above results do not rule out the existence of other exceptional points between which the collection of geodesics has a topology which is not that of a normal network.
In fact, it is easy to see that there do exist such other points. For example,  in both configurations in Figure~\ref{fig:root_geodesics}, the geodesics between $z_1$ and $z_2$ do not form a normal network.

In the present work, we aim to describe and classify \emph{all} geodesics in the Brownian map. 
First of all (see Section~\ref{sec:intro1}), we will prove the \emph{strong confluence of geodesics} phenomenon which holds simultaneously for all geodesics in the Brownian map. It states that any two geodesics that are close in the Hausdorff distance must coalesce very rapidly away from their endpoints (see Figure~\ref{fig:strong_confluence}). 

\begin{figure}[h!]
\centering
\includegraphics[width=.4\textwidth]{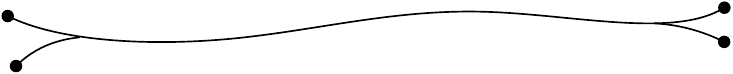}
\caption{Strong confluence of geodesics. Theorems~\ref{thm:strong_confluence2}, \ref{thm:strong_confluence} imply that any two geodesics that are close in the Hausdorff distance must coincide with each other except in a small neighborhood of their endpoints.}
\label{fig:strong_confluence}
\end{figure}

Secondly (see Section~\ref{sec:intro2}), we will show that if we consider the intersection of any two geodesics minus their endpoints, it must be a connected set, which rules out the configurations in Figure~\ref{fig:bump12}.
We will also obtain dimension upper bounds on the \emph{geodesic stars},  which were studied by Miermont in ~\cite{m2013bm} and  played an essential role in the proof of the convergence of quadrangulations towards the Brownian map. More precisely, a point $z\in\CS$ is called a \emph{$k$-star point}, if there are $k$ geodesics which emanate from $z$ and are disjoint except at their starting point.
Our results will in particular imply that there a.s.\ do not exist $k$-star points for $k\ge 6$, which confirms the prediction in \cite{m2013bm}.

We will further classify all possible configurations of geodesics between any pair of points into a finite number of cases, up to homeomorphism, and obtain  upper bounds on the Hausdorff dimension of such pairs of points for each case. Our results will in particular imply that it is a.s.\ the case that the maximal number of geodesics between any pair of points is $9$.
In addition, for each $1\le k \le 9$, we will obtain the a.s.\ Hausdorff dimension of the pairs of points connected by exactly $k$ geodesics. This dimension is positive for $1\le k\le 6$, and equal to zero for $k=7,8,9$. For each $k\in\{7,8,9\}$, we will show that the union of all endpoints connected by exactly $k$ geodesics is a.s.\ dense in $\CS$ and countably infinite.

\begin{figure}[h!]
\centering
\includegraphics[width=.28\textwidth]{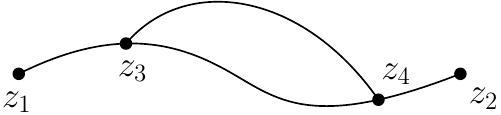}\qquad \qquad \qquad
\includegraphics[width=.28\textwidth]{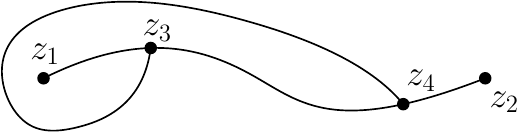}
\caption{\label{fig:bump12}  On both sides, we depict two geodesics between $z_1$ and $z_2$ which coincide for a positive length between $z_1$ and $z_3$, and then between $z_4$ and $z_2$, but take disjoint routes between $z_3$ and $z_4$.  Theorem~\ref{thm:intersection_of_geodesics} rules out the existence of such configurations.}
\end{figure}

Finally  (see Section~\ref{sec:intro3}), we will show that every geodesic $\eta$ in the Brownian map can be approximated arbitrarily well by a geodesic connecting $\nu$-typical points, in the sense that the latter geodesic agrees with $\eta$ except possibly in a small neighborhood of its endpoints. 
As a consequence, we will confirm a conjecture by Angel, Kolesnik, and Miermont \cite{akm2017stability} that the \emph{geodesic frame} of $\CS$, which is the union of all geodesics in $\CS$ minus their endpoints, has dimension one, the dimension of a single geodesic.
In other words, this indicates that all geodesics in the Brownian map go through a few common ``highways'', and that most points in the Brownian map are not traversed by any geodesic (they are just endpoints of geodesics). See Figure~\ref{fig:geodesics} for a numerical simulation.
This exhibits a striking difference between the metric of the Brownian map and that of the Euclidean plane.

We remark that the proofs in the previous works \cite{lg2010geodesics,akm2017stability} on geodesics in the Brownian map  primarily make use of the Brownian snake encoding of the Brownian map \cite{MR2031225, MR2294979, MR2336042}, which is the continuous analog of the Cori-Vauquelin-Schaeffer bijection for quadrangulations \cite{MR638363, schaeffer98}. 
In this encoding, the Brownian map is built from a labeled continuous random tree (CRT) \cite{ald1991crt, MR1207226}. This approach corresponds to the \emph{depth-first} construction of the Brownian map. 
The present work will differ in that we will primarily make use of the \emph{breadth-first} exploration of the Brownian map,  which is the continuum analog of the \emph{peeling by layers} algorithm \cite{ambjorn1997quantum,WATABIKI1995119, a2003peeling} for random planar maps.  There are a number of works which have developed this perspective, including in \cite{clg2016plane, MR3606744, bck2018growth,bbck2018martingales, ms2015axiomatic}.  As we will see later in this work, the breadth-first exploration is particularly amenable for establishing independence properties along geodesics which will lead to our main results.

The term Brownian map is often used to refer to the standard unit area Brownian map in which case $\nu(\CS) = 1$.  In this work we will primarily make use of the infinite measure $\bminflaw$ on (doubly-marked) Brownian maps (see Section~\ref{sec:preliminaries} for the definition) under which $\nu(\CS) \in (0,\infty)$ is not fixed.  Conditioning $\bminflaw$ so that $\nu(\CS) = 1$ yields a probability measure which coincides with the unit area Brownian map.  We will state all of our main results in terms of $\bminflaw$, although by scaling they also all apply to the standard unit area Brownian map. 

One can also consider similar questions in the setting of Brownian surfaces with other topologies, such as the Brownian disk \cite{bm2017disk}, plane \cite{cl2014plane}, or half-plane \cite{gm2017uihpq} (also see \cite{lg2020spine} for their relations).  It is shown in \cite{bet2016geodesics} that for a general class of Brownian surfaces, geodesics to a uniformly chosen random point exhibit similar behavior as for the Brownian map.  We expect that our results can also be transferred to other Brownian surfaces, but for the sake of brevity we will not develop this further here.

Let us finally mention a few works that study geodesics in discrete maps such as the uniform infinite planar triangulations and quadrangulations (e.g.\ \cite{krikun, bg2008geodesics, MR3783207, MR3580091, MR3963287}). The present work focuses exclusively on the continuous object, but can also possibly shed light on the discrete setting.

\begin{figure}[h!]
\begin{center}
\includegraphics[width=0.65\textwidth]{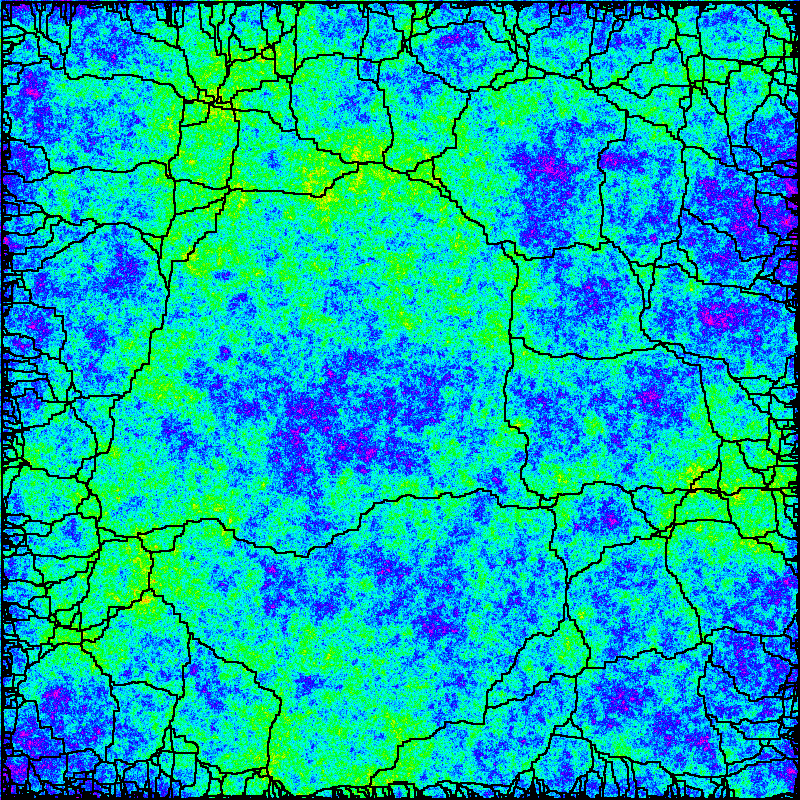}
\end{center}
\caption{\label{fig:geodesics} Shown is an instance of the discrete Gaussian free field $h$ on a $1000 \times 1000$ box $B$ in $\Z^2$.  Darker (resp.\ lighter) colors indicate higher (resp.\ lower) values of $h$.  Shown also are the geodesics which connect every pair of boundary points associated with the random metric on $B$ where the length of each path $P$ is given by $\sum_{x \in P} e^{h(x)/\sqrt{6}}$ (note $\sqrt{8/3}/4=1/\sqrt{6}$).  It is believed that this metric converges to the $\sqrt{8/3}$-LQG metric and therefore the simulation represents an embedding of a Brownian surface (a related approximation scheme was shown to converge in \cite{dddf2019tightness,gm2019metric}).}
\end{figure}

\subsection{Strong confluence of geodesics}\label{sec:intro1}
We have earlier explained the confluence of geodesics phenomenon at the root of the Brownian map discovered in \cite{lg2010geodesics}. Let us also mention that a certain strengthened form of this phenomenon was proved in~\cite{akm2017stability}, but its statement is again associated with typical points.
We also point out that this type of phenomenon does not hold for all points in the Brownian map simultaneously. Indeed, one counterexample is given by the endpoints of a normal network (see Figure~\ref{fig:network}).

In the present work, we show that a different form of  the confluence of geodesics phenomenon holds for all geodesics in the Brownian map.
Before stating our results, let us fix some notation.  For a metric space $X$, $x \in X$, and $\epsilon > 0$ we let $B(x,\epsilon)$ denote the open metric ball centered at~$x$ of radius~$\epsilon$.  For a set $A \subseteq X$ and $\epsilon > 0$ we let $A(\epsilon) = \cup_{x \in A} B(x,\epsilon)$ be the $\epsilon$-neighborhood of $A$.  We also recall that the \emph{Hausdorff distance} between closed sets $A,B \subseteq X$ is defined as
\[ \distH(A,B) = \inf\{ \epsilon > 0 : A \subseteq B(\epsilon),\ B \subseteq A(\epsilon)\}.\]
\begin{theorem}[Strong confluence of geodesics]
\label{thm:strong_confluence2}
The following holds for $\bminflaw$ a.e.\ instance of the Brownian map $(\CS,d,\nu)$.  For each $\sp{u}>0$, there exists $\epsilon_0 > 0$ so that for all $\epsilon \in (0,\epsilon_0)$ the following is true. Let $\delta =  \eps^{1-\sp{u}}$.  Suppose that $\eta_i \colon [0,T_i] \to \CS$ for $i=1,2$ are geodesics with $T_i = d(\eta_i(0), \eta_i(T_i)) \geq 2\delta$  and
$ \distH(\eta_1([0, T_1]),\eta_2([0,T_2])) \leq \epsilon.$  
Then $\eta_i([\delta, T_i-\delta]) \subseteq \eta_{3-i}$ for $i=1,2$.
\end{theorem}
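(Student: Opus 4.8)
The plan is to deduce the almost sure statement from a quantitative bound at a single dyadic scale via Borel--Cantelli, and to obtain that bound by iterating a ``one-scale confluence'' estimate over polynomially many scales. Fix $\sp{u}>0$. After intersecting with $\{\nu(\CS)\le M\}$ (and letting $M\to\infty$; recall $\bminflaw$-a.e.\ Brownian map has finite volume) and adjusting $\sp{u}$ by a bounded factor to pass between dyadic and general $\eps$, it suffices to prove that $\sum_n\bminflaw[\mathcal B_n\cap\{\nu(\CS)\le M\}]<\infty$, where for $\eps=2^{-n}$ and $\delta=\eps^{1-\sp{u}}$ the event $\mathcal B_n$ is that there exist geodesics $\eta_1,\eta_2$ with $T_i=d(\eta_i(0),\eta_i(T_i))\ge 2\delta$, $\distH(\eta_1,\eta_2)\le\eps$, and $\eta_i([\delta,T_i-\delta])\not\subseteq\eta_{3-i}$ for some $i$.

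\emph{From a failure to a local configuration.} On $\mathcal B_n$, pick witnesses and, say, a point $z=\eta_1(t)$ with $t\in[\delta,T_1-\delta]$ and $z\notin\eta_2$. Tracing $\eta_1$ away from $z$ in both directions until it first meets $\eta_2$ (or reaches an endpoint of $\eta_1$), and using $\distH(\eta_1,\eta_2)\le\eps$ together with the geodesic property (projecting the endpoints of a sub-arc of $\eta_1$ onto nearby points of $\eta_2$), one extracts two \emph{distinct} geodesic segments $\gamma_1\subseteq\eta_1$, $\gamma_2\subseteq\eta_2$, disjoint except possibly at their endpoints, of length at least $\delta/2$, with $\distH(\gamma_1,\gamma_2)\le\eps$. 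A handful of degenerate ``bump'' configurations (where $\eta_1$ and $\eta_2$ agree along long segments flanking a short segment on which they differ) must be listed separately, but after rescaling each again exhibits two long geodesic segments that are Hausdorff-close relative to their length, and so is controlled by a verbatim analogue of the estimate below. Thus, up to this bookkeeping, $\mathcal B_n$ forces two distinct geodesic segments of length $\ge\delta/2$ at Hausdorff distance $\le\eps$, one passing through a prescribed point $z$.

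\emph{The core estimate: a multi-scale product.} Put $\ell_0=\eps^{\,1-\sp{u}/2}$, so $\eps\ll\ell_0\ll\delta$, and $N:=\lfloor\delta/\ell_0\rfloor\asymp\eps^{-\sp{u}/2}$. Cover $\{\nu(\CS)\le M\}$ by at most $\delta^{-4}$ balls of radius $\delta$ (subpolynomial corrections are irrelevant below); $z$ lies in one of them, $B_\star$. Subdivide $\gamma_1$ into $N$ consecutive sub-segments of length $\asymp\ell_0$ lying inside a constant enlargement of $B_\star$. Wherever $\gamma_1$ and $\gamma_2$ genuinely differ along such a sub-segment, it and the corresponding part of $\gamma_2$ realise a \emph{local bad event}: inside a ball of radius $\asymp\ell_0$ there are two distinct geodesic segments of length $\asymp\ell_0$ at Hausdorff distance $\le\eps$; rescaling the ball to unit size, this is the event $L(\theta)$ that a fixed unit ball contains two distinct geodesic segments of length $\ge 1/2$ at Hausdorff distance $\le\theta$, with $\theta=\eps/\ell_0=\eps^{\sp{u}/2}$. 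The one-scale input I would establish first is the \emph{quantitative confluence} bound $\bminflaw[L(\theta)]\to 0$ as $\theta\to 0$ (with, say, a polynomial rate), uniformly in the location of the ball and under a mild volume regularisation. Granting it, fix $\theta_0$ with $\beta:=\bminflaw[L(\theta_0)]$ small; for $n$ large we have $\theta=\eps^{\sp{u}/2}\le\theta_0$, so each local bad event has probability $\le\beta$, and along a suitably sparse subfamily of $\asymp N$ of the sub-balls these events are conditionally independent --- here I would invoke the breadth-first (peeling-by-layers) exploration and the Markov property of metric balls, namely that the complement of a filled metric ball is, conditionally on its boundary length, a fresh Brownian disk. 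Since consecutive sub-balls are scale-$\ell_0$ neighbours, a geodesic visits at most $\mathcal C^{N}$ chains of them (absorbing polynomial factors), and a union bound over the $\le\delta^{-4}$ choices of $B_\star$ and over these chains yields, for a suitable $\kappa>0$,
\[
\bminflaw\!\big[\mathcal B_n\cap\{\nu(\CS)\le M\}\big]\;\le\;C\,M\,\delta^{-4}\,\big(\mathcal C\,\beta^{\kappa}\big)^{N}\;\le\;C\,M\,2^{4n}\,2^{-N}\;=\;C\,M\,2^{4n}\,2^{-\lfloor 2^{\,n\sp{u}/2}\rfloor},
\]
where the middle inequality uses $\mathcal C\,\beta^{\kappa}\le\tfrac12$ once $\theta_0$ (hence $\beta$) is small enough; this is summable in $n$, completing the reduction. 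The crucial structural point is that $\delta$ exceeds $\eps$ by a \emph{power} of $\eps$, so the number $N$ of scales along which confluence must fail is itself a power of $1/\eps$, making the product stretched-exponentially small and so dwarfing the polynomial metric-entropy cost $\delta^{-4}$ of locating $z$.

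\emph{Main obstacle.} The heart of the matter is the one-scale confluence bound $\bminflaw[L(\theta)]\to0$ valid for \emph{all} geodesics, including those between exceptional points --- the Brownian-snake arguments of \cite{lg2010geodesics, akm2017stability} only describe geodesics from $\nu$-typical points. The plan is to run the breadth-first exploration instead and to use its Markovian structure (complements of filled metric balls are independent fresh Brownian disks) to manufacture, along an arbitrary geodesic, the independence needed both for the one-scale bound and for the product above. Turning ``subdivide along a geodesic and resample the surrounding disks in order'' into a genuinely Markovian argument, when geodesics are not locally determined objects, is the main technical difficulty; by comparison, the multi-scale self-improvement and the enumeration of the degenerate local configurations in the second step are routine once that independence is in hand.
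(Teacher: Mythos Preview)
Your high-level scaffolding (Borel--Cantelli at dyadic scales, multi-scale product, union bound over locations) is reasonable, but the proposal has a genuine gap at exactly the point you flag as the ``main obstacle.'' The Markov property you invoke --- that the complement of a filled metric ball is a fresh Brownian disk given its boundary length --- applies to filled balls centered at the \emph{marked (typical) point} $x$, not to balls centered at points along an arbitrary geodesic $\gamma_1$. Your sub-balls along $\gamma_1$ are therefore not conditionally independent in any usable sense, and the product bound $\beta^{\kappa N}$ is not justified. Relatedly, your one-scale input $\bminflaw[L(\theta)]\to 0$ is essentially a rescaled instance of the theorem itself (two distinct geodesic segments of unit length at Hausdorff distance $\theta$), so invoking it without a separate argument is circular.

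The paper's route is quite different and supplies precisely the missing mechanism. It first proves the \emph{one-sided} version (Theorem~\ref{thm:strong_confluence}): along a geodesic $\eta$ from a \emph{typical} point one performs the reverse metric exploration in successive bands (where the Markov property genuinely holds) and shows that with overwhelming probability each band contains an ``\X'' --- a pair of auxiliary unique geodesics crossing $\eta$ that force any nearby geodesic to hit $\eta$ there (Lemmas~\ref{lem:X_prob}--\ref{lem:x_concentration}). To transfer this to arbitrary $\eta_1,\eta_2$, one sandwiches a geodesic $\sigma$ from a typical point between them (Lemma~\ref{lem:sigma_contained_U}) and uses the \X's along $\sigma$ to force $\eta_1\cap\eta_2\neq\emptyset$ near the endpoints (Proposition~\ref{prop:strong_confluence}); combined with the no-bump result (Proposition~\ref{prop:no_geodesic_bump}) this gives one-sided strong confluence. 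The two-sided Theorem~\ref{thm:strong_confluence2} is then deduced from the one-sided version via a separate \emph{bottleneck estimate} (Lemma~\ref{lem:bottlenecks}): the number of places where $\eta_1,\eta_2$ can ``switch sides'' is at most $\eps^{-\sp{u}}$, so one applies the one-sided result on each segment between bottlenecks. Your proposal contains neither the \X\ construction (which is what converts band-independence into a statement about arbitrary geodesics) nor the one-sided/bottleneck decomposition, and without one of these the independence you need does not follow from the breadth-first exploration.
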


The strong confluence of geodesics, as well as the intermediate results in its proof, will allow us to deduce a number of other consequences about the behavior of geodesics (see Sections~\ref{sec:intro2} and~\ref{sec:intro3}).

In fact, we have an even more precise version of the strong confluence of geodesics, if we further assume that $\eta_2$ is consistently close to either the left or right side of $\eta_1$.
Let us fix more notation. Given a metric space $(X, d)$ and $S\subseteq X$, let $d_S$ be the \emph{interior-internal metric} on $S$, whereby the $d_S$ distance between any two points $u,v \in S$ is given by the infimum of the $d$-length of paths which are contained in the interior of $S$, except possibly at its endpoints.
Suppose $\eta_1$ and $\eta_2$ are two geodesics in a Brownian map $(\CS, d, \nu)$. Then $\CS\setminus \eta_1$ is a simply connected set whose boundary is the union of two parts $\eta_1^{\mathrm L}$ and  $\eta_1^{\mathrm R}$ which respectively correspond to the left and right sides of $\eta_1$. 
Let $\ell_{\mathrm L}$ (resp.\ $\ell_{\mathrm R}$) be the Hausdorff distance between $\eta_1^{\mathrm L}$ (resp.\ $\eta_1^{\mathrm R}$) and $\eta_2 \setminus \eta_1$ with respect to the interior-internal metric $d_{\CS \setminus \eta_1}$.
We define the \emph{one-sided Hausdorff distance} from $\eta_1$ to $\eta_2$ to be
\begin{align}\label{eq:one_sided_hausdorff}
\distHos(\eta_1, \eta_2)= \min (\ell_{\mathrm L}, \ell_{\mathrm R}).
\end{align}
Note that we always have $\distH(\eta_1, \eta_2)\le \distHos(\eta_1, \eta_2)$ where $\distH$ is the Hausdorff distance with respect to $d$.

\begin{theorem}[Strong confluence of geodesics for the one-sided Hausdorff distance]
\label{thm:strong_confluence}
There exists $c > 0$ such that the following holds for $\bminflaw$ a.e.\ instance of the Brownian map $(\CS,d,\nu)$.   There exists $\epsilon_0 > 0$ so that for all $\epsilon \in (0,\epsilon_0)$ the following is true. Let $\delta =  c \eps \log \eps^{-1}$.  Suppose that $\eta_i \colon [0,T_i] \to \CS$ for $i=1,2$ are geodesics with $T_i = d(\eta_i(0), \eta_i(T_i)) \geq 2\delta$  and
$ \distHos(\eta_1([0, T_1]),\eta_2([0,T_2])) \leq \epsilon.$  
Then $\eta_i([\delta, T_i-\delta]) \subseteq \eta_{3-i}$ for $i=1,2$.
\end{theorem}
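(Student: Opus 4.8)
\emph{Strategy.}
The plan is to extract from the statement a single quantitative estimate --- roughly, ``a geodesic cannot shadow one side of another geodesic at distance $\eps$ over a longitudinal length much larger than $\eps\log\eps^{-1}$'' --- to prove this estimate via the breadth-first (metric ball) exploration of the Brownian map together with $\sqrt{8/3}$-scaling, and then to upgrade it to the claimed a.s.\ statement by a union bound over a net of size polynomial in $\eps^{-1}$ followed by Borel--Cantelli over dyadic scales $\eps=2^{-n}$. The logarithmic factor in $\delta=c\eps\log\eps^{-1}$ is exactly the price of trading the exponential decay of the basic estimate against this polynomial union bound.

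\emph{Step 1: reduction.}
I would first unpack the hypothesis. Using that $\eta_2$ is arc-length parametrized, together with a short a priori input on the local structure of $\CS\setminus\eta_1$ near $\eta_1$ (that at scales $\lesssim\eps$ this complement does not ``fold back'', so that its interior-internal metric near $\eta_1$ is comparable to $d$), one sees that $\distHos(\eta_1,\eta_2)\le\eps$ forces: $\eta_2\setminus\eta_1$ lies in the closure of a single side of $\eta_1$, say the left; $\eta_2$ stays within interior-internal distance $O(\eps)$ of $\eta_1$; and $\eta_2$ cannot coincide with $\eta_1$ along any sub-arc of length $\ge C\eps$ (otherwise the midpoint of such an arc would be a point of $\eta_1^{\mathrm L}$ at interior-internal distance $\gg\eps$ from $\eta_2\setminus\eta_1$, contradicting the value of $\ell_{\mathrm L}$). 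Since $\eta_1^{\mathrm L}$ has length $T_1\ge2\delta$ and must be $\eps$-covered by $\eta_2\setminus\eta_1$, it follows that every longitudinal window of $\eta_1$ of length $K_0\eps$ (for a fixed large constant $K_0$) is crossed by a ``thin geodesic lens'': a region bounded by a sub-arc of $\eta_1$ and a disjoint sub-arc of $\eta_2$ with the same two endpoints, of transverse width $O(\eps)$. Thus it suffices to show that a.s.\ for small $\eps$ there is no geodesic all of whose length-$K_0\eps$ windows, over a total longitudinal length $\ge\delta$, are crossed by such lenses; the conclusion of the theorem then holds (the hypothesis can only be met when $T_i$ is within $O(\eps)$ of $2\delta$, where the conclusion concerns a sub-arc of length $O(\eps)$). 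The companion inclusion $\eta_2([\delta,T_2-\delta])\subseteq\eta_1$ follows by interchanging the roles of $\eta_1$ and $\eta_2$.

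\emph{Step 2: the key estimate.}
The heart of the matter is to bound, under $\bminflaw$ restricted to a fixed compact region and for a suitable $c_0>0$, the probability that some geodesic has $\ge m$ of its length-$K_0\eps$ windows crossed by a thin geodesic lens, by $Ce^{-c_0 m}$. Here I would run the breadth-first exploration of the Brownian map targeted at one endpoint of the relevant sub-arc: the region between $\eta_1$ and $\eta_2$ is swept out by the level sets of the distance to that endpoint, and by the Markov property of this exploration --- the independence-across-levels which, as the authors emphasize, is available in the breadth-first picture --- slitting along $\eta_1$ decomposes that region into conditionally independent ``slices'' of longitudinal length $K_0\eps$, each governed by its boundary data. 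Using $\sqrt{8/3}$-scaling, the conditional probability that a given slice is crossed by a thin lens --- i.e.\ that $\eta_2$ is within transverse distance $\eps$ of $\eta_1$ across that slice without being everywhere equal to it there --- is at most some $p_0<1$, uniformly once $K_0$ is large: rescaling the slice to unit longitudinal size reduces this to a scale-one statement, that over a unit longitudinal length a distinct geodesic cannot remain within transverse distance $1/K_0$ of the first one, except with probability bounded away from $1$. Multiplying over the $m$ slices yields the claimed bound. Equivalently, one may phrase the per-slice input through the quantum boundary length of the region between the two geodesics, which under the metric exploration evolves as a self-similar Markov process, and whose probability of remaining below $\asymp\eps^2$ over distance-length $\ell$ decays like $e^{-c_0\ell/\eps}$.

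\emph{Step 3: union bound, conclusion, and the main obstacle.}
Applying Step 2 with $m\asymp\delta/(K_0\eps)=(c/K_0)\log\eps^{-1}$ gives, for each prescribed pair of net points playing the roles of the endpoints of the sub-arc, a bad-event probability at most $C\exp(-c_0(c/K_0)\log\eps^{-1})=C\eps^{c_0 c/K_0}$. Since the Brownian map has Hausdorff dimension $4$, a net of endpoint-pairs at scale $\eps$ has size polynomial in $\eps^{-1}$, so a union bound (together with a routine argument that a net point within $\eps$ of a true endpoint does not spoil the event) bounds the probability of \emph{any} such configuration at scale $\eps$ by $C\eps^{a}$ with $a>1$, provided $c$ is chosen large enough. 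Taking $\eps=2^{-n}$ and invoking the first Borel--Cantelli lemma, $\bminflaw$-a.s.\ there is $\eps_0>0$ such that no such configuration occurs for $\eps\in(0,\eps_0)$, which by Step 1 is the assertion of the theorem. The main obstacle is the scale-one per-slice input in Step 2: proving --- without circularity with the very phenomenon being established --- that two distinct geodesics which come within transverse distance $r$ of one another rapidly either coalesce or separate to distance $\gg r$ at unit scale, with probability bounded away from $1$; I would expect to derive this from the confluence of geodesics at a common target point together with the Brownian snake representation of the rooted Brownian map at the base scale, or from a direct analysis of the law of the region between the geodesics. The remaining technical burden is to make precise the Markovian independence of the breadth-first exploration ``cut along a geodesic'' --- the exact law of a slice and its scale invariance --- whereas the reduction of Step 1 (in particular the no-local-folding input) and the covering argument of Step 3 are comparatively routine.
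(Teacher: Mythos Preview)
Your high-level architecture (a per-scale probabilistic estimate, union bound over a polynomial-size net of endpoints, Borel--Cantelli over dyadic scales) matches the paper's, and your Step~1 reduction is correct: the one-sided Hausdorff hypothesis does force $\eta_2\setminus\eta_1$ to be $\eps$-dense along one side of $\eta_1$, so that the theorem is essentially asserting that this shadowing cannot persist over longitudinal length~$2\delta$. The gap is where you locate it, in Step~2, but it is deeper than you suggest, and the paper takes a structurally different route.

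The paper splits Theorem~\ref{thm:strong_confluence} into two independent pieces. The first, Proposition~\ref{prop:strong_confluence}, has the same hypothesis but only concludes that $\eta_1$ and $\eta_2$ \emph{intersect} near each endpoint. Its proof builds an explicit ``X'' configuration --- two auxiliary unique geodesics crossing the reference geodesic from each side --- and shows (via the band independence you invoke) that X's occur densely along geodesics from typical points; any geodesic passing near both branches of an X is forced through its center on the reference geodesic. The crucial feature is that an X is a property of the band geometry alone that constrains \emph{arbitrary} nearby geodesics $\eta_2$; the confluence-at-a-typical-target and boundary-length arguments you propose only control geodesics whose target is the center of the metric exploration, and the endpoints of $\eta_2$ may be exceptional. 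The second piece is Theorem~\ref{thm:intersection_of_geodesics} (intersections of geodesics minus endpoints are connected), which upgrades ``intersect near both endpoints'' to the full containment. That result is proved by contradiction --- a failure would produce two points joined by infinitely many geodesics --- and rests on the substantial splitting-point machinery of Section~\ref{sec:finite_number_of_geodesics}; it is not a per-band statement and has no scale-one analog. Your per-slice event, that with positive probability $\eta_2$ must \emph{coincide} with $\eta_1$ across the slice rather than merely touch it, packages both ingredients at once, and neither confluence at a typical root nor a direct analysis of the thin region between the geodesics yields it without essentially reproving Theorem~\ref{thm:intersection_of_geodesics}.
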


\begin{figure}[h!]
\centering
\includegraphics[width= .9\textwidth]{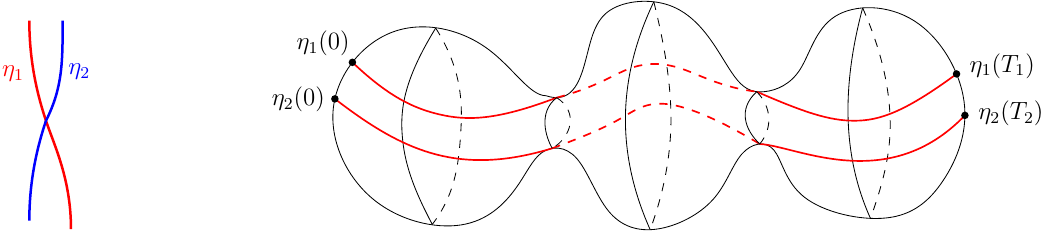}
\caption{{\bf Left:} $\eta_1$ and $\eta_2$ cross each other. {\bf Right:} $\eta_1$ starts being close to the left side of $\eta_2$, but bottlenecks of the Brownian map allow them to ``switch sides'' (so that $\eta_1$ can become close to the right side of $\eta_2$ instead).}
\label{fig:bottleneck}
\end{figure}

We believe that the order of magnitude $\eps \log \eps^{-1}$ is optimal in the statement of Theorem~\ref{thm:strong_confluence}. 
We will first prove Theorem~\ref{thm:strong_confluence}, and then use it to deduce Theorem~\ref{thm:strong_confluence2}.
We depict in Figure~\ref{fig:bottleneck} the two situations where $\eta_1$ and $\eta_2$ are close in the Hausdorff distance, but not in the one-sided Hausdorff distance. If $\eta_1$ and $\eta_2$ cross each other (see the left side of Figure~\ref{fig:bottleneck}), then we can apply  Theorem~\ref{thm:strong_confluence} separately to the portions of $\eta_1$ and $\eta_2$ which do not cross each other. In the right side of Figure~\ref{fig:bottleneck}, we present a more complicated situation where $\eta_1$ and $\eta_2$ do not cross each other, but switch sides at the bottlenecks of the Brownian map.
We will show that for each $\sp{u}>0$ and all sufficiently small $\epsilon > 0$, there are at most $\eps^{-\sp{u}}$ such bottlenecks along any geodesic. Since Theorem~\ref{thm:strong_confluence} ensures that the lengths of $\eta_1$ and $\eta_2$ between every two bottlenecks are at most $c \eps \log \eps^{-1}$, Theorem~\ref{thm:strong_confluence2} holds for this situation.

\subsection{Geometric structure of geodesics}\label{sec:intro2}
We will prove a number of results on the geometric structure of geodesics in the Brownian map.
Among other things, we will show  that the number of geodesics from any point which are otherwise disjoint is at most $5$, and the maximal number of geodesics between any pair of points is $9$.
These results in particular rule out the possibility of infinitely many geodesics between two points, and infinitely many geodesics from a point which are otherwise disjoint.

First of all, we restrict the intersection behavior of geodesics, which rules out the configurations of geodesics in Figure~\ref{fig:bump12}. 
\begin{theorem}[Intersection behavior of geodesics]
\label{thm:intersection_of_geodesics}
The following holds for $\bminflaw$ a.e.\ instance of the Brownian map $(\CS,d,\nu)$.  Suppose that $\eta_i \colon [0,T_i] \to \CS$ are geodesics for $i=1,2$.  Then $\{t \in (0,T_i) : \eta_i(t) \in \eta_{3-i}\}$ is connected for $i=1,2$.
\end{theorem}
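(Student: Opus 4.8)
The plan is to argue by contradiction using the strong confluence theorem (Theorem~\ref{thm:strong_confluence2}). Suppose that for some pair of geodesics $\eta_1, \eta_2$ the set $\{t \in (0,T_1) : \eta_1(t) \in \eta_2\}$ is disconnected. Then (after possibly relabeling) there exist parameters $0 < s_1 < s_2 < s_3 < s_4 < T_1$ such that $\eta_1(s_1), \eta_1(s_4) \in \eta_2$ but $\eta_1((s_2,s_3)) \cap \eta_2 = \emptyset$, and we may further arrange that $\eta_1$ restricted to a neighborhood of $s_1$ and of $s_4$ lies on $\eta_2$ for a positive length (this is the ``bump'' configuration of Figure~\ref{fig:bump12}; the case where the intersection near $s_1$ or $s_4$ is a single point is handled by the same argument applied to a slightly shifted sub-geodesic). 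Let $a = \eta_1(s_1)$ and $b = \eta_1(s_4)$, both on $\eta_2$. Since $\eta_1$ is a geodesic, $\eta_1|_{[s_1,s_4]}$ is a geodesic from $a$ to $b$ of length $s_4 - s_1$; since $\eta_2$ is a geodesic containing $a$ and $b$, the sub-path of $\eta_2$ between $a$ and $b$ is also a geodesic from $a$ to $b$, hence of the same length $d(a,b) = s_4 - s_1$. Call these two geodesics $\gamma_1 := \eta_1|_{[s_1,s_4]}$ and $\gamma_2 := (\text{sub-geodesic of }\eta_2\text{ from }a\text{ to }b)$.

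The key step is to produce a \emph{family} of such bump configurations at all scales and apply strong confluence at a scale where the two sub-geodesics are forced to coincide but, by construction, do not. Concretely: the two geodesics $\gamma_1$ and $\gamma_2$ share the endpoints $a$ and $b$, so they bound a region, and the ``bump'' forces $\distH(\gamma_1, \gamma_2)$ to be some positive number $r > 0$. Now apply Theorem~\ref{thm:strong_confluence2} with a suitable $\sp{u} \in (0,1)$: it gives a (random, but a.s.\ finite) threshold $\epsilon_0$ so that for all $\epsilon < \epsilon_0$, any two geodesics within Hausdorff distance $\epsilon$ coincide away from $\epsilon^{1-\sp u}$-neighborhoods of their endpoints. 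To get a contradiction we need two geodesics sharing a common endpoint, close in Hausdorff distance, but disagreeing far from the endpoints. The mechanism is to \emph{zoom in}: consider the first time $\gamma_1$ leaves $\gamma_2$ after $a$ and the last time before it rejoins; iterate the bump analysis on shorter and shorter subintervals. The dyadic/scaling argument should show that if one bump exists, then (using, e.g., near-self-similarity of the Brownian map together with the fact that geodesics between a fixed point and $\nu$-typical points are unique, and re-rooting invariance) one can find bumps on arbitrarily short geodesic segments with Hausdorff-to-length ratio bounded below — in particular on segments $\gamma_1', \gamma_2'$ of length $\ell$ with $\distH(\gamma_1',\gamma_2') \le \epsilon$ for $\epsilon$ as small as we like but $\gamma_1' \not\subseteq \gamma_2'$ on the middle portion $[\delta, \ell - \delta]$ with $\delta = \epsilon^{1-\sp u} \ll \ell$. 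This contradicts Theorem~\ref{thm:strong_confluence2}.

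I expect the main obstacle to be making the ``zooming in produces a bump at every scale'' step rigorous: a priori a single bump configuration need not replicate itself at smaller scales, so one must instead argue directly that the \emph{existence} of any disconnected intersection set, somewhere in $\CS$, already contradicts strong confluence. The cleaner route — and the one I would ultimately pursue — avoids self-similarity entirely: take the bump endpoints $a,b$ and the two geodesics $\gamma_1, \gamma_2$ between them; let $c$ be the midpoint of $\gamma_2$ (on $\eta_2$), which has a unique geodesic to most points, and consider the geodesic $\gamma_1$ together with the concatenation $\gamma_2$; both are geodesics from $a$ to $b$, so for any $\epsilon$, if we could show $\distH(\gamma_1,\gamma_2) \le \epsilon$ we would be done — but this requires $\gamma_1$ and $\gamma_2$ to actually be close, which is exactly what the bump prevents, so instead one shortens: let $t^* = \sup\{t \le s_4 : \eta_1(t) \in \eta_2\}$ restricted to be $< s_4$ is impossible by our bump setup, so one works with the excursion of $\eta_1$ off $\eta_2$ over $(s_2', s_3')$ where $\eta_1(s_2'), \eta_1(s_3') \in \eta_2$ are the endpoints of a single excursion of minimal length; such minimal-length excursions of arbitrarily small length must exist if the intersection is disconnected and not closed in the right way, and on the minimal one the two competing geodesic segments have comparable length and small Hausdorff distance — contradicting Theorem~\ref{thm:strong_confluence2} once the length is below the confluence threshold. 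The technical heart is thus a compactness/minimality argument extracting a short excursion, combined with checking that the interior-internal geometry (so that one can also invoke Theorem~\ref{thm:strong_confluence} on the appropriate side) behaves well along it; I would expect to spend most of the proof on that extraction and on verifying the hypotheses $T_i \ge 2\delta$ and the Hausdorff bound hold for the extracted segments.
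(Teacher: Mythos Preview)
Your proposal has a circularity problem and, even setting that aside, a genuine gap in the core step.

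\textbf{Circularity.} In the paper's logical structure, Theorem~\ref{thm:strong_confluence2} (and likewise Theorem~\ref{thm:strong_confluence}) is proved \emph{after} and \emph{using} Theorem~\ref{thm:intersection_of_geodesics}: see the proof of Theorem~\ref{thm:strong_confluence} in Section~\ref{subsec:intersection_behavior}, which explicitly combines Proposition~\ref{prop:strong_confluence} with Proposition~\ref{prop:no_geodesic_bump} (the restatement of Theorem~\ref{thm:intersection_of_geodesics}). What is available at this stage is only the weaker Proposition~\ref{prop:strong_confluence}, which says that two geodesics close in one-sided Hausdorff distance must \emph{intersect} near their endpoints --- not that they \emph{coincide} on the middle portion. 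So you cannot invoke strong confluence here.

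\textbf{The gap in the excursion argument.} Even granting strong confluence as a black box, your ``minimal excursion'' route does not close. If the intersection set is disconnected there may be a \emph{single} excursion of $\eta_1$ away from $\eta_2$, of some fixed positive length $L$ and fixed positive Hausdorff separation $r$; nothing forces the existence of shorter ones. And for that one excursion the two geodesic arcs $\gamma_1,\gamma_2$ between its endpoints are simply two distinct geodesics of length $L$ sharing endpoints --- their Hausdorff distance can be of order $L$, far above any confluence threshold $\epsilon_0$. There is no scaling or self-similarity input that produces bumps at arbitrarily small scales from a single one.

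\textbf{What the paper actually does.} The paper's proof (Proposition~\ref{prop:no_geodesic_bump}) uses a completely different mechanism. Assuming a bump exists along $\eta$ between $\eta(s)$ and $\eta(t)$, it picks $s_0\in(0,s)$, takes a sequence of $\nu$-typical points $z_{j_k}\to\eta(s_0)$, and looks at geodesics $\eta_k$ from $z_{j_k}$ to $\eta(T)$. If a subsequential limit equals $\eta|_{[s_0,T]}$, then Proposition~\ref{prop:strong_confluence} forces $\eta_k$ to hit $\eta$ near $\eta(s_0)$; one can then splice in the bump $\wt\eta$ to build two geodesics from the \emph{typical} point $z_{j_k}$ to $\eta(T)$ that agree on both an initial and a terminal segment but differ in the middle --- contradicting Le~Gall's classification \cite{lg2010geodesics} of geodesics from a typical point. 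Hence every such limit is a \emph{new} geodesic from $\eta(s_0)$ to $\eta(T)$. Letting $s_0\downarrow 0$ along a sequence manufactures infinitely many distinct geodesics from $\eta(0)$ to $\eta(T)$, contradicting Proposition~\ref{prop:two_point_number_of_geodesics}. The two essential external inputs are thus (i) the structure of geodesics from a typical point and (ii) the finiteness of the number of geodesics between any pair --- neither of which appears in your proposal.
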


Then, we obtain the following dimension upper bound on the set $\Psi_k$ of $k$-star points, namely points $z \in \CS$ from which there emanate $k$ geodesics which are disjoint (except at $z$).

\begin{theorem}[Geodesic stars]
\label{thm:maximum_geodesics}
The following holds for $\bminflaw$ a.e.\ instance of the Brownian map $(\CS,d,\nu)$.  The set $\Psi_{k}$ is empty if $k \geq 6$, and satisfies $\dimH(\Psi_{k}) \leq 5-k$ if $k \leq 5$.
\end{theorem}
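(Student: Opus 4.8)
\emph{Plan.} The plan is to prove both conclusions by a first--moment (covering) argument that reduces everything to a single one--point estimate, using the strong confluence of geodesics (Theorem~\ref{thm:strong_confluence2}) to turn the topological hypothesis ``disjoint except at the starting point'' into quantitative separation at every small scale. First I would localize the geodesic lengths: writing $\Psi_k^{(\delta)}$ for the set of $z\in\CS$ from which emanate $k$ geodesics of length at least $\delta$ which are pairwise disjoint except at $z$, one has $\Psi_k=\bigcup_m\Psi_k^{(1/m)}$ with these sets increasing in $m$, so by the scale invariance of $\bminflaw$ it suffices to bound $\dimH(\Psi_k^{(1)})$ and, for $k\ge 6$, to show $\Psi_k^{(1)}=\emptyset$ a.s. The crux is the estimate: for every $u>0$ and all small $\epsilon>0$, if $x$ is a $\nu$--typical point then
\[
\mathbf{P}\big[\, B(x,\epsilon)\cap\Psi_k^{(1)}\neq\emptyset \,\big]\le \epsilon^{(k-1)-u}.
\]
Granting this, I would run the usual covering argument: by re--rooting invariance the expected $\nu$--measure of the $\epsilon$--neighbourhood of $\Psi_k^{(1)}$ is comparable to the left--hand side above, and combining this with the a.s.\ uniform volume lower bound $\nu(B(x,r))\ge r^{4+o(1)}$ bounds the expected $\beta$--dimensional Hausdorff content of $\Psi_k^{(1)}$ by $\epsilon^{\beta-(5-k)-o(1)}$. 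Taking $\epsilon=2^{-n}$ and applying Borel--Cantelli gives $\mathcal{H}^\beta(\Psi_k^{(1)})=0$ a.s.\ whenever $\beta>5-k$, hence $\dimH(\Psi_k)\le 5-k$; and for $k\ge 6$ the content already at $\beta=0$ is summably small, forcing $\Psi_k^{(1)}$, and therefore $\Psi_k$, to be a.s.\ empty.

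\emph{Reducing the one--point estimate.} To prove the displayed estimate I would first use strong confluence to upgrade disjointness to quantitative separation at every intermediate scale. If $\eta_1,\dots,\eta_k$ emanate from $z$ and are disjoint except at $z$, then for any arc--length $t$ and any $\rho$ with $t\ge 2\rho^{1-u}$, applying Theorem~\ref{thm:strong_confluence2} with $\delta=\rho^{1-u}$ to the segments $\eta_i|_{[0,t]},\eta_j|_{[0,t]}$---which are geodesics realizing the distance between their endpoints and share only the endpoint $z$---would force $\eta_i([\rho^{1-u},t-\rho^{1-u}])\subseteq\eta_j$, contradicting disjointness; hence $\distH(\eta_i([0,t]),\eta_j([0,t]))>\rho$. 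Taking $t=r$ and $\rho\asymp r^{1+O(u)}$, this says that at every scale $r\in[\epsilon,1]$ the $k$ geodesics exit $B^\bullet(z,r)$ through points which are pairwise at distance at least $r^{1+O(u)}$, so that they cut $\partial B^\bullet(z,r)$ (a topological circle for all but countably many $r$) into $k$ well--separated arcs. The one--point estimate thus reduces to bounding, for a typical $x$, the probability that at scale $\epsilon$ near $x$ there is a point $z$ from which $k$ geodesics of macroscopic length issue and stay in $k$ distinct arcs of $\partial B^\bullet(z,r)$ for every $r$ between $\epsilon$ and a fixed macroscopic scale.

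\emph{The core, and the main obstacle.} The heart of the matter, which I expect to be the hardest step, is to show that this last probability is $\epsilon^{(k-1)-o(1)}$, via the breadth--first exploration of the Brownian map around $z$ (the continuum peeling by layers). As $B^\bullet(z,r)$ shrinks, its boundary--length process is an explicit Markov process and the geometry in disjoint scale windows is conditionally independent given the boundary lengths; I would cut $[\epsilon,1]$ into $\asymp\log(1/\epsilon)$ dyadic windows and follow, in each, the number of disjoint arcs carrying a geodesic germ from $z$. The statement to extract is that one germ is ``free'' while, over each window, every additional germ kept in its own arc costs an independent multiplicative factor bounded away from $1$, the product over all windows being $\epsilon^{(k-1)-o(1)}$; heuristically, the set of branch points of the geodesic structure has codimension $1$ and each further disjoint geodesic adds one more unit of codimension, which is precisely the passage from $\dimH(\Psi_1)=4$ to $\dimH(\Psi_k)\le 5-k$. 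Making this rigorous requires: (i) a measurable notion of an arc ``carrying'' a geodesic germ and its stability along the exploration; (ii) the per--window decay with genuine independence, which forces one to control the finitely many exceptional scales---bounded by the bottleneck count obtained alongside strong confluence---at which arcs could merge and later re--separate; and (iii) summing over the $\asymp\epsilon^{-4}$ candidate positions of $z$ near $x$ with only an $\epsilon^{-o(1)}$ loss, where the scale--$r$ separation above is used to keep the events attached to different candidates essentially disjoint. With the one--point estimate in hand, both assertions follow from the covering argument of the first paragraph.
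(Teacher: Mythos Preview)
Your overall architecture---localize to $\Psi_k^{(\delta)}$, establish a one-point estimate $\mathbf{P}[B(x,\epsilon)\cap\Psi_k^{(\delta)}\neq\emptyset]\le\epsilon^{k-1-o(1)}$ for typical $x$, and then run a covering argument over an $\epsilon$-net of typical points---matches the paper's strategy exactly, and your heuristic that the exponent $k-1$ comes from $k$ independent $3/2$-stable CSBPs (the boundary-length slices between the geodesics) all surviving to a common small scale is precisely the mechanism behind Proposition~\ref{prop:disjoint_geodesics}.

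There is, however, a genuine gap in how you propose to prove the one-point estimate. You speak of performing the breadth-first exploration ``around $z$'', but $z$ is unknown and atypical: it is determined by the entire map, and conditioning on its location destroys the Markovian structure underpinning the conditional independence of metric bands. The exploration has to be centred at the \emph{typical} point $x$, and the entire difficulty is that geodesics towards $z$ need not coincide with geodesics towards $x$, so the CSBP picture does not apply directly. The paper's solution (Section~\ref{sec:exponent_disjoint_geodesics}) is a delicate layer-by-layer coupling: one decomposes $\fb{y}{x}{\cdot}$ into bands where the boundary length halves, distinguishes three kinds of bad layers---\emph{fat} (width too large; CSBP tail), \emph{crossing} (a geodesic to $z$ hits the layer boundary at points with large boundary-length separation; controlled by the pinch-point estimate Lemma~\ref{lem:bad_band}), and \emph{non-merging} (a geodesic to $z$ fails to merge with the corresponding geodesic to $x$; controlled using the \X's of Section~\ref{sec:strong_confluence})---shows each occurs with probability $\epsilon^{c}$ per layer so that only $O(1)$ layers are bad, and only then couples the geodesics to $z$ with a genuinely Markovian exploration following geodesics to $x$. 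Your separation argument via strong confluence is a reasonable proxy for the crossing-layer control, but it does not address the non-merging issue, and the bottleneck count of Lemma~\ref{lem:bottlenecks} controls an unrelated phenomenon. Item (iii) is also confused: there is no sum over $\epsilon^{-4}$ candidate positions of $z$ inside the one-point estimate; that count appears only in the outer covering step you have already described.
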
 
In \cite{m2013bm}, Miermont conjectured that there exist $k$-star points for $1\le k\le 4$ and that there do not exist $k$-star points for $k\ge 6$. Our result confirms the second part of this conjecture.
While we were revising this article, Le Gall established the matching lower bound for $1\le k\le 4$ in the recent work \cite{lg2021star}, so one actually has $\dimH(\Psi_{k}) = 5-k$ for $k\le 5$. We believe that, the techniques and ideas of this article can also allow us to obtain the relevant second moment estimate, leading to a matching lower bound for Theorem~\ref{thm:maximum_geodesics} (see Remark~\ref{remark}). 
It remains an interesting open question to determine whether there exist $5$-star points.

Note that Theorems~\ref{thm:intersection_of_geodesics} and~\ref{thm:maximum_geodesics} together already rule out the possibility of infinitely many geodesics between any pair of points, and reduce the possible configurations of geodesics between any pair of points to a finite number of cases up to homeomorphism.
In the next result, for each of the finite number of configurations of geodesics between pairs of points (up to homeomorphism),  we will  provide an upper bound on the Hausdorff dimension of the endpoints of these geodesics.
In order to give the statement, we first need to introduce the notion of a splitting point. Suppose that $(\CS,d,\nu)$ is an instance of the Brownian map and $u,v \in \CS$ are distinct.  
We say that $z$ is a \emph{splitting point} from $v$ towards $u$ of multiplicity at least $k \in \N$ if there exists $0<r<t<d(u,v)$ and geodesics $\eta_1, \ldots,\eta_{k+1}$ from $v$ to $u$ so that $\eta_i(t)=z$ for each $1\le i \le k+1$, and
$\eta_i|_{[t-r,t]} = \eta_j|_{[t-r,t]}$, $\eta_i((t,t+r]) \cap \eta_j((t,t+r]) = \emptyset$ for all $1\le i < j \le k+1$. 
The \emph{multiplicity} of $z$ is equal to the largest integer $k$ so that the above holds.

\begin{figure}[h!]
\begin{center}
\includegraphics[scale=0.85]{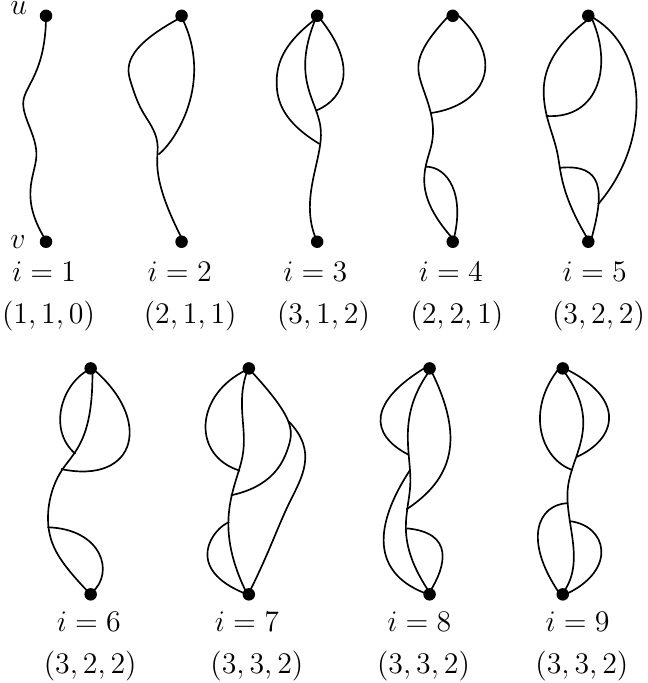}	
\end{center}
\caption{\label{fig:optimal_configurations} Shown are configurations of geodesics which minimize $I+2J+K$ from Theorem~\ref{thm:finite_number_of_geodesics} between points connected by exactly $i$ geodesics with $1 \leq i \leq 9$.
We also indicate the triplet $(I, J, K)$ associated to each configuration.
There are no pairs of distinct points which are connected by $10$ or more geodesics.  Altogether, this leads to the upper bounds in Theorem~\ref{thm:number_of_geodesics}.}
\end{figure}

\begin{theorem}
\label{thm:finite_number_of_geodesics}
The following holds for $\bminflaw$ a.e.\ instance of $(\CS,d,\nu)$.  
For any $u,v\in\CS$ distinct, any geodesic from $v$ to $u$ contains at most $2$ splitting points from $v$ towards $u$, and the multiplicity of any such splitting point is $1$.
Let $\Phi_{I,J,K}$ be the set of $(u,v)$ such that  $u,v \in \CS$ are distinct and that there exists $r > 0$ so that the following is true.
\begin{enumerate}[(i)]
\item There are geodesics $\eta_1,\ldots,\eta_I$ from $u$ to $v$ so that the sets $\eta_i((0,r))$ for $1 \leq i \leq I$ are pairwise disjoint.
\item There are geodesics $\eta_1,\ldots,\eta_J$ from $v$ to $u$ so that the sets $\eta_i((0,r))$ for $1 \leq i \leq J$ are pairwise disjoint.
\item There are $K$ splitting points from $v$ towards $u$.
\end{enumerate}
If $11-(I+2J+K) \geq 0$, we have that
\begin{equation}
\label{eqn:geo_haus_formula}
\dimH(\Phi_{I,J,K}) \leq 11 - (I+2J+K).
\end{equation}
Otherwise, we have that $\Phi_{I,J,K} = \emptyset$.  
\end{theorem}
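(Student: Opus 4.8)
The statement has two components: the combinatorial restriction --- every geodesic from $v$ to $u$ carries at most two splitting points from $v$ towards $u$, each of multiplicity one --- and the Hausdorff dimension bound for $\Phi_{I,J,K}$ (with emptiness when the putative bound is negative). The plan is to prove the dimension bound first, by a multiscale first moment argument, and to deduce the combinatorial restriction from it together with the structural results already at our disposal (Theorems~\ref{thm:strong_confluence2}, \ref{thm:intersection_of_geodesics}, \ref{thm:maximum_geodesics}).

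\textbf{The dimension bound.} Fix small $\epsilon > 0$. Since $\dimH(\CS) = 4$, cover $\CS$ by $O(\epsilon^{-4})$ balls of radius $\epsilon$, so $\CS \times \CS$ is covered by $O(\epsilon^{-8})$ ball-pairs. For a pair with centers $(x_1, x_2)$ I want to bound the probability $p_\epsilon(x_1, x_2)$ that there are $u \in B(x_1, \epsilon)$ and $v \in B(x_2, \epsilon)$ realizing, at some macroscopic scale $\gg \epsilon$, an approximate version of the configuration of $\Phi_{I,J,K}$: $I$ geodesics from $u$ pairwise disjoint over a macroscopic initial time, $J$ geodesics from $v$ pairwise disjoint over a macroscopic initial time, and $K$ splitting points from $v$ towards $u$. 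The target estimate, uniform in $(x_1, x_2)$, is $p_\epsilon \lesssim \epsilon^{(I-1) + 2(J-1) + K}$. Granting it, the expected number of ``bad'' ball-pairs is $\lesssim \epsilon^{-8} \cdot \epsilon^{(I-1)+2(J-1)+K} = \epsilon^{-(11 - (I+2J+K))}$; summing over $\epsilon = 2^{-n}$, Borel--Cantelli and the definition of Hausdorff dimension yield $\dimH(\Phi_{I,J,K}) \le 11 - (I+2J+K)$, and emptiness when this is negative. (The base case $(I,J,K) = (1,1,0)$ merely recovers $\dimH(\CS \times \CS) = 8 = 11 - 3$, so the whole content sits in the surplus exponent $(I-1)+2(J-1)+K$.)

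\textbf{The probability estimate.} This is the heart of the argument, and it is where the breadth-first (peeling-by-layers) exploration is used. Rooting at $u$, grow the metric balls $B(u,r)$ and track the component of $\CS \setminus B(u,r)$ containing $v$ together with its boundary length, which after the successive disconnection times evolves as the explicit Markovian boundary-length process. The configuration event decomposes into nearly independent blocks, whose probabilities are read off from the known boundary-length and area scaling exponents and whose independence comes from the Markov property applied at the disconnection time separating $u$ from $v$ and at the successive pinch times. The blocks are: (i) the $I$ arms emanating from $u$ and remaining disjoint for a macroscopic time --- each arm beyond the first costs a factor $\epsilon$, by the same mechanism that gives $\dimH(\Psi_I) \le 5-I$ in Theorem~\ref{thm:maximum_geodesics}, and nothing extra is paid for these arms to reach $v$, since once they point into the right component they are carried downstream to $v$ for free; (ii) each of the $K$ splitting points from $v$ towards $u$ corresponds to a pinch of the $v$-component occurring at a prescribed location along the geodesic, and forcing a pinch into a window of size $\epsilon$ costs a factor $\epsilon$; (iii) the $J$ arms emanating from $v$ --- here is the source of the weight $2$ on $J$: $v$ being a $J$-star costs $\epsilon^{J-1}$ as in (i), but requiring that all $J$ arms be geodesics terminating at the \emph{common} endpoint $u$ (the root of the exploration, reached only after $v$ has been isolated inside a thin neck) is a second constraint of the same order, contributing a further $\epsilon^{J-1}$, for a total $\epsilon^{2(J-1)}$. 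Multiplying the three blocks gives $p_\epsilon \lesssim \epsilon^{(I-1) + 2(J-1) + K}$.

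\textbf{The combinatorial restriction, and the main obstacle.} With the dimension bound (and the a priori finiteness it provides) in hand, the combinatorial restriction follows by combining it with Theorems~\ref{thm:intersection_of_geodesics} and~\ref{thm:strong_confluence2}: by the intersection theorem the overlap of any two geodesics from $v$ to $u$ is an interval, so a geodesic cannot leave and re-enter a bundle; hence a multiplicity-$k$ splitting point produces $k+1$ pairwise disjoint prongs at $u$, and a geodesic with $m$ splitting points (counted with multiplicity) forces a ``caterpillar'' of $m+1$ such prongs, while each splitting point is a $(k{+}2)$-geodesic star lying on a geodesic from $v$ to $u$; Theorem~\ref{thm:maximum_geodesics} then bounds the multiplicity and count by an absolute constant, and pinning these down to multiplicity $1$ and count at most $2$ is carried out via a refined version of the same exploration analysis (showing the $v$-component of $\CS \setminus B(u,r)$ admits at most two pinch points along the geodesic before swallowing $v$), with strong confluence ruling out the degenerate coincidences a higher count would require. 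The principal obstacle is the probability estimate in block (iii) --- making rigorous why an extra arm at the ``far'' endpoint $v$ costs $\epsilon^2$ rather than $\epsilon$, which demands a genuinely two-sided analysis of the exploration on the two sides of the neck isolating $v$ and delicate control of the boundary-length process near the disconnection time; a secondary, more technical obstacle is extracting the sharp constants in the combinatorial restriction rather than a non-explicit finite bound.
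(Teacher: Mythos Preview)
Your dimension-bound strategy is essentially the paper's: a covering of $\CS\times\CS$ by $O(\epsilon^{-8})$ ball-pairs reduces to a probability estimate $p_\epsilon \lesssim \epsilon^{I+2J+K-3}$ for a pair of $\nu$-typical centers, which the paper proves as Lemma~\ref{lem:geodesics_splitting_lemma}. The paper's decomposition is slightly finer than your three blocks: it cuts $\CS$ into four conditionally independent pieces $X_1,X_2,X_3,X_4$ by three nested filled-ball boundaries, and the factor $\epsilon^{2(J-1)}$ you flag as the principal obstacle splits cleanly as $\epsilon^{J-1}$ from $X_1$ (Proposition~\ref{prop:disjoint_geodesics}, the star exponent at $v$) times $\epsilon^{J-1}$ from $X_2$ (the $J$ geodesics from $v$ hit $\partial X_1$ at $J$ boundary points whose distances to the marked interior point of the complementary Brownian disk are all within $O(\epsilon)$ of the minimum --- exactly the estimate of Lemma~\ref{lem:num_pinch_points_disk}). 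So your heuristic for the weight $2$ on $J$ is right in spirit, and the paper supplies the concrete mechanism you were missing.

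The genuine gap is in your combinatorial restriction. Your claim that a multiplicity-$k$ splitting point forces $k+1$ disjoint prongs \emph{at $u$} is false: the $k+1$ geodesics are disjoint just after the splitting point but may merge with one another before reaching $u$ (Theorem~\ref{thm:intersection_of_geodesics} only forbids a second split after a merge, not a merge after a split). The star argument at the splitting point itself does give $z\in\Psi_{k+2}$, but via Theorem~\ref{thm:maximum_geodesics} this yields only $k\le 3$, not $k=1$; and nothing in your sketch bounds the number of splitting points along a single geodesic by $2$. The ``refined exploration'' you invoke is not an argument. The paper's route is different and short: it uses Theorem~\ref{thm:ghost} (proved earlier) to approximate the portion of any geodesic from $v$ to $u$ near a splitting point by a geodesic from a $\nu$-typical point $x_{n_j}$ to $u$, which inherits the splitting structure; Le Gall's classification \cite{lg2010geodesics} of geodesics to a typical point then gives multiplicity $1$ and at most two splitting points directly.
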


See Figure~\ref{fig:optimal_configurations} for an illustration of the definition of $I, J, K$ in several cases. 
The reason for the asymmetry in $I$ and $J$ in Theorem~\ref{thm:finite_number_of_geodesics} is that there is an asymmetry in the definition of a splitting point.  
In the language of \cite{akm2017stability}, if $u$ and $v$ are connected by a normal $(j,k)$-network, we have that $I = j$, $J=k$, and $K=j-1$.  Therefore Theorem~\ref{thm:finite_number_of_geodesics} implies that the dimension of the set of such pairs of points is at most $11-(j+2k+(j-1)) = 12-2(j+k)$.  This matches the dimension computed in \cite{akm2017stability}.

Theorem~\ref{thm:finite_number_of_geodesics} is one main ingredient leading to the following result on the number of geodesics between a pair of points.
\begin{theorem}[Number of geodesics between a pair of points]
\label{thm:number_of_geodesics}
The following holds for $\bminflaw$ a.e.\ instance of the Brownian map $(\CS,d,\nu)$. The set $\Phi_{i}$ of pairs of distinct points in $\CS$ which are connected by exactly $i$ geodesics is empty if $i \geq 10$. For $1 \leq i \leq 9$ we have that 
\begin{equation*}
\begin{gathered}
\dimH(\Phi_1) = 8,\quad \dimH(\Phi_2) = 6,\quad \dimH(\Phi_3)= 4,\quad \dimH(\Phi_4)= 4\\
\dimH(\Phi_5) = 2,\quad \dimH(\Phi_6)=2,\quad \dimH(\Phi_7) = 0,\quad \dimH(\Phi_8) = 0,\quad \dimH(\Phi_9) = 0.
\end{gathered}
\end{equation*}
The sets $\Phi_7, \Phi_8, \Phi_9$ are all countably infinite. For each $1\le i \le 9$, the set of endpoints $u\in\CS$ such that there exists $v\in\CS$ with $(u,v) \in \Phi_i$ is dense in $\CS$.
\end{theorem}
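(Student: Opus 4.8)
\emph{Strategy.} The plan is to deduce Theorem~\ref{thm:number_of_geodesics} from the structural results already in hand — Theorem~\ref{thm:intersection_of_geodesics} (connectedness of pairwise intersections of geodesics), Theorem~\ref{thm:maximum_geodesics} (no $6$-star points) and, above all, Theorem~\ref{thm:finite_number_of_geodesics} (at most two splitting points of multiplicity one read from either endpoint, together with the dimension bound $\dimH(\Phi_{I,J,K})\le 11-(I+2J+K)$) — with these giving the upper bounds and the emptiness statement, while the lower bounds come from $\nu$-typical points and from families realising each configuration, and the countability and density claims need an extra rigidity argument.

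\emph{Classification and upper bounds.} Fix distinct $u,v\in\CS$. By Theorems~\ref{thm:intersection_of_geodesics} and~\ref{thm:maximum_geodesics} there are finitely many geodesics from $u$ to $v$, and since two geodesics between the same pair of points agree exactly where they intersect, Theorem~\ref{thm:intersection_of_geodesics} says that for any two of them the set of times at which they agree is a subinterval of $(0,d(u,v))$. Hence the map sending a geodesic to the pair consisting of the class of geodesics agreeing with it near $u$ and the class agreeing with it near $v$ is injective, so the number of geodesics is at most $IJ$, where $I$ and $J$ count these classes near $u$ and near $v$. Within each class, Theorem~\ref{thm:finite_number_of_geodesics} (used with the endpoints in either order) forces a binary, multiplicity-one branching of depth at most two, so each class has size at most $4$. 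I would then record how the branchings seen from $u$ and from $v$ are glued along the common trunk — Theorem~\ref{thm:intersection_of_geodesics} forbidding two distinct geodesics from agreeing near \emph{both} endpoints — to produce a finite list of network types, and in particular to get $I,J\le 3$ and $i\le 9$, so $\Phi_i=\emptyset$ for $i\ge 10$. For each type, the part of $\Phi_i$ of that type lies in $\Phi_{I,J,K}$ for the triple read off in whichever endpoint ordering minimises $I+2J+K$, so Theorem~\ref{thm:finite_number_of_geodesics} bounds its dimension by $11-(I+2J+K)$. Carrying out the (finite) minimisation over types with exactly $i$ geodesics — whose minimisers are the configurations of Figure~\ref{fig:optimal_configurations} — gives the values $8,6,4,4,2,2,0,0,0$, and the union over types yields the stated upper bounds on $\dimH(\Phi_i)$.

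\emph{Lower bounds, countability and density.} For $i=1$, $\nu\times\nu$-a.e.\ pair is joined by a unique geodesic, by Le Gall's classification from a fixed root \cite{lg2010geodesics} together with re-rooting invariance of $\bminflaw$ and Fubini, so $\dimH(\Phi_1)=8$. For $i\in\{2,3,4,6\}$ the matching lower bounds are the dimensions of the normal $(1,2)$-, $(1,3)$-, $(2,2)$- and $(2,3)$-networks from \cite{akm2017stability}, whose endpoints are joined by exactly $2,3,4,6$ geodesics, and for $i=9$ the normal $(3,3)$-network provides a nonempty (zero-dimensional) example. The counts $i=5,7,8$ are not realised by normal networks, so for those I would produce a family of pairs carrying the relevant configuration of Figure~\ref{fig:optimal_configurations} of the claimed dimension by a moment estimate along geodesics, of the same kind used for the lower bound direction of Theorem~\ref{thm:maximum_geodesics} (cf.\ Remark~\ref{remark}), with the breadth-first exploration supplying the needed independence. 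Finally, $\dimH(\Phi_i)=0$ from the upper bound when $i\in\{7,8,9\}$; to upgrade this to countability I would argue that any pair in such a $\Phi_i$ forces the maximal network type, in which every geodesic realises the maximal number of splitting points, and that if such pairs accumulated one could extract two of their extremal geodesic networks close in the Hausdorff distance — contradicting the strong confluence of geodesics (Theorem~\ref{thm:strong_confluence2}), which would force the two to coincide away from their endpoints even though each carries the maximal splitting structure at distinct nearby locations. Thus $\Phi_7,\Phi_8,\Phi_9$ are locally finite, hence countable, and countably infinite once the explicit examples are taken into account; density of the endpoint sets for each $1\le i\le 9$ follows from re-rooting invariance and a local surgery showing the relevant configuration occurs with positive probability inside any fixed ball.

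\emph{Main obstacle.} The hard part will be the classification step — checking that Theorems~\ref{thm:intersection_of_geodesics}, \ref{thm:maximum_geodesics} and~\ref{thm:finite_number_of_geodesics} genuinely reduce the possible geodesic networks to the finite list of Figure~\ref{fig:optimal_configurations}, in particular proving $I,J\le 3$ and keeping accurate track of how the at-most-two splitting points seen from each endpoint interleave along the trunk. A second delicate point is the passage from a dimension-zero bound to genuine countability of $\Phi_7,\Phi_8,\Phi_9$, which requires the \emph{quantitative} form of strong confluence to rule out accumulation rather than a mere dimension count.
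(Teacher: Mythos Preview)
Your strategy for the upper bounds is essentially the paper's: reduce to Theorem~\ref{thm:finite_number_of_geodesics} by finding, for each $i$, the minimal value of $I+2J+K$ compatible with exactly $i$ geodesics. Your injectivity observation giving $i\le IJ$ is correct and useful, but the paper organises the count differently: it uses the identity $M=J+K-I$ (with $M$ the number of merging points, which follows from Theorem~\ref{thm:intersection_of_geodesics}) and then observes that, for fixed $I+2J+K=n$, the number of geodesics is maximised by a normal $(M+1,K+1)$-network together with $J-(M+1)$ extra disjoint geodesics. This yields an explicit function $g(n)$ whose values immediately give all the upper bounds and the emptiness for $i\ge 10$, without needing to pin down $I,J\le 3$ (which in fact is not proved or used).

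The real divergences are in the remaining parts. For the lower bounds at $i=5,7,8$, the paper does \emph{not} run a moment estimate. It builds the configurations by hand: inside a metric band it locates a $3$-star point $u$ (via the density of such points from \cite{lg2010geodesics}), cuts the band into slices along the three geodesics, and in one slice (reglued into a band) realises the set $\wt\Phi_2$ (resp.\ $\wt\Phi_3$) of points with exactly $2$ (resp.\ $3$) geodesics to a marked boundary point, arranged to merge with the slice boundary in a prescribed order. A separate annulus with a single cut point (Lemma~\ref{lem:geo_merge}) forces any competing geodesic to pass through the constructed network, so the count is exactly $5$, $7$, or $8$. The dimension $\dimH(\wt\Phi_2)\ge 2$ comes directly from \cite{lg2010geodesics}, not from a second-moment bound. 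Your proposed moment approach would face the difficulty that the event ``$(u,v)\in\Phi_5$'' is not obviously amenable to a two-point estimate, since it involves an \emph{exact} count and a global topological constraint.

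For countability of $\Phi_7,\Phi_8$, the paper's argument is also different from your accumulation idea, and more direct: for $(u,v)\in\Phi_j$ with $j\in\{7,8\}$, the classification forces a point $w$ on three geodesics that coincide near $u$ and split before $v$; approximating $w$ by $\nu$-typical points $w_n$ and applying Theorem~\ref{thm:strong_confluence2} shows $v$ is joined to $w_n$ by exactly three geodesics, hence $v$ lies in the countable set of $3$-geodesic points to a typical point (Le~Gall \cite{lg2010geodesics}). The same reasoning applied at the other end puts $u$ in a countable set. Your accumulation argument would need to explain why two nearby maximal networks cannot simply coincide after strong confluence is applied---nothing forbids $(u_n,v_n)\to(u,v)$ with the same limiting network---so as written it does not yield local finiteness.
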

In Figure~\ref{fig:optimal_configurations}, we illustrate configurations of geodesics which minimize $I+2J+K$ between points connected by exactly $i$ geodesics with $1 \leq i \leq 9$. Together with Theorem~\ref{thm:finite_number_of_geodesics}, this gives the upper bound of $\dimH(\Phi_{i})$ for $1\le i \le 9$.
The matching lower bounds of $\dimH(\Phi_{i})$ for $2\le i \le 6$, as well as the description of the sets $\Phi_7, \Phi_8, \Phi_9$, are obtained via different arguments (the case $i=1$ is trivial, since $\Phi_1$ has full measure in $\CS \times \CS$).
\begin{itemize}
\item For $i \in \{2,3,4,6, 9\}$, the optimal configurations in Figure~\ref{fig:optimal_configurations} are normal networks.  It was shown in \cite{akm2017stability} that the dimension of the set of pairs of points connected by a normal $(j,k)$-network is $12- 2(j+k)$. Since the endpoints of normal $(j,k)$-networks form a subset of $\Phi_{jk}$, this gives the matching lower bounds of $\dimH(\Phi_{i})$ for $i \in \{2,3,4,6\}$.  
It was also shown in \cite{akm2017stability} that there is a.s.\ a dense and countably infinite set of pairs of points which are connected by a normal $(3,3)$-network. Theorem~\ref{thm:finite_number_of_geodesics} also implies that there exists no other configuration (which is not a normal $(3,3)$-network) giving rise to $9$ geodesics.
\smallskip

\item For $i \in\{5, 7, 8\}$,  the optimal configurations in Figure~\ref{fig:optimal_configurations} are \emph{not} normal networks. We will go through separate procedures to obtain the matching lower bound for $\dimH(\Phi_5)$, and to show that $\Phi_7, \Phi_8$ are countably infinite. This part of the proof uses techniques which are different from the rest of the article. This will be the subject of Section~\ref{sec:578}.
\end{itemize}

\subsection{Approximation by geodesics between typical points}
\label{sec:intro3}
As a consequence of Theorems~\ref{thm:strong_confluence2} and~\ref{thm:intersection_of_geodesics}, we can deduce the following result.

\begin{theorem}
\label{thm:ghost}
For $\bminflaw$ a.e.\ instance of the Brownian map $(\CS,d,\nu)$, the following holds.  For every geodesic $\eta \colon [0,T] \to \CS$,  every $0 < s < t < T$ and $\eps>0$, there exist $\delta>0$ such that every geodesic $\xi \colon [0, S] \to \CS$ with $\xi(0) \in B(\eta(s), \delta)$ and $\xi(S)\in B(\eta(t), \delta)$ satisfies
\[ \xi([\eps, S-\eps]) \subseteq \eta \quad\text{and}\quad \eta([s+\eps,t-\eps]) \subseteq \xi.\]
\end{theorem}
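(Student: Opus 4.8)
The plan is to deduce the statement from the strong confluence of geodesics (Theorem~\ref{thm:strong_confluence2}) together with the connectedness of geodesic intersections (Theorem~\ref{thm:intersection_of_geodesics}), plus a soft compactness/continuity argument to handle the fact that the $\delta$ must be chosen after $\eta$, $s$, $t$, $\eps$ are fixed. First I would fix a generic instance of the Brownian map on which both Theorem~\ref{thm:strong_confluence2} and Theorem~\ref{thm:intersection_of_geodesics} hold. Let $\eta\colon[0,T]\to\CS$ be a geodesic, fix $0<s<t<T$ and $\eps>0$. Shrinking $\eps$ if necessary we may assume $\eps < \min(s, T-t, (t-s)/4)$. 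Apply Theorem~\ref{thm:strong_confluence2} with a fixed $\sp{u} \in (0,1)$ to get $\epsilon_0$; choose $\epsilon_1 \in (0,\epsilon_0)$ small enough that $\delta_1 := \epsilon_1^{1-\sp{u}} < \eps/2$. The idea is that if $\xi\colon[0,S]\to\CS$ has endpoints within some small $\delta$ of $\eta(s),\eta(t)$, then $\xi$ is Hausdorff-close to the subgeodesic $\eta|_{[s,t]}$, and strong confluence forces $\xi$ to coincide with $\eta$ away from its endpoints and vice versa. The quantitative input is a comparison between the endpoint distances $d(\xi(0),\eta(s))$, $d(\xi(S),\eta(t))$ and the Hausdorff distance $\distH(\xi([0,S]),\eta([s,t]))$.

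The key step is this comparison. One direction is easy: since $\eta|_{[s,t]}$ is a geodesic from $\eta(s)$ to $\eta(t)$ and $\xi$ is a geodesic from $\xi(0)$ to $\xi(S)$, the lengths $S$ and $t-s$ differ by at most $d(\xi(0),\eta(s))+d(\xi(S),\eta(t)) \le 2\delta$, and every point of $\eta|_{[s,t]}$ is within $\delta + \tfrac12\cdot(\text{something})$ — more carefully, a standard argument using the triangle inequality and the geodesic property shows $\eta([s,t]) \subseteq \xi([0,S])(C\delta)$ for an absolute constant $C$ (e.g.\ $C=1$ suffices after reparametrising, comparing $\eta(s+u)$ with $\xi(u)$ via $d(\eta(s+u),\xi(u)) \le d(\eta(s),\xi(0)) + |(\text{arc lengths})| \le 2\delta$). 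The reverse inclusion $\xi([0,S]) \subseteq \eta([s,t])(C\delta)$ is symmetric. Hence $\distH(\xi([0,S]),\eta([s,t])) \le C\delta$. Now choose $\delta := \min(\epsilon_1/C, \delta_1)$ — actually we want the Hausdorff distance to be at most $\epsilon_1$, so set $\delta = \epsilon_1/C$, and we also need $S, t-s \ge 2\delta_1$, which holds once $\delta$ is small since $t-s$ is a fixed positive number and $|S-(t-s)|\le 2\delta$. Then Theorem~\ref{thm:strong_confluence2} applied to the pair $(\eta|_{[s,t]}, \xi)$ — reparametrised by arc length from their own endpoints — gives $\xi([\delta_1, S-\delta_1]) \subseteq \eta|_{[s,t]} \subseteq \eta$ and $\eta([s+\delta_1, t-\delta_1]) \subseteq \xi$. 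Since $\delta_1 < \eps/2 < \eps$ and $S-\delta_1 > S-\eps$, this yields $\xi([\eps,S-\eps]) \subseteq \xi([\delta_1,S-\delta_1]) \subseteq \eta$ and $\eta([s+\eps,t-\eps]) \subseteq \eta([s+\delta_1,t-\delta_1]) \subseteq \xi$, as desired.

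The main obstacle I anticipate is a subtlety hidden in the reparametrisation: Theorem~\ref{thm:strong_confluence2} is stated for geodesics $\eta_i\colon[0,T_i]\to\CS$ with $T_i = d(\eta_i(0),\eta_i(T_i))$, i.e.\ unit-speed geodesics between their actual endpoints, and it outputs inclusions of the form $\eta_i([\delta,T_i-\delta])\subseteq\eta_{3-i}$. Applying it to $\eta_1 = \eta|_{[s,t]}$ is fine (it is unit-speed, $T_1 = t-s$), but one must be careful that the conclusion $\eta_1([\delta_1,T_1-\delta_1]) \subseteq \eta_2 = \xi$ refers to the image set $\xi([0,S])$, which is exactly what we want after translating back to the parametrisation of the original $\eta$ via $\eta_1(\cdot) = \eta(s+\cdot)$. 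A second, more genuine subtlety: I have implicitly used that $\eta|_{[s,t]}$ is itself a geodesic, which is immediate, but also that the strong confluence statement applies to *this particular pair* of geodesics on the generic instance — which it does, since Theorem~\ref{thm:strong_confluence2} is a statement holding simultaneously for all pairs of geodesics on a.e.\ instance. I do not expect to need Theorem~\ref{thm:intersection_of_geodesics} for this argument after all; if a gap appears in controlling how $\xi$ re-enters a neighbourhood of $\eta$ near the endpoints (e.g.\ ruling out that $\xi$ leaves $\eta$ and comes back), the connectedness of $\{u : \xi(u) \in \eta\}$ from Theorem~\ref{thm:intersection_of_geodesics} is the tool to close it, so I would keep it in reserve. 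The only remaining routine check is the elementary Hausdorff-distance estimate $\distH(\xi([0,S]),\eta([s,t])) \le C\,\max(d(\xi(0),\eta(s)), d(\xi(S),\eta(t)))$ for geodesics with nearby endpoints, which follows from the triangle inequality and comparing arc-length parametrisations.
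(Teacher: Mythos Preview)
The main gap is in your ``key step'': the claimed elementary estimate $\distH(\xi([0,S]),\eta([s,t])) \le C\delta$ for an absolute constant $C$ is false, and your triangle-inequality justification $d(\eta(s+u),\xi(u)) \le d(\eta(s),\xi(0)) + |(\text{arc lengths})|$ is not a valid inequality in any metric space. A counterexample inside $\CS$ itself: take any pair $(p,q)\in\Phi_2$, i.e.\ points connected by two distinct geodesics $\gamma_1,\gamma_2$. These have identical endpoints (so $\delta=0$) yet $\distH(\gamma_1,\gamma_2)>0$, and nothing changes under small perturbations of the endpoints. So the bound cannot follow from the geodesic property and triangle inequality alone; geodesics with nearby endpoints need not be Hausdorff-close.

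What makes the argument work --- and what the paper actually uses --- is precisely Theorem~\ref{thm:intersection_of_geodesics} (equivalently Proposition~\ref{prop:no_geodesic_bump}): since $0<s<t<T$, the restriction $\eta|_{[s,t]}$ is the \emph{unique} geodesic between $\eta(s)$ and $\eta(t)$. Combined with Arzel\`a--Ascoli compactness of geodesics in the compact space $\CS$, this gives: if $(\xi_n)$ are geodesics with $\xi_n(0)\to\eta(s)$ and $\xi_n(S_n)\to\eta(t)$, then every subsequential limit is a geodesic from $\eta(s)$ to $\eta(t)$, hence equals $\eta|_{[s,t]}$, hence $\distH(\xi_n,\eta([s,t]))\to 0$. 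This yields the qualitative statement ``for any $\epsilon_1>0$ there exists $\delta>0$ with $\distH\le\epsilon_1$'' (with $\delta$ depending on the instance and on $\eta,s,t,\epsilon_1$, not a universal $C\delta$ bound), after which your invocation of strong confluence goes through as you wrote. So your overall scheme is sound, but the bridge from nearby endpoints to small Hausdorff distance genuinely requires Theorem~\ref{thm:intersection_of_geodesics}; it is not a reserve tool but the heart of the matter.
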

In the statement above, we can choose the endpoints of $\xi$ to be $\nu$-typical points. This implies that every geodesic in the Brownian map can be arbitrarily well approximated in a strong sense by a geodesic connecting typical points. In particular, the behavior of any geodesic away from their endpoints is the same as that of a geodesic between typical points. This result is used in the proof of Theorem~\ref{thm:finite_number_of_geodesics}.

The \emph{geodesic frame} $\gf(\CS)$ is the union of all of the geodesics in $\CS$ minus their endpoints.  Since the Hausdorff dimension of a single geodesic is $1$, it immediately follows that $\dimH \gf(\CS) \geq 1$.  In \cite{akm2017stability}, Angel, Kolesnik, and Miermont proved that the geodesic frame of the Brownian map is of first Baire category, and further conjectured that $\dimH \gf(\CS) = 1$.  We confirm this conjecture as a consequence of Theorem~\ref{thm:ghost}.

\begin{corollary}
\label{cor:geodesic_frame}
For $\bminflaw$ a.e.\ instance of the Brownian map $(\CS,d,\nu)$, we  have that $\dimH \gf(\CS) = 1$.
\end{corollary}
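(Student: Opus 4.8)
The plan is to deduce Corollary~\ref{cor:geodesic_frame} from Theorem~\ref{thm:ghost} together with the known structure of geodesics between $\nu$-typical points. Since a single geodesic has Hausdorff dimension $1$, the lower bound $\dimH\gf(\CS)\ge 1$ is automatic, so the entire content is the upper bound $\dimH\gf(\CS)\le 1$. The strategy is to cover $\gf(\CS)$ by a countable union of sets, each of which is contained in a single geodesic between typical points (up to a controlled error near the endpoints), and each geodesic between typical points has dimension $1$; since Hausdorff dimension is stable under countable unions, this gives the result.

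First I would record the relevant input about typical geodesics. Fix a countable dense set $D\subseteq\CS$ (for instance, sample a sequence of i.i.d.\ $\nu$-distributed points, or use a countable dense set of typical points — by \cite{lg2010geodesics} and re-rooting invariance, geodesics between any two points of $D$ are a.s.\ unique and have the structure of a geodesic between typical points, and in particular each has Hausdorff dimension $1$). Let $G$ be the (countable) union over all ordered pairs $(x,y)\in D\times D$ of the images (minus endpoints) of all geodesics between $x$ and $y$; then $\dimH G\le 1$ since it is a countable union of dimension-one sets. The claim will be that $\gf(\CS)\subseteq \ol{G}$ in a strong enough sense, or more precisely that $\gf(\CS)$ differs from $G$ by a set of dimension at most $1$.

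Next I would exploit Theorem~\ref{thm:ghost}. Let $\eta\colon[0,T]\to\CS$ be any geodesic and let $z=\eta(r)\in\gf(\CS)$ be an interior point, so $0<r<T$. Pick rationals $0<s<r<t<T$ with $s,t$ close enough to $r$ that $z\in\eta((s,t))$. Theorem~\ref{thm:ghost} gives a $\delta>0$ such that any geodesic $\xi$ with endpoints in $B(\eta(s),\delta)$ and $B(\eta(t),\delta)$ satisfies $\eta([s+\eps,t-\eps])\subseteq\xi$ and $\xi([\eps,S-\eps])\subseteq\eta$; choosing $\eps$ small enough (depending on $r,s,t$) we arrange $z\in\eta([s+\eps,t-\eps])$, and since $D$ is dense we may pick $x,y\in D$ with $x\in B(\eta(s),\delta)$, $y\in B(\eta(t),\delta)$; then the (typical) geodesic $\xi$ from $x$ to $y$ passes through $z$ and its interior is contained in $\eta$. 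Hence $z\in\xi((0,S))\subseteq G$. This shows $\gf(\CS)\subseteq G$, and therefore $\dimH\gf(\CS)=\dimH G=1$.

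The main subtlety — and the step I would be most careful about — is the interface with the endpoints: Theorem~\ref{thm:ghost} only controls $\xi$ away from its endpoints ($\xi([\eps,S-\eps])\subseteq\eta$, not the endpoints $x,y$ themselves) and only controls $\eta$ on $[s+\eps,t-\eps]$; one must verify that the point $z$, being a fixed interior point of $\eta$ that we have bounded away from both $s$ and $t$, genuinely lands in the ``good'' region $\eta([s+\eps,t-\eps])\cap\xi((\eps,S-\eps))$ for a suitable choice of $\eps$, which is why the order of quantifiers (first fix $z,s,t$, then get $\delta$ from the theorem with a pre-chosen small $\eps$) matters. A secondary point is simply to invoke that each geodesic between two fixed typical points has dimension exactly $1$ (upper bound from \cite{lg2010geodesics} or equivalently from the snake encoding; alternatively $\le 1$ suffices, and the matching lower bound is not even needed here since we only want $\dimH\gf(\CS)\le 1$), and that there are only countably many such pairs, so countable stability of Hausdorff dimension finishes the argument. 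No quantitative estimates beyond Theorem~\ref{thm:ghost} are required.
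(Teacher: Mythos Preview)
Your proposal is correct and follows essentially the same approach as the paper: both take a countable dense set of $\nu$-typical points (the paper uses an i.i.d.\ sequence from $\nu$), let $G$ be the countable union of the unique geodesics between pairs of these points, and use Theorem~\ref{thm:ghost} to show $\gf(\CS)\subseteq G$, whence $\dimH\gf(\CS)\le\dimH G=1$ by countable stability. Your discussion of the endpoint subtlety (fixing $\eps$ small enough before extracting $\delta$) is exactly the care needed, and the paper handles it the same way by writing $\eta([2\eps,T-2\eps])\subseteq\eta_{n_k,m_k}$.
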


Corollary~\ref{cor:geodesic_frame} is similar in spirit to \cite{m2018fan}, where it is proved that the dimension of all the flow lines of the Gaussian free field with different angles is equal to the dimension of a single $\SLE$ path.  This type of result seems to be a special feature of random, fractal spaces.

\subsection{Outline}\label{sec:intro4}

We now give a detailed outline of the remainder of this article as well as the general strategy to prove the main theorems. We emphasize that our proofs are guided by clear geometric intuitions, which can be explained in a relatively simple language, even though the actual proofs involve a lot of technicalities. The most essential tool in our proofs is the breadth-first exploration of the Brownian map, which roughly speaking decomposes the Brownian map into concentric annuli that are independent metric spaces (conditionally on their boundary lengths).

We denote by $(\CS,d,\nu,x,y)$ an instance of the doubly-marked Brownian map sampled from $\bminflaw$.  As we will explain in more detail, this means that the conditional law of $x,y$ given $(\CS,d,\nu)$ is that of independent samples from $\nu$ (in particular $x, y$ are $\nu$-typical points) and for each $a > 0$ the conditional law of $(\CS,d,\nu)$ given $\nu(\CS) = a$ is that of the standard Brownian map with total area $a$.  We will review the construction of $\bminflaw$ in Section~\ref{sec:preliminaries} as well as describe the breadth-first construction of the Brownian map as developed in \cite{ms2015axiomatic}.

\begin{figure}[h!]
\centering
\includegraphics[width=\textwidth]{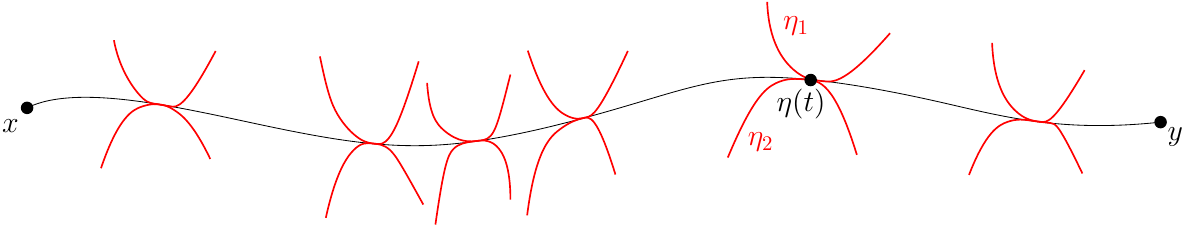}
\caption{Along the geodesic $\eta$ between two typical points $x,y$, with overwhelming probability, there is a very dense set of times at which there is an \X\ along $\eta$.}
\label{fig:outline_X}
\end{figure}

The purpose of Section~\ref{sec:strong_confluence} is to prove a weaker version of Theorem~\ref{thm:strong_confluence}.  Namely, we will prove that two geodesics which are sufficiently close in the one-sided Hausdorff distance intersect each other near their endpoints.  The general idea to prove this result is to show that with overwhelming probability for a geodesic $\eta$ between two $\nu$-typical points a certain event occurs at a very dense set of times along $\eta$.  The event is designed so that if another geodesic passes near $\eta$ at a place where the event occurs then it is forced to intersect $\eta$.  Roughly, the event occurs for $\eta$ at time $t$ if there are two auxiliary geodesics $\eta_1,\eta_2$ of $\CS$ which are respectively to the left and right of $\eta$, both contain $\eta(t)$, and $\eta_i$ for $i=1,2$ is the unique geodesic connecting its endpoints (see Figure~\ref{fig:outline_X}).  
This means that if another geodesic $\wt{\eta}$ intersects the parts of $\eta_i$ (for $i=1$ or $2$)  before and after $\eta_i$ hits $\eta(t)$ then $\wt{\eta}$ must also hit $\eta(t)$ (since $\wt{\eta}$ has to agree with $\eta_i$ between its first and last intersection times by uniqueness of $\eta_i$).  
We refer to these configurations of geodesics as $\X$'s (as the union of $\eta_1$ and $\eta_2$ has the topology of the letter $\X$).  In order to prove this result, we will consider the probability that an \X\ occurs in the successive concentric annuli (which consist of what we call \emph{metric bands}) arising from the breadth-first decomposition centered at $\eta(0)$, and use the independence property across these annuli.

The purpose of Section~\ref{sec:finite_number_of_geodesics} is to rule out the existence of infinitely many geodesics  between any pair of points. 
We will first show a weaker version of Theorem~\ref{thm:maximum_geodesics} which states that there is a deterministic constant $C$ so that the number of geodesics which emanate from any point in the Brownian map and are otherwise disjoint is at most $C$.  
The proof uses the result from Section~\ref{sec:strong_confluence} and a compactness argument. This method is soft and will not allow us to deduce anything about the value of $C$.  
Then, we will compute the probability of having points $u,v$ respectively within distance $\eps$ of two $\nu$-typical points $x,y$ such that the sum of the multiplicities of the splitting points of the geodesics from $u$ to $v$ is exactly $K$. (We will later show in Theorem~\ref{thm:finite_number_of_geodesics} that the multiplicity of each splitting point is $1$, but do not know this at this stage.) 
We will again use the independence property across the successive concentric annuli centered at $x$ and show that the cost of having a splitting point in each annulus is $\eps$ to the power of its multiplicity.
Therefore, the probability of the preceding event is $O(\eps^{K+o(1)})$ and this result will be an important input in the proof of Theorem~\ref{thm:finite_number_of_geodesics} later in Section~\ref{sec:dimension}.
This will allow us to show that the collection of geodesics which connect any pair of points in the Brownian map has at most $9$ splitting points.  Combining these properties, we will deduce that every pair of points is connected by at most a constant number of geodesics (but at this point we do not have any control over this constant).

In Section~\ref{sec:geodesic_structure}, we will complete the proofs of Theorems~\ref{thm:strong_confluence2}, \ref{thm:strong_confluence}, \ref{thm:intersection_of_geodesics}, \ref{thm:ghost}, and Corollary~\ref{cor:geodesic_frame}.  The strategy is first to show that Theorem~\ref{thm:intersection_of_geodesics} holds (i.e., that the intersection set of two geodesics minus their endpoints is connected).  The idea is to show that if it is not the case then there must exist a pair of points in the Brownian map which are connected by infinitely many geodesics.  In other words, we will obtain a contradiction to the results obtained in Section~\ref{sec:finite_number_of_geodesics}.  Upon establishing Theorem~\ref{thm:intersection_of_geodesics}, Theorem~\ref{thm:strong_confluence} (i.e., strong confluence of geodesics for the one-sided Hausdorff distance) will immediately follow from the results established in Section~\ref{sec:strong_confluence}.  Theorem~\ref{thm:ghost} and Corollary~\ref{cor:geodesic_frame} also follow quickly.  Indeed, for the former if we have a geodesic $\eta \colon [0,T]\to \CS$ in the Brownian map and $0 < s < t < T$ fixed and a sequence of geodesics $(\eta_n)$ connecting $\nu$-typical points which converge to $\eta(s)$ and $\eta(t)$ then $\eta_n$ must converge to $\eta|_{[s,t]}$ for otherwise we would obtain a pair of geodesics whose intersection (minus the endpoints) set is not connected.  Corollary~\ref{cor:geodesic_frame} then immediately follows from Theorem~\ref{thm:ghost}.  As explained earlier, we will deduce Theorem~\ref{thm:strong_confluence2} from Theorem~\ref{thm:strong_confluence} by controlling the number of bottlenecks which can occur in the Brownian map.

\begin{figure}[h!]
\centering
\includegraphics[width=.6\textwidth]{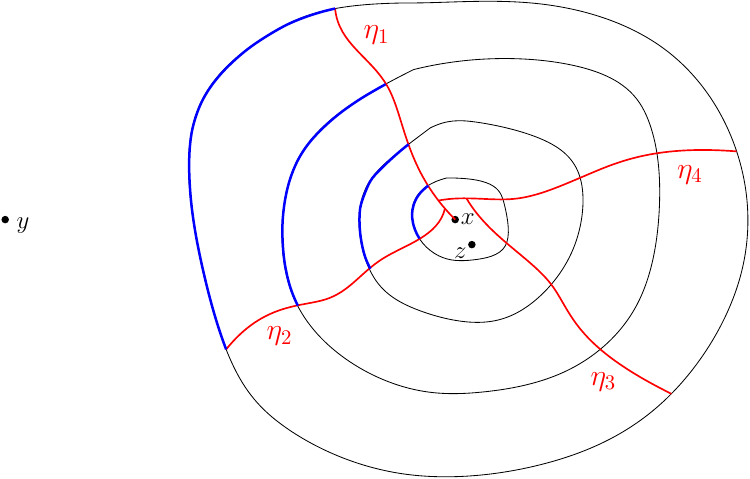}
\caption{We depict the event that there are $k=4$ geodesics going to a typical point $x$ which stay disjoint before reaching distance $\eps$ of $x$. We also depict the boundaries of the (filled) metric balls centered at $x$ at various radii, and color the part of the metric ball boundary between $\eta_1$ and $\eta_2$ in blue, for successive radii. The boundary length of the blue part evolves as a $3/2$-stable continuous state branching process (CSBP) as the radius decreases. The evolution of the boundary lengths between $\eta_i$ and $\eta_{i+1}$ (let $\eta_{k+1}=\eta_1$) for different $i$ are independent.}
\label{fig:outline_disjoint_geodesics}
\end{figure}

In Section~\ref{sec:exponent_disjoint_geodesics}, we will obtain the exponent for there being a point $z$ within distance $\epsilon$ of the marked point $x$ so that there are $k$ geodesics that emanate from $z$ and are otherwise disjoint.  The value of the exponent turns out to be $k-1$.  The proof of this result will involve many technicalities, but heuristically it is possible to arrive at this exponent relatively quickly. Let us consider a slightly different event, which states that there are $k$ geodesics going to $x$ which stay disjoint before reaching distance $\eps$ of $x$ (see Figure~\ref{fig:outline_disjoint_geodesics}).
In the breadth-first construction of the Brownian map, one can describe the evolution of the boundary lengths along the parts of the boundary of the filled metric ball centered at $x$ (i.e., we fill in all the holes of the metric ball except the one containing $y$) between these $k$ geodesics as the radius of the metric ball is reduced: they evolve as $k$ independent continuous state branching processes (CSBPs, we will review this in Section~\ref{sec:preliminaries}).  The time at which one of these processes first hits $0$ corresponds to when the associated pair of geodesics first merges.  For $k$ geodesics going to $x$ not to merge before getting within distance $\epsilon$ of $x$, it must be that these $k$ processes first reach $0$ within time $\epsilon$ of each other.  If we condition on when one of the processes first hits $0$, the conditional probability of the other $k-1$ processes hitting $0$ within $\epsilon$ of this time will be of order $\epsilon^{k-1}$ (as the hitting time of $0$ has a smooth density).  The technical difficulties arise because we want to make a statement about geodesics to $z$ (not $x$) and when one performs the above exploration it is not possible to condition on which geodesics eventually reach $z$ without destroying the Markovian property.  The strategy will be to use the strong confluence results to get that at all but finite number of scales, the geodesics towards $z$ must agree with geodesics towards $x$.  Once we have obtained this exponent, Theorem~\ref{thm:maximum_geodesics} quickly follows.

In Section~\ref{sec:dimension} we will complete the proofs of Theorems~\ref{thm:maximum_geodesics} and~\ref{thm:finite_number_of_geodesics}, as well as deduce the dimension upper bounds in Theorem~\ref{thm:number_of_geodesics} from Theorem~\ref{thm:finite_number_of_geodesics}.
To obtain the dimension upper bound for the endpoints of geodesics in each configuration, we will use as a main ingredient the exponent for the number of splitting points computed in Section~\ref{sec:finite_number_of_geodesics} and the exponent for the number of disjoint geodesics computed in Section~\ref{sec:exponent_disjoint_geodesics}. To deduce that every geodesic contains at most $2$ splitting points (from one endpoint to the other) and that each splitting point has multiplicity $1$, we will use Theorem~\ref{thm:ghost} (which states every geodesic can be approximated by geodesics between typical points) and results \cite{lg2010geodesics} on geodesics to the root.

Finally, in Section~\ref{sec:578}, we will complete the proof of Theorem~\ref{thm:number_of_geodesics}. More precisely, we will obtain the dimension lower bound for $\dimH(\Phi_5)$,  show that $\Phi_7, \Phi_8$ are countably infinite, and that the endpoints of $\Phi_5, \Phi_7, \Phi_8$ are dense in $\CS$. 
It is enough to study the optimal configurations for $\Phi_5, \Phi_7, \Phi_8$ as shown in Figure~\ref{fig:optimal_configurations}. The idea is that one can construct a configuration of geodesics between $u, v\in\CS$, by merging and concatenating a set of geodesics from $u$ to a typical point and another set of geodesics from $v$ to some other typical point. For example, the points $u$ and $v$ with $(u,v) \in \Phi_5$ would respectively belong to the set of points which have $2$ and $3$ distinct geodesics to a typical point (as in Figure~\ref{fig:root_geodesics}). It was shown in \cite{lg2010geodesics} that such sets respectively have dimension $2$ and $0$. We will construct the sets of geodesics from $u$ and $v$ in a relatively independent way, so that the dimension of $\Phi_5$ should be just $2+0=2$. The idea here is similar to how the dimension of the endpoints of normal networks were computed in \cite{akm2017stability}, but our setting is more complicated, because  the geodesics in our configurations do not all pass through any common point (except their common endpoints).

\subsection*{Acknowledgements}  JM was supported by ERC Starting Grant 804166 (SPRS).
WQ was in University of Cambridge, and partially supported by EPSRC grant EP/L018896/1 and a JRF of Churchill college when the project was initiated. WQ was then in CNRS and Universit\'e Paris-Saclay where the project was continued.
The revision of this paper was partially carried out while WQ participated in a program hosted by the Mathematical Sciences Research Institute in Berkeley, California, during the Spring 2022 semester, supported by the National Science Foundation under Grant No. DMS-1928930. 
WQ was also partially supported by CityU Start-up Grant 7200745 while completing the revision.
We are indebted to the referee for a very careful reading of this paper and for many helpful comments.
We thank Ewain Gwynne, Jean-Fran\c cois Le Gall and Pierre Nolin for useful comments on an earlier version of this article.

\section{Preliminaries}
\label{sec:preliminaries}

\subsection{Brownian map review}
\label{subsec:brownian_map}

\newcommand{\CRT}{\mathrm{CRT}}
\newcommand{\BM}{\mathrm{BM}}

We will now give a brief review of the definition and basic properties of the Brownian map.  We direct the reader to \cite{lg2014sphere,m2014stflour} for a more complete review.  The starting point for the construction of the standard unit area Brownian map is the \emph{Brownian snake} \cite{MR1127710}, which is a random process $(X,Y)$ from $[0,1]$ to $\R_+ \times \R$, defined as follows.  Let $X$ be a Brownian excursion on $[0,1]$ (see \cite{ry1999mg}).  Let $\CT$ be the continuum random tree (CRT) \cite{ald1991crt, MR1207226} encoded by $X$.  That is, for $s,t \in [0,1]$ with $s \leq t$ we let
\[ d_X(s,t) = X_s + X_t - 2\inf_{r \in [s,t]} X_r.\]
We say that $s \sim t$ if and only if $d_X(s,t) = 0$.  Then $\CT$ is given by the metric quotient $[0,1] / \sim$.  Let $\rho_{\CRT} \colon [0,1] \to \CT$ be the associated projection map.  Then $\CT$ is equipped with a measure which is given by the pushforward of Lebesgue measure on $[0,1]$ to $\CT$ using $\rho_{\CRT}$.  Given $X$, let $Y$ be the mean-zero Gaussian process on $[0,1]$ with covariance function
\[ \cov(Y_s,Y_t) = \inf_{r \in [s,t]} X_r  \quad\text{for}\quad 0 \leq s \leq t \leq 1.\]
It follows from the Kolmogorov continuity criterion that $Y$ has an a.s.\ $(1/4-\sp{a})$-H\"older continuous modification for any $\sp{a} \in (0,1/4)$.  Note that $s \sim t$ implies that $Y_s = Y_t$ so that $Y$ can be viewed as a Gaussian process indexed by $\CT$.

One can now define the Brownian map as a random metric measure space encoded by the Brownian snake $(X,Y)$, see \cite{MR2336042}.
For $s,t \in [0,1]$ with $s < t$ and $[t,s] = [0,1] \setminus (s,t)$, let
\begin{equation}
\label{eqn:d_circ_def}
d^\circ(s,t) = Y_s + Y_t - 2\max\left(\inf_{r \in [s,t]} Y_r, \inf_{r \in [t,s]} Y_r \right).
\end{equation}
For $a, b \in \CT$, we set
\[ d_\CT^\circ(a,b) = \min\{ d^\circ(s,t) : \rho_{\CRT}(s) = a,\ \rho_{\CRT}(t) = b\}.\]
Finally, for $a,b \in \CT$ we set
\begin{equation}
\label{eqn:d_def}
 d(a,b) = \inf\left\{ \sum_{j=1}^k d_\CT^\circ(a_{j-1},a_j) \right\}
\end{equation}
where the infimum is over all $k \in \N$ and $a_0=a,a_1,\ldots,a_k=b$ in $\CT$.  We say that $a \cong b$ if and only if $d(a,b) = 0$.  Let $p \colon \CT \to \CT / \cong$ be the associated projection map and let $\rho_{\BM} = p \circ \rho_{\CRT}$.  Then $(\CS,d,\nu) = \CT/\cong$ is the Brownian map instance encoded by the Brownian snake $(X,Y)$.  It is a geodesic metric measure space where $\nu$ is given by the pushforward under $p$ of the natural measure on $\CT$.  Equivalently, $\nu$ is the pushforward of Lebesgue measure on $[0,1]$ under the projection $\rho_\BM$.  It was proved by Le Gall and Paulin \cite{lgp2008sphere} and independently by Miermont \cite{m2008sphere} that $(\CS,d,\nu)$ is a.s.\ homeomorphic to $\s^2$.

The Brownian map instance $(\CS,d,\nu)$ is marked by two special points.  The first is the \emph{root} and is given by $x = \rho_{\BM}(s^*)$ where $s^*$ is the a.s.\ unique value of $s \in [0,1]$ where $Y$ attains its infimum (see \cite[Section~2.5]{lgw2006conditioned} for a proof of the uniqueness of $s^*$). The second is the \emph{dual root} and is given by $y = \rho_{\BM}(0) = \rho_{\BM}(1)$.  The reason for the terminology is that $x$ is the root of the tree of geodesics from $x$ to every point in $\CS$ and $y$ is the root of the dual tree, the projection to $\CS$ of $\CT$ under $p$.  It turns out that $x,y$ are independently distributed according to $\nu$.  That is, the law of $(\CS,d,\nu,x,y)$ is invariant under the operation of resampling $x,y$ independently using $\nu$ \cite[Theorem~8.1]{lg2010geodesics}.

We remark that the root and dual root of the Brownian map are often denoted by $\rho$ and $\rho^*$ in other works. In the present work, we denote them by $x$ and $y$, to emphasize that they are just points independently sampled according to $\nu$.  That is, the conditional law  of $\rho,\rho^*$ given the metric measure space $(\CS,d,\nu)$ is that of independent samples from $\nu$.  In particular, throughout the present article, we will often resample the root and the dual root according to $\nu$.

We let $\bmlaw{1}$ denote the law of $(\CS,d,\nu,x,y)$; the superscript $A=1$ in the notation is to emphasize that $\nu(\CS) = 1$.  By replacing the unit length Brownian excursion $X$ with a Brownian excursion of length $a > 0$, we can similarly define $\bmlaw{a}$.  This is the law on Brownian map instances with total area $a$.  By applying the Brownian scaling property twice (i.e., for $X$ and then for $Y$), we note that if $(\CS,d,\nu,x,y)$ has law $\bmlaw{1}$ then the metric measure space obtained by scaling distances by the factor $a^{1/4}$ and areas by the factor $a$ has law $\bmlaw{a}$.

In many situations, it is useful to consider the Brownian map with \emph{random} area rather than fixed area.  This is defined by replacing the Brownian excursion $X$ with a ``sample'' from the (infinite) Brownian excursion measure.  We recall that the Brownian motion excursion measure can be ``sampled'' from as follows:
\begin{itemize}
\item Pick a lifetime from the infinite measure $c t^{-3/2} dt$ where $dt$ denotes Lebesgue measure on $\R_+$ and $c > 0$ is a constant.
\item Given $t$, sample a Brownian excursion $(X_s)_{0\le s\le t}$ of length $t$. We recall that for all $t>0$, $(X_s)_{0\le s\le t}$ is equal in law to $( t^{1/2} \wt{X}_{s/t})_{0\le s\le t}$ where $\wt{X}$ is a Brownian excursion of length $1$. 
\end{itemize}
Note that the total amount of area of the corresponding Brownian map instance is given by the length $t$ of the Brownian excursion $X$.

We let $\bminflaw$ be the distribution of $(\CS,d,\nu,x,y)$ when $X$ is ``sampled'' from the infinite Brownian excursion measure as defined above.  
For each $a>0$, the conditional law of $(\CS,d,\nu,x,y)$ given $\nu(\CS) = a$ is exactly the probability measure $\bmlaw{a}$. By an abuse of notation, we also say an instance $(\CS, d, \nu)$ is distributed according to $\bminflaw$ (or $\bmlaw{a}$) meaning that it is obtained by sampling $(\CS,d,\nu,x,y)$ according to $\bminflaw$ (or $\bmlaw{a}$) and then forgetting about the marked points $x,y$.

We finish this subsection by collecting a few results on the upper and lower bounds for the volume of balls in the Brownian map and a result about covering the Brownian map by a union of balls centered at $\nu$-typical points.
We first record in the following lemma a result from \cite[Corollary~6.2]{lg2010geodesics}.
\begin{lemma}[\cite{lg2010geodesics} Corollary 6.2]
\label{lem:le_gall_volume}
Let $(\CS, d, \nu)$ be sampled from $\bmlaw{1}$. Fix $\sp{a} \in (0,1]$ and let $S= \sup_{\eps>0} (\sup_{z\in\CS} \nu(B(z, \eps)) \eps^{-4+\sp{a}})$. Then $\E[S^k] <\infty$ for every $k\in\N$.
\end{lemma}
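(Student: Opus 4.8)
The plan is to reduce the uniform‑in‑$z$‑and‑$\eps$ bound to a moment estimate for the volume of a ball centred at the root $x$ (a $\nu$-typical point), and to deduce the latter from the Brownian snake encoding. First I would restrict to dyadic radii. Since $r\mapsto\nu(B(z,r))$ is nondecreasing and $-4+\sp{a}<0$, for $\eps\in[2^{-n-1},2^{-n})$ one has $\nu(B(z,\eps))\,\eps^{-4+\sp{a}}\le 2^{4-\sp{a}}\,2^{n(4-\sp{a})}\,\nu(B(z,2^{-n}))$, whereas for $\eps\ge 1$ the quantity $\nu(B(z,\eps))\,\eps^{-4+\sp{a}}$ is at most $1$ because $\nu(\CS)=1$ and $\eps^{-4+\sp{a}}\le1$. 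Hence $S\le C_{\sp{a}}+2^{4-\sp{a}}\sup_{n\ge1}2^{n(4-\sp{a})}\sup_{z\in\CS}\nu(B(z,2^{-n}))$, so, using $(\sup_n a_n)^k\le\sum_n a_n^k$,
\[
\E[S^k]\le C_k\Big(1+\sum_{n\ge1}2^{nk(4-\sp{a})}\,\E\big[(\sup_{z\in\CS}\nu(B(z,2^{-n})))^k\big]\Big).
\]
Since $\E[S^j]\le\E[S^k]^{j/k}$ for $j\le k$ by Jensen's inequality, it suffices to make this finite for $k$ arbitrarily large.

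Next I would replace the arbitrary centre by the root, at the cost of one power of the volume. For any $z_0\in\CS$ and $z'\in B(z_0,r)$ the triangle inequality gives $B(z_0,r)\subseteq B(z',2r)$, hence $\nu(B(z_0,r))\le\nu(B(z',2r))$; integrating the constant $\nu(B(z_0,r))^{k-1}$ over $z'\in B(z_0,r)$ against $\nu$,
\[
\nu(B(z_0,r))^{k}=\int_{B(z_0,r)}\nu(B(z_0,r))^{k-1}\,d\nu(z')\le\int_{\CS}\nu(B(z',2r))^{k-1}\,d\nu(z').
\]
The right‑hand side does not depend on $z_0$, so it bounds $(\sup_{z_0}\nu(B(z_0,r)))^{k}$ as well (the supremum is measurable since $z\mapsto\nu(B(z,r))$ is lower semicontinuous, being the measure of an open ball). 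Taking expectations and using that the conditional law of $x$ given $(\CS,d,\nu)$ is $\nu$ (re-rooting invariance, \cite[Theorem~8.1]{lg2010geodesics}),
\[
\E\big[(\sup_{z\in\CS}\nu(B(z,r)))^{k}\big]\le\E\big[\nu(B(x,2r))^{k-1}\big].
\]

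It then remains to bound $\E[\nu(B(x,\rho))^{m}]$ for the root. Via the Brownian snake and the identity $d(x,\rho_\BM(s))=Y_s-\min_{[0,1]}Y$, one has $\nu(B(x,\rho))=\mathrm{Leb}\{s\in[0,1]:Y_s<\min Y+\rho\}$, and I would invoke the estimate $\E[\nu(B(x,\rho))^{m}]\le C_m\,\rho^{4m}$ for $\rho\in(0,1]$ --- the content of the relevant bound in \cite{lg2010geodesics}, obtained by integrating a stretched‑exponential tail bound on $\nu(B(x,\rho))\rho^{-4}$, which in turn follows from the fact that a Brownian excursion and its Gaussian label process spend time of order $\rho^4$ within $\rho$ of an extreme value, with strong concentration. (Any bound $\E[\nu(B(x,\rho))^{m}]\le C_m\,\rho^{(4-\kappa)m}$ with $0<\kappa<\sp{a}$ would in fact suffice.) Putting $\rho=2^{-n+1}$ and $m=k-1$ into the first display gives, with $\kappa=\sp{a}/2$,
\[
\E[S^k]\le C_k\Big(1+\sum_{n\ge1}2^{n[k(4-\sp{a})-(4-\kappa)(k-1)]}\Big),
\]
and the exponent in $n$ is negative as soon as $k(\sp{a}-\kappa)>4-\kappa$, i.e.\ for all sufficiently large $k$. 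Thus $\E[S^k]<\infty$ for large $k$, and hence for every $k\in\N$ by the Jensen reduction above.

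The main obstacle is the root‑ball estimate: the dyadic decomposition, the triangle‑inequality averaging step, re-rooting invariance, and Jensen are all soft, but the tail bound for $\nu(B(x,\rho))$ is a genuine Brownian snake computation. It is also what forces the exponent $-4+\sp{a}$ rather than $-4$: uniformly over exceptional centres $z$, the volume $\nu(B(z,\eps))$ can exceed $\eps^4$ by a polynomial factor, and only the weaker $\eps^{-4+\sp{a}}$ can absorb this once the geometric series over scales is summed.
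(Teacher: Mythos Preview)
The paper does not give a proof of this lemma; it merely records it as \cite[Corollary~6.2]{lg2010geodesics} and uses it as a black box. There is therefore no in-paper proof to compare against.

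Your argument is correct and self-contained modulo the root-ball moment estimate $\E[\nu(B(x,\rho))^m]\le C_m\rho^{4m}$. The dyadic reduction and the Jensen step are routine; the key device is the averaging inequality $\nu(B(z_0,r))^k\le\int_{\CS}\nu(B(z',2r))^{k-1}\,d\nu(z')$, which together with re-rooting invariance reduces the uniform-in-$z$ moment to a typical-point moment of one lower order. This is a genuinely different route from Le~Gall's original proof, which establishes a stretched-exponential tail bound for $\sup_{z}\nu(B(z,r))r^{-4}$ \emph{directly} from the Brownian-snake encoding (his Lemma~6.1), without passing through a typical-point reduction. Your approach trades a harder uniform snake estimate for a softer averaging trick plus an easier pointed estimate; the price is that you still need the pointed moment bound (which is itself a snake computation, though a more classical one). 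One small remark: when you invoke ``the relevant bound in \cite{lg2010geodesics}'' for the root-ball moments, make sure you are pointing at the pointed estimate (which precedes Corollary~6.2 in the logical flow) and not at Corollary~6.2 itself, to avoid any appearance of circularity.
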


\begin{lemma}
\label{lem:bm_volume_estimates}
For $\bminflaw$ a.e.\ instance $(\CS,d,\nu)$ and each $\sp{a} > 0$ there exists $\epsilon_0 > 0$ so that
\[ \epsilon^{4+\sp{a}} \leq \nu(B(z,\epsilon)) \leq \epsilon^{4-\sp{a}} \quad\text{for all}\quad \epsilon \in (0,\epsilon_0) \quad\text{and}\quad z \in \CS.\]
\end{lemma}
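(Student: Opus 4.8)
The plan is to upgrade the fixed-area estimate in Lemma~\ref{lem:le_gall_volume} to a statement that holds $\bminflaw$-a.e., using the fact that under $\bminflaw$ one has $\nu(\CS) \in (0,\infty)$ together with the scaling relation between $\bmlaw{a}$ and $\bmlaw{1}$. Concretely, I would first prove the \emph{upper} bound $\nu(B(z,\epsilon)) \leq \epsilon^{4-\sp{a}}$ for all small $\epsilon$ and all $z$, which follows directly from Lemma~\ref{lem:le_gall_volume}: fix $\sp{a}>0$, apply Lemma~\ref{lem:le_gall_volume} with, say, $\sp{a}/2$ in place of $\sp{a}$, so that under $\bmlaw{1}$ the random variable $S = \sup_{\epsilon>0}\sup_{z}\nu(B(z,\epsilon))\epsilon^{-4+\sp{a}/2}$ has finite moments of all orders, hence is a.s.\ finite; this gives $\nu(B(z,\epsilon)) \leq S\,\epsilon^{4-\sp{a}/2}$ for all $\epsilon,z$, and for $\epsilon$ smaller than some (random) $\epsilon_0$ depending on $S$ we have $S\epsilon^{4-\sp{a}/2} \leq \epsilon^{4-\sp{a}}$. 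Since $\bminflaw$ conditioned on $\nu(\CS)=a$ equals $\bmlaw{a}$, and $\bmlaw{a}$ is obtained from $\bmlaw{1}$ by scaling distances by $a^{1/4}$ and areas by $a$, the a.s.\ finiteness of the corresponding supremum (with exponent $4-\sp{a}/2$, which is scale-covariant up to the constant $a^{1-(4-\sp{a}/2)/4}$) transfers to every $\bmlaw{a}$, hence to $\bminflaw$ after integrating over $a$.

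For the \emph{lower} bound $\nu(B(z,\epsilon)) \geq \epsilon^{4+\sp{a}}$, I would invoke the known lower volume estimate for the Brownian map, which is of the same type: for the standard unit-area Brownian map there is an a.s.-finite random variable $S'$ with $\inf_{z \in \CS}\nu(B(z,\epsilon)) \geq (S')^{-1}\epsilon^{4+\sp{a}/2}$ for all small $\epsilon$ (this is standard — it is in the same circle of ideas as Le Gall's work; one can cite \cite{lg2010geodesics} or \cite{MR2336042}, or derive it from the Hausdorff dimension $4$ plus the Brownian snake estimates; since the problem statement says I may assume results stated earlier, and the lower bound of this shape is as classical as Lemma~\ref{lem:le_gall_volume}, I would state it as a companion fact). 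Then again for $\epsilon$ below a random threshold, $(S')^{-1}\epsilon^{4+\sp{a}/2} \geq \epsilon^{4+\sp{a}}$. Combining with the upper bound and taking $\epsilon_0$ to be the minimum of the two (random) thresholds gives the claim under $\bmlaw{1}$, and then the scaling argument above transfers it to $\bminflaw$.

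Putting it together: given $\sp{a}>0$, under $\bmlaw{1}$ we obtain a random $\epsilon_0>0$ such that $\epsilon^{4+\sp{a}} \leq \nu(B(z,\epsilon)) \leq \epsilon^{4-\sp{a}}$ for all $\epsilon \in (0,\epsilon_0)$ and $z \in \CS$; the scaling relation $(\CS,d,\nu) \mapsto (\CS, a^{1/4}d, a\nu)$ carries a ball $B_{d}(z,\epsilon)$ of the unit-area map to the ball $B_{a^{1/4}d}(z,a^{1/4}\epsilon)$ of area $a\,\nu(B_d(z,\epsilon))$, and one checks that the two-sided bound with exponents $4 \pm \sp{a}$ is preserved up to adjusting $\epsilon_0$ by a factor depending on $a$ (here it is convenient to first prove the estimate with exponents $4 \pm \sp{a}/2$ so that there is room to absorb the constant $a^{\pm \sp{a}/4}$ into the power of $\epsilon$ for $\epsilon$ small). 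Hence the estimate holds $\bmlaw{a}$-a.e.\ for every $a>0$, and integrating over the lifetime measure $c t^{-3/2}dt$ shows it holds $\bminflaw$-a.e.

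The main obstacle is essentially bookkeeping rather than a genuine difficulty: one must be careful that the ``random threshold'' $\epsilon_0$ really can be chosen measurably and that the scaling argument does not secretly require a \emph{uniform}-in-$a$ choice of $\epsilon_0$, which it does not — the statement is a.e.\ in the instance, so a threshold depending on $a$ (equivalently on $\nu(\CS)$) is fine. The one point that needs a line of care is the lower bound: unlike the upper bound, which is quoted verbatim from \cite{lg2010geodesics}, the lower volume bound of matching polynomial order should be stated explicitly (e.g.\ as a lemma citing the literature) before it is used, since $\epsilon^{4+\sp{a}}$ is a genuine lower bound only away from the (fractal) exceptional behavior, and the clean ``for all $z$'' form relies on the same kind of moment estimate for $\sup_z$ of the reciprocal volume.
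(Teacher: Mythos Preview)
Your upper bound argument is essentially the same as the paper's (the paper phrases it via Borel--Cantelli along dyadic scales, you via the a.s.\ finiteness of $S$; these are equivalent). The real difference is in the lower bound. You propose to import a companion ``lower volume'' estimate from the literature, which you correctly flag as something not actually stated in the paper and in need of a separate citation. The paper instead gives a two-line self-contained argument using only the Brownian snake encoding already set up in Section~\ref{subsec:brownian_map}: since the head process $Y$ is a.s.\ $(1/4-\sp{b})$-H\"older continuous, one has $d(\rho_\BM(s),\rho_\BM(t)) \leq c|s-t|^{1/4-\sp{b}}$, hence $\rho_\BM([s,s+\epsilon^{4+\sp{a}}]) \subseteq B(\rho_\BM(s),\epsilon)$ for $\epsilon$ small, and since $\nu$ is the pushforward of Lebesgue measure under $\rho_\BM$ this immediately gives $\nu(B(\rho_\BM(s),\epsilon)) \geq \epsilon^{4+\sp{a}}$ for every $s$, hence for every $z\in\CS$. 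This is strictly more elementary than what you propose: it uses no external input beyond the H\"older regularity already recorded in the preliminaries, and it sidesteps entirely the question of whether a uniform-in-$z$ lower moment bound of the type $\E[(S')^k]<\infty$ is available in the cited references (which is actually somewhat delicate to locate in the exact form you need). Your scaling reduction to $\bmlaw{1}$ is fine and matches the paper's.
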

\begin{proof}
By scaling, it suffices to prove the result for $(\CS,d,\nu)$ sampled from $\bmlaw{1}$.   Lemma~\ref{lem:le_gall_volume} implies that off an event with probability decaying faster than any power of $\eps$, we have $\nu(B(z,\eps)) \le \eps^{4-\sp{a}}$ for every $z\in\CS$. Then by the Borel-Cantelli lemma, the upper bound follows. Let us now deduce the lower bound.  Let $(X,Y)$ be the Brownian snake instance which encodes $(\CS,d,\nu)$ and recall that $Y$ is a.s.\ $(1/4-\sp{b})$-H\"older continuous for each $\sp{b} \in (0,1/4)$.  Fix $s,t \in [0,1]$ with $s \leq t$.  Recalling~\eqref{eqn:d_circ_def} and~\eqref{eqn:d_def}, we thus have for a constant $c > 0$ that
\[ d(\rho_\BM(s), \rho_\BM(t)) \leq Y_s + Y_t - 2 \inf_{r \in [s,t]} Y_r \leq c |s-t|^{1/4-\sp{b}}.\]
Fix $\sp{a} > 0$.  Then by taking $\sp{b} \in (0,1/4)$ sufficiently small, we see that there exists $\epsilon_0 > 0$ so that $\rho_\BM([s,s+\epsilon^{4+\sp{a}}]) \subseteq B(\rho_\BM(s),\epsilon)$ for all $\epsilon \in (0,\epsilon_0)$.  By the definition of $\nu$, this implies that $\nu(B(\rho_\BM(s),\epsilon)) \geq \epsilon^{4+\sp{a}}$ for all $\epsilon \in (0,\epsilon_0)$.  This completes the proof of the lower bound since $s \in [0,1]$ was arbitrary.
\end{proof}

\begin{lemma}
\label{lem:typical_points_dense}
The following holds for $\bminflaw$ a.e.\ instance of the Brownian map $(\CS,d,\nu)$.  Let $(z_j)$ be a sequence of i.i.d.\ points chosen from $\nu$.  For each $\sp{a} > 0$ there exists $\epsilon_0 > 0$ so that for all $\epsilon \in (0,\epsilon_0)$ we have that $\CS \subseteq \cup_{j=1}^{N_\epsilon} B(z_j,\epsilon)$ where $N_\epsilon = \epsilon^{-4-\sp{a}}$.	
\end{lemma}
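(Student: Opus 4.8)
The plan is to combine the lower volume bound of Lemma~\ref{lem:bm_volume_estimates} with an elementary packing estimate and the Borel--Cantelli lemma. By the scaling relation between $\bminflaw$ and $\bmlaw{1}$ it suffices to work with $(\CS,d,\nu)$ sampled from $\bmlaw{1}$, so that $\nu(\CS)=1$; moreover I would first condition on $(\CS,d,\nu)$ and treat the i.i.d.\ sequence $(z_j)$ as the only remaining randomness, the conclusion for the joint law then following by Fubini. Since the event in question is monotone in $\sp{a}$ (enlarging $\sp{a}$ only enlarges $N_\eps$), it is enough to establish it for $\sp{a}$ ranging over a fixed sequence decreasing to $0$, after which a countable intersection finishes.

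So fix $\sp{a}>0$ and pick $\sp{b},\sp{a}'$ with $0<\sp{b}<\sp{a}'<\sp{a}$. By Lemma~\ref{lem:bm_volume_estimates} there is a (random) $\eps_1>0$ with $\nu(B(z,\eps))\ge \eps^{4+\sp{b}}$ for all $z\in\CS$ and all $\eps\in(0,\eps_1)$. For $\eps\in(0,\eps_1)$ let $\{w_1,\dots,w_{M_\eps}\}$ be a maximal $(\eps/2)$-separated subset of the compact space $\CS$. By maximality every point of $\CS$ lies within distance $\eps/2$ of some $w_i$, so $\bigcup_i B(w_i,\eps/2)=\CS$; and since the balls $B(w_i,\eps/4)$ are pairwise disjoint, $M_\eps(\eps/4)^{4+\sp{b}}\le\sum_i\nu(B(w_i,\eps/4))\le 1$, hence $M_\eps\le 4^{4+\sp{b}}\eps^{-4-\sp{b}}$ grows only polynomially in $\eps^{-1}$. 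The key observation is that if, for a given $\eps$ and some $N$, each $B(w_i,\eps/2)$ contains a point $z_j$ with $j\le N$, then $\CS=\bigcup_i B(w_i,\eps/2)\subseteq\bigcup_{j=1}^N B(z_j,\eps)$.

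Next I would bound the failure probability. Since $\nu(B(w_i,\eps/2))\ge(\eps/2)^{4+\sp{b}}$, the probability that none of $z_1,\dots,z_N$ lies in $B(w_i,\eps/2)$ is at most $(1-(\eps/2)^{4+\sp{b}})^N\le\exp(-N(\eps/2)^{4+\sp{b}})$. Taking $N=\eps^{-4-\sp{a}'}$ makes this $\exp(-2^{-4-\sp{b}}\eps^{-(\sp{a}'-\sp{b})})$, so a union bound over $i\le M_\eps$ shows the probability that the cover fails at scale $\eps$ with $\eps^{-4-\sp{a}'}$ balls is at most $M_\eps\exp(-2^{-4-\sp{b}}\eps^{-(\sp{a}'-\sp{b})})$, which decays faster than any power of $\eps$. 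Applying this along the dyadic scales $\eps_n=2^{-n}$ and invoking Borel--Cantelli, a.s.\ there is a random $n_0$ with $\CS\subseteq\bigcup_{j=1}^{\eps_n^{-4-\sp{a}'}}B(z_j,\eps_n)$ for all $n\ge n_0$. Finally, for arbitrary small $\eps$ choose $n\ge n_0$ with $\eps\in(\eps_{n+1},\eps_n]$; then $B(z_j,\eps_{n+1})\subseteq B(z_j,\eps)$, and since $\eps\le\eps_n$ and $n$ is large,
\[ N_\eps=\eps^{-4-\sp{a}}\ge\eps_n^{-4-\sp{a}}=2^{n(4+\sp{a})}\ge 2^{(n+1)(4+\sp{a}')}=\eps_{n+1}^{-4-\sp{a}'}, \]
so that $\CS\subseteq\bigcup_{j=1}^{\eps_{n+1}^{-4-\sp{a}'}}B(z_j,\eps_{n+1})\subseteq\bigcup_{j=1}^{N_\eps}B(z_j,\eps)$, which is the assertion.

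There is no substantial obstacle here: the argument is routine. The only point requiring a little care is the scale bookkeeping in the last step, which is needed to produce the stated count $N_\eps=\eps^{-4-\sp{a}}$ exactly rather than merely up to a multiplicative constant; this is precisely what the auxiliary exponent $\sp{a}'<\sp{a}$ is introduced to handle.
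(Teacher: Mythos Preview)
Your proof is correct and follows the same overall architecture as the paper's: reduce to $\bmlaw{1}$ by scaling, invoke the volume lower bound of Lemma~\ref{lem:bm_volume_estimates}, bound the failure probability at each dyadic scale, and conclude by Borel--Cantelli. The only real difference is in how the per-scale failure probability is controlled. You take a maximal $\eps/2$-separated net $\{w_i\}$, bound its cardinality via the disjointness of the balls $B(w_i,\eps/4)$, and then union bound over the event that some $B(w_i,\eps/2)$ receives no sample among $z_1,\dots,z_{N}$. The paper instead uses an exchangeability trick: on the failure event $A_k^c$ there is an uncovered point $z$, so the $(N_k+1)$-th sample lands in $B(z,2^{-k})$ with probability at least $2^{-(4+\sp{a})k}$, and one compares this with the unconditional probability that $z_{N_k+1}$ is $2^{-k}$-far from $z_1,\dots,z_{N_k}$ (which is at most $(1-2^{-(4+\sp{a})k})^{N_k}$). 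Your net argument is the more classical route and is arguably cleaner; the paper's device avoids writing down the net explicitly. Your interpolation from dyadic scales to all $\eps$, using the auxiliary exponent $\sp{a}'<\sp{a}$, is also spelled out more carefully than in the paper.
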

\begin{proof}
By scaling, it suffices to prove the result for $(\CS,d,\nu)$ sampled from $\bmlaw{1}$.  Fix $\sp{a} > 0$.  Lemma~\ref{lem:bm_volume_estimates} implies that there a.s.\ exists $r_0 > 0$ so that $\nu(B(z,r)) \geq r^{4+\sp{a}}$ for all $r \in (0,r_0)$ and $z \in \CS$.   For each $k \in \N$, we let $N_k = 2^{(4+2\sp{a})k}$.  There exists $k_0\in\N$ such that for all $k\ge k_0$, given $(\CS,d,\nu)$, conditionally on $\{r_0\ge 2^{-k}\}$, the probability that the following event $A_k$ does not hold is at most $\exp(-2^{k\sp{a}/2})$.
\begin{enumerate}
\item[$(A_k)$] For all $z\in\CS$, there exists $1 \leq i \leq N_k$ such that $z_i \in B(z, 2^{-k})$.
\end{enumerate}
Indeed, to see this we note that on $A_k^c$ there exists $z \in \CS$ so that $d(z,z_i) \geq 2^{-k}$ for all $1 \leq i \leq N_k$.  Since $\nu(B(z,2^{-k})) \geq 2^{-k(4+\sp{a})}$ and $\nu(\CS) = 1$, this would imply that the probability conditionally on $A_k^c$ that $d(z_{N_k+1},z_i) \geq 2^{-k}$ for all $1 \leq i \leq N_k$ is at least $2^{-(4+\sp{a})k}$. Using the fact that for events $A,B$ if $\p[ A \giv B] \geq p$ then $\p[B] \leq p^{-1}\p[A]$, this implies that the probability of $A_k^c$ is at most $2^{(4+\sp{a})k}$ times the probability that $d(z_{N_k+1},z_i) \geq 2^{-k}$ for all $1 \leq i \leq N_k$.  The latter probability is at most $(1-2^{-(4+\sp{a})k})^{N_k} \leq \exp(-2^{k\sp{a}})$.  This proves the claim.  Therefore the Borel-Cantelli lemma implies that there $\bminflaw$-a.e.\ exists $k_0 \in \N$ with $2^{-k_0} \le r_0$ so that for all $k \geq k_0$ the union of $B(z_j, 2^{-k})$ for $1 \leq j \leq N_k$ covers $\CS$.
\end{proof}

\subsection{Breadth-first exploration of the Brownian map}
\label{subsec:breadth_first}

The Brownian snake construction of the Brownian map $(\CS,d,\nu,x,y)$ given in Section~\ref{subsec:brownian_map} corresponds to a \emph{depth-first} exploration of the Brownian map because the curve $t \mapsto \rho_\BM(t)$ is the Peano curve between the tree of geodesics from the root $x$ and the dual tree.  In this work, the \emph{breadth-first} exploration which has been studied in a number of works, including \cite{clg2016plane, MR3606744, bck2018growth,bbck2018martingales, ms2015axiomatic}, will play an important role.
Here, we will only focus on the continuous aspect, but mention that this point of view also naturally arises when one considers the peeling process \cite{a2003peeling} of the uniform infinite planar triangulation \cite{MR2013797}.

\subsubsection{Continuous state branching processes}
\label{subsubsec:csbp}

We begin by recalling basic properties of the continuous state branching processes (CSBPs) \cite{MR0101554,MR208685,MR0217893}. A CSBP with branching mechanism $\psi$ is the Markov process $Y$ on $\R_+$ which is defined through its Laplace transforms
\begin{equation}
\label{eqn:csbp_laplace_transform}
\E[ \exp(-\lambda Y_t) \giv Y_s] = \exp(-Y_s u_{t-s}(\lambda)) \quad\text{for}\quad \lambda \geq 0, \quad t\ge s\ge 0
\end{equation}
where
\[ \frac{\partial u_t}{\partial t}(\lambda) = -\psi(u_t(\lambda)) \quad\text{for}\quad u_0(\lambda) = \lambda.\]
CSBPs are related to L\'evy processes through the \emph{Lamperti transform} (see \cite{kyp2006levy}).  Namely, if $X$ is a L\'evy process with Laplace exponent $\psi$ and
\begin{equation}
\label{eqn:lamperti_levy_to_csbp}
s(t) = \inf\{r > 0 : \int_0^r \frac{1}{X_u} du \geq t\}
\end{equation}
then the time-changed process $X_{s(t)}$ is a CSBP with branching mechanism $\psi$.  Conversely, if $Y$ is a CSBP with branching mechanism $\psi$ and
\begin{equation}
\label{eqn:lamperti_csbp_to_levy}
t(s) = \inf\{r > 0 : \int_0^r Y_u du \geq s\}
\end{equation}
then the time-changed process $Y_{t(s)}$ is a L\'evy process with Laplace exponent $\psi$.

For $\alpha\in (1,2)$, the Laplace exponent of an $\alpha$-stable L\'evy process with only upward jumps is given by $\psi(\lambda) = c \lambda^\alpha$ for a constant $c > 0$. We call the corresponding CSBP an $\alpha$-stable CSBP.  Note that in this case we have that $u_t(\lambda) = (\lambda^{1-\alpha} + ct)^{1/(1-\alpha)}$.  This combined with~\eqref{eqn:csbp_laplace_transform} implies that $\alpha$-stable CSBPs satisfy the following scaling property.  If $Y$ is an $\alpha$-stable CSBP and $C > 0$, then $Y_{C^{\alpha-1} t}$ is equal in distribution to $C Y_t$, up to a change of starting point.
Suppose that $Y$ is an $\alpha$-stable CSBP and $\zeta = \inf\{t \geq 0 : Y_t = 0\}$.  Then we have that
\begin{equation}
\label{eqn:csbp_extinction_time}
\p[ \zeta > t ]  = \p[ Y_t > 0] = 1 - \lim_{\lambda \to \infty} \E[ e^{-\lambda Y_t}] = 1- \exp(-(c t)^{1/(1-\alpha)} Y_0).\end{equation}

For each $\alpha$-stable L\'evy process $X$ with only upward jumps, there is a naturally associated infinite measure $\mu_\alpha$ on $\alpha$-stable L\'evy excursions from $0$ (see \cite{MR1465814}). Let $I_t = \inf_{0 \leq s \leq t} X_s$ be the running infimum of $X$.   Then the succession of excursions of $X-I$  from $0$ is distributed as a Poisson point process with intensity measure given by the product of Lebesgue measure and $\mu_\alpha$.
The measure $\mu_\alpha$ can be ``sampled'' from using the following steps 

\begin{itemize}
\item Pick a lifetime $t$  from the infinite measure $c_\alpha t^{-1-1/\alpha} dt$ where $dt$ denotes Lebesgue measure on $\R_+$ and $c_\alpha > 0$ is a constant.
\item Given $t$, sample (according to a probability measure) an $\alpha$-stable L\'evy excursion $(Z_s)_{0\le s\le t}$ of length $t$. We refer the reader to \cite[Chapter VIII]{b1996levy} for more details on this excursion measure. Here, we just mention that for all $t>0$, $(Z_s)_{0\le s\le t}$ is equal in law to $( t^{1/\alpha} \wt{Z}_{s/t})_{0\le s\le t}$ where $\wt{Z}$ is an $\alpha$-stable L\'evy excursion of length $1$. 
\end{itemize}
By performing the time-change for $\alpha$-stable L\'evy excursions as in the definition of the Lamperti transform~\eqref{eqn:lamperti_levy_to_csbp}, we also get an infinite measure $M_\alpha$ on $\alpha$-stable CSBP excursions.  As in the case of the Brownian excursion measure, both the $\alpha$-stable L\'evy and CSBP excursion measures have the following property.  If for each $\epsilon > 0$ we let $\tau_\epsilon$ be the first time $t$ that the L\'evy (resp.\ CSBP) excursion hits $\epsilon$, the conditional law of the remainder of the process given $\{\tau_\epsilon < \infty\}$ is that of an $\alpha$-stable L\'evy process (resp.\ CSBP) starting from $\epsilon$ and stopped at the first time that it hits $0$.  Let us now derive the law on the lifetime $t$ under $M_\alpha$.  For each $S,T > \delta > 0$ we have that
\begin{align*}
	\frac{M_\alpha[ t \geq T ]}{M_\alpha[ t \geq S ]}
&= \lim_{\epsilon \to 0} \frac{M_\alpha[ t \geq T, \tau_\epsilon \leq \delta]}{M_\alpha[ t \geq S, \tau_\epsilon \leq \delta]}
 \leq \left( \frac{T-\delta}{S+\delta} \right)^{1/(1-\alpha)},
\end{align*}
where we have applied~\eqref{eqn:csbp_extinction_time} at the stopping time $\tau_\epsilon$ in the inequality.  Since $\delta > 0$ was arbitrary we have that the left hand side above is at most $(T/S)^{1/(1-\alpha)}$.  By an analogous argument, we have that it is also at least $(T/S)^{1/(1-\alpha)}$.  Therefore it is also equal to $(T/S)^{1/(1-\alpha)}$.  This implies that there exists a constant $c > 0$ so that the density for the lifetime under $M_\alpha$ is given by $c t^{1/(1-\alpha)-1}$.

\subsubsection{Boundary length and the conditional independence of the inside and outside of filled metric balls}
\label{sec:exploration}

Suppose that $(\CS,d,\nu,x,y)$ is distributed according to $\bminflaw$.  For each $r \geq 0$, we let $\fb{y}{x}{r}$ be the \emph{filled metric ball} centered at $x$ of radius $r$ with respect to $y$.  That is, $\fb{y}{x}{r}$ is the complement of the $y$-containing component of $\CS \setminus B(x,r)$ where $B(x,r)$ is the metric ball of radius~$r$ centered at~$x$.

It is shown in \cite[Section~4]{ms2015axiomatic} that it is possible to associate with $\partial \fb{y}{x}{r}$ a \emph{boundary length}~$L_r$ in a manner which is measurable with respect to $(\CS,d,\nu,x,y)$.
Moreover, the process $(L_r)$ indexed by $r\in [0,  d(x,y)]$ has the same (infinite) distribution as the time-reversal of a $3/2$-stable CSBP excursion.
Note that the infinite mass of $\bminflaw$ is carried by the infinite distribution of the time length $d(x,y)$ of the process $(L_r)$. For any $r>0$, when we restrict to the event $d(x,y) >r$, $\bminflaw$ has finite total mass.

For each $r > 0$, we can view $\fb{y}{x}{r}$ as a metric measure space which is marked by $x$ and equipped with the measure which is given by restricting $\nu$ to $\fb{y}{x}{r}$.  
We can similarly view $\CS \setminus \fb{y}{x}{r}$ as a metric measure space which is marked by $y$.
We equip both spaces with the interior-internal metric as defined before Theorem~\ref{thm:strong_confluence}.

It is shown in \cite{ms2015axiomatic} that the boundary length $L_r$ is a.s.\ determined  by $\fb{y}{x}{r}$.  It is also shown in \cite{ms2015axiomatic} that $L_r$ is a.s.\ determined by $\CS \setminus \fb{y}{x}{r}$.  Moreover, on the event $d(x,y)>r$,  both $\fb{y}{x}{r}$ and $\CS \setminus \fb{y}{x}{r}$ as marked metric measure spaces are conditionally independent given $L_r$. 
The same also holds if we replace $r$ with $s = d(x,y)-r$. This allows us to perform a \emph{reverse metric exploration}, in which case we observe $\CS \setminus \fb{y}{x}{s}$ as $r$ increases from $0$ to $d(x,y)$ so that $s$ decreases from $d(x,y)$ to $0$.  In this case, the unexplored region is the filled metric ball $\fb{y}{x}{s}$ and its law only depends on $L_s = L_{d(x,y)-r}$.  Then the boundary length process $L_{d(x,y)-r}$ evolves as a $3/2$-stable CSBP.  We will often use the notation $Y_r$ for $L_{d(x,y)-r}$. As we will see, in many cases it is actually more convenient to perform a reverse rather than a forward metric exploration.

\subsubsection{Metric bands}
\label{subsec:metric_bands}

Iterating the reverse metric exploration allows us to decompose an instance of the Brownian map into conditionally independent \emph{metric bands}. See Figure~\ref{fig:metric_band}.  More precisely, suppose that we have fixed $0 < r_1 < r_2 < \cdots < r_k$ and we let $s_j = d(x,y) - r_j$ for each $1 \leq j \leq k$.  Then we can view each $\CB^j = \fb{y}{x}{s_{j}} \setminus \fb{y}{x}{s_{j+1}}$, $1 \leq j \leq k-1$, as a metric measure space with its interior-internal metric $d_{\CB^j}$ and measure $\nu_{\CB^j} := \nu|_{\CB^j}$.  On the event $d(x,y)>r_j$, $\CB^j$ is non-empty, and it is either a topological annulus if $d(x,y)>r_{j+1}$, or disk if $r_{j+1}\ge d(x,y)$.  Its inner (resp.\ outer) boundary is the component of $\partial \CB^j$ whose distance to $x$ is $s_{j}$ (resp.\ $s_{j+1}$).  If $\CB^j$ is a topological disk, then it corresponds to a filled metric ball and in this case we will define the outer boundary to be the center point of the ball. We denote the inner (resp.\ outer) boundary of $\partial \CB^j$ by $\innerboundary \CB^j$ (resp.\ $\outerboundary \CB^j$).  We note that $\innerboundary \CB^j$ is naturally marked by the point visited by the a.s.\ unique geodesic connecting $x$ and $y$.  The \emph{width} of $\CB^j$ is $s_{j} - s_{j+1} = r_{j+1} - r_{j}$.  The independence property for the reverse metric exploration implies that $\CB^j$ is conditionally independent of $\CB^1,\ldots,\CB^{j-1}$ given the boundary length $Y_{r_j}$ of $\innerboundary \CB^j$.

\begin{figure}[h!]
\centering
\includegraphics[width=.56\textwidth]{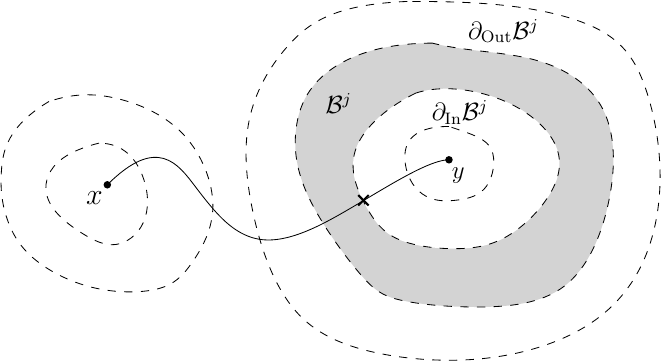}
\caption{\label{fig:metric_band} Illustration of the metric bands (drawn in the plane, but should be viewed as in the Riemann sphere). We depict one metric band $\CB^j$ in grey.}
\end{figure}

For each $\ell,w > 0$, let $\bandlaw{\ell}{w}$ be the probability law on metric bands $(\CB,d_\CB,\nu_\CB,z)$ with inner boundary length $\ell$, width $w$, and marked by a point $z$ on the inner boundary of $\CB$. 
By Section~\ref{sec:exploration} and the definition of the metric bands, one can again perform a reverse metric exploration inside~$\CB$.  Let $d_0$ be the distance from $\innerboundary \CB$ to $\outerboundary \CB$ (this distance is equal to $w$ if $\CB$ is a topological annulus, and is at most $w$ otherwise).
For each $t \in (0, w)$ we let $\CB_t$ be the set of points in $\CB$ disconnected from $\innerboundary \CB$ by the $(d_0-t)$-neighborhood of $\outerboundary \CB$ (let $\CB_t$ be empty if $t>d_0$).  Then $\CB_t$ is a metric band of width $w-t$.  If $Y_t$ denotes the boundary length of $\innerboundary \CB_t$, then $(Y_t)_{t \in [0,w]}$ evolves as a $3/2$-stable CSBP starting from $\ell$ and which stays at $0$ once it hits $0$.

Finally, let us remark that the metric bands satisfy the same scaling property as the Brownian map. For all $a>0$, if $(\CB,d_\CB,\nu_\CB,z)$ has law $\bandlaw{\ell}{w}$, and we rescale distances by $a$, boundary lengths by $a^2$, and areas by $a^4$, then we obtain a sample from $\bandlaw{a^2 \ell}{a w}$.

\subsubsection{Geodesic slices}
Suppose that $(\CB,d_\CB,\nu_\CB,z)$ is distributed according to $\bandlaw{\ell}{w}$.  Then we can decompose $\CB$ further into \emph{slices} as follows, see Figure~\ref{fig:slices}.  Suppose that $z_1,\ldots,z_k$ are points on $\innerboundary \CB$ given in counterclockwise order  and chosen in a way which is independent of $\CB$. For each $j$ there is a.s.\  a unique geodesic $\eta_j$ from $z_j$ to $\outerboundary \CB$.  For each $j$, let $\CG_j$ be the component of $\CB \setminus (\eta_j \cup \eta_{j+1})$ with $\eta_j$ (resp.\ $\eta_{j+1}$) on its left (resp.\ right) side.  Then $\CG_j$ is a \emph{geodesic slice}.  Moreover, the $\CG_j$'s (viewed as metric measure spaces with the interior-internal metric) are independent of each other.  We define the inner boundary $\innerboundary \CG_j$ of $\CG_j$ to be the intersection of $\partial \CG_j$ with $\innerboundary \CB$.  If the geodesics which make up the left and right boundaries of $\CG_j$ have not merged upon getting to distance $w$ from $\innerboundary \CB$, then we define the outer boundary $\outerboundary \CG_j$ to be the intersection of $\partial \CG_j$ with $\outerboundary \CB$.  Otherwise, we define $\outerboundary \CG_j$ to be the merging point of the two geodesics.  The width of each slice $\CG_j$ is equal to $w$ (even though the geodesics which bound its left and right sides may merge before getting to distance $w$ from $\innerboundary \CB$) and the inner boundary length of $\CG_j$ is equal to the boundary length $\innerboundary \CG_j$ (equivalently, of the counterclockwise segment of $\innerboundary \CB$ from $z_j$ to $z_{j+1}$).  We note that if $k=1$ so that we cut $\CB$ with a single geodesic then the resulting slice has width $w$ and inner boundary length $\ell$.  In particular, if we take such a slice and glue its left and right boundaries together then we obtain a band with inner boundary length $\ell$ and width $w$.

\begin{figure}[h!]
\centering
\includegraphics[width=.45\textwidth]{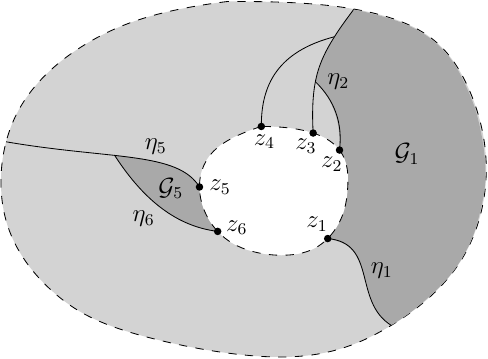}
\caption{\label{fig:slices} We decompose a metric band $\CB$ of width $w$ into several geodesic slices. We depict in dark grey two geodesic slices. The left and right boundaries of $\CG_1$ have not merged, while those of $\CG_5$ have merged before reaching distance $w$ from $\innerboundary \CB$.}
\end{figure}

For $\ell, w>0$, let $\slicelaw{\ell}{w}$ be the  probability law on slices $\CG$ with inner boundary length $\ell$ and width $w$.  As in the case of metric bands, we can also perform a reverse metric exploration from~$\innerboundary\CG$.  To explain this point in more detail, let $d_0$ be the distance from $\innerboundary \CG$ to $\outerboundary \CG$.  For each $t \in (0, w)$ we let~$\CG_t$ be the set of points in~$\CG$ disconnected from $\innerboundary \CG$ by the $(d_0-t)$-neighborhood of $\outerboundary \CG$.  Then $\CG_t$ is a slice of width $w-t$.  If $Y_t$ denotes the boundary length of $\innerboundary \CG_t$, then $(Y_t)_{t \in [0,w]}$ evolves as a $3/2$-stable CSBP starting from $\ell$ and which stays at $0$ once it hits $0$.  Also, for each $a > 0$, if $(\CG,d_\CG,\nu_\CG)$ has law $\slicelaw{\ell}{w}$, and we rescale distances by $a$, boundary lengths by $a^2$, and areas by $a^4$, then we obtain a sample from $\slicelaw{a^2 \ell}{a w}$.

\subsection{Brownian disk review}

We will now record some basic facts about the Brownian disk.  There are two different ways to define it. One is using a Brownian snake based construction, which is developed in \cite{bm2017disk}.  The other is based on realizing it as a complementary component when performing a metric exploration of the Brownian map.  This approach is developed in \cite{ms2015axiomatic} and the two definitions were proved to be equivalent in \cite{lg2019disksnake}.  The two definitions of the Brownian disk each include a notion of boundary length and these notions were proved to be equivalent in \cite{lg2020equivalent}.

\newcommand{\bdisklaw}[1]{\p_{\mathrm{BD}}^{L=#1}}
\newcommand{\bdisklawweighted}[1]{\p_{\mathrm{BD,W}}^{L=#1}}
\newcommand{\mnet}[3]{{\mathrm{MetNet}}(#1;#2,#3)}

Let us first recall the definition given in \cite{ms2015axiomatic}. Suppose that $r > 0$.  Consider the metric measure space which is given by equipping $\CS \setminus \fb{y}{x}{r}$ with its interior-internal metric, the restriction of $\nu$, and the boundary length measure associated with $\partial \fb{y}{x}{r}$.  We note that this metric measure space is marked by the point $y$.  This is the \emph{marked Brownian disk} and we will denote its law by $\bdisklawweighted{\ell}$ when its boundary length is equal to $\ell$. There is also a law on unmarked Brownian disks so that one can obtain the law on marked Brownian disks from the former by weighting the area of the unmarked disk and then adding a marked point which is sampled from the area measure. However, we will only consider marked Brownian disks in the present article.

\newcommand{\BD}{{\mathrm{BD}}}

We now recall the Brownian snake definition of $\bdisklawweighted{\ell}$ as developed in \cite{bm2017disk}.  Let $X$ be a standard Brownian motion and let $\tau = \inf\{t \geq 0 : X_t = -\ell\}$.  For each $0 \leq s \leq t \leq \tau$, we let $\ul{X}_{s,t} = \inf_{r \in [s,t]} X_r$.  We then define for $0 \leq s \leq t \leq \tau$,
\[ d_X(s,t) = X_s + X_t - 2 \ul{X}_{s,t}.\]
Then $d_X$ defines a pseudometric on $[0,\tau]$.  By setting $s \sim t$ if and only if $d_X(s,t) = 0$, one can define a metric space by considering $[0,\tau] / \sim$.  Given $X$, we let $Y^0$ be the Gaussian process with covariance function
\[ \cov( Y_s^0, Y_{s'}^0) = \inf_{u \in [s \wedge s', s \vee s']} (X_u - \ul{X}_u)\]
where $\ul{X}_u = \inf_{0 \leq v \leq u} X_v$.  We also let $B$ be a Brownian bridge of duration $\ell$ so that
\begin{equation}
\label{eqn:bb_cov}
\cov(B_s, B_t) = \frac{s(\ell-t)}{\ell} \quad\text{for}\quad 0 \leq s \leq t \leq \ell.
\end{equation}
Finally, we let
\[ Y_s = Y_s^0 + \sqrt{3} B_{T^{-1}(s)}\]
where $T^{-1}$ denotes the right-continuous inverse of the local time of $X$ at its running infimum.  The Brownian disk is then defined in terms of a metric quotient from $(X,Y)$ in the same way that the Brownian map is defined as a metric quotient from the Brownian snake in that case (recall Section~\ref{subsec:brownian_map}).  Let $\rho_{\BD} \colon [0,\tau] \to \CD$ be the natural projection map.  The marked point $y$ of the Brownian disk is given by $\rho_\BD(s^*)$ where $s^*$ is the a.s.\ unique point where $Y$ attains its overall infimum.  The image of the set of times at which $X$ hits a record minimum under $\rho_\BD$ gives the boundary $\partial \CD$ of $\CD$.  Let $\rho_{\BD,\partial} \colon [0,\ell] \to \partial \CD$ be the natural projection map to $\partial \CD$.  Then the boundary length measure $\nu_\partial$ on $\partial \CD$ is given by the projection of Lebesgue measure on $[0,\ell]$ to $\partial \CD$ under $\rho_{\BD,\partial}$.  If $x \in \partial \CD$ is given by $\rho_{\BD,\partial}(s)$ for some $s \in [0,\ell]$ then $d(x,y) = \sqrt{3} B_s - Y_{s^*}$.  That is, $B$ serves to encode the distance of points on $\partial \CD$ to $y$.  In particular, the unique place where $B$ attains its overall infimum corresponds to the unique boundary point which is closest to $y$.

It follows from the Brownian snake construction that the law of the area of a sample from $\bdisklawweighted{\ell}$ is equal to the law of the amount of time it takes a standard Brownian motion on $\R$ starting from $0$ to hit $-\ell$.  Recall that the density for this law with respect to Lebesgue measure on $\R_+$ at $a$ is given by
\begin{equation}
\label{eqn:bdiskweighted_area_density}
\frac{\ell}{\sqrt{2 \pi a^3}} \exp\left(-\frac{\ell^2}{2a} \right).	
\end{equation}

The following is a restatement of \cite[Lemma~3.2]{gm2019gluing}, which we will use several times later in this article.  In the lemma statement, for $u,v \in \partial \CD$ we let $[u,v]_{\partial}$ denote the counterclockwise arc in $\partial \CD$ from $u$ to $v$.

\begin{lemma}[\cite{gm2019gluing} Lemma 3.2]
\label{lem:boundary_length_distance}
Fix $\ell >0$ and suppose that $(\CD,d,\nu,\nu_\partial,y)$ has law $\bdisklawweighted{\ell}$.  For every $\zeta > 0$ there a.s.\ exists $C > 0$ so that for all $u,v \in \partial \CD$ we have that
\[ d(u,v) \leq C \nu_{\partial}([u,v]_{\partial})^{1/2} \left(	|\log \nu_{\partial}([u,v]_{\partial}) | + 1\right)^{7/4+\zeta}.\]
Moreover, if $C$ is the smallest constant so that the above holds for all $u,v \in \partial \CD$ then we have that $\bdisklawweighted{\ell}[ C > A]$ decays to $0$ as $A \to \infty$ faster than any negative power of $A$.
\end{lemma}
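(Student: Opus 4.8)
\emph{Proof idea.} The plan is to work directly with the Brownian snake construction $(X,Y)$ of $\bdisklawweighted{\ell}$, where $Y = Y^0 + \sqrt 3\, B_{T^{-1}(\cdot)}$ and $T^{-1}$ denotes the inverse local time of $X$ at its running infimum. The two facts we start from are: (i) $Y^0$ vanishes at every time at which $X$ attains a running infimum, so that the boundary point $\rho_{\BD,\partial}(a)$ carries label $\sqrt 3\,B_a$; and (ii) by the strong Markov property of $X$ at running-infimum times, the region $R_{a,b} := \rho_{\BD}([T^{-1}(a),T^{-1}(b)])$ hanging off the counterclockwise boundary arc from $\rho_{\BD,\partial}(a)$ to $\rho_{\BD,\partial}(b)$ is the metric measure space encoded by a Brownian motion run until it accumulates local time $b-a$ at its running infimum (the $X$-encoding of a Brownian disk of boundary length $b-a$), decorated by the corresponding label $Y^0$ together with the boundary bridge segment $\sqrt3\, B|_{[a,b]}$. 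Fixing $u=\rho_{\BD,\partial}(a)$, $v=\rho_{\BD,\partial}(b)$ and $\delta=\nu_\partial([u,v]_\partial)=b-a$, the goal is to bound $d(u,v)$, which is at most the internal diameter of $R_{a,b}$, but that diameter has only polynomial tails (because the area of a Brownian disk has a $1/2$-stable tail by~\eqref{eqn:bdiskweighted_area_density}), so a direct estimate is too crude and one must do better.

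The decisive geometric observation is that every excursion $e$ of $X-\ul X$ over $[T^{-1}(a),T^{-1}(b)]$ produces a \emph{bubble} $\rho_{\BD}(e)$ which is glued to the rest of $\CD$ at a single boundary point $p$ (its two endpoints lie at equal height on the spine, hence are identified in the CRT), with interior disjoint from $\partial\CD$; consequently any path between boundary points that enters such a bubble must cross $p$ both entering and leaving, so $d(u,v)$ is unaffected by the internal structure of these bubbles. The plan is therefore to fix a threshold $\theta=\delta^2$, let $a=c_0<c_1<\dots<c_m=b$ enumerate $a$, $b$ and the running-infimum values of all excursions over $[T^{-1}(a),T^{-1}(b)]$ of duration $\ge\theta$, and write $d(u,v)\le \sum_{j=0}^{m-1} d(\rho_{\BD,\partial}(c_j),\rho_{\BD,\partial}(c_{j+1}))$, where each term is the distance between the endpoints of an arc off which no bubble of area $\ge\theta$ hangs. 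Standard estimates for the $1/2$-stable subordinator $T^{-1}$ (the number and total size of its jumps over an interval of length $\delta$, with the big jumps removed) give that $m=O(\log\delta^{-1})$ and that the total area of the complementary regions is $O(\delta^2)$, both off an event of probability decaying faster than any power of $\delta$; the same bookkeeping works for an arc of any length, since $\ell$ is fixed, which is why no separate treatment of ``long'' arcs is needed.

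It then remains to bound $d(\rho_{\BD,\partial}(c_j),\rho_{\BD,\partial}(c_{j+1}))$ for a region with no large bubble and area $O(\delta^2)$. Using the Brownian disk scaling relation (distances by $a$, boundary lengths by $a^2$, areas by $a^4$) together with the stretched-exponential upper tail $\exp(-c r^{4/3})$ for the diameter of a fixed-area Brownian disk --- which becomes available precisely once the heavy-tailed contribution of large bubbles has been excluded --- such a region has internal diameter at most $\delta^{1/2}(\log\delta^{-1})^{3/4+\zeta}$ off a super-polynomially small event. Adding the $O(\mathrm{osc}(B|_{[a,b]}))=O(\delta^{1/2}(\log\delta^{-1})^{1/2})$ contribution of the boundary bridge (Lévy's modulus of continuity for $B$) and summing the $m=O(\log\delta^{-1})$ terms produces the bound $d(u,v)\le C'\delta^{1/2}(\log\delta^{-1})^{7/4+\zeta}$, with the exponent $7/4=1+3/4$ arising as the reciprocal $3/4$ of the tail exponent $4/3$ plus the extra $1$ from summing logarithmically many terms, and $C'$ having a super-polynomially decaying tail. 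Finally one passes from a fixed pair to the simultaneous statement over all $u,v$ by a union bound over a net of boundary points at dyadic boundary-length scales $2^{-k}$: there are $O(2^k)$ pairs to control at scale $k$, each failure probability is $\le \exp(-ck)$, hence summable, and Borel--Cantelli yields the random constant $C$ with the asserted tail.

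The main obstacle is the diameter estimate in the third paragraph: one must show that excluding bubbles of area $\ge\delta^2$ genuinely converts the polynomial upper tail of a Brownian disk's diameter into a stretched-exponential one, with enough quantitative control to read off the exponent $7/4$, and one must be careful throughout that it is the \emph{internal} metric of the complementary regions (not the ambient metric on $\CD$) that is being controlled; this is where the bulk of the work in~\cite{gm2019gluing} lies. The remaining ingredients --- the conditional-Gaussian tails for $Y^0$ given $X$, the modulus of continuity of $B$, and the $1/2$-stable subordinator estimates for $T^{-1}$ --- are standard.
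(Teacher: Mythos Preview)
The paper does not give its own proof of this lemma; it is cited directly from \cite[Lemma~3.2]{gm2019gluing}, so there is no in-paper argument to compare against. Your outline is a plausible reconstruction, and the arithmetic leading to $7/4=1+3/4$ --- the $3/4$ from inverting the $4/3$ in the stretched-exponential tail $\exp(-cr^{4/3})$ for the range of the snake over a unit-area tree (this exponent is correct, arising from a saddle point between the Gaussian tail of $Y$ given the tree height and the Gaussian tail of the height itself), and the extra $1$ from summing $O(\log\delta^{-1})$ pieces --- is right. The $1/2$-stable subordinator bookkeeping and the dyadic union bound at the end are also fine.

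One point to correct: your ``decisive geometric observation'' that each bubble $\rho_{\BD}(e)$ is glued to the rest of $\CD$ only at the single boundary point $p$ is false. If it held, the bubble minus $p$ and its complement minus $p$ would be non-empty disjoint relatively open sets covering $\CD\setminus\{p\}$ (the bubble has positive $\nu$-mass, hence is not a single point), so $\CD\setminus\{p\}$ would be disconnected, contradicting the disk topology. In fact the $Y$-quotient does identify interior points of a bubble with points in other bubbles: for $s$ inside an excursion one has $Y_s=Y^0_s+\sqrt{3}B_{-X_{s_1}}$ with $Y^0_s$ a mean-zero Gaussian, so $Y_s$ can drop below the boundary value $Y_{s_1}$, permitting identifications across $t_1$. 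Fortunately your argument does not actually use this claim. All you need is the triangle inequality $d(u,v)\le\sum_j d(\rho_{\BD,\partial}(c_j),\rho_{\BD,\partial}(c_{j+1}))$ together with the fact that each summand is bounded above by the diameter of the \emph{standalone} metric space encoded by $(X,Y)$ restricted to $[T^{-1}(c_j),T^{-1}(c_{j+1})]$, which is valid simply because chains in the ambient disk may leave that interval and hence can only be shorter. So the logic survives; just rewrite the motivating sentence.
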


\section{Close geodesics must intersect near their endpoints}
\label{sec:strong_confluence}

The purpose of this section is to prove the following proposition, which is a key ingredient in proving Theorem~\ref{thm:strong_confluence}.

\begin{proposition}
\label{prop:strong_confluence}
There exists $c>0$ such that the following holds for $\bminflaw$ a.e.\ instance of the Brownian map $(\CS,d,\nu,x, y)$.  There exists $\epsilon_0 > 0$ so that for all $\epsilon \in (0,\epsilon_0)$ the following is true. Let $\delta =  c \eps \log \eps^{-1}$. Suppose that $\eta_i \colon [0,T_i] \to \CS$ for $i=1,2$ are geodesics with $T_i = d(\eta_i(0), \eta_i(T_i)) \geq 2\delta$  and
$ \distHos(\eta_1([0, T_1]),\eta_2([0,T_2])) \leq \epsilon.$  
Then 
\begin{align}\label{eq:prop_str_conf}
 \eta_i( (0, \delta]) \cap \eta_{3-i} \neq \emptyset \quad \text{and} \quad  \eta_i([T_i-\delta, T_i)) \cap \eta_{3-i} \neq \emptyset \quad \text{for} \quad i=1,2.
\end{align}
\end{proposition}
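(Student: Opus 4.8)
The plan is to exhibit, with overwhelming probability, a \emph{dense} collection of local configurations of auxiliary geodesics along the geodesic between two $\nu$-typical points — which we call \X{}'s — each of which forces every sufficiently close geodesic through one prescribed point, and then to transfer this to arbitrary geodesics using the density of $\nu$-typical points (Lemma~\ref{lem:typical_points_dense}). Say that a geodesic $\eta\colon[0,T]\to\CS$ has an \emph{\X{} at $\eta(t)$ of scale $\kappa>0$} if there are geodesics $\gamma^{\mathrm L},\gamma^{\mathrm R}$ of $\CS$, each the \emph{unique} geodesic between its two endpoints, with $\eta(t)\in\gamma^{\mathrm L}\cap\gamma^{\mathrm R}$, with $\gamma^{\mathrm L}$ (resp.\ $\gamma^{\mathrm R}$) contained in the closure of the component of $\CS\setminus\eta$ to the left (resp.\ right) of $\eta$, and such that each of $\gamma^{\mathrm L},\gamma^{\mathrm R}$ extends $d$-distance at least $\kappa$ beyond $\eta(t)$ in both directions; then $\gamma^{\mathrm L}\cup\gamma^{\mathrm R}$ has the topology of the letter \X{} with crossing point $\eta(t)$. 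The first, deterministic, ingredient is a \emph{forcing lemma}: there is an absolute constant $C_0>0$ so that if $\eta$ has an \X{} at $\eta(t)$ of scale $\kappa$ and $\wt\eta$ is a geodesic with $\distHos(\eta([0,T]),\wt\eta)\le\kappa/C_0$ which moreover comes within $\kappa/C_0$ of $\eta(t)$, then $\eta(t)\in\wt\eta$. Indeed, by one-sided closeness $\wt\eta$ runs along a fixed side of $\eta$ throughout a neighborhood of $\eta(t)$; a Jordan-curve/winding argument then shows that the corresponding arc of $\wt\eta$ must meet the auxiliary geodesic on that side — say $\gamma^{\mathrm L}$ — at two points lying on opposite sides of $\eta(t)$ along $\gamma^{\mathrm L}$, and since $\gamma^{\mathrm L}$ is the unique geodesic between its endpoints, the sub-arc of $\wt\eta$ between its first and last hit of $\gamma^{\mathrm L}$ coincides with the corresponding sub-arc of $\gamma^{\mathrm L}$, which contains $\eta(t)$. (Lemmas~\ref{lem:bm_volume_estimates} and~\ref{lem:boundary_length_distance} are used to rule out metric degeneracies at the relevant scales.)

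\textbf{Producing \X{}'s densely via metric bands.} By re-rooting invariance it suffices to construct a dense family of \X{}'s along the geodesic $\eta=[x,y]$ between the marked (hence $\nu$-typical) points $x,y$. Run the reverse metric exploration centered at $x=\eta(0)$: for $j\ge1$ let $\CB_j=\fb{y}{x}{j\eps}\setminus\fb{y}{x}{(j-1)\eps}$, a metric band of width $\eps$ whose inner boundary is marked by $\eta((j-1)\eps)$ and which $\eta$ crosses radially. Conditionally on the boundary-length process, the bands $(\CB_j)$ are independent, and $\CB_j$ has law $\bandlaw{\ell_j}{\eps}$ where $\ell_j$ is the boundary length of $\partial\fb{y}{x}{(j-1)\eps}$. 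The key probabilistic input is a \emph{uniform-in-$\ell$ lower bound}: there exist $p_0>0$ and $C_1>0$ so that for every $\ell>0$, under $\bandlaw{\ell}{\eps}$, with probability at least $p_0$ the geodesic $\eta$ has an \X{} at some point of $\CB_j$ of scale $\eps/C_1$. By the scaling relation $\bandlaw{\ell}{\eps}=\bandlaw{\ell/\eps^2}{1}$ this reduces to unit width, where for $\ell$ ranging over a compact set it follows from mutual absolute continuity with respect to a bounded region of the Brownian map together with positivity of the corresponding event, and as $\ell\to\infty$ it follows because a neighborhood of the marked inner boundary point converges to the Brownian half-plane, for which the event again has positive probability. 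Granting this, conditional independence of the bands gives that the probability that $\eta$ has no \X{} of scale $\eps/C_1$ inside any of $\CB_1,\dots,\CB_N$ is at most $(1-p_0)^N$; taking $N=\lceil c\log\eps^{-1}\rceil$ and $c$ large makes this smaller than any fixed power of $\eps$. Running the same argument over every block of $N$ consecutive bands (and over the bands centered at $y$) shows that, off an event of probability decaying faster than any power of $\eps$, $\eta$ has an \X{} of scale $\eps/C_1$ at an $(N\eps)$-dense set of times, with $N\eps\le c\eps\log\eps^{-1}$.

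\textbf{Conclusion.} Combining this with the forcing lemma, for $\eta=[x,y]$ and for any geodesic $\wt\eta$ with $\distHos(\eta,\wt\eta)\le\eps/(C_0C_1)$ that stays within $\eps/(C_0C_1)$ of $\eta$ on a neighborhood of $\eta(0)$ we get $\eta(t)\in\wt\eta$ for some $t=O(\eps\log\eps^{-1})$, and symmetrically near $\eta(T)$. Given arbitrary geodesics $\eta_1,\eta_2$ with $\distHos(\eta_1,\eta_2)\le\eps$ as in the statement, we cover $\CS$ by $O(\eps^{-4-\sp{u}})$ balls of radius $\eps$ about i.i.d.\ $\nu$-samples (Lemma~\ref{lem:typical_points_dense}) and apply the previous paragraph — whose exceptional probability beats every power of $\eps$, so that a union bound over the $O(\eps^{-8-2\sp{u}})$ pairs of centers followed by Borel--Cantelli along $\eps=2^{-n}$ applies — to the geodesics joining the relevant pairs of centers, comparing $\eta_1$ near its endpoint with such a geodesic and using the one-sided closeness of $\eta_1$ and $\eta_2$, which pins both near the common endpoint regions. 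Re-absorbing the absolute constants into $c$ and replacing $\eps$ by a fixed multiple, this gives $\eta_1((0,\delta])\cap\eta_2\neq\emptyset$ with $\delta=c\eps\log\eps^{-1}$; the remaining three assertions in \eqref{eq:prop_str_conf} are obtained by running the same forcing mechanism along $\eta_1$ and along $\eta_2$, near $t=0$ and near $t=T_i$ respectively.

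\textbf{Main obstacle.} The crux is the uniform-in-$\ell$ lower bound for the \X{}-event in a unit-width band: one must produce, with probability bounded away from $0$ simultaneously over all boundary lengths, two \emph{one-sided, uniquely realized} auxiliary geodesics through a prescribed boundary point, which requires both the Brownian half-plane limit as $\ell\to\infty$ and sharp control of boundary distances (via Lemma~\ref{lem:boundary_length_distance}) to ensure that the configuration genuinely crosses the band at the correct scale. The accompanying topological forcing lemma — upgrading one-sided Hausdorff closeness to "crosses both halves of an auxiliary geodesic" in a space with no smooth structure — is the second delicate ingredient, and the reduction of the case of arbitrary geodesics to that of geodesics between $\nu$-typical points must likewise be handled with care.
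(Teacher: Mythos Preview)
Your high-level architecture matches the paper's: build \X{}'s along geodesics emanating from $\nu$-typical points using independence across metric bands, then use these \X{}'s as traps. But the transfer to arbitrary $\eta_1,\eta_2$ has a genuine gap, and this is exactly where the paper's argument is most delicate.

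Your forcing lemma is stated for a pair $(\eta,\wt\eta)$ with $\distHos(\eta,\wt\eta)$ small, where $\eta$ carries the \X{}. In your conclusion you take $\eta=[z_i,z_j]$ for typical centers $z_i,z_j$ within $\eps$ of the endpoints of $\eta_1$, and then want to force $\eta_1$ (and $\eta_2$) through an \X{} on $[z_i,z_j]$. But nothing gives you $\distHos([z_i,z_j],\eta_1)\le\eps/(C_0C_1)$: two geodesics with $\eps$-close endpoints can be far apart along their entire length (this is precisely the phenomenon behind multiple geodesics between exceptional points). So your forcing lemma does not apply, and the sentence ``comparing $\eta_1$ near its endpoint with such a geodesic'' hides the whole difficulty.

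The paper resolves this with a different idea: it does \emph{not} compare $\eta_1,\eta_2$ to a typical geodesic with nearby endpoints. Instead, it places a typical point $z_j$ \emph{inside the thin region $V$ between $\eta_1$ and $\eta_2$} (at a carefully chosen scale $\eps_k$ adapted to the width of $V$), and proves a topological lemma (Lemma~\ref{lem:sigma_contained_U}) that some geodesic $\sigma$ from $z_j$ to one of the four endpoints stays in the closed region $\ol U$ bounded by $\eta_1,\eta_2$ and short connectors. Since $\sigma$ emanates from a typical point, the density-of-\X{}'s result applies to $\sigma$; since $\sigma$ lies between $\eta_1$ and $\eta_2$, the four branches of any \X{} on $\sigma$ must exit the region through $\eta_1$ or $\eta_2$, and uniqueness forces \emph{both} $\eta_1$ and $\eta_2$ through the same point of $\sigma$. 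This ``mediating geodesic'' and the associated ladder construction (to control scales and rule out escape through the short connectors) are the missing ingredients.

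A secondary issue: your uniform-in-$\ell$ lower bound is claimed for all $\ell>0$ via half-plane limits and absolute continuity. The paper only proves it for $\ell\ge\ell_0$ (by a slice-gluing argument, Lemma~\ref{lem:X_prob}) and handles small boundary lengths separately via stopping times in Lemma~\ref{lem:x_concentration}; for very small $\ell$ the band may collapse before an \X{} can form, so uniformity down to $\ell=0$ is false as stated.
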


We begin by showing in Section~\ref{subsec:x_with_pos_prob} that for a geodesic between typical points, a certain event which forces nearby geodesics to merge occurs within a metric band of unit width and inner boundary length bounded below with uniformly positive probability.  Then in Section~\ref{subsec:x_concentration}, we will combine this with a concentration argument and the independence property across metric bands when performing a reverse metric exploration  to show that with overwhelming probability this event happens on a very dense set of points along a geodesic starting from a typical point.
Finally in Section~\ref{sec:proof_prop_3.1}, we will complete the proof of Proposition~\ref{prop:strong_confluence} by putting in between $\eta_1$, $\eta_2$ a geodesic starting from a typical point so that it forces $\eta_1$, $\eta_2$ both to merge with it hence intersect each other.

Throughout the section, when there is a unique geodesic from $a$ to $b$ in the Brownian map or a metric band, we denote it by $[a,b]$. If we have not ruled out the possibility of multiple geodesics between $a$ and $b$, we still sometimes use $[a,b]$ to denote one of the geodesics from $a$ to $b$ which we will make clear from the context.  Whenever we have defined a geodesic $[a,b]$, then for any points $c, d \in[a,b]$, we use $[c,d]$ to denote the  geodesic from $c$ to $d$ which is a subset of $[a,b]$.

\subsection{\X ~occurs with positive probability within a band}
\label{subsec:x_with_pos_prob}

Let $(\CS, d, \nu, x, y)$ be an instance of the Brownian map and let $\eta$ be a geodesic in $\CS$. We say that there exists an \X\ along $\eta$ if there exist $a,b,c,d \in \CS$ so that the following hold. See Figure~\ref{fig:band_x} (Left).
\begin{enumerate}[(i)]
\item\label{itm:X1} There is a unique geodesic from $a$ to $c$ which intersects $\eta$ on its left side. Moreover, there exist $e, g\in\eta$ so that $[a,c]\cap \eta=[e,g]$ and $[a,e]\cap [c,g]=\emptyset$ so that $e$ (resp.\ $g$) is the first place on $\eta$ hit by $[a,c]$ (resp.\ $[c,a]$).
\item\label{itm:X2} There is a unique geodesic from $b$ to $d$ which intersects $\eta$ on its right side. Moreover, there exist $f, h\in \eta$ so that $[b,d]\cap \eta=[f,h]$ and $[b,f]\cap [h,d]=\emptyset$ so that $f$ (resp.\ $h$) is the first place on $\eta$ hit by $[b,d]$ (resp.\ $[d,b]$).
\item\label{itm:X3} The intersection between $[e,g ]$ and $[f,h]$ is non-empty.
\end{enumerate}
We call $[e,g]\cup [f,h]$ the \emph{center} of the \X\ and call $[a,e]$, $[b,f]$, $[g,c]$, $[h,d]$ the four \emph{branches} of the \X.
We say that the \emph{size} of the {\X} is at least $s > 0$ if the balls of radius $s$ centered at $a, b, c, d$ are all disjoint from $\eta$. 
\begin{figure}[h!]
\centering
\includegraphics[scale=0.8]{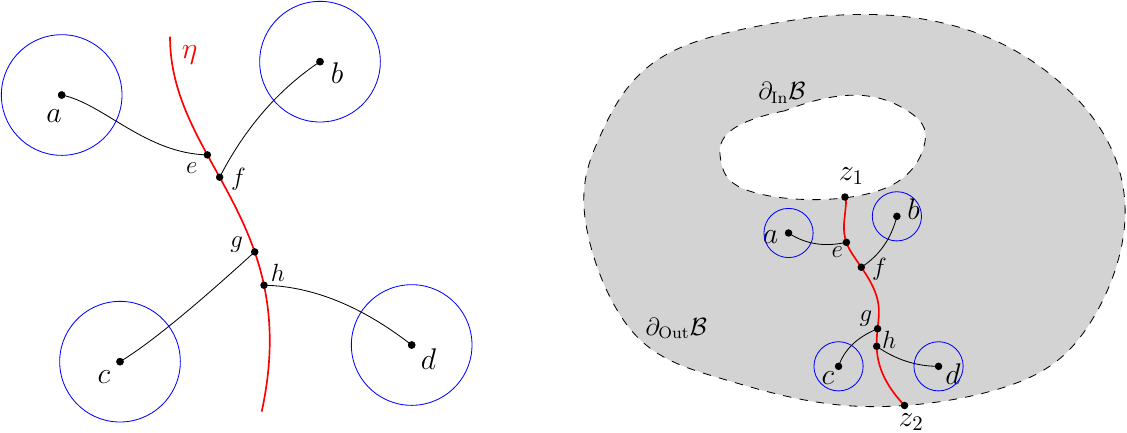}
\caption{\label{fig:band_x} \textbf{Left:} An \X\ along $\eta$ in a Brownian map of size at least $s$. \textbf{Right:} An {\X} in a metric band along the geodesic $[z_1,z_2]$.}
\end{figure}

In the remainder of this subsection, we will mostly be interested in \X's in a metric band.
Fix $\ell, w > 0$  and let $(\CB,d_\CB,\nu_\CB,z_1)$ be sampled according to $\bandlaw{\ell}{w}$.  There is a.s.\ a unique geodesic in $\CB$ from~$z_1$ to $\outerboundary \CB$.  Let $z_2$ be the terminal point of this geodesic in $\outerboundary \CB$.  We say that there exists an {\X} along $[z_1,z_2]$ in $\CB$ if there exist $a, b, c,d \in \CB$ so that the conditions \eqref{itm:X1}--\eqref{itm:X3} hold for $\eta=[z_1, z_2]$. See Figure~\ref{fig:band_x} (Right).
We say that the \emph{size} of the {\X} is at least $s > 0$ if the balls of radius $s$ centered at $a, b, c, d$ are all disjoint from $\eta$, and also disjoint from $\partial \CB$. 
Note that if $\CB$ is a band in a Brownian map instance, then the balls $B(a,s)$, $B(b,s)$, $B(c, s)$, $B(d, s)$ with respect to $d_\CB$ are also balls with respect to the Brownian map metric.  In particular, if there is an \X\ of size at least $s$ along a geodesic $[z_1, z_2]$ in a  metric band which is embedded in a Brownian map and $a,b,c,d$ are as above, then $[a,c]$ and $[b,d]$ form an \X\ of size at least $s$ inside the Brownian map along any geodesic which contains $[z_1, z_2]$.

Let $E(\CB,z_1,s)$ be the event that there exists an $\X$ along $[z_1,z_2]$ in $\CB$ of size at least~$s$.  Note that this event is measurable with respect to $(\CB,d_\CB,\nu_\CB,z_1)$.

Let us first prove the following lemma.

\begin{lemma}
\label{lem:X_prob}
There exists $\ell_0,s_0,p_0 > 0$ so that for all $\ell \geq \ell_0$, we have 
\begin{align*}
\bandlaw{\ell}{1}[E(\CB, z_1,s_0)] \geq p_0.
\end{align*}
\end{lemma}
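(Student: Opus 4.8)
The plan is to show that, with uniformly positive probability, the metric band $\CB$ of width $1$ and inner boundary length $\ell \ge \ell_0$ contains an $\X$ along $[z_1,z_2]$ of size at least some fixed $s_0$. First I would use the scaling property of metric bands recorded in Section~\ref{subsec:metric_bands}: a band of inner boundary length $\ell$ and width $1$ is, after rescaling distances by $\ell^{-1/2}$, a band of inner boundary length $1$ and width $\ell^{-1/2}$. Equivalently, rescaling the width-$1$ band instead, it is often cleaner to fix the width and let $\ell\to\infty$; the point is that as $\ell$ grows the inner boundary becomes very long relative to the width, so the band looks locally like a long thin strip and there is lots of room along $\innerboundary\CB$ to place the four ``feet'' $a,b,c,d$ of a prospective $\X$. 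I would therefore aim to prove the statement for all $\ell$ at least some $\ell_0$, exploiting that for large $\ell$ we can find, with probability bounded below, two nearby but disjoint arcs of $\innerboundary\CB$ on opposite sides of $z_1$ from which geodesics cross $[z_1,z_2]$ and then exit to $\outerboundary\CB$ on opposite sides of $z_2$.

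The key geometric mechanism is the following. Pick four points $a,b\in\innerboundary\CB$ on the two sides of $z_1$ (say $a$ to the left and $b$ to the right, both within a small but fixed boundary-length distance of $z_1$) and consider the a.s.\ unique geodesics from $a$ and from $b$ to $\outerboundary\CB$; generically these exit at points $c,d\in\outerboundary\CB$. Because $a$ is on the left of $z_1$ and $b$ on the right, and because geodesics in the band to the outer boundary form a tree rooted at $\outerboundary\CB$ whose leaves are ordered cyclically as on $\innerboundary\CB$, the geodesic $[a,c]$ must pass to the left of $z_1$ and $[b,d]$ to the right, but for them to reach the outer boundary they are forced to cross the central geodesic $[z_1,z_2]$ — more precisely, there is positive probability that $c$ lies on the opposite side of $z_2$ from $d$, which forces $[a,c]$ and $[b,d]$ each to intersect $[z_1,z_2]$, and forces their traces on $[z_1,z_2]$ to overlap (giving condition~\eqref{itm:X3}). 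Conditions~\eqref{itm:X1} and~\eqref{itm:X2} — that $[a,c]\cap[z_1,z_2]$ and $[b,d]\cap[z_1,z_2]$ are each a single nonempty subsegment of $[z_1,z_2]$ — will follow from the (already available, via Theorem~\ref{thm:intersection_of_geodesics}, or more elementarily from confluence-type arguments for geodesics to a fixed boundary) fact that the intersection of two geodesics is connected, applied here to $[z_1,z_2]$ and $[a,c]$ (resp.\ $[b,d]$); if a soft argument is preferred, one can instead demand uniqueness of $[a,c],[b,d]$ only between their first and last hitting times of $[z_1,z_2]$ and derive the connectedness of the intersection from there. Uniqueness of the geodesics $[a,c]$ and $[b,d]$ themselves holds a.s.\ because $a,b$ are $\nu_\partial$-typical-type points on $\innerboundary\CB$ (one can choose them by an independent sampling, as in the slice decomposition of Section~2.2.5), so uniqueness of geodesics to the outer boundary holds a.s.\ for such points. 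Finally, the size lower bound: having fixed the combinatorial configuration, I would argue that on a further positive-probability event the branches $[a,e],[b,f],[g,c],[h,d]$ contain points $a_1,b_1,c_1,d_1$ at macroscopic distance from $[z_1,z_2]$ and from $\partial\CB$, with surrounding balls of some fixed radius $s_0$ avoiding $[z_1,z_2]$; this is a positive-probability event for a fixed band by the volume/diameter regularity of Brownian-disk-like pieces (Lemmas~\ref{lem:bm_volume_estimates}, \ref{lem:boundary_length_distance}), and it survives the $\ell\to\infty$ limit because the relevant region near $z_1$ and near a fixed exit region of $\outerboundary\CB$ has a nondegenerate limiting law.

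The cleanest way to package all of this is probably a compactness/continuity argument: show that the event $E(\CB,z_1,s)$ (for suitable small $s$) contains an open event in an appropriate topology on pointed metric measure spaces, and that the laws $\bandlaw{\ell}{1}$ (equivalently, rescaled, $\bandlaw{1}{\ell^{-1/2}}$) converge as $\ell\to\infty$ to a nondegenerate limiting law — a band with a long inner boundary converges to a half-plane-type Brownian surface, or, after the reverse exploration started from $\innerboundary\CB$, to a small perturbation of a Brownian disk glued along a geodesic — under which an $\X$ of some positive size clearly occurs with positive probability. Then for $\ell$ large enough the probability is bounded below by half the limiting probability, and one sets $\ell_0$ accordingly, $s_0$ to be the size realized in the limit (shrunk slightly), and $p_0$ the resulting lower bound. \textbf{The main obstacle} I anticipate is making rigorous the claim that for large $\ell$ the two crossing geodesics $[a,c]$ and $[b,d]$ genuinely land on opposite sides of $z_2$ on $\outerboundary\CB$ with probability bounded away from $0$ — i.e.\ controlling the joint law of the exit points of two geodesics to the outer boundary and showing this ``separation'' event does not degenerate as $\ell\to\infty$; this requires either a careful use of the slice decomposition (the geodesics bounding the slices containing $a$ and $b$) together with the CSBP description of the boundary-length processes of those slices, or an explicit identification of the $\ell\to\infty$ limiting surface and its geodesic structure. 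The uniqueness and size statements, by contrast, I expect to be routine given the tools already assembled in Section~\ref{sec:preliminaries}.
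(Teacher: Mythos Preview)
Your central geometric mechanism is wrong. You define $c,d$ as the exit points on $\outerboundary\CB$ of the (unique) geodesics from $a,b\in\innerboundary\CB$ to $\outerboundary\CB$, and then hope that with positive probability $c$ lands on the opposite side of $z_2$ from $a$. But you yourself note that these geodesics form a non-crossing tree ordered cyclically as on $\innerboundary\CB$: this means precisely that if $a$ is to the left of $z_1$ then the geodesic from $a$ to $\outerboundary\CB$ either merges with $[z_1,z_2]$ (so $c=z_2$, which is excluded in the definition of an \X) or stays entirely to the left of $[z_1,z_2]$ and exits at some $c$ to the left of $z_2$. It \emph{never} crosses to the right side. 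The ``separation'' event you flag as the main obstacle is therefore not hard --- it has probability zero. To get an \X\ you need geodesics $[a,c]$, $[b,d]$ between specifically chosen pairs of points (with $a$ and $c$ on opposite sides of $[z_1,z_2]$), not geodesics from inner-boundary points to $\outerboundary\CB$; and for such geodesics you would then need to establish uniqueness and that the intersection with $[z_1,z_2]$ is a single segment, which is a genuinely nontrivial task at this point in the paper. In particular, your appeal to Theorem~\ref{thm:intersection_of_geodesics} is circular: that theorem is proved in Section~\ref{sec:geodesic_structure} using the machinery that Lemma~\ref{lem:X_prob} is meant to feed into.

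The paper's approach is quite different and avoids both problems. It works in a full Brownian map instance $(\CS,d,\nu,x,y)$ conditioned on $d(x,y)>1$, where Le Gall's confluence of geodesics to the root is already available. One picks $a$ on $\partial\fb{y}{x}{t_0}$ close to the geodesic $[x,y]$ so that the unique geodesic $[a,x]$ merges with $[x,y]$, and symmetrically $b$ on $\partial\fb{x}{y}{t_0}$ so that $[b,y]$ merges with $[x,y]$, with the merge intervals overlapping on some $[u,v]\subset[x,y]$. One then resamples $(x,y)$ to $(x_2,y_2)$ lying in small neighborhoods of $v,u$ (using root invariance), and proves by a direct topological argument --- not invoking Theorem~\ref{thm:intersection_of_geodesics} --- that the unique geodesic $[x_2,y_2]$ must follow $[u,v]$. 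This makes $[a,x]$ and $[b,y]$ into the two arms of an \X\ along $[x_2,y_2]$. A metric band of suitable width and random inner boundary length is then cut out of $\CS$ around this \X, giving the statement for a single $\ell_0$. Finally, for $\ell\ge\ell_0$ the paper uses the slice decomposition: take a slice of inner boundary length $\ell_0$ around $z_1$ inside the band of inner boundary length $\ell$, glue its sides to form a band of law $\bandlaw{\ell_0}{1}$, and note that an \X\ in this smaller band is also an \X\ in the original band. This last reduction is the clean way to get uniformity in $\ell\ge\ell_0$, replacing your proposed $\ell\to\infty$ limiting argument.
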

Our strategy is to first construct an instance of the Brownian map $(\CS,d,\nu,x,y)$ which exhibits a certain behavior with positive ``probability'' under $\bminflaw$. Since $\bminflaw$ is an infinite measure, what we actually construct is $(\CS,d,\nu,x,y)$ conditioned on a certain event with $\bminflaw$ measure in $(0,\infty)$. 
Next,  we explore the conditioned $(\CS,d,\nu,x,y)$ in a breadth-first way. We show that with positive (conditional) probability, we can cut out a metric band $\CB$ which contains an \X. Finally, we complete the proof by adjusting the width and length of the metric band.

\begin{proof}

Let $(\CS,d,\nu,x,y)$ have the law of $\bminflaw$ conditioned on the event $\{d(x,y) > 1\}$.  Note that this corresponds to conditioning the  $3/2$-stable CSBP excursion associated with the reverse metric exploration from $y$ to $x$ (and also from $x$ to $y$) on having time length at least $1$. Since  $\bminflaw(d(x,y) > 1)\in (0,\infty)$, we obtain a probability measure after performing the conditioning.  We fix $r > 0$ small and then perform a reverse metric exploration from $y$ to $x$ (resp.\ from $x$ to $y$) and consider $\fb{y}{x}{t_0}$ (resp.\ $\fb{x}{y}{t_0}$) where $t_0 = d(x,y) - r$.  We can choose $r>0$ sufficiently small so that it is a positive probability event that $\partial \fb{y}{x}{t_0}$ and $\partial\fb{x}{y}{t_0}$ are disjoint (note that the probability of this event tends to $1$ as $r\to 0$, since we have conditioned on $d(x,y)>1$). From now on, we further condition on this event (see Figure~\ref{fig:construction}, Left).

Let $x_1$ (resp.\ $y_1$) be the a.s.\ unique intersection point of $[x,y]$ with $\partial \fb{x}{y}{t_0}$ (resp.\ $\partial \fb{y}{x}{t_0}$).  
If $(a_n)$ is a sequence of points on $\partial \fb{y}{x}{t_0}$ so that the boundary length of the counterclockwise arc from $y_1$ to $a_n$ is equal to $1/n$ and $u_n$ is the point where the unique geodesic from $a_n$ to $x$ merges with $[y_1,x]$, then $u_n \to y_1$ as $n \to \infty$.  Similarly, if $(b_n)$ is a sequence of points on $\partial \fb{x}{y}{t_0}$ so that the boundary length along the clockwise arc of $\partial \fb{x}{y}{t_0}$ from $b_n$ to $x_1$ is equal to $1/n$ and $v_n$ is where the unique geodesic from $b_n$ to $y$ merges with $[x_1,y]$, then we have that $v_n \to x_1$ as $n \to \infty$.  Therefore there exists $N \in \N$ so that if we let $a = a_N$, $b = b_N$, $u=u_N$, and $v=v_N$, then the geodesics $[a,x]$ and $[b,y]$ overlap on a non-trivial interval $[u,v]$.
Note that $[x,y], [a,x], [b,y]$ are each the unique geodesic connecting their endpoints, but it is \emph{a priori} not true that the concatenation of $[a,v]$ and $[v,b]$ is a geodesic from $a$ to $b$.

\begin{figure}[h!]
\centering
\includegraphics[scale=0.85]{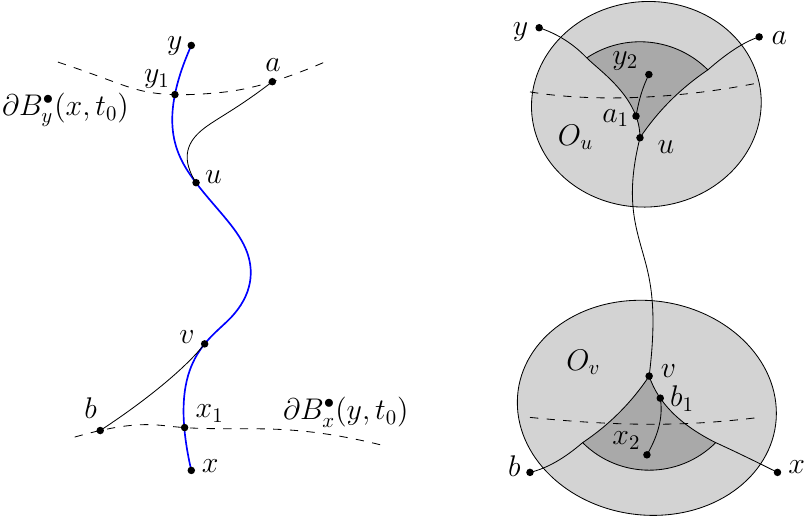}
\caption{\label{fig:construction} Construction of an \X.}
\end{figure}

In the following, we will use the root invariance of the Brownian map to resample the root and dual root of $(\CS,d,\nu,x,y)$, so that with positive probability, we get a new instance $(\CS,d,\nu,x_2,y_2)$ that exhibits some desired properties (see Figure~\ref{fig:construction}, Right).

Choose a neighborhood $O_u$ of $u$ and a neighborhood $O_v$ of $v$, so that (i) $y, a\not\in O_u$, $b,x\not\in O_v$, (ii) $O_u, O_v$ are disjoint, and (iii)  $O_u \setminus ([y,u] \cup [a,u] \cup [u,v])$ and $O_v \setminus ([x,v] \cup [b,v] \cup [u,v])$ each have three connected components (this is possible since the Brownian map is homeomorphic to the sphere $\s^2$).  Let $\wt O_u$ (resp.\ $\wt O_v$) denote the connected component of  $O_u \setminus ([y,u] \cup [a,u] \cup [u,v])$ (resp.\ $O_v \setminus ([x,v] \cup [b,v] \cup [u,v])$)  which is bounded between $[y,u]$ and $[a,u]$ (resp.\ $[b,v]$ and $[x,v]$). Let $l_2=\partial \wt O_u \cap [y,u]$, $l_3=\partial \wt O_u \cap[u,a]$, $l_4=\partial \wt O_v \cap[v,b]$, $l_5=\partial \wt O_v \cap[v,x]$. 
Let $l_1=\partial \wt O_u \setminus (l_2 \cup l_3) $ and  $l_6=\partial \wt O_v \setminus (l_4 \cup l_5) $. 
See Figure~\ref{fig:X}.
Let $(O_u^n)$ (resp.\ $(O_v^n)$) be a sequence of such neighborhoods of $u$ (resp.\ $v$) contained in  $O_u$ (resp.\ $O_v$) shrinking to the point $u$ (resp.\ $v$) as $n\to\infty$.
For each $n \in \N$, among the three connected components of $O_u^n \setminus ([y,u] \cup [a,u] \cup [u,v])$ (resp.\ $O_v^n  \setminus ([x,v] \cup [b,v] \cup [u,v])$), let $\wt O_u^n$ (resp.\ $\wt O_v^n$) denote the one which is bounded between $[y,u]$ and $[a,u]$ (resp.\ $[b,v]$ and $[x,v]$).
The rest of this paragraph is dedicated to the proof of the following claim.
\begin{enumerate}
\item[$(*)$] It is a.s.\ the case that there exists $N \in\N$ such that for any $y' \in O_u^N$ and $x'\in O_v^N$, any geodesic from $y'$ to $x'$ is contained in $O_u \cup O_v \cup [u,v]$. 
\end{enumerate}
Let us prove $(*)$ by contradiction, and suppose that it is not the case. Then with positive probability, for all $n\in\N$, there exist $y_n \in O_u^n$ and $x_n \in O_v^n$ and a geodesic $[x_n, y_n]$ from $x_n$ to $y_n$ which is not contained in $O_u \cup O_v \cup [u,v]$.
Since $\CS$ is compact, by the Arzel\`a-Ascoli theorem, there is a subsequence $(n_k)$ for which $[x_{n_k}, y_{n_k}]$ converges to a limiting geodesic $\eta$. Since $(x_n)$ and $(y_n)$ respectively converge to $v$ and $u$, the endpoints of $\eta$ are  $u$ and $v$. Since there is a unique geodesic between $u$ and $v$ (because there is a unique geodesic between $x$ and $y$),
$\eta$ must be equal to $[u,v]$. 
We must be in (at least) one of the following situations, depending on where $[y_{n_k}, x_{n_k}]$ (resp.\ $[x_{n_k}, y_{n_k}]$) first hits $\partial \wt O_u$ (resp.\ $\partial \wt O_v$). 
\begin{enumerate}[(i)]
\item See Figure~\ref{fig:X} (b). There exists $k\in\N$ such that $[y_{n_k}, x_{n_k}]$ first hits $\partial \wt O_u$ at some point $u_{n_k}\in l_2$, and $[x_{n_k}, y_{n_k}]$ first hits $\partial \wt O_v$ at some point $v_{n_k}\in l_4$.  Then, due to the uniqueness of the geodesic $[y,b]$, the part of the geodesic $[y_{n_k}, x_{n_k}]$ between $u_{n_k}$ and $v_{n_k}$ must coincide with $[y,b]$. In this case $[x_{n_k}, y_{n_k}] \subset O_u \cup O_v \cup [u,v]$. 

\item There exists $k\in\N$ such that $[y_{n_k}, x_{n_k}]$ first hits $\partial\wt O_u$ in $l_3$ and $[x_{n_k}, y_{n_k}]$ first hits $\partial\wt O_v$ in $l_5$. This case is similar to (i), due to the uniqueness of the geodesic $[a,x]$.  In this case $[x_{n_k}, y_{n_k}] \subset O_u \cup O_v \cup [u,v]$. 

\item There exists $k\in\N$ such that $[y_{n_k}, x_{n_k}]$ first hits $\partial\wt O_u$ in $l_2$ and $[x_{n_k}, y_{n_k}]$ first hits $\partial\wt O_v$ in $l_5$. This case is similar to (i) due to the uniqueness of the geodesic $[y,x]$. In this case $[x_{n_k}, y_{n_k}] \subset O_u \cup O_v \cup [u,v]$. 

\item See Figure~\ref{fig:X} (a). For all $k_0\in\N$ there exists $k\ge k_0$ such that either $[y_{n_k}, x_{n_k}]$ first hits $\partial\wt O_u$ in $l_1$ or $[x_{n_k}, y_{n_k}]$ first hits $\partial\wt O_v$ in $l_6$. In this case, the Hausdorff distance between $[y_{n_k}, x_{n_k}]$ and $[u,v]$ is bounded away from $0$, hence it is impossible.

\item For $k\in\N$ large enough,  $[y_{n_k}, x_{n_k}]$ first hits $\partial\wt O_u$ at some point $u_{n_k}\in l_3$, and $[x_{n_k}, y_{n_k}]$ first hits $\partial\wt O_v$ at some point $v_{n_k}\in l_4$. See Figure~\ref{fig:X} (c). The difference with (i)-(iii) is that the concatenation of $[a,v]$ and $[v,b]$ is \emph{a priori} not necessarily a geodesic from $a$ to $b$.  However, we will show that, there exists $k\in\N$ such that the part between $u_{n_k}$ and $v_{n_k}$ of the geodesic $[y_{n_k}, x_{n_k}]$ in fact coincides with the concatenation of $[u_{n_k},v]$ and $[v, v_{n_k}]$.
Suppose that it is not true. For each $k\in \N$, let $u_{n_k}'$ (resp.\ $v_{n_k}'$) be the point where $[u_{n_k}, v_{n_k}]$ (resp.\ $[v_{n_k}, u_{n_k}]$) first leaves $[u_{n_k},v] \cup [v, v_{n_k}]$.
 Due to the uniqueness of $[a,x]$ and $[b,y]$, the part $[u_{n_k}', v_{n_k}']$
 has to go around $[b,y] \cup [a,x]$, see Figure~\ref{fig:X} (d). However, in this case, the Hausdorff distance between $[y_{n_k}, x_{n_k}]$ and $[u,v]$ is bounded away from $0$, hence this is impossible. This implies that there exists $k\in\N$ such that $[y_{n_k}, x_{n_k}]$ follows $[u,v]$, hence  $[x_{n_k}, y_{n_k}] \subset O_u \cup O_v \cup [u,v]$. 
\end{enumerate}
\begin{figure}[h!]
\centering
\includegraphics[width=\textwidth]{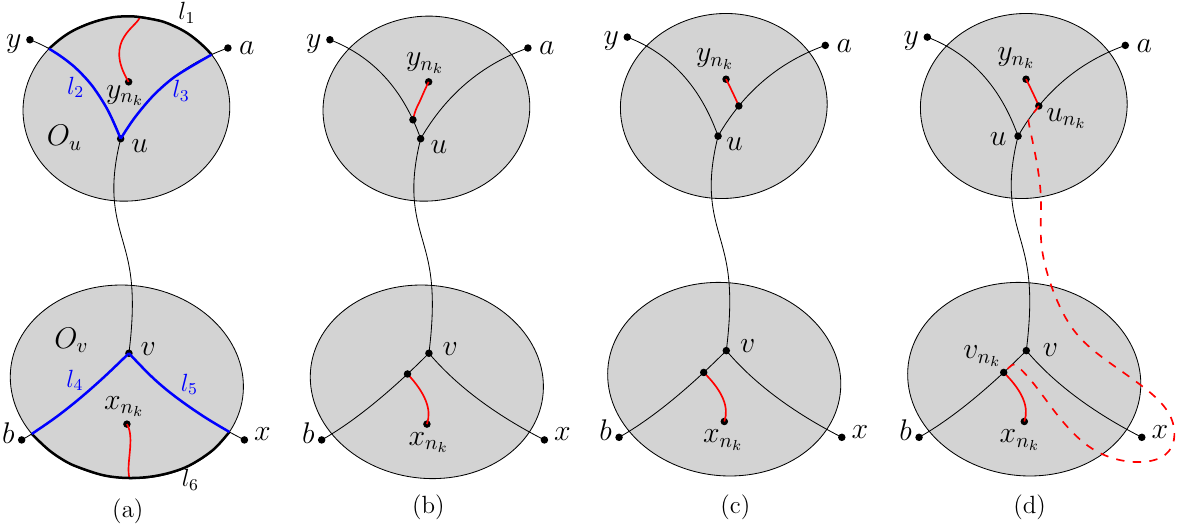}
\caption{\label{fig:X} The different cases in the proof of $(*)$.}
\end{figure}
In each of the above cases (when it is possible), we have $[x_{n_k}, y_{n_k}] \subset O_u \cup O_v \cup [u,v]$,  which contradicts the definition of $[y_{n_k}, x_{n_k}]$. We have therefore proved $(*)$.

Now, let us resample the root and dual root of $\CS$ independently according to $\nu$. Since $\wt O^N_u$ and $\wt O^N_v$ have non-zero area with respect to $\nu$, there is a positive probability that the new root $x_2$ is in $\wt O^N_v$ and the new dual root $y_2$ is in $\wt O^N_u$. From now on, we condition on this event. Let $a_1$ be the point at which the geodesic $[y_2, x_2]$ first hits $[y,u] \cup [u,a]$, and let $b_1$ be the point at which the geodesic $[x_2, y_2]$ first hits $[x,v] \cup [b,v]$ (these points exist, by the definition of $\wt O^N_u$ and $\wt O^N_v$). Moreover, the geodesic $[a_1, b_1]$ is also a subset of $[y,b]\cup [a,x]$, see Figure~\ref{fig:construction}.

The lengths of the three parts $[y_2, a_1], [a_1, b_1], [b_1, x_2]$ form a triple of random variables $(L_1, L_2, L_3)$, taking values in $(\R_{>0})^3$. There exists $\eps>0$ sufficiently small and $\ell_1, \ell_2, \ell_3 > 5\eps$ such that there is a positive probability that $L_i\in (\ell_i-\eps, \ell_i+\eps)$ for $i=1,2,3$. We further condition on this event.  Let $r=\ell_1 -\eps$ and $w=\ell_2 + 3 \eps$.  We perform a reverse metric exploration from $y_2$ to $x_2$ and consider the metric band $\CB=\fb{y_2}{x_2}{s} \setminus \fb{y_2}{x_2}{s-w}$, where $s=d(x_2, y_2) -r$. Due to our conditioning, we have
\begin{align*}
L_1>r, \quad L_1+L_2<r+w,  \quad d(x_2,y_2)=L_1+L_2+L_3>r+w.
\end{align*}
This implies that there is an \X~in $\CB$ along the geodesic $[z_1, z_2]$, where $z_1$ (resp.\ $z_2$) is the unique intersection point of $[x_2,y_2]$ with $\innerboundary \CB$ (resp.\ $\outerboundary \CB$). Note that $\CB$ has width $w$ and a random length $L_0>0$. It follows that there exist $\wt \ell_0, \wt s_0, p_0>0$ such that 
\[ \bandlaw{\wt \ell_0}{w} \!\left[ E(\CB, z_1, \wt s_0)\right] >  p_0.\]
By applying the scaling property for metric bands and letting $\ell_0=\wt \ell_0/w^2$, $s_0=\wt s_0/w$, we have
\begin{align}
\label{eq:eb1}
\bandlaw{\ell_0}{1} \!\left[ E(\CB, z_1, s_0) \right] >p_0.
\end{align}
\begin{figure}[t]
\centering
\includegraphics[width=\textwidth]{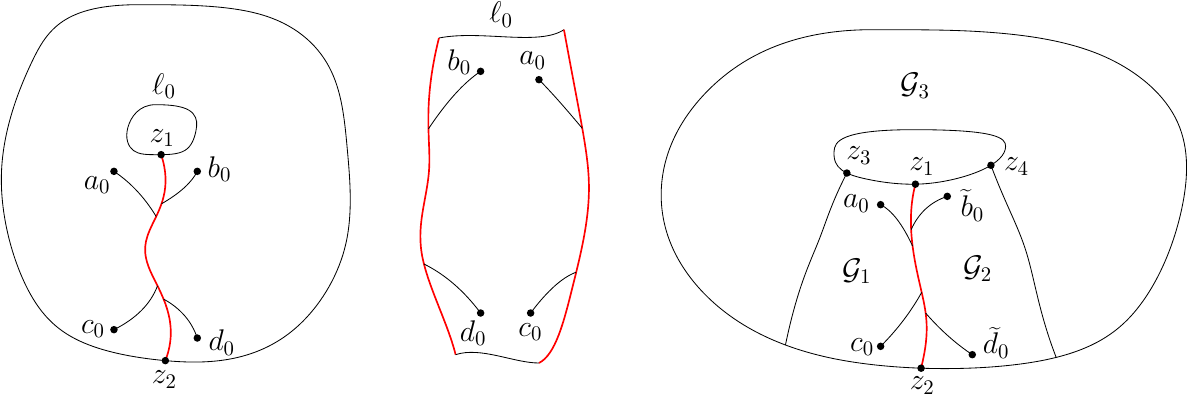}
\caption{\label{fig:X_distance} Gluing three independent slices to construct a band with an arbitrarily big inner boundary length which contains an \X.}
\end{figure}
To complete the proof, let us show that for every $\ell > 2\ell_0$, we also have
\begin{align}\label{eq:eb2}
\bandlaw{\ell}{1} \!\left[ E(\CB, z_1, s_0)\right] > p_0^2.
\end{align}
Fix $\ell>2\ell_0$. To show \eqref{eq:eb2}, we will construct a band $\CB$ with law $\bandlaw{\ell}{1}$ by gluing together three independent slices, as in Figure~\ref{fig:X_distance}.
Suppose that $\CB_1$ is a band with law $\bandlaw{\ell_0}{1}$ for which $E(\CB_1, z_1, s_0)$ holds. Let $[z_1, z_2]$ be the unique geodesic in $\CB_1$ from $z_1$ to $\outerboundary\CB_1$.
Then there exist a geodesic $[a_0, c_0]$ on the left of $[z_1, z_2]$ and a geodesic $[b_0, d_0]$ on the right of $[z_1, z_2]$ which form an \X~with size at least $s_0$. In particular, the balls of radius $s_0$ around $a_0, b_0, c_0, d_0$ are disjoint from $[z_1, z_2], \innerboundary\CB_1, \outerboundary\CB_1$.
If we cut open $\CB_1$ along the geodesic $[z_1, z_2]$, then we get a slice $\CG_1$ with inner boundary length $\ell_0$ and width $1$. The geodesics $[a_0, c_0]$ and $[b_0, d_0]$ in $\CB_1$ remain geodesics in $\CG_1$. Now, we also sample an independent copy $\CG_2$ with the same law as $\CG_1$, equipped with two geodesics $[\wt a_0, \wt c_0]$ and $[\wt b_0, \wt d_0]$. Let $\CG_3$ be a third independent slice with law $\slicelaw{\ell-2\ell_0}{1}$. Gluing together the slices $\CG_1, \CG_2$ and $\CG_3$ in a counterclockwise manner as in Figure~\ref{fig:X_distance}, we get a metric band $\CB$ with inner boundary length $\ell$ and width $1$. Let $z_1, z_3, z_4$ be the three points on $\innerboundary\CB$ which mark the separation between the slices,  as in Figure~\ref{fig:X_distance}.
Note that the geodesic $[a_0, c_0]$ in $\CG_1$ remains a geodesic in $\CB$, because the geodesic between $a_0$ and $c_0$ in $\CB$ cannot cross any of the two geodesics which bound $\CG_1$, i.e., the geodesics from $z_1$ and $z_3$ to $\outerboundary\CB$, hence must stay in $\CG_1$. Indeed, if the geodesic from $a_0$ to $c_0$ in $\CB$ crosses the geodesic starting from $z_1$ (resp.\ $z_3$), then it will create more than one geodesic from  $z_1$ (resp.\ $z_3$) to $\outerboundary\CB$, which is impossible. Similarly, the geodesic $[\wt b_0, \wt d_0]$ in $\CG_2$ is also a geodesic in $\CB$. Therefore, the geodesics $[a_0, c_0]$ and $[\wt b_0, \wt d_0]$ form an \X~in $\CB$ along $[z_1,z_2]$.
It is easy to see that this \X~also has size at least $s_0$. To summarise, we have constructed $\CB$ for which $E(\CB, z_1, s_0)$ occurs, using two independent bands $\CB_1$ and $\CB_2$ for which $E(\CB_1, z_1, s_0)$ and $E(\CB_1, z_1, s_0)$ respectively occur, and a third slice $\CG_3$ independent of everything else. Due to~\eqref{eq:eb1}, we can deduce~\eqref{eq:eb2}.
This completes the proof of the lemma, with $2\ell_0$ and $p_0^2$ in place of $\ell_0$ and $p_0$.
\end{proof}

Now, let us consider an event where we impose some further conditions on the \X.
Fix $\ell > 0$ and let $(\CB,d_\CB,\nu_\CB,z_0)$ be sampled according to $\bandlaw{\ell}{7}$ and let $d_0$ be the distance between $\innerboundary \CB$ and $\outerboundary \CB$.  Note that $\CB$ is the union of the three bands $\CB_1, \CB_2, \CB_3$ where 
\begin{itemize}
\item $\CB_3$ is  the set of points in $\CB$ disconnected from $\innerboundary\CB$ by the $(d_0-4)$-neighborhood of $\outerboundary\CB$. If $d_0\le 4$, then let $\CB_3$ be empty and let $\innerboundary\CB_3$, $\outerboundary\CB_3$ be equal to the point $\outerboundary \CB$. Let $d_3$ be the distance between $\innerboundary \CB_3$ and $\innerboundary \CB$.
\item $\CB_2$ is the set of points in $\CB \setminus \CB_3$ disconnected from $\innerboundary\CB$ by the $(d_3-3)$-neighborhood of $\innerboundary\CB_3$. If $d_3\le 3$, then let $\CB_2$ be empty and let $\innerboundary\CB_2$, $\outerboundary\CB_2$ be equal to the point $\outerboundary \CB$.
\item $\CB_1$ is the set of points in $\CB \setminus (\CB_2 \cup \CB_3)$.
\end{itemize}
There is a.s.\ a unique geodesic $\eta$ in $\CB$ from $z_0$ to $\outerboundary \CB$.  
For $s_0>0$ and $s_1\in (0, s_0/2)$, let $F(\CB, s_0, s_1)$ be the following event. See Figure~\ref{fig:goodX}.
\begin{enumerate}[(i)]
\item We have $d_0=7$.
\item There is an \X\ of size at least $s_0$ along $\eta([3, 4])$ in the band $\CB_2$.
\item\label{itm: U_s_1} Let $U$ denote the $s_1$-neighborhood of $\eta([1, 6])$ with respect to  $d_\CB$. Then $\ol U$ is disjoint from $\innerboundary\CB\cup\outerboundary\CB$ and does not disconnect $\innerboundary\CB$ from $\outerboundary\CB$. Moreover, every connected component of $\CB\setminus U$ whose closure is disjoint from $\innerboundary\CB \cup \outerboundary\CB$ has diameter at most $s_0/2$.
\end{enumerate}

\begin{figure}[h!]
\centering
\includegraphics[width=.7\textwidth]{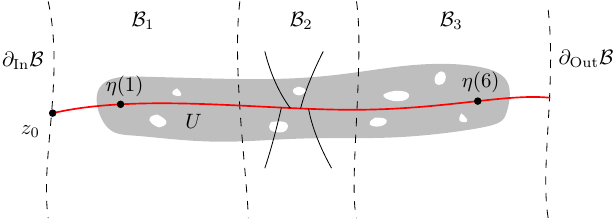}
\caption{Illustration of the event $F(\CB, s_0, s_1)$.}
\label{fig:goodX}
\end{figure}

\begin{lemma}\label{lem:X_prob2}
There exists $\ell_1,s_0, p_1 > 0$ and $s_1\in(0,s_0/2)$ so that for all $\ell \geq \ell_1$, we have 
\begin{align*}
\bandlaw{\ell}{7}[F(\CB, s_0, s_1)] \geq p_1.
\end{align*}
\end{lemma}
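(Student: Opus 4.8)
The plan is to bootstrap off Lemma~\ref{lem:X_prob}, using the three–sub-band structure of a width-$7$ band to place the \X\ in $\CB_2$ and a compactness argument to control the tube around the geodesic, and then extract a deterministic $s_1$ by a pigeonhole step. We take $s_0$ to be the constant produced by Lemma~\ref{lem:X_prob}; larger $s_0$ only makes condition~(ii) easier, so this is the favourable choice, and it trivially gives condition~(i) once we exhibit an \X\ of size at least $s_0$ in $\CB_2$.

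For condition~(i): let $\CB$ have law $\bandlaw{\ell}{7}$ and consider the reverse metric exploration inside $\CB$, whose boundary lengths evolve as a $3/2$-stable CSBP started from $\ell$. On the event that this CSBP survives up to time $4$ — an event whose probability tends to $1$ as $\ell\to\infty$ by~\eqref{eqn:csbp_extinction_time} — the sub-band $\CB_2$ is a topological annulus of width exactly $1$ consisting of the points at distance between $3$ and $4$ from $\innerboundary\CB$, so $\eta\cap\CB_2=\eta([3,4])$, which is the a.s.\ unique geodesic in $\CB_2$ from its marked inner point $\eta(3)$ to $\outerboundary\CB_2$. By the conditional independence of the sub-bands, conditionally on the inner boundary length $L$ of $\CB_2$ (and on $\CB_1$) the band $\CB_2$ has law $\bandlaw{L}{1}$, and by the CSBP scaling property $L\to\infty$ in probability as $\ell\to\infty$. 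Applying Lemma~\ref{lem:X_prob} to $\CB_2$, we get that for all large $\ell$, with probability at least $p_0/3$ there is an \X\ of size at least $s_0$ along $\eta([3,4])$ in $\CB_2$; that is, condition~(i) holds with probability at least $p_0/3$.

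For condition~(ii): fix $s_0$ as above and, for a band–geodesic pair $(\CB,\eta)$, let $s_1^\ast$ be the supremum of those $s_1>0$ such that the three requirements of part~(ii) hold with $U_{s'}$ in place of $U$ for every $s'\in(0,s_1]$. I claim $s_1^\ast>0$ $\bandlaw{\ell}{7}$-a.s. Since $\eta$ realizes the width $7$, we have $d_\CB(\eta(t),\innerboundary\CB)=t$ and $d_\CB(\eta(t),\outerboundary\CB)=7-t$, so $\eta([1,6])$ lies at distance at least $1$ from $\partial\CB$ and hence $\overline{U_{s'}}$ is disjoint from $\innerboundary\CB\cup\outerboundary\CB$ for $s'<1$; moreover, since an embedded arc never separates a surface there is a path $\sigma$ in $\CB$ from $\innerboundary\CB$ to $\outerboundary\CB$ avoiding the compact set $\eta([1,6])$, and for $s'<d_\CB(\sigma,\eta([1,6]))$ this $\sigma$ avoids $\overline{U_{s'}}$, so $\innerboundary\CB$ and $\outerboundary\CB$ stay in one component of $\CB\setminus\overline{U_{s'}}$. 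For the bound on closed pockets, suppose for contradiction that there were $s'_n\downarrow 0$ and closed pockets $P_n$ of $\CB\setminus U_{s'_n}$ with $\diam P_n>s_0/2$. Any fixed point of $\CB\setminus\eta([1,6])$ lies in the component of $\CB\setminus U_{s'}$ containing $\partial\CB$ for all small $s'$ (join it to $\partial\CB$ by a fixed path), and metric balls in the geodesic space $\CB$ are connected, so no subsequential Hausdorff limit of $\overline{P_n}$ can contain a point of $\CB\setminus\eta([1,6])$; hence such a limit $K$ is a connected compact subset of $\eta([1,6])$, i.e.\ a sub-arc $\eta([a,b])$ with $b-a=\diam K\geq s_0/2$. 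Thus the $P_n$ are slivers of the $(s_0/100)$-collar of $\eta([a,b])$ that are pinched off from the rest of that collar, which forces, at each scale $s'_n$, a bottleneck of $\CB$ of throat size $\lesssim s'_n$ located within distance $s'_n$ of $\eta([1,6])$; letting $n\to\infty$ and using compactness of $\eta([1,6])$ produces a cut point of $\CB$, contradicting that $\CB$ is homeomorphic to a closed annulus (or disk) and therefore has none.

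Combining: condition~(i) holds with probability at least $p_0/3$ for all large $\ell$, and $\{s_1^\ast>0\}$ holds a.s.; since $\{s_1^\ast>s_1\}\uparrow\{s_1^\ast>0\}$ as $s_1\downarrow 0$, we may fix a deterministic $s_1\in(0,s_0/2)$ small enough that $\P[s_1^\ast>s_1]\geq 1-p_0/6$, and then condition~(i) and $\{s_1^\ast>s_1\}$ occur simultaneously with probability at least $p_1:=p_0/6$; on this event $F(\CB,s_0,s_1)$ holds, which gives the lemma with $\ell_1$ chosen large enough for all the above estimates. I expect the last part of the claim — the uniform control of the closed pockets of $\CB\setminus U_{s'}$ over small $s'$ — to be the main obstacle, since making the ``pinch forces a cut point'' argument fully rigorous is delicate; an alternative that may be cleaner is to estimate the boundary length of $\partial U_{s'}$ running along the geodesic $\eta$, note that it tends to $0$ with $s'$, and bound the diameters of the pockets by a power of this boundary length via a version of Lemma~\ref{lem:boundary_length_distance} that is uniform over the sub-regions of $\CB$ cut off by $U_{s'}$.
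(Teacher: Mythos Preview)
Your approach matches the paper's: invoke Lemma~\ref{lem:X_prob} on the middle sub-band $\CB_2$ for condition~(i), and for~(ii) argue that the tube $U_s$ shrinks to the arc $\eta([1,6])$ as $s\to0$ so that all three requirements are eventually met. In fact the paper's treatment of~(ii) is much terser than yours---it simply asserts that $U_s\to\eta([1,6])$ in Hausdorff distance and that $\eta([1,6])$ ``does not contain any holes,'' and concludes. You have gone further in attempting to justify the pocket-diameter bound, and you correctly flag it as the delicate step. Your reduction (any Hausdorff-subsequential limit of large pockets lies in $\eta([1,6])$, via monotone convergence of the filled tube to the arc in a compact space) is sound; the final ``cut point'' deduction is indeed incomplete as written, since $\partial P_n\subseteq\overline{U_{s_n}}$ only places the boundary near $\eta$, it does not make it short, so no small throat is forced. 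The paper, however, supplies no additional argument at this point, so you are not missing a step that the paper provides---you have simply been more explicit about where the difficulty sits.

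Two minor issues are worth fixing. First, you condition on CSBP survival only to time~$4$, but for $\eta([1,6])$ to be defined and to sit at distance $\geq1$ from both $\innerboundary\CB$ and $\outerboundary\CB$ you need $d_0=7$; condition instead on survival to time~$7$, which still has probability tending to $1$ as $\ell\to\infty$. Second, your pigeonhole choice of a deterministic $s_1$ with $\bandlaw{\ell}{7}[s_1^\ast>s_1]\geq 1-p_0/6$ is made for a single $\ell$, whereas the lemma requires one $s_1$ valid for all $\ell\geq\ell_1$; neither you nor the paper addresses this uniformity explicitly, and the natural fix is the slicing reduction used at the end of the proof of Lemma~\ref{lem:X_prob}, carried out for width~$7$.
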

\begin{proof}
Let $\ell_0, s_0, p_0$ be as in Lemma~\ref{lem:X_prob}.
Fix $\ell_1=2 \ell_0$ so that with positive probability, the boundary length of $\innerboundary \CB_2$ is at least $\ell_0$. By  Lemma~\ref{lem:X_prob}, the event $E(\CB_2, \eta(3), s_0)$ happens with probability at least $p_0$. Conditionally on $E(\CB_2, \eta(3), s_0)$, the law of $\CB_3$ depends on $\CB_1\cup \CB_2$ only through $\outerboundary \CB_2$. There is again a positive conditional probability that the distance between $\innerboundary \CB_3$ and $\outerboundary \CB_3$ is at least $3$, so that $d_0=7$. We further condition on this event.
For $s>0$, let $U_s$ denote the $s$-neighborhood of $\eta([1, 6])$ with respect to  $d_\CB$. Almost surely, as $s\to 0$, $U_s$ converges in Hausdorff distance to $\eta([1, 6])$ which is disjoint from $\innerboundary\CB \cup \outerboundary\CB$ and does not contain any holes. Therefore, there exists $s_1>0$ such that the event~\eqref{itm: U_s_1} occurs with positive conditional probability. This completes the proof.
\end{proof}

Finally, let us turn back to the notion of an \X\ in a Brownian map instance $(\CS, d, \nu, x,y)$.
For each $s_0, s_1, \delta>0$, for any geodesic $\eta$ in the Brownian map with length $T>0$ and any \X\ along $\eta$ in $\CS$, we say that this \X\ is \emph{$(s_0, s_1,  \delta)$-good} for $\eta$ if the following holds. 
\begin{enumerate}[(i)]
\item\label{itm:goodX} The size of this \X\ is at least $s_0$.
\item\label{itm:goodX_center} The center of this \X\ is $\eta([u_1, u_2])$ where $T-\delta >u_2 > u_1 >\delta$. 
\item\label{itm:goodX_hole} Let $U_0$ be the $s_1$-neighborhood of $\eta([u_1-\delta, u_2+\delta])$. Let $\CF(U_0)$ be the complement in $\CS$ of the $\eta(0)$-containing connected component of $\CS \setminus U_0$. None of the four branches of the \X\ is contained in $\CF(U_0)$.
\item\label{itm:goodX_dist} For any $t\le u_1-\delta/2$ (resp.\ $t\ge u_2+\delta/2$) such that $t\in[0,T]$, the distance from $\eta(t)$ to any of the four branches of the \X\ is at least $u_1-\delta/2 - t+s_0$ (resp.\ $t-u_2-\delta/2 + s_0$). 
\end{enumerate}

\begin{lemma}
\label{lem:good_band_map}
Suppose that $(\CB,d_\CB,\nu_\CB,z_0)$ is distributed according to $\bandlaw{\ell}{7}$ and is embedded in a Brownian map instance $\CS$. Suppose that $\eta$ is a geodesic in $\CS$ which contains the geodesic in $\CB$ from $z_0$ to $\outerboundary\CB$. Then on the event $F(\CB, s_0, s_1)$, the \X\ in $\CB$ is an $(s_0, s_1, 2)$-good \X\  along $\eta$ inside $\CS$.
\end{lemma}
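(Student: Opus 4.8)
The goal is to verify that each of the four defining properties of an $(s_0,s_1,2)$-good \X\ along $\eta$ follows from the event $F(\CB,s_0,s_1)$, using the fact that $\CB$ is embedded isometrically (in the interior-internal sense) in $\CS$ and that $\eta$ extends the geodesic $[z_0,z_2]$ of $\CB$. Recall from the discussion before Lemma~\ref{lem:X_prob} that balls of $\CB$ which are disjoint from $\partial\CB$ coincide with balls of $\CS$, so distances measured inside $\CB$ to $\eta$ agree with $\CS$-distances as long as one stays away from $\partial\CB$; this dictionary is what makes the translation possible. The plan is to go through properties \eqref{itm:goodX}--\eqref{itm:goodX_dist} one at a time.

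First, I would record the basic picture: on $F(\CB,s_0,s_1)$, the center of the \X\ is $\eta([u_1,u_2])$ for some $u_1,u_2$ lying in the ``time interval'' of $\eta$ corresponding to $\CB_2$, i.e.\ roughly $u_1,u_2 \in [3,4]$ after reparametrising so that $\eta(0)$ sits at $\innerboundary\CB$; more precisely $3 \le u_1 < u_2 \le 4$ when $\eta$ is parametrised by $d_\CB$-distance from $\innerboundary\CB$. Since the total width of $\CB$ is $7$ and $\eta$ extends to a geodesic in $\CS$, the part of $\eta$ before $\innerboundary\CB$ and after $\outerboundary\CB$ only adds length, so with $\delta=2$ we get $u_1 > 3 > \delta$ and $u_2 < 4 < T-\delta$ provided $T \ge 6$; the width assumption $T_i \ge 2\delta$ in the eventual applications and the fact that $\eta$ must traverse all $7$ units of $\CB$ give $T \ge 7 > 4$, so property \eqref{itm:goodX_center} holds. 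Property \eqref{itm:goodX} is immediate: by definition $F(\CB,s_0,s_1)$ includes the event that the \X\ in $\CB_2$ has size at least $s_0$, and by the remark preceding Lemma~\ref{lem:X_prob} the witnessing balls $B(a_1,s_0),\dots,B(d_1,s_0)$ taken in $d_\CB$ are genuine $\CS$-balls disjoint from $\eta$, so the \X\ has $\CS$-size at least $s_0$.

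Next, properties \eqref{itm:goodX_hole} and \eqref{itm:goodX_dist} are where condition~\eqref{itm: U_s_1} of $F$ does its work. For \eqref{itm:goodX_hole}, note that $\eta([u_1-\delta,u_2+\delta]) \subseteq \eta([1,6])$ (since $u_1-2 \ge 1$ and $u_2+2 \le 6$), so the $s_1$-neighborhood $U_0$ of $\eta([u_1-\delta,u_2+\delta])$ is contained in the set $U$ from \eqref{itm: U_s_1}; because $\ol U$ is disjoint from $\innerboundary\CB\cup\outerboundary\CB$ and does not separate them, and because the branches of the \X\ reach all the way out to $\partial\CB$ (their far endpoints lie on $\innerboundary\CB$ or $\outerboundary\CB$), no branch can be swallowed by $\CF(U_0)$: the $\eta(0)$-containing component of $\CS\setminus U_0$ still contains a neighborhood of $\partial\CB$ and hence points on each branch near $\partial\CB$. (One has to be mildly careful that $\CF(U_0)$, defined via $U_0$, is contained in the analogous set defined via $U$, and that the latter omits the branch endpoints; this is the one spot needing a clean topological argument on the sphere.) For \eqref{itm:goodX_dist}, fix $t \le u_1 - \delta/2 = u_1 - 1$. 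Any point $p$ on a branch of the \X\ either lies outside $U$, in which case it lies in a component of $\CB\setminus U$ whose closure meets $\partial\CB$, or it lies in $U$; in the latter case the component-diameter bound $s_0/2$ in \eqref{itm: U_s_1}, combined with the witnessing ball of radius $s_0$ around the point $a_1$ (etc.) being disjoint from $\eta$, forces $p$ to be at $d_\CB$-distance (hence $d$-distance, staying inside $\CB$) at least roughly $s_0$ plus the distance from $\eta(t)$ to $\eta(u_1-\delta/2)$, and the latter is exactly $u_1-\delta/2 - t$ because $\eta$ is a geodesic. Making this inequality precise — chaining along $\eta$ from $\eta(t)$ to the center, then jumping off by at least $s_0$ to reach the branch, and checking that nothing shortcuts through $\partial\CB$ because of the ``closer to $\eta$ than to $\partial\CB$'' clause in the definition of size — gives the bound $d(\eta(t),\text{branch}) \ge u_1 - \delta/2 - t + s_0$; the case $t \ge u_2 + \delta/2$ is symmetric. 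The main obstacle is precisely this last estimate: one must argue that a path from $\eta(t)$ to a branch cannot be shorter than $(u_1-\delta/2-t) + s_0$ by leaving $\CB$, which uses that the branch points $a_1,\dots,d_1$ are by hypothesis closer to $\eta$ than to $\partial\CB$ and that the size-$s_0$ balls around them avoid $\eta$, so any such path stays in $\CB$ long enough for the interior-internal and ambient metrics to agree.

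Finally I would assemble these four verifications into the statement, noting that all four used only $F(\CB,s_0,s_1)$ together with the embedding hypotheses, and that the choice $\delta = 2$ was exactly calibrated so that the ``buffer'' bands $\CB_1$ and $\CB_3$ (of widths $3$ and $4$) provide the room needed: the center sits in $\CB_2$ at depth between $3$ and $4$, the $\delta$-enlargement $\eta([u_1-2,u_2+2])$ still fits inside $\eta([1,6]) \subseteq \CB$, and the $\delta/2 = 1$ offsets in \eqref{itm:goodX_dist} keep us within the regions where the metric dictionary is valid. This completes the proof once the topological sphere argument in \eqref{itm:goodX_hole} and the metric estimate in \eqref{itm:goodX_dist} are written out in full.
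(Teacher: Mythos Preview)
Your treatment of \eqref{itm:goodX} and \eqref{itm:goodX_center} is fine and matches the paper (the paper keeps the ambient parametrisation $z_0=\eta(t_0)$ rather than shifting to $t_0=0$, but that is cosmetic).

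For \eqref{itm:goodX_hole} there is a genuine error. You assert that ``the branches of the \X\ reach all the way out to $\partial\CB$ (their far endpoints lie on $\innerboundary\CB$ or $\outerboundary\CB$)''. This is false on two counts. First, the \X\ lives in $\CB_2$, so even the full band-branches $[a,e],\dots,[h,d]$ only touch $\partial\CB_2$, not $\partial\CB$. Second, the \X\ that one obtains in $\CS$ is the one with outer points $a_1,b_1,c_1,d_1$ (see the paragraph just before Lemma~\ref{lem:X_prob}); its branches are $[a_1,e],[b_1,f],[g,c_1],[h,d_1]$, and these sit strictly in the interior of $\CB_2$ since $B(a_1,s_0),\dots$ are disjoint from $\partial\CB_2$. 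So no branch goes anywhere near $\partial\CB$, and your topological argument collapses. The paper's argument is metric, not topological: using $U_0\subseteq U$ and the hole-diameter bound in condition~\eqref{itm: U_s_1}, every point of $\CF(U_0)$ lies within $s_0/2+s_1\le s_0$ of $\eta$; since each branch contains a point ($a_1,b_1,c_1$ or $d_1$) at distance $\ge s_0$ from $\eta$, the branch must exit $\CF(U_0)$.

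For \eqref{itm:goodX_dist} your route through $U$ and its holes does not lead to the required inequality (try $p\in U$: being within $s_1$ of $\eta([1,6])$ does not by itself give the lower bound $u_1-1-t+s_0$), and the sketch never makes the ``chaining'' estimate precise. The paper's argument is completely different and much shorter: the branches lie in $\overline{\CB_2}$ and are at distance $\ge s_0$ from $\innerboundary\CB_2$ (radial monotonicity of $[a,c]$ and $[b,d]$ plus the size condition), while $\eta(t)$ for $t\le t_0+3$ is separated from $\overline{\CB_2}$ by $\innerboundary\CB_2$; hence $d(\eta(t),\text{branch})\ge d(\eta(t),\innerboundary\CB_2)+s_0\ge (t_0+3-t)+s_0\ge u_1-1-t+s_0$. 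The case $t\ge u_2+1$ is symmetric via $\outerboundary\CB_2$. Condition~\eqref{itm: U_s_1} of $F$ is not used at all here; this is precisely where the buffer bands $\CB_1,\CB_3$ earn their keep.
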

\begin{proof}
Suppose that $z_0=\eta(t_0)$ for some $t_0\ge 0$. Let $T$ be the time length of $\eta$ so that $T \ge t_0 + d$ where $d$ is the distance between $\innerboundary \CB$ and $\outerboundary \CB$.
On the event $F(\CB, s_0, s_1)$, there is an \X\ along $\eta$ contained in the middle part $\CB_2$ of $\CB$, hence
the center of the \X\ is $\eta([u_1, u_2])$ where $t_0+3 \le u_1< u_2 \le t_0 +4$. 
It is clear that~\eqref{itm:goodX} and~\eqref{itm:goodX_center} hold. 

For any point $z\in \CF(U_0)$, since $U_0$ does not contain any hole with diameter at least $s_0/2$, we have
\[ d(z, \eta([t_0+1,t_0+6])) \le d(z, \partial U_0) + d(\partial U_0,  \eta([t_0+1,t_0+6])) \le s_0/2 + s_1 \le s_0.\]
Since the size of the \X\ is at least $s_0$, each branch of the \X\ must exit $\CF(U_0)$. Thus~\eqref{itm:goodX_hole}  holds.

For any $t\le t_0 +3$, the distance from $\eta(t)$ to any branch of the \X\ is at least the distance from  $\eta(t)$ to $\innerboundary \CB_2$ plus the distance from $\innerboundary \CB_2$ to any branch of the \X. This is at least $t_0+3-t+ s_0 \ge u_1 -1-t +s_0$. Since $u_1-1\le t_0+3$, the same is true for any $t\le u_1-1$. Similarly, we can deduce that for any $t \ge u_2 +1$, the distance $\eta(t)$ to any branch of the \X\ is at least $t-u_2-1+s_0$. Thus~\eqref{itm:goodX_dist}  holds.
\end{proof}

\subsection{There are \X's everywhere along a geodesic between typical points}
\label{subsec:x_concentration}

The goal of this subsection is to prove Lemma~\ref{lem:x_concentration}, as stated below.

\begin{lemma}
\label{lem:x_concentration}
Let  $s_0, s_1$ be as in Lemma~\ref{lem:X_prob2}. Fix $r_0>0$. There exist $\sp{b}_0, c_0>0$ and $\eps_0>0$ so that for all $r\in(0,r_0)$, $\eps\in(0,\eps_0)$ and $c>c_0$ the following is true.  
Suppose that $(\CS,d,\nu,x,y)$ is sampled from $\bminflaw$ conditioned on $\{d(x,y) > r\}$. We denote this conditional probability measure by $\mathbf{P}_{r}$.
The $\p_{r}$ probability of the following event is at most $\eps^{\sp{b}_0 c}$.  There exist $t\in (0, d(x,y)-r)$, a geodesic $\eta$ from $\partial \fb{y}{x}{t}$ to $x$ and times $2\eps \leq t_1 < t_1  +c \epsilon \log \eps^{-1} \leq t_2 \leq t-2\eps$, so that
$\eta|_{[t_1,t_2]}$ does not pass through any \X\ which is $(s_0\eps, s_1 \eps, 2 \eps)$-good for $\eta$.
\end{lemma}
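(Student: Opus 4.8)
The plan is to run the reverse metric exploration from $y$ to $x$, cut it into metric bands of width $7\epsilon$, and use the conditional independence of these bands given the boundary-length process to show that a band carrying a $(s_0\epsilon,s_1\epsilon,2\epsilon)$-good \X\ for a given geodesic to $x$ occurs so often that, off an event of $\p_r$-probability $\epsilon^{\sp{b}_0 c}$, no geodesic to $x$ can avoid all such bands over an interval of length $c\epsilon\log\epsilon^{-1}$. The single-band input is taken from the previous subsection: by the scaling property of metric bands (Section~\ref{subsec:metric_bands}), rescaling lengths by $\epsilon^{-1}$ (boundary lengths by $\epsilon^{-2}$, areas by $\epsilon^{-4}$) sends a band of width $7\epsilon$ with inner boundary length $\ell\ge\ell_1\epsilon^2$ to a sample of $\bandlaw{\ell\epsilon^{-2}}{7}$ with $\ell\epsilon^{-2}\ge\ell_1$, so by Lemma~\ref{lem:X_prob2} the $\epsilon$-rescaled event $F(\,\cdot\,,s_0\epsilon,s_1\epsilon)$ has probability at least $p_1$, and by the $\epsilon$-rescaling of Lemma~\ref{lem:good_band_map}, on this event the \X\ in the band is $(s_0\epsilon,s_1\epsilon,2\epsilon)$-good for any geodesic of $\CS$ containing the radial geodesic of the band (the geodesic from the marked point of its inner boundary to its outer boundary).

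Next I would fix (a representative of) a geodesic to $x$ and an interval of distances to $x$ of the form $[\mathsf d_1,\mathsf d_2]\subseteq[2\epsilon,R]$ with $\mathsf d_2-\mathsf d_1\ge c\epsilon\log\epsilon^{-1}$, where $R=d(x,y)-r$, and decompose $\fb{y}{x}{R}$ into the bands $\CA^{(i)}=\fb{y}{x}{7(i+1)\epsilon}\setminus\fb{y}{x}{7i\epsilon}$. Over $[\mathsf d_1,\mathsf d_2]$ the fixed geodesic traverses at least $\tfrac{c}{8}\log\epsilon^{-1}$ of these bands, restricting in each to the radial geodesic from its entry point. Revealing the bands from the outside inward, when $\CA^{(i)}$ is about to be revealed the already-revealed region is a marked Brownian disk which determines both the inner boundary length $L_i$ of $\CA^{(i)}$ and the entry point of the fixed geodesic into $\CA^{(i)}$, and $\CA^{(i)}$ is conditionally independent of the bands revealed so far given $L_i$. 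Re-marking $\CA^{(i)}$ at this entry point (this is permissible by the rotational symmetry of the band law), the single-band input gives that, conditionally on the revealed region, $\CA^{(i)}$ carries a $(s_0\epsilon,s_1\epsilon,2\epsilon)$-good \X\ for the fixed geodesic with probability at least $p_1$ whenever $L_i\ge\ell_1\epsilon^2$, and these events are conditionally independent across $i$ given $(L_i)$. Since $(L_i)$ samples the $3/2$-stable CSBP that drives the boundary length, the CSBP scaling property and the excursion estimates of Section~\ref{subsubsec:csbp} yield a constant $\gamma>0$ so that, off an event of $\p_r$-probability of order $\epsilon^{\sp{b}_3 c}$ for a fixed $\sp{b}_3>0$, at least $\gamma c\log\epsilon^{-1}$ of the traversed bands satisfy $L_i\ge\ell_1\epsilon^2$; on that event the conditional probability that none of them carries a good \X\ for the fixed geodesic is at most $(1-p_1)^{\gamma c\log\epsilon^{-1}}$. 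Hence, for a fixed geodesic and interval, the probability of meeting no good \X\ along $[\mathsf d_1,\mathsf d_2]$ is at most $\epsilon^{\sp{b}_2 c}$ for some $\sp{b}_2>0$ depending only on $p_1$ and the CSBP constants.

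To remove the quantifiers I would first restrict to $\{d(x,y)\le\epsilon^{-A}\}$, whose $\p_r$-complement has probability $\lesssim r^2\epsilon^{2A}$ by the $t^{-3}$ density of the lifetime under $\bminflaw$ (Section~\ref{subsubsec:csbp}). The times $t_1,t_2$ (equivalently $\mathsf d_1,\mathsf d_2$) may be taken in $\epsilon\Z$ after shrinking the interval by $2\epsilon$, leaving $\epsilon^{-O(A)}$ choices, and since the event involves a geodesic from $\partial\fb{y}{x}{t}$ to $x$ only through its restriction to $\{2\epsilon\le d(\,\cdot\,,x)\le R\}$, it is enough to union over one representative for each distinct geodesic to $x$ restricted to this annulus. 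By confluence of geodesics at $x$ together with a covering of the filled-ball boundaries by $\epsilon$-balls (using the volume estimates of Lemma~\ref{lem:bm_volume_estimates}), there are at most $\epsilon^{-O(A)}$ such representatives, and each of them has, at every band, an entry point adapted to the outside-in filtration, so the fixed-geodesic estimate applies to each. A union bound then gives that the bad event has $\p_r$-probability at most $\epsilon^{-O(A)}\epsilon^{\sp{b}_2 c}+r^2\epsilon^{2A}$; taking $A$ a suitable small multiple of $c$ and then $c_0$ large, this is at most $\epsilon^{\sp{b}_0 c}$ for all $\epsilon\in(0,\epsilon_0)$ and $c>c_0$, with $\sp{b}_0$ a fixed fraction of $\sp{b}_2$.

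The step I expect to be the main obstacle is handling the quantifier over \emph{all} geodesics to $x$ while keeping the band-by-band conditional independence: one must reveal the metric exploration so that every relevant geodesic's entry point into every band is measurable with respect to the revealed region, reconcile the re-marking of each band with the band law $\bandlaw{\ell}{7\epsilon}$, and bound the number of geodesics to $x$ that are genuinely distinct at scale $\epsilon$ over the range $[2\epsilon,R]$ by a fixed power of $\epsilon^{-1}$. The CSBP input guaranteeing a positive fraction of bands with inner boundary length $\ge\ell_1\epsilon^2$ is comparatively routine given Section~\ref{subsubsec:csbp}, though it too must be made uniform in the random boundary length at which the relevant range of the exploration begins.
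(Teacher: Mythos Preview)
Your overall strategy matches the paper's: cut into width-$7\epsilon$ bands, use Lemma~\ref{lem:X_prob2} (after rescaling) together with Lemma~\ref{lem:good_band_map} as the single-band input, exploit conditional independence given the boundary-length process, and control the fraction of bands with inner boundary length at least $\ell_1\epsilon^2$ via CSBP estimates. The union over the level $t$ and the restriction to $\{d(x,y)\le\epsilon^{-A}\}$ are also handled essentially as in the paper.

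The genuine gap is precisely where you flag it: the quantifier over all geodesics to $x$. Your proposed enumeration of representatives via ``confluence of geodesics at $x$ together with a covering of the filled-ball boundaries by $\epsilon$-balls'' does not work as stated. Le~Gall's confluence gives no quantitative rate at which geodesics from nearby boundary points merge, so a metric $\epsilon$-covering of $\partial\fb{y}{x}{t}$ does not bound the number of distinct geodesics to $x$; and if the representatives are selected by any global criterion (e.g.\ one per ``distinct geodesic restricted to the annulus''), their entry points into the successive bands depend on the unrevealed interior and are not adapted to the outside-in filtration, so the single-band estimate cannot be applied to them. The paper resolves this by first performing a preliminary $\epsilon$-wide exploration and invoking Lemma~\ref{lem:num_of_geodesics}: the number of points on $\partial\fb{y}{x}{s-\epsilon}$ hit by \emph{any} geodesic crossing that first band from $\partial\fb{y}{x}{s}$ is stochastically dominated by twice a Poisson variable of mean $O(Y_\ell/\epsilon^2)$, and these hit points are determined by the first band alone, hence are adapted. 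After controlling $Y_\ell$ and this Poisson count by $O(\epsilon^{-2-\sp{b}})$ off a small-probability event, one is left with finitely many adapted starting points, from each of which the geodesic to $x$ is a.s.\ unique and can be analyzed band-by-band exactly as you describe. Lemma~\ref{lem:num_of_geodesics} (and the preliminary $\epsilon$-exploration that makes it applicable) is the missing ingredient in your proposal.
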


Before proving Lemma~\ref{lem:x_concentration}, let us prove the following lemma which will be used later.

\begin{lemma}
\label{lem:num_of_geodesics}
There exists a constant $c > 0$ so that the following is true.  Suppose that $(\CB,d_\CB,\nu_\CB,z)$ is sampled from $\bandlaw{\ell}{w}$.  Let $N$ be the number of points on $\outerboundary \CB$ which are visited by a geodesic from $\innerboundary \CB$ to $\outerboundary \CB$.  Then $N$ is stochastically dominated by the law of $2Z$ where $Z$ is a Poisson random variable with mean $c \ell/w^2$.
\end{lemma}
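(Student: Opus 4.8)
The statement is a scaling-invariant tail bound, so by the scaling property of metric bands (recall Section~\ref{subsec:metric_bands}) it suffices to treat one normalization, say $w=1$; the general case follows by replacing $\ell$ with $\ell/w^2$. I would set up the argument via the reverse metric exploration of $\CB$ from $\innerboundary\CB$ described at the end of Section~\ref{subsec:metric_bands}: the inner boundary length process $(Y_t)_{t\in[0,w]}$ is a $3/2$-stable CSBP started from $\ell$, and the geodesics from $\innerboundary\CB$ to $\outerboundary\CB$ coalesce as the exploration proceeds. The key observation is that the set of points on $\outerboundary\CB$ hit by some geodesic corresponds, via confluence of geodesics, to the ``leaves'' of the branching structure induced by the CSBP: each time two adjacent geodesics have not yet merged at the outer boundary contributes one more endpoint, and the number of such surviving geodesics at the outer boundary is governed by how the population $Y_t$ splits up. Concretely, I would discretize $\innerboundary\CB$ into $\ell/\delta$ equally spaced marked points and cut $\CB$ into the corresponding geodesic slices (Section on geodesic slices), each of inner boundary length $\delta$ and width $w$; the slices are mutually independent given their inner boundary lengths.

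The main step is then a stochastic domination: the number of distinct outer endpoints is dominated by the number of slices (out of $\ell/\delta$) whose two bounding geodesics have \emph{not} merged before reaching distance $w$ from $\innerboundary\CB$, plus a bounded correction for edge effects, because once two geodesics merge they contribute a single endpoint. For a single slice of inner boundary length $\delta$ and width $w=1$, the two bounding geodesics merge before distance $w$ precisely when the associated $3/2$-stable CSBP excursion started from $\delta$ dies before time $w$; by the extinction formula~\eqref{eqn:csbp_extinction_time} with $\alpha=3/2$, $\p[Y_w>0] = 1-\exp(-(cw)^{-2}\delta) \le c'\delta$ for a constant $c'$ and $\delta$ small. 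Thus each slice independently ``survives'' to the outer boundary with probability at most $c'\delta$, so the number of survivors is dominated by a $\mathrm{Binomial}(\ell/\delta,\,c'\delta)$ random variable, which is in turn dominated by a Poisson variable of mean $c'\ell$. Letting $\delta\to 0$ and absorbing the boundary correction into a factor of $2$ (each merged cluster of geodesics still contributes up to two endpoints on $\outerboundary\CB$, corresponding to the two extreme geodesics of the cluster), one obtains that $N$ is stochastically dominated by $2Z$ with $Z\sim\mathrm{Poisson}(c\,\ell)$ for an appropriate constant $c$; restoring the width gives mean $c\,\ell/w^2$.

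The step I expect to be the main obstacle is making rigorous the identification of ``number of outer endpoints'' with ``number of slices whose boundary geodesics survive,'' uniformly in the discretization parameter $\delta$. The delicate points are: (i) justifying that a geodesic from $\innerboundary\CB$ to $\outerboundary\CB$ cannot ``re-separate'' after merging (this is confluence, which holds a.s.\ for geodesics to a typical target and can be transferred here using the structure of the metric band), so that the geodesic tree is genuinely a tree and survivors are counted without multiplicity; (ii) controlling the contribution of geodesics emanating from the ``gaps'' between consecutive marked points, which is where the factor $2$ and the passage to the limit $\delta\to 0$ must be handled carefully, e.g.\ by a monotone-convergence argument on the counting functional as the marked set is refined; and (iii) ensuring the independence of slices together with the CSBP extinction estimate can be combined cleanly — this is straightforward given the independence statement for slices but requires noting that the total inner boundary length $\ell$ is fixed rather than random, so one really is summing $\ell/\delta$ i.i.d.\ indicator variables. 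Once these are in place the Poisson domination is a routine Chernoff/thinning computation.
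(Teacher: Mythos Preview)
Your overall architecture matches the paper's: reduce by scaling, discretize $\innerboundary\CB$ into equally spaced marked points, use the independence of the resulting geodesic slices together with the $3/2$-stable CSBP extinction formula~\eqref{eqn:csbp_extinction_time} to see that the number of adjacent pairs of marked geodesics that have \emph{not} merged by the outer boundary is $\mathrm{Binomial}(\ell/\delta,\,c'\delta/w^2)$, hence Poisson in the limit $\delta\to 0$. This part is correct and is exactly what the paper does (with $\delta=2^{-k}$).

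The gap is in your justification of the factor $2$. Your sentence ``each merged cluster of geodesics still contributes up to two endpoints on $\outerboundary\CB$, corresponding to the two extreme geodesics of the cluster'' is not right: a cluster of marked geodesics that have all merged contributes exactly \emph{one} endpoint, not two. The discrepancy between the Poisson count and $N$ comes from a different source, which you flag correctly in your obstacle (ii) --- geodesics starting from \emph{non-marked} points on $\innerboundary\CB$ --- but your proposed fix (monotone convergence as the marked set is refined) does not by itself bound these extra endpoints. Refining the marked set makes the marked-endpoint count $|P|$ converge, but says nothing about $N-|P|$.

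The paper's argument for the factor $2$ is the following. Let $P\subseteq\outerboundary\CB$ be the set of endpoints of geodesics from the countable dense set of marked points (dyadic rationals of boundary length), so $|P|$ is the Poisson variable $Z$. For an arbitrary point $b\in\innerboundary\CB$, the \emph{leftmost} and \emph{rightmost} geodesics from $b$ to $\outerboundary\CB$ are limits of the (a.s.\ unique) geodesics from marked points approaching $b$ from each side, and therefore their endpoints already lie in $P$. So any endpoint in $\wt P\setminus P$ must come from a geodesic that is neither leftmost nor rightmost from its starting point $b$. Now invoke Le Gall's result \cite[Theorem~1.4]{lg2010geodesics}: embedding $\CB$ in an ambient Brownian map so that $\outerboundary\CB$ is a filled-metric-ball boundary centered at the root, every such geodesic extends to a geodesic to the root, and there are at most $3$ of these from any single point. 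Hence there is at most one ``middle'' geodesic from $b$, and between any two consecutive points of $P$ there can be at most one extra point of $\wt P$. This gives $|\wt P|\le 2|P|$. Without this input (or something equivalent), there is no a priori reason the gaps could not contribute many extra endpoints, so your proposal as written does not close.
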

\begin{proof}
By scaling, we only need to  give the proof in the case that $\ell=1$.  For each $k \in \N$, let $z_1^k,\ldots,z_{2^k}^k$ be points on $\innerboundary \CB$ with $z_1^k = z$ and $z_1^k,\ldots,z_{2^k}^k$ ordered counterclockwise on $\innerboundary \CB$ so that the boundary length of the counterclockwise segment of $\innerboundary \CB$ from $z_j^k$ to $z_{j-1}^k$ is equal to $2^{-k}$.  Let $\eta_j^k$ be the a.s.\ unique geodesic from $z_j^k$ to $\outerboundary \CB$.  Let $N_k$ be the number of points in $\outerboundary \CB$ visited by the $\eta_j^k$ for $j=1,\ldots, 2^k$.  Note that $N_k$ is equal to the number of indices $j$ so that $\eta_j^k$ does not merge with $\eta_{j-1}^k$ before hitting $\outerboundary \CB$.  The probability that $\eta_j^k$ does not merge with $\eta_{j-1}^k$ before hitting $\outerboundary \CB$ is equal to the probability that a $3/2$-stable CSBP starting from $2^{-k}$ hits $0$ after time $w$.  By~\eqref{eqn:csbp_extinction_time}, there exists a constant $c > 0$ so that this probability is explicitly given by
\[ p_{k,w} = 1-\exp(c w^{-2} 2^{-k}) = c w^{-2} 2^{-k} + o(2^{-k}).\]
By the independence of geodesic slices, we thus have that $N_k$ is a binomial random variable with parameters $p=p_{k,w}$ and $n=2^k$.  It therefore follows that $N_k$ converges in distribution as $k \to \infty$ to a Poisson random variable with mean $c/w^2$.  Let $P$ be the set of points on $\outerboundary \CB$ which are visited by a geodesic starting from a point $\wt{z}$ on $\innerboundary \CB$ where $\wt{z}$ is such that the boundary length along $\innerboundary \CB$ from $z$ to $\wt{z}$ in the counterclockwise direction is a dyadic rational.  Then the above implies that $|P|$ is distributed as a Poisson random variable with mean $c/w^2$.  Let $\wt{P}$ be the set of points on $\outerboundary \CB$ which are visited by a geodesic starting from a point on $\innerboundary \CB$ to $\outerboundary \CB$.  Suppose that $a \in \wt{P} \setminus P$.  Then $a$ is visited by a geodesic from a point $b$ on $\innerboundary \CB$ which is not the leftmost or the rightmost geodesic from $b$ to $\outerboundary \CB$ (because the leftmost and rightmost geodesics can be written as limits of the $\eta_j^k$).  Since there can be at most $3$ geodesics from any point $b$ in $\innerboundary \CB$ to $\outerboundary \CB$ \cite[Theorem~1.4]{lg2010geodesics} (since if we realized $\CB$ as a metric band inside of an ambient Brownian map instance associated with filled metric balls centered at the root, then these geodesics are each part of a geodesic from $b$ to the root), it follows that there can be at most one point in~$\wt{P}$ between any pair of points in~$P$.  That is, $|\wt{P}| \leq 2|P|$.
\end{proof}

We are now ready to prove Lemma~\ref{lem:x_concentration}.

\begin{proof}[Proof of Lemma~\ref{lem:x_concentration}]
Fix $r_0>0$ and $r\in(0,r_0)$. Let $(\CS,d,\nu,x,y)$ be sampled from $\bminflaw$ conditioned on $\{d(x,y) > r\}$.  
Fix $\ell \ge r$ and let $s=d(x,y)-\ell$. Fix $c>0.$ Let $E$ be the following event.
\begin{enumerate}[($E$)]
\item We have $d(x,y)>\ell$, i.e., $s>0$, and there exist a geodesic $\eta$ from $\partial \fb{y}{x}{s}$ to $x$ and times $2\eps \leq t_1 < t_1 + c \eps \log \eps^{-1} \leq t_2 \leq s-2\eps$, so that 
$\eta|_{[t_1,t_2]}$ does not pass through any \X\ which is $(s_0\eps, s_1 \eps, 2 \eps)$-good for $\eta$.
\end{enumerate}
Let us focus on showing that there exist  $\sp{b}_1,c_1 >0$ and $\eps_0>0$ such that $\mathbf{P}_{r}[E] \le  \eps^{\sp{b}_1 c}$ for all $c>c_1$ and $\eps\in(0,\eps_0)$, where $\sp{b}_1, c_1, \eps_0$ do not depend on $\ell$ or $c$. 

This will enable us to complete the proof of the lemma as follows.
Note that for any $\sp{a}>0$, we have (recall the discussion at the end of Section~\ref{subsubsec:csbp} on the density of the lifetime of an $\alpha$-stable CSBP excursion)
\begin{align}\label{eq:dxy}
\mathbf{P}_r [ d(x,y) \ge \eps^{-\sp{a}} ] = O( \eps^{2\sp{a}}),
\end{align}
where the implicit constant is uniform over $r\in(0,r_0)$.
Applying the union bound to $\ell=r+k\eps$ for $k\in [0,  \eps^{-1-\sp{a}}] \cap \N$ yields that the event in the lemma holds with probability at most $  \eps^{\sp{b}_1 c-1-\sp{a}} + O(\eps^{2\sp{a}})$ for all $c>c_1$ and $\eps > 0$ sufficiently small. Take $c_0=\max(2/\sp{b}_1, c_1)$ and $\sp{b}_0=\sp{b}_1/4$. For all $c>c_0$, take $\sp{a} \in(\sp{b}_1 c/8, 3\sp{b}_1 c/4-1)$. Then for all $c>c_0$, we have $\sp{b}_1c-1-\sp{a}>\sp{b}_0c$ and $2\sp{a}>\sp{b}_0c$. This implies that the event in the lemma holds with probability at most $\eps^{\sp{b}_0c}$ for $\eps$ sufficiently small, which completes the proof.

Let us now fix $\ell \in [r, r+\eps^{-\sp{a}}]$.
Note that $\p_r[E] \le \p_{\ell}[E]$, hence we only need to prove that  there exist $\sp{b}_1, c_1>0$ and $\eps_0>0$ such that $\mathbf{P}_{\ell}[E] \le \eps^{\sp{b}_1 c}$ for all $c>c_1$ and $\eps\in(0,\eps_0)$, where $\sp{b}_1, c_1, \eps_0$ do not depend on $\ell$ or $c$.

As a first step, we fix $\eps > 0$ and perform $\eps$ unit of reverse metric exploration on $\fb{y}{x}{s}$. Let  $N$ be the number of points on $\partial \fb{y}{x}{s-\eps}$ visited by all of the geodesics from $\partial \fb{y}{x}{s}$ to $x$. Let us prove that there exists $c_3>0$ such that for all $\sp{b}>0$, we have
\begin{align}
\label{eq:reverse_eps}
\p_{\ell}[ N \ge c_3 \eps^{-2-\sp{b}} ] =O( \eps^\sp{b}).
\end{align}

Let $Y_{\ell}$ denote the boundary length of $\partial \fb{y}{x}{s}$.  Then $Y_\ell$ is the value at time $\ell$ of a $3/2$-stable CSBP excursion conditioned to have length at least $\ell$.  The maximum of a $3/2$-stable CSBP excursion has the same distribution as the maximum of the corresponding $3/2$-stable L\'evy excursion by the Lamperti transform~\eqref{eqn:lamperti_csbp_to_levy}.  Since the lifetime of a $3/2$-stable L\'evy excursion has distribution given by a constant times $t^{-5/3} dt$ where $dt$ is Lebesgue measure on $\R_+$, the scaling property of a $3/2$-stable L\'evy excursion implies that the distribution of the maximum is given by a constant times $t^{-2} dt$.  This implies in particular that for some constant $c_2 > 0$, we have
\begin{align}\label{eq:Yl_upper_bound}
\p_{\ell}[ Y_\ell \ge \eps^{-\sp{b}} ] \le c_2 \eps^{\sp{b}}.
\end{align}
We now further condition on the event  $\{Y_\ell < \eps^{-\sp{b}}\}$.  Lemma~\ref{lem:num_of_geodesics} then implies that on this event the number $N$ of points on $\partial \fb{y}{x}{s-\epsilon}$ which are visited by a geodesic from $\partial \fb{y}{x}{s}$ to $x$ is stochastically dominated by $2Z$ where $Z$ is a Poisson random variable with mean $\lambda = c_3 \epsilon^{-2-\sp{b}}$ for some absolute constant $c_3>0$.
Recall the elementary tail bound for Poisson random variables:
\[ \p[ Z \geq \alpha \lambda] \leq \exp(\lambda(\alpha-\alpha \log \alpha -1)) \quad\text{for all}\quad \alpha > 1.\]
Applying this with $\alpha = 3$ so that $\alpha -\alpha \log \alpha - 1 < -1$ implies
\begin{align*}
\p_{\ell} [ N \ge 6 c_3 \eps^{-2-\sp{b}} \giv Y_\ell < \eps^{-\sp{b}} ] \le \exp (- c_3 \eps^{-2-\sp{b}}).
\end{align*}
Combining this with~\eqref{eq:Yl_upper_bound} and adjusting the value of $c_3$ implies~\eqref{eq:reverse_eps}. 
\smallskip

Now it only remains to prove that the following is true.  Fix $z \in \partial \fb{y}{x}{s-\epsilon}$ chosen in a way which is measurable with respect to $\CS \setminus \fb{y}{x}{s-\epsilon}$ and let $\eta$ be the a.s.\ unique geodesic from $z$ to $x$ in $\fb{y}{x}{s-\epsilon}$.  Then there exist $\sp{b}_2, c_4>0$ and $\eps_0>0$ such that for all $c>c_4$ and $\eps\in(0,\eps_0)$, the following event happens with probability at most  $\eps^{\sp{b}_2 c}$.
There exist times $2\eps \leq t_1 < t_1 + c \eps \log \eps^{-1} \leq t_2 \leq s-2\eps$ so that $\eta|_{[t_1,t_2]}$ does not pass through an {\X} which is $(s_0 \eps, s_1 \eps, 2\eps)$-good for $\eta$.  Indeed,  upon showing this, since the metric band $\fb{y}{x}{s} \setminus \fb{y}{x}{s-\epsilon}$ is independent of the filled metric ball $\fb{y}{x}{s-\epsilon}$ given its boundary length $Y_{\ell+\epsilon}$, 
by applying a union bound to the at most $c_3 \eps^{-2-\sp{b}}$ points $z$ in $\partial \fb{y}{x}{s-\epsilon}$ which are visited by a geodesic from $\partial \fb{y}{x}{s}$ to $x$, we get that $\p_\ell[E] \le c_3 \eps^{\sp{b}_2 c -2-\sp{b}} + O(\eps^\sp{b})$. Let $\sp{b}_1=\sp{b}_2/4$ and $c_1=\max(c_4, 5/\sp{b}_2)$. For all $c>c_1$, choose $\sp{b} \in(\sp{b}_2c/4, 3\sp{b}_2c/4-2)$. This implies that there exists $\eps_0>0$ such that  $\p_\ell[E] \le \eps^{\sp{b}_1c}$ for all $c> c_1$ and $\eps\in(0,\eps_0)$ as desired.

Assume that we have fixed $z$ and $\eta$ as just above.  For each $k \in \N$, we let $\CF_k$ be the $\sigma$-algebra generated by the metric measure space $\CS \setminus \fb{y}{x}{s-(7k+1)\epsilon}$ equipped with the interior-internal metric and the restriction of $\nu$ to $\CS \setminus \fb{y}{x}{s-(7k+1)\epsilon}$.  The boundary length $Y_{\ell+(7k+1)\epsilon}$ of $\partial \fb{y}{x}{s-(7k+1)\epsilon}$  is $\CF_k$-measurable.  Moreover, the conditional law of $\fb{y}{x}{s-(7k+1)\epsilon} \setminus \fb{y}{x}{s-(7k+8)\epsilon}$ given $\CF_k$ is $\bandlaw{Y_{\ell+(7k+1)\epsilon}}{7\epsilon}$.  

Let $\ell_0$ be as the $\ell_1$ in Lemma~\ref{lem:X_prob2}.  Let $\tau_{1} = \min\{k \geq 0 : Y_{\ell+(7k+1) \epsilon} \geq \ell_0 \epsilon^2\}$ and for each $j \geq 1$ we inductively let $\tau_{j+1} = \min\{k \geq \tau_j+1 : Y_{\ell+(7k+1)\epsilon} \geq \ell_0 \epsilon^2\}$.  Lemmas~\ref{lem:X_prob2} and~\ref{lem:good_band_map} imply that there exist constants $s_0, s_1, p_1 > 0$ so that on the event $\{\tau_j < \infty\}$, the conditional probability given $\CF_{\tau_j}$ such that $\eta$ passes through an {\X} which is $(s_0 \eps, s_1 \eps, 2\eps)$-good for $\eta$ in the time-interval $[s-(7\tau_j+8)\epsilon,s- (7 \tau_j+1) \epsilon]$ is at least $p_1$.  Let $N_1$ be the first $j \geq 0$ that either $\eta$ passes through an {\X} which is $(s_0 \eps, s_1 \eps, 2\eps)$-good for $\eta$ in the time-interval  $[s-(7\tau_j+8)\epsilon,s- (7 \tau_j+1) \epsilon]$ or $Y_{\ell+(7\tau_j+1) \eps}=0$, i.e., $\tau_j=\infty$.  Inductively let $N_{i+1}$ be the first $j \geq N_i+1$ so that either $\eta$ passes through an {\X} which is $(s_0 \eps, s_1 \eps, 2\eps)$-good for $\eta$ in the time-interval  $[s-(7\tau_j+8)\epsilon,s- (7 \tau_j+1) \epsilon]$ or $Y_{\ell+(7\tau_j+1)\eps}=0$.  Then it follows that on the event $Y_{\ell+(7\tau_{N_i}+1) \epsilon} > 0$,  conditionally on $\CF_{\tau_{N_i}}$, the probability of $ N_{i+1}-N_i \geq n $ is at most $(1-p_1)^n$.  This in particular implies that there exist constants $\sp{b}_3 > 0$ and $\eps_0>0$ so that for all $c>0$ and $\eps\in(0,\eps_0)$, we have
\begin{equation}
\label{eqn:nj_tail}
\mathbf{P}_{\ell} \!\left[ N_{i+1}-N_i \geq (c/40) \log \eps^{-1} \giv Y_{\tau_{N_i}} > 0,\, \CF_{\tau_{N_i}} \right] \leq \eps^{\sp{b}_3c}.
\end{equation}
On the other hand, the number of $k\in\N$ such that $Y_{\ell+(7k+1)\epsilon} \in (0, \ell_0 \epsilon^2)$ is stochastically dominated by a geometric random variable uniformly in $\epsilon > 0$.  Indeed, by the scaling property of $3/2$-stable CSBPs, on $\{Y_{\ell+(7k+1)\epsilon} \in (0, \ell_0 \epsilon^2)\}$ we have that the conditional probability given $\CF_k$ of the event that $Y$ hits $0$ in $[\ell+(7k+1)\epsilon, \ell+(7k+8)\epsilon]$ is positive uniformly in $\epsilon > 0$. It follows that there exists $\sp{b}_4>0$ and $\eps_0>0$ such that the probability that there are at least $(c/40) \log \eps^{-1}$ such values of $k$ is at most $\eps^{\sp{b}_4 c}$ for all $c>0$ and $\eps\in(0,\eps_0)$.

To complete the proof,  it suffices to show that there exist $\sp{b}_2>0$ and $\eps_0>0$ such that for all $c>0$ and $\eps\in(0,\eps_0)$, with probability at most $\eps^{\sp{b}_2 c}$, there exists $n \in \N_0$ such that $(n+1) (c/2) \eps \log\eps^{-1} \le s$ and  $\eta$ does not pass through an {\X} which is $(s_0 \eps, s_1 \eps, 2\eps)$-good for $\eta$ in $I_n:=[s- (n+1) (c/2) \eps \log\epsilon^{-1}, s - n (c/2)\eps \log\epsilon^{-1}]$. Indeed, each interval $[t_1, t_2]$ where  $2\eps \leq t_1 < t_1 + c \eps \log \eps^{-1} \leq t_2 \leq s-2\eps$ contains an interval  $I_n$ for some $n \in \N_0$ with  $(n+1) (c/2) \eps\log\eps^{-1} \le s$.

Fix $n \in \N_0$. On the event $\{(n+1) (c/2) \eps \log\eps^{-1} \le s\}$,  there are at least $ (c/20) \log \eps^{-1}$ values of $k$ for which $\ell+(7k+1)\epsilon \in I_n$. We have previously shown that the probability that there are at least $(c/40) \log \eps^{-1}$ values of $k$ with $Y_{\ell+(7k+1)\epsilon} \in (0, \ell_0 \epsilon^2)$ is at most $\eps^{\sp{b}_4 c}$.  Combining this with~\eqref{eqn:nj_tail} implies that the probability that $\eta$ does not pass through an {\X} in $I_n$ is at most $\eps^{\sp{b}_5 c}$ with $\sp{b}_5 =\max(\sp{b}_3, \sp{b}_4)$.

On the other hand, \eqref{eqn:csbp_extinction_time} implies that for all $\sp{b}>0$,
\begin{align*}
\mathbf{P}_{\ell} [s \geq \eps^{-\sp{b}} \giv Y_\ell < \eps^{-\sp{b}} ] =O( \eps^{\sp{b}}).
\end{align*}
Recall that $\p_\ell[Y_\ell \ge \eps^{-\sp{b}}] \le c_2 \eps^\sp{b}$ by \eqref{eq:Yl_upper_bound}. Now apply the union bound to the probability in the previous paragraph by summing over $n\in[0, \eps^{-\sp{b}-2}] \cap\N_0$. We get that the  probability that there exist times  $2\eps \leq t_1 < t_1 + c \eps \log \eps^{-1} \leq t_2 \leq s-2\eps$ so that $\eta|_{[t_1,t_2]}$ does not pass through any {\X} which is $(s_0 \eps, s_1 \eps, 2\eps)$-good for $\eta$ is at most $\eps^{\sp{b}_5c -\sp{b}-2} + c_2 \eps^\sp{b}+O(\eps^{\sp{b}})$. Take $\sp{b}_2=\sp{b}_5/4$ and $c_4=5/\sp{b}_5$. For all $c>c_4$, choose $\sp{b} \in (\sp{b}_5c/4, 3\sp{b}_5c/4-2)$. Then we have $\sp{b}_5 c-\sp{b}-2 > \sp{b}_2c$ and $b> \sp{b}_2c$, hence the previously mentioned probability is at most $\eps^{\sp{b}_2c}$ for all $c>c_4$ and $\eps$ sufficiently small. This completes the proof.
\end{proof}

\subsection{Proof of Proposition~\ref{prop:strong_confluence}}
\label{sec:proof_prop_3.1}
In this subsection, we will complete the proof of Proposition~\ref{prop:strong_confluence}.  We will make use of the following strategy.  First, for all sufficiently large $k \in \N$ we place an $(\eps_{k}=2^{-k})$-net of typical points in the Brownian map and prove using Lemma~\ref{lem:x_concentration} that there are \X's everywhere along every geodesic emanating from points in each $2^{-k}$-net. Then we will show that for any pair of geodesics $\eta_1, \eta_2$ which are close in the one-sided Hausdorff distance, we can always find $k \in \N$ and a point in the $2^{-k}$-net and a geodesic $\sigma$ emanating from this point which stays between $\eta_1$ and $\eta_2$. This will force $\eta_1$ and $\eta_2$ to both intersect the same \X\ along $\sigma$ so that they will also intersect each other.

We will focus on proving the assertion of Proposition~\ref{prop:strong_confluence} for a.e.\ $(\CS,d,\nu,x,y)$ which is sampled from $\bminflaw$ conditioned on $\{1 \leq \nu(\CS) \leq 2\}$.  It will then follow that for Lebesgue a.e.\ value of $a \in [1,2]$, the same result holds a.s.\ for $(\CS,d,\nu,x,y)$ with law $\bmlaw{a}$. The result in the case of a sample from $\bmlaw{a}$ for every value of $a > 0$ thus holds by the scaling property of the Brownian map.  This will complete the proof of Proposition~\ref{prop:strong_confluence}.

Throughout this subsection, we suppose that $(\CS,d,\nu,x,y)$ is sampled from $\bminflaw$ conditioned on $\{1 \leq \nu(\CS) \leq 2\}$. Let $(z_i)$ be a sequence of points chosen i.i.d.\ with respect to the measure $\nu$.  Let $s_0, s_1, r_0, \sp{b}_0, c_0$ be given by Lemma~\ref{lem:x_concentration}. Fix $M>0$ that we will adjust later.  For each $k\ge 1$, let $\eps_k=2^{-k}$ and $N_k = \eps_k^{-4-\sp{a}}$. Let $L_k= c_1 (M\eps_k) \log (M\eps_k)^{-1}$.

\begin{lemma}\label{lem:eps_net_points} 
Fix $c_1> 2 c_0$ so that $\sp{b}_0 c_1/2 \ge 9$. Fix $\sp{a} \in(0, 1/2)$. There a.s.\ exists $k_0 \in \N$ so that for all $k \geq k_0$, the following events occur
\begin{enumerate}
\item[$(A_k)$] For all $z\in\CS$, there exists $1 \leq i \leq N_k$ such that $z_i \in B(z, \eps_k)$.
\item[$(B_k)$] For all $1 \leq i \leq N_k$, for every geodesic $\omega: [0, L] \to \CS$ with length $L\ge L_k$ starting from $z_i$ and $2 M\eps_k \leq t_1 \leq t_1 + L_k \leq t_2 \leq L -2 M\eps_k$, $\omega|_{[t_1,t_2]}$ passes through an {\X} which is $(s_0 M\eps_k, s_1 M\eps_k, 2M\eps_k)$-good.
\end{enumerate}

\end{lemma}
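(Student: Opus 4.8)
\begin{proof}[Proof sketch]
The plan is to establish each of the two families of events separately and then apply the Borel--Cantelli lemma.

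\textbf{The event $A_k$.} This is exactly the content of Lemma~\ref{lem:typical_points_dense} with the choice $\sp{a}' < \sp{a}$: that lemma gives that $\bminflaw$-a.e.\ there is $k_0$ so that for all $k \geq k_0$ the union of $B(z_j,\eps_k)$ for $1 \leq j \leq \eps_k^{-4-\sp{a}'}$ covers $\CS$, and since $\eps_k^{-4-\sp{a}'} \leq N_k$ for large $k$, the event $A_k$ holds. (Here we first condition on $\{1 \le \nu(\CS) \le 2\}$; by scaling the statement transfers from $\bmlaw{1}$ as in the proof of Lemma~\ref{lem:typical_points_dense}.)

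\textbf{The event $B_k$.} Fix $i$ with $1 \leq i \leq N_k$. We want to bound the $\bminflaw$-probability that there is a geodesic $\omega$ of length $\ge L_k$ from $z_i$ and times $2M\eps_k \le t_1 < t_1 + L_k \le t_2 \le L - 2M\eps_k$ with $\omega|_{[t_1,t_2]}$ passing through no $(s_0 M\eps_k, s_1 M\eps_k, 2M\eps_k)$-good \X. Since $z_i$ is a $\nu$-typical point, the conditional law of $(\CS,d,\nu)$ marked by $z_i$ and an independent further $\nu$-sample is that of $\bminflaw$ restricted to the area window, so up to the bounded area conditioning we may apply Lemma~\ref{lem:x_concentration} with the marked point $x = z_i$, with the scale parameter $M\eps_k$ in place of $\eps$, and with the constant $c = c_1$. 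Indeed, any geodesic $\omega$ from $z_i$ of length $\ge L_k$ which contains a subsegment $\omega|_{[t_1,t_2]}$ as above is a geodesic from $\partial\fb{y}{z_i}{t}$ (for $t$ the appropriate radius, after extending $\omega$ to hit the far point) to $z_i$ — more precisely, reversing the parametrization, it is a geodesic towards $z_i$ of the type considered in Lemma~\ref{lem:x_concentration}, and a good \X\ for $\omega$ in the sense of the present lemma is a good \X\ for the reversed geodesic in the sense of Lemma~\ref{lem:x_concentration} at scale $M\eps_k$. Hence Lemma~\ref{lem:x_concentration} bounds the probability of the bad event by $(M\eps_k)^{\sp{b}_0 c_1}$, which for $k$ large is at most $C \eps_k^{\sp{b}_0 c_1}$ for a constant $C$ depending on $M$.

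\textbf{Union bound and Borel--Cantelli.} Summing over $1 \leq i \leq N_k$, the probability that $B_k$ fails is at most
\[
N_k \cdot C \eps_k^{\sp{b}_0 c_1} = C \eps_k^{\sp{b}_0 c_1 - 4 - \sp{a}}.
\]
By the hypothesis $\sp{b}_0 c_1/2 \ge 9$ and $\sp{a} < 1/2$ we have $\sp{b}_0 c_1 - 4 - \sp{a} \ge 18 - 4 - 1/2 > 0$, so this is summable in $k$ (indeed it decays like $2^{-k(\sp{b}_0 c_1 - 4 - \sp{a})}$). Likewise, the proof of Lemma~\ref{lem:typical_points_dense} gives that $A_k^c$ has probability decaying faster than any power of $\eps_k$, hence summable. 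The Borel--Cantelli lemma then yields that a.s.\ only finitely many of the events $A_k^c$, $B_k^c$ occur, i.e.\ there a.s.\ exists $k_0$ so that $A_k$ and $B_k$ both hold for all $k \geq k_0$. Finally, since this holds for $\bminflaw$ conditioned on $\{1 \le \nu(\CS) \le 2\}$, it holds for $\bmlaw{a}$ for Lebesgue-a.e.\ $a \in [1,2]$, and then for all $a>0$ by the scaling property of the Brownian map, and hence $\bminflaw$-a.e.
\end{proof}

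\textbf{Remark on the main obstacle.} The only genuinely delicate point is the reduction in the middle step: one must check that the notion of an $(s_0 M\eps, s_1 M\eps, 2M\eps)$-good \X\ for a geodesic $\omega$ \emph{emanating from} $z_i$ (as defined before Lemma~\ref{lem:good_band_map}) matches, after time-reversal, the notion used in Lemma~\ref{lem:x_concentration} for a geodesic from $\partial\fb{y}{x}{t}$ \emph{to} $x$, and that the length and margin conditions ($L \ge L_k$, the $2M\eps_k$ buffers at the two ends) line up with the conditions $2\eps \le t_1 < t_1 + c\eps\log\eps^{-1} \le t_2 \le t - 2\eps$ there. This is bookkeeping rather than a new idea, but it is where care is needed; everything else is a routine union bound plus Borel--Cantelli together with the scaling and re-rooting invariance of the Brownian map.
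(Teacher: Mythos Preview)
There is a genuine gap in your reduction for $B_k$. Lemma~\ref{lem:x_concentration} controls geodesics $\eta$ from $\partial\fb{y}{x}{t}$ to $x$ for a \emph{fixed} second typical point $y$. An arbitrary geodesic $\omega\colon[0,L]\to\CS$ starting at $z_i$ need not be of this form (even after time-reversal or extension): the endpoint $\omega(L)$ can lie in a ``hole'' of the metric ball, i.e.\ in a component of $\CS\setminus B(z_i,t)$ (for some $t<L$) that does not contain $y$. In that case $\omega(L)$ is in the interior of $\fb{y}{z_i}{L}$, not on $\partial\fb{y}{z_i}{L}$, and there is no way to extend $\omega$ to a geodesic ending on $\partial\fb{y}{z_i}{\cdot}$. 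So your claim that ``reversing the parametrization, it is a geodesic towards $z_i$ of the type considered in Lemma~\ref{lem:x_concentration}'' is not justified with a single auxiliary point $y$.

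The paper closes this gap by using the net itself to supply the second marked point. On $A_k$ one finds $1\le j\le N_k$ with $\omega(L)\in B(z_j,\eps_k)$; then $\omega$ hits $\partial\fb{z_j}{z_i}{d(z_i,z_j)-\eps_k}$ at some time $\wt s$ with $L-\wt s<2\eps_k$, and $\omega|_{[0,\wt s]}$ \emph{is} a geodesic from $z_i$ to $\partial\fb{z_j}{z_i}{\cdot}$, so Lemma~\ref{lem:x_concentration} applies with $(x,y)=(z_i,z_j)$. This is why the paper first proves an intermediate event $E_k$ quantified over \emph{pairs} $(i,j)$ and then shows $A_k\cap E_k\Rightarrow A_k\cap B_k$; the union bound is over $N_k(N_k-1)$ pairs, which is also why the hypothesis reads $\sp{b}_0 c_1/2\ge 9$ (to beat $N_k^2\asymp\eps_k^{-8-2\sp{a}}$) rather than the weaker $\sp{b}_0 c_1>4+\sp{a}$ your single-index bound would need. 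Your ``Remark on the main obstacle'' flags only the time-reversal bookkeeping, but the real missing idea is this choice of the second typical point near the terminal endpoint of $\omega$.
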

\begin{proof}
By Lemma~\ref{lem:typical_points_dense}, there exists $k_0\in\N$ such that for all $k\ge k_0$, the event $A_k$ holds.
Fix $k \in \N$ and $1 \leq i, j \leq N_k$ distinct. Fix $r\in(0,r_0)$ where $r_0$ is as in Lemma~\ref{lem:x_concentration}.
Applying Lemma~\ref{lem:x_concentration} to $(\CS,d,\nu,z_i,z_j)$ in place of $(\CS,d,\nu,x,y)$, we deduce that by possibly increasing $k_0$, the following event has probability at most a constant times $\eps_k^{9}$. There exists $t\in (0, d(z_i, z_j)-r)$, a geodesic $\eta$ from $\partial \fb{z_j}{z_i}{t}$ to $z_i$ and times $2M\eps_k \le t_1< t_1 + L_k/2 \le t_2\le t -2 M\eps_k$ so that $\eta|_{[t_1, t_2]}$ does not pass through any \X\  which is $(s_0 M\eps_k, s_1 M\eps_k, 2M\eps_k)$-good. 
Since this is true for all $r\in(0,r_0)$, the preceding event with $t\in (0, d(z_i, z_j))$ in place of $t\in (0, d(z_i, z_j)-r)$ also occurs with probability at most a constant times $\eps_k^{9}$.
We can then apply a union bound for the $N_k(N_k-1)$ pairs of distinct $(i,j)$. This implies that the following event $E_k$ holds with probability at least $1- O( \eps_k^{1-2\sp{a}})$. 
\begin{enumerate}
\item[$(E_k)$]  For all $1 \leq i,j \leq N_k$ such that $d(z_i, z_j)> L_k/2$, for all geodesics $\omega \colon [0,L] \to \CS$ with length $L \ge L_k/2$ from $z_i$ to $\partial \fb{z_j}{z_i}{L}$ and $2M\eps_k \leq t_1 \leq t_1 + L_k/2 \leq t_2 \leq L-2M\eps_k$, $\omega|_{[t_1, t_2]}$ passes through an {\X} which is $(s_0 M\eps_k, s_1 M\eps_k, 2M\eps_k)$-good.
\end{enumerate}
By the Borel-Cantelli lemma, we get that, by possibly increasing the value of $k_0 \in \N$, $A_k \cap E_k$ holds for every $k\ge k_0$.

Now, let us show that the event $A_k \cap E_k$ implies the event $A_k \cap B_k$, which will complete the proof of the lemma.
Fix $L \ge L_k$. On $A_k$, any geodesic $\omega: [0, L] \to \CS$  starting from $z_i$ ends at $\omega(L) \in B(z_j, \eps_k)$ for some $1 \leq j \leq N_k$.  Note that $\fb{z_j}{z_i}{d(z_i, z_j) - \eps_k}$ is always disjoint from $B(z_j, \eps_k)$.
This implies that $\omega$ intersects $\partial \fb{z_j}{z_i}{d(z_i, z_j) - \eps_k}$ at a unique point $\omega(\wt s)$ for some $\wt s>0$. Moreover, $L- \wt s \in (0, 2\eps_k)$ by the triangle inequality. 
On $A_k\cap E_k$, for any $2M\eps_k \leq t_1 \leq t_1 + L_k \leq t_2 \leq L-2M\eps_k$, $\omega|_{[t_1, t_1 + L_k/2]}$ must pass through an \X\ which is $(s_0 M\eps_k, s_1 M\eps_k, 2M\eps_k)$-good, hence so do $\omega|_{[t_1, t_2]}$.
Since this is true for all $1 \leq i \leq N_k$ and every geodesic $\omega$ starting from $z_i$, the event $A_k\cap B_k$ also holds.  
\end{proof}

Let us collect the following general fact about a pair of geodesics in a metric space which roughly speaking says that if they are close in the Hausdorff sense then their lengths are close and their endpoints are also close.

\begin{lemma}\label{lem:hausdorff_close}
Suppose that $(X,d)$ is a geodesic metric space and $\eta_i \colon [0,T_i] \to X$ for $i=1,2$ are geodesics such that $\distH(\eta_1([0, T_1]),\eta_2([0,T_2])) \leq \epsilon$.  Then, by possibly reversing the time of $\eta_2$, we have
\begin{align}\label{eq:triangular}
|T_1- T_2| \le 2\eps,\quad d(\eta_1(0), \eta_2(0))\le 5\eps, \quad d(\eta_1(T_1), \eta_2(T_2))\le 5\eps.
\end{align}
\end{lemma}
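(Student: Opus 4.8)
The statement is a soft fact about geodesics in a general geodesic metric space, so no probabilistic input is needed. Fix geodesics $\eta_i\colon[0,T_i]\to X$ with $\distH(\eta_1,\eta_2)\le\eps$, and write $p_i=\eta_1(0)$, $q_i=\eta_1(T_1)$ etc.\ for the endpoints. The basic tool throughout is that each $\eta_i$ is an isometric embedding of an interval, so $d(\eta_i(s),\eta_i(t))=|s-t|$, and that every point of $\eta_1$ lies within $\eps$ of $\eta_2$ and vice versa.

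First I would normalize orientation. The Hausdorff bound gives points $a,b\in\eta_2$ with $d(\eta_1(0),a)\le\eps$ and $d(\eta_1(T_1),b)\le\eps$. Writing $a=\eta_2(s_0)$, $b=\eta_2(s_1)$, the triangle inequality gives $|s_1-s_0|=d(a,b)\ge d(\eta_1(0),\eta_1(T_1))-2\eps=T_1-2\eps$; symmetrically $T_2\ge T_1-2\eps$ exchanging roles, and running the same argument with the roles of $\eta_1,\eta_2$ swapped gives $T_1\ge T_2-2\eps$, hence $|T_1-T_2|\le 2\eps$. By relabeling (reversing the time parametrization of $\eta_2$ if necessary) we may assume $s_0\le s_1$, i.e.\ $\eta_2$ is oriented "the same way" as $\eta_1$; this is the one place the "possibly reversing the time of $\eta_2$" clause is used, and I should check that the two candidate orientations cannot both fail, which follows because if the nearest points to $\eta_1(0)$ and $\eta_1(T_1)$ were in the "wrong order" then $|s_1-s_0|\le 2\eps$, forcing $T_1\le 4\eps$, and then both orientations trivially satisfy the desired bounds (all four endpoints are within $\le 4\eps+2\eps$ of each other by the triangle inequality, which is already $\le 5\eps$ only if we are slightly more careful — see below).

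Next, the endpoint bounds. With the orientation fixed so that $a=\eta_2(s_0)$ is the point of $\eta_2$ closest to $\eta_1(0)$ and $b=\eta_2(s_1)$ closest to $\eta_1(T_1)$, I claim $s_0\le\text{(small)}$ and $s_1\ge T_2-\text{(small)}$, which then yields $d(\eta_2(0),\eta_1(0))\le d(\eta_2(0),a)+d(a,\eta_1(0))=s_0+d(a,\eta_1(0))$. To bound $s_0$: the point $\eta_2(0)$ lies within $\eps$ of some point $\eta_1(t)$; then $s_0\le d(a,\eta_2(0))$, and one estimates $d(a,\eta_2(0))$ by going $a\to\eta_1(0)\to\eta_1(t)\to\eta_2(0)$, giving $s_0\le \eps + t + \eps$. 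It remains to see $t$ is small: $t=d(\eta_1(0),\eta_1(t))\le d(\eta_1(0),\eta_2(0))+\eps \le (s_0+\eps)+\eps$, and also $d(\eta_1(0),\eta_2(0))\le 2\eps+ (\text{length discrepancy})$... this is getting circular, so the clean way is: use the ordering. Since $s_0\le s_1$ and $|s_1-s_0|\ge T_1-2\eps$ while $s_1\le T_2\le T_1+2\eps$, we get $s_0\le T_2-(T_1-2\eps)\le 4\eps$; similarly $s_1\ge T_2-4\eps$. Hence $d(\eta_1(0),\eta_2(0))\le d(\eta_1(0),a)+d(a,\eta_2(0))\le\eps+s_0\le\eps+4\eps=5\eps$, and symmetrically $d(\eta_1(T_1),\eta_2(T_2))\le\eps+(T_2-s_1)\le 5\eps$.

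The main (minor) obstacle is handling the orientation dichotomy cleanly — i.e.\ making sure that after choosing the "forward" labeling one genuinely has $s_0\le s_1$ with $|s_1-s_0|\ge T_1-2\eps$, rather than the degenerate case where the closest points to the two ends of $\eta_1$ occur in the reversed order along $\eta_2$. I would dispose of this by noting that for each of the two orientations of $\eta_2$ one gets a pair $(s_0,s_1)$, and $|s_1-s_0|$ is the same for both; so at least one orientation has $s_0\le s_1$, and for that one the chain of inequalities above goes through. Everything else is just repeated use of the triangle inequality and the isometry property of geodesics, so the write-up is short.
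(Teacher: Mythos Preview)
Your proposal is correct and follows essentially the same approach as the paper: both find points $\eta_2(s_0),\eta_2(s_1)$ within $\eps$ of $\eta_1(0),\eta_1(T_1)$, use the triangle inequality to get $|T_1-T_2|\le 2\eps$ and $s_1-s_0\ge T_1-2\eps$, orient $\eta_2$ so $s_0\le s_1$, and then conclude $s_0\le 4\eps$ and $T_2-s_1\le 4\eps$. The paper packages the last step slightly more compactly as the single inequality $s_0+(T_2-s_1)\le T_2-(T_1-2\eps)\le 4\eps$, which simultaneously gives both bounds since each summand is nonnegative; your derivation obtains the two bounds separately but equivalently. Your digression about a possible ``degenerate case'' with the orientation is unnecessary --- reversing $\eta_2$ always achieves $s_0\le s_1$, and your final chain of inequalities goes through uniformly (including when $T_1\le 2\eps$, since then $T_2\le 4\eps$ so $s_0\le T_2\le 4\eps$ trivially).
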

\begin{proof}
Fix $0 \leq s_1 < t_1 \leq T_1$. There exist $0 \leq s_2,t_2 \leq T_2$ such that 
\begin{align}\label{eq:eta_distance}
d(\eta_1(s_1),\eta_2(s_2))\le \eps, \quad d(\eta_1(t_1),\eta_2(t_2))\le \eps.
\end{align}
By the triangle inequality, we have that
\begin{align*}
t_1 - s_1 = d(\eta_1(s_1),\eta_1(t_1)) \leq 2 \epsilon + d(\eta_2(s_2),\eta_2(t_2)) = 2\epsilon + |t_2 - s_2|.
\end{align*}
Taking $s_1 = 0$ and $t_1 = T_1$ we see that 
\begin{align}
\label{eq:st1}
T_1 \leq |t_2-s_2| + 2\epsilon \le T_2 +2\eps.
\end{align}
By swapping the roles of $\eta_1$ and $\eta_2$, we also have 
\begin{align}\label{eq:st2}
T_2 \leq T_1 + 2 \epsilon.  
\end{align}
Combining, we see that
\begin{align*}
    T_2 - |t_2-s_2|
 &\leq T_2 - (T_1 - 2\epsilon) \quad\text{(by~\eqref{eq:st1})}\\
 &\leq 4 \epsilon \quad\text{(by~\eqref{eq:st2})}.
\end{align*}
By possibly reversing the time of $\eta_2$, we can assume that $s_2 \le t_2$, in which case $s_2 + (T_2 - t_2) \le 4\eps$.  Since $s_2 \geq 0$ and $T_2 - t_2 \geq 0$, this inequality implies both $s_2 \leq 4 \epsilon$ and $T_2 - t_2 \leq 4 \epsilon$.  Together with~\eqref{eq:eta_distance} applied to $s_1=0$ and $t_1=T_1$, we deduce both
\begin{align*}
&d(\eta_1(0),\eta_2(0)) \leq d(\eta_1(0), \eta_2(s_2)) + d(\eta_2(s_2), \eta_2(0))\le \eps + s_2 \le 5\eps \quad\text{and}\\
&d(\eta_1(T_2),\eta_2(T_2)) \leq d(\eta_1(T_1), \eta_2(t_2)) + d(\eta_2(t_2), \eta_2(T_2))\le \eps + (T_2 -t_2) \le 5\eps.
\end{align*}
This completes the proof.
\end{proof}

From now on, fix $k_0$ and $c_1$ as given by Lemma~\ref{lem:eps_net_points}. Fix $\eps_0 \le 2^{-k_0}$ and $\eps\in(0,\eps_0)$. Fix $\delta =  8 c_1 M \eps \log \eps^{-1}$.
Suppose that $\eta_i \colon [0,T_i] \to \CS$ for $i=1,2$ are geodesics with $T_i = d(\eta_i(0), \eta_i(T_i)) \geq 2\delta$ and $\distHos(\eta_1([0, T_1]),\eta_2([0,T_2])) \leq \epsilon.$  
Without loss of generality, suppose that $\eta_2$ is close to the right side of $\eta_1$, namely $\distHos(\eta_1, \eta_2)= \ell_\mathrm{R}$ in~\eqref{eq:one_sided_hausdorff}. 

By possibly reversing the time of $\eta_2$, suppose that~\eqref{eq:triangular} holds.
Let $\gamma_0$ be a geodesic from $\eta_1(0)$ to $\eta_2(0)$ and $\gamma_1$ be a geodesic from $\eta_1(T_1)$ to $\eta_2(T_2)$. See Figure~\ref{fig:sigma}.  Let $U$ be the open set of points surrounded clockwise by the concatenation of $\eta_1$, $\gamma_1$, the time-reversal of $\eta_2$, and the time-reversal of $\gamma_0$. 
There exist $0\le \alpha_1 < \alpha_2 \le T_1$, paths $\gamma_2$ from $\eta_1(\alpha_1)$ to $\eta_2(10 \eps)$ and $\gamma_3$ from $\eta_1(\alpha_2)$ to $\eta_2(T_2- 10 \eps)$ such that both $\gamma_2, \gamma_3$ are contained in $\ol U$ and have length at most $\eps$.
Let $V$ be the open set of points surrounded by the concatenation of $\eta_1|_{[\alpha_1, \alpha_2]}$, $\gamma_3$, the time-reversal of $\eta_2|_{[10\epsilon,T_2-10\epsilon]}$, and the time-reversal of $\gamma_2$. 
Define 
\[R_V = \sup\{ r > 0 : \exists w \in V, B(w,r) \subseteq V\} \quad \text{and} \quad R= \min(R_V, \eps). \]
Choose $k \in \N$ so that $R \in (\eps_k, 2\eps_k]$. 
Pick $w \in V$ so that $B(w,R) \subseteq V$. 
As $R<\eps$ so $k \geq k_0$, we know by Lemma~\ref{lem:eps_net_points} that there exists $1 \leq j \leq N_k$ so that $z_j \in B(w,\eps_k) \subseteq V$.

\begin{figure}[h!]
\begin{center}
\includegraphics[width=.92\textwidth]{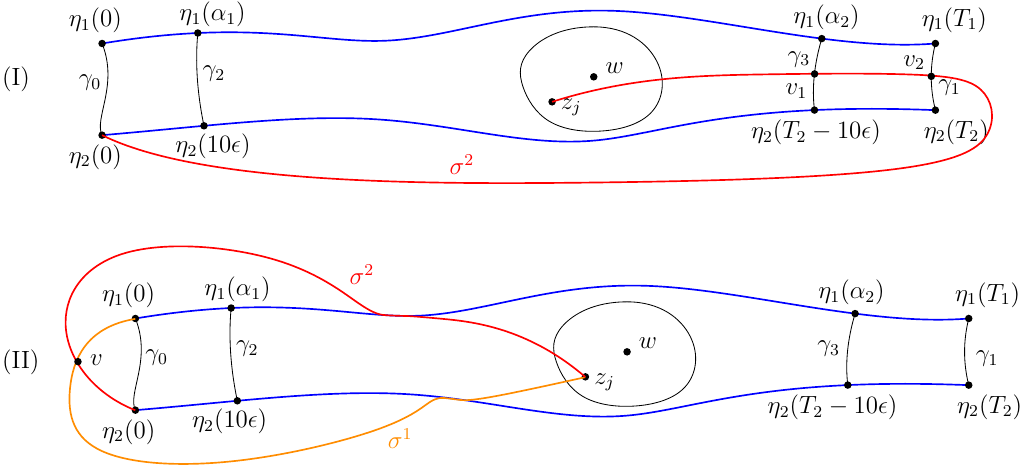}	
\end{center}
\caption{\label{fig:sigma} Illustration of the setup and the proof of Lemma~\ref{lem:sigma_contained_U}.}
\end{figure}

\begin{lemma}\label{lem:sigma_contained_U}
There exists $i\in\{1,2\}$ and a geodesic $\sigma^i$ from $z_j$ to $\eta_i(0)$ which is contained in $\ol U$.
\end{lemma}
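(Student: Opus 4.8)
The plan is to construct $\sigma^i$ by taking an arbitrary geodesic from $z_j$ to $\eta_1(0)$ (or to $\eta_2(0)$) and, whenever it leaves $\ol{U}$, rerouting it along $\partial U$; this can be done without increasing its length precisely when the relevant arc of $\partial U$ is itself a geodesic. Concretely, let $\sigma\colon[0,S]\to\CS$ be a geodesic from $z_j$ to $\eta_1(0)$. Since $z_j\in V\subseteq U$, $U$ is open, and $\eta_1(0)\in\partial U$, there is a first time $\rho\in(0,S]$ with $\sigma(\rho)\in\partial U$; then $\sigma([0,\rho])\subseteq\ol{U}$, and since $\sigma(\rho)$ lies on the geodesic $\sigma$ we have $d(z_j,\sigma(\rho))+d(\sigma(\rho),\eta_1(0))=d(z_j,\eta_1(0))$. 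Recall that $\partial U=\eta_1([0,T_1])\cup\gamma_1\cup\eta_2([0,T_2])\cup\gamma_0$, that $\gamma_0,\gamma_1$ are geodesics of length at most $5\eps$ by Lemma~\ref{lem:hausdorff_close}, and that $\gamma_0$ joins $\eta_1(0)$ and $\eta_2(0)$. If $\sigma(\rho)\in\eta_1([0,T_1])$, say $\sigma(\rho)=\eta_1(s)$, then the reversal of $\eta_1|_{[0,s]}$ is a geodesic from $\sigma(\rho)$ to $\eta_1(0)$ of length $s=d(\eta_1(s),\eta_1(0))=d(\sigma(\rho),\eta_1(0))$, so its concatenation with $\sigma|_{[0,\rho]}$ is a path from $z_j$ to $\eta_1(0)$ of length $\rho+s=d(z_j,\eta_1(0))$ contained in $\ol{U}$; this is the desired geodesic, so we are done with $i=1$. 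The same rerouting works verbatim when $\sigma(\rho)\in\gamma_0$, since $\gamma_0$ is a geodesic with $\eta_1(0)$ as an endpoint. Running this with $\eta_2(0)$ in place of $\eta_1(0)$ shows likewise that if some geodesic from $z_j$ to $\eta_2(0)$ first meets $\partial U$ on $\eta_2([0,T_2])\cup\gamma_0$, then $\sigma^2$ exists.

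It therefore remains to rule out the configuration in which every geodesic from $z_j$ to $\eta_1(0)$ first meets $\partial U$ on $\eta_2([0,T_2])\cup\gamma_1$ and every geodesic from $z_j$ to $\eta_2(0)$ first meets $\partial U$ on $\eta_1([0,T_1])\cup\gamma_1$. Here I would exploit that one-sided Hausdorff closeness forces $U$ to be a thin strip: using $\distHos(\eta_1,\eta_2)\le\eps$, the definition of the interior-internal metric, and the Brownian-disk estimate Lemma~\ref{lem:boundary_length_distance}, one gets an absolute constant $C$ with $\ol{U}$ contained in the $C\eps$-neighbourhood of $\eta_1$; in particular $d(z_j,\eta_1)\le C\eps$ and the inradius $R_V$ of $V$ is at most $C\eps$. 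Moreover the construction makes $\gamma_0,\gamma_1,\gamma_2,\gamma_3$ short and separates $V$ from the four ``corner'' regions of $\partial U$ by collars of width of order $\eps$ — for instance $\alpha_1\le 16\eps$ and $\alpha_2\ge T_1-16\eps$, from the bounds on the lengths of $\gamma_0,\gamma_1,\gamma_2,\gamma_3$ and $|T_1-T_2|\le 2\eps$. Now take a geodesic $\sigma$ from $z_j$ to $\eta_1(0)$ with first hitting point $\sigma(\rho)\in\eta_2([0,T_2])\cup\gamma_1$. If $\sigma(\rho)\in\gamma_1$, then $d(\sigma(\rho),\eta_1(0))\ge T_1-5\eps\ge 2\delta-5\eps$, whereas $d(z_j,\eta_1(0))\le d(z_j,\eta_1)+T_1\le C\eps+T_1$; the identity above then forces $d(z_j,\sigma(\rho))\le(C+5)\eps$ and hence $d(z_j,\eta_1(T_1))\le(C+10)\eps$, which is incompatible with $z_j\in V$ once $\eps$ is small. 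If $\sigma(\rho)=\eta_2(s)$ with $s$ not too small, then $d(\eta_2(s),\eta_1(0))\ge s-5\eps$ and an analogous computation forces $z_j$ to be within $O(\eps)$ of $\eta_2(0)$, again contradicting $z_j\in V$. Finally, if $s$ is small, then $\sigma|_{[\rho,S]}$ is a short geodesic confined near the ``start'' end of the strip, the path obtained by following $\sigma|_{[0,\rho]}$ and then the reversal of $\eta_2|_{[0,s]}$ lies in $\ol{U}$ and joins $z_j$ to $\eta_2(0)$, and either it is a geodesic (done, $i=2$) or $d(z_j,\eta_2(0))<d(z_j,\eta_2(s))+s$; replacing $\sigma$ by a length-minimising choice among geodesics from $z_j$ to $\eta_1(0)$ or $\eta_2(0)$ and re-running the argument then pushes the first-exit point onto one of the ``good'' arcs.

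The step I expect to be the main obstacle is this last one: making rigorous that $z_j\in V$ is incompatible with a geodesic from $z_j$ to $\eta_i(0)$ leaving $\ol{U}$ through $\eta_{3-i}$ or through $\gamma_1$. This rests on two quantitative ingredients — the thin-strip picture for $U$ (every point of $\ol{U}$ away from its two ends lying within $O(\eps)$ of $\eta_1$, obtained from one-sided Hausdorff closeness together with Lemma~\ref{lem:boundary_length_distance}), and the width-$O(\eps)$ separation between $V$ and the corner regions of $\partial U$ built into the choices of $\gamma_0,\gamma_1,\gamma_2,\gamma_3$ and $\alpha_1,\alpha_2$ — plus a short iteration to absorb the degenerate case where the exit through $\eta_{3-i}$ occurs very close to one of its endpoints.
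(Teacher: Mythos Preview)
Your rerouting argument in the first paragraph is correct and matches the paper: if a geodesic $\sigma$ from $z_j$ to $\eta_1(0)$ first meets $\partial U$ on $\eta_1$ or on $\gamma_0$, you can reroute along that geodesic arc and stay in $\ol U$; by symmetry the same holds for geodesics to $\eta_2(0)$ first meeting $\eta_2$ or $\gamma_0$. The paper also rules out any intersection with $\gamma_1$ by a direct distance estimate, but using the bound~\eqref{eq:d_v1} on $\partial V$ rather than a ``thin--strip'' estimate for $\ol U$; note that Lemma~\ref{lem:boundary_length_distance} is a statement about the boundary of a Brownian disk and does not apply to the arcs of $\partial U$ here, and the one-sided Hausdorff bound $\distHos(\eta_1,\eta_2)\le\eps$ only says $\eta_2\setminus\eta_1$ is close to one side of $\eta_1$ in the interior-internal metric of $\CS\setminus\eta_1$, not that every point of $\ol U$ lies within $O(\eps)$ of $\eta_1$.

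The genuine gap is the remaining case, where every geodesic from $z_j$ to $\eta_1(0)$ first exits $\ol U$ through $\eta_2$ and every geodesic from $z_j$ to $\eta_2(0)$ first exits through $\eta_1$. Your distance computation does not yield a contradiction here: if $\sigma(\rho)=\eta_2(s)$ with $s$ moderate, the identity $d(z_j,\eta_1(0))=\rho+d(\eta_2(s),\eta_1(0))$ together with $d(\eta_2(s),\eta_1(0))\ge s-5\eps$ only gives $\rho\le d(z_j,\eta_1(0))-s+5\eps$, which places no useful constraint on where $z_j$ sits in $V$. Your proposed iteration cannot help either, since all geodesics from $z_j$ to $\eta_i(0)$ have the same length and the rerouted path $\sigma|_{[0,\rho]}\cup\eta_2|_{[0,s]}^{-1}$ need not be a geodesic; you would just oscillate between the two bad configurations indefinitely. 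The paper resolves this case by a topological argument you are missing: after modifying $\sigma^2$ (to $\eta_2(0)$, first exiting through $\eta_1$) and $\sigma^1$ (to $\eta_1(0)$, first exiting through $\eta_2$) so that each stays outside $U$ after its first exit, the sphere topology of $\CS$ forces $\sigma^1$ and $\sigma^2$ to cross at some point $v$; concatenating $\sigma^1$ from $z_j$ to $v$ with $\sigma^2$ from $v$ to $\eta_2(0)$ then produces a geodesic from $z_j$ to $\eta_2(0)$ that first exits $\ol U$ through $\eta_2$, contradicting the assumed bad configuration.
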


\begin{proof}
Suppose that it is not the case.  Then for $i=1,2$, every geodesic from $z_j$ to $\eta_i(0)$ exits~$\ol U$. 
Let $\sigma^2$ be a geodesic from $z_j$ to $\eta_2(0)$. Then $\sigma^2$ must exit $\ol U$ through either $\eta_1, \eta_2$ or $\gamma_0, \gamma_1$.

First of all, $\sigma^2$ cannot first exit $\ol U$ through $\eta_2$. Otherwise, let $v_0$ be the point where $\sigma^2$ first hits $\eta_2$, then the concatenation of the part of $\sigma^2$ from $z_j$ to $v_0$ and the part of $\eta_2$ from $v_0$ to $\eta_2(0)$ is also a geodesic from $z_j$ to $\eta_2(0)$ and it is contained in $\ol U$, which contradicts our assumption.

Similarly, $\sigma^2$ cannot first exit $\ol U$ through $\gamma_0$. Otherwise, let $v_0$ be the point where $\sigma^2$ first hits $\gamma_0$, then the concatenation of the part of $\sigma^2$ from $z_j$ to $v_0$ and the part of $\gamma_0$ from $v_0$ to $\eta_2(0)$ is also a geodesic from $z_j$ to $\eta_2(0)$ and it is contained in $\ol U$, which contradicts our assumption.

Now, let us show that $\sigma^2$ cannot first exit $\ol U$ through $\gamma_1$. More generally, we will show that
\begin{align}\label{eq:sigma_gamma_empty}
\sigma^2 \cap \gamma_1 =\emptyset.
\end{align}
Suppose in the contrary that $\sigma^2$ intersects $\gamma_1$ at some point $v_2$. Before intersecting $\gamma_1$, $\sigma^2$ must first exit $V$. Suppose that there exists a point $v_1 \in \sigma^2\cap \partial V$ (we illustrate in Figure~\ref{fig:sigma} (I) the case $v_1 \in \gamma_3$). Then we must have
\begin{align}\label{eq:d_v1}
d(v_1, \eta_2(0)) \le T_2 -10\eps + \eps.
\end{align}
This clearly holds if $v_1 \in \eta_2([10\eps, T_2 -10\eps])$. If $v_1 \in \gamma_2$, then we have $d(v_1, \eta_2(0)) \le d(v_1, \eta_2(10\eps)) + d(\eta_2(10\eps), \eta_2(0))\le 11\eps$, hence~\eqref{eq:d_v1} also holds.
If $v_1 \in\gamma_3$, then $d(v_1, \eta_2(0)) \le d(v_1, \eta_2(T_2 - 10\eps)) + d(\eta_2(T_2 - 10\eps), \eta_2(0))\le \eps+ T_2 -10\eps$, hence~\eqref{eq:d_v1} holds. If $v_1 \in \eta_1([\alpha_1, \alpha_2])$, then there exists $t_2 \in [10\eps, T_2 -10\eps]$ such that $d(v_1, \eta_2(t_2)) \le \eps$. Therefore, $d(v_1, \eta_2(0)) \le d(v_1, \eta_2(t_2)) + d(\eta_2(t_2), \eta_2(0)) \le \eps + T_2 -10 \eps$. This completes the proof of~\eqref{eq:d_v1} for all cases. 
Recall that we have assumed that $\sigma^2$ first hits $v_1$ before hitting $v_2$, so we also have
\begin{align*}
d(v_1, \eta_2(0)) =  d(v_1, v_2) + d(v_2, \eta_2(0)) \ge d(v_2, \eta_2(0)) \ge d (\eta_2(T_2), \eta_2(0)) - d (\eta_2(T_2), v_2) \ge T_2 -\eps.
\end{align*}
This contradicts~\eqref{eq:d_v1}, hence proves~\eqref{eq:sigma_gamma_empty}.

The only remaining possibility is that $\sigma^2$ first exits $\ol U$ through $\eta_1$. See Figure~\ref{fig:sigma} (II).
Moreover, by possibly modifying $\sigma^2$, we can assume that after that  $\sigma^2$ exits $\ol U$ through $\eta_1$, it does not reenter $U$. By~\eqref{eq:sigma_gamma_empty}, we know that $\sigma^2$ cannot intersect $\gamma_1$.  
If $\sigma^2$ intersects $\eta_1$ again after leaving it, then we can just replace the part of $\sigma^2$ between the first and last time that it intersects $\eta_1$ by the part of $\eta_1$ between these two points. 
If  $\sigma^2$ intersects $\eta_2$ or $\gamma_0$, then we can modify $\sigma_2$ so that if follows $\eta_2$ or $\gamma_0$ since the first time that it intersects $\eta_2$ or $\gamma_0$ until it reaches $\eta_2(0)$.

By symmetry, there also exists a geodesic $\sigma^1$ from $z_j$ to $\eta_1(0)$ which first exits $\ol U$ through $\eta_2$ and remains outside of $U$ afterwards. Since the Brownian map has the topology of a sphere, these two geodesics $\sigma^1$ and $\sigma^2$ must intersect each other. Let $v$ be such an intersection point.  (See Figure~\ref{fig:sigma} (II) for an illustration.) It follows that the concatenation of the part of $\sigma^1$ from $z_j$ to $v$ and the part of $\sigma^2$ from $v$ to $\eta_2(0)$ is another geodesic from $z_j$ to $\eta_2(0)$ and this geodesic first exits $\ol U$ through $\eta_2$. We have shown earlier that this is impossible. This leads to a contradiction and we have completed the proof.
\end{proof}

We are finally ready to prove Proposition~\ref{prop:strong_confluence}.

\begin{proof}[Proof of Proposition~\ref{prop:strong_confluence}]
Recall that $k$ is chosen so that $R \in (\eps_k, 2\eps_k]$ and $\eps_k=2^{-k}$. 
We will complete the proof in 3 steps.
In Step 1, we will construct a ladder of paths between $\eta_1$ and $\eta_2$ with a spacing of $280 \eps_k$, and show that the longest path in this ladder has length $L\le 128 \eps_k$, which is less than the spacing. 
This will be used in Step 2 to show that when there is a good \X\ of size proportional to $\eps_k$ along a geodesic starting from $z_j$ which stays in $\ol U$, this \X\ must exit $U$ by intersecting both $\eta_1$ and $\eta_2$ and force them to merge. This will imply that $\eta_1$ and $\eta_2$ intersect near  one endpoint, and we will complete the proof in Step 3 by showing that $\eta_1, \eta_2$ also intersect near the other endpoint.

\begin{figure}
\centering
\includegraphics[width=\textwidth]{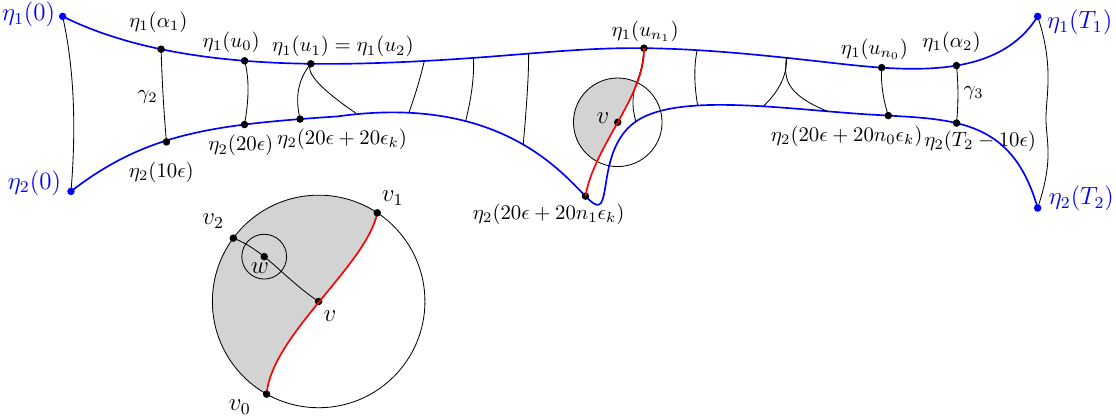}
\caption{Construction of the ladder. The paths $\wh\xi_m$ can possibly overlap, but do not cross each other.
The region $W_0$ is colored in grey. We also show a zoomed version of $B(v, L/8)$ in the case that it intersects $\eta_2$. Both $B(v, L/8)$ and $W_0$ can have holes, but we do not draw them for simplicity.}
\label{fig:proof_X2}
\end{figure}

\emph{Step 1. Construction of a ladder of paths between $\eta_1$ and $\eta_2$.} See Figure~\ref{fig:proof_X1} for the final ladder and see Figure~\ref{fig:proof_X2} for the intermediate steps. 
Let $n_0=  \lfloor (T_2-40 \eps) /(20\eps_k) \rfloor -1$.
For each $0\le n \le n_0$, let $u_n$ be the number  in $[\alpha_1, \alpha_2]$ which minimizes the distance from $\eta_2(20\eps+ 20 n  \eps_k)$ to $\eta_1(u_n)$  with respect to the interior-internal metric $d_V$ (if there are several such numbers, let $u_n$ be the largest one).
Let $\wh\xi_n$ be a shortest path from  $\eta_2(20\eps+ 20 n  \eps_k)$ to $\eta_1(u_n)$ contained in $\ol V$ (if there are more than one such paths, choose the rightmost one).  Note that the length of $\wh{\xi}_n$ is at most $\epsilon$.  Moreover, by~\eqref{eq:triangular} and the triangle inequality, we have
\begin{align}\label{eq:u0un0}
14 \eps\le u_0  \le 26 \eps, \quad 14 \eps \le T_2 - u_{n_0} \le 26 \eps.
\end{align}
For $0\le n\le n_0-1$, we have $u_{n+1} \ge u_n$. Let $L$ denote the maximum of the lengths of the $\wh\xi_n$ for $0\le n \le n_0$. Note that we clearly have $L\le \distHos(\eta_1, \eta_2) \le \eps$. However it is possible that $R_V$ (hence $\eps_k$) is much smaller than $\eps$, and it is not immediately clear that $R_V$ cannot be much smaller than $L$, but we will show that it is not the case. More precisely, we will show that 
\begin{align}\label{eq:rv_prop_L}
R_V \ge L/64.
\end{align}
Suppose that $\wh\xi_{n_1}$ has length $L$ where $0 \le n_1 \le n_0$, and let $v=\wh \xi_{n_1}(L/2)$. Then $B(v, L/8)$ must be disjoint from $\eta_1$, because otherwise the distance from $\eta_2(20\eps + 20 n_1 \eps_k)$ to $\eta_1$ would be at most $5L/8$, which contradicts the definition of $u_{n_1}$. 
It is also clear that $B(v, L/8)$ is disjoint from $\gamma_2$ or $\gamma_3$, because otherwise  the distance from $\eta_2(20\eps + 20 n_1 \eps_k)$ to $\eta_2(10 \eps)$ or $\eta_2(T_2 -10\eps)$ would be at most $L \le \eps$, which is impossible. 
If $B(v, L/8)$ does not intersect $\eta_2$, then $R_V\ge L/8$ hence~\eqref{eq:rv_prop_L} holds.
Otherwise, if $B(v, L/8)$ intersects $\eta_2$, it cannot intersect both $\eta_2([0, 20\eps+ 20 n_1 \eps_k])$ and $\eta_2([20\eps+ 20 n_1 \eps_k, T_2])$. Indeed, if we assume the contrary, then there exist $t_1\in [0,20\eps+ 20 n_1 \eps_k]$ and $t_2 \in[20\eps+ 20 n_1 \eps_k, T_2]$ such that both $\eta_2(t_1)$ and $\eta_2(t_2)$ are in $B(v, L/8)$. Then $t_2 - t_1 = d (\eta_2(t_2), \eta_2(t_1)) \le L/4$. On the other hand, we have
\begin{align*}
t_2-t_1\ge t_2 - (20\eps+ 20 n_1 \eps_k) \ge d(v, \eta_2(20\eps+ 20 n_1 \eps_k)) - d(v, \eta_2(t_2)) \ge 3L/8.
\end{align*}
This is a contradiction. Without loss of generality, we can suppose that $B(v, L/8)$ intersects  $\eta_2([20\eps+ 20 n_1 \eps_k, T_2])$.  
Let $W_0$ be the connected component of $B(v, L/8) \setminus \wh \xi_{n_1}$ which lies to the left of $ \wh \xi_{n_1}$. Note that $W_0$ is disjoint from $\eta_1, \eta_2$ and $\gamma_2, \gamma_3$, hence $W_0 \subset V$. Let $v_0 =\wh \xi_{n_1}(3L/8)$ and $v_1=\wh \xi_{n_1}(5L/8)$ so that they lie on the boundary of $B(v, L/8)$. We have $d(v_0, v_1)= L/4$. By the continuity of the space, there exists a point $v_2 \in V$ on the clockwise part of the boundary of $B(v, L/8)$ from $v_0$ to $v_1$  which satisfies $d(v_2, v_0)\ge L/8$ and $d(v_2, v_1) \ge L/8$.  We draw a geodesic from $v_0$ to $v$ so that it is contained in $\ol W_0$, and then let $w$ be the point on this geodesic which has distance $L/32$ to $v_2$. We claim that $B(w, L/64) \subset W_0$. It is enough to show that $B(w, L/ 64) \cap \wh \xi_{n_1} =\emptyset$. Suppose the contrary so that there exists a point $u$ on the part of $\wh \xi_{n_1}$ which is in $B(w, L/64)$. Then $d(u, v_2) \le d (u, w) + d(w, v_2) \le L/64 + L/32=3L/64.$ On the other hand, the distance from $u$ to one of $v_0, v, v_1$ must be at most $L/16$ and the distance from each of  $v_0, v, v_1$ to $v_2$ is at least $L/8$. By the triangle inequality, we have $d(u, v_2) \ge L/8 - L/16 =L/16$, leading to a contradiction. We have thus proved $B(w, L/64) \subset W_0$, hence~\eqref{eq:rv_prop_L} holds.

Recall that $R=\min(R_V, \eps)$. Noting that $L \le \eps$, \eqref{eq:rv_prop_L} implies that $R \ge L /64$ and consequently 
\begin{align}\label{eq:eps_k_L}
128 \eps_k \ge L.
\end{align}
Let $m_0=\lfloor n_0/14 \rfloor$. For $0\le m \le m_0$, let $v_m=20 \eps+280 m \eps_k$ and $w_m=u_{14m}$. Let $\xi_m=\wh\xi_{14m}$ be the path from $\eta_2(v_m)$ to $\eta_1(w_m)$. Then for each $0\le m \le m_0-1$, we have $\xi_{m+1}\cap \xi_{m}=\emptyset$, because otherwise, we would have $280\eps_k=v_{m+1} - v_m\le 2L$ which contradicts~\eqref{eq:eps_k_L}.
Moreover, 
\begin{align*}
|(w_{m+1} - w_m) - 280 \eps_k |=&|(w_{m+1} - w_m) - (v_{m+1} - v_m)|\\
=&| d(\eta_1(w_{m+1}), \eta_1(w_m)) - d (\eta_2(v_{m+1}), \eta_2(v_m))| \\
\le &d(\eta_1(w_{m+1}), \eta_2(v_{m+1})) + d(\eta_1(w_m), \eta_2(v_m)) \le 2L  \le 256 \eps_k.
\end{align*}
This implies that
\begin{align}\label{eq:w_m_spacing}
 24 \eps_k \le w_{m+1} - w_m \le 536 \eps_k.
\end{align}
The collection of paths $\xi_1, \ldots, \xi_{m_0}$ form the ladder that we want to construct.

\begin{figure}
\centering
\includegraphics[width=\textwidth]{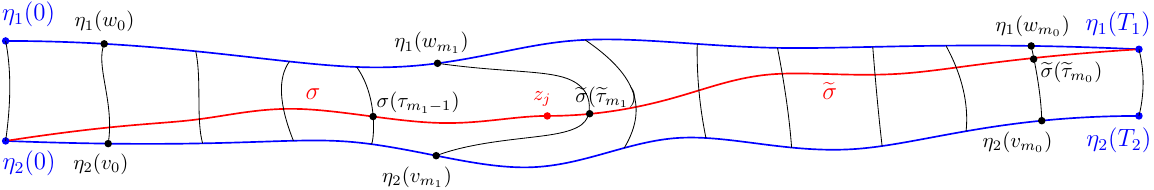}
\caption{Illustration of the ladder and the setup of Step 2 in the proof of Proposition~\ref{prop:strong_confluence}.}
\label{fig:proof_X1}
\end{figure}

\emph{Step 2. Intersection of $\eta_1$ and $\eta_2$ near one end.} More precisely, we aim to prove 
\begin{align}\label{eq:prop1}
 \eta_i( (0, \delta]) \cap \eta_{3-i} \neq \emptyset  \quad \text{for} \quad i=1,2,
\end{align}
where $\delta= 8 c_1 M\eps \log \eps^{-1}.$

By Lemma~\ref{lem:sigma_contained_U} and by relabeling $\eta_1$ and $\eta_2$ if necessary, suppose that there exists a geodesic $\sigma^2$ from $z_j$ to $\eta_2(0)$ which is contained in $\ol U$ and let $\sigma=\sigma^2$. See Figure~\ref{fig:proof_X1}. 
By symmetry, we also know that there exist $i\in\{1,2\}$ and a geodesic $\wt \sigma^i$ from $z_j$ to $\eta_i(T_i)$ which is contained in $\ol U$. In the present proof, we suppose that $i=1$, namely there exists a geodesic $\wt \sigma^1$ from $z_j$ to $\eta_1(T_1)$ which is contained in $\ol U$. Let $\wt \sigma=\wt \sigma^1$. However, the proof works essentially the same if $i=2$.
Let $S$ be the length of $\sigma$ and let $\wt S$ be the length of $\wt \sigma$. We also reorient $\sigma, \wt\sigma$ in such a way that $\sigma(0)=\eta_2(0)$ and $\wt\sigma(0)= \eta_1(T_1)$.

Let $m_1$ be the smallest $m\in\N$ such that $\xi_{m}$ intersects $\wt \sigma$. Due to the symmetric roles of $\sigma$ and $\wt\sigma$, we can assume without loss of generality that $m_1\ge m_0/2-1$.

For each $0 \le m \le m_1-1$, $\xi_m$ intersects $\sigma$ and we let $\tau_m\in[0,S]$ be such that $\sigma(\tau_m)\in \sigma \cap \xi_m$. For each $m_1 \le m \le m_0$, $\xi_m$ intersects $\wt\sigma$ and we let $\wt \tau_m\in[0,S]$ be such that $\wt \sigma(\wt \tau_m)\in \wt\sigma \cap \xi_m$.  For each $0 \le m \le m_1-1$, we have
\begin{align}\label{eq:tau_m}
|\tau_m - v_m|=| d(\sigma(0), \sigma(\tau_m)) - d(\eta_2(0), \eta_2(v_m))|
\le  d(\sigma(\tau_m), \eta_2(v_m)) \le L \le 128 \eps_k.
\end{align}
For all $0 \le m \le m_1-2$, noting that $v_{m+1}- v_m =280 \eps_k$, we have
\begin{align}\label{eq:m_space}
152 \eps_k\le \tau_{m+1} -\tau_m \le 308 \eps_k.
\end{align}
Noting that $v_0=20\eps$, we also have
\begin{align*}
\tau_0 \le 20 \eps +128 \eps_k.
\end{align*}
Since we have assumed that $T_2 \ge 16 c_1 M \eps \log \eps^{-1}$ and $m_1 \ge m_0/2-1$, it follows that
$
m_1 >( 8 c_1 M \eps \log \eps^{-1} -20\eps )/(280 \eps_k) -2.
$
By~\eqref{eq:tau_m}, we know that by possibly decreasing $\eps_0$, we have
\begin{align*}
\tau_{m_1-1} \ge v_{m_1-1} -128 \eps_k = 20\eps+ 280 \eps_k (m_1-1) -128 \eps_k \ge 7 c_1 M \eps \log \eps^{-1}.
\end{align*}

\begin{figure}
\begin{center}
\includegraphics[width=\textwidth]{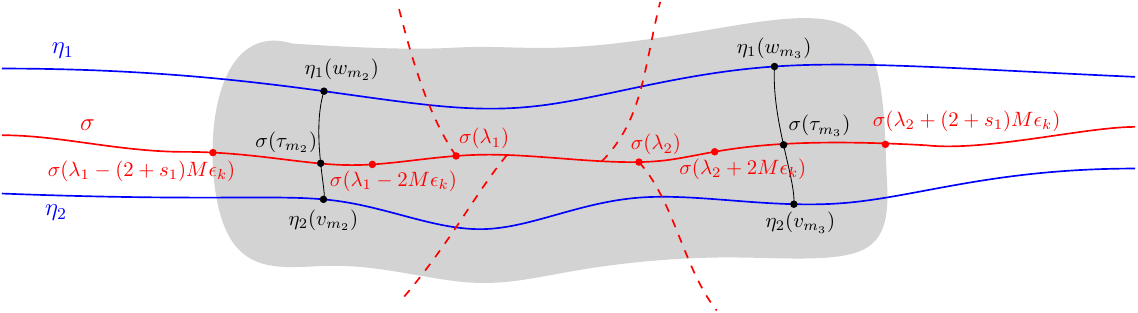}	
\end{center}
\caption{\label{fig:strong_conf_proof} Illustration of Step 2 of the proof of Proposition~\ref{prop:strong_confluence}.  The grey area represents $\CF(U_0)$, the filled $s_1M \eps_k$-neighborhood of $\sigma([\lambda_1- 2M\eps_k, \lambda_2 + 2M\eps_k])$. We show that $\eta_1$ and $\eta_2$ must intersect the same \X.}
\end{figure}

Fix $M \ge \max (1200 /s_1, 200/s_0)$. 
Let 
$$t_1= \tau_0 + ((s_1 +2) M - 800) \eps_k, \quad  t_2 =t_1+ c_1 (M \eps_k) \log (M \eps_k)^{-1}.$$ By possibly decreasing $\eps_0$, we have
\begin{align}\label{eq:t1t2}
t_2 \le \tau_{m_1-1} - ((s_1+ 2)M -800) \eps_k.
\end{align}
By Lemma~\ref{lem:eps_net_points}, we know that $\sigma|_{[t_1, t_2]}$ must pass through an \X\ which is $(s_0 M\eps_k, s_1 M\eps_k, 2M\eps_k)$-good.  See Figure~\ref{fig:strong_conf_proof}.
We suppose that the center of this \X\ is $\sigma([\lambda_1, \lambda_2])$ where $t_1 \le \lambda_1 < \lambda_2 \le  t_2$. Let $U_0$ be the $s_1M \eps_k$-neighborhood of $\sigma([\lambda_1- 2M\eps_k, \lambda_2 + 2M\eps_k])$. Let $\CF(U_0)$ be the complement in $\CS$ of the $z_j$-containing connected component of $\CS \setminus U_0$. Then none of the four branches of the \X\ is contained in $\CF(U_0)$.
We claim that there exist $0 \le m_2 < m_3 \le m_1-1$ so that 
\begin{align}\label{eq:lambda12_interval}
\lambda_1 - (s_1 +2) M \eps_k +800 \eps_k \le \tau_{m_2} \le \lambda_1 - 2M \eps_k, \quad
\lambda_2 + 2M \eps_k \le \tau_{m_3} \le \lambda_2 + (s_1+2 ) M \eps_k -800\eps_k.
\end{align}
Indeed, on the one hand, since $t_1 \le \lambda_1 < \lambda_2 \le  t_2$, we have by~\eqref{eq:t1t2} that
\begin{align*}
\lambda_1 - (s_1 +2) M \eps_k + 800\eps_k \ge \tau_0, \quad \lambda_2 + (s_1+2 ) M \eps_k - 800\eps_k\le \tau_{m_1-1}.
\end{align*}
On the other hand, the intervals $[\lambda_1 - (s_1 +2) M \eps_k + 800\eps_k,   \lambda_1 - 2M \eps_k]$ and $[\lambda_2 + 2M \eps_k, \lambda_2 + (s_1+2 ) M \eps_k - 800\eps_k]$ both have length $(s_1 M -800) \eps_k \ge 400 \eps_k$ which is larger than the spacing between the $\tau_m$'s  given by~\eqref{eq:m_space}.

Let $W$ denote the region which is surrounded clockwise by the concatenation of $\eta_1([w_{m_2}, w_{m_3}])$, the time reversal of $\xi_{m_3}$, the time reversal of $\eta_2([v_{m_2} , v_{m_3}])$ and $\xi_{m_2}$. 
Let us show that $W \subset \CF(U_0)$.
First note that by~\eqref{eq:lambda12_interval} we have
$$d(\sigma(\tau_{m_2}), \sigma(\lambda_1-2 M \eps_k)) \le s_1  M \eps_k -800 \eps_k, \quad d(\sigma(\tau_{m_3}), \sigma(\lambda_2 +2 M \eps_k)) \le s_1  M \eps_k -800 \eps_k.$$
This implies that for each $m_2\le m\le m_3$, we have
\begin{align*}
d(\sigma(\tau_m), \sigma([\lambda_1-2M \eps_k, \lambda_2 + 2 M \eps_k]) \le s_1  M \eps_k -800 \eps_k.
\end{align*}
For any point $z\in \xi_{m_2}$, we have
\begin{align*}
d(z, \sigma(\lambda_1-2 M \eps_k)) \le &d(z, \sigma(\tau_{m_2})) + d(\sigma(\tau_{m_2}), \sigma(\lambda_1-2 M \eps_k))\\
 \le& L + s_1  M \eps_k -800 \eps_k \le s_1 M\eps_k-672 \eps_k.
\end{align*}
Therefore $z \in U_0$, hence $\xi_{m_2} \subset  U_0$.
Similarly, we can deduce that $\xi_{m_3} \subset U_0$. 
For each $m_2\le m\le m_3-1$ and $v_{m} \le t \le v_{m+1}$,  we have
\begin{align*}
d(\eta_2(t),\sigma([\lambda_1 -2 M \eps_k, \lambda_2 + 2 M \eps_k])) \le & d (\eta_2(t), \eta_2(v_m)) + d(\eta_2(v_m), \sigma([\lambda_1 -2 M \eps_k, \lambda_2 + 2 M \eps_k]))\\
\le & 280\eps_k + L+ s_1 M\eps_k-800 \eps_k \le s_1 M \eps_k.
\end{align*}
This implies that $\eta_2([v_{m_2}, v_{m_3}]) \subset U_0$. For each $m_2\le m\le m_3-1$ and $w_{m} \le t \le w_{m+1}$,  we have
\begin{align*}
d(\eta_1(t),\sigma([\lambda_1 -2 M \eps_k, \lambda_2 + 2 M \eps_k])) \le & d (\eta_1(t), \eta_1(w_m)) + d(\eta_1(w_m), \sigma([\lambda_1 -2 M \eps_k, \lambda_2 + 2 M \eps_k]))\\
\le & 536 \eps_k + L + s_1 M\eps_k- 800 \eps_k \le s_1 M \eps_k,
\end{align*}
where the number $536 \eps_k$ in the above line comes from \eqref{eq:w_m_spacing}.
This implies that $\eta_1([w_{m_2}, w_{m_3}]) \subset U_0$.   
Altogether, we have shown that $\partial W\subset U_0$, hence $W \subset \CF(U_0)$.

Since none of the four branches of the \X\ is contained in $\CF(U_0)$, they are also not contained in $W$. In addition, any branch of this \X\ cannot intersect $\xi_{m_2}$. Indeed, by~\eqref{itm:goodX_dist} in the definition of the goodness of the \X, since $\tau_{m_2} \le \lambda_1 - M \eps_k$, the distance from $\sigma(\tau_{m_2})$ to any branch of this \X\ is at least $\lambda_1 - M \eps_k -\tau_{m_2} + s_0 M \eps_k \ge s_0 M \eps_k \ge 200 \eps_k$.  However, if one of the branches of the \X\ intersects $\xi_{m_2}$, then the distance from $\sigma(\tau_{m_2})$ to this branch would be at most the length of $\xi_{m_2}$ which is at most $128\eps_k$, leading to a contradiction.
Similarly, no branch of this \X\ can intersect $\xi_{m_3}$. Therefore, each of the four branches of the \X\ must exit $W$ from $\eta_1$ or $\eta_2.$
By the uniqueness of the geodesics which make up the two sides of each {\X} (recall the definition of an {\X}), it follows that both $\eta_1$ and $\eta_2$ must intersect a common part of $\sigma([\lambda_1, \lambda_2])$. 
In other words, there exists $t\in [\lambda_1, \lambda_2]$ such that $\sigma(t)\in\eta_1\cap\eta_2$. Since
$d(\sigma(t), \eta_2(0) ) =t \le t_2  \le \delta$ and $d(\sigma(t), \eta_1(0) ) \le d(\sigma(t), \eta_2(0) ) + 5 \eps \le \delta$, we have proved~\eqref{eq:prop1}.

\emph{Step 3. End of the proof.} It remains to prove the following
\begin{align}\label{eq:prop2}
 \eta_i( [T_i- \delta, T_i)) \cap \eta_{3-i} \neq \emptyset  \quad \text{for} \quad i=1,2,
\end{align}
We will treat separately the cases $T_2  -v_{m_1} < 4 c_1 M \eps \log \eps^{-1}$ and $T_2  - v_{m_1} \ge 4 c_1 M \eps \log \eps^{-1}$.

\begin{enumerate}[1.]
\item Suppose that $T_2  -v_{m_1}  < 4 c_1 M \eps \log \eps^{-1}$, which means that $\sigma(S)$ is near $\eta_2(T_2)$. In this case, we can apply the same argument as in Step 2 for the other end of $\sigma$.
By~\eqref{eq:tau_m} and~\eqref{eq:m_space}, we have 
$$T_2 -\tau_{m_1-1} \le T_2 - v_{m_1-1} + L \le T_2 - v_{m_1} + 280\eps_k +128\eps_k \le 5 c_1 M \eps \log \eps^{-1}.$$
Let 
$$t_2=\tau_{m_1-1} - ((s_1+2) M -800) \eps_k, \quad t_1=t_2 - c_1 (M\eps_k) \log (M \eps_k)^{-1}.$$
Arguing like before, we know that $\sigma|_{[t_1, t_2]}$ must pass through an  \X\ which is $(s_0 M\eps_k, s_1 M\eps_k, 2M\eps_k)$-good, and both $\eta_1$ and $\eta_2$  intersect a common part of $\sigma([t_1, t_2])$. Suppose that for some $t\in[t_1, t_2]$, $\sigma(t) \in \eta_1 \cap \eta_2$, then 
$$d(\sigma(t), \eta_2(T_2))=T_2 - t \le T_2 -t_1 \le 7 c_1 M \eps \log \eps^{-1} \le \delta.$$ 
and $d(\sigma(t), \eta_1(T_1)) \le d(\sigma(t), \eta_2(T_2)) + 5\eps \le \delta$. This implies~\eqref{eq:prop2}.

\item Suppose that $T_2  - v_{m_1} \ge 4 c_1 M \eps \log \eps^{-1}$, which means that $\sigma(S)$ is far away from $\eta_2(T_2)$. In this case, we can apply  the same argument as in Step 2 for $\wt\sigma$. Note that for each $m_1\le m\le m_0$, we have
\begin{equation}\label{eq:wttau}
\begin{split}
&|(\wt \tau_m - \wt \tau_{m_0}) - (v_{m_0} - v_m) | = |d(\wt\sigma(\wt\tau_m), \wt\sigma(\wt\tau_{m_0})) -  d(\eta_2(v_{m}), \eta_2(v_{m_0}))| \\
\le & d(\wt\sigma(\wt\tau_m),\eta_2(v_m))  + d(\wt\sigma(\wt\tau_{m_0}),  \eta_2(v_{m_0})) \le 2L \le 256 \eps_k.
\end{split}
\end{equation}
Noting that $v_{m+1} - v_m =280 \eps_k$, it follows that for all $m_1\le m \le m_0-1$, we have
\begin{align*}
24 \eps_k \le \wt \tau_{m} - \wt \tau_{m+1} \le 536 \eps_k.
\end{align*}
We also have
\begin{align*}
&|\wt \tau_{m_0} - (T_2 -v_{m_0})| = |d(\wt\sigma(\wt \tau_{m_0}), \eta_1(T_1)) - d(\eta_2(v_{m_0}), \eta_2(T_2))| \\
\le& d(\wt\sigma(\wt \tau_{m_0}), \eta_2(v_{m_0})) + d(\eta_1(T_1), \eta_2(T_2)) \le L +5\eps \le 128 \eps_k + 5\eps.
\end{align*}
It follows that by possibly decreasing $\eps_0$, we have $\wt\tau_{m_0} \le c_1 M \eps \log \eps^{-1}$.
Let 
$$t_1= \wt\tau_{m_0} + ((s_1+2) M -800) \eps_k, \quad t_2 =  t_1 + c_1 (M\eps_k) \log (M \eps_k)^{-1}. $$ 
Noting that
\begin{align*}
m_0 \ge (T_2 -40\eps) /(280 \eps_k) -2,
\end{align*}
putting $m=m_1$ into~\eqref{eq:wttau}, we get
\begin{align*}
\wt\tau_{m_1} - \wt\tau_{m_0} \ge &(m_0 -m_1) 280 \eps_k - 256 \eps_k \ge T_2 -40\eps - 560 \eps_k - 280 m_1 \eps_k -256 \eps_k\\
\ge & T_2 - v_{m_1} - c_1 M \eps \log\eps^{-1} \ge
3 c_1 M \eps \log \eps^{-1}.
\end{align*}
Therefore, by possibly decreasing the value of $\eps_0$, we have that 
$$t_2 \le  \wt\tau_{m_1} - ((s_1+2)M -800) \eps_k.$$
We can argue as before that $\wt \sigma|_{[t_1, t_2]}$ must pass through an  \X\ which is $(s_0 M\eps_k, s_1 M\eps_k, 2M\eps_k)$-good, and both $\eta_1$ and $\eta_2$  intersect a common part of $\wt \sigma([t_1, t_2])$. Suppose that for some $t\in[t_1, t_2]$, $\wt \sigma(t) \in \eta_1 \cap \eta_2$, then 
$d(\wt\sigma(t), \eta_1(T_1))= t  \le t_2  \le \delta$ 
and $d(\wt \sigma(t), \eta_2(T_2)) \le d(\wt\sigma(t), \eta_1(T_1))+ 5 \eps \le \delta$. This again implies~\eqref{eq:prop2}.
\end{enumerate}
Altogether, we have thus completed the proof of Proposition~\ref{prop:strong_confluence}.
\end{proof}

\section{Finite number of geodesics}
\label{sec:finite_number_of_geodesics}

We are now going to show that the number of disjoint geodesics which can emanate from any given point in the Brownian map is at most a deterministic constant (Section~\ref{subsec:finite_from_a_point}) and likewise that the number of geodesics between any pair of points is at most a deterministic constant (Sections~\ref{subsec:approx_tail_bounds}, \ref{subsec:finite_between_points}).  
These two results will be important in Sections~\ref{sec:geodesic_structure}, \ref{sec:exponent_disjoint_geodesics}, \ref{sec:dimension} when we complete the proofs of the main theorems.
In particular, the proof of the latter result is based on the computation of the exponent for a collection of geodesics between a pair of points to have a given number of splitting points which will be an important input in the proof of Theorem~\ref{thm:finite_number_of_geodesics}.

\subsection{Finite number of geodesics from a point}
\label{subsec:finite_from_a_point}
The goal of this section is to prove the following proposition, which is a weaker version of Theorem~\ref{thm:maximum_geodesics}.

\begin{proposition}
\label{prop:single_point_number_of_geodesics}
There exists a constant $C > 0$ so that the following is true.  For $\bminflaw$ a.e.\ instance $(\CS,d,\nu,x,y)$ and $z \in \CS$, the number of geodesics starting from $z$ which are otherwise disjoint is at most $C$.
\end{proposition}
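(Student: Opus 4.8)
The plan is to derive Proposition~\ref{prop:single_point_number_of_geodesics} from Proposition~\ref{prop:strong_confluence} by a compactness argument, exploiting the uniform nature of the strong confluence statement. First I would fix $\sp{u} > 0$ and invoke Proposition~\ref{prop:strong_confluence} (more precisely, its analogue for a sample from $\bmlaw{1}$, obtained by scaling): $\bminflaw$-a.e.\ there is $\epsilon_0 > 0$ so that, with $\delta = c\epsilon \log \epsilon^{-1}$, any two geodesics $\eta_1,\eta_2$ of length $\geq 2\delta$ which are within one-sided Hausdorff distance $\epsilon$ must intersect within $\delta$ of each of their endpoints. The deterministic constant $C$ will come from the fact that one cannot pack too many geodesics emanating from a common point $z$ and remaining pairwise disjoint: if there were very many such geodesics, a pigeonhole argument on small metric balls would force two of them to be close in the one-sided Hausdorff sense, contradicting disjointness.

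The key steps, in order, would be as follows. (i) Reduce to geodesics of a fixed small length: if $z$ has $N$ geodesics which are otherwise disjoint, then by truncating each of them to length $\rho$ for a suitable small fixed $\rho > 0$ we obtain $N$ geodesics $\eta_1,\dots,\eta_N$ from $z$, pairwise disjoint except at $z$, each of length exactly $\rho$. (ii) Use the volume lower bound of Lemma~\ref{lem:bm_volume_estimates} together with the fact that the Brownian map is a geodesic space of Hausdorff dimension $4$ to argue that there is a scale $r_0 > 0$ (depending only on the instance, via $\epsilon_0$ and the volume constants) below which the metric balls around $z$ behave regularly. (iii) Consider the points $\eta_i(\rho)$, the far endpoints of the truncated geodesics. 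These lie in $\CS \setminus B(z, \rho/2)$, a set of finite $\nu$-measure; by choosing $\rho$ small and covering an annular region by boundedly many balls of radius $\epsilon$ (using a Lemma~\ref{lem:typical_points_dense}-type covering bound, which gives a bound of order $\epsilon^{-4-\sp{a}}$ balls — but this does not yet bound $C$ independently of $\epsilon$, so one must be more careful here). (iv) The correct mechanism: run a \emph{reverse} metric exploration from $z$ outward and use Lemma~\ref{lem:num_of_geodesics} — the number of points on $\partial\fb{\cdot}{z}{r}$ visited by geodesics from the boundary to $z$ is stochastically dominated by $2Z$ with $Z$ Poisson of mean $c\ell/w^2$, where $\ell$ is the boundary length. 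Since $N$ disjoint geodesics from $z$ must hit $\partial \fb{\cdot}{z}{r}$ in $N$ distinct points for every small $r$, and the boundary length $\ell = Y_r$ tends to $0$ as $r \to 0$ (it evolves as a $3/2$-stable CSBP started from $0$ at $z$), the Poisson mean $c Y_r / w^2$ becomes small, forcing $N$ to be bounded. Making $w$ comparable to $r$ and optimizing gives a uniform a.s.\ bound on $N$, which is the desired constant $C$.

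Actually, the cleanest route — and the one I would write up — combines both ideas: fix a small deterministic $\rho$; on the full-measure event of Proposition~\ref{prop:strong_confluence} there is a random $\epsilon_0$, and on the full-measure event of Lemma~\ref{lem:bm_volume_estimates} there are volume bounds. Decompose the annulus $B(z,\rho)\setminus B(z,\rho/2)$ (or rather a metric band of width $\rho/2$ centered at $z$) into slices; by the independence of geodesic slices and the CSBP description of boundary lengths from Section~\ref{subsec:metric_bands}, the number of geodesics from $\partial B(z,\rho/2)$ to $z$ that remain disjoint is controlled by Lemma~\ref{lem:num_of_geodesics}, but one still wants a \emph{deterministic} bound valid for \emph{all} $z$ simultaneously. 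For that, I would instead argue by contradiction using Proposition~\ref{prop:strong_confluence} directly: suppose for some instance and some $z$ there are $\geq N$ geodesics, truncate to length $2\delta$ with $\delta$ as small as we like (shrinking $\delta$ below $\epsilon_0$); the $N$ geodesics all start at $z$, so any two are within distance $2\delta$ of each other near their common starting point — but near the far endpoints they may be spread out. The point is that two geodesics of length $2\delta$ sharing a starting point are automatically within Hausdorff distance $2\delta$ of each other; if additionally they are within \emph{one-sided} Hausdorff distance $\delta/c\cdot(\text{something})$, Proposition~\ref{prop:strong_confluence} forces them to coincide on $(0,\delta]$, contradicting the ``otherwise disjoint'' hypothesis. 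So the obstruction to having many disjoint geodesics is precisely the failure of the one-sided closeness, and a planar/topological packing argument (the geodesics emanate from $z$ in cyclic order around $z$, since $\CS$ is a sphere) shows at most a bounded number can be mutually ``one-sided-far'' — this is where the deterministic constant enters, and it is soft, giving no control on its value, exactly as the outline in Section~\ref{sec:intro4} promises.

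The main obstacle, and the step requiring the most care, is making the topological packing argument rigorous: one must argue that geodesics from $z$ which are pairwise disjoint (except at $z$) leave $z$ in a well-defined cyclic order, and that consecutive ones in this order cannot be too far apart in the one-sided Hausdorff distance relative to their Hausdorff distance without violating either disjointness (via Proposition~\ref{prop:strong_confluence}) or the bounded geometry of $\CS$ at small scales. Controlling the one-sided Hausdorff distance — as opposed to the ordinary Hausdorff distance — is delicate because of the ``bottleneck'' phenomenon illustrated in Figure~\ref{fig:bottleneck}, where two-sided closeness does not imply one-sided closeness; handling this is what forces the argument to be run at a genuinely small scale $\delta$ and to use the sphere topology to control side-switching. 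I expect this to be the crux, while the reduction steps (truncation, passage from $\bminflaw$ to $\bmlaw{1}$ by scaling, and invoking the volume estimates) are routine.
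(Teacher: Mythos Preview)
Your proposed ``cleanest route'' is genuinely different from the paper's, and the step you flag as the crux is indeed a gap you do not close.

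The paper does not argue directly at an arbitrary point $z$. Instead it proceeds in three layers. First (Lemma~\ref{lem:number_of_crossings}), it works in a single metric band of fixed width $w$: the closure of the band in the ambient Brownian map is compact, so the Hausdorff topology on closed subsets is totally bounded, and for $k$ large any $k$ crossing geodesics contain two within Hausdorff distance $\epsilon$; Proposition~\ref{prop:strong_confluence} then forces an intersection. The key point is that here the length of the geodesics is pinned at $w$ while the Hausdorff parameter $\epsilon$ can be driven to $0$ by taking $k_0$ large --- the two scales are decoupled. Second (Lemma~\ref{lem:geodesic_exponent}), the band estimate is iterated across successive bands where the boundary length halves, turning the constant-probability bound into polynomial decay $O((\epsilon/\ell)^\alpha)$ with $\alpha$ as large as one likes. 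Third, a union bound over an $\epsilon$-net of $\epsilon^{-4-\sp{a}}$ typical points together with Borel--Cantelli gives the a.s.\ statement simultaneously for all typical centers, and Lemma~\ref{lem:diameter_of_complement} transfers this to every $z\in\CS$.

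Your direct argument at $z$ collapses the two scales: truncating the geodesics to length $2\delta$ and then trying to apply Proposition~\ref{prop:strong_confluence} requires one-sided Hausdorff closeness at scale $\epsilon$ with $\delta=c\epsilon\log\epsilon^{-1}$, i.e.\ $\epsilon\ll\delta$. But the only automatic bound you have is that two length-$2\delta$ geodesics from $z$ lie in $B(z,2\delta)$, giving ordinary Hausdorff distance $\le 2\delta$ --- useless at scale $\epsilon$. Your cyclic-order packing heuristic does not repair this: even for consecutive geodesics in the cyclic order, nothing prevents the region between them from being wide (in the interior-internal metric of the complement) relative to $\epsilon$, and the bottleneck phenomenon you mention means one-sided closeness genuinely cannot be read off from two-sided closeness without further input. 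The paper avoids this entirely by never trying to bound one-sided distance between geodesics emanating from a point; it instead bounds the number of \emph{crossing} geodesics of a band of macroscopic width, where compactness does the work, and then iterates plus union-bounds. Your earlier aside about reverse exploration and Lemma~\ref{lem:num_of_geodesics} is closer in spirit to what is needed (it feeds into Lemma~\ref{lem:geodesic_exponent}), but you abandon it in favor of the direct argument.
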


The main step in the proof of Proposition~\ref{prop:single_point_number_of_geodesics} is the following lemma, which gives that the number of disjoint geodesics which can cross a metric band is a.s.\ finite. The main ingredients of its proof are Proposition~\ref{prop:strong_confluence} and a compactness argument. We emphasize that the proof of Proposition~\ref{prop:single_point_number_of_geodesics} will not lead to the optimal value of $C$ in the statement of Proposition~\ref{prop:single_point_number_of_geodesics}.  We will later determine as a consequence of the main result of Section~\ref{sec:exponent_disjoint_geodesics} the correct exponent for having $k$ geodesics emanate from a point which are otherwise disjoint. 

\begin{lemma}
\label{lem:number_of_crossings}
For each $p \in (0,1)$ and $w > 0$ there exists $k_0 \in \N$ so that the following is true.  Suppose that $(\CB,d_\CB,\nu_\CB,z)$ has law $\bandlaw{1}{w}$.  Let $A_k$ be the event that there exist at least $k$ disjoint geodesics in $\CB$ which connect a point on $\innerboundary \CB$ to a point on $\outerboundary \CB$.  Then $\p[A_k] \leq p$ for all $k \geq k_0$.
\end{lemma}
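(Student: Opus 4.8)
The plan is to argue by contradiction using a compactness argument built on Proposition~\ref{prop:strong_confluence}. Suppose the conclusion fails, so there exist $p\in(0,1)$ and $w>0$ such that for every $k$ there is a positive probability $\geq p$ that $\CB$ (sampled from $\bandlaw{1}{w}$) contains at least $k$ disjoint geodesics crossing from $\innerboundary\CB$ to $\outerboundary\CB$. Embed $\CB$ inside an ambient Brownian map instance $(\CS,d,\nu,x,y)$ — for instance by realizing $\CB$ as a metric band in the reverse metric exploration centered at $x$, which one can do because the conditional law of the band given its inner boundary length is exactly $\bandlaw{1}{w}$, and conditioning on the boundary length being $1$ is an event of positive mass under $\bminflaw$ restricted to $\{d(x,y)$ large$\}$. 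On this embedding, any geodesic crossing $\CB$ with respect to the interior-internal metric is (a subpath of) a geodesic of $\CS$ of length $\geq w$ — here I use that the interior-internal metric on $\CB$ dominates $d$ and that crossings realize the band width $w$, so a crossing geodesic has $d$-length between $w$ and the $d_\CB$-diameter of $\CB$, which is a.s.\ finite.

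The key step: fix the $k$ disjoint crossing geodesics $\sigma_1,\ldots,\sigma_k$, reparametrized by arc length on $[0,T_i]$ with $T_i\geq w$. The sets $\sigma_i([0,w])$, say, all live in the compact set $\CB$. If $k$ is very large, then by a pigeonhole/packing argument there must be two indices $i\neq j$ with $\distH(\sigma_i([0,T_i]),\sigma_j([0,T_j]))\leq \epsilon$ for $\epsilon$ as small as we like: indeed, the total number of $\epsilon$-separated (in Hausdorff distance) subsets of the compact metric space of closed subsets of $\CB$ that additionally are geodesics of width-$w$ bands is bounded — more carefully, one covers $\CB$ by $O(\epsilon^{-4-\sp{a}})$ balls of radius $\epsilon$ (using Lemma~\ref{lem:bm_volume_estimates} and $\nu(\CB)<\infty$), notes each geodesic $\sigma_i$ passes through a specific sequence of these balls, and if two geodesics pass through the same sequence then they are within Hausdorff distance $2\epsilon$; the number of ball-sequences of length $\lceil T/\epsilon\rceil$ is at most $N(\epsilon)^{\lceil D/\epsilon\rceil}$ where $D=\diam(\CB)<\infty$, but this is too crude. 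Better: just observe that two disjoint geodesics that are very close in Hausdorff distance contradict Proposition~\ref{prop:strong_confluence}. So I want: for each $\epsilon>0$, the number of geodesics crossing $\CB$ that are pairwise at Hausdorff distance $>\epsilon$ is at most some finite (random, a.s.\ finite) number $K(\epsilon,\CB)$; conversely, if there were $k > K(\epsilon,\CB)$ disjoint crossing geodesics, two of them would be within Hausdorff distance $\epsilon$.

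Now fix $\sp{u}>0$ and apply Proposition~\ref{prop:strong_confluence} (more precisely its conclusion \eqref{eq:prop_str_conf} together with Theorem~\ref{thm:strong_confluence2}-type consequences, but at this stage only Proposition~\ref{prop:strong_confluence} is available): a.s.\ there is $\epsilon_0>0$ such that any two geodesics $\eta_1,\eta_2$ with $T_i\geq 2\delta$, $\delta=c\epsilon\log\epsilon^{-1}$, and $\distH\leq\distHos\leq\epsilon$ must satisfy $\eta_i((0,\delta])\cap\eta_{3-i}\neq\emptyset$. But two \emph{disjoint} crossing geodesics of a band cannot intersect at all, so they cannot be $\distHos$-close. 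The remaining gap is that Proposition~\ref{prop:strong_confluence} controls the one-sided Hausdorff distance $\distHos$, not $\distH$; two disjoint crossing geodesics could be $\distH$-close while crossing each other or switching sides. To handle this I would: (1) note that two disjoint geodesics cannot cross (they're disjoint), so the only obstruction is the "bottleneck/switching sides" phenomenon of Figure~\ref{fig:bottleneck}; (2) observe that inside a \emph{single} band of width $w$, and with $\epsilon$ small compared to $w$, there is no room for a side-switch: a side-switch requires the region between the two geodesics to pinch to a point, but if the geodesics remain within $d_\CB$-distance $\epsilon\ll w$ of each other throughout a crossing of width $w$, the internal geometry between them is a thin quadrilateral which, by Lemma~\ref{lem:boundary_length_distance} applied to the Brownian-disk-like components cut out, cannot contain a macroscopic bottleneck except on an event of probability decaying in $\epsilon$. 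Thus for $\epsilon$ small, $\distH$-close disjoint crossing geodesics of $\CB$ are in fact $\distHos$-close, contradicting Proposition~\ref{prop:strong_confluence}.

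Conclude: there is an a.s.\ finite random bound $K=K(\CB)$ on the number of disjoint geodesics crossing $\CB$. Since $\p[K\geq k]\to 0$ as $k\to\infty$, for every $p$ there is $k_0$ with $\p[A_k]=\p[K\geq k]\leq p$ for $k\geq k_0$. By scaling (rescaling to $\bandlaw{1}{w}$ from the embedded band) this $k_0$ depends only on $p$ and $w$, as claimed.

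\textbf{Main obstacle.} The delicate point is the last one: upgrading $\distH$-closeness to $\distHos$-closeness inside a band, i.e.\ ruling out that two disjoint geodesics run alongside each other but on "opposite sides" through a sequence of bottlenecks. Within a fixed band of width $w$ this is where one must quantify that a Brownian disk of small boundary length has small diameter (Lemma~\ref{lem:boundary_length_distance}) to show the pinch-points cannot occur at macroscopic scale $w$ with more than vanishing probability; alternatively, one can avoid this entirely by a cruder compactness argument that directly extracts, from $k\to\infty$ disjoint geodesics, a subsequential Hausdorff limit and a pair of limiting geodesics that coincide on an open sub-interval while being limits of disjoint ones — then derive a contradiction with the uniqueness of geodesics between typical points via a resampling of the root. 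I would present the compactness route as primary and the quantitative route as a remark.
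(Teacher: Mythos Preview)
Your approach is essentially the paper's: embed $\CB$ inside an ambient Brownian map, use compactness of the Hausdorff space of closed subsets of $(\ol{\CB},d|_{\ol{\CB}})$ to force two among any $k$ crossing geodesics to be $\epsilon$-close once $k$ is large, and then invoke Proposition~\ref{prop:strong_confluence} for a contradiction with disjointness. The paper streamlines your pigeonhole step: for fixed $\epsilon$ let $B_k$ be the event that there are $k$ crossing geodesics with pairwise Hausdorff distance $\geq\epsilon$; since the Hausdorff space of closed subsets of a compact space is compact (hence totally bounded), $\cap_k B_k=\emptyset$ a.s., so $\p[B_k]\to 0$ and one picks $k_0$ with $\p[B_{k_0}]\le p/2$, then picks $\epsilon$ small enough that $\p[\epsilon\ge\epsilon_0]\le p/2$. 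Your ball-covering detour is not needed.

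You are more careful than the paper on one point. Proposition~\ref{prop:strong_confluence} is stated for $\distHos\le\epsilon$, whereas compactness only yields $\distH\le\epsilon$; the paper simply writes ``By Proposition~\ref{prop:strong_confluence}'' at this step and does not discuss the distinction. Your concern is legitimate, but your proposed patches do not close it: the bottleneck control you sketch is precisely the argument of Section~\ref{subsec:proof_th1}, which lies \emph{downstream} of this lemma in the logical order, so invoking it here would be circular; and your limit-extraction route (``a pair of limiting geodesics that coincide on an open sub-interval while being limits of disjoint ones'') is too vague as written---a single Hausdorff limit of disjoint geodesics is just one geodesic, and you have not explained how to manufacture two distinct limits that overlap. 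A separate small slip: your assertion that every $d_\CB$-geodesic crossing the band is (a subpath of) a $d$-geodesic in $\CS$ is not justified in general, since the $d$-geodesic between its endpoints may leave $\CB$; it does hold for the crossings of length exactly $w$ (restrictions of geodesics to $x$), which are the only ones needed in the application (Lemma~\ref{lem:geodesic_exponent}).
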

We will prove Lemma~\ref{lem:number_of_crossings} by realizing an instance from the law $\bandlaw{1}{w}$ inside of the ambient space given by an instance from the distribution $\bminflaw$.  The reason for doing so is to avoid the issue of whether the interior-internal metric associated with a metric band is compact (though this follows from the fact that the law of a Brownian disk is compact \cite{bm2017disk} and the equivalence of Brownian disks as defined in \cite{bm2017disk} and as the complement of a metric ball as in \cite{ms2015axiomatic} as established in \cite{lg2019disksnake}).
\begin{proof}[Proof of Lemma~\ref{lem:number_of_crossings}]
Let $(\CS,d,\nu,x,y)$ be distributed according to $\bminflaw$ and for each $r > 0$ we let $Y_r$ be the boundary length of $\partial \fb{y}{x}{d(x,y)-r}$.  Let $\tau = \inf\{r \geq 0 : Y_r = 1\}$.  Conditionally on $\tau < \infty$, we have that $\CB = \fb{y}{x}{d(x,y)-\tau} \setminus \fb{y}{x}{d(x,y)-\tau-w}$ has law $\bandlaw{1}{w}$.  We let $d|_{\ol{\CB}}$ be the restriction of the metric $d$ on $\CS$ to $\ol{\CB}$.  Since $(\ol{\CB},d|_{\ol{\CB}})$ is a.s.\ a compact metric space (as $(\CS,d)$ is a compact metric space and $\ol{\CB}$ is closed in $\CS$), it follows that the Hausdorff topology on closed subsets of $(\ol{\CB},d|_{\ol{\CB}})$ is also compact.  In particular, the Hausdorff topology on $(\ol{\CB},d|_{\ol{\CB}})$ is totally bounded.  Fix $\epsilon > 0$; we will adjust its value later in the proof.  For each $k \in \N$, let $B_k$ be the event that there exist at least $k$ geodesics which cross $\CB$ from $\innerboundary \CB$ to $\outerboundary \CB$ so that the Hausdorff distance (computed using $d|_{\ol{\CB}}$) between any pair of them is at least $\epsilon$.  By the total boundedness of $(\ol{\CB},d|_{\ol{\CB}})$, we can then choose $k_0 \in \N$ so that $k \geq k_0$ implies that $\p[B_k] \leq p/2$.  On $B_k^c$, for any $k$ geodesics which cross $\CB$ from $\innerboundary \CB$ to $\outerboundary \CB$, there exist at least two of them that are within Hausdorff distance $\eps$ from each other.  By Proposition~\ref{prop:strong_confluence}, we can choose $\epsilon > 0$ sufficiently small so that with probability $1-p/2$ we have $\eps<\eps_0$ (where $\eps_0$ is the random variable from Proposition~\ref{prop:strong_confluence}), which implies that these two geodesics are not disjoint.  This implies that, with probability at least $1-p$, there do not exist $k$ disjoint geodesics which cross $\CB$ from $\innerboundary \CB$ to $\outerboundary \CB$.
\end{proof}

\begin{lemma}
\label{lem:geodesic_exponent}
For each $\alpha >0$ there exists $k_0 \in \N$ so that the following is true.
Suppose that $(\CS,d,\nu,x,y)$ is distributed according to $\bminflaw$.  For each $r > 0$, let $Y_r$ be the boundary length of $\partial \fb{y}{x}{d(x,y)-r}$.  Fix $\ell > 0$. Let $\tau_0 = \inf\{r \geq 0 : Y_r = \ell\}$. Let $\sigma_0=\inf\{r\ge \tau_0: Y_r=\eps\}$.  We condition on the event that $\{\tau_0 < \infty\}$ and $Y_r < 2\ell$ for all $r\in(\tau_0, \sigma_0)$. 
The conditional probability that there are at least $k_0$ disjoint geodesics in $\CS$ which connect $\partial \fb{y}{x}{d(x,y)-\tau_0}$ to $\partial \fb{y}{x}{d(x,y)-\sigma_0}$ is $O((\epsilon/\ell)^\alpha)$, where the implicit constant is uniform in $\ell >0$.
\end{lemma}
\begin{proof}
Throughout, we shall be working with $(\CS,d,\nu,x,y)$ conditioned on $\{\tau_0 < \infty\}$. Let $\CB$ denote the metric band $\fb{y}{x}{d(x,y)-\tau_0} \setminus \fb{y}{x}{d(x,y)-\sigma_0}$ conditioned on $Y_r < 2\ell$ for all $r\in(\tau_0, \sigma_0)$. 
We have made such a conditioning because $\CB$ naturally occurs in the Brownian map: If we do not condition on $Y_r < 2\ell$ for all $r\in(\tau_0, \sigma_0)$, then the process $Y_r$ would make a finite number of up-crossings from $\ell$ to $2\ell$ before reaching $\eps$. If this number is $0$, namely $Y_r$ does not reach $2\ell$ for $r\in(\tau_0, \sigma_0)$, then let $\theta= \tau_0$. Otherwise, let $\theta$ be the first time that $Y_r$ reaches $\ell$ after the last time that it gets above $2\ell$ for  $r\in(\tau_0, \sigma_0)$. The metric band $\fb{y}{x}{d(x,y)-\theta} \setminus \fb{y}{x}{d(x,y)-\sigma_0}$ then has the same law as $\CB$.
Even though $\theta$ is not a stopping time, the law of the metric band $\fb{y}{x}{d(x,y)-\theta} \setminus \fb{y}{x}{d(x,y)-\sigma_0}$ is still independent from $\CS \setminus \fb{y}{x}{d(x,y)-\theta}$ conditionally on $\theta$.

From now on, we condition on $\{\tau_0 < \infty\}$ and $Y_r < 2\ell$ for all $r\in(\tau_0, \sigma_0)$.  For each $j \in \N_0$, we let $\tau_{j+1} = \inf\{r > \tau_{j} : Y_r = Y_{\tau_{j}}/2 = 2^{-j-1} \ell\}$. For $r\in(\tau_j, \tau_{j+1})$, the process $Y_r$ makes a finite number of up-crossings from $2^{-j}\ell$ to $2^{-j+1} \ell$ before reaching $2^{-j-1} \ell$. If this number is $0$, namely $Y_r$ does not reach $2^{-j+1} \ell$ for $r\in(\tau_j, \tau_{j+1})$, then let $\theta_j= \tau_j$. Otherwise, let $\theta_j$ be the first time that $Y_r$ reaches $2^{-j}\ell$ after the last time that it gets above $2^{-j+1}\ell$ for  $r\in(\tau_j, \tau_{j+1})$. 
Let $\CB_j$ be a metric band with inner boundary length $2^{-j}\ell$ stopped when the boundary length reaches $2^{-j-1}\ell$, and let $\p_{\CB_j}$ denote its probability law. Then the band $\fb{y}{x}{d(x,y)-\theta_j} \setminus \fb{y}{x}{d(x,y)-\tau_{j+1}}$ has the law of $\CB_j$ conditioned on the event $E_j$ such that the boundary length of $\CB_j$ never gets above $2^{-j+1}\ell$.
Let $\sigma_{j+1} = \theta_j + w Y_{\tau_j}^{1/2}$. Fix $p \in (0,1)$.  We assume that $w > 0$ is chosen sufficiently small so that the probability that $\tau_{j+1} \leq \sigma_{j+1}$ is at most $p/2$. 
Given $\CB_j$, let $\CB_j^1$ be the metric band which consists of all the points in $\CB_j$ which are not disconnected from $\innerboundary \CB_j$ by the $(d_0- w (2^{-j}\ell)^{1/2})$-neighborhood of $\outerboundary \CB_j$, where $d_0$ is the width of $\CB_j$. Let $A_{k_0}^j$ be the event that there are at least $k_0$ disjoint geodesics in the metric band $\CB_j^1$ from $\innerboundary \CB^1_j$ to $\outerboundary \CB^1_j$. Then there exists  $k_0 \in \N$ such that
\begin{align*}
\p_{\CB_j} (A^j_{k_0} \mid E_j) \le \p_{\CB_j} (A^j_{k_0}) / \p_{\CB_j} (E_j) \le p/2.
\end{align*}
Indeed, $C:= \p_{\CB_j} (E_j)$ is a positive constant independent of $j$ and $\ell$. Lemma~\ref{lem:number_of_crossings} (and the scaling property for metric bands) implies that for there exists $k_0 \in \N$ so that $\p_{\CB_j} (A^j_{k_0}) \le Cp/2$.
This implies that the probability that there are at least $k_0$ disjoint geodesics which cross $\fb{y}{x}{d(x,y)-\theta_j} \setminus \fb{y}{x}{d(x,y) - \tau_{j+1}}$ is at most $p$.

If there are at least $k_0$ disjoint geodesics which cross $\fb{y}{x}{d(x,y) - \tau_0} \setminus \fb{y}{x}{d(x,y) - \tau_n}$, then there would be at least $k_0$ disjoint geodesics which cross $\fb{y}{x}{d(x,y) - \theta_j} \setminus \fb{y}{x}{d(x,y) - \tau_{j+1}}$ for all $0\le j\le n-1$. Taking $n=\lfloor \log_2 (\ell/\eps) \rfloor$ so that $\tau_n < \sigma_0$, we deduce that the probability that there are at least $k_0$ disjoint geodesics which cross $\fb{y}{x}{d(x,y) - \tau_0} \setminus \fb{y}{x}{d(x,y) - \sigma_0}$ is at most $p^n$. Choosing $p=2^{-\alpha}$ implies the lemma.
\end{proof}

\begin{lemma}
\label{lem:diameter_of_complement}
For $\bminflaw$ a.e.\ instance of $(\CS,d,\nu,x,y)$ the following is true.  For every $0<R <\diam(\CS)$ there exists $r_0 > 0$ so that for all $r \in (0,r_0)$ and $z \in \CS$ there is at most one connected component of $\CS \setminus B(z,r)$ with diameter at least $R$.
\end{lemma}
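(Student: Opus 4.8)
The plan is to prove the following statement, which is equivalent to the assertion up to replacing $R$ by a constant multiple of it: for $\bminflaw$-a.e.\ instance and every $R>0$ there is $r_0>0$ so that for all $r\in(0,r_0)$ and all $z\in\CS$, at most one connected component of $\CS\setminus B(z,r)$ contains a point at $d$-distance at least $R$ from $z$. (Indeed, a component of diameter $\ge R$ contains, once $r$ is small, a point at distance $\ge R/3$ from $z$, while a component containing a point at distance $\ge R$ from $z$ has diameter $\ge R-r$.) By the scaling property of the Brownian map it suffices to work under $\bmlaw{1}$, or equivalently under $\bminflaw$ conditioned on $\nu(\CS)\in[1,2]$.

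\emph{Step 1: reduction to a net of typical points.} Fix $R>0$ and $\sp{a}\in(0,1)$, and let $(z_j)$ be i.i.d.\ points sampled from $\nu$ as in Lemma~\ref{lem:typical_points_dense}, so that $\bminflaw$-a.e.\ the balls $B(z_j,\epsilon)$ for $1\le j\le N_\epsilon=\epsilon^{-4-\sp{a}}$ cover $\CS$ once $\epsilon$ is small. Suppose that for some $z$ and some $r\le 2^{-k}$ there are two components of $\CS\setminus B(z,r)$ each containing a point at distance $\ge R$ from $z$. Since $B(z,r)\subseteq B(z,2^{-k})$, any path joining those two points crosses $B(z,2^{-k})$, so they lie in distinct components of $\CS\setminus B(z,2^{-k})$ as well; picking $j$ with $z\in B(z_j,2^{-k})$ gives $B(z,2^{-k})\subseteq B(z_j,2^{-k+1})$, hence the two points lie in distinct components of $\CS\setminus B(z_j,2^{-k+1})$ and, for $k$ large, at distance $\ge R/2$ from $z_j$. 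Thus it suffices to prove $\sum_k N_{2^{-k}}\, p_k<\infty$, where $p_k$ is the probability that $\CS\setminus B(z_1,2^{-k+1})$ has two components each containing a point at distance $\ge R/2$ from $z_1$. By the invariance of the Brownian map under resampling the marked point according to $\nu$, $p_k$ equals the corresponding probability for the marked point $x$ at scale $\delta=2^{-k+1}$, so it is enough to show that this ``bad event at $x$'' has probability decaying faster than any power of $\delta$.

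\emph{Step 2: the bad event at $x$ is super-polynomially unlikely.} Write $\CS\setminus B(x,\delta)=\big(\CS\setminus\fb{y}{x}{\delta}\big)\sqcup\bigsqcup_i H_i$, where the $H_i$ are the ``holes'' of $B(x,\delta)$, i.e.\ the components not containing $y$, all contained in $\fb{y}{x}{\delta}$. Since $\CS\setminus\fb{y}{x}{\delta}$ is allowed to be the one large component, it suffices to rule out that some hole $H_i$ has diameter $\ge R/2$. By the description of the breadth-first/metric exploration from $x$ developed in \cite{ms2015axiomatic} (Section~\ref{sec:exploration}), each hole is a (marked) Brownian disk whose boundary length is controlled by the boundary-length process $r\mapsto Y_r$ of $\partial\fb{y}{x}{r}$ for $r\le\delta$. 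As $(Y_r)$ has the law of the time-reversal of a $3/2$-stable CSBP excursion, the scaling property together with the tail estimates of Section~\ref{subsubsec:csbp} (in particular the power-law tail of the running maximum of the associated L\'evy excursion) show that, off an event of probability decaying faster than any power of $\delta$, every hole of $B(x,\delta)$ has boundary length at most $\delta^{2-\sp{a}}$. Applying Lemma~\ref{lem:boundary_length_distance} to the Brownian disk bounding each hole, and using that the constant there has super-polynomial tails, every such disk has diameter at most $C\,\delta^{1-\sp{a}/2}\,(|\log\delta|+1)^{7/4+\zeta}$ off a further super-polynomially small event. Hence for $\delta$ small every hole has diameter $<R/2$, so the bad event at $x$ has probability $o(\delta^M)$ for every $M$. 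Combining with the union bound over $1\le j\le N_{2^{-k}}=2^{(4+\sp{a})k}$ and the Borel--Cantelli lemma gives, $\bminflaw$-a.e., a $k_0$ so that for $k\ge k_0$ no $z\in\CS$ has two components of $\CS\setminus B(z,2^{-k})$ each containing a point at distance $\ge R$ from $z$; the case of general $r\in(2^{-k-1},2^{-k}]$ is handled exactly as in Step 1, since a bad configuration at scale $r$ is also bad at scale $2^{-k}$.

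\emph{Main obstacle.} The delicate point is Step 2: identifying the holes of $B(x,\delta)$ with Brownian disks and, crucially, obtaining a super-polynomial tail bound on their boundary lengths \emph{uniformly over all holes}. This requires invoking the precise structure of the metric exploration from \cite{ms2015axiomatic}, controlling the running maximum of the boundary-length process, and checking that only a stochastically controlled (polynomial in $\delta^{-1}$) number of macroscopic holes can appear, so that the union bound needed to upgrade the per-disk constant of Lemma~\ref{lem:boundary_length_distance} to a uniform one still yields a super-polynomially small failure probability; the contributions of the (possibly infinitely many) microscopic holes are absorbed since the diameter bound of Lemma~\ref{lem:boundary_length_distance} tends to $0$ with the boundary length.
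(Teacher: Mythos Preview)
The paper proves this lemma by a short, purely topological argument: assume the contrary, extract by compactness of $\CS$ a limiting point $z$ (and limits $u,v$ of points in the two large components) such that every path from $u$ to $v$ passes through $z$, and contradict that $\CS$ is homeomorphic to $\s^2$ (which has no cut points). No probabilistic estimates are needed beyond this deterministic topological fact.

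Your approach is genuinely different --- a quantitative one via the breadth-first exploration --- but Step~2 has real gaps. First, Lemma~\ref{lem:boundary_length_distance} bounds only distances between \emph{boundary} points of a Brownian disk, not the diameter of the disk itself, so a small boundary length does not by itself force a small hole diameter; you would need an additional area argument (a hole of diameter $\ge R/2$ contains a metric ball of radius comparable to $R$, hence has area $\gtrsim R^{4}$, and one then needs that a Brownian disk of boundary length $\le\delta^{2-\sp a}$ rarely has area of order one). Second, and more seriously, the claim that every hole has boundary length at most $\delta^{2-\sp a}$ off an event of probability decaying faster than any power of $\delta$ is not established and is doubtful as stated: the hole boundary lengths are the downward jumps of the boundary-length process $L_r$ on $[0,\delta]$, which is a time-reversed $3/2$-stable CSBP excursion near its extinction, and stable-type processes typically give only \emph{polynomial} tails for such overshoots. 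You would need a careful argument (exploiting, e.g., the conditioning $\nu(\CS)\in[1,2]$) even to get an exponent larger than $4+\sp a$, which is what your union bound over the net actually requires. Third, the holes are \emph{unmarked} Brownian disks, whereas Lemma~\ref{lem:boundary_length_distance} is stated for the marked law $\bdisklawweighted{\ell}$, so the tail of the constant $C$ there needs to be transferred. You candidly flag several of these issues in your ``Main obstacle'' paragraph, but as written Step~2 is a plausible sketch rather than a proof; by contrast the paper's compactness argument sidesteps all of this machinery in a few lines.
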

\begin{proof}
Suppose that $(\CS,d,\nu,x,y)$ has distribution $\bminflaw$.  Suppose that the assertion of the lemma is not true.  Then there exists $R > 0$ and a positive sequence $(r_n)$ with $r_n \to 0$ as $n \to \infty$ as well as a sequence $(z_n)$ in $\CS$ so that $\CS \setminus B(z_n,r_n)$ for every $n \in \N$ has at least two components $U_n, V_n$ both of which have diameter at least $R > 0$.  Since $\partial B(z_n,r_n)$ has diameter at most $2r_n$, it follows that there exists $n_0 \in \N$ so that for each $n \geq n_0$ we can find $u_n \in U_n$ so that $B(u_n,R/4) \subseteq U_n$ and $v_n \in V_n$ so that $B(v_n,R/4) \subseteq V_n$.  By passing to a subsequence if necessary, we may assume without loss of generality that $u_n,v_n,z_n$ respectively converge to distinct points $u,v,z$.  Since every path from $u_n$ to $v_n$ passes through $\partial B(z_n,r_n)$, it follows that every path from $u$ to $v$ passes through $z$.  This is a contradiction to the fact that $\CS$ is homeomorphic to $\s^2$, which completes the proof.
\end{proof}

\begin{proof}[Proof of Proposition~\ref{prop:single_point_number_of_geodesics}]
Fix $\sp{a} > 0$ and let $\alpha = 8+3 \sp{a}$.  Let $k_0 \in \N$ be so that the exponent from Lemma~\ref{lem:geodesic_exponent} is at least $\alpha$.  Let $(z_j)$ be an i.i.d.\ sequence chosen from $\nu$.  Lemma~\ref{lem:typical_points_dense} implies that there exists $\epsilon_0 > 0$ so that for all $\epsilon \in (0,\epsilon_0)$ the following is true.  If we let $N = \epsilon^{-4-\sp{a}}$ then for every $z \in \CS$ there exists $1 \leq j \leq N$ so that $d(z,z_j) \leq \epsilon$.

Fix $\epsilon > 0$. For each $1\le j \le N$ and $r > 0$, let $Y_r^j$ be the boundary length of $\partial \fb{y}{z_j}{d(z_j,y)-r}$. Let $\tau_j = \inf\{r>0 : Y_r^{j} = \epsilon^{1/2}\}$.  
On the event $\{\tau_j<\infty\}$, the process $(Y^j_r, r> \tau_j)$ makes a finite number of up-crossings from $\eps$ to $\eps^{1/2}$ before reaching $0$. If this number is $0$, then let $\omega_j=\tau_j$. Otherwise, let $\omega_j$ be the last time that  $(Y^j_r, r> \tau_j)$ completes an up-crossing  from $\eps$ to $\eps^{1/2}$. Let $\sigma_j:=\inf\{r>\omega_j: Y^j_r =\eps\}$.
Conditionally on $\omega_j$, $\fb{y}{z_j}{d(z_j,y) - \omega_j} \setminus \fb{y}{z_j}{d(z_j,y) - \sigma_j}$ is distributed as an (unconditioned) metric band with inner boundary length $\eps^{1/2}$ and stopped when the boundary length $Y^j_r$ first reaches $\eps$, and $\fb{y}{z_j}{d(z_j,y) - \sigma_j}$  is distributed as a metric band with inner boundary length $\eps$ and stopped when the boundary length $Y^j_r$ first reaches $0$, conditioned on the event that the boundary length $Y^j_r$ stays below $\eps^{1/2}$.

On the event $\{\tau_j<\infty\}$, the process $(Y^j_r,  \omega_j <r < \sigma_j)$ makes a finite number of up-crossings
from $\eps^{1/2}$ to $2\eps^{1/2}$ before reaching $\eps$. If this number is $0$, then let $\theta_j=\omega_j$. Otherwise, let $\theta_j$  be the first time that $(Y^j_r: \omega_j<r <\sigma_j)$ reaches $\eps^{1/2}$ after making the last up-crossing from $\eps^{1/2}$ to $2\eps^{1/2}$. By  Lemma~\ref{lem:geodesic_exponent}, we know that conditionally on $\{\tau_j<\infty\}$, the probability that there are at least $k_0$ disjoint geodesics in $\CS$ which which connect $\partial \fb{y}{z_j}{d(z_j,y) - \theta_j}$ to $\partial\fb{y}{z_j}{d(z_j,y) - \sigma_j}$ is $O(\eps^{\alpha/2})$.
By performing a union bound with respect to $j$ and then applying the Borel-Cantelli lemma, we can deduce that one can find $\eps_1\in(0, \eps_0)$ so that for all $\eps\in(0,\eps_1)$, for all $1 \leq j \leq N$, if $\tau_j<\infty$, then there are at most $k_0-1$ disjoint geodesics in $\CS$ which which connect $\partial \fb{y}{z_j}{d(z_j,y) - \theta_j}$ to $\partial\fb{y}{z_j}{d(z_j,y) - \sigma_j}$.

For each $1\le j \le N$,  arguing as in the proof of Lemma~\ref{lem:x_concentration}, we know that for some $c>0$
\[
\bminflaw (\tau_j=\infty, d(y,z_j) \geq \epsilon^{1/8}) \le \exp(-c \epsilon^{-1/8}).
\] 
Indeed, in each round of time of length $\epsilon^{1/4}$ in which $Y^j_r \leq \epsilon^{1/2}$ there is a positive chance that $Y^j_r$ hits $0$ and there are at least $\epsilon^{-1/8}$ such rounds when $d(y,z_j) \geq \epsilon^{1/8}$.  Therefore by performing a union bound and then applying the Borel-Cantelli lemma, we can deduce that one can find $\eps_2\in (0, \eps_1)$ so that for all $\eps\in(0, \eps_2)$, for all $1 \leq j \leq N$ with $d(y,z_j) \geq \epsilon^{1/8}$, the event $\tau_j<\infty$ holds.

On the event $\{\tau_j<\infty\}$, we have $Y^j_r < 2\eps^{1/2}$ for $r\in (\theta_j, \sigma_j)$. Arguing as in the previous paragraph, we have for some constant $c_1>0$
\begin{align*}
\bminflaw (\sigma_j - \theta_j \ge \eps^{1/8} \mid \tau_j<\infty)  \le  \exp (-c_1 \eps^{-1/8}).
\end{align*}
By the second paragraph of the proof, we also have $Y^j_r < \eps^{1/2}$ for $r > \sigma_j$. Again using the same arguments (which led to the previous equation), this implies 
\begin{align*}
\bminflaw (d(z_j, y) - \sigma_j \ge \eps^{1/8} \mid \tau_j<\infty)  \le  \exp (-c \eps^{-1/8}).
\end{align*}
On the other hand, by~\eqref{eqn:csbp_extinction_time}, we have for some constant $c_2>0$
\begin{align*}
\bminflaw (d(x,y)-\sigma_j \le \epsilon \mid \tau_j<\infty)  \le \exp(-c_2 \epsilon^{-2} \times \epsilon) = \exp(-c_2 \epsilon^{-1}).
\end{align*}
By performing a union bound and then applying the Borel-Cantelli lemma, we can deduce that one can find $\eps_3\in(0, \eps_2)$ so that for all $\eps\in(0, \eps_3)$, for all $1 \leq j \leq N$ with $d(y,z_j) \geq \epsilon^{1/8}$, the event $\{\tau_j<\infty\}\cap\{\sigma_j - \theta_j < \eps^{1/8}\} \cap \{d(z_j,y) -\sigma_j <\eps^{1/8}\} \cap\{d(z_j, y) - \sigma_j > \eps\}$ holds. On this event, the metric band $\fb{y}{z_j}{d(z_j,y) - \theta_j} \setminus \fb{y}{z_j}{d(z_j,y) - \sigma_j}$ is contained in $\fb{y}{z_j}{2\eps^{1/8}} \setminus \fb{y}{z_j}{\eps}$. 

Combined, we deduce that  for all $\eps\in(0, \eps_3)$, for all $1 \leq j \leq N$ with $d(y,z_j) \geq 3 \epsilon^{1/8}$,  there are at most $k_0-1$ disjoint geodesics in $\CS$ which which connect $\partial \fb{y}{z_j}{\eps}$ to $\partial\fb{y}{z_j}{\eps^{1/8}}$.

Let $U_\delta = \{z \in \CS : d(z,y) > \delta\}$.  Then  for all $\delta' \in (0,\delta/2)$ the number of geodesics which connect any $z \in U_\delta$ to $\partial \fb{y}{z}{\delta'}$ and are disjoint other than at $z$ is at most $k_0-1$.  Lemma~\ref{lem:diameter_of_complement} implies that if $z \in \CS \setminus \{y\}$ and there are at least $k_0-1$ geodesics starting from $z$ which are disjoint other than at $z$ then there exists $\delta' > 0$ so that these geodesics connect $z$ to $\partial \fb{y}{z}{\delta'}$.  Indeed, if the minimal length of the geodesics is $2R > 0$ and $\delta' > 0$ is sufficiently small then they must all enter the same component of $\CS \setminus B(z,\delta')$ as there is only one such component which has diameter at least $R$ for small enough $\delta' > 0$.  In particular, this component must be the $y$-containing component of $\CS \setminus B(z,r)$ for all $\delta' > 0$ sufficiently small.  Since $\delta > 0$ was arbitrary, this proves the result for all $z \in \CS \setminus \{y\}$.  If we sample $y'$ from $\nu$ independently of everything else, then the same also holds for all $z \in \CS \setminus \{y'\}$.  Since we a.e.\ have that $y \neq y'$, this proves the proposition.
\end{proof}

\subsection{Approximate geodesic tail bounds}
\label{subsec:approx_tail_bounds}

In this section, we will establish the following estimate for the probability that a metric band has $k+1$ points on its outer boundary whose distance to the marked point on the inner boundary is within $\epsilon$ of the width of the band.  This estimate will then feed into Section~\ref{subsec:finite_between_points} where it will be used to get the exponent for the collection of geodesics between a pair of points having a given number of splitting points.  In order to read the sequel, one only needs to read the statement of the following lemmas and can skip its proof.

\begin{lemma}
\label{lem:metric_band_epsilon_point}
Fix $\xi \in (0,1)$ and $\epsilon > 0$.  Suppose that $\ell, w > 0$ and $(\CB,d_\CB,\nu,z)$ has law $\bandlaw{\ell}{w}$.  Fix $k\in\N$.
Let $F_{\epsilon,\xi,k}(\CB)$ (we also denote it by $F_{\epsilon,\xi,k}$ when there is no ambiguity) be the event that there exist points $z_1,\ldots,z_{k+1} \in \outerboundary \CB$ ordered counterclockwise so that the boundary length distance from each $z_i$ to $z_{i+1}$ (with $z_{k+2} = z_1$) is at least $\xi^2$ and $d_{\CB}(z_i,z) \le w+ \epsilon$ for $1 \leq i \leq k+1$.
Then  $$\bandlaw{\ell}{w} [F_{\epsilon,\xi,k}] = O((\epsilon/\xi)^{k+o(1)}) \text{ as } \eps\to 0,$$
where the implicit constant does not depend on $\ell, \xi, w$. 
\end{lemma}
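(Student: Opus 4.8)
The plan is to reduce the event $F_{\epsilon,\xi,k}$ to a statement about the $3/2$-stable CSBP $(Y_t)_{t\in[0,w]}$ which governs the reverse metric exploration inside $\CB$ from $\innerboundary\CB$, together with scaling and the independence of geodesic slices. First I would explain the geometric picture: if $z_1,\dots,z_{k+1}\in\outerboundary\CB$ satisfy $d_\CB(z_i,z)\le w+\epsilon$ and are separated along $\outerboundary\CB$ by boundary length at least $\xi^2$, then the (leftmost, say) geodesics from the $z_i$ back towards $\innerboundary\CB$ stay within distance $\epsilon$ of the full width $w$ before merging — equivalently, the boundary length processes between consecutive such geodesics, run backwards in the radial coordinate starting from $\outerboundary\CB$, all survive to within $\epsilon$ of time $w$. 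Actually it is cleaner to run the exploration forward from $\innerboundary\CB$: decompose $\CB$ using $k+1$ marked points $w_1,\dots,w_{k+1}$ on $\innerboundary\CB$ spaced by inner boundary length $\ell/(k+1)$, get independent slices, and observe that on $F_{\epsilon,\xi,k}$ one can couple so that inside $k$ of the relevant regions the pair of bounding geodesics does \emph{not} merge before distance $w-\epsilon$. The probability that the geodesics bounding a slice of inner boundary length $\beta$ do not merge before distance $w-\epsilon$ is the probability that a $3/2$-stable CSBP started from $\beta$ survives time $w-\epsilon$, which by~\eqref{eqn:csbp_extinction_time} equals $1-\exp(-(c(w-\epsilon))^{-2}\beta)$; when $\beta$ is of order $\xi^2$ and we further want the surviving boundary length to be small (so that the exit points are close), one extracts a factor of order $\epsilon/\xi$ per slice.

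More precisely, the key step is a one-slice estimate: for $(\CG,d_\CG,\nu_\CG)$ with law $\slicelaw{\beta}{w}$, the probability that the two geodesics bounding $\CG$ fail to merge before reaching distance $w-\epsilon$ from $\innerboundary\CG$ \emph{and} the resulting outer boundary points are within $d_\CG$-distance $O(\epsilon)$ of the full width is $O((\epsilon/\beta^{1/2})^{1+o(1)})$ as $\epsilon\to0$, uniformly in $\beta$ in the relevant range. This is proved by first using the scaling property of slices to reduce to $\beta$ of unit order (rescaling distances by $\beta^{-1/2}$, so width $w$ becomes $w/\beta^{1/2}$ and $\epsilon$ becomes $\epsilon/\beta^{1/2}$), then analysing the $3/2$-stable CSBP excursion: conditioning on the lifetime $\zeta$ of the CSBP excursion, the event forces $\zeta\in[w-\epsilon,\,w+\epsilon']$ for the appropriate window, and since $\zeta$ has a smooth density (as discussed after~\eqref{eqn:csbp_extinction_time}, the lifetime under $M_{3/2}$ has density $c\,t^{1/(1-\alpha)-1}$ and the conditioned version inherits smoothness away from $0$), the probability of landing in an $\epsilon$-window is $O(\epsilon)$. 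The $o(1)$ in the exponent absorbs the logarithmic corrections coming from Lemma~\ref{lem:boundary_length_distance} (converting the boundary-length separation $\xi^2$ into a lower bound on $d_\CB$-distance costs a factor of $\xi^{o(1)}$), and from the fact that we must also control the boundary length of the slice at distance $w-\epsilon$, which we want to be at most $O(\epsilon^2)$ so the exit points are genuinely within $d_\CB$-distance $O(\epsilon)$; this is an extra constraint of probability $O(\epsilon^{o(1)})$ by the hitting-time density.

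Then I would assemble the pieces: condition on the inner boundary length configuration, partition $\innerboundary\CB$ into $k+1$ arcs of length $\ell/(k+1)$ meeting at points $w_1,\dots,w_{k+1}$, and cut $\CB$ along the geodesics from the $w_i$ to $\outerboundary\CB$ into $k+1$ independent slices $\CG_1,\dots,\CG_{k+1}$ of inner boundary lengths $\beta_1,\dots,\beta_{k+1}$ summing to $\ell$. On $F_{\epsilon,\xi,k}$, by a pigeonhole/combinatorial argument the $k+1$ well-separated points $z_i$ on $\outerboundary\CB$ must be distributed so that at least $k$ of the slices $\CG_j$ each contain in their interior a geodesic back to $\innerboundary\CB$ that stays within $\epsilon$ of the full width — more carefully, one should compare against the $2^m$-net of geodesics as in Lemma~\ref{lem:num_of_geodesics} to handle the countable-to-general-point issue and the fact that the separating geodesics need not start at the $w_i$, but the upshot is that $F_{\epsilon,\xi,k}$ is contained (up to an event of negligible probability, using that $\ell$ is not too large by the argument around~\eqref{eq:Yl_upper_bound}) in the event that $k$ independent slices each exhibit the one-slice event above. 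By independence of the slices, the probability is then at most a constant times $\prod$ over those $k$ slices of $O((\epsilon/\beta_j^{1/2})^{1+o(1)})$. The dominant contribution comes from slices with $\beta_j$ of order $\xi^2$ (smaller $\beta_j$ only decrease the probability of the CSBP surviving, larger $\beta_j$ give smaller $\epsilon/\beta_j^{1/2}$), and after summing over the $O(1)$ choices of which $k$ slices and over the relevant scales of $\ell$, we obtain $\bandlaw{\ell}{w}[F_{\epsilon,\xi,k}]=O((\epsilon/\xi)^{k+o(1)})$, with the constant uniform in $\ell$ and $\xi$ by scaling.

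The main obstacle I anticipate is the one-slice estimate with the precise exponent $1+o(1)$: it is not enough that the bounding geodesics survive time $w-\epsilon$ (which already has probability of order $\epsilon$ times boundary length), one also needs the two exit points to be within $d_\CB$-distance $O(\epsilon)$ of each other and of full width, which requires controlling the boundary length of the slice near the outer boundary and converting that into a metric estimate via Lemma~\ref{lem:boundary_length_distance}; juggling these two constraints while keeping the power exactly $1$ (up to $o(1)$) and uniform in $\beta$ and $\xi$ is the delicate part. A secondary technical point is the reduction from arbitrary points $z_i$ on $\outerboundary\CB$ to geodesics emanating from the dyadic net on $\innerboundary\CB$, which must be handled as in Lemma~\ref{lem:num_of_geodesics} using the bound of at most $3$ geodesics between any two points from~\cite{lg2010geodesics}.
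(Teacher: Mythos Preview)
Your approach has a fundamental gap: the slice/CSBP decomposition tracks the wrong quantity. When you cut $\CB$ into slices along geodesics from points $w_1,\dots,w_{k+1}\in\innerboundary\CB$ to $\outerboundary\CB$, the associated CSBPs encode the boundary lengths of the slices---equivalently, the radial distance at which the bounding geodesics \emph{towards the center $x$} merge. But $F_{\epsilon,\xi,k}$ is a statement about distances from the single marked point $z\in\innerboundary\CB$ to points $z_i\in\outerboundary\CB$; these are governed by geodesics towards $z$, which are not the slice-bounding geodesics and are not read off from the slice CSBPs. Your claimed reduction ``on $F_{\epsilon,\xi,k}$ one can couple so that inside $k$ of the relevant regions the pair of bounding geodesics does not merge before distance $w-\epsilon$'' is unjustified: having $k+1$ well-separated points $z_i$ with $d_\CB(z_i,z)\le w+\epsilon$ does not constrain when the geodesics from the $w_j$ towards $x$ merge (indeed all the $z_i$ could lie in a single slice). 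Likewise, in your one-slice estimate there is no reason the event forces the CSBP lifetime $\zeta$ into a window of size $O(\epsilon)$; you are conflating survival of a slice CSBP with near-optimality of $d_\CB(\cdot,z)$, and the latter is simply a different functional. The CSBP/slice machinery is the right tool for the event of Proposition~\ref{prop:disjoint_geodesics} (disjoint geodesics towards a point near $x$), which may be what you have in mind, but not for this lemma.

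The paper takes a completely different route. It first proves the analogous bound for a marked Brownian disk (Lemma~\ref{lem:num_pinch_points_disk}): in the Brownian-snake construction of $(\CD,d,\nu,y)$ with boundary length $\ell$, the distance from a boundary point $\rho_{\BD,\partial}(s)$ to $y$ equals $\sqrt{3}B_s-Y_{s^*}$ for a Brownian bridge $B$ on $[0,\ell]$, so the event ``$k+1$ boundary points within $\epsilon$ of minimal distance to $y$, pairwise $\xi^2$-separated'' becomes a one-dimensional question about $k$ near-zero returns of a normalized Brownian excursion with spacing $\ge\xi^2/\ell$, and an iterated strong-Markov argument gives $O((\epsilon/\xi)^k)$ directly. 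Lemma~\ref{lem:metric_band_epsilon_point} is then deduced by embedding the band in an ambient Brownian map: if $\CB=\fb{y}{x}{d(x,y)-\tau}\setminus\fb{y}{x}{d(x,y)-\tau-w}$ and $r=d(x,y)-\tau-w$, then $\CD_r=\CS\setminus\fb{y}{x}{r}$ is a marked Brownian disk with marked point $y$, and $d_\CB(z_i,z)\le w+\epsilon$ together with $d(z,y)=\tau$ forces $d(z_i,y)\le d(\partial\CD_r,y)+\epsilon$, which is exactly the disk event. A uniform randomization of the width, an area truncation, and a separate treatment of $\ell\le1$ versus $\ell>1$ (using a slice argument only to remove an extraneous factor of $\ell$) give the uniformity in $\ell,\xi$.
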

\begin{remark}
Throughout the paper, when we make a statement such as ``$f(x,r)=O(x^{k+o(1)})$ as $x\to 0$ where the implicit constant does not depend on $r$'', we mean that for all $\delta>0$, there is a constant $C_\delta$ which does not depend on $r$ such that $f(x,r)\le C_\delta x^{k-\delta}$ for all $x$ small enough.
\end{remark}

We remark that the event $F_{\epsilon,\xi,k}$ in the statement of Lemma~\ref{lem:metric_band_epsilon_point} implies that $\outerboundary\CB$ is not equal to a point and that the distance between the inner and outer boundaries of $\CB$ is equal to $w$.

We will deduce Lemma~\ref{lem:metric_band_epsilon_point} from the following estimate that an instance $(\CD,d,\nu,\nu_{\partial \CD},y)$ of $\bdisklawweighted{\ell}$ has $k+1$ distinct points on its boundary whose distance to $y$ is within $\epsilon$ of being minimal.  The difference between the statement of the following lemma and Lemma~\ref{lem:metric_band_epsilon_point} is that the Brownian disk case arises as a forward exploration of the Brownian map while the metric band case arises from a reverse exploration.

\begin{lemma}
\label{lem:num_pinch_points_disk}
Let $\epsilon > 0$ and suppose that $\ell \geq \xi^2 \geq \epsilon^2$.  Suppose that $(\CD,d,\nu,\nu_{\partial \CD},y)$ has law $\bdisklawweighted{\ell}$.  Let $E_{\epsilon,\xi,k}$ be the event that there exist points $x_1,\ldots,x_{k+1} \in \partial \CD$ ordered counterclockwise so that the boundary length distance between $x_i$ and $x_{i+1}$ is at least $\xi^2$ for each $i$ (with $x_{k+2} = x_1$) and $d(x_i,y) \leq d(\partial \CD,y) + \epsilon$ for each $i$.  Then 
$$\p[E_{\epsilon,\xi,k}] = O((\epsilon/\xi)^{k+o(1)}) \text{ as } \eps\to 0,$$
where the implicit constant does not depend on $\ell$, $\xi$.  The same statement holds if we condition on $\nu(\CD)$ or if we assume instead that $d(x_i,x_j) \geq \xi$ for each $i,j$ distinct.
\end{lemma}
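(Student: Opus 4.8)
The plan is to reduce the event $E_{\epsilon,\xi,k}$ to a statement about the Brownian bridge appearing in the Brownian snake construction of $\bdisklawweighted{\ell}$, and then to estimate that probability using Williams' decomposition of a bridge at its minimum together with the transience of the three-dimensional Bessel process.

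\emph{Reduction to the bridge.} Recall from the Brownian snake construction of $\bdisklawweighted{\ell}$ that the boundary $\partial\CD$ is parametrised by $\rho_{\BD,\partial}\colon[0,\ell]\to\partial\CD$, that the boundary length of the arc between $\rho_{\BD,\partial}(s)$ and $\rho_{\BD,\partial}(s')$ is $|s-s'|$ (up to cyclic wrap), and that $d(\rho_{\BD,\partial}(s),y)=\sqrt3\,B_s-Y_{s^*}$, where $B$ is a Brownian bridge of duration $\ell$ from $0$ to $0$ independent of the Brownian motion $X$ encoding the tree part. Writing $\underline B=\min_{[0,\ell]}B$ we get $d(\partial\CD,y)=\sqrt3\,\underline B-Y_{s^*}$, so $d(\rho_{\BD,\partial}(s),y)\le d(\partial\CD,y)+\epsilon$ is precisely $B_s\le\underline B+\epsilon/\sqrt3$. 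Hence $E_{\epsilon,\xi,k}$ is exactly the event that there are $0\le s_1<\dots<s_{k+1}<\ell$ whose consecutive cyclic gaps in $[0,\ell]$ are all $\ge\xi^2$ and with $B_{s_i}\le\underline B+\epsilon/\sqrt3$ for each $i$; in particular $\mathbf 1_{E_{\epsilon,\xi,k}}$ is $\sigma(B)$-measurable. Since $\nu(\CD)$ is a function of $X$ alone (it equals the first hitting time of $-\ell$ by $X$), it is independent of $B$, so conditioning on $\nu(\CD)$ does not change $\p[E_{\epsilon,\xi,k}]$; this handles that part of the statement. For the variant with $d(x_i,x_j)\ge\xi$ for all $i\ne j$, Lemma~\ref{lem:boundary_length_distance} (used off the negligible event that its random constant is large) shows that $d(x_i,x_{i+1})\ge\xi$ forces the boundary length between consecutive points to be at least $(\xi')^2$ for a slightly smaller $\xi'$, so this case follows from the boundary-length case with $\xi$ replaced by $\xi'$; the polylogarithmic discrepancy (and the tail of the random constant) only affect the $o(1)$ in the exponent.

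\emph{The core estimate.} It remains to bound $\p[E_{\epsilon,\xi,k}]$ for the bridge $B$. By Brownian scaling, $B_s=\sqrt\ell\,\widehat B_{s/\ell}$ for a standard Brownian bridge $\widehat B$, and the event becomes: $\widehat B$ lies within $\widehat\delta:=\epsilon/\sqrt{3\ell}$ of its minimum at $k+1$ points whose consecutive cyclic gaps in $[0,1]$ are all $\ge\widehat a:=\xi^2/\ell$. The only effective parameter is $q:=\widehat\delta/\sqrt{\widehat a}=\epsilon/(\sqrt3\,\xi)\asymp\epsilon/\xi$, which is independent of $\ell$, so we need a bound uniform in $\widehat a\in(0,1]$ (and we may assume $\widehat a$ is small enough that the event is not vacuous). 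Let $u^*$ be the a.s.\ unique argmin of $\widehat B$; its value $\underline{\widehat B}$ is of order $1$. Using only that all $k+1$ gaps are $\ge\widehat a$ one checks that at most one of the $k+1$ points lies within cyclic distance $\widehat a/2$ of $u^*$, so at least $k$ of them lie at cyclic distance $>\widehat a/2$ from $u^*$, and on each side of $u^*$ these ``far'' points are ordered with consecutive gaps $\ge\widehat a$. By Williams' path decomposition of a Brownian bridge at its minimum, conditionally on $(u^*,\underline{\widehat B})$ the recentred pre- and post-minimum processes are independent three-dimensional Bessel bridges started from $0$. For such a process $R$, transience of Bessel(3) gives $\p[\inf_{t\ge s}R_t\le b\mid R_s]=(b/R_s)\wedge1$ and $\E[1/R_s]\lesssim s^{-1/2}$ uniformly in the starting point, and the bridge conditioning costs only a bounded factor on the events below since $R$ is required to be small there while the endpoint is of order $1$. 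Peeling off one return at a time, it follows that the probability that $R$ dips below $\widehat\delta$ at $m$ times that are mutually $\ge\widehat a$-separated, the first at a time $\ge\widehat a/2$, is $O(q^m)$: after each return $R$ is pushed back up to level $\asymp\sqrt{\widehat a}$ over the forced separation $\widehat a$, whence the chance of a further return below $\widehat\delta$ is $\lesssim\widehat\delta/\sqrt{\widehat a}=q$. Distributing the $\ge k$ far points between the two sides of $u^*$, summing over the finitely many (only $k$-dependent) ways this can happen, and over which side (if any) carries the single possible near point, we obtain $\p[E_{\epsilon,\xi,k}]=O(q^k)=O((\epsilon/\xi)^k)$ with implicit constant depending only on $k$; combined with the previous paragraph this gives the asserted $O((\epsilon/\xi)^{k+o(1)})$ in all cases.

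\emph{Main obstacle.} The substantive part is the core Bessel(3) estimate --- that $k$ mutually well-separated near-minima of a Brownian bridge cost $q^k$. Concretely one must (i) carry out the Radon--Nikodym comparison showing that replacing each Bessel bridge by a free Bessel(3) process costs only a bounded constant on the relevant ``dip below $\widehat\delta$'' events; (ii) treat separately the atypical configurations --- $u^*$ close to an endpoint of $[0,1]$, or $\underline{\widehat B}$ close to $0$ --- which contribute only lower-order terms; and (iii) make the ``each further separated return costs a factor $q$'' step uniform in $\widehat a\in(0,1]$, i.e.\ uniform in $\ell$, which is exactly what the scale-covariant bounds $\E[1/R_s]\lesssim s^{-1/2}$ and $\p[\inf_{t\ge s}R_t\le b\mid R_s]=(b/R_s)\wedge1$ deliver. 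The reduction to the bridge, the scaling, the elementary combinatorics of the separated points, and the reductions of the two variant statements are routine.
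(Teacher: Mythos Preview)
Your reduction to the bridge $B$, the independence argument for conditioning on $\nu(\CD)$, and the reduction of the $d(x_i,x_j)\ge\xi$ variant via Lemma~\ref{lem:boundary_length_distance} all match the paper exactly. The core estimate, however, is handled differently.

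The paper uses Vervaat's transform rather than Williams' decomposition: cyclically shifting $B$ to start at its argmin produces a single normalized Brownian excursion $e$ on $[0,1]$, and the event becomes the existence of $k$ times, mutually $\ge\xi^2/\ell$-separated and separated from the endpoints, at which $e\le\epsilon/\sqrt\ell$. The paper then runs a stopping-time iteration, but instead of transience estimates it computes $\p[\tau_1<1-\xi^2/\ell]$ as the ratio $\E[Z]/\E[Z\mid\tau_1<1-\xi^2/\ell]$, where $Z$ is the Lebesgue measure of the sub-level set $\{t:e(t)\le\epsilon/\sqrt\ell\}$; the explicit marginal density $2\sqrt{2\pi}\,q_t(x)q_{1-t}(x)$ of the excursion makes both expectations directly computable, giving $\Theta(\epsilon/\xi)$ at each step with no process-versus-bridge comparison needed.

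Your route via Williams and the Bessel(3) transience bound $\p[\inf_{t\ge s}R_t\le b\mid R_s]=(b/R_s)\wedge1$ is also valid, and the uniform estimate $\E_x[1/R_s]\lesssim s^{-1/2}$ you invoke is correct. What it costs you is precisely the obstacles you flag: the Radon--Nikodym comparison between Bessel bridge and free Bessel process (whose density ratio involves a factor like $(T/(T-t))^{3/2}$ and is not innocuous near the bridge endpoints), the two-sided combinatorics, and the degenerate cases for $u^*$ and $\underline{\widehat B}$. The paper's Vervaat-plus-explicit-density approach sidesteps all of these at once, yielding a shorter argument; your approach is more probabilistic but leaves more to check.
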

\begin{proof}
Let $b$ be the Brownian bridge on $[0,\ell]$ in the Brownian snake construction of $(\CD,d,\nu,y)$.  Let $t_*$ be the a.s.\ unique time at which $b$ attains its infimum.  The event in the lemma statement is equivalent to the existence of times $0 < s_1 < \cdots < s_{k+1} < \ell$ so that $b(s_j) - b(t_*) \leq \epsilon$ and $s_j-s_{j-1} \mod \ell \geq \xi^2$ for each $j$.  For $t \in [0,1]$, we let $e(t) = \ell^{-1/2}( b(\ol{t_*+ \ell t}) - b(t_*))$ where $\ol{s}$ denotes $s$ modulo $\ell$.  Then $e$ is a normalized Brownian excursion and the event in the proposition statement implies that there exist times $0 < s_1 < \cdots < s_k < 1$ so that $e(s_j) \leq \ell^{-1/2} \epsilon$ and $s_j-s_{j-1} \geq \xi^2/\ell$ for each $1 \leq j \leq k$.  Let $\tau_1 = \inf\{t \geq \xi^2/\ell : e(t) \leq \epsilon \ell^{-1/2} \}$.  Given that $\tau_1,\ldots,\tau_j$ have been defined, let $\tau_{j+1} = \inf\{t \geq \tau_j + \xi^2/\ell : e(t) \leq \epsilon \ell^{-1/2}\}$.  To prove the lemma, we will show that $\p[ \tau_k < 1-\xi^2/\ell] = O( (\epsilon/\xi)^{k+o(1)})$.

Let $Z$ be the Lebesgue measure of $\{t \in [\xi^2/\ell,1-\xi^2/\ell] : e(t) \leq \epsilon/\ell^{1/2}\}$.  Recall that the density at time $t$ for $e$ at $x > 0$ is given by
\[ 2\sqrt{2\pi} q_t(x) q_{1-t}(x) \quad\text{where}\quad q_t(x) = \frac{x}{\sqrt{2\pi t^3}} \exp\left( - \frac{x^2}{2t} \right).\]
It therefore follows that $\E[Z] = \Theta(\epsilon^3/(\xi \ell))$.  We also have that $\E[ Z \giv \tau_1 < 1-\xi^2/\ell] = \Theta(\epsilon^2/\ell)$.  Therefore
\[ \p[ \tau_1 < 1- \xi^2/\ell] = \frac{\E[Z]}{\E[Z \giv \tau_1 < 1-\xi^2/\ell]} = \Theta(\epsilon/\xi).\]

For each $t \in [0,1]$, let $\CF_t = \sigma(e(s) : s \leq t)$.  Given $\tau_j < 1-\xi^2/\ell$, we let $T_j = 1-\tau_j$ and $e_j(t) = T_j^{-1/2}(e(t \cdot T_j+\tau_j ))$.  Note that $e_j(0) = \epsilon / (T_j \ell)^{1/2}$.  Then the event that $\tau_{j+1} < 1-\xi^2/\ell$ is equivalent to the event that 
$$\sigma_{j+1} = \inf\{t \geq \xi^2/(\ell T_j) : e_j(t) \leq \epsilon/(\ell T_j)^{1/2}\}  \leq 1 - \xi^2/(\ell T_j).$$  Let $Z_j$ be the Lebesgue measure of $\{t \in [\xi^2/(2\ell T_j),1-\xi^2/(2 \ell T_j)] : e_j(t) \leq \epsilon/(\ell T_j)^{1/2}\}$.  Arguing as above, we have that $$\E[ Z_j \giv \CF_{\tau_j}, \tau_j < 1-\xi^2/\ell] =\Theta(\epsilon^3/(\xi \ell T_j)).$$  We also have that 
$$\E[ Z_j \giv \CF_{\tau_j},\ \sigma_{j+1} < 1 - \xi^2/(\ell T_j)] = \Theta(\epsilon^2/ (\ell T_j)).$$  Altogether, this gives that $\p[ \sigma_{j+1} < 1- \xi^2/(\ell T_j) \giv \CF_{\tau_j}, \tau_j < 1-\xi^2/\ell] =\Theta(\epsilon/\xi)$.  Combining, we see that 
$$\p[ \tau_{j+1} < 1- \xi^2/\ell \giv \CF_{\tau_j}, \tau_j < 1-\xi^2/\ell] = \Theta(\epsilon/\xi).$$  By iterating, we thus see that $\p[ \tau_k < 1-\xi^2/\ell] = O( (\epsilon/\xi)^k)$, as desired.

The result in the case that we condition on $\nu(\CD)$ follows because the event in the lemma statement is determined by the Brownian bridge $b$ in the construction of $(\CD,d,\nu,\nu_\partial,y)$ which, in turn, is independent of $\nu(\CD)$.

The result in the case that we assume $d(x_i,x_j) \geq \xi$ for each $i,j$ distinct follows from the case in which we assume that the boundary length distance from each $x_i$ to $x_{i+1}$ is at least $\xi^2$ and Lemma~\ref{lem:boundary_length_distance}.
\end{proof}

\begin{proof}[Proof of Lemma~\ref{lem:metric_band_epsilon_point}]
Fix $\sp{a},\xi \in (0,1)$, $\epsilon > 0$, and $\ell > 0$.  Let $(\CS,d,\nu,x,y)$ be sampled from $\bminflaw$.  We will prove the result by realizing the metric band instance $(\CB,d_\CB,\nu_\CB,z)$ inside of $(\CS,d,\nu,x,y)$.

For each $r > 0$, we let $Y_r$ be the boundary length of $\partial \fb{y}{x}{d(x,y)-r}$.  Let $\tau = \inf\{r \geq 0 : Y_r = \ell\}$.  
Let $U$ be a uniform random variable in $[0,1]$ independent of everything else.  
Let $\CB(U)= \fb{y}{x}{d(x,y) - \tau} \setminus \fb{y}{x}{d(x,y)-\tau-1-U}.$
On $\{\tau < \infty\}$ and conditionally on $U$, the metric band $\CB(U)$ equipped with its interior-internal metric and the restriction of $\nu$ has law $\bandlaw{\ell}{1+U}$.  Let us now show that  given $\tau < \infty$ and the distance between the inner and outer boundaries of $\CB(U)$ is $1+U$, the conditional probability that $\nu(\CB(U)) \leq (\epsilon/\xi)^{\sp{a}}$ decays to $0$ as $\epsilon/\xi \to 0$ faster than any power of $\epsilon/\xi$.  Let $\sigma:= \inf\{r \ge \tau: Y_r \ge (\eps/\xi)^{\sp{a}/4}\}$. Note that we have $\sigma \in [\tau, \tau+1+U/2]$ off an event with conditional probability tending to $0$ faster than any power of $\eps/ \xi$.
Indeed, for each round of time of length $(\eps/\xi)^{\sp{a}/8}$ such that $Y_r \le (\eps/\xi)^{\sp{a}/4}$, there is a positive probability that the process $Y_r$ hits $0$, and there are $O((\eps/\xi)^{-\sp{a}/8})$ such rounds if $Y_r \le (\eps/\xi)^{\sp{a}/4}$ for all $r \in [\tau, \tau+1+U/2]$. On the event that $\sigma \in [\tau, \tau+1+U/2]$, we can divide $\partial \fb{y}{x}{d(x,y)- \sigma}$ into $\lfloor (\eps/\xi)^{-\sp{a}/8} \rfloor$ equally spaced parts, so that each part has boundary length at least $(\eps/\xi)^{3\sp{a}/8}$. Each of these boundary parts is the inner boundary of a slice with width $\tau+1+U -\sigma \ge U/2$.  The slices are independent with each other, and each slice has $\nu$-volume at least $(\eps/\xi)^{3\sp{a}/4}$ with a positive probability. Therefore, the probability that $\nu(\CB(U)) \leq (\epsilon/\xi)^{\sp{a}}$ is less than the probability that every slice has $\nu$-volume at most $(\eps/\xi)^{3\sp{a}/4}$, which decays to $0$  as $\epsilon/\xi \to 0$ faster than any power of $\epsilon/\xi$.

For a metric band $\CB$, let $F_{\epsilon,\xi,k, \sp{a}} (\CB)$ be the event that $F_{\epsilon,\xi,k} (\CB)$ holds and the area of $\CB$ is at least $(\epsilon/\xi)^{\sp{a}}$. It suffices to show that 
\begin{align}\label{eq:p_band_goal}
\bandlaw{\ell}{1}[F_{\epsilon,\xi,k, \sp{a}}]=O( (\epsilon/\xi)^{k-\sp{a}+o(1)}) \quad\text{as}\quad \epsilon \to 0
\end{align}
where the implicit constant does not depend on $\ell, \xi$. This will imply the lemma since we can take $\sp{a}>0$ arbitrarily close to $0$.  We will consider the possibilities that $\ell \in (0,1]$ and $\ell > 1$ separately.

We first suppose that $\ell \in (0,1]$.  
In this case, there exists $c_0 > 0$ so that $\bminflaw(\tau < \infty) \geq c_0$ for all such $\ell$.  Our first goal is to show that
\begin{align}\label{eq:mu_BM_FB}
\bminflaw(F_{\epsilon,\xi,k, \sp{a}}(\CB(U)) \cap \{\tau<\infty\})=O( (\epsilon/\xi)^{k-\sp{a}+o(1)}),
\end{align}
which implies
\begin{align}\label{eq:EP_band}
\E[ \bandlaw{\ell}{1+U}[F_{\epsilon,\xi,k, \sp{a}}]] = O ( (\epsilon/\xi)^{k-\sp{a}+o(1)} ) \quad\text{as}\quad \eps\to 0.
\end{align}
For each $r > 0$, we let $\CD_r$ be the $y$-containing component of $\CS \setminus B(x,r)$.  We also let $E_{\epsilon,\xi,k, r, \sp{a}}$ be the intersection of the event $E_{\eps, \xi, k}$ from Lemma~\ref{lem:num_pinch_points_disk} for $\CD_r$ and the event that $\nu(\CD_r) \geq (\epsilon/\xi)^{\sp{a}}$.  Given the boundary length of $\CD_r$ is at least $\xi^2$ and $\nu(\CD_r) \geq (\epsilon/\xi)^{\sp{a}}$, the conditional probability of $E_{\epsilon,\xi,k, r, \sp{a}}$ is $O((\epsilon/\xi)^{k+o(1)})$ by Lemma~\ref{lem:num_pinch_points_disk}.  Let $X$ be the set of $r >0$ so that $E_{\epsilon,\xi,k, r, \sp{a}}$ occurs.  We have that
\begin{align}
\label{eq:leb_X}
\bminflaw[\leb (X)] = \int_0^\infty \bminflaw(E_{\epsilon,\xi,k, r, \sp{a}}) dr = O( (\epsilon/\xi)^{k-\sp{a}+o(1)})
\end{align}
where $\leb$ denotes the Lebesgue measure.  Here, we have used that the $\bminflaw$ measure of the set of Brownian map instances with area at least $(\epsilon/\xi)^{\sp{a}}$ is $O( (\epsilon/\xi)^{-\sp{a}})$.  Note that $F_{\epsilon,\xi,k, \sp{a}}(\CB(U)) \cap\{\tau < \infty\} $ is contained in the event  $\{d(x,y)-(\tau+1+U) \in X\}$. As $U$ is uniform in $[0,1]$ independently of everything else, it follows that the $\bminflaw$ measure of $F_{\epsilon,\xi,k, \sp{a}}(\CB(U)) \cap\{\tau < \infty\} $ is at most \eqref{eq:leb_X}, thus proving~\eqref{eq:mu_BM_FB} and~\eqref{eq:EP_band}.

From~\eqref{eq:EP_band}, we deduce that there exists $u_0 \in [1/2,1]$ so that 
\[\bandlaw{\ell}{1+u_0}[F_{\epsilon,\xi,k, \sp{a}}] = O( (\epsilon/\xi)^{k-\sp{a}+o(1)} ).\]
By the scaling property of metric bands, the same is also true for $\bandlaw{\ell/(1+u_0)^2}{1}$.  Suppose that $(\wh\CB,d,\nu,z)$ has law $\bandlaw{\ell/(1+u_0)^2}{1}$.  Consider a reverse metric exploration from $\innerboundary \wh\CB$ to $\outerboundary \wh\CB$.   That is, for each $r \in(0,1)$ we let $\CB_r$ be the set of points which have distance at most $d_0-r$ from $\outerboundary \wh{\CB}$ or are disconnected from $\innerboundary \wh{\CB}$ by such points where $d_0$ is the distance from $\innerboundary \wh{\CB}$ to $\outerboundary \wh{\CB}$.  Then $\CB_r$ is again a metric band and has width $1-r$.  Let $y_r$ be the point on $\innerboundary \CB_r$ visited by the a.s.\ unique geodesic from $z$ to $\outerboundary \wh\CB$.   If we let $Y_r$ be the boundary length of $\innerboundary \CB_r$, then given $Y_r$ the conditional law of $(\CB_r,d_r,\nu_r,y_r)$ is given by $\bandlaw{Y_r}{1-r}$.  
For any $r\in (0,1)$, if the event $F_{\epsilon,\xi,k, \sp{a}} (\CB_r)$ holds, then  $F_{\epsilon,\xi,k, \sp{a}}(\wh\CB)$ also holds.
Note that $Y_0 = \ell/(1+u_0)^2 < \ell$.  Let $\tau = \inf\{r > 0 : Y_r = (1-r)^2 \ell\}$.  
Then the probability that $\tau < 1/2$ is at least $p > 0$. 
On the event $\{\tau<1/2\}$, the conditional law of $(\CB_\tau,d_\tau,\nu_\tau,z_\tau)$ given $\tau$ and $Y_\tau$ is given by $\bandlaw{Y_\tau}{1-\tau}$.  If we rescale boundary lengths by $(1-\tau)^{-2}$ and distances by $(1-\tau)^{-1}$ then we obtain an instance of $\bandlaw{\ell}{1}$.  It therefore follows that
\begin{align*}
\bandlaw{\ell/(1+u_0)^2}{1}[F_{\epsilon,\xi,k, \sp{a}}(\wh\CB)]
&\geq \E\!\left[\one_{\tau < 1/2} \bandlaw{Y_{\tau}}{1-\tau}[F_{\epsilon,\xi,k, \sp{a}}(\CB_\tau)] \right] 
 \geq p \bandlaw{\ell}{1}[ F_{2 \epsilon,2\xi,k, \sp{a}}],
\end{align*}
where the expectation in the middle above is with respect to $\bandlaw{\ell/(1+u_0)^2}{1}$.  Since the left hand side is $O((\epsilon/\xi)^{k-\sp{a}+o(1)})$, it follows that the right hand side is as well.  By replacing $\epsilon$ with $\epsilon/2$ and~$\xi$ with $\xi/2$, we can deduce~\eqref{eq:p_band_goal}. 
This completes the proof of the assertion of the lemma for $\ell \in (0,1]$.

We now consider the case that $\ell > 1$.  By the Lamperti transform~\eqref{eqn:lamperti_csbp_to_levy}, $\bminflaw(\tau < \infty)$ is equal to the mass assigned under the infinite measure on $3/2$-stable L\'evy excursions with only upward jumps to the set of excursions whose supremum is at least $\ell$.  This, in turn, is equal to a constant times $\ell^{-1}$.  Therefore the bound that we obtain by dividing~\eqref{eq:mu_BM_FB} by $\bminflaw(\tau < \infty)$ and applying the argument as above for $\bandlaw{\ell}{1}[F_{\epsilon,\xi,k}]$ is $O(\ell (\epsilon/\xi)^{k+o(1)})$ as $\epsilon \to 0$.  We will now explain how to eliminate the factor of $\ell$.  Fix $\sp{a} > 0$ and assume that $\ell \geq (\epsilon/\xi)^{-\sp{a}}$.  Then we can let $N = \lfloor (\epsilon/\xi)^{-\sp{a}} \rfloor$ and place equally spaced points $x_1,\ldots,x_N$ on $\innerboundary \CB$ with $x_{N/2} = z$ where the counterclockwise boundary length distance from $x_i$ to $x_{i+1}$ is equal to $1$.  Then the slices bounded by the geodesics from the $x_i$ to $\outerboundary \CB$ are i.i.d.\ samples from $\slicelaw{1}{1}$.  In particular, there exists $p > 0$ so that in each slice, the probability that the distance between the two geodesics is at least $1$ is at least $p$.  This implies that the probability that there exists a point $z_0$ on the outer boundary of the clockwise slice between $x_1$ and $x_N$ with  $d_{\CB}(z_0,z) \le 1+ \epsilon$ decays to $0$ as $\epsilon/\xi \to 0$ faster than any power of $\epsilon/\xi$.  
On the event that there does not exist such a point $z_0$, we can bound from above the probability of the event from the lemma statement by gluing the two sides of the counterclockwise slice between $x_1$ and $x_N$ together to obtain a band with inner boundary length $\ell= (\epsilon/\xi)^{-\sp{a}}$. Consequently, $\bandlaw{\ell}{1}[F_{\epsilon,\xi,k}] = O((\epsilon/\xi)^{k-\sp{a}+o(1)})$ as $\epsilon \to 0$ for all $\ell \geq (\epsilon/\xi)^{-\sp{a}}$.  Since $\sp{a} > 0$ was arbitrary, we obtain that $\bandlaw{\ell}{1}[F_{\epsilon,\xi,k}] = O((\epsilon/\xi)^{k+o(1)})$, as desired.
\end{proof}

We will now deduce a variant of Lemma~\ref{lem:metric_band_epsilon_point} which will be used in Section~\ref{sec:exponent_disjoint_geodesics}.  This result will not be used in the remainder of the current section.
 
\begin{lemma}
\label{lem:metric_band_epsilon_point2}
Fix $\xi \in (0,1)$ and $\epsilon > 0$.  Suppose that $\ell > 0$ and $(\CB,d_\CB,\nu,z)$ has law $\bandlaw{\ell}{1}$. For each $t \in[0,1]$, let $\CB_t$  be the set of points in $\CB$ which are not disconnected from $\innerboundary \CB$ by the $(d-t)$-neighborhood of $\outerboundary \CB$ where $d$ is the distance between $\outerboundary \CB$ and $\innerboundary \CB$.  Then $\CB_t$ is a metric band of width $t$.

Fix $k\in \N_0$. Let $H_{\epsilon,\xi,k}$ be the event that there exist $t\in(0,1]$ and points $z_1,\ldots,z_{k+1} \in \outerboundary \CB_t$ ordered counterclockwise so that the boundary length distance from each $z_i$ to $z_{i+1}$ (with $z_{k+2} = z_1$) is at least $\xi^2$ and $d_{\CB_t}(z_i,z) \le t+ \epsilon$ for $1 \leq i \leq k+1$.  Then 
\[ \bandlaw{\ell}{1} [H_{\epsilon,\xi,k}] = O((\epsilon/\xi)^{k+o(1)} \xi^{-1}) \text{ as } \eps\to 0,\]
where the implicit constant does not depend on $\ell,\xi$.
\end{lemma}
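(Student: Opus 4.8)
The plan is to deduce the statement from Lemma~\ref{lem:metric_band_epsilon_point} by a first-moment argument over the continuum of scales $t\in(0,1]$. First observe that, for each fixed $t$, the event that there exist $z_1,\dots,z_{k+1}\in\outerboundary\CB_t$ which are pairwise at boundary length distance at least $\xi^2$ with $d_{\CB_t}(z_i,z)\le t+\epsilon$ is exactly the event $F_{\epsilon,\xi,k}(\CB_t)$ of Lemma~\ref{lem:metric_band_epsilon_point}, since $\CB_t$ is a metric band of inner boundary length $\ell$, width $t$, marked by $z$ on its inner boundary; hence $H_{\epsilon,\xi,k}=\bigcup_{t\in(0,1]}F_{\epsilon,\xi,k}(\CB_t)$. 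By the scaling property of metric bands, $\bandlaw{\ell}{1}[F_{\epsilon,\xi,k}(\CB_t)]=\bandlaw{\ell/t^2}{1}[F_{\epsilon/t,\xi/t,k}]$, and as $\epsilon/t$ and $\xi/t$ keep the ratio $\epsilon/\xi$, Lemma~\ref{lem:metric_band_epsilon_point} gives $\bandlaw{\ell}{1}[F_{\epsilon,\xi,k}(\CB_t)]=O((\epsilon/\xi)^{k+o(1)})$, uniformly in $\ell$ and in $t$ bounded away from $0$. The regime of very thin bands, say $t\le\epsilon^{1/2}$, must be handled separately by a cruder estimate: there $F_{\epsilon,\xi,k}(\CB_t)$ with $k\ge1$ forces $k+1$ points of $\outerboundary\CB_t$ which are pairwise at $d_{\CB_t}$-distance $O(\epsilon^{1/2})$ yet at boundary length distance $\ge\xi^2$, whose probability one controls using Lemma~\ref{lem:boundary_length_distance} and the concentration of the boundary length measure; I expect this to contribute negligibly. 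Altogether $\int_0^1\bandlaw{\ell}{1}[F_{\epsilon,\xi,k}(\CB_t)]\,dt=O((\epsilon/\xi)^{k+o(1)})$, uniformly in $\ell,\xi$.

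The heart of the matter is to show that on the event $H_{\epsilon,\xi,k}$ the random set $T:=\{t\in(0,1]:F_{\epsilon,2^{-1/2}\xi,k}(\CB_t)\text{ holds}\}$ has Lebesgue measure at least $c\xi$ with conditional probability bounded below by a constant $p_k>0$; combining this with the previous paragraph gives $\E[\leb(T)]\ge cp_k\xi\,\bandlaw{\ell}{1}[H_{\epsilon,\xi,k}]$, hence $\bandlaw{\ell}{1}[H_{\epsilon,\xi,k}]\le (cp_k\xi)^{-1}\int_0^1\bandlaw{\ell}{1}[F_{\epsilon,2^{-1/2}\xi,k}(\CB_t)]\,dt=O((\epsilon/\xi)^{k+o(1)}\xi^{-1})$, which is the assertion. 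To produce such a window, run a metric exploration of $\CB$ started from $\innerboundary\CB$, so that $\CB_t$ is the region explored up to radius $t$ and $\tau:=\inf\{t\in(0,1]:F_{\epsilon,\xi,k}(\CB_t)\text{ holds}\}$ is a stopping time with $\{H_{\epsilon,\xi,k}\}\subseteq\{\tau\le1\}$ (up to a set where the infimum fails to be attained, which is removed by a trivial perturbation). On $\{\tau\le1-c\xi\}$ there are $k+1$ distinct geodesics from $z$ inside $\CB_\tau$ ending at points $z_1,\dots,z_{k+1}\in\outerboundary\CB_\tau$ which are pairwise at boundary length distance $\ge\xi^2$. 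Conditionally on the explored region, the unexplored band $\CB\setminus\CB_\tau$ is, given the boundary length $Y_\tau$ of $\outerboundary\CB_\tau$, a metric band glued along $\outerboundary\CB_\tau$; cutting it along the continuations of these $k+1$ geodesics decomposes it into $k+1$ independent geodesic slices whose inner boundary lengths are the $k+1$ arcs of $\outerboundary\CB_\tau$ between consecutive $z_i$'s, each $\ge\xi^2$. Inside each slice, the boundary length of $\outerboundary\CB_t$ restricted to that slice evolves, as $t$ increases from $\tau$, like a $3/2$-stable CSBP (up to time change, possibly time-reversed) started from a value $\ge\xi^2$; by CSBP scaling such a process stays above $\xi^2/2$ throughout an interval of length $c\xi$ with probability bounded below, uniformly in the starting value $\ge\xi^2$. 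By independence of the slices, with probability bounded below (depending only on $k$) this holds simultaneously for all $k+1$ arcs, and then for every $t\in[\tau,\tau+c\xi]$ the $k+1$ points where the continued geodesics meet $\outerboundary\CB_t$ are distinct, pairwise at boundary length distance $\ge\xi^2/2$, and at $d_{\CB_t}(z,\cdot)\le(\tau+\epsilon)+(t-\tau)=t+\epsilon$ from $z$, so $F_{\epsilon,2^{-1/2}\xi,k}(\CB_t)$ holds on the whole interval $[\tau,\tau+c\xi]\subseteq(0,1]$, giving $\leb(T)\ge c\xi$. The complementary case $\tau\in(1-c\xi,1]$ is handled symmetrically, running the exploration from $\outerboundary\CB$ inward and taking a window of scales below $\tau$.

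The main obstacle is exactly this middle step: obtaining a window of scales of length of order $\xi$, rather than only of order $\epsilon$, which is what a naive union bound over an $\epsilon$-spaced grid would force and which would give the much weaker bound $O((\epsilon/\xi)^{k+o(1)}\epsilon^{-1})$. This is what dictates the use of the geodesic-slice decomposition together with the CSBP description of the inter-geodesic boundary lengths, and it requires care about the direction of the metric exploration, so that $\tau$ is a genuine stopping time and the slices in the unexplored region are conditionally independent given $Y_\tau$. Two further technical points need attention: the continuation of a geodesic from $z$ to $z_i$ by a shortest path inside the unexplored band need not itself be a geodesic, but it still certifies $d_{\CB_t}(z,\cdot)\le t+\epsilon$, and distinctness of the $k+1$ certificates persists because the corresponding slices stay nondegenerate; and the very thin band regime $t\le\epsilon^{1/2}$ alluded to above must be excised cleanly so that the uniform-in-$t$ bound on $\int_0^1\bandlaw{\ell}{1}[F_{\epsilon,\xi,k}(\CB_t)]\,dt$ really holds down to $t=0$.
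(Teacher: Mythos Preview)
Your core mechanism is the same as the paper's: define the stopping time $\tau$ (the first $t$ at which $F_{\epsilon,\xi,k}(\CB_t)$ holds), and use the geodesic-slice/CSBP description of $\CB\setminus\CB_\tau$ to propagate the configuration forward. The paper's implementation is considerably simpler, though. Rather than a first-moment argument over Lebesgue measure, it discretizes: set $T=\xi\lceil\sigma/\xi\rceil$, extend the $k+1$ geodesics from $z_i\in\outerboundary\CB_\sigma$ to $w_i\in\outerboundary\CB_T$, and use that each inter-geodesic CSBP stays above $\xi^2/4$ for time $\le\xi$ with uniformly positive probability. This gives $\p[F_{\epsilon,\xi/2,k}(\CB_T)\mid\sigma\le 1]\ge p$, hence $\p[\sigma\le1]\le p^{-1}\sum_{j=1}^{\xi^{-1}}\p[F_{\epsilon,\xi/2,k}(\CB_{\xi j})]$, and Lemma~\ref{lem:metric_band_epsilon_point} on each summand finishes. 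Both your complications disappear: all grid widths are $\ge\xi$, so the thin-band regime never arises, and $T\le 1$ always, so there is no boundary case.

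Your proposed handling of the boundary case $\tau\in(1-c\xi,1]$ is not right. You suggest running the exploration from $\outerboundary\CB$ inward and producing a window below $\tau$, but $\tau$ is by definition the \emph{first} time $F_{\epsilon,\xi,k}(\CB_t)$ holds, so it does not hold for $t<\tau$; relaxing $\xi$ to $\xi/\sqrt2$ does not change this. Moreover, the CSBP/slice structure you need is attached to the forward (inner-to-outer) exploration of $\CB\setminus\CB_\tau$, not to a backward one. The fix is trivial and is exactly what the paper does: extend forward to the single grid point $T=1$; this gives $\p[\tau\in(1-c\xi,1]]\le p^{-1}\p[F_{\epsilon,\xi/2,k}(\CB_1)]=O((\epsilon/\xi)^{k+o(1)})$, which is even stronger than needed. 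Once you see this, the continuous first-moment formulation buys nothing over the grid argument.
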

\begin{proof}
Let $\sigma$ be the first time $t>0$ such that there exist $z_1,\ldots,z_{k+1} \in \outerboundary \CB_t$ ordered counterclockwise so that the boundary length distance from each $z_i$ to $z_{i+1}$ (with $z_{k+2} = z_1$) is at least $\xi^2$ and $d_{\CB_t}(z_i,z) \le t+ \epsilon$ for $1 \leq i \leq k+1$.  Let $\sigma=\infty$ if there is no such $t$ in $[0,1]$. Note that $\sigma$ is a stopping time with respect to the filtration generated by $\CB_t$.

We condition on the event $\sigma\le 1$.  Let $T = \xi \lceil \sigma/\xi \rceil$.  Let $\eta_1, \ldots, \eta_{k+1}$ be the $k+1$ geodesics respectively from 
$z_1,\ldots,z_{k+1}$ to $\outerboundary \CB_T$ which hit $\outerboundary \CB_T$ respectively at $w_1, \ldots, w_{k+1}$.  Then with conditional probability at least $p>0$, the boundary length distance from each $w_i$ to $w_{i+1}$ (with $w_{k+2} = w_1$) is at least $\xi^2/4$. This is because the boundary length distances from $\eta_i$ to $\eta_{i+1}$ for $i=1,\ldots, k+1$ evolve like independent $3/2$-CSBP processes and there is a positive probability that none of them has reached a quarter of its initial length before $\xi$ units of time.  On the other hand, we have $d_{\CB_T}(w_i, z) \le d_{\CB_T}(w_i, z_i) + d_{\CB_\sigma}(z_i, z) \le T + \eps$ for each $1\le i\le k+1$. This implies that the event $F_{\eps,\xi/2, k}$ holds for $\CB_T$. Therefore
\begin{align*}
\p[ F_{\eps,\xi/2, k}(\CB_T) \mid \sigma\le 1 ] \ge p.
\end{align*}
It follows that
\begin{align*}
\p[ \sigma\le 1 ] \le \frac{1}{p} \p[ F_{\eps,\xi/2, k}(\CB_T)] \leq \frac{1}{p} \sum_{j=1}^{\xi^{-1}} \p[ F_{\eps,\xi/2,k}(\CB_{\xi j}) ] = O((\eps/\xi)^{k+o(1)} \xi^{-1})
\end{align*}
Note that $H_{\epsilon,\xi,k}$ is equal to the event $\{\sigma\le 1\}$, hence the proof is complete.
\end{proof}

\subsection{Finite number of geodesics between pairs of points}
\label{subsec:finite_between_points}

Suppose that $(\CS,d,\nu,x,y)$ is an instance of the Brownian map and $u,v \in \CS$ are distinct.  Recall that we say $t \in (0,d(u,v))$ is a \emph{splitting time} for $v$ towards $u$ of multiplicity at least $k \in \N$ if there exists $r > 0$ and geodesics $\eta_1, \ldots,\eta_{k+1}$ from $v$ to $u$ so that $\eta_i|_{[t-r,t]} = \eta_j|_{[t-r,t]}$ and $\eta_i((t,t+r]) \cap \eta_j((t,t+r]) = \emptyset$ for all $i \neq j$.  We call $z = \eta_1(t)$ a \emph{splitting point} for $v$ towards $u$.  The \emph{multiplicity} of $z$ is equal to the largest integer $k$ so that the multiplicity of $z$ is at least $k$.  The \emph{multiplicity} of a splitting time $t$ is equal to the sum of the multiplicities of all of the splitting points for $v$ towards $u$ which have distance $t$ to $v$.

We aim to prove the following proposition in this section.
\begin{proposition}
\label{prop:two_point_number_of_geodesics}
For $\bminflaw$ a.e.\ instance $(\CS,d,\nu,x,y)$, the following holds for all $u,v \in \CS$ distinct. The sum of the multiplicities of all the splitting times from $v$ to $u$ is at most $8$. Moreover, there exists a deterministic constant $C$ so that the number of geodesics which connect $u$ to $v$ is at most $C$.
\end{proposition}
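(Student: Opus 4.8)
The strategy is to reduce the statement about geodesics between a pair of distinct points $u,v$ to the already-established control on splitting points together with Proposition~\ref{prop:single_point_number_of_geodesics}.  The first step is to bound the sum of the multiplicities of splitting times from $v$ to $u$.  By re-rooting the Brownian map using $\nu$, it suffices to bound, for a $\nu$-typical point playing the role of $x$, the sum of the multiplicities of splitting points of geodesics going to $x$ inside the filled metric ball centered at $x$.  As in the proof of Lemma~\ref{lem:geodesic_exponent}, I would perform a reverse metric exploration from $y$ towards $x$, conditioned on the boundary length hitting $\ell$, and decompose the exploration into metric bands.  A splitting point of multiplicity at least $k$ occurring in a given band forces the band's outer boundary (equivalently, by the reverse direction, the appropriate slice structure) to carry $k+1$ distinct points whose distance to the marked point on the inner boundary is within $\epsilon$ of the band width; Lemma~\ref{lem:metric_band_epsilon_point} gives that this costs $O((\epsilon/\xi)^{k+o(1)})$.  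Iterating the geometric-decay estimate of Lemma~\ref{lem:geodesic_exponent} across the dyadic sequence of scales $\tau_j$, the probability that the total multiplicity of splitting points towards a typical point seen by radius $\epsilon$ exceeds some fixed $K$ is $O(\epsilon^{K+o(1)})$.  Combining this with Lemma~\ref{lem:typical_points_dense} (cover $\CS$ by $\epsilon^{-4-\sp{a}}$ balls around typical points) and Lemma~\ref{lem:diameter_of_complement} (so that for small $\epsilon$ the relevant geodesics all go into the same component of $\CS\setminus B(z,\epsilon)$), a union bound over the $\epsilon^{-4-\sp{a}}$ centers together with Borel-Cantelli shows that for $K$ large enough (any $K$ with $K > 4$ works, and $K=9$ is well within reach) the total multiplicity of splitting points along any geodesic between any pair of points is a.s.\ at most $9$.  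Here one must be careful, as in the proof of Proposition~\ref{prop:single_point_number_of_geodesics}, to handle the transition from statements about geodesics to $x$ to statements about geodesics to a nearby point $z$: one uses the strong confluence input (Proposition~\ref{prop:strong_confluence}), which forces a geodesic towards $z$ to coincide with a geodesic towards $x$ at all but finitely many scales, so splitting points towards $z$ are, away from the endpoints, splitting points towards $x$ in some band.

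The second step is to deduce the bound $C$ on the number of geodesics between $u$ and $v$ from the bound on splitting points plus Proposition~\ref{prop:single_point_number_of_geodesics}.  Fix $u,v$ distinct and let $\eta_1,\ldots,\eta_m$ be distinct geodesics from $v$ to $u$.  Consider the family of geodesics parametrized by arc length from $v$.  For each pair $i\neq j$, let $t_{ij} = \sup\{t : \eta_i|_{[0,t]} = \eta_j|_{[0,t]}\}$ be the time at which they separate; this is a splitting time for $v$ towards $u$ (of the appropriate multiplicity, counting how many of the $\eta$'s agree up to $t_{ij}$ and then become pairwise disjoint just afterwards).  The tree structure of the partition of $\{1,\ldots,m\}$ induced by the equivalence relations ``$\eta_i = \eta_j$ on $[0,t]$'' as $t$ increases is a rooted tree whose total branching (sum over internal vertices of (number of children $-1$)) equals $m-1$; each unit of branching at a vertex at time $t$ contributes a corresponding amount to the total multiplicity of splitting times from $v$ towards $u$.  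Hence $m - 1 \le 9$ for all but a controlled contribution — more precisely, the total multiplicity of all splitting times from $v$ towards $u$ is at least $m-1$ minus the number of geodesics that merge back together, i.e.\ we must also account for the possibility that two geodesics separate and later re-merge.  To handle re-merging I would invoke the bound on the number of disjoint geodesics emanating from a point (Proposition~\ref{prop:single_point_number_of_geodesics}): at any splitting point, at most $C_0$ of the outgoing branches are pairwise disjoint, and the geometry of the sphere limits how the branches can recombine, so the number of distinct geodesics is bounded by a function of the number of splitting points, their multiplicities, and $C_0$.  A clean way to phrase this: cut each $\eta_i$ at its splitting times; between consecutive splitting times every $\eta_i$ that is present is determined (it is the unique geodesic between the two relevant points, or at least one of a bounded number), and the number of distinct concatenations is controlled by $C_0$ raised to the number of splitting times, which is bounded since the total multiplicity is at most $9$.

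The main obstacle I expect is the second step — passing from ``bounded total multiplicity of splitting points'' to ``bounded number of geodesics''.  The subtlety is that two geodesics can diverge and later re-converge (this is exactly the $\X$ phenomenon and the bottleneck phenomenon that the paper works hard to understand), so the separation/merging structure is not literally a tree.  One needs Proposition~\ref{prop:single_point_number_of_geodesics} to cap the local fan-out, plus a topological argument on $\s^2$ (two geodesics cannot cross infinitely often, and the cyclic order of geodesics around $u$ and around $v$ is preserved) to bound how the branches can pair up and recombine.  Getting a clean deterministic $C$ from this combinatorics, without yet knowing the sharp value (which is established only later, in Section~\ref{sec:dimension}, as $C=9$), is the delicate point; for the purposes of this proposition a crude bound such as $C \le 2^{C_0 + 9} + C_0$ suffices, so I would aim for the soft combinatorial argument rather than an optimal one.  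The remaining steps — the band-by-band multiplicity estimate, the union bound over a net of typical points, and the Borel-Cantelli conclusion — are essentially a repackaging of the arguments already carried out for Proposition~\ref{prop:single_point_number_of_geodesics} and Lemma~\ref{lem:geodesic_exponent}, now using Lemma~\ref{lem:metric_band_epsilon_point} in place of Lemma~\ref{lem:number_of_crossings}.
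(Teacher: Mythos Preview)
Your overall architecture (band decomposition, cost-per-band via Lemma~\ref{lem:metric_band_epsilon_point}, union bound over a net, Borel--Cantelli, then combine with Proposition~\ref{prop:single_point_number_of_geodesics}) matches the paper's, but there is a genuine gap in Step~1 and an unnecessary complication in Step~2.

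\textbf{Step 1: the one-point net does not work.}  The paper covers $\CS\times\CS$ by $N_\epsilon^2=\epsilon^{-8-2\sp{a}}$ \emph{pairs} $(x_i,x_j)$ of typical points and applies Lemma~\ref{lem:k_geodesics_nearby}, whose event requires \emph{both} $u\in B(x,\epsilon)$ and $v\in B(y,\epsilon)$.  The reason both endpoints must be netted is that the reverse exploration is from $y$ towards $x$, and the marked point on each $\innerboundary\CB_j$ is where the geodesic from $y$ lands; the estimate that the $a_i$'s lie in short boundary intervals (which is what feeds into Lemma~\ref{lem:metric_band_epsilon_point}) uses $d(a_i,y)\le r_j+4\epsilon$, i.e.\ that $v$ is $\epsilon$-close to $y$.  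With only $u$ netted and $v$ arbitrary, you have no control over where the geodesics from $v$ enter each band relative to the marked point, and the band estimate does not apply.  This is why the paper needs $K>8$ (hence $K=9$), not $K>4$.

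Your proposed workaround---transfer splitting points for geodesics towards $u$ to splitting points for geodesics towards $x$ via Proposition~\ref{prop:strong_confluence}---does not close this gap.  Proposition~\ref{prop:strong_confluence} only gives that close geodesics \emph{intersect} near their endpoints; to conclude they \emph{coincide} on a large interval one needs Theorem~\ref{thm:intersection_of_geodesics}, which is proved in Section~\ref{sec:geodesic_structure} and uses Proposition~\ref{prop:two_point_number_of_geodesics} as input.  So invoking that here is circular.  (The argument you sketch is essentially what the paper does later in Section~\ref{sec:exponent_disjoint_geodesics}, once the strong confluence is available.)

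\textbf{Step 2 is simpler than you make it.}  Once the total splitting multiplicity from $v$ towards $u$ is at most $9$, the bound on the number of geodesics is immediate: the equivalence relation ``$\eta_i|_{[0,t]}=\eta_j|_{[0,t]}$'' only \emph{refines} as $t$ increases, so the number of distinct geodesics equals (number of classes at $t=0^+$) plus (total number of splits).  The former is the number of pairwise-disjoint initial segments, bounded by $C_0$ from Proposition~\ref{prop:single_point_number_of_geodesics}; the latter is the total splitting multiplicity $\le 9$.  Re-merging is irrelevant: two geodesics that have already separated on $[0,t_0]$ remain in distinct classes for all $t\ge t_0$ regardless of whether they later coincide.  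So $C=C_0+9$ works, with no need for topological arguments or exponential bounds.
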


The main input into the proof of Proposition~\ref{prop:two_point_number_of_geodesics} is Lemma~\ref{lem:k_geodesics_nearby}, which is stated a bit further below, and gives the exponent for there being a given number of splitting points for the collection of geodesics connecting two points.  The idea to obtain this exponent is the following.  Suppose that $(\CS,d,\nu,x,y)$ has distribution $\bminflaw$ and $u,v \in \CS$ are respectively within distance $\epsilon$ of $x$ and $y$.  Assume that the total multiplicity of the splitting points for the collection of geodesics between $u$ and $v$ is equal to $k$ and for simplicity that each splitting time corresponds to a single splitting point and each splitting point has multiplicity $1$.  Then there must exist metric bands in the reverse metric exploration from $y$ to $x$ with the property that the marked point on the inner boundary has distance at most the width of the band plus $\epsilon$ to two points on the outer boundary.  The exponent for this event was determined in Lemma~\ref{lem:metric_band_epsilon_point} and altogether gives us an exponent of $k$.  The proof will be a bit more involved since we will consider the case that there can be multiple splitting points for a given splitting time and also that each splitting point can have multiplicity which is larger than $2$.

We need to collect Lemmas~\ref{lem:band_boundary_distance}, \ref{lem:complement_ball}, \ref{lem:disk_crossing} and~\ref{lem:k_geodesics_nearby} before completing the proof of Proposition~\ref{prop:two_point_number_of_geodesics}. We first record in the following lemma a result from \cite[Proposition~2.8]{ms2015axiomatic} which we will use to prove Lemma~\ref{lem:band_boundary_distance}.

\begin{lemma}[\cite{ms2015axiomatic} Proposition 2.8]
\label{lem:axiom_prop2.8}
Suppose that $(\CG,d_\CG,\nu)$ has distribution $\slicelaw{1}{\infty}$. For each $a\in[0,1]$, let $z(a)$ be the point on $\innerboundary\CG$ whose counterclockwise boundary distance to the right extremity of $\innerboundary\CG$ is $a$. For $a, b \in [0, 1]$, let $d(a,b)$ be the distance from $\innerboundary \CG$ at which the leftmost geodesic from $z(a)$ to $\outerboundary\CG$ and the leftmost geodesic from $z(b)$ to $\outerboundary\CG$ merge. 

The random function $d$ can equivalently be constructed by the following procedure.
First choose a collection of pairs $(s,x)$ as a Poisson point process on $[0, 1] \times \R_+$ with intensity $ds \otimes x^{-3} dx$ where $ds, dx$ respectively denote Lebesgue measure on $[0,1]$ and $\R_+$. Let $d(a,b)$ be the largest value of $x$ such that $(s,x)$ is a point in this point process for some $s\in(a,b)$.
\end{lemma}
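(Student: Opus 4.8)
The plan is to identify the random function $d$ with a \emph{max-stable} field over $[0,1]$, compute its one–dimensional marginals via the CSBP extinction formula, and then recognize that field as the one produced by the Poisson point process. Throughout I take $a<b$ in $[0,1]$, write $\gamma_a,\gamma_b$ for the leftmost geodesics from $z(a),z(b)$ to $\outerboundary\CG$, and rely on the following geometric facts, all of which are consequences of the breadth-first exploration structure and of properties of leftmost geodesics: leftmost geodesics to $\outerboundary\CG$ do not cross and, once they touch, coincide from that point on (so the merge distance $d(a,b)$ is well defined); and the region of $\CG$ lying between $\gamma_a$ and $\gamma_b$ is again a geodesic slice, with inner boundary the counterclockwise arc of $\innerboundary\CG$ from $z(a)$ to $z(b)$, hence of law $\slicelaw{b-a}{\infty}$. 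Since that region "closes up'' precisely when $\gamma_a$ and $\gamma_b$ merge, and since under a reverse metric exploration its inner boundary length evolves as a $3/2$-stable CSBP started from $b-a$, the quantity $d(a,b)$ is exactly the extinction time of such a CSBP. By \eqref{eqn:csbp_extinction_time} with $\alpha=3/2$ this gives
\[
\P[\,d(a,b)\le t\,]=\exp\!\big(-(b-a)\,c^{-2}t^{-2}\big),
\]
where $c>0$ is the normalization constant of the $3/2$-stable CSBP.

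Next I would record the two structural properties that make $\{d(a,b)\}$ rigid. First, for $a<b<c$ one has $d(a,c)=\max(d(a,b),d(b,c))$: because $z(b)$ separates $z(a)$ from $z(c)$, the geodesic $\gamma_b$ is trapped between $\gamma_a$ and $\gamma_c$ and must pass through their merge point, which forces $d(a,b)\le d(a,c)$ and $d(b,c)\le d(a,c)$; conversely, past distance $\max(d(a,b),d(b,c))$ we have $\gamma_a=\gamma_b$ and $\gamma_b=\gamma_c$, hence $\gamma_a=\gamma_c$, giving the reverse inequality. Second, for any $0=a_0<a_1<\cdots<a_m=1$ the regions between consecutive geodesics $\gamma_{a_{i-1}},\gamma_{a_i}$ are independent slices (independence of geodesic slices, as recalled in the excerpt), so the variables $d(a_{i-1},a_i)$, $1\le i\le m$, are independent with the marginal law above; combined with the max property this determines the joint law, namely $d(a_i,a_j)=\max_{i<\ell\le j} d(a_{\ell-1},a_\ell)$ for $i<j$. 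In particular, restricting to dyadic points, $d$ restricted to dyadic pairs has a completely explicit law.

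Finally the matching. Let $\Pi$ be a Poisson point process on $[0,1]\times\R_+$ with intensity $ds\otimes x^{-3}dx$ and put $\widehat d(a,b)=\max\{x:(s,x)\in\Pi,\ s\in(a,b)\}$, which is a.s.\ finite since $\Pi$ has only finitely many points above any level. Then $\widehat d$ trivially satisfies the max identity; values of $\widehat d$ on disjoint intervals are independent since $\Pi$ restricted to disjoint vertical strips is independent; and $\P[\widehat d(a,b)\le t]=\exp\!\big(-(b-a)\int_t^\infty x^{-3}dx\big)=\exp\!\big(-(b-a)t^{-2}/2\big)$, which coincides with the marginal of $d$ once one checks that the normalization of the $3/2$-stable CSBP used in \cite{ms2015axiomatic} gives $c^{-2}=\int_1^\infty x^{-3}dx=1/2$. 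Hence $d$ and $\widehat d$ have identical finite-dimensional distributions over all dyadic partitions. To upgrade this to equality in law of the full functions, I would note that both $d$ and $\widehat d$ are a.s.\ recovered from their restrictions to dyadic pairs through the same measurable map $(a,b)\mapsto \sup\{v(a',b'):a',b'\text{ dyadic},\ a<a'<b'<b\}$ (with $v$ the relevant dyadic restriction): for $\widehat d$ this is immediate from local finiteness of $\Pi$, and for $d$ it follows from coordinatewise monotonicity of $(a,b)\mapsto d(a,b)$ (a direct consequence of the max identity) together with $d(a,a_n)\to 0$ a.s.\ as $a_n\downarrow a$, which is read off from the tail $\P[d(a,a_n)>\epsilon]=1-\exp(-c^{-2}(a_n-a)\epsilon^{-2})\to 0$. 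Equality of the dyadic finite-dimensional laws then gives equality in law of $d$ and $\widehat d$.

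The main obstacle is not the probabilistic bookkeeping above but the geometric inputs it rests on: that the region between two leftmost geodesics of the slice is itself a slice with a $3/2$-stable CSBP boundary-length process, that consecutive such slices are independent, and the basic regularity of leftmost geodesics (uniqueness, non-crossing, once-merged-stay-merged). These are precisely the structural facts about the breadth-first metric exploration and about geodesics from a marked point established in \cite{ms2015axiomatic} and \cite{lg2010geodesics}; granting them, the remainder is constant-chasing and a soft measure-theoretic identification.
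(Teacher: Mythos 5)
The paper states this as a direct citation of Proposition~2.8 of \cite{ms2015axiomatic} and does not reproduce a proof, so there is no argument of the paper's own to compare against; your reconstruction is the natural one, and the main chain is sound. Concretely: the region between the leftmost geodesics from $z(a)$ and $z(b)$ is a slice of law $\slicelaw{b-a}{\infty}$, so $d(a,b)$ is the extinction time of a $3/2$-stable CSBP started from $b-a$; the extinction-time formula with $\alpha=3/2$ gives $\P[d(a,b)\le t]=\exp(-c^{-2}(b-a)t^{-2})$; the max identity $d(a,c)=\max(d(a,b),d(b,c))$ together with independence of consecutive sub-slices pins down all finite-dimensional laws; and the Poisson field $\widehat d$ has exactly this structure. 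Two points deserve tightening beyond a sketch. First, the normalization constant: as you acknowledge, the identification requires $c^{-2}=1/2$ for the $3/2$-stable branching mechanism $\psi(\lambda)=c\lambda^{3/2}$ in the convention of \cite{ms2015axiomatic}, and since the lemma states the intensity $ds\otimes x^{-3}dx$ with no adjustable prefactor this really needs to be verified rather than asserted. Second, the upgrade from dyadic finite-dimensional distributions to equality in law of the whole random function: your recovery formula $\Psi$ holds a.s.\ \emph{simultaneously} for all $(a,b)$ for $\widehat d$ (by local finiteness of $\Pi$), but for $d$ the tail estimate $\P[d(a,a_n)>\epsilon]\to 0$ only gives the recovery a.s.\ for each \emph{fixed} pair $(a,b)$, not uniformly over the uncountably many $a$. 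To close this, either argue directly that $d$ is upper semicontinuous --- e.g.\ via compactness of the merging tree of leftmost geodesics --- so that it is determined by its restriction to a countable dense set, or restrict the statement to finite-dimensional laws (including non-dyadic points, by the same argument with finitely many extra mesh points), which is in fact all that the application of this lemma in the paper actually requires.
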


\begin{figure}[h!]
\centering
\includegraphics[width=.67\textwidth]{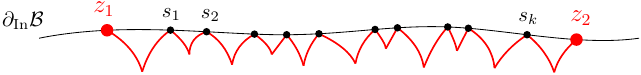}
\caption{We depict a path from $z_1$ to $z_2$ which is contained in $\ol \CB$ and has length at most $3 \eps^{1-2\sp{u}}$, constructed as the concatenation of geodesics from $\innerboundary\CB$ to $\outerboundary\CB$.}
\label{fig:boundary_distance}
\end{figure}

\begin{lemma}
\label{lem:band_boundary_distance}
Fix $\epsilon> 0$ and $0< \sp{u} < \sp{a} <1$. Suppose that $\ell_0(\eps)$ is a function of $\eps$ such that $\ell_0(\eps) \le \exp(\eps^{-\sp{u}/4})$ and $(\CB,d_\CB,\nu, z)$ has distribution $\bandlaw{\ell_0(\eps)}{\epsilon^{1-\sp{a}}}$. 
The probability that there exist $z_1,z_2 \in \innerboundary \CB$ such that either the clockwise or counterclockwise boundary length distance from $z_1$ to $z_2$ is at most $\epsilon^2$ and $d_\CB(z_1, z_2) \geq \epsilon^{1-\sp{u}}$ decays to $0$ as $\epsilon \to 0$ faster than any power of $\epsilon$, where the implicit constants in the decay rate depends only on $\sp{u}, \sp{a}$ but not on the choice of $\ell_0$.
\end{lemma}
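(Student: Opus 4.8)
The plan is to prove the stronger statement that, off an event of probability decaying faster than any power of $\epsilon$, \emph{every} pair $z_1,z_2\in\innerboundary\CB$ at boundary length distance at most $\epsilon^2$ satisfies $d_\CB(z_1,z_2)\le 2k\epsilon^{1-\sp{a}}$ for a suitable fixed integer $k$; since $\sp{a}>\sp{u}$, for small $\epsilon$ this is $<\epsilon^{1-\sp{u}}$, which gives the lemma. The key geometric point is that one can connect two nearby inner-boundary points by a path inside $\ol\CB$ which is a concatenation of (pieces of) geodesics running from $\innerboundary\CB$ towards $\outerboundary\CB$ and which stays always within distance $O(\epsilon^{1-\sp{a}})$ of $\innerboundary\CB$ — the detour needed to get past each ``obstruction'' between $z_1$ and $z_2$ costs only $O(\epsilon^{1-\sp{a}})$, and, crucially, only finitely many obstructions occur.

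First I would set up the Poissonian description of geodesic merging inside the band. By the scaling property of metric bands we may work with $\bandlaw{1}{w}$ where $w=\epsilon^{1-\sp{a}}$, and parametrize $\innerboundary\CB$ by boundary length, i.e.\ by $[0,1]$. For $a,b\in[0,1]$ let $d(a,b)$ denote the distance from $\innerboundary\CB$ at which the leftmost geodesics to $\outerboundary\CB$ from $z(a)$ and from $z(b)$ coalesce. By Lemma~\ref{lem:axiom_prop2.8}, $d(a,b)$ is the largest $x$ such that $(s,x)$ is a point of a Poisson process on $[0,1]\times\R_+$ of intensity $ds\,x^{-3}\,dx$ with $s\in(a,b)$. (To invoke this lemma, which is stated for a slice, one may glue $\CB$ from a sample of $\slicelaw{1}{w}$ — gluing the two sides together only decreases distances, which is favourable for an upper bound — or realize $\CB$ inside an ambient Brownian map and use the geodesic tree to the root; either way the merge heights within the band have the stated law.) Call a Poisson point \emph{high} if its height exceeds $w$. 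The basic path estimate is: if the open arc $(a,b)$ contains no high point then $d(a,b)\le w$, and concatenating the two leftmost geodesics up to their merge point at height $d(a,b)$ and back down produces a path in $\ol\CB$ of $d$-length $2d(a,b)$ (it stays in $\ol\CB$ since the merge occurs at height $\le w$, and distance from $z(a)$ along such a geodesic equals height above $\innerboundary\CB$); hence $d_\CB(z(a),z(b))\le 2d(a,b)\le 2w$.

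Next I would do the probabilistic count. Cover $\innerboundary\CB$ by $O(\epsilon^{-2})$ arcs of boundary length $2\epsilon^2$, chosen so that every arc of length at most $\epsilon^2$ is contained in one of them. For a fixed such arc the number of high points it contains is Poisson with mean $2\epsilon^2\cdot\tfrac12 w^{-2}=\epsilon^{2\sp{a}}$, so the chance it contains at least $k$ high points is $O(\epsilon^{2k\sp{a}})$; a union bound shows that, off an event of probability $O(\epsilon^{2k\sp{a}-2})$, every length-$2\epsilon^2$ arc contains fewer than $k$ high points. On this event, for any $z_1=z(s_1)$, $z_2=z(s_2)$ with $(s_1,s_2)$ of length at most $\epsilon^2$, the arc $(s_1,s_2)$ contains some $m<k$ high points $s^*_1<\dots<s^*_m$, and each of the open intervals $(s_1,s^*_1),(s^*_1,s^*_2),\dots,(s^*_m,s_2)$ is free of high points; chaining the basic estimate through the points $z(s^*_1),\dots,z(s^*_m)$ and using the triangle inequality for $d_\CB$ gives
\[
 d_\CB(z_1,z_2)\ \le\ 2(m+1)w\ \le\ 2k\,\epsilon^{1-\sp{a}}.
\]
Given any target power $p$, choosing $k$ with $2k\sp{a}-2>p$ and then $\epsilon$ small enough that $2k\epsilon^{1-\sp{a}}<\epsilon^{1-\sp{u}}$ (possible since $\sp{a}>\sp{u}$ and $k$ is fixed), we conclude that the probability in the statement is $O(\epsilon^{2k\sp{a}-2})=O(\epsilon^{p})$.

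I expect the main technical point to be the first step: making the Poissonian merging description of Lemma~\ref{lem:axiom_prop2.8} cleanly applicable to geodesics \emph{within the finite-width band} rather than within an infinite slice, and handling the bookkeeping around the ``leftmost geodesic'' convention at the high branch points $z(s^*_i)$ (so that the chained inequalities through these points are legitimate and the merge heights are genuinely the relevant Poisson maxima over the \emph{open} subintervals). Once this is in place, the remainder is a routine Poisson tail estimate together with the one-line concatenated-geodesic path construction, and nothing further — in particular Lemma~\ref{lem:boundary_length_distance} is not needed here, since the constructed path never approaches $\outerboundary\CB$.
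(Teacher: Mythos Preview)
Your proposal is correct and follows essentially the same approach as the paper: use the Poisson description of merge heights from Lemma~\ref{lem:axiom_prop2.8}, bound the number of ``high'' branch points in any arc of length $\epsilon^2$, and concatenate up-and-down geodesic pieces through those branch points to build a short path in $\ol\CB$ between $z_1$ and $z_2$. The only cosmetic difference is the parametrization: the paper sets the high-point threshold at $\epsilon^{1-\sp{u}'}$ for an auxiliary small $\sp{u}'$ and allows up to $\epsilon^{-\sp{u}'}$ high points per arc (then sends $\sp{u}'\to 0$), whereas you set the threshold at the band width $w=\epsilon^{1-\sp{a}}$ and bound the count by a fixed integer $k$ chosen from the target power $p$; both lead to the same conclusion, and the paper likewise notes the need to pass from the concatenated path in $\ol\CB$ (which touches $\innerboundary\CB$ finitely often) to nearby interior paths when reading off $d_\CB$.
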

\begin{proof}
Let $\Lambda$ be a Poisson point process on $[0, \ell_0(\eps)] \times \R_+$ with intensity measure given by $ds \otimes x^{-3}dx$.  
Fix $N = \lfloor \eps^{-2} \ell_0(\eps) \rfloor$. For $1\le n\le N$, let $x_n = n \eps^2$ and let $x_{N+1}=x_1$.  
For each $1\le n\le N$, the number of $s\in[x_n, x_{n+1}]$ with $(s,x) \in\Lambda$ where $x \ge \eps^{1-\sp{u}}$ is Poisson with parameter given by a constant times $\epsilon^{2\sp{u}}$.  
We recall the following tail bound on a Poisson random variable $X$ with parameter $\lambda$:
\begin{align*}
\p[X \ge \lambda(1+v)] \le \exp(-\lambda h(v)),
\end{align*}
where $v>0$ and $h(v)=(1+v) \log (1+v)-v$. 
Therefore, for each $1\le n\le N$, the probability that  there are at most $\epsilon^{-\sp{u}}$ such points in $[x_n, x_{n+1}]$ is at most $\exp(-\eps^{-\sp{u}/2}\sp{u})$.
Therefore, off an event with probability at most $\eps^{-2} \exp(\eps^{-\sp{u}/4}) \exp(-\eps^{-\sp{u}/2}\sp{u})$ (which decays to $0$ as $\epsilon \to 0$ faster than any power of $\epsilon$), for every $1\le n \le N$, there are at most $\epsilon^{-\sp{u}}$ such points in $[x_n, x_{n+1}]$.

Suppose that $z_1, z_2$ are points on $\innerboundary \CB$ with respective counterclockwise boundary length distance from $z$ given by $0 < a < b < 1$, and that $b-a \le \eps^2$. The previous paragraph implies that off an event whose probability tends to $0$ as $\epsilon \to 0$, there are at most $2 \eps^{-\sp{u}}$ points  $s\in [a,b]$ with $(s,x) \in\Lambda$ where $x \ge \eps^{1-\sp{u}}$. On this event, we denote these points by $a \le s_1 < \cdots < s_k \le b$ where $k \le 2\eps^{-\sp{u}}$.
We can construct a path from $z_1$ to $z_2$ in $\ol\CB$ as follows. See Figure~\ref{fig:boundary_distance}. 
Let $s_0=a$ and $s_{k+1} =b$.
By Lemma~\ref{lem:axiom_prop2.8}, since the interval $(s_i, s_{i+1})$ for each $0\le i \le k$ does not contain any $s$ with $(s,x) \in\Lambda$ such that $x \ge \eps^{1-\sp{u}}$,  the rightmost geodesic from $s_i$ to $\outerboundary\CB$ (which can be obtained as the limit of the leftmost geodesics from $s$ to $\outerboundary\CB$ as $s\to s_i$ from the right) and the leftmost geodesic from $s_{i+1}$ to $\outerboundary\CB$ should merge within distance $\eps^{1-\sp{u}}$ to $\innerboundary\CB$ (hence these geodesics are contained in $\ol\CB$). The concatenation of these geodesics forms a path with length at most $2 \eps^{1-\sp{u}} (k+1) \le 4 \eps^{1-3\sp{u}}$. This path intersects $\innerboundary\CB$ finitely many times, and can be approximated by paths from $z_1$ to $z_2$ which stay in $\CB$ (except their endpoints).
This implies $d_\CB(z_1, z_2) \le 4 \eps^{1-3\sp{u}}$. Since $\sp{u}$ can be chosen to be arbitrarily close to $0$, this completes the proof of the lemma.
\end{proof}

\begin{lemma}\label{lem:complement_ball}
Suppose that $(\CS,d,\nu,x,y)$ has distribution $\bminflaw$.  Fix $r, \eps, \xi>0$. 
Let $E_{r, \eps, \xi, k}$ be the event that $d(x,y)>r$ and there exist points $x_1, \ldots, x_{k+1} \in \partial \fb{y}{x}{d(x,y)-r}$ ordered counterclockwise so that the boundary  length distance between $x_i$ and $x_{i+1}$ is at least $\xi^2$ for each $i$ (with $x_{k+2} = x_1$) and $d_{\CS \setminus  \fb{y}{x}{d(x,y)-r}} (x_i,y) \leq r + \epsilon$ for each $i$. Then
\begin{align*}
\bminflaw[E_{r, \eps, \xi, k}] = O((\epsilon/\xi)^{k+o(1)}) \text{ as } \eps\to 0,
\end{align*}
where the implicit constant does not depend on $\xi, r$. 
\end{lemma}
Note that this lemma is very similar to Lemma~\ref{lem:num_pinch_points_disk}, except that it is a statement about $\CS \setminus  \fb{y}{x}{d(x,y)-r}$ instead of the Brownian disk. Its proof uses Lemmas~\ref{lem:metric_band_epsilon_point} and~\ref{lem:band_boundary_distance}.
\begin{proof}
 Fix $\eps>0$. Let $\tau_\eps$ be the first time $t>0$ that the process $Y_t$ which is given by the boundary length of $\partial \fb{y}{x}{d(x,y)-t}$ hits $\eps^2$. Fix $\eps_0:=\min(\eps, \tau_\eps)$.
For $1\le i \le k+1$, let $\eta_i$ be a geodesic w.r.t.\ $d_{\CS \setminus  \fb{y}{x}{d(x,y)-r}}$ from $x_i$ to $y$.
Let $z_i$ be the first point where  $\eta_i$ hits $\partial \fb{y}{x}{d(x,y)-\eps_0}$. Then the part of $\eta_i$ between $x_i$ and $z_i$ is entirely contained in $\fb{y}{x}{d(x,y)-\eps_0} \setminus   \fb{y}{x}{d(x,y)-r}$, and has length at most $r+\eps$.
Let $w$ be the unique point where the unique geodesic from $x$ to $y$ intersects $\partial \fb{y}{x}{d(x,y)-\eps_0}$.
By definition, the boundary length of $\partial \fb{y}{x}{d(x,y)-\eps_0}$ is at most $\eps^2$.

By Lemma~\ref{lem:band_boundary_distance}, we further know that for any $\sp{a} >0$, off an event with $\bminflaw$ measure decaying faster than any power of $\eps$,
\begin{align*}
d_{\fb{y}{x}{d(x,y)-\eps} \setminus   \fb{y}{x}{d(x,y)-r}} (z_i, w) \le \eps^{1-\sp{a}}.
\end{align*}
This implies that
\begin{align*}
&d_{\fb{y}{x}{d(x,y)-\eps} \setminus   \fb{y}{x}{d(x,y)-r}} (x_i, w) \\
\le &d_{\fb{y}{x}{d(x,y)-\eps} \setminus   \fb{y}{x}{d(x,y)-r}} (x_i, z_i) + d_{\fb{y}{x}{d(x,y)-\eps} \setminus   \fb{y}{x}{d(x,y)-r}} (z_i, w)\\
\le & r + \eps+ \eps^{1-\sp{a}} \le r +\eps^{1-2\sp{a}}.
\end{align*}
Applying Lemma~\ref{lem:metric_band_epsilon_point} to the band $\fb{y}{x}{d(x,y)-\eps} \setminus   \fb{y}{x}{d(x,y)-r}$, we deduce that
\begin{align*}
\bminflaw[E_{r, \eps, \xi, k}] = O((\epsilon^{1-2\sp{a}}/\xi)^{k+o(1)}) \quad\text{as}\quad \eps\to 0.
\end{align*}
Since $\sp{a}$ is arbitrarily close to $0$, we have proved the lemma.
\end{proof}

\begin{lemma}
\label{lem:disk_crossing}
Suppose that $(\CS,d,\nu,x,y)$ has distribution $\bminflaw$. Fix $\eps>0$, $r>0$. Fix $\sp{u} \in(0,1)$ which should be thought of as being arbitrarily close to $1$. Off an event with  $\bminflaw$ measure decaying faster than any power of $\eps$ as $\eps\to 0$, the following holds. For any $z_1, z_2 \in\partial \fb{y}{x}{r}$ with $d(z_1, z_2)\le \eps$, we have $d_{\CS\setminus \fb{y}{x}{r}}(z_1, z_2) \le \eps^{\sp{u}}$.
\end{lemma}

\begin{proof}
In order to prove the lemma, we will construct a rectifiable path from $z_1$ to $z_2$ which is  entirely contained in $\CS \setminus \fb{y}{x}{r}$ and has length at most $\eps^\sp{u}$. As a first step, we will construct a (non-rectifiable) path $\xi$ from $z_1$ to $z_2$ which is contained in $\ol{\CS \setminus \fb{y}{x}{r}}$. In the second step, we show that $\xi \subseteq B(z_1, 3\eps)$. In the last step, we finally construct the desired rectifiable path using points on $\xi$.

\emph{Step 1. Construction of a path $\xi$ from $z_1$ to $z_2$.}
Let $\eta$ be a geodesic between $z_1$ and $z_2$ in $\CS$.  Note that both $\eta$ and $\partial \fb{y}{x}{r}$ are simple curves, and that their intersection is a closed set that does not contain any curve (which is not a point). Indeed, any sub-curve of  $\partial \fb{y}{x}{r}$ has Hausdorff dimension $2$ (since it is the boundary of the  Brownian disk $\CS \setminus \fb{y}{x}{r}$), hence cannot be a subset of $\eta$. This implies that $\eta$ is cut by $\partial \fb{y}{x}{r}$ into a countable set of pieces $\{\eta_i, i\in I\}$, in a way that each piece $\eta_i$ is either entirely contained in  $\fb{y}{x}{r}$ or in $\CS \setminus \ol{\fb{y}{x}{r}}$ (except their endpoints). 
Let $ I_0$ be the set of $i\in I$ such that $\eta_i \subseteq \ol{\fb{y}{x}{r}}$.
Note that the pieces $\eta_i$ for $i \in I$ are ordered naturally according to their distance to $z_1$. If we concatenate the $\eta_i$'s for $i\in I$ in this order, then we recover~$\eta$.

In the same way, $\partial \fb{y}{x}{r}$ is also cut by $\eta$ into a countable set of pieces $\{\zeta_j, j \in J\}$. The configuration can be rather complicated, see Figure~\ref{fig:disk_crossing2.pdf}.
For $j\in J$, let $a_j$ and $b_j$ be the two endpoints of $\zeta_j$, so that $a_j$ is closer to $z_1$ than $b_j$ along $\eta$. Let $[a_j, b_j]$ be the part of $\eta$ between $a_j$ and $b_j$. 
Let $J_1$ (resp.\ $J_2$) be the set of $j\in J$ such that $\zeta_j$ is a subset of the clockwise (resp.\ counterclockwise) arc on $\partial \fb{y}{x}{r}$  from $z_1$ to $z_2$. For $j\in J$, we say that $\zeta_j$ is a \emph{good arc}, if 
\begin{itemize}
\item for any point $z\in \zeta_j$, any path from $z$ to $x$ which stays in $\ol{ \fb{y}{x}{r}}$ has to intersect $\eta$;
\item the arc $\zeta_j$ run from $a_j$ to $b_j$ is in the clockwise (resp.\ counterclockwise) direction if $j\in J_1$ (resp.\ $j \in J_2$).
\end{itemize}
For a good arc $\zeta_j$, we say that $\zeta_j$ is \emph{maximal}, if there does not exist any other good arc $\zeta_{j'}$ such that  $[a_j, b_j]$ is a subset of $[a_{j'}, b_{j'}]$.
Let $J_0 \subseteq J$ be the set of maximal good arcs in $J$.
For any distinct $j_1, j_2 \in J_0$, the intervals $[a_{j_1}, b_{j_1}]$ and $[a_{j_2}, b_{j_2}]$ must be disjoint.

\begin{figure}
\centering
\includegraphics[width=.5\textwidth]{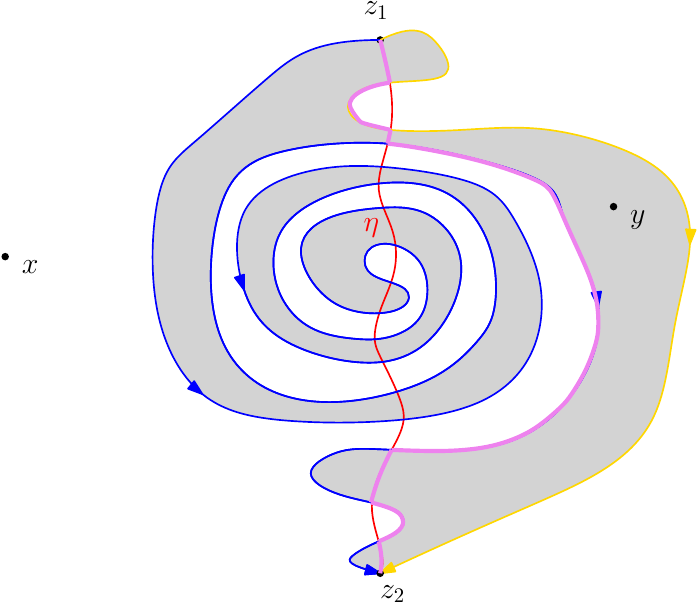}
\caption{The purple curve represents the path $\xi$.}
\label{fig:disk_crossing2.pdf}
\end{figure}

Note that the union of $[a_j, b_j]$ for $j\in J_0$ covers every piece $\eta_i$ for $i\in I_0$. Indeed, for $i\in I_0$, let $a$ be the endpoint of $\eta_i$ which is closest to $z_1$. If $a$ is on the clockwise (resp.\ counterclockwise) arc from $z_1$ to $z_2$ on $\partial \fb{y}{x}{r}$, then let $\zeta$ be the arc on $\partial \fb{y}{x}{r}$ starting from $a$, and going in the clockwise (resp.\ counterclockwise) direction until it hits $\eta$ again at some point $z$ with $d(z,z_2) < d(a, z_2)$. The arc $\zeta$ then must contain some good arc $\zeta_j$. Then either $\zeta_j$ is maximal, or $[a_j, b_j]$ is contained in some $[a_{j'}, b_{j'}]$ for some maximal arc $\zeta_{j'}$.

Let $I_1$ be the set of $i\in I\setminus I_0$ such that $\eta_i$ is not covered by the union of $[a_j, b_j]$ for $j\in J_0$. For $i\in I_1$, let $a_i$ and $b_i$ be the two endpoints of $\eta_i$ so that $a_i$ is closer to $z_1$ than $b_i$. By definition, the closure of the union of $[a_i, b_i]$ for $i\in I_1 \cup J_0$ is equal to $\eta$. Moreover, the intervals  $[a_i, b_i]$ for $i\in I_1\cup J_0$ are disjoint (except possibly at their endpoints), hence can be ordered according to their distance to $z_1$. This induces a natural order on the index set $I_1\cup J_0$.  Let $\xi$ be the concatenation of $\eta_i$ for $i\in I_1$ and $\zeta_j$ for $j\in J_0$ according to the natural order on the countable set $I_1\cup J_0$, and parametrized as follows: 
\begin{itemize}
\item For each $i\in I_1$, let $u_i:= d(a_i, b_i)$ and $\gamma_i: [0, u_i] \to [a_i, b_i]$ be a parametrization of the curve $[a_i, b_i]$ by the geodesic distance, from $a_i$ to $b_i$.

\item It is shown in \cite[Proposition 2.1]{ms2015axiomatic} that $\partial \fb{y}{x}{r}$ is a curve homeomorphic to $\s^1$, so there exists a continuous parametrization $\gamma: [0,1] \to \partial \fb{y}{x}{r}$ so that $\gamma(0)=\gamma(1)=z_1$ and $\gamma$ goes along $\partial \fb{y}{x}{r}$ in the counterclockwise direction. For each $j\in J_0$, let $[s_j, t_j]\subseteq [0,1]$ be the time interval which parametrizes the piece $\zeta_j$. Let $u_j:=t_j-s_j$. Let $\gamma_j : [0, u_j] \to \zeta_j$ be a continuous function defined by $\gamma_j(t) :=\gamma(t + s_j)$ if $\gamma(s_j)=a_j$, and  $\gamma_j(t): =\gamma(t_j-t)$ if $\gamma(s_j)=b_j$. Note that $\gamma_j$ parametrizes $\zeta_j$ from $a_j$ to $b_j$.

\item We then concatenate the functions $\gamma_i$ for $i\in I_1\cup J_0$, according to the natural order on $ I_1\cup J_0$. More precisely, let $T$ be the sum of $u_i$ for all $i\in I_1 \cup J_0$, then $T\le d(z_1, z_2) +1$ is finite. For each $k\in I_1 \cup J_0$, let $T_k$ be the sum of  $u_i$ for all $i\in I_1 \cup J_0$ that are ordered strictly before $k$. Let $\xi(0):=z_1$.
For each $t\in [0,T]$, we are in either of the two situations below.
\begin{itemize}
\item There exists $k \in I_1\cup J_0$ so that $T_k \le t \le T_k + u_k$, then we let $\xi (t):= \gamma_k(t- T_k)$.
\item There exists an infinite sequence $(k_n)_{n\ge 1}$ of indices in $I_1 \cup J_0$, so that $T_{k_n}$ increases to $t$ as $n\to \infty$. Let $\xi(t):=\lim_{n\to \infty}\gamma_{k_n}(0)$. Note that this limit exists, because the points $\gamma_{k_n}(0)$ are on $\eta$ and ordered according to increasing distance to $z_1$.
\end{itemize}
\end{itemize}
It is clear that $\xi$ is continuous at points $t$ such that  $T_k < t < T_k + u_k$ for some $k\in I_1\cup J_0$, because $\xi(t)$ is in the middle of a continuous curve $\gamma_k$. For the same reason, $\xi$ is right continuous at $t=T_k$ and left continuous at $t=T_k+u_k$.
Now suppose that  $t = \lim_{n\to\infty} T_{k_n}$ for an infinite sequence $(k_n)_{n\ge 1}$, let us show that $\xi$ is left continuous. 
Fix $\delta>0$.  Since $\xi(t):=\lim_{n\to \infty}\gamma_{k_n}(0)$, we can also find $n_1>0$ so that $d(\gamma_{k_n}(0), \xi(t)) \le \delta/4$ for all $n\ge n_1$. This implies that for all $i\in I_1 \cap \{k_{n_1}, k_{n_1+1}, \ldots \}$, we have that $[a_i, b_i]$ is contained in $B(\xi(t), \delta)$. Since $\gamma: [0,1] \to \partial \fb{y}{x}{r}$ is a continuous function on a compact interval, it is also uniformly continuous. Thus, there exists $\iota>0$ so that for all $v_1, v_2 \in  [0,1]$ with $|v_1 -v_2| \le \iota$, we have $d(\gamma(v_1), \gamma(v_2)) \le \delta/4$. 
Let $n_2\ge n_1$ be such that for all $n\ge n_2$, $t - T_{k_n} \le \iota$.
This implies that for all $j\in J_0 \cap \{k_{n_2}, k_{n_2+1}, \ldots \}$, we have $u_j \le \iota$, so $d(\gamma_{j}(0), \gamma_j(s)) \le \delta/4$ for all $0\le s\le u_j$, so $\zeta_j$ is also contained in $B(\xi(t), \delta)$. Combined, it implies that $\xi(s) \in B(\xi(t), \delta)$ for all $T_{k_{n_2}} \le s \le t$. This implies that $\xi$ is left continuous at $t$. So $\xi$ is left continuous at all points in $[0, T]$. Similarly, we can show that $\xi$ is right continuous at all points in $[0, T]$, hence continuous everywhere.

Finally, note that $\eta_i\subseteq \ol{\CS \setminus \fb{y}{x}{r}}$ for every $i\in I_1$. All together, we have shown that $\xi$ is a continuous curve  contained in $\ol{\CS \setminus \fb{y}{x}{r}}$ from $z_1$ to $z_2$.

\emph{Step 2. Let us prove that $\xi$ is contained in $B(z_1, 3 \eps)$.} Since $\eta \subseteq B(z_1, 3 \eps)$, it is enough to prove that for each $j \in J_0$, the arc $\zeta_j$ is contained in $B(z_1, 3 \eps)$. For each point $z\in\zeta_j$, the geodesic $\rho$ from $z$ to $x$ is contained in $\ol{\fb{y}{x}{r}}$ and must intersect $\eta$ at some point $z_0$, because $\zeta_j$ is a good arc. We have
\begin{align*}
&d(x, z_0) \ge d (x, z_1) - d(z_0, z_1) \ge r  - \eps. \\
&d(z, z_0) = d(x, z) - d(x, z_0) = r - d(x, z_0) \le \eps. \\
& d (z, z_1) \le d(z, z_0) + d(z_0, z_1) \le \eps +\eps = 2\eps.
\end{align*}
This proves the claim. 

\emph{Step 3. Construction of a rectifiable path from $z_1$ to $z_2$ using points on $\xi$.} Let us now define a sequence of $N$ points $w_1, \ldots, w_N$  on $\xi$, ordered  from $z_1$ to $z_2$ along $\xi$.
\begin{itemize}
\item Let $w_1$ be the first point on $\xi$ (going from $z_1$ to $z_2$) that has $d_{\CS \setminus \fb{y}{x}{r}}$ distance $\eps$ to $z_1$. 
\item For each $n\ge 1$, if $d_{\CS \setminus \fb{y}{x}{r}} (w_n, z_2)\ge 2\eps$, then
 let $w_{n+1}$ be the first point on $\xi$ after $w_n$ (going from $z_1$ to $z_2$) that has $d_{\CS \setminus \fb{y}{x}{r}}$ distance $2\eps$ to $z_1, w_1, \ldots, w_{n-1}, w_n$.
 \item If $d_{\CS \setminus \fb{y}{x}{r}} (w_n, z_2) < 2\eps$, then let $N:=n$.
\end{itemize}
By definition, one can find a collection of $n$ distinct points $w_{i_1}, \ldots, w_{i_n}$  among the $N$ points $w_1, \ldots, w_N$, so that 
\begin{itemize}
\item $w_{i_1}$ is connected to $z_1$ by a path contained in $\ol{\CS \setminus \fb{y}{x}{r}}$ of length at most $2\eps$;
\item for each $1\le k \le n-1$, $w_{i_{k}}$ and $w_{i_{k+1}}$ are connected by a path contained in $\ol{\CS \setminus \fb{y}{x}{r}}$ of length at most $2\eps$;
\item $w_{i_n}$ is connected to $z_2$ by a path contained in $\ol{\CS \setminus \fb{y}{x}{r}}$ of length at most $2\eps$.
\end{itemize} Concatenating all the paths, we have created a path  in $\ol{\CS \setminus \fb{y}{x}{r}}$ from $z_1$ to $z_2$ of length at most $(2N+1) \eps$. In order to prove the lemma, it suffices to show that for any $\sp{a}>0$, off an event with $\bminflaw$  measure decaying faster than any power of $\eps$, we have
\begin{align}\label{eq:est_N}
N \le \eps^{-\sp{a}}.
\end{align}
Since $\xi \subseteq \ol{\CS \setminus \fb{y}{x}{r}}$, by \cite[Lemma 3.3]{gm2019gluing}, off an event with $\bminflaw$ measure decaying faster than any power of $\eps$, there exists $C>0$, such that for each $1\le n\le N$, the $\nu$ area of the ball of radius $\eps$ around $w_n$ w.r.t.\ $d_{\CS \setminus \fb{y}{x}{r}}$ is at least $C \eps^{4+\sp{a}}$. Note that these $N$ balls around the points $w_1,\ldots, w_N$ are disjoint by definition. Moreover, these $N$ balls are all contained in $B(z_1, 4\eps)$ by the result of \emph{Step 2.} This implies that $\nu(B(z_1, 4\eps)) \ge CN \eps^{4+\sp{a}}$.  On the other hand, by Lemma~\ref{lem:le_gall_volume}, we know that for any $\sp{b}>0$, off an event with $\bminflaw$ measure decaying faster than any power of $\eps$, the $\nu$-measure of $B(z_1, 4 \eps)$ is at most $\eps^{4 -\sp{b}}$. Combined, this implies \eqref{eq:est_N} and proves the lemma. 
\end{proof}

\begin{lemma}
\label{lem:k_geodesics_nearby}
Suppose that $(\CS,d,\nu,x,y)$ has distribution $\bminflaw$. Fix  $\delta >0$, $k\in\N$ and $2k$ numbers $0<r_1<s_1 < \cdots <r_k<s_k$ such that $s_j-r_j <\delta/10$ for all $1\le j\le k$.
 Fix $\xi\in (0, \delta/100)$, $\zeta\in(0, \delta/10)$, $\epsilon\in(0, \delta /200)$ and $K \in \N$.  
 Fix $n_1, \dots, n_k \in\N$. 
 Let 
 \begin{align}\label{eq:G}
 G \left((r_j, s_j, n_j)_{1\le j \le k}; \eps, \delta, \xi, \zeta, K\right)
 \end{align}
 be the event that there exist $u \in B(x,\epsilon)$ and $v \in B(y,\epsilon)$ and splitting times $t_1<t_2<\cdots <t_k$ from $v$ towards $u$ so that  the sum of the multiplicities of the $t_j$ over $1\le j\le k$ is equal to $K$ and that
 the following conditions hold for each $1\le j\le k$ (by abuse of notation, we do not always write $j$, but all the objects in the following are defined for a fixed $j$).
 \begin{enumerate}[(i)]
\item\label{1} There are exactly $n_j$ splitting points $z_1, \ldots, z_{n_j}$ from $v$ to $u$ at time $t_j$, and they are all contained in the metric band $\CB_j= \fb{y}{x}{d(x,y)-r_j}\setminus \fb{y}{x}{d(x,y)-s_j}$.

\item\label{3} The distances of each splitting point at time $t_j$ to $\innerboundary\CB_j$ and to $\outerboundary\CB_j$ are at least $\zeta$.
\item\label{4} The distance between any two splitting points at the same time $t_j$ is at least $\delta$.
\item\label{5} 
Let us first make the following definitions before stating the condition (see Figure~\ref{fig:splitting_lemma}).
For each $1\le i\le n_j$, let $m_i$ be the multiplicity of $z_i$. It follows that there exist $\sum_{i=1}^{n_j} (m_i+1)$ different geodesics from $v$ to $u$, denoted by $\gamma_{i,\ell}$ for $(i,\ell)\in\{1\le i\le n_j, 1\le \ell \le m_i+1\}$ and $r>0$, such that 
for each $1\le i\le n_j$, 
\begin{itemize}
\item $\gamma_{i, \ell }|_{[t_j -r, t_j]}$ agree for all $1\le \ell\le m_i+1$ 
\item $\gamma_{i,\ell}((t_j, t_j+r]) \cap \gamma_{i,\ell'}((t_j, t_j+r]) =\emptyset$ for any $\ell, \ell'$ distinct in $[1, m_i +1]$.
\end{itemize}
Let $a_{i}$ be the point on the intersection with $\innerboundary \CB_j$ of the part of $\gamma_{i,1}$ between $v$ and $z_i$ -- if there are several such points,  then choose $a_i$ to be the closest point to $u$. 
By possibly relabeling the points $z_1, \ldots, z_{n_j}$, we can assume that $a_1, \ldots, a_{n_j}$ are ordered counterclockwise on $\innerboundary \CB_j$.
Let $ b_{i,\ell}$ be the point on the intersection of the part of $\gamma_{i,\ell}$ between $z_i$ and $u$ with $\outerboundary \CB_j$ which is the closest to $v$. 
We are now ready to state the condition: for each $1\le i\le n_j$ and $\ell, \ell' \in [1, m_i+1]$ distinct, we have $d(b_{i,\ell}, b_{i,\ell'})\ge \xi$.
\end{enumerate}
 Then we have 
 \begin{align}\label{eq:mu_bm_G}
 \bminflaw \left[G \left((r_j, s_j, n_j)_{1\le j \le k}; \eps, \delta, \xi, \zeta, K\right)\right] =O(\epsilon^{K+o(1)}) \text{ as } \epsilon \to 0
 \end{align}
 where the implicit constant depends on $(r_j, s_j, n_j)_{1\le j \le k}$ and $\delta, \xi, \zeta, K$.
\end{lemma}

\begin{figure}[h]
\begin{center}
\includegraphics[scale=0.7]{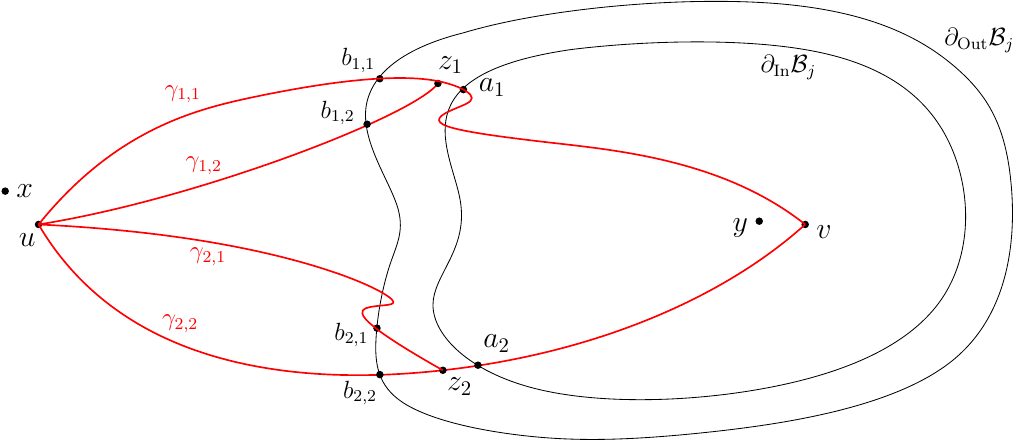}	
\end{center}
\caption{\label{fig:splitting_lemma} Illustration of the setup of Lemma~\ref{lem:k_geodesics_nearby}.}
\end{figure}

\begin{proof}
Throughout, we fix $(r_j, s_j, n_j)_{1\le j \le k}$ and $\delta, \xi, \zeta, K$ as in the statement of the lemma. Suppose that we are working on the event~\eqref{eq:G} and we denote it simply by $G$ in the rest of the proof. Let $u,v$ be as in the statement of the lemma.

\emph{Step 1. Ruling out a bad situation.} We continue to use the notation in item \eqref{5} in the lemma. Let $M_j:=\sum_{i=1}^{n_j} m_i$ be the multiplicity of $t_j$. Our first goal is to prove that on the event $G$, 
for all $(i,\ell)$,  the part of $\gamma_{i,\ell}$ between $b_{i,\ell}$ and $a_i$ is contained in $\CB_j$ (ruling out situations as in Figure~\ref{fig:splitting_lemma2}). Upon proving this, we can show that
\begin{align}\label{eq:dist1}
 d_{\CB_j}(a_i, b_{i,\ell})=d(a_i, b_{i,\ell}) \le s_j-r_j +2\eps.
\end{align}
This is because $ d (a_i, u) \le d(a_i, x) +\eps =d(x,y)-r_j+\eps, d(b_{i,\ell}, u) \ge d(b_{i,l}, x) -\eps=d(x,y)-s_j-\eps$, and in addition $a_i, b_{i,\ell}$ are both on the same geodesic $\gamma_{i,\ell}$ from $u$.
Assume on the contrary that the part of $\gamma_{i,\ell}$ between $b_{i,\ell}$ and $a_i$ is not contained in $\CB_j$.
Note that by definition, the part of $\gamma_{i,\ell}$ between $b_{i,\ell}$ and $z_i$ is disjoint from $\outerboundary\CB_j$ (except at $b_{i,\ell}$) and the part of $\gamma_{i,\ell}$ between $z_i$ and $a_i$ is disjoint from $\innerboundary\CB_j$ (except at $a_i$). Consequently, either
the part of  $\gamma_{i,\ell}$ between $b_{i,\ell}$ and $z_i$ exits $\CB_j$ from $\innerboundary\CB_j$, implying that
\[ d(b_{i,\ell}, z_i) \ge d(b_{i,\ell}, \innerboundary \CB_j) + d( \innerboundary \CB_j, z_i)\ge s_j -r_j +\zeta;\]
or the part of $\gamma_{i,\ell}$ between $z_i$ and $a_i$ exits $\CB_j$ from $\outerboundary\CB_j$, implying that
\[ d(a_{i}, z_i) \ge d(a_{i}, \outerboundary \CB_j) + d( \outerboundary \CB_j, z_i)\ge s_j -r_j +\zeta.\]
Redoing the argument under \eqref{eq:dist1} and combining it with condition \eqref{3} in the lemma, we have
\begin{align*}
d(b_{i,\ell}, z_i) = d (b_{i,\ell}, a_i) -d(a_i, z_i) \le s_j-r_j+2\eps -\zeta,\\
d(a_i, z_i) = d (b_{i,\ell}, a_i) -d(b_{i,\ell}, z_i) \le s_j-r_j+2\eps -\zeta,
\end{align*}
leading to a contradiction. Therefore, the part of $\gamma_{i,\ell}$ between $b_{i,\ell}$ and $a_i$ is contained in $\CB_j$ and~\eqref{eq:dist1} holds.

\begin{figure}[h]
\begin{center}
\includegraphics[scale=0.7]{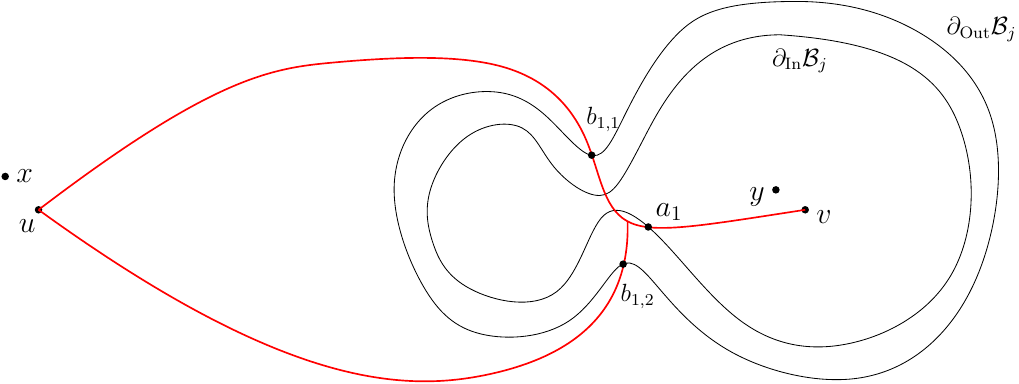}	
\end{center}
\caption{\label{fig:splitting_lemma2} This situation is ruled out.  Our setup implies that the part of $\gamma_{i,\ell}$ between $b_{i,\ell}$ and $a_i$ is always contained in $\CB_j$.}
\end{figure}

\emph{Step 2.} Now, we will use the conditional independence between the successive bands $\CB_j$ and also use Lemma~\ref{lem:metric_band_epsilon_point} to control the cost for each band.
We will carry out the following arguments for each $1\le j\le k$, in the order of the breadth-first exploration from $y$ to $x$.
 In the remainder of Step 2, we will deal with a fixed $j\in[1,k]$ (except when said otherwise) and sometimes omit the index $j$. 
For technical reasons that will be made clear later on, we define $\wh r_j$ to be a uniformly chosen point in $[r_j, r_j+\eps]$. Let  $\wh\CB_j= \fb{y}{x}{d(x,y)-\wh r_j}\setminus \fb{y}{x}{d(x,y)-s_j}$. Note that $\wh \CB_j \subseteq \CB_j$.
Let $\wh a_i$ be the point on the intersection with $\innerboundary \wh\CB_j$ of the part of $\gamma_{i,1}$ between $v$ and $z_i$ which is the closest to $u$.
 When we say ``probability'', we mean the conditional probability of $\bminflaw$ given $\CS \setminus \fb{y}{x}{d(x,y)-\wh r_j}$. 
We continue to use the notation in item \eqref{5} in the lemma. Let $L_j$ be the boundary length of $\innerboundary \wh\CB_j$. Let $M_j:=\sum_{i=1}^{n_j} m_i$ be the multiplicity of $t_j$.

\emph{Step 2.1. Finding the marked points $\wt a_i$.} Fix $\sp{a} \in(0,1)$. The goal of this step is to find points $\wt a_i \in \innerboundary\wh \CB_j$ for $1\le i \le n_j$ such that the boundary length between $\wt a_i$ and $\wh a_i$ is at most $\eps^{2\sp{a}}$. The points $\wt a_i$ should be chosen in a  way which depends on $\fb{y}{x}{d(x,y)-\wh r_j}$ only through the boundary  $\partial\fb{y}{x}{d(x,y)-\wh r_j}$.

Let $F_j(N)$ be the event that we can cover the set of points on $\innerboundary\wh\CB_j$ with $d_{\CS \setminus \fb{y}{x}{d(x,y)- \wh r_j}}$ distance at most $ r_j +8 \epsilon$ to $y$ with at most $N$ intervals $I_1,\ldots,I_{N}$ each with boundary length at most $\eps^{2\sp{a}}$.
Applying Lemma~\ref{lem:complement_ball}, we can deduce that $F_j(N)^c$ occurs with probability $O(\epsilon^{N (1-\sp{a}) +o(1)})$. Letting $N=\lceil K/ (1-\sp{a}) \rceil$, to prove the lemma, it is enough to prove 
\begin{align}\label{eq:G_enough}
\bminflaw \big(G \cap \cap_{j=1}^k F_j(N)\big)=O(\eps^{K+o(1)}).
\end{align}
From now on, for each $j\in[1,k]$, suppose that we are working on the event $G\cap F_j(N)$.

For some $\wt N_j  \le N$, let $I_1, \ldots, I_{\wt N_j}$ be the $\wt N_j$ intervals with boundary length at most $\eps^{2\sp{a}}$ that cover  the set of points on $\innerboundary\wh\CB_j$ with $d_{\CS \setminus \fb{y}{x}{d(x,y)-\wh r_j}}$ distance at most $ r_j +8 \epsilon$ to $y$. Let us show that for each $1\le i \le n_j$, $\wh a_i$ is covered by one of the $\wt N_j$ intervals.
Let $c_i$ be the point where $\gamma_{i,1}$ (from $v$ to $u$) first hits  $\innerboundary \wh \CB_j$.   By definition, the part of $\gamma_{i,1}$  from $v$ to $c_i$ is entirely contained in $\CS \setminus \fb{y}{x}{d(x,y)- \wh r_j}$, so
$$d_{\CS \setminus \fb{y}{x}{d(x,y)-\wh r_j}} (c_i, v) = d(c_i, v) =d (u,v) - d(u,c_i) \le d(x,y) +2\eps - (d(x, c_i) - \eps) =\wh r_j + 3\eps \le r_j+4\eps. $$
This implies that $c_i$ is covered by one of the $\wt N_j$ intervals.
Also note that 
\begin{align}\label{eq:d_a_c_j}
d (\wh a_j, c_j) = d(u, c_j) -d (u,\wh a_j) \le d(x, c_j) + \eps - (d(x, \wh a_j) -\eps) \le 2\eps.
\end{align}

In the following, we will argue using Lemma~\ref{lem:disk_crossing} that for any $\sp{a}>0$, off an event with probability decaying faster than any power of $\eps$, we have
\begin{align}\label{eq:goal_d_a_c}
d_{\CS \setminus \fb{y}{x}{d(x,y)-\wh r_j}} (\wh a_j, c_j) \le \eps^{1-\sp{a}}.
\end{align}
Let $\CR_\eps$ be the set of $r>0$ such that there exist $z_1, z_2 \in\partial \fb{y}{x}{r}$ with $d(z_1, z_2)\le \eps$ such that $d_{\CS\setminus \fb{y}{x}{r}}(z_1, z_2) > \eps^{1- \sp{a}}$. Lemma~\ref{lem:disk_crossing} implies that for any $n\in\N$, we have 
$$\bminflaw [ \leb (\CR_\eps)] \le \eps^n \quad \text{for}\quad \eps > 0 \quad \text{sufficiently small.}$$
Since $\wh r_j$ is a uniform point in $[r_j, r_j+\eps]$ chosen independently of anything else, we know that
\begin{align*}
\bminflaw [d(x,y) -\wh r_j \in \CR_\eps] =O( \eps^n) \quad \text{as}\quad \eps\to 0 \quad \text{for all}\quad n \in\N.
\end{align*}
On the event that $d(x,y) -\wh r_j \not \in \CR_\eps$, \eqref{eq:d_a_c_j} implies~\eqref{eq:goal_d_a_c}.

It then follows that, off an event with probability decaying faster than any power of $\eps$, we have
\begin{align*}
d_{\CS \setminus \fb{y}{x}{d(x,y)-\wh r_j}} (\wh a_i, y) \le  &d_{\CS \setminus \fb{y}{x}{d(x,y)-\wh r_j}} (\wh a_i, c_i) + d_{\CS \setminus \fb{y}{x}{d(x,y)-\wh r_j}} (c_i, v) + d_{\CS \setminus \fb{y}{x}{d(x,y)-\wh r_j}} (v, y) \\
\le & 2\eps + r_j + 4\eps +\eps \le r_j + 7\eps.
\end{align*}
This implies that $\wh a_i$ is also covered by one of the $\wt N_j$ intervals.

Let $1\le k_1, \ldots, k_{n_j} \le \wt N_j$ be the set of indices such that $\wh a_i \in I_{k_i}$ for all $1\le i \le n_j$.
We also choose a set of $n_j$ indices $1\le \wt k_1, \ldots, \wt k_{n_j} \le \wt N_j$ uniformly among all possible sets of $n_j$ indices, and independently of everything else. Let $\Q_j$ be the probability measure of the choice of $n_j$ points out of $\wt N_j$.
Let $H_j$ be the event that $\wt k_i = k_i$ for all $1\le i \le K$. Then the event $H_j$ occurs with  $\Q_j$-probability at least $N_j ^{-n_j}$.
For each $1\le i \le n_j$, let $\wt a_1$ be the middle point of the interval $I_{k_i}$.  The choice of the points $\wt a_1, \ldots, \wt a_{n_j}$ is measurable w.r.t.\ $\CS \setminus \fb{y}{x}{d(x,y)-\wh r_j}$.
Since $H_j$ has positive $\Q_j$-probability (at least $N^{-K}$) and is independent of $\bminflaw$, in order to prove~\eqref{eq:G_enough}, it is enough to prove that
\begin{align}\label{eq:G_enough2}
\bminflaw \otimes \prod_{j=1}^k \Q_j \big(G \cap \cap_{j=1}^k F_j(N) \cap \cap_{j=1}^k H_j \big)=O(\eps^{K+o(1)}).
\end{align}
By abuse of notation, we will sometimes drop $\Q_j$ and only refer to the $\bminflaw$-measure of the event in~\eqref{eq:G_enough2}.
We will from now on work on the event $G\cap F_j(N) \cap H_j$ at each layer $j$. Note that on the event  $G\cap F_j(N) \cap H_j$, for each $1\le i \le n_j$ the boundary length between $\wt a_i$ and $\wh a_i$ is at most $\eps^{2\sp{a}}$. 
By Lemma~\ref{lem:band_boundary_distance}, we know that for any $\sp{b}_1\in (0, \sp{a})$, on $G\cap F_j(N) \cap H_j$ minus an event with probability decaying faster than any power of $\eps$, we have
\begin{align}\label{eq:d_wh_wt}
d_{\wh \CB_j}(\wt a_i, \wh a_i)\le \eps^{\sp{a} -\sp{b}_1}.
\end{align}

\emph{Step 2.2. Finding the right collection of boundary intervals.} 
The goal of this step is to find a collection  of intervals $\wt I_1, \ldots, \wt I_{n_j}$ on $\innerboundary\wh\CB_j$ which satisfies the following conditions:
\begin{enumerate}
\item\label{it:good1} The intervals $\wt I_1, \ldots, \wt I_{n_j}$ are disjoint except possibly at their endpoints.
\item\label{it:good2} For each $1\le i \le n_j$, we have $\wt a_i \in \wt I_i$.
\item\label{it:good3} For each $1\le i \le n_j$, let $\wt b_i$ and $\wt c_i$ be the two endpoints of $\wt I_i$, so that  $\wt I_i$ is equal to the counterclockwise arc from $\wt b_i$ to $\wt c_i$.  We have
\begin{align}\label{eq:b-i-c-i}
d_{\wh \CB_j}(\wt a_i, \wt b_i) \ge  \delta/ 4, \quad  d_{\wh\CB_j}(\wt a_i, \wt c_i) \ge \delta/ 4.
\end{align}
\end{enumerate}
Our choice of the intervals will be made in a way which depends on $\fb{y}{x}{d(x,y)-\wh r_j}$ only through the boundary  $\partial\fb{y}{x}{d(x,y)-\wh r_j}$.

Let $w_j$ be the unique point where the unique geodesic from $y$ to $x$ hits $\innerboundary\wh\CB_j$. Fix $\sp{b}>0$.  Let $\CC_j$ be a collection of boundary intervals on $\innerboundary\wh\CB_j$ which cover $\innerboundary\wh\CB_j$, defined as follows. Let $\CC_j$ be the unique collection of $N_j= \lfloor L_j \eps^{-2\sp{b}} \rfloor$ intervals of equal boundary length so that every two intervals are disjoint (except possibly at their endpoints) and $w_j$ is  exactly the endpoint of two of the intervals. Then each interval in $\CC_j$ has boundary length $L_j / N_j \in[\eps^{2\sp{b}}, 2 \eps^{2\sp{b}}]$, if $\eps$ is sufficiently small.

We will choose the  intervals $\wt I_1, \ldots, \wt I_{n_j}$  in a way that each of them is the union of several adjacent intervals in $\CC_j$. The total number of such choices (i.e., $n_j$ disjoint intervals each made of the union of adjacent intervals in $\CC_j$) is at most $N_j^{2 n_j} = O(\eps^{-4 \sp{b}n_j})$, because choosing $n_j$ intervals boils down to choosing $2n_j$ endpoints among the $N_j$ endpoints of the intervals in $\CC_j$. We will make a choice according to the uniform measure $\wt \p_j$ among all such choices, independently of everything else.
It is clear that such a choice depends on $\fb{y}{x}{d(x,y)-\wh r_j}$ only through the boundary  $\partial\fb{y}{x}{d(x,y)-\wh r_j}$.

We say that a choice is \emph{good} if it satisfies the conditions \eqref{it:good1}, \eqref{it:good2}, and \eqref{it:good3} specified earlier.
Let $E_j$ be the event that we have made the good choice.
In the sequel, we will show that there exists at least one good choice, which will imply that $\wt \p_j(E_j)\ge\eps^{4\sp{b}n_j}$. In order to prove~\eqref{eq:G_enough2}, it is enough to prove that
\begin{align}\label{eq:G_enough3}
\bminflaw \otimes \prod_{j=1}^k (\Q_j \otimes \wt \p_j) \big(G \cap \cap_{j=1}^k F_j(N) \cap \cap_{j=1}^k H_j  \cap \cap_{j=1}^k E_j \big)=O(\eps^{K+o(1)}).
\end{align}
Indeed, \eqref{eq:G_enough3} implies that 
\begin{align*}
\bminflaw \otimes \prod_{j=1}^k \Q_j \big(G \cap \cap_{j=1}^k F_j(N) \cap \cap_{j=1}^k H_j \big)=O(\eps^{(1-\sp{b})K+o(1)}).
\end{align*}
Since $\sp{b}$ is arbitrarily close to $0$, it implies \eqref{eq:G_enough2}.

It remains to show that on the event $G \cap F_j(N) \cap H_j$ minus an event with probability decaying faster than any power of $\eps$, there exists at least one good choice.
Note that for any $(i,\ell)\in \{1\le i\le n_j, 1\le \ell \le m_i+1\}$ we have 
\begin{align}\label{eq:dist_d(a,b)}
d(\wh a_i, b_{i, \ell} )\le s_j -\wh r_j +2\eps \le s_j-r_j+ 2\eps.
\end{align}
This is because $d(\wh a_i, u) \le d(\wh a_i, x)+ \eps = d(x,y)-\wh r_j +\eps$, $d(b_{i,\ell}, u) \ge d(b_{i,\ell}, x) -\eps=d(x,y)-s_j -\eps,$
 and in addition $\wh a_i$, $b_{i,\ell}$ are both on the same geodesic $\gamma_{i,\ell}$ from $u$. 
For any $1\le i\le n_j$, we have
 \begin{align}
\notag d(\wh a_i, \wh a_{i+1}) \ge d(z_i, z_{i+1}) -d (\wh a_i, z_i) - d(\wh a_{i+1}, z_{i+1}) \ge \delta -d(\wh a_i, b_{i,1}) -d (\wh a_{i+1}, b_{i+1,1})\\
\label{eq:a_i_a_i}
  \ge \delta -(\delta/10+2\eps) -(\delta/10+2\eps)  \ge 4\delta/5 -4\eps \ge 39 \delta/ 50,
 \end{align}
 where the inequality in~\eqref{eq:a_i_a_i} follows from~\eqref{eq:dist_d(a,b)} and the fact that $s_j -r_j < \delta/10$. 
 Combined with \eqref{eq:d_wh_wt}, we have for $\eps$ small
 \begin{align}\label{eq:d_s/b_aa}
d(\wt a_i, \wt a_{i+1}) \ge \,  d (\wh a_i, \wh a_{i+1})- d (\wt a_i, \wh a_{i})  -d(\wh a_{i+1}, \wt a_{i+1}) 
\ge \, 39\delta/50- 2\eps^{\sp{a}-\sp{b}_1} \ge 38\delta/50.
\end{align}
Note that $\innerboundary\wh \CB_j$ between $\wt a_i$ and $\wt a_{i+1}$ is a continuous curve, so we can let $p_i$ (resp.\ $q_i$) be the first point that has $d$-distance $0.37\delta$ from $\wt a_i$ that one encounters as one goes along $\innerboundary\wh \CB_j$ from $\wt a_i$ in the clockwise (resp.\ counterclockwise) direction.  
Let $I(q_i)$ (resp.\ $I(p_i)$) be the interval in $\CC_j$ which contains $q_i$ (resp.\ $p_i$) and which is the closest to $a_i$ w.r.t.\ the boundary distance. Let $\wt b_i$ (resp.\ $\wt c_i$) be the endpoint of $I(p_i)$ (resp.\ $I(q_i)$) which is the closest to $a_i$ w.r.t.\ the boundary distance. See Figure~\ref{fig:boundary_order}.

\begin{figure}
\begin{center}
\includegraphics[scale=1.1]{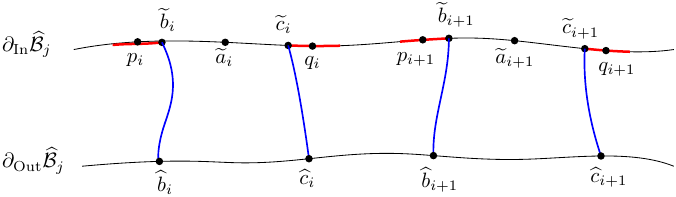}	
\end{center}
\caption{\label{fig:boundary_order} We depict the points on $\innerboundary \wh\CB_j$. The red intervals represent $I(p_i), I(q_i), I(p_{i+1})$ and $I(q_{i+1})$. We also depict the slices cut out in Step 2.3.}
\end{figure}

Let us show that,  on $G \cap F_j(N) \cap H_j$ minus an event with probability decaying faster than any power of $\eps$,  for each $1\le i \le n_j$, the points $\wt b_i, \wt a_i, \wt c_i, \wt b_{i+1}$ are ordered in a counterclockwise manner on $\innerboundary \wh\CB_j$. 
First, note that $d(\wt a_i, q_i) + d (\wt a_{i+1}, p_{i+1}) = 0. 74 \delta \le d(\wt a_i, \wt a_{i+1})$ by \eqref{eq:d_s/b_aa}, which implies that $\wt a_i, q_i, p_{i+1}, \wt a_{i+1}$ are ordered in a counterclockwise manner.

Then note that the distance between $\wt a_i$ and $p_i$ inside the band  $\fb{y}{x}{d(x,y)-\wh r_j} \setminus \fb{y}{x}{d(x,y)-\wh r_j-\delta/300}$ is at least $d(\wt a_i, p_i)=0.37 \delta$. 
Conditionally on the boundary length $L_j$ of $\innerboundary \wh\CB_j$, the band $\fb{y}{x}{d(x,y)-\wh r_j} \setminus \fb{y}{x}{d(x,y)-\wh r_j-\delta/300}$ is distributed according to $\bandlaw{L_j}{\delta/300}$.
There exists an absolute constant $c_0>0$, such that for any $\sp{a}>0$, we have $\bminflaw(L_j \ge \exp (\eps^{-\sp{a}})) \le \exp(-c_0 \eps^{-\sp{a}})$. On the event $\{L_j < \exp (\eps^{-\sp{a}})\}$, we can apply Lemma~\ref{lem:band_boundary_distance} to the band $\fb{y}{x}{d(x,y)-\wh r_j} \setminus \fb{y}{x}{d(x,y)-\wh r_j-\delta/300}$. This implies that  for any $\sp{b}_1>0$, off an event with probability decaying faster than any power of $\eps$, the boundary length between $\wt a_i$ and $p_i$ is at least $\eps^{\sp{b}_1}$. In particular, we can take $\sp{b}_1 =\sp{b}/4$. Since the boundary length between $\wt b_i$ and $p_i$ is at most $2\eps^{2\sp{b}}$,  $\wt b_i$ must be on the clockwise arc from $\wt a_i$ to $p_i$. The same arguments apply to $\wt c_i$ and $q_i$. This proves the claim about the order of the points $\wt b_i, \wt a_i, \wt c_i, \wt b_{i+1}$.
 For $1\le i \le n_j$, let $\wt I_i$ be the clockwise arc from $\wt b_i$ to $\wt c_i$. This choice of $\wt I_1, \ldots, \wt I_{n_j}$ satisfies the conditions (1) and (2). In order to show that it is a good choice, we only need to show \eqref{eq:b-i-c-i}.
 
Applying Lemma~\ref{lem:band_boundary_distance} as in the previous paragraph, we know that for each $\sp{b}_2\in (0, \sp{b})$, on  $G \cap F_j(N) \cap H_j$ minus an event with probability decaying faster than any power of $\eps$, we have
\begin{align*}
d_{\wh \CB_j}(p_i, \wt b_i) \le \eps^{\sp{b} -\sp{b}_2}, \quad d_{\wh \CB_j}(q_i, \wt c_i) \le \eps^{\sp{b} - \sp{b}_2}.
\end{align*}
Therefore
\begin{align*}
d_{\wh\CB_j} (\wt a_i, \wt b_i) \ge d_{\wh\CB_j}(\wt a_i, p_i) - d_{\wh \CB_j}(p_i, \wt b_i) \ge 0.37 \delta -  \eps^{\sp{b} -\sp{b}_2},
\end{align*}
which is bigger than $\delta/4$ as $\eps\to 0$. The same applies to $d_{\wh\CB_j} (\wt a_i, \wt c_i)$. 
This implies \eqref{eq:b-i-c-i}, hence  $\wt I_1, \ldots, \wt I_{n_j}$ is a good choice.

\emph{Step 2.3. Cutting $\wh\CB_j$ into independent slices.}
Let $I_1, \ldots, I_{n_j}$ be the $n_j$ intervals chosen in the previous step. Suppose that  $G \cap F_j(N) \cap H_j\cap E_j$ holds.
Let us cut $\wh\CB_j$ into independent slices along the geodesics from $\wt b_i$ and $\wt c_i$ to $x$. 
For each $1\le i\le n_j$, let $\eta_{L, i}$ (resp.\ $\eta_{R, i}$) be the geodesic from $\wt b_i$ (resp.\ $\wt c_i$) to $\outerboundary \wh\CB_j$. Let $ \wh b_i$ (resp.\ $ \wh c_i$) be the point where $\eta_{L, i}$ (resp.\ $\eta_{R, i}$) hits $\outerboundary \wh\CB_j$.  Let $\CG_{j,i}$ be the slice of $\wh\CB_j$ between $\eta_{L,i}$ and $\eta_{R,i}$ which contains $\wt a_i$.
Let us show the following.
\begin{equation}\label{sji}
\text{For all } 1 \le i\le n_j \text{ and } 1\le \ell \le m_i +1, \text{ the part of } \gamma_{i,\ell} \text{ between } b_{i,\ell} \text{ and } \wh a_i \text{ is contained in } \CG_{j,i}.
\end{equation}
Assume that there exist $1\le i\le n_j$ and $1\le \ell \le m_i +1$ such that it is not the case, and let us try to get a contradiction.
Since the part of $\gamma_{i,\ell}$ between $b_{i,\ell}$ and $\wh a_i$ is contained in $\wh\CB_j$, this part must intersect $\eta_{L,i}$ or $\eta_{R, i}$ in order to exit $\CG_{j,i}$ (let it be $\eta_{L,i}$ without loss of generality). 
However this is impossible since by~\eqref{eq:b-i-c-i} and~\eqref{eq:d_wh_wt}, for $\eps$ small
$$d(\wh a_i, \eta_{L, i}) \ge d(\wh a_i, \wt b_{i}) - (s_j -r_j) \ge d(\wt a_i, \wt b_{i})- d(\wh a_i, \wt a_i)- (s_j -r_j)  \ge \delta/4  - \eps^{\sp{a} -\sp{b}_1} - \delta/10\ge 0.14\delta$$ and 
$$d(\wh a_i, b_{i,\ell})\le \delta/10 +\delta/50=0.12\delta$$ by~\eqref{eq:dist_d(a,b)}. This proves \eqref{sji}.
In addition, it also follows that
\begin{align}\label{eq: d(b,q)}
d(b_{i,\ell},  \wh b_{i})\ge d(\wh a_i,  \wh b_i) -d (\wh a_i, b_{i,\ell})\ge 0.02\delta \quad \text{and similarly} \quad d(b_{i,\ell},  \wh c_{i}) \ge 0.02\delta.
\end{align}

Now fix $1\le i\le n_j$ and cut out the slice $\CG_{j,i}$. Gluing its left and right boundaries, we get a new metric band $\wh\CB_{j,i}$, with a certain inner boundary length that we denote by $L_{j,i}$ and width $s_j-\wh r_j$. Moreover, $\wh\CB_{j,i}$ satisfies the following conditions (\ref{aa}') and (\ref{dd}').
\begin{enumerate}[(a)]
\item\label{aa} Due to \eqref{sji} and \eqref{eq:dist1}, we have that for all $1\le \ell \le m_i+1$, 
\begin{align*}
d_{\wh\CB_{j,i}} (b_{i,\ell}, \wh a_i) \le s_j-r_j +2\eps.
\end{align*}
Then by~\eqref{eq:d_wh_wt}, we have
\begin{align*}
d_{\wh\CB_{j,i}} (b_{i,\ell}, \wt a_i) \le s_j-r_j + 2\eps^{\sp{a}-\sp{b}_1}.
\end{align*}
\item\label{dd}  Fix $\sp{b}_2 \in(0,1)$. By condition \eqref{5} and Lemma~\ref{lem:band_boundary_distance}, we know that for any $1\le \ell \le m_i$,
the boundary length of the counterclockwise arc on $\outerboundary\wh\CB_{j,i}$ from $b_{i,\ell}$ to $b_{i, \ell+1}$  is at least $\xi^2 \epsilon^{2\sp{b}_2}$.
By \eqref{eq: d(b,q)}  and Lemma~\ref{lem:band_boundary_distance}, we further know that the boundary length of the counterclockwise (resp.\ clockwise) arc on $\outerboundary\wh\CB_{j}$ from $b_{i,m_i+1}$ (resp.\ $b_{i,1}$) to $\wh c_i$ (resp.\ $\wh b_i$) is at least $(0.02\delta)^2 \epsilon^{2\sp{b}_2}$. Since the points $\wh b_i$ and $\wh c_i$ are identified in $\wh\CB_{j,i}$, we deduce that the boundary length of the counterclockwise arc on $\outerboundary\wh\CB_{j}$ from $b_{i,m_i+1}$ to $b_{i,1}$ is at least $2 (0.02\delta)^2 \epsilon^{2\sp{b}_2}$.
\end{enumerate}

Now we rescale the width of the band $\CB_{j,i}$ by $(s_j-\wh r_j)^{-1}$ to get a new band with width $1$ and inner boundary length $L_{j,i} (s_j-\wh r_j)^{-2}$. We apply Lemma~\ref{lem:metric_band_epsilon_point} to the rescaled band, and get that the $\bandlaw{L_{j,i}}{s_j-\wh r_j}$ probability that (\ref{aa}) and (\ref{dd}) simultaneously hold for $\wh \CB_{j,i}$ is at most $O(\eps^{m_i(\sp{a}-\sp{b}_1-\sp{b}_2)+o(1)})$. Since this is true for each $1\le i\le n_j$ and the slices $\wh\CB_{j,i}$ are independent for different $i$, we get that conditionally on $L_j$, the probability that \eqref{aa} and \eqref{dd} hold for every $1\le i\le n_j$ is $O(\eps^{\sum_{i=1}^{n_j} m_i (\sp{a}-\sp{b}_1-\sp{b}_2) + o(1)})=O(\eps^{M_j (\sp{a}-\sp{b}_1-\sp{b}_2)+o(1)})$.

\emph{Step 3. Conclusion.}
Since the above is true for all $1\le j\le k$, by the conditional independence of the metric bands $\wh\CB_j$ given their boundary lengths, it follows that 
$$\bminflaw \big( G \cap \cap_{j=1}^k F_j(N) \cap \cap_{j=1}^k H_j  \cap \cap_{j=1}^k E_j  \big)=O(\eps^{K(\sp{a}-\sp{b}_1-\sp{b}_2) +o(1)}).$$ 
Since $\sp{b}_1, \sp{b}_2$ can be chosen to be arbitrarily close to $0$ and $\sp{a}$ can be chosen to be arbitrarily close to $1$, this proves~\eqref{eq:G_enough3} and completes the proof of the lemma.
\end{proof}

\begin{proof}[Proof of Proposition~\ref{prop:two_point_number_of_geodesics}]
Suppose that $(\CS,d,\nu,x,y)$ has distribution $\bminflaw$.  We will prove that for every distinct pair $u,v \in \CS$, the sum of the multiplicities of all the splitting times from $v$ to $u$ is at most $8$.  This combined with Proposition~\ref{prop:single_point_number_of_geodesics} will imply that there is a deterministic constant $C$ so that the number of geodesics connecting $u$ and $v$ is at most $C$.

Fix  $\delta >0$, $k\in\N$ and rational numbers $(r_j, s_j, n_j)_{1\le j \le k}$, $\delta, \xi, \zeta$ and $K$ as in Lemma~\ref{lem:k_geodesics_nearby}. 
Fix $\eps_0>0$ that we will adjust later. Let 
$$P\left((r_j, s_j, n_j)_{1\le j \le k} ; \eps_0, \delta, \xi, \zeta, K\right)$$
(which we will also denote by $P$ for simplicity) be the set of pairs of points $(u,v)\in\CS^2$ such that there are splitting times $t_1<t_2<\cdots <t_k$ from $v$ towards $u$, so that  the sum of the multiplicities of the $t_j$ over $1\le j\le k$ is equal to $K$ and that
 the following conditions hold for each $1\le j\le k$ (by abuse of notation, we do not always write $j$, but all the objects in the following are defined for a fixed $j$).
\begin{enumerate}[(i)]
\item There are exactly $n_j$ splitting points $z_1, \ldots, z_{n_j}$ from $v$ to $u$ at time $t_j$, and they are all contained in the metric band $\CB_j (u,v)= \fb{v}{u}{d(u,v)-r_j-3\eps_0}\setminus \fb{v}{u}{d(u,v)-s_j+3\eps_0}$.

\item The distance of each splitting point at time $t_j$ to $\innerboundary\CB_j(u,v)$ and to $\outerboundary\CB_j(u,v)$ is at least $\zeta$.

\item The distance between any two splitting points at the same time $t_j$ is at least $\delta$.

\item As in (\ref{5}) of Lemma~\ref{lem:k_geodesics_nearby}, we define the following quantities. For each $1\le i\le n_j$, let $m_i$ be the multiplicity of $z_i$. It follows that there exist $\sum_{i=1}^{n_j} (m_i+1)$ different geodesics from $v$ to $u$, denoted by $\gamma_{i,\ell}$ for $(i,\ell)\in\{1\le i\le n_j, 1\le \ell \le m_i+1\}$ and $r>0$, such that 
for each $1\le i\le n_j$, 
\begin{itemize}
\item $\gamma_{i, \ell }|_{[t_j -r, t_j]}$ agree for all $1\le \ell\le m_i+1$ 
\item $\gamma_{i,\ell}((t_j, t_j+r]) \cap \gamma_{i,\ell'}((t_j, t_j+r]) =\emptyset$ for any $\ell, \ell'$ distinct in $[1, m_i +1]$.
\end{itemize}
Let $\wt a_{i}$ be the point on the intersection with $\innerboundary \CB_j(u,v)$ of the part of $\gamma_{i,1}$ between $v$ and $z_i$ -- if there are several such points,  then choose $\wt a_i$ to be the closest point to $u$. 
By possibly relabelling the points $z_1, \ldots, z_{n_j}$, we can assume that $\wt a_1, \ldots, \wt a_{n_j}$ are ordered counterclockwise on $\innerboundary \CB_j$.
Let $ \wt b_{i,\ell}$ be the point on the intersection of the part of $\gamma_{i,\ell}$ between $z_i$ and $u$ with $\outerboundary \CB_j(u,v)$ which is the closest to $v$. 
We are now ready to state the condition: for each $1\le i\le n_j$ and $\ell, \ell' \in [1, m_i+1]$ distinct, we have $d(\wt b_{i,\ell}, \wt b_{i,\ell'})\ge \xi + 12 \eps_0$.
\end{enumerate}

Choose $\sp{a} \in(0,1/3)$. 
Let $N_\epsilon = \epsilon^{-4-\sp{a}}$.  Let $x_1,\ldots,x_{N_\eps}$ be i.i.d.\ points chosen from $\nu$.  
By Lemma~\ref{lem:typical_points_dense}, we know that there a.e.\ exists $\epsilon_1 > 0$ so that for all $\epsilon \in (0,\epsilon_1)$ we have that $\CS \subseteq \cup_{i} B(x_i,\epsilon)$. Choose $\eps_0\in(0, \min(\eps_1, \delta/200))$ in the definition of $P$. Let 
$$G_{i,j} \left((r_j, s_j, n_j)_{1\le j \le k}; \eps, \delta, \xi, \zeta, 9\right)$$ 
(which we will denote by $G_{i,j}(\eps)$ for simplicity) be the event from Lemma~\ref{lem:k_geodesics_nearby} for $(\CS,d,\nu,x_i,x_j)$.  

If $(u,v)\in P$, then for all $\eps\in(0, \eps_0)$, there exist $1\le i,j\le N_\eps$  such that $u\in B(x_i, \eps), v\in B(x_j, \eps)$. Let us show that $(u,v) \in G_{i,j}(\eps)$.
First prove that 
\begin{align}\label{eq:B_uv_x}
\CB_j (u,v) \subseteq \fb{x_j}{x_i}{d(x_i, x_j) - r_j} \setminus \fb{x_j}{x_i}{d(x_i, x_j) - s_j}.
\end{align}
If $z\in  B(x_i, d(x_i, x_j) - s_j)$, then 
$d(z, u) \le d (z, x_i) + d(x_i, u) \le d(x_i, x_j) - s_j + \eps \le d(u,v) -s_j+3\eps.$
It follows that $B(x_i, d(x_i, x_j) - s_j) \subseteq B(u,  d(u,v) -s_j+3\eps)$. Note that $v$ and $y$ are in the same connected component of $B(u,  d(u,v) -s_j+3\eps)$, as well as of $B(x_i, d(x_i, x_j) - s_j)$. Therefore
we also have $\fb{y_i}{x_i}{d(x_i, x_j) - s_j} \subseteq \fb{v}{u}{ d(u,v) -s_j+3\eps}$. 
With similar arguments, we can show that $\fb{v}{u}{ d(u,v) -r_j-3\eps} \subseteq \fb{y_i}{x_i}{d(x_i, x_j) - r_j}$.
Combining the two inclusions and noting $\eps<\eps_0$, we have shown \eqref{eq:B_uv_x}. Therefore, the condition (i) for the set $P$ implies the condition \eqref{1} of  Lemma~\ref{lem:k_geodesics_nearby}.
Due to~\eqref{eq:B_uv_x}, we also know that for each splitting point $z_i$,
\begin{align*}
d(z_i, \partial \fb{x_j}{x_i}{d(x_i, x_j) - r_j}) \ge d(z_i, \innerboundary\CB_j(u,v)) \ge \zeta.\\
d(z_i, \partial \fb{x_j}{x_i}{d(x_i, x_j) - s_j}) \ge d(z_i, \outerboundary\CB_j(u,v)) \ge \zeta.
\end{align*}
Therefore the condition \eqref{3} of  Lemma~\ref{lem:k_geodesics_nearby} is also fulfilled.
The condition (iii) for the set $P$ is the same as the condition \eqref{4} of Lemma~\ref{lem:k_geodesics_nearby}.  

It remains to check the condition \eqref{5} of  Lemma~\ref{lem:k_geodesics_nearby}. Note that $ b_{i,\ell}$ and $\wt b_{i, \ell}$ are both on the geodesic $\gamma_{i, \ell}$, so
\begin{align*}
d( b_{i,\ell}, \wt b_{i, \ell}) = d(\wt b_{i, \ell}, u)  - d( b_{i,\ell}, u) \le &d(u,v) -s_j+3\eps_0 - (d( b_{i,\ell}, x_i) -\eps)\\
= &d(u,v) - d(x_i, x_j)+4\eps_0\le 6\eps_0.
\end{align*}
Therefore for any $\ell, \ell' \in [1, m_i+1]$ distinct, we have
\begin{align*}
d( b_{i,\ell},b_{i, \ell'}) \ge d( \wt b_{i,\ell},  \wt b_{i, \ell'})-d( b_{i,\ell}, \wt b_{i, \ell})   - d( b_{i,\ell'}, \wt b_{i, \ell'})\ge \xi. 
\end{align*}
the condition \eqref{5} of  Lemma~\ref{lem:k_geodesics_nearby}. Combined, we have proved $(u,v) \in G_{i,j}(\eps)$.

Then we know that $\bminflaw[G_{i,j}(\eps)] = O(\epsilon^{K-\sp{a}})$ as $\epsilon \to 0$.  If $K\ge 9$, then by a union bound over $1 \leq i,j \leq N_\epsilon$ we have that $\bminflaw [\cup_{i,j} G_{i,j}(\eps)] = o(\eps^{1-3\sp{a}})$ as $\epsilon \to 0$.  
By the Borel-Cantelli lemma, there exists a sequence of $\eps_m$ going to $0$ and $M>0$, such that the event $\cup_{i,j}G_{i,j}(\eps_m)$ a.e.\  does not occur for any $m\ge M$. Therefore, the set $P$ is $\bminflaw$ a.e.\ empty.

The union of $P\left((r_j, s_j, n_j)_{1\le j \le k}; \eps_0, \delta, \xi, \zeta, K\right)$  for any set of rational numbers $(r_j, s_j, n_j)_{1\le j \le k}$, $\delta$, $\xi$, $\zeta$ chosen as in Lemma~\ref{lem:k_geodesics_nearby}, and for any $\eps_0\in(0, \min(\eps_1, \delta/200))$ contains all pairs of points $u,v\in\CS$ such that the sum of the multiplicities of all the splitting times from $v$ to $u$ is equal to $K$. This completes the proof.
\end{proof}

\section{Proof of strong confluence and topology of geodesics}
\label{sec:geodesic_structure}

This section is structured as follows.  First, in Section~\ref{subsec:intersection_behavior}, we will give the proof of Theorem~\ref{thm:intersection_of_geodesics} which restricts the intersection behavior of geodesics.  This, in turn, will allow us to complete the proof of Theorem~\ref{thm:strong_confluence}.  Next, in Section~\ref{subsec:proof_th1}, we will prove Theorem~\ref{thm:strong_confluence2} using Theorem~\ref{thm:strong_confluence}, which will allow us to complete the proofs of Theorem~\ref{thm:ghost} and Corollary~\ref{cor:geodesic_frame}.

\subsection{Intersection behavior}
\label{subsec:intersection_behavior}

We will now give the proof of Theorem~\ref{thm:intersection_of_geodesics}, which we have restated in an equivalent manner in the following proposition.

\begin{proposition}
\label{prop:no_geodesic_bump}
The following is true for $\bminflaw$ a.e.\ instance of $(\CS,d,\nu,x,y)$.  If $\eta \colon [0,T] \to \CS$ is a geodesic and $0 < s < t < T$ then $\eta|_{[s,t]}$ is the unique geodesic in $\CS$ which connects $\eta(s)$ to $\eta(t)$.
\end{proposition}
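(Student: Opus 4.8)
The statement we want, Proposition~\ref{prop:no_geodesic_bump} (equivalently Theorem~\ref{thm:intersection_of_geodesics}), is that for a.e.\ Brownian map, no geodesic $\eta$ has a proper subsegment $\eta|_{[s,t]}$ ($0<s<t<T$) which is not the unique geodesic between its endpoints. The plan is to argue by contradiction: suppose there is a geodesic $\eta\colon[0,T]\to\CS$, times $0<s<t<T$, and a second geodesic $\xi$ from $\eta(s)$ to $\eta(t)$ with $\xi\neq\eta|_{[s,t]}$. I will use this to manufacture a pair of points in $\CS$ joined by infinitely many distinct geodesics, contradicting Proposition~\ref{prop:two_point_number_of_geodesics} (which bounds the number of geodesics between any pair of points by a deterministic constant $C$). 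This is exactly the logical structure flagged in the outline (Section~\ref{sec:intro4}): Theorem~\ref{thm:intersection_of_geodesics} is deduced from the finiteness results of Section~\ref{sec:finite_number_of_geodesics}.

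\textbf{Key steps.} First I would set up the contradiction picture. Since $\eta$ and $\xi$ are both geodesics from $\eta(s)$ to $\eta(t)$ and they differ, the intersection $\{u\in(s,t):\eta(u)\in\xi\}$ is \emph{not} all of $(s,t)$; let $(\alpha,\beta)\subseteq(s,t)$ be a maximal open subinterval on which $\eta$ leaves $\xi$, so $\eta(\alpha),\eta(\beta)\in\xi$ but $\eta((\alpha,\beta))\cap\xi=\emptyset$ (such an interval exists because $\xi$ is closed and $\eta|_{[s,t]}\neq\xi$). Both the arc of $\eta$ and the corresponding arc of $\xi$ between $\eta(\alpha)$ and $\eta(\beta)$ are geodesics of the same length $\beta-\alpha$, and they are disjoint except at their endpoints; together they bound a closed Jordan domain $D\subseteq\CS$ (using that $\CS$ is homeomorphic to $\s^2$) — this is precisely a ``bump'' configuration as in Figure~\ref{fig:bump12}. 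Now I would extend $\eta$ slightly past both ends of this bump: since $\alpha>s>0$ and $\beta<t<T$, there are points $\eta(\alpha-r),\eta(\beta+r)$ on $\eta$ for small $r>0$, and $\eta|_{[\alpha-r,\alpha]}$ resp.\ $\eta|_{[\beta,\beta+r]}$ are geodesic ``stems'' attached to the bump. Concatenating a stem, a choice of side of the bump, and the other stem would produce two distinct geodesics between $\eta(\alpha-r)$ and $\eta(\beta+r)$, but the key point is that I need \emph{infinitely many}.

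\textbf{Producing infinitely many geodesics.} The mechanism for infinitude is confluence/uniqueness of geodesics to typical points. Concretely: the two boundary arcs of $D$ are two distinct geodesics between $\eta(\alpha)$ and $\eta(\beta)$, so $\eta(\alpha)$ and $\eta(\beta)$ lie in the (exceptional) set of pairs connected by $\geq 2$ geodesics. The trick is to grow the bump: for each small $r>0$ consider the concatenation $c_r$ of $\eta|_{[\alpha-r,\alpha]}$, the $\xi$-side arc of $\partial D$, and $\eta|_{[\beta,\beta+r]}$; I claim that for all sufficiently small $r>0$, $c_r$ is a geodesic from $\eta(\alpha-r)$ to $\eta(\beta+r)$ distinct from $\eta|_{[\alpha-r,\beta+r]}$. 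That $c_r$ has length $2r+(\beta-\alpha)=d(\eta(\alpha-r),\eta(\beta+r))$ is automatic because the $\xi$-arc has the same length as the $\eta$-arc it replaces and $\eta$ itself realizes the distance; I just need to check the concatenation can't be shortened, which follows from the maximality of $(\alpha,\beta)$ together with the fact that distances are realized along $\eta$. Now letting $r$ range over a sequence $r_n\downarrow 0$, the endpoints $\eta(\alpha-r_n)\to\eta(\alpha)$, $\eta(\beta+r_n)\to\eta(\beta)$ vary, so this alone gives pairs of nearby points each joined by $\geq 2$ geodesics but not yet a single fixed pair with infinitely many. To get the fixed pair, I would instead fix the endpoints at $\eta(s)$ and $\eta(t)$ (or at $\eta(\alpha-r_0),\eta(\beta+r_0)$ for one small $r_0$) and observe that \emph{any} bump of $\eta$ inside $[s,t]$, concatenated appropriately, yields a distinct geodesic between $\eta(s)$ and $\eta(t)$; so if $\eta|_{[s,t]}$ meets $\xi$ in a set with infinitely many complementary intervals we are done immediately. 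The remaining case is that there are only finitely many bumps, in which case by subdividing I may assume there is exactly one bump $(\alpha,\beta)$; then I apply Theorem~\ref{thm:ghost} (approximation by geodesics between typical points) — or directly the strong-confluence Proposition~\ref{prop:strong_confluence} — to both boundary arcs of the bump $D$: approximating the $\eta$-side arc and the $\xi$-side arc by geodesics between $\nu$-typical points and using that these typical geodesics, being close in Hausdorff distance to one of the arcs, must (by strong confluence) coincide with it away from endpoints, one derives that the two arcs would have to coincide — contradiction — \emph{unless} the bump genuinely supports a continuum of distinct geodesics, which again contradicts Proposition~\ref{prop:two_point_number_of_geodesics}.

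\textbf{Main obstacle.} The delicate point is the very last reduction: turning ``one fixed bump'' into ``infinitely many geodesics between a fixed pair of points.'' A clean way is the following variant, which I expect to be the heart of the argument: since $\eta(\alpha)$ and $\eta(\beta)$ are joined by (at least) the two arcs of $\partial D$, consider instead for a \emph{fixed} small $r>0$ the pair $p=\eta(\alpha-r)$, $q=\eta(\beta+r)$; I want to show these are joined by infinitely many geodesics, which would contradict Proposition~\ref{prop:two_point_number_of_geodesics}. For this I would run the metric exploration / use the fact (implicit in Section~\ref{sec:finite_number_of_geodesics}) that whenever two points are joined by two geodesics bounding a positive-area Jordan domain $D$, one can perturb within $D$ using the abundance of ``bottleneck''-type structure of the Brownian map — but cleanly, the right statement to invoke is simply that Proposition~\ref{prop:two_point_number_of_geodesics} already bounds splitting-time multiplicities by $9$ and the number of geodesics by $C$, so it suffices to exhibit $C+1$ distinct geodesics between $p$ and $q$, and these are obtained by noting that the $\xi$-side arc of $D$ itself contains, by applying the \emph{same} non-uniqueness hypothesis one level down (the $\xi$-side arc is a geodesic, and if it has no bump we are reduced to the already-handled base case via Theorem~\ref{thm:ghost}), a further bump — an infinite-descent argument. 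I would organize this as: either the base case (bump boundary arcs forced to coincide by strong confluence — contradiction), or infinite descent producing $>C$ geodesics (contradiction with Proposition~\ref{prop:two_point_number_of_geodesics}). Making the descent terminate correctly — i.e.\ arguing that a minimal-length bump must have boundary arcs that are each the unique geodesic between their endpoints, so that strong confluence applies and forces them equal — is the step that needs the most care, and it is where Theorem~\ref{thm:ghost} and Proposition~\ref{prop:strong_confluence} do the real work.
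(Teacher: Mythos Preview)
Your overall strategy matches the paper's: assume a bump exists and derive a contradiction to Proposition~\ref{prop:two_point_number_of_geodesics} by producing infinitely many geodesics between a fixed pair of points. However, the mechanism you propose for the single-bump case does not work. First, invoking Theorem~\ref{thm:ghost} here is circular: its proof uses Theorem~\ref{thm:intersection_of_geodesics}, which is exactly the statement you are trying to establish. Second, strong confluence (Proposition~\ref{prop:strong_confluence}) does \emph{not} force the two boundary arcs of a bump to coincide --- it only says that geodesics which are close in one-sided Hausdorff distance must intersect near their endpoints, and the two arcs of a bump bound a region of positive area and are not Hausdorff-close. Third, your infinite-descent idea collapses: the two arcs of a ``minimal'' bump share the \emph{same} endpoints $\eta(\alpha),\eta(\beta)$, so saying each is the unique geodesic between its endpoints already contradicts their being two distinct geodesics; there is no terminating base case here.

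The missing idea is to bring in Le~Gall's classification of geodesics from \emph{typical} points \cite{lg2010geodesics} (cf.\ Figure~\ref{fig:root_geodesics}): from a $\nu$-typical point, two geodesics to the same target cannot agree on both an initial and a terminal segment while differing in between. The paper exploits this as follows. Fix $s_0\in(0,s)$, take $\nu$-typical points $z_k\to\eta(s_0)$, and let $\eta_k$ be a geodesic from $z_k$ to $\eta(T)$; pass to a subsequential limit $\eta^{s_0}$. If $\eta^{s_0}=\eta|_{[s_0,T]}$, then for large $k$ the geodesic $\eta_k$ is Hausdorff-close to $\eta|_{[s_0,T]}$, so by Proposition~\ref{prop:strong_confluence} it hits $\eta$ near its start; concatenating the initial piece of $\eta_k$ with either $\eta|_{[\cdot,T]}$ or $\eta|_{[\cdot,s]}\cdot\wt\eta\cdot\eta|_{[t,T]}$ then yields two geodesics from the typical point $z_k$ to $\eta(T)$ which agree initially and terminally but differ over the bump --- exactly the configuration Le~Gall rules out. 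Hence $\eta^{s_0}\neq\eta|_{[s_0,T]}$, and prepending $\eta|_{[0,s_0]}$ gives a geodesic $\eta(0)\to\eta(T)$ distinct from $\eta$. Repeating this for a sequence $s_k\downarrow 0$ produces infinitely many geodesics between $\eta(0)$ and $\eta(T)$, contradicting Proposition~\ref{prop:two_point_number_of_geodesics}.
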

The idea of the proof of Proposition~\ref{prop:no_geodesic_bump} is to assume the contrary and deduce that this implies the existence of a pair of points between which there are infinitely many geodesics.  This gives the result as it contradicts Proposition~\ref{prop:two_point_number_of_geodesics}.

\begin{figure}[h]
\begin{center}
\includegraphics[scale=1]{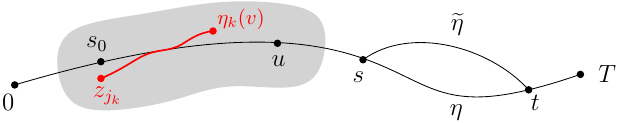}	
\end{center}
\caption{\label{fig:proof_intersection} Proof of Proposition~\ref{prop:no_geodesic_bump}. The grey area is the filled $(r_0/2)$-neighborhood of $\eta([s_0, u])$. We depict (part of) the geodesic $\eta_k$ in red in the case that it crosses $\eta$.}
\end{figure}

\begin{proof}[Proof of Proposition~\ref{prop:no_geodesic_bump}]
Suppose that $(\CS,d,\nu,x,y)$ has distribution $\bminflaw$ and $\eta \colon [0,T] \to \CS$ is a geodesic.  Suppose that there exist $0 < s < t < T$ and a geodesic $\wt{\eta}$ from $\eta(s)$ to $\eta(t)$ which is distinct from $\eta|_{[s,t]}$.  We may assume without loss of generality that $\wt{\eta}$ intersects $\eta$ only at its endpoints. See Figure~\ref{fig:proof_intersection}.  We will derive a contradiction to Proposition~\ref{prop:two_point_number_of_geodesics} by showing that this implies there are infinitely many geodesics in $\CS$ which connect $\eta(0)$ and $\eta(T)$.

Fix $s_0 \in (0,s)$.  Suppose that $(z_j)$ is a sequence in $\CS$ chosen i.i.d.\ from $\nu$.  Let $(z_{j_k})$ be a subsequence which converges to $\eta(s_0)$.  For each $k$, let $\eta_k \colon [0,T_k] \to \CS$ be a geodesic from $z_{j_k}$ to $\eta(T)$.  By passing to a further subsequence if necessary, we may assume without loss of generality that the sequence $\eta_k$ converges to a geodesic $\eta^{s_0}$ from $\eta(s_0)$ to $\eta(T)$.  It must be that $\eta^{s_0} \neq \eta|_{[s_0,T]}$.  For the sake of contradiction, suppose that $\eta_k$ converges to $\eta|_{[s_0,T]}$ in the Hausdorff topology.  Let us show that there exists $k\in\N$ such that $\eta_k \cap \eta|_{[0, s]}\not=\emptyset$. Fix $u\in(s_0,s).$ Let $r_0$ be the infimum over $r>0$ such that the $r$-neighborhood of $\eta([s_0, u])$  disconnects  $\eta(0)$ from  $\eta(s)$ in $\CS$.
There exists $n_0 \in \N$ so that for each $k \geq n_0$, the geodesic $\eta_k|_{[0, u-s_0]}$ must stay in the $(r_0/2)$-neighborhood of $\eta([s_0, u])$. There are two possibilities:
\begin{enumerate}
\item There exists $k\ge n_0$, such that $\eta_k|_{[0, u-s_0]}$ is not close to $\eta|_{[s_0, u]}$ in the one-sided Hausdorff distance. Suppose that $\eta_k(0)=z_{j_k}$ is close to the right side of $\eta$, then there exists $v\in [0, s_0-u]$ such that $\eta_k(v)$ is close to the left side of $\eta$. See Figure~\ref{fig:proof_intersection}. Note that $\eta_k$ needs to stay in the  $(r_0/2)$-neighborhood of $\eta([s_0, u])$, hence cannot go from $z_{j_k}$ to $\eta_k(v)$ without crossing $\eta([0, u])$. This implies $\eta_k \cap \eta|_{[0, s]}\not=\emptyset$.

\item The sequence of geodesics  $(\eta_k|_{[0, u-s_0]})_{k\ge n_0}$ converges to $\eta|_{[s_0, u]}$ in the one-sided Hausdorff distance. Then Proposition~\ref{prop:strong_confluence} implies that there exists $n_1 \ge n_0$ so that $k \geq n_1$ implies that $\eta_k \cap \eta|_{[s_0, s]} \neq \emptyset$. 
\end{enumerate}

Let $k\in\N$ be such that $\eta_k \cap \eta|_{[0, s]}\not=\emptyset$. 
 Let $u_k \in [0, u-s_0]$  and $v_k\in[0,s]$ be so that $\eta(v_k) = \eta_k(u_k)$.  Then the concatenation of $\eta_k|_{[0,u_k]}$, $\eta|_{[v_k,s]}$, $\wt{\eta}$, and $\eta|_{[t,T]}$ is a geodesic from $\eta_k(0)$ to $\eta_k(T_k) = \eta(T)$.  Likewise, the concatenation of $\eta_k|_{[0,u_k]}$ and $\eta|_{[v_k,T]}$ is another geodesic from $\eta_k(0)$ to $\eta(T)$.  These two geodesics agree on their initial and terminal segments.  This is a contradiction since the starting point of $\eta_k$ is typical and the results of \cite{lg2010geodesics} (explained in Figure~\ref{fig:root_geodesics}) imply that this a.s.\ cannot happen.  Therefore $\eta^{s_0}$ is distinct from $\eta|_{[s_0,T]}$.  
 
 Let $(s_k)$ be a positive sequence starting from $s_0$ and which decreases to $0$.  Arguing as above, we can construct for each $k$ a geodesic $\eta^{s_k}$ from $\eta(s_k)$ to $\eta(T)$ which is not equal to the concatenation of $\eta|_{[s_k, s_j]}$ with $\eta^{s_j}$ for $1 \leq j \leq k-1$ and also not equal to $\eta|_{[s_k,T]}$.  Therefore $\eta(0)$ and $\eta(T)$ are connected by infinitely many geodesics.  This contradicts Proposition~\ref{prop:two_point_number_of_geodesics}, which completes the proof.
\end{proof}

We can now complete the proof of Theorem~\ref{thm:strong_confluence}.

\begin{proof}[Proof of Theorem~\ref{thm:strong_confluence}]
This immediately follows by combining Propositions~\ref{prop:strong_confluence} and~\ref{prop:no_geodesic_bump}.  Indeed, Proposition~\ref{prop:no_geodesic_bump} implies that if we have geodesics $\eta_i \colon [0,T_i] \to \CS$ for $i=1,2$ and $\delta > 0$ so that $\eta_i(\delta),\eta_i(T_i-\delta) \in \eta_{3-i}$ then $\eta_i([\delta,T_i-\delta]) \subseteq \eta_{3-i}$.
\end{proof}

\subsection{Proof of Theorem~\ref{thm:strong_confluence2}}
\label{subsec:proof_th1}

In this subsection, we will prove Theorem~\ref{thm:strong_confluence2}, which then allows us to prove Theorem~\ref{thm:ghost} and Corollary~\ref{cor:geodesic_frame}.
We begin with the following estimate on the number of bottlenecks (as illustrated by Figure~\ref{fig:bottleneck})  in the Brownian map along a geodesic between typical points, and then combine this with Theorem~\ref{thm:strong_confluence} to complete the proof of Theorem~\ref{thm:strong_confluence2}.

\begin{lemma}[Bottleneck estimate]
\label{lem:bottlenecks}
Suppose that $(\CS,d,\nu,x,y)$ has distribution $\bminflaw$.  Let $\eta$ be the a.e.\ unique geodesic from $y$ to $x$.  Fix $\epsilon,\sp{u} > 0$.  The measure of the event that the number of $m \in \N$ so that $x$ and $y$ lie in two different connected components of $\CS\setminus B(\eta(m \epsilon),2\epsilon)$ is at least $\epsilon^{-\sp{u}}$ decays to $0$ as $\epsilon \to 0$ faster than any power of $\epsilon$.
\end{lemma}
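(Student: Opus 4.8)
The plan is to show that a bottleneck of size $2\epsilon$ along $\eta$ near $\eta(m\epsilon)$ can occur only when the boundary length of a filled metric ball around $x$ of the appropriate radius is of order $\epsilon^2$, and then to observe that the complement of a filled ball of small boundary length is a Brownian disk of correspondingly small diameter — which, since it is marked by $y$, forces the radius of the ball to be close to $d(x,y)$. This confines all the relevant values of $m$ to a polylogarithmic range near the two endpoints of $\eta$, off an event whose $\bminflaw$-mass decays faster than any power of $\epsilon$.

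For $m\in\N$ set $a_m=m\epsilon$, and let $D_m$ be the event that $x$ and $y$ lie in different connected components of $\CS\setminus B(\eta(a_m),2\epsilon)$. One may discard the $m$ with $a_m\le 2\epsilon$ or $a_m\ge d(x,y)-2\epsilon$, since then $x$ or $y$ lies in $\overline{B(\eta(a_m),2\epsilon)}$ and $D_m$ cannot hold. On $D_m$, the topological boundary of the $y$-containing component of $\CS\setminus B(\eta(a_m),2\epsilon)$ is a connected set of $d$-diameter at most $4\epsilon$ separating $x$ from $y$; I would argue, via a topological/metric argument controlling the holes of the relevant filled metric balls (these are under control because $\eta(a_m)$ lies on the geodesic $\eta$, which continues from $\eta(a_m)$ to $x$), that $D_m$ forces $\partial\fb{y}{x}{r_m}$ to have $d$-diameter $O(\epsilon)$ for some $r_m$ within $O(\epsilon)$ of $d(x,y)-a_m$. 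Since this boundary circle is then contained in a metric ball of radius $O(\epsilon)$, which has $\nu$-volume at most $\epsilon^{4-\sp{a}}$ by Lemma~\ref{lem:bm_volume_estimates} (and Lemma~\ref{lem:le_gall_volume}), the construction of boundary length in \cite{ms2015axiomatic} gives that, off an event of $\bminflaw$-mass decaying faster than any power of $\epsilon$, $D_m$ implies $L_{r_m}\le \epsilon^{2-\sp{a}}$, where $L_r$ denotes the boundary length of $\partial\fb{y}{x}{r}$ and $\sp{a}>0$ may be taken arbitrarily small.

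To finish, recall that, given $L_r$, the space $\CS\setminus\fb{y}{x}{r}$ equipped with its interior-internal metric is a Brownian disk of boundary length $L_r$ marked by $y$, so that (by the scaling property of the Brownian disk) its diameter is $L_r^{1/2}$ times the normalized diameter of a unit-boundary-length Brownian disk — a random variable with super-exponential upper tail. Since the marked point $y$ sits at distance exactly $d(x,y)-r$ from $\partial\fb{y}{x}{r}$, we get $d(x,y)-r\le L_r^{1/2}\cdot(\text{this diameter variable})$; combining this with $L_{r_m}\le\epsilon^{2-\sp{a}}$ and the upper tail, one obtains that $a_m=d(x,y)-r_m+O(\epsilon)$ is at most $\epsilon^{1-\sp{a}}(\log\epsilon^{-1})^{C}$ for a fixed $C$, off a super-polynomially rare event; an analogous argument (using that $\fb{y}{x}{r}$ itself has radius $r$, or the $x\leftrightarrow y$ symmetry of $\bminflaw$) rules out $D_m$ for $m$ with $d(x,y)-a_m$ larger than this. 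Hence, off an event of $\bminflaw$-mass decaying faster than any power of $\epsilon$, the set $\{m:D_m\}$ is contained in $[3,\epsilon^{-\sp{a}}(\log\epsilon^{-1})^{C}]\cup[d(x,y)/\epsilon-\epsilon^{-\sp{a}}(\log\epsilon^{-1})^{C},\,d(x,y)/\epsilon]$ and therefore has cardinality $O(\epsilon^{-\sp{a}}(\log\epsilon^{-1})^{C})$, which is at most $\epsilon^{-\sp{u}}$ for all sufficiently small $\epsilon$ once $\sp{a}<\sp{u}$ — this is the claim. The main obstacle is the geometric reduction of the previous paragraph: handling the holes of the filled metric balls, and quantifying, with a probability bound strong enough to survive the union over the $\asymp d(x,y)/\epsilon$ relevant values of $m$, the passage from "a separating set of $d$-diameter $4\epsilon$ near $\eta(a_m)$" to "a filled-ball boundary of boundary length $O(\epsilon^{2-\sp{a}})$"; the remaining steps are comparatively routine given the Brownian disk estimates already recorded in Section~\ref{sec:preliminaries}.
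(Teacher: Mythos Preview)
Your first step — a bottleneck of width $2\epsilon$ around $\eta(m\epsilon)$ forces the boundary length of some filled ball $\fb{y}{x}{r_m}$ with $r_m\approx d(x,y)-m\epsilon$ to be at most $\epsilon^{2-\sp{a}}$ — is the same reduction the paper makes, though the paper gives a concrete argument rather than a gesture toward one: it shows topologically that $\partial\fb{y}{x}{d(x,y)-(m-2)\epsilon}\subseteq B(\eta(m\epsilon),6\epsilon)$, then compares a \emph{lower} bound on the volume of the $\epsilon$-neighborhood of that boundary (obtained by slicing into $\epsilon^{-\sp{u}}$ independent geodesic slices of boundary spacing $\epsilon^2$) against the \emph{upper} bound $\epsilon^{4-\sp{u}/2}$ for $\nu(B(\eta(m\epsilon),7\epsilon))$ from Lemma~\ref{lem:le_gall_volume}. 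Your sketch of this step is vague but salvageable.

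The second step, however, is wrong. You claim that the diameter of a marked Brownian disk with boundary length $1$ has a super-exponential upper tail, so that $d(x,y)-r_m\le L_{r_m}^{1/2}\cdot(\text{something polylog})$ with super-polynomially small failure probability. This is false: under $\bdisklawweighted{1}$ the area is the hitting time of $-1$ by a Brownian motion, with $\p[A>t]\sim c\,t^{-1/2}$, so the diameter has only polynomial upper tails. More fundamentally, small boundary length does \emph{not} confine $r_m$ to either end of $[0,d(x,y)]$: the reverse boundary length process $Y_r=L_{d(x,y)-r}$ is a $3/2$-stable CSBP excursion with upward jumps and can therefore become of order $\epsilon^{2-\sp{a}}$ at some intermediate time and then jump back up. So the set $\{m:D_m\}$ need not cluster near $0$ or $d(x,y)/\epsilon$.

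The paper's completion is different and correct: having reduced to controlling $\#\{m:Y_{m\epsilon}\le\epsilon^{2-\sp{u}}\}$, it uses that whenever $Y_{m\epsilon}\le\epsilon^{2-\sp{u}}$ the CSBP has a uniformly positive chance of hitting $0$ within the next $\epsilon^{1-\sp{u}/2}$ units of time, so the number of such $m$ is stochastically dominated by $\epsilon^{-\sp{u}/2}$ times a geometric random variable. This gives the super-polynomial bound directly, with no diameter estimate needed.
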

\begin{proof}
Fix $\epsilon > 0, \sp{u} > 0$.  For each $r > 0$, we let $Y_r$ be the boundary length of $\partial \fb{y}{x}{d(x,y)-r}$.
Fix $m \in \N_{\ge 3}$.  We are going to argue that if $B(\eta(m \epsilon),2\epsilon)$ disconnects $x$ from $y$ in $\CS$, then off an event whose probability tends to $0$ as $\epsilon \to 0$ faster than any power of $\epsilon$ we have that $Y_{(m-2) \epsilon}$ is at most $\epsilon^{2-\sp{u}}$.  Suppose that $Y_{(m-2) \epsilon}$ is at least $\epsilon^{2-\sp{u}}$, and let us deduce a contradiction.  

On the one hand, we have off an event whose probability tends to $0$ as $\epsilon \to 0$ faster than any power of $\epsilon$ that the amount of volume in the $\epsilon$-neighborhood of $\partial \fb{y}{x}{d(x,y)-(m-2) \epsilon}$ is at least a constant times $\epsilon^{4-\sp{u}}$.  This can be seen because we can place $\epsilon^{-\sp{u}}$ equally spaced points on $\partial \fb{y}{x}{d(x,y)-(m-2) \epsilon}$ with boundary length spacing $\epsilon^2$.  The geodesic slices between these points are independent and in $\epsilon$ units of time, each has a positive chance of having mass a constant times $\epsilon^4$.  Therefore by binomial concentration, the total area is at least a constant times $\epsilon^{4-\sp{u}}$ off an event whose probability tends to $0$ as $\epsilon \to 0$ faster than any power of $\epsilon$.  

On the other hand, if $B(\eta(m \epsilon),2\epsilon)$ disconnects $x$ from $y$, then the $\epsilon$-neighborhood of $\partial \fb{y}{x}{d(x,y)-(m-2) \epsilon}$ would be contained in $B(\eta(m \epsilon), 7\epsilon)$. Indeed, we know that $\fb{y}{x}{d(x,y)-(m-2) \epsilon}$ contains $B(\eta(m \eps), 2\eps)$, since any point in $B(\eta(m \eps), 2\eps)$ has distance at most $d(x,y)-(m-2) \epsilon$ to $x$. Therefore $\partial \fb{y}{x}{d(x,y)-(m-2) \epsilon}$ is contained in the $y$-containing connected component of $\CS \setminus B(\eta(m\eps), 2\eps)$. For any point $z\in \partial \fb{y}{x}{d(x,y)-(m-2) \epsilon}$, any geodesic from $z$ to $x$ should pass through $B(\eta(m\eps), 2\eps)$, because $B(\eta(m\eps), 2\eps)$ disconnects $x$ from $y$. Suppose $z_0 \in B(\eta(m\eps), 2\eps)$ is a point on a geodesic from $z$ to $x$, then $d(x,z_0)\ge d(x,y) -m\eps - 2\eps$. We also have $d(x, z)  = d(x,z_0) + d (z_0, z) = d(x,y) -(m-2) \eps$. Combined, we have $d(z_0, z) \le 4\eps$. This implies that $d(z, \eta(2m\eps)) \le 6\eps$, hence $\partial \fb{y}{x}{d(x,y)-(m-2) \epsilon}$ is contained in $B(\eta(m\eps), 6\eps)$. Therefore, the $\epsilon$-neighborhood  of $\partial \fb{y}{x}{d(x,y)-(m-2) \epsilon}$ is contained in $B(\eta(m\eps), 7\eps)$.
We know by Lemma~\ref{lem:le_gall_volume} that the volume of $B(\eta(m \epsilon), 7\epsilon)$ is at most $\epsilon^{4-\sp{u}/2}$ off an event whose probability tends to $0$ as $\epsilon \to 0$ faster than any power of $\epsilon$. This contradicts the previous paragraph, and proves the desired claim.

To complete the proof of the lemma, it suffices to show that the number of $m \in \N$ so that $Y_{m \epsilon}$ is at most $\epsilon^{2-\sp{u}}$ is at most $\epsilon^{-\sp{u}}$ off an event whose probability tends to $0$ as $\epsilon \to 0$ faster than any power of $\epsilon$.  Suppose that $Y_{m \epsilon} \leq \epsilon^{2-\sp{u}}$.  Then there is a positive chance that $Y$ hits $0$ in the next $\epsilon^{1-\sp{u}/2}$ units of time.  That is, we have that the number of such $m$ is at most $\epsilon^{-\sp{u}/2}$ times a geometric random variable, which proves the result.
\end{proof}

\begin{proof}[Proof of Theorem~\ref{thm:strong_confluence2}]
We will prove the assertion of Theorem~\ref{thm:strong_confluence2} for a.e.\ $(\CS,d,\nu,x,y)$ which is sampled from $\bminflaw$ conditioned on $\{1 \leq \nu(\CS) \leq 2\}$.  It will then follow that for Lebesgue a.e.\ value of $a \in [1,2]$, the same result holds a.s.\ for $(\CS,d,\nu,x,y)$ with law $\bmlaw{a}$. The result in the case of a sample from $\bmlaw{a}$ for every value of $a > 0$ thus holds by the scaling property of the Brownian map.  This will complete the proof.

Let $(\CS, d, \nu, x, y)$ be sampled from $\bminflaw$ conditioned on $\{1 \leq \nu(\CS) \leq 2\}$. 
Let $(z_i)$ be a sequence of points chosen i.i.d.\ with respect to the $\nu$ measure in $\CS$. Fix $\sp{u}>0$ and $\eps>0$.  Let $\delta=\eps^{1-\sp{u}}$.

Fix $i, j \ge 1$ distinct. 
Let $E_{i,j}(\eps)$ be the following event.
There exist two geodesics $\eta_1 \colon [0,T_1] \to \CS$ and $\eta_2 \colon [0,T_2] \to \CS$ such that 
\begin{align}
\label{eq:confluence3}
& \eta_1(0), \eta_2(0) \in B(z_i, 6\eps) \quad \text{and} \quad \eta_1(T_1), \eta_2(T_2) \in B(z_j, 6\eps),\\
\label{eq:confluence1}
& T_1 = d(\eta_1(0), \eta_1(T_1)) \geq 2\delta \quad \text{and} \quad T_2 = d(\eta_2(0), \eta_2(T_2)) \geq 2\delta, \\
\label{eq:confluence2}
&\distH(\eta_1([0, T_1]),\eta_2([0,T_2])) \leq \epsilon, \\
\label{eq:confluence4}
& \eta_1([\delta, T_1 -\delta]) \not\subseteq \eta_2 \quad \text{or} \quad \eta_2([\delta, T_2 -\delta]) \not\subseteq \eta_1.
\end{align}
Let $\eta$ be the unique geodesic from $z_i$ to $z_j$. Let us show that there a.s.\ exists $\eps_0>0$ such that for all $\eps\in(0,\eps_0)$, on the event $E_{i,j}(\eps)$, the number of $m \in \N$ so that $B(\eta(30 m \eps), 60\eps)$ disconnects $z_i$ from $z_j$ is at least $\epsilon^{-\sp{u}/2}/8$. Upon showing this,  Lemma~\ref{lem:bottlenecks} will imply that the probability of $E_{i,j}(\eps)$ decays to $0$ as $\epsilon \to 0$ faster than any power of $\epsilon$.

Suppose that we are on the event $E_{i,j}(\eps)$. By~\eqref{eq:confluence4} and Proposition~\ref{prop:no_geodesic_bump}, we actually have 
\[\eta_1([0, \delta/2]) \cap \eta_2 =\emptyset\quad \text{or} \quad\eta_1([T_1-\delta/2, T_1]) \cap \eta_2 =\emptyset.\]
By possibly reversing the times of $\eta_1$ and $\eta_2$, we can assume that 
\begin{align}\label{eq:asum_eta_1}
\eta_1([0, \delta/2]) \cap \eta_2 =\emptyset.
\end{align}
For $1\le n \le \lfloor \eps^{- \sp{u}/2}/4 \rfloor$, let $\beta_n = n \eps^{1- \sp{u}/2}$.
Due to~\eqref{eq:confluence2}, there exists $\alpha_n \in [0, T_1]$ such that $d(\eta_1(\alpha_n), \eta_2(\beta_n))\le \eps$. 
For each $1\le n \le \lfloor \eps^{- \sp{u}/2}/4 \rfloor$, we also have
\begin{align}
\label{eq:distH<eps}
\distH(\eta_1([\alpha_n, \alpha_{n+1}]), \eta_2([\beta_n, \beta_{n+1}])) \le 5\eps.
\end{align}
Indeed, by the same arguments as in Lemma~\ref{lem:hausdorff_close}, we have $|\alpha_n -\beta_n| \le 2\eps$ for each $n$. For each $t\in[0, T_2]$, due to~\eqref{eq:confluence2}, there exists $\wh s(t) \in [0, T_1]$ such that 
$d(\eta_1(\wh s(t)), \eta_2(t)) = d (\eta_2(t), \eta_1) \le \eps.$ Similarly we have $|\wh s(t) -t| \le 2\eps$. This implies that for each $n$ and $t\in[\beta_n + 4\eps, \beta_{n+1} -4 \eps]$, we have $\wh s(t) \in [\alpha_n, \alpha_{n+1}]$. The same statement holds if we switch the roles of $\eta_1$ and $\eta_2$, hence~\eqref{eq:distH<eps} holds. On the other hand,
by Theorem~\ref{thm:strong_confluence}, if $\eps$ is small enough, we must have  
\begin{align}\label{eq:distHos>eps}
\distHos (\eta_1([\alpha_n, \alpha_{n+1}]), \eta_2([\beta_n, \beta_{n+1}])) >15 \eps.
\end{align}
since otherwise $\eta_1([\alpha_n, \alpha_{n+1}])$ and $\eta_2([\beta_n, \beta_{n+1}])$ would intersect each other, contradicting~\eqref{eq:asum_eta_1}.

By~\eqref{eq:distH<eps}, for each $t\in[\beta_n, \beta_{n+1}]$, there exist $s(t) \in [\alpha_n, \alpha_{n+1}]$ and a geodesic $\xi_t$  from $\eta_1(s(t))$ to $\eta_2(t)$ whose length is equal to $d(\eta_2(t), \eta_1([\alpha_n, \alpha_{n+1}])) \le 5 \eps$. Moreover, this geodesic is disjoint from $\eta_1([\alpha_n, \alpha_{n+1}])$ except at its starting point. Note that $\xi_t$ hits $\eta_1|_{[\alpha_n, \alpha_{n+1}]}$ at either its left or right side (we consider that it hits $\eta_1|_{[\alpha_n, \alpha_{n+1}]}$ at both its left and right sides if and only if $\xi_t(0)$ is equal to $\eta_1(\alpha_1)$ or $\eta_1(\alpha_2)$). For each $t$, it is possible that there is more than one choice of $s(t)$ and $\xi_t$. However, we will show that it is not possible to choose $s(t)$ and $\xi_t$ in a way so that $\xi_t$ hits $\eta_1|_{[\alpha_n, \alpha_{n+1}]}$ on the same side for all $t\in[\beta_n, \beta_{n+1}]$. 

Let us assume that we can choose  $s(t)$ and $\xi_t$ for each $t\in[\beta_n, \beta_{n+1}]$ so that $\xi_t$ always hits on the right side of $\eta_1|_{[\alpha_n, \alpha_{n+1}]}$, and try to get a contradiction to~\eqref{eq:distHos>eps}.  Recall that in the definition of $\distHos$, we let $\eta_1^{\rm{L}}$ and $\eta_1^{\rm{R}}$ denote the left and the right sides of $\eta_1$. For each $s\in [0,T_1]$, we will further let $\eta_1^{\rm{L}}(s)$ (resp.\ $\eta_1^{\rm{R}}(s)$) denote the prime end in $\CS\setminus\eta_1$ corresponding to $\eta_1(s)$ on $\eta_1^{\rm{L}}$ (resp.\ $\eta_1^{\rm{R}}$).  Under our assumption, it is clear that
\begin{align}\label{eq:eta_2_inclu_eta_1}
\eta_2 ([\beta_n, \beta_{n+1}]) \subseteq B_{\CS\setminus\eta_1([\alpha_n, \alpha_{n+1}])}(\eta_1^{\rm R}([\alpha_n, \alpha_{n+1}]), 5\eps),
\end{align}
where the right hand side denotes the $\eps$-neighborhood of $\eta_1^{\rm R}([\alpha_n, \alpha_{n+1}])$ in $\CS\setminus\eta_1([\alpha_n, \alpha_{n+1}])$ with respect to the interior-internal metric $d_{\CS\setminus\eta_1([\alpha_n, \alpha_{n+1}])}$.
We can in addition assume that the geodesics $\xi_t$ for $t\in[\beta_n, \beta_{n+1}]$ do not cross each other, so that for each $t_1< t_2$, $\xi_{t_1}$ stays to the right of $\xi_{t_2}$ (this can be achieved by exchanging the trajectories of geodesics $\xi_{t_1}$ and $\xi_{t_2}$ at their crossing point, if they ever cross each other). This implies that $s(t)$ is increasing in $t$. 
For each $s\in[\alpha_n, \alpha_{n+1}]$, if $s=s(t)$ for some $t\in[\beta_n, \beta_{n+1}]$, then we clearly have 
$$d_{\CS\setminus\eta_1([\alpha_n, \alpha_{n+1}])}(\eta_1^{\rm R}(s), \eta_2([\beta_n, \beta_{n+1}]))\le 5\eps.$$ 
Otherwise, let $s_1=\sup\{\sigma < s: \exists t\in[\beta_n, \beta_{n+1}], \sigma=s(t) \}$ and $s_2=\inf\{\sigma > s: \exists t\in[\beta_n, \beta_{n+1}], \sigma=s(t)\}$. Let $t_0= \sup\{t>0: s(t)\le s_1\}$, then we must also have $t_0= \inf\{t>0: s(t)\ge s_2\}$. 
Let $\gamma_1$ (resp.\ $\gamma_2$) be the geodesic from $\eta_1(s_1)$ (resp.\ $\eta_1(s_2)$) to $\eta_1(t_0)$ which is obtained as the limit of the geodesics $\xi_t$ as $t$ tends to $t_0$ from its left (resp.\ right) side. 
It is obvious that $\gamma_1$ and $\gamma_2$ are disjoint from $\eta_1([\alpha_n, \alpha_{n+1}])$ except at their starting points and they hit $\eta_1|_{[\alpha_n, \alpha_{n+1}]}$ on its right side. Moreover, both the lengths of $\gamma_1$ and $\gamma_2$ are at most $5\eps$. By the triangle inequality, $s_2 -s_1$ is at most the sum of the lengths of $\gamma_1$ and $\gamma_2$ which is at most $10\eps$.
By another application of the triangle inequality, it follows that 
$$d_{\CS\setminus\eta_1([\alpha_n, \alpha_{n+1}])}(\eta_1^{\rm R}(s), \eta_2([\beta_n, \beta_{n+1}]))\le 15 \eps.$$ 
Altogether, we have proved
$$\eta_1^{\rm R}([\alpha_n, \alpha_{n+1}]) \subseteq B_{\CS\setminus\eta_1([\alpha_n, \alpha_{n+1}])}(\eta_2([\beta_n, \beta_{n+1}]), 15 \eps).$$ 
Combined with~\eqref{eq:eta_2_inclu_eta_1}, we have proved that the Hausdorff distance from $\eta_1^{\rm R}([\alpha_n, \alpha_{n+1}])$ to $\eta_2([\beta_n, \beta_{n+1}])$ with respect to $d_{\CS\setminus\eta_1([\alpha_n, \alpha_{n+1}])}$ is at most $15\eps$, which contradicts~\eqref{eq:distHos>eps}. 
Similarly, it is impossible to choose  $s(t)$ and $\xi_t$ for each $t\in[\beta_n, \beta_{n+1}]$ so that $\xi_t$ always hits at the left side of $\eta_1|_{[\alpha_n, \alpha_{n+1}]}$.

Let $t_1$ (resp.\ $t_2$) be the infimum over all  $t\in[\beta_n, \beta_{n+1}]$ such that there does not exist $s \in [\alpha_n, \alpha_{n+1}]$ with 
\begin{align*}
&d_{\CS \setminus \eta_1([\alpha_n, \alpha_{n+1}])}(\eta_2(t), \eta_1^{\rm L}(s))= d (\eta_2(t), \eta_1 ([\alpha_n, \alpha_{n+1}])) \\\text{resp. }  &d_{\CS \setminus \eta_1([\alpha_n, \alpha_{n+1}])}(\eta_2(t), \eta_1^{\rm R}(s))= d (\eta_2(t), \eta_1 ([\alpha_n, \alpha_{n+1}])).
\end{align*}
 By the previous paragraph, we know that $t_1, t_2 < \beta_{n+1}$. On the other hand, due to~\eqref{eq:distH<eps}, we must have $t_1>\beta_n$ or $t_2>\beta_n$. Without loss of generality, suppose that $t_1>\beta_n$.
Then there is a sequence $(t^n)$ tending to $t_1$ from the left such that we can choose $s(t^n)$ and $\xi_{t^n}$ in a way that $\xi_{t^n}$ hits $\eta_1$ at its left side. Let $\gamma_3$ be the limit of $\xi_{t^n}$.
There is also a sequence of $(\wt t^n)$ tending to $t_1$ from the right such that we can choose $s(\wt t^n)$ and $\xi_{\wt t^n}$ in a way that $\xi_{\wt t^n}$ hits $\eta_1$ at its right side.  Let $\gamma_4$ be the limit of $\xi_{\wt t^n}$.
Then there exist $s_3, s_4$ such that $\gamma_3$ is a geodesic from $\eta_1^{\rm L} (s_3)$ to $\eta_2(t_1)$ and $\gamma_4$ is a geodesic from $\eta_1^{\rm R} (s_4)$ to $\eta_2(t_1)$. The lengths of $\gamma_3$ and $\gamma_4$ are both at most $5\eps$. We also have $|s_3 -s_4| \le 10\eps$. Note that the concatenation of the time-reversal of $\gamma_3$, $\eta_1([s_3, s_4])$ and $\gamma_4$ forms a closed loop which disconnects $z_i$ and $z_j$, and this loop is contained in $B (\eta_2(t_1), 15 \eps)$.
This implies that $B (\eta_2(t_1), 15 \eps)$ disconnects $z_i$ and $z_j$ inside $\CS$. 
Therefore, we have proved that there exists $t \in (\beta_n, \beta_{n+1})$ such that $z_i$ and $z_j$ are in two different connected components of $\CS \setminus B(\eta_2(t), 15 \eps)$.

For each $1\le n \le \lfloor \eps^{-\sp{u}/2}/4 \rfloor$, let $t_n\in (\beta_n, \beta_{n+1})$ be such that $B(\eta_2(t_n), 15 \eps)$ disconnects $z_i$ and $z_j$.
Recall that $\eta$ is the unique geodesic from $z_i$ to $z_j$, thus $\eta$ must intersect $B(\eta_2(t_n), 15 \eps)$. Suppose that $\eta(r) \in B(\eta_2(t_n), 15 \eps)$. Let $m=\lfloor r/( 30\eps) \rfloor$. Then $B(\eta(30 m \eps), 60\eps)$ also disconnects $z_i$ and $z_j$ inside $\CS$. Since this is true for all $1\le n \le \lfloor \eps^{-\sp{u}/2}/4 \rfloor$, the number of $m \in \N$ so that $B(\eta(30 m \eps), 60\eps)$ disconnects $z_i$ from $z_j$ is at least $\epsilon^{-\sp{u}/2}/8$. By Lemma~\ref{lem:bottlenecks}, the probability of $E_{i,j}(\eps)$ decays to $0$ as $\epsilon \to 0$ faster than any power of $\epsilon$.

Fix $\sp{a}>0$ and let $N= \eps^{-8-\sp{a}}$.
By the union bound, we can deduce that by possibly decreasing $\eps_0$, we have that for all $\eps\in(0, \eps_0)$ and $1\le i, j \le N$ distinct, $E_{i,j}(\eps)$ does not occur.
Let $\delta=\eps^{1-\sp{u}}$. Suppose that $\eta_i : [0, T_i] \to \CS$ for $i=1,2$ are two geodesics such that $T_i \ge 2\delta$ and $ \distH(\eta_1([0, T_1]),\eta_2([0,T_2])) \leq \epsilon$. By Lemma~\ref{lem:typical_points_dense}, by possibly decreasing the value of $\eps_0$, there exist $1\le i,j \le N$ such that $\eta_1(0) \in B(z_i, \eps)$ and $\eta_1(T_1) \in B(z_j, \eps)$. By Lemma~\ref{lem:hausdorff_close}, we know that $\eta_2(0) \in B(z_i, 6\eps)$ and $\eta_2(T_2) \in B(z_j, 6\eps)$. Since $E_{i,j}(\eps)$ does not occur, \eqref{eq:confluence4} must not hold. Therefore, we have $\eta_i([\delta, T_i-\delta]) \subseteq \eta_{3-i}$ for $i=1,2$.
This completes the proof.
\end{proof}

We are now ready to prove Theorem~\ref{thm:ghost} and Corollary~\ref{cor:geodesic_frame}.

\begin{proof}[Proof of Theorem~\ref{thm:ghost}]
Suppose that $\eta \colon [0,T] \to \CS$ is a geodesic and fix $0 < s < t < T$.  Let $(\eta_n)$ be a sequence of geodesics $\eta_n \colon [0,T_n] \to \CS$ with $\eta_n(0) \to \eta(s)$ and $\eta_n(T_n) \to \eta(t)$ as $n \to \infty$.  Suppose that $(\eta_{n_k})$ is a subsequence of $(\eta_n)$.  We can then pass to a further subsequence $(\eta_{n_{k_j}})$ which converges to a geodesic $\wh{\eta}$ connecting $\eta(s)$ and $\eta(t)$.  Theorem~\ref{thm:intersection_of_geodesics} implies that $\wh{\eta}$ is equal to $\eta|_{[s,t]}$.  Thus we have shown that every subsequence of $(\eta_n)$ has a further subsequence which converges to $\eta|_{[s,t]}$ and therefore $(\eta_n)$ converges to $\eta|_{[s,t]}$.  The assertion of the theorem then follows from Theorem~\ref{thm:strong_confluence2}.
\end{proof}

\begin{proof}[Proof of Corollary~\ref{cor:geodesic_frame}]
Let $(z_j)$ be an i.i.d.\ sequence chosen from $\nu$ and, for each $i,j$, we let $\eta_{i,j}$ be the a.s.\ unique geodesic which connects $z_i$ and $z_j$.  Note that $\dimH(\cup_{i,j} \eta_{i,j}) = 1$ a.s.\ since $\dimH(\eta_{i,j}) = 1$ a.s.\ for each $i,j \in \N$.  We will finish the proof by showing that $\gf(\CS) \subseteq \cup_{i,j} \eta_{i,j}$.  Let $\eta \colon [0,T] \to \CS$ be a geodesic and fix $\epsilon > 0$.  Then there exist subsequences $(z_{n_j})$, $(z_{m_j})$ of $(z_j)$ so that $z_{n_j} \to \eta(\epsilon)$ and $z_{m_j} \to \eta(T-\epsilon)$.  Theorem~\ref{thm:ghost} implies that there exists $k_0 \in \N$ so that $k \geq k_0$ implies that $\eta([2\epsilon,T-2\epsilon]) \subseteq \eta_{n_k, m_k}$.  That is, $\eta([2\epsilon,T-2\epsilon]) \subseteq \cup_{i,j} \eta_{i,j}$.  This proves the result since~$\eta$ and $\epsilon > 0$ were arbitrary.
\end{proof}

\section{Exponent for disjoint geodesics from a point}
\label{sec:exponent_disjoint_geodesics}

The purpose of this section is to derive the exponent for the event that the root $x$ of the Brownian map is within distance $\epsilon$ of a point $z$ from which there are $k$ geodesics which are disjoint except for at $z$.

\begin{proposition}
\label{prop:disjoint_geodesics}
Fix $r,\rho >0$ and $\sp{b}_0\in(0,1)$.  Suppose that $(\CS,d,\nu,x,y)$ has distribution $\bminflaw$.  For each $s \geq 0$, let $Y_s$ be the boundary length of $\partial \fb{y}{x}{d(x,y)-s}$ and let $\tau_0 = \inf\{s \geq r : Y_s = \rho\}$.  Fix $k \ge 2$. For each $\epsilon > 0$, let $E(\epsilon,k, r,\rho)$ be the event that $\tau_0<\infty$ and there exists $z \in B(x,\eps)$ such that 
the maximal number of geodesics  from points on $\partial \fb{y}{x}{d(x,y)-\tau_0}$ to $z$ which are disjoint except at $z$ is equal to $k$. 
For each $\rho\in( \epsilon^{2\sp{b}_0}, \eps^{-2\sp{b}_0})$, we have that 
\begin{align}\label{eq:disjoint_geodesics}
\bminflaw[E (\epsilon,k, r,\rho)  \mid d(x,y)>r] =  O((\epsilon/\rho^{1/2})^{k-1 +o(1)}) \quad\mathrm{as}\quad \epsilon \to 0
\end{align}
where the implicit constant depends only on $k, \sp{b}_0$ and does not depend on $r$ or $\rho$.
\end{proposition}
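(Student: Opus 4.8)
The heuristic from Section~\ref{sec:intro4} (cf.\ Figure~\ref{fig:outline_disjoint_geodesics}) is our guide: if there are $k$ geodesics to a point $z$ near $x$ which are disjoint away from $z$, then performing a reverse metric exploration from $y$ towards $x$ (so that the unexplored region is $\fb{y}{x}{d(x,y)-s}$ as $s$ increases), the $k$ geodesics subdivide $\partial \fb{y}{x}{d(x,y)-s}$ into $k$ arcs whose boundary lengths evolve as $k$ \emph{independent} $3/2$-stable CSBPs, and these $k$ processes must all reach $0$ (the arc between two consecutive geodesics pinches exactly when those two geodesics merge) within $\epsilon$ of each other. Since the extinction time of a CSBP has a smooth density, conditioning on when the \emph{first} of the $k$ processes hits $0$, the probability that the remaining $k-1$ hit $0$ within time $\epsilon$ is of order $\epsilon^{k-1}$, after rescaling to account for the boundary length $\rho$ (recall the scaling $Y_{C^{1/2}t} \overset{d}{=} C Y_t$ for a $3/2$-stable CSBP), which accounts for the $\rho^{-(k-1)/2}$ factor. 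The first step is therefore to make this clean statement precise for geodesics to the center point $x$ itself rather than to a nearby point $z$: define the event that the $k$ geodesics from $\partial\fb{y}{x}{d(x,y)-\tau_0}$ to $x$ stay disjoint until they get within distance $\epsilon$ of $x$, decompose the boundary length process on each arc using the independence of geodesic slices (Section~\ref{subsec:metric_bands}), and run the CSBP density computation as in Lemma~\ref{lem:num_pinch_points_disk}.

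\textbf{Reduction from $z$ to $x$.}
The genuine difficulty, as flagged in the outline, is that the statement concerns geodesics to a point $z \in B(x,\epsilon)$, and when we perform the reverse exploration centered at $x$ we cannot Markovianly condition on which geodesics eventually reach $z$ — the identity of the arcs that pinch depends on $z$, which lies in the not-yet-explored region. The plan to circumvent this is to invoke the strong confluence results: by Theorem~\ref{thm:strong_confluence} (or rather Proposition~\ref{prop:strong_confluence} together with Theorem~\ref{thm:intersection_of_geodesics}), at all but a bounded—indeed $\epsilon^{-\sp{u}}$ for any $\sp{u}>0$, via the bottleneck count of Lemma~\ref{lem:bottlenecks}—number of scales, any geodesic from $\partial\fb{y}{x}{d(x,y)-s}$ to $z$ must agree off a neighborhood of its endpoints with a geodesic from $\partial\fb{y}{x}{d(x,y)-s}$ to $x$, because $z$ and $x$ are within $\epsilon$. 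Concretely, one introduces a stopping time scale $s_j = $ (e.g.) geometric in the boundary length $Y_s$, works at scales where $Y_s$ is of order $\epsilon^{2j}$ for $j=0,1,\ldots$, and argues that on each metric band $\fb{y}{x}{d(x,y)-s_j}\setminus\fb{y}{x}{d(x,y)-s_{j+1}}$ not of bottleneck type, the family of geodesics towards $z$ that remain disjoint is ``shadowed'' by a family towards $x$ that remain disjoint with the same combinatorial structure; this transfers the event $E(\epsilon,k,r,\rho)$ up to the event about geodesics to $x$, at the cost of a union bound over the $O(\epsilon^{-\sp{u}})$ exceptional scales, which is harmless since $\sp{u}$ is arbitrary and only affects the $o(1)$ in the exponent.

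\textbf{Putting it together.}
Having reduced to geodesics towards $x$, I would run the following scheme. Condition on $\{d(x,y) > r, \tau_0 < \infty\}$, so that $\fb{y}{x}{d(x,y)-\tau_0}$ is a metric measure space with boundary length $\rho$; by the scaling property of the Brownian map (rescale distances by $\rho^{-1/2}$, boundary lengths by $\rho^{-1}$) we may normalize $\rho = 1$, which is what produces the $(\epsilon/\rho^{1/2})$ combination, and the hypothesis $\rho \in (\epsilon^{2\sp{b}_0}, \epsilon^{-2\sp{b}_0})$ guarantees the rescaled $\epsilon' = \epsilon/\rho^{1/2}$ stays polynomially related to $\epsilon$ so that $o(1)$ exponents are preserved. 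Then: (i) on the event that $k$ geodesics to $x$ stay disjoint down to distance $\epsilon'$ of $x$, the boundary length of the filled metric ball $\fb{y}{x}{d(x,y)-\tau_0}$ of radius $t$ for $t$ near $0$ is cut by these $k$ geodesics into $k$ arcs; (ii) by the independence of geodesic slices and the CSBP description of boundary lengths (Section~\ref{subsec:metric_bands}), these $k$ arc-lengths are independent $3/2$-stable CSBP excursions (more precisely, run until hitting $0$), and the pinch time of arc $i$ is its extinction time $\zeta_i$; (iii) the $k$ geodesics stay disjoint until within $\epsilon'$ of $x$ forces $|\zeta_i - \zeta_j| \le C\epsilon'$ for all $i,j$ — here one uses that once two geodesics merge they coincide to $x$, and the metric distance from the band at boundary length $\ell$ to $x$ is comparable to an explicit power of $\ell$, an estimate of the type in Lemma~\ref{lem:boundary_length_distance}; (iv) by \eqref{eqn:csbp_extinction_time} the law of $\zeta_i$ has a density bounded on compact subsets of $(0,\infty)$, so $\mathbf{P}[\,|\zeta_i - \zeta_j|\le C\epsilon'\ \forall i,j\,\mid\,\zeta_1] = O((\epsilon')^{k-1})$ by integrating out $\zeta_2,\ldots,\zeta_k$ one at a time. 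Finally, the event $E(\epsilon,k,r,\rho)$ specifies the maximal number of disjoint geodesics is \emph{exactly} $k$, which is contained in the event of \emph{at least} $k$; combined with the reduction step this yields $\bminflaw[E(\epsilon,k,r,\rho)\mid d(x,y)>r] = O((\epsilon/\rho^{1/2})^{k-1+o(1)})$, with the implicit constant depending only on $k$ and $\sp{b}_0$ (the $\sp{b}_0$-dependence entering only through the range of $\rho$ and hence the $o(1)$). I expect step (iii)—converting the metric statement ``disjoint until distance $\epsilon'$ of $x$'' into the clean probabilistic statement ``extinction times within $C\epsilon'$''—together with the $z$-to-$x$ reduction to be where essentially all the technical work lies; the CSBP density computation in step (iv) is routine.
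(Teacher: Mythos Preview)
Your heuristic and the clean CSBP computation for geodesics to $x$ are both on target, and you correctly locate the difficulty in the $z$-to-$x$ reduction. But the mechanism you propose for that reduction has a genuine gap.

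First, the bottleneck count of Lemma~\ref{lem:bottlenecks} is the wrong tool: it bounds the number of places where a small ball around the geodesic from $x$ to $y$ disconnects the map, which is what controls side-switching in the proof of Theorem~\ref{thm:strong_confluence2}. It says nothing about the layers where a geodesic to $z$ fails to coincide with a geodesic to $x$. The paper instead classifies layers (defined by $Y_{\tau_m}=2^{-m}\rho$) into three \emph{different} bad types --- ``fat'' (band too wide), ``crossing'' (the geodesic to $z$ re-enters a layer boundary at points far apart in boundary length, controlled by Lemma~\ref{lem:bad_band}), and ``non-merging'' (the geodesic to $z$ genuinely fails to coalesce with the geodesic to $x$ within the band, controlled via the \X\ mechanism of Lemmas~\ref{lem:X_prob2} and~\ref{lem:good_band_map} together with Lemma~\ref{lem:metric_band_epsilon_point2}) --- and shows each type contributes at most a bounded number $N_0,N_1,N_2$ of bad layers off an event of negligible probability.

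Second, and more fundamentally: even granting that on each good layer the geodesics to $z$ are ``shadowed'' by geodesics to $x$, the resulting shadow family is \emph{still not Markovian} with respect to the reverse-exploration filtration $(\CF_m)$, because which geodesic to $x$ does the shadowing at layer $m+1$ depends on where $\eta_j$ (a geodesic to $z$) lands on $\innerboundary\CB_m$, and $z$ lies in the unexplored region. So you cannot simply invoke slice independence to get $k$ independent CSBPs. The paper's key device (its Steps~5--6) is to run a \emph{parallel Markovian exploration}: at each bad-layer transition it lays down a finite set of ``children'' on the next boundary (covering all points close to the marked point in band metric), and selects one child uniformly at random. This is $\CF$-adapted by construction. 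One then couples the two explorations and shows that, conditionally on the event $F(H,I,J)$ specifying the locations of bad layers, the Markovian exploration happens to make the same choices as the non-Markovian one with probability at least $\epsilon^{o(1)}$ (equation~\eqref{eq:GCK2}). Only after this Markovianization can one run the CSBP argument on the runs of consecutive good layers (Step~8). Your proposal does not contain this idea, and without it the independence in your step~(ii) is unjustified for the arcs cut by the shadow geodesics.
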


The basic idea to prove Proposition~\ref{prop:disjoint_geodesics} is to use the fact that the boundary length between geodesics from the boundary of a filled metric ball back to the root $x$ evolve as independent $3/2$-stable CSBPs. See Figure~\ref{fig:outline_disjoint_geodesics}. The heuristic is that if there are $k$ geodesics towards a point $z$ which has distance at most $\epsilon$ from $x$ which are disjoint except for at $z$ then one would expect that there are $k$ geodesics towards $x$ which are disjoint until hitting $B(x,\epsilon)$.  This, in turn, would correspond to there being $k$ independent $3/2$-stable CSBPs with initial value of constant order which all hit $0$ for the first time within time $\epsilon$ of each other.  If one conditions on when the first CSBP hits $0$, the conditional probability that the $k-1$ others hit $0$ within $\epsilon$ of this time will then be of order $\epsilon^{k-1}$.  There is some technical work in turning this heuristic into a proof because the $k$ geodesics towards $z$ will not always coincide with $k$ geodesics towards $x$.  This will involve showing that a version of this statement holds at most scales.

We will first collect some preliminary estimates in Section~\ref{subsec:band_estimate} in order to give the proof of Proposition~\ref{prop:disjoint_geodesics}.  More concretely, we will consider an exploration of the Brownian map in which we look at successive bands in the reverse exploration where the boundary length goes down by a factor of two.  The main estimate in Section~\ref{subsec:band_estimate} is Lemma~\ref{lem:bad_band} which bounds from above the probability that such a band has a pinch point in its inner boundary (with respect to the ambient Brownian map metric).  
We will also give a lower bound for the width of such a band in Lemma~\ref{lem:CSBP_stopping}.
In order to understand the proof of Proposition~\ref{prop:disjoint_geodesics}, one can on a first reading only read the statement of Lemmas~\ref{lem:bad_band} and~\ref{lem:CSBP_stopping}, and skip the rest of Section~\ref{subsec:band_estimate}.  
We will complete the proof of Proposition~\ref{prop:disjoint_geodesics} in Section~\ref{subsec:completion_of_proof}.

\subsection{Band estimate}
\label{subsec:band_estimate}
The main part of this subsection is devoted to the proof of the following lemma. At the end of this subsection, we will also prove Lemma~\ref{lem:CSBP_stopping}.

\begin{lemma} 
\label{lem:bad_band}
Suppose that $(\CS,d,\nu,x,y)$ has distribution $\bminflaw$.  For each $r \geq 0$ we let $Y_r$ be the boundary length of $\partial \fb{y}{x}{d(x,y)-r}$.  Let $\tau_1 = \inf\{ r \geq 0 : Y_r = 1\}$, $\tau_2 = \inf\{r \geq \tau_1 : Y_r = 1/2\}$, and $\tau_3 = \inf\{ r \geq \tau_2 : Y_r = 1/4\}$.  Fix $\sp{a} \in (0,1/3)$.  Given $\tau_1 < \infty$, the probability that there exist $z,w \in \partial \fb{y}{x}{d(x,y)-\tau_2}$ with $d(z,w) \leq \epsilon$ and both the clockwise and counterclockwise boundary length distance from $z$ to $w$ are at least $\epsilon^{2\sp{a}}$ is $O(\epsilon^{4/3-4\sp{a}+o(1)})$ as $\epsilon \to 0$.
\end{lemma}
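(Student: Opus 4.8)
\textbf{Proof proposal for Lemma~\ref{lem:bad_band}.}

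The plan is to relate the event in question to the extinction behavior of the $3/2$-stable CSBP governing the reverse metric exploration, combined with a volume-based argument like the one appearing in the proof of Lemma~\ref{lem:bottlenecks}. The key geometric observation is the following: if $z,w \in \partial \fb{y}{x}{d(x,y)-\tau_2}$ have ambient distance $d(z,w) \le \epsilon$ but are far apart in boundary length (both boundary arcs have length at least $\epsilon^{2\sp{a}}$), then a short path between $z$ and $w$ together with one of the boundary arcs separates $\CS$ into two pieces, one of which contains $x$. Pushing the metric exploration slightly further inward (from $\tau_2$ to $\tau_2 + C\epsilon$ for a suitable constant $C$), the boundary $\partial \fb{y}{x}{d(x,y)-\tau_2-C\epsilon}$ must then be trapped inside a metric ball of radius $O(\epsilon)$ centered near $z$, because every geodesic from a point on that boundary to $x$ must cross the short separating path. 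This is exactly the mechanism used in Lemma~\ref{lem:bottlenecks}, and it forces $Y_{\tau_2+C\epsilon}$ to be small: more precisely, by the volume lower bound obtained from placing $\epsilon^{2\sp{a}}$-spaced points on the boundary and using independence of geodesic slices (binomial concentration, as in Lemma~\ref{lem:bottlenecks}), one gets that $\nu$ of an $\epsilon$-neighborhood of $\partial \fb{y}{x}{d(x,y)-\tau_2-C\epsilon}$ is at least a constant times $\epsilon^{4-\sp{a}}$ off an event of superpolynomially small probability; but Lemma~\ref{lem:le_gall_volume} bounds the volume of an $O(\epsilon)$-ball above by $\epsilon^{4-\sp{a}/2}$. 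Hence, off a negligible event, the separation event implies $Y_{\tau_2+C\epsilon} \le \epsilon^{2-\sp{a}}$ (arguing as in Lemma~\ref{lem:bottlenecks} to convert the trapping into a boundary-length bound via the CSBP scaling).

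So the task reduces to estimating the $\bminflaw$-probability, given $\tau_1<\infty$, that the boundary length process $Y$, which evolves as a $3/2$-stable CSBP started from $1/2$ at time $\tau_2$ (by the reverse exploration Markov property and the independence across bands), satisfies $Y_{\tau_2 + C\epsilon} \le \epsilon^{2-\sp{a}}$. Here I would invoke the scaling property of the $3/2$-stable CSBP together with the explicit extinction-time formula~\eqref{eqn:csbp_extinction_time}. A $3/2$-stable CSBP started from a value of order $1$ reaches level $\epsilon^{2-\sp{a}}$ (which is much smaller than $1$) only if it has nearly become extinct, and the density of the extinction time is smooth; quantitatively, the probability that the CSBP is below level $\delta$ at a time which is within $C\epsilon$ of a value of order $1$ is $O(\epsilon \cdot \delta^{1/2})$ by the same computation used for $M_\alpha$ in Section~\ref{subsubsec:csbp} (the density of the lifetime under $M_\alpha$ is $c t^{1/(1-\alpha)-1} = c t^{-3}$, and for $\alpha = 3/2$ the relevant $1/(1-\alpha) = -2$ factor gives, after differentiating/integrating, a bound of the form $\epsilon \cdot \delta^{1/(\alpha-1)} = \epsilon \cdot \delta^{2}$... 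I would need to recheck the exact exponent). In fact the cleaner route is: the probability that $Y$ hits a level $\le \epsilon^{2-\sp{a}}$ during a time interval of length $C\epsilon$ situated a bounded distance into the excursion is, by~\eqref{eqn:csbp_extinction_time} and the smoothness of the resulting density, of order $\epsilon \cdot (\epsilon^{2-\sp{a}})^{1/(\alpha-1)}\big|_{\alpha=3/2} = \epsilon \cdot \epsilon^{2(2-\sp{a})} $; I expect the correct combined exponent to work out to $4/3 - 4\sp{a} + o(1)$ after tracking the constant $C$ and the fact that what we actually need is the CSBP to drop by a factor of order $\epsilon^{2-\sp{a}}$ over the band of width comparable to $\epsilon$, for which~\eqref{eqn:csbp_extinction_time} gives probability $1 - \exp(-c\,(C\epsilon)^{-2}\epsilon^{2-\sp{a}})$... so I would instead phrase the estimate as a union bound over the $O(\epsilon^{-1})$ possible locations of the ``bad'' band of width $\epsilon$ inside the window $[\tau_2, \tau_3]$, each contributing $O(\epsilon^{?})$, which is the computation I need to do carefully.

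Concretely, the key steps in order: (1) condition on $\tau_1 < \infty$ and record that on this event $\bminflaw$ restricted appropriately is a finite measure, with $Y$ after time $\tau_2$ a $3/2$-stable CSBP from $1/2$ independent of the earlier exploration; (2) establish the geometric lemma that a $z,w$ pair as in the statement produces, after advancing the exploration by $C\epsilon$, a separating loop of diameter $O(\epsilon)$, hence an $O(\epsilon)$-ball containing all of $\partial \fb{y}{x}{d(x,y)-\tau_2-C\epsilon}$; (3) run the volume argument of Lemma~\ref{lem:bottlenecks} (lower bound from slice independence and binomial concentration via Lemma~\ref{lem:le_gall_volume}, upper bound from Lemma~\ref{lem:le_gall_volume}) to conclude that, off a superpolynomially small event, the separation event forces $Y_{\tau_2+C\epsilon}$ to be $\le \epsilon^{2-\sp{a}}$; (4) bound the probability of $\{Y_{\tau_2+C\epsilon} \le \epsilon^{2-\sp{a}}\}$ (or rather the union over all possible locations of such a drop within $[\tau_2,\tau_3]$, since we do not know where $z$ lies metrically) using the CSBP scaling and~\eqref{eqn:csbp_extinction_time}; and (5) combine, choosing $\sp{a}$ small and absorbing subpolynomial losses into the $o(1)$. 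The main obstacle I anticipate is Step (4): getting exactly the exponent $4/3-4\sp{a}$ requires being careful about (i) the width of the band over which the CSBP must drop (it is $O(\epsilon)$, not constant, so~\eqref{eqn:csbp_extinction_time} with $t \asymp \epsilon^{-2}\cdot\epsilon^{2-\sp{a}}$ type arguments are delicate), (ii) the union bound over $O(\epsilon^{-1})$ candidate positions for the bad band inside $[\tau_2,\tau_3]$ which costs a factor $\epsilon^{-1}$, and (iii) making the geometric separation argument quantitative enough that $C$ is an absolute constant and the ``$\le \epsilon^{2-\sp{a}}$'' bound on $Y$ is genuinely what comes out. A secondary subtlety is ensuring the estimate is uniform and does not secretly depend on $\tau_1$ or on the conditioning, which follows from the Markov property of the reverse exploration and the scale invariance of the CSBP.
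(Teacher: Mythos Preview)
Your approach has a genuine gap at Step~(2), the geometric claim that a pinch point on $\partial \fb{y}{x}{d(x,y)-\tau_2}$ forces the inner boundary $\partial \fb{y}{x}{d(x,y)-\tau_2-C\epsilon}$ to be trapped in an $O(\epsilon)$-ball. The analogy with Lemma~\ref{lem:bottlenecks} breaks down: there a ball centered on the $x$--$y$ geodesic disconnects $x$ from $y$, so every geodesic between them must pass through it; here the short path joining $z$ and $w$ lies near $\partial \fb{y}{x}{d(x,y)-\tau_2}$, at distance $d(x,y)-\tau_2$ from $x$, whereas geodesics from $\partial \fb{y}{x}{d(x,y)-\tau_2-C\epsilon}$ to $x$ travel \emph{inward} and have no reason to meet that path. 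Both lobes of the pinched boundary can enclose large pieces of the filled ball, so nothing forces the next boundary to be small. Your own Step~(4) confirms the error: by Lemma~\ref{lem:CSBP_stopping}, a $3/2$-stable CSBP started from $1/2$ even halves its value within time $C\epsilon$ only with probability $O(\exp(-c\epsilon^{-2/3}))$, so $\{Y_{\tau_2+C\epsilon}\le\epsilon^{2-\sp{a}}\}$ has superpolynomially small probability --- far below the target $\epsilon^{4/3-4\sp{a}}$. The implication from the pinch event to that CSBP event therefore cannot hold.

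The paper's argument is structurally different, and this is where the $4/3$ actually comes from. One looks at the width-$\epsilon$ annulus just \emph{outside} $\partial\fb{y}{x}{d(x,y)-\tau_2}$, namely $\fb{y}{x}{d(x,y)-\tau_2+\epsilon} \setminus \fb{y}{x}{d(x,y)-\tau_2}$ (explored during $[\tau_2-\epsilon,\tau_2]$ in the reverse exploration). Lemma~\ref{lem:mass_disconnected} shows its expected $\nu$-area is $O(\epsilon^{4/3+o(1)})$: the area is governed by the sum of squares of jumps of the underlying $3/2$-stable L\'evy process in that window, and since the process (with only upward jumps) is about to hit $1/2$ from above, those jumps are forced to have size $\lesssim\epsilon^{2/3}$, yielding $\epsilon^{4/3}$ after summing. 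Lemma~\ref{lem:mass_in_filled_metric_ball_complement} then shows, via an upper/lower volume comparison on the Brownian disk complementary to a slightly smaller filled ball, that the pinch event forces this same band to contain a metric ball of radius $\epsilon^{\sp{a}+o(1)}$, hence $\nu$-area $\ge\epsilon^{4\sp{a}+o(1)}$. Markov's inequality on the area then gives $O(\epsilon^{4/3-4\sp{a}+o(1)})$. So the correct quantity to control is the \emph{area of the $\epsilon$-band at $\tau_2$}, not the boundary length at $\tau_2+C\epsilon$.
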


We will make use of the following strategy to prove Lemma~\ref{lem:bad_band}.  We will first (Lemma~\ref{lem:mass_disconnected}) bound the amount of area near $\partial \fb{y}{x}{d(x,y)-\tau_2}$.  We will in particular show that the area in $\fb{y}{x}{d(x,y)-\tau_2+\epsilon} \setminus \fb{y}{x}{d(x,y)-\tau_2}$ is $O(\epsilon^{4/3+o(1)})$.  This is smaller than for a ``typical'' $\epsilon$-width band because when exploring towards $y$ from $\partial \fb{y}{x}{d(x,y)-\tau_2}$ the boundary length process is prevented from making a large downward jump.  This result will be used in the proof of Lemma~\ref{lem:bad_band} because we will argue that the existence of a pinch point would imply that the amount of area in $\fb{y}{x}{d(x,y)-\tau_2+\epsilon} \setminus \fb{y}{x}{d(x,y)-\tau_2}$ is of order $\epsilon^{4\sp{a}}$ as it is likely to contain a ball of radius of order $\epsilon^{\sp{a}}$ (Lemma~\ref{lem:mass_in_filled_metric_ball_complement}).

\begin{lemma}
\label{lem:mass_disconnected}
Suppose that we have the setup of Lemma~\ref{lem:bad_band}.  There exists an event $G$ so that $\p[G^c]$ tends to $0$ as $\epsilon \to 0$ faster than any power of $\epsilon$ so that the following is true.  Given $\tau_1 < \infty$, the conditional expectation of $\nu( \fb{y}{x}{d(x,y)-\tau_2+\epsilon} \setminus \fb{y}{x}{d(x,y)-\tau_2}) \one_G$ is $O(\epsilon^{4/3+o(1)})$.
\end{lemma}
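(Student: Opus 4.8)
The plan is to control the amount of area swept out by the reverse metric exploration from $y$ towards $x$ during the $\epsilon$ units of time immediately after time $\tau_2$, using the fact that during these $\epsilon$ units of time we are near the stopping time $\tau_3$ at which the boundary length process hits $1/4$, and the boundary length process $(Y_r)$ cannot make a large downward jump without terminating the exploration near $\tau_2$. Recall (Section~\ref{subsubsec:csbp}, Section~\ref{sec:exploration}) that under a reverse exploration from $y$ to $x$, the boundary length $(Y_r)$ evolves as a $3/2$-stable CSBP; via the Lamperti time change~\eqref{eqn:lamperti_csbp_to_levy}, running infima and jumps of the associated L\'evy process translate into the jumps of $(Y_r)$. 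The key point is: conditionally on $\tau_2<\infty$ (equivalently on $\{Y$ reaches $1/2\}$), the process $(Y_{\tau_2+s})_{s\geq 0}$ is again a $3/2$-stable CSBP started from $1/2$, so with overwhelming probability (faster than any power of $\epsilon$) we have $|Y_{\tau_2+s}-1/2|\leq 1/4$ for all $s\in[0,\epsilon]$ and $(Y_{\tau_2+s})$ makes no jump of size larger than some fixed constant in this time interval; call this high-probability event $G$.

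On $G$ the band $\CB_\epsilon := \fb{y}{x}{d(x,y)-\tau_2+\epsilon}\setminus\fb{y}{x}{d(x,y)-\tau_2}$ has inner boundary length in $[1/4,3/4]$ and width $\epsilon$, and its conditional law given $Y_{\tau_2}=\ell\in[1/4,3/4]$ and given $G$ is absolutely continuous with respect to $\bandlaw{\ell}{\epsilon}$ with a bounded Radon--Nikodym derivative on $G$ (the conditioning to stay bounded and make no large jump only costs a multiplicative constant). So it suffices to bound $\E[\nu_{\CB}(\CB)]$ for $(\CB,d_\CB,\nu_\CB,z)\sim\bandlaw{\ell}{\epsilon}$ with $\ell$ of order $1$. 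By the scaling property of metric bands (rescale distances by $\epsilon^{-1}$, areas by $\epsilon^{-4}$, boundary lengths by $\epsilon^{-2}$), a sample from $\bandlaw{\ell}{\epsilon}$ is the image of a sample from $\bandlaw{\epsilon^{-2}\ell}{1}$ under scaling areas by $\epsilon^4$; hence $\E_{\bandlaw{\ell}{\epsilon}}[\nu(\CB)] = \epsilon^4\,\E_{\bandlaw{\epsilon^{-2}\ell}{1}}[\nu(\CB)]$. Now I would estimate $\E_{\bandlaw{L}{1}}[\nu(\CB)]$ for large $L=\epsilon^{-2}\ell$: decompose the band of inner boundary length $L$ into roughly $L$ independent geodesic slices each of inner boundary length of order $1$ (Section on geodesic slices), each slice having expected area of order $1$, giving $\E_{\bandlaw{L}{1}}[\nu(\CB)] = O(L^{1+o(1)})$. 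Feeding this back, $\E_{\bandlaw{\ell}{\epsilon}}[\nu(\CB)] = O(\epsilon^4\cdot\epsilon^{-2-o(1)}) = O(\epsilon^{2+o(1)})$ — but this is the wrong (too large) exponent, so a finer argument is needed.

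The refinement, which I expect to be the main obstacle, is to exploit the constraint that there is \emph{no large downward jump} of $(Y_r)$ in $[\tau_2,\tau_2+\epsilon]$ (which holds on $G$, and more importantly will be reinforced by the fact that $\tau_3$, where $Y$ hits $1/4$, is comparably close). The heuristic (stated in the paragraph preceding the lemma) is that the area of $\CB_\epsilon$ is really of order $\epsilon^{4/3}$ rather than $\epsilon^2$ because the band is ``thin'': the typical boundary length of $\partial\fb{y}{x}{d(x,y)-\tau_2+s}$ for $s\in[0,\epsilon]$ is not of order $1$ but rather the band is built from slices whose widths are much smaller than $\epsilon$ except for a few. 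Concretely, I would instead write $\nu(\CB_\epsilon) = \int_0^\epsilon Y_{\tau_2+s}^{1/2}\,dA_s$-type bound or, more carefully, use that the area of a slice of width $\epsilon$ and inner boundary length $a$ has expectation of order $\min(\epsilon^4, a^2\epsilon^{?})$ — one must track that a slice contributes area of order (its inner boundary length)$^2\cdot$(width)$^0$ up to the width, not simply $\epsilon^4$. Summing the squared boundary length increments of a $3/2$-stable process over $[\tau_2,\tau_2+\epsilon]$ and using that, absent a macroscopic jump, $\sum (\Delta Y)^2$ over this window is $O(\epsilon^{4/3+o(1)})$ (this is the $3/2$-stable scaling: the sum of squares of jumps of an $\alpha$-stable process over a time window $\epsilon$ is of order $\epsilon^{2/\alpha}=\epsilon^{4/3}$), one gets the claimed bound. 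I would make this rigorous by: (i) defining $G$ to also include the event that $\sum_{s\in[\tau_2,\tau_2+\epsilon]}(\Delta Y_s)^2 \le \epsilon^{4/3-\sp{a}}$, which holds off an event of probability decaying faster than any power of $\epsilon$ by the $\alpha$-stable jump structure and a Markov/Chernoff bound on $\sum(\Delta Y)^2$; (ii) bounding, conditionally on the slice structure, $\E[\nu(\CB_\epsilon)\mid \text{slices}] \le C\big(\epsilon \cdot Y_{\tau_2} + \sum (\Delta Y_s)^2\big)^{\text{something}}$ via the slice area estimate $\E_{\slicelaw{a}{\epsilon}}[\nu] = O(a^2\wedge (a\epsilon) \wedge \epsilon^4\cdot\ldots)$ and summing; and (iii) combining with $Y_{\tau_2}$ of order $1$ and the $G$ bound from (i) to conclude $\E[\nu(\CB_\epsilon)\one_G \mid \tau_1<\infty] = O(\epsilon^{4/3+o(1)})$. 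The delicate point is getting the correct per-slice area bound and confirming that the dominant contribution is indeed $\sum(\Delta Y)^2 = O(\epsilon^{4/3+o(1)})$ rather than the naive $\epsilon\cdot Y_{\tau_2}=O(\epsilon)$; this requires that on $G$ the boundary length near $\tau_2$ is small on most of the time interval, which is exactly where the no-large-jump constraint enters.
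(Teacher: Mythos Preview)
Your high-level heuristic is right: the area of the band is governed by the sum of squares of the jump sizes of the $3/2$-stable boundary-length process over an interval of length $\epsilon$, and this sum scales like $\epsilon^{4/3}$. That is also the heart of the paper's proof. However, there are two genuine gaps that prevent your argument from going through, and one point of confusion.

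First, you have the time interval reversed. The band $\fb{y}{x}{d(x,y)-\tau_2+\epsilon}\setminus\fb{y}{x}{d(x,y)-\tau_2}$ corresponds in the reverse-exploration time $r$ to the interval $[\tau_2-\epsilon,\tau_2]$, \emph{not} $[\tau_2,\tau_2+\epsilon]$. So you cannot invoke the strong Markov property at $\tau_2$ to say that $(Y_{\tau_2+s})_{s\ge 0}$ is a fresh CSBP; the relevant window sits \emph{before} the stopping time. This is precisely why the paper passes to the Lamperti-transformed L\'evy process $X$, decomposes the path from $X_0=1$ into successive excursions in and out of a strip $(1/2,\,1/2+2\epsilon^{2/3-\sp{u}})$, and identifies the last such excursion (indexed by a geometric random variable $N$) before $X$ hits $1/2$. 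The point is that in this last excursion $X$ must exit downward, so it cannot have made an upward jump larger than $2\epsilon^{2/3-\sp{u}}$: this gives a \emph{deterministic} jump-size cutoff on the relevant window, without any extra conditioning. Your event $G$ ``no large jump in $[\tau_2,\tau_2+\epsilon]$'' does not capture this.

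Second, the choice of strip width $\epsilon^{2/3-\sp{u}}$ is not incidental: it is chosen so that the time spent in the strip is $O(\epsilon^{1-o(1)})$ (with overwhelming probability) and the maximal jump is $O(\epsilon^{2/3})$. Then the sum $S$ of squared jumps of size $\le 2\epsilon^{2/3-\sp{u}}$ over a time window of length $\epsilon^{1-2\sp{u}}$ satisfies $\E[S^p]^{1/p}=O(\epsilon^{4/3-\sp{c}})$ by a direct dyadic computation on the jump intensity; one then uses H\"older over the geometric index $N$ to get the bound for $S_N$. This balancing of scales is the technical core, and your sketch does not locate it.

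Finally, your first attempt via scaling of $\bandlaw{\ell}{\epsilon}$ does not give a ``wrong exponent'': it fails outright because $\E_{\bandlaw{L}{1}}[\nu(\CB)]=\infty$ (the disks cut off by jumps have infinite expected area without a jump-size cutoff). That is why the jump truncation is essential for finiteness, not merely for optimality.
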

\begin{proof}
Let $X$ be given by $s \mapsto Y_{\tau_1+t(s)}$ where $t(s)$ is the inverse of the Lamperti transform \eqref{eqn:lamperti_csbp_to_levy}.  Then $X$ is a $3/2$-stable L\'evy process with only upward jumps and $X_0 = 1$.  Fix $\sp{u} > 0$.  Let $\sigma_1 = \inf\{ s \geq 0 : X_s = 1/2+\epsilon^{2/3-\sp{u}}\}$ and let $\tau_1 = \inf\{s \geq \sigma_1 : X_s \notin (1/2,1/2+2\epsilon^{2/3-\sp{u}})\}$.  Given that $\sigma_1,\tau_1,\ldots,\sigma_k,\tau_k$ have been defined, we let $\sigma_{k+1} = \inf\{s \geq \tau_k : X_s = 1/2+\epsilon^{2/3-\sp{u}}\}$ and $\tau_{k+1} = \inf\{s \geq \sigma_{k+1} : X_s \notin (1/2,1/2+2\epsilon^{2/3-\sp{u}})\}$.  Let $N = \min\{k : X_{\tau_k} = 1/2\}$.  By the definition of the stopping times $\sigma_k,\tau_k$ we have that $X|_{[\sigma_k,\tau_k]} \leq 1$ (provided $\epsilon > 0$ is small enough) hence
\[ \int_{\sigma_k}^{\tau_k} \frac{1}{X_s} ds \geq \tau_k - \sigma_k,\]
which implies by~\eqref{eqn:lamperti_levy_to_csbp} that $t(\tau_k) - t(\sigma_k) \ge \tau_k -\sigma_k$.

Let $G_1$ be the event that $\tau_N -\sigma_N>\eps$. Then on $G_1$, we have
\[ \inf_{\sigma_N \leq s \leq \sigma_N+\epsilon} X_s \geq 1/2.\]
Since the probability that $\inf_{\sigma_k \leq s \leq \sigma_k+\epsilon} X_s \leq 1/2$ decays to $0$ as $\epsilon \to 0$ faster than any power of $\epsilon$ and the probability that $N$ is larger than $(\log \epsilon^{-1})^2$ decays to $0$ as $\epsilon \to 0$ faster than any power of $\epsilon$, we altogether have that $\p[ G_1^c] \to 0$ as $\epsilon \to 0$ faster than any power of $\epsilon$.

Let $S_k$ be the sum of the squares of the jumps made by $X$ in $[\sigma_k,\tau_k]$ of size at most $2\epsilon^{2/3-\sp{u}}$.  Note that $X|_{[\sigma_N,\tau_N]}$ cannot make a jump of size larger than $2\epsilon^{2/3-\sp{u}}$ for otherwise $X|_{[\sigma_N,\tau_N]}$ would first exit the interval $(1/2,1/2+2\epsilon^{2/3-\sp{u}})$ by exceeding the value $1/2 + 2\epsilon^{2/3-\sp{u}}$.  It therefore follows that on $G_1$, the conditional expectation of $A = \nu( \fb{y}{x}{d(x,y)-\tau_2+\epsilon} \setminus \fb{y}{x}{d(x,y)-\tau_2})$ given $X$ is at most a constant times $S_N$.

Let $G_2$ be the event that $\tau_k - \sigma_k$ is at most $\epsilon^{1-2\sp{u}}$ for each $1 \leq k \leq N$.  Then $\p[G_2^c] \to 0$ as $\epsilon \to 0$ faster than any power of $\epsilon$ since in each round of length $\epsilon^{1-3\sp{u}/2}$ the process has a positive chance of leaving $(1/2,1/2+2 \epsilon^{2/3-\sp{u}})$ no matter where it starts.  Let $G = G_1 \cap G_2$.  Then it suffices to show that
\[ \E\left[ \one_G S_N \right] =  O(\epsilon^{4/3-\sp{c}})\]
where $\sp{c} > 0$ can be made arbitrarily close to $0$ by making $\sp{u} > 0$ arbitrarily close to $0$.  
Note that
\begin{align*}
 \E\left[ \one_G S_N \right] = \E\left[ \one_G \sum_{k=1}^\infty S_k \one_{\{N=k\}} \right] \le \sum_{k=1}^\infty \E\!\left[S_k^p \one_G \right]^{1/p} \p[N=k]^{(p-1)/p}
\end{align*}
for any $p>1$.  Since $N$ is a geometric random variable, $\p[N=k]^{(p-1)/p}$ is summable in $k$. Therefore, it is enough to prove that there exists $p>1$ such that
\begin{align*}
\E\left[ S_1^p \one_G \right]^{1/p} = O(\eps^{4/3-\sp{c}}).
\end{align*}
By the definition of $G$, it suffices to show  that
\[  \E[ S^p]^{1/p}  = O(\epsilon^{4/3-\sp{c}})\]
where $S$ is the sum of the squares of the jumps made by $X$ of size at most $2\epsilon^{2/3-\sp{u}}$ in the time interval $[0,\epsilon^{1-2\sp{u}}]$.  We note that for each $k \in \Z$, the number $N_k$ of jumps that $X$ makes in time $\epsilon^{1-2\sp{u}}$ of size in $[e^k,e^{k+1}]$ is Poisson with mean $\lambda_k$ proportional to $\epsilon^{1-2\sp{u}} e^{-3 k/2}$.  
Fix $\delta > 0$ and let
\[ \sum_{k \in \Z, k \leq (2/3-\sp{u}) \log \epsilon} e^{\delta k} \leq c_0 \epsilon^{\delta(2/3-\sp{u})} = c_1.\]
We have that
\begin{align*}
  \E[ S^p ]
&\leq \E\left[ \left( \sum_{k\in\Z, k \leq (2/3-\sp{u}) \log \epsilon}  e^{2k+2} N_k \right)^p\, \right]\\
&= \E\left[ \left( \sum_{k\in\Z, k \leq (2/3-\sp{u}) \log \epsilon}  e^{(2-\delta)k+2} N_k e^{\delta k} \right)^p\, \right]\\
&\leq c_1^{p-1}  \sum_{k\in\Z,k \leq (2/3-\sp{u}) \log \epsilon} \big( e^{(2-\delta)kp +2p} \E[ N_k^p] \big)  e^{\delta k} \quad\text{(Jensen's inequality)}.
\end{align*}
If $k \in \Z$ is such that $\lambda_k \geq 1$ then $\E[ N_k^p]$ is at most a constant times $\lambda_k^p$.  Altogether, we see that for each $\sp{c} > 0$ we can find $\delta, \sp{u} > 0$ sufficiently close to $0$ and $p > 1$ sufficiently close to $1$ so that $(\E[ S^p ])^{1/p} = O(\epsilon^{4/3-\sp{c}})$ as $\epsilon \to 0$.
\end{proof}

\begin{lemma}
\label{lem:mass_in_filled_metric_ball_complement}
Suppose that we have the setup of Lemma~\ref{lem:bad_band} and we condition on $\tau_1 < \infty$.  Fix $\sp{u} > 0$.  The probability of the event that there exist $z,w \in \partial \fb{y}{x}{d(x,y)-\tau_2}$ with $d(z,w) \leq \epsilon$ and both the clockwise and counterclockwise boundary length distance from $z$ to $w$ are at least $\epsilon^{2\sp{a}}$ and there does not exist $z_0$ so that $B(z_0,\epsilon^{\sp{a}+\sp{u}}) \subseteq \fb{y}{x}{d(x,y)-\tau_2+8\epsilon} \setminus \fb{y}{x}{d(x,y)-\tau_2}$ decays to $0$ as $\epsilon \to 0$ faster than any power of $\epsilon$.
\end{lemma}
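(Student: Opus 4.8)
\textbf{Proof proposal for Lemma~\ref{lem:mass_in_filled_metric_ball_complement}.}

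The plan is to show that if there is a pair of points $z,w$ on $\partial\fb{y}{x}{d(x,y)-\tau_2}$ which are close in the ambient metric ($d(z,w)\le\eps$) but far apart along the boundary (both boundary arcs have length at least $\eps^{2\sp{a}}$), then the ``narrow neck'' they create must trap a metric ball of radius of order $\eps^{\sp{a}+\sp{u}}$ inside the thin band $\fb{y}{x}{d(x,y)-\tau_2+\eps}\setminus\fb{y}{x}{d(x,y)-\tau_2}$, with overwhelming probability. First I would set up the geometric picture: a geodesic $\sigma$ of length at most $\eps$ from $z$ to $w$ together with one of the two boundary arcs $\alpha$ of $\partial\fb{y}{x}{d(x,y)-\tau_2}$ bounds a region $W$ inside $\fb{y}{x}{d(x,y)-\tau_2}$. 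Since $\partial\fb{y}{x}{r}$ moves outward at unit speed, the boundary arc $\alpha$ (of boundary length $\ge \eps^{2\sp{a}}$) is, at time $\tau_2-\eps$ (i.e.\ one step back into the band), still present and carries comparable boundary length, so the band $\CB':=\fb{y}{x}{d(x,y)-\tau_2+\eps}\setminus\fb{y}{x}{d(x,y)-\tau_2}$ restricted over $\alpha$ is a slice-type region of width $\eps$ and inner boundary length $\gtrsim \eps^{2\sp{a}}$ whose outer side is pinched into a set of diameter $O(\eps)$ by the neck. The key point is that such a slice/band, after rescaling distances by $\eps^{-1}$ and boundary lengths by $\eps^{-2}$, has law $\bandlaw{\ell}{1}$ with $\ell\gtrsim\eps^{2\sp{a}-2}\to\infty$, and one expects it to contain a ball of macroscopic (post-rescaling) radius $\gtrsim \eps^{\sp{u}-1}\cdot\eps = \eps^{\sp{u}}$... more carefully, a ball of radius $\eps^{\sp{a}+\sp{u}}$ in the original scale.

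The second step is to make the ball-trapping quantitative using the Brownian disk estimates already available. Viewing $\CS\setminus\fb{y}{x}{d(x,y)-\tau_2}$ as (essentially) a Brownian disk of boundary length $Y_{\tau_2}=1/2$ — or more precisely using the reverse exploration and the conditional independence of the inside and outside of the filled ball given the boundary length — the thin band over $\alpha$ contains, with probability tending to $1$ faster than any power of $\eps$, a point $z_0$ at distance $\ge \eps^{\sp{a}+\sp{u}}$ from both $\sigma$ and from $\partial\fb{y}{x}{d(x,y)-\tau_2+\eps}$. The mechanism: subdivide $\alpha$ into $\sim\eps^{-\sp{u}}$ sub-arcs of boundary length $\gtrsim\eps^{2\sp{a}+\sp{u}}$; by the geodesic-slice decomposition of metric bands the corresponding slices are independent, each of width $\eps$ and inner boundary length $\gtrsim\eps^{2\sp{a}+\sp{u}}$, so each has a uniformly positive chance to have its two bounding geodesics not merge and to contain a metric ball of radius a constant times its width; by Lemma~\ref{lem:boundary_length_distance} applied to the (rescaled) Brownian disk, the diameter of such a slice is at least a constant times (boundary length)$^{1/2}$ up to logarithmic corrections, which dominates $\eps^{\sp{a}+\sp{u}}$ for $\sp{u}$ small. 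A binomial concentration over the $\sim\eps^{-\sp{u}}$ independent slices then shows at least one of them works, off an event of super-polynomially small probability. The ball so produced lies in $W$, hence inside $\fb{y}{x}{d(x,y)-\tau_2}$, but the portion of it within $\eps$ of the neck also lies in the thin band $\CB'$; shrinking the radius by a constant factor (absorbed into $\sp{u}$) gives $B(z_0,\eps^{\sp{a}+\sp{u}})\subseteq\CB'$.

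The main obstacle I anticipate is the measure-theoretic bookkeeping: $\bminflaw$ is an infinite measure and $Y_{\tau_2}$, while conditionally fixed at $1/2$, comes with the band $\CB'$ only after conditioning on $\tau_1<\infty$ (a finite-mass event), so I must be careful that ``with probability tending to $0$ faster than any power of $\eps$'' is stated under the correct conditional law and that the independence of slices is the conditional independence given boundary lengths established in Section~\ref{sec:exploration}. A related subtlety is that the arc $\alpha$ is random and depends on where the pinch $z,w$ occurs, so I cannot fix the slices in advance; the fix is to take a deterministic fine mesh of $\sim\eps^{-2\sp{a}-\sp{u}}$ points on $\partial\fb{y}{x}{d(x,y)-\tau_2}$ with boundary spacing $\eps^{2\sp{a}+\sp{u}}$, run a union bound over all pairs of mesh points that could serve as (approximations of) $z,w$, and for each such pair use the slices strictly between them. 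Since the number of mesh pairs is only polynomial in $\eps^{-1}$ and the failure probability for each is super-polynomially small, the union bound still gives a super-polynomially small total, which is exactly the conclusion. Finally I would note that the logarithmic corrections in Lemma~\ref{lem:boundary_length_distance} and the passage from boundary-length distance to metric distance are harmless because we have the free parameter $\sp{u}>0$ to absorb them.
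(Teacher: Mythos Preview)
Your geometric picture is right: the short geodesic $\sigma$ together with one boundary arc $\alpha$ traps a region $W$ on the $y$-side, and $W\subseteq \fb{y}{x}{d(x,y)-\tau_2+\eps}\setminus\fb{y}{x}{d(x,y)-\tau_2}$, so what you must show is that $W$ has in-radius at least $\eps^{\sp{a}+\sp{u}}$. The gap is in how you try to produce that ball. Your slices have radial width $\eps$, so ``a ball of radius a constant times its width'' is a ball of radius $O(\eps)$, whereas the target radius is $\eps^{\sp{a}+\sp{u}}\gg\eps$ (recall $\sp{a}\in(0,1/3)$). You then appeal to Lemma~\ref{lem:boundary_length_distance} to say the slice has diameter $\gtrsim (\text{boundary length})^{1/2}$, but that lemma is an \emph{upper} bound $d(u,v)\le C\,\nu_\partial([u,v]_\partial)^{1/2+o(1)}$, not a lower bound; and even a large diameter would not yield a large in-radius. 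Finally, the geodesic-slice decomposition you invoke cuts toward $x$ and hence sits inside $\fb{y}{x}{d(x,y)-\tau_2}$, not in $W$ where the ball must live. So as written the mechanism cannot deliver the ball.

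The paper's argument is not constructive but a volume comparison. One works in the Brownian disk $\CS\setminus\fb{y}{x}{\sigma+K\eps^{\sp c}}$ at a nearby forward-exploration radius, and uses the two-sided boundary-neighborhood volume estimates of \cite[Lemmas~3.2--3.4]{gm2019gluing}: for any boundary arc, its $\delta$-neighborhood has $\nu$-mass between $\delta^{2+o(1)}$ and $\delta^{2-o(1)}$ times its boundary length, off a super-polynomially small event. If no $z_0$ with $B(z_0,\eps^{\sp a+\sp u})\subseteq W$ existed, every point near the trapped arc $A(z_1,w_1)$ within $\eps^{\sp a+\sp u/8}$ would lie either in the $2\eps^{\sp a+\sp u}$-neighborhood of $A(z_1,w_1)$ or in $B(z,2\eps^{\sp a+\sp u/8})\cup B(w,2\eps^{\sp a+\sp u/8})$; comparing the resulting upper bound for the mass of this $\eps^{\sp a+\sp u/8}$-neighborhood with the lower bound coming from the same lemma (and Lemma~\ref{lem:le_gall_volume} for the two small balls) gives a contradiction since the arc has boundary length $\gtrsim\eps^{2\sp a}$. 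The super-polynomial decay comes for free because both the volume estimates and the ball-volume upper bound hold off events of that type; no slice independence or binomial concentration is needed.
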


\begin{proof}
Suppose that $(\CS,d,\nu,x,y)$ has distribution $\bminflaw$.  We let $\sigma$ be the smallest $r > 0$ so that the boundary length of $\partial \fb{y}{x}{r}$ is equal to $1/2$ and we condition on $\sigma < \infty$.  Fix $\sp{c} \geq 1$.  Then for each $k \geq 0$ we have that $\CS \setminus \fb{y}{x}{\sigma+k\epsilon^{\sp{c}}}$ is a Brownian disk weighted by its area.  It follows that given $d(x,y) \geq \sigma + k \epsilon^{\sp{c}}$, we have off an event whose probability decays to $0$ as $\epsilon \to 0$ faster than any power of $\epsilon$ that the following is true.  For each $p,q \in \partial \fb{y}{x}{\sigma + k \epsilon^{\sp{c}}}$ the measure of the $\epsilon^{\sp{a}+\sp{u}}$ neighborhood of the counterclockwise arc from $p$ to $q$ is between $\epsilon^{2\sp{a}+3\sp{u}}$ and $\epsilon^{2\sp{a}+\sp{u}}$ times the length of the arc \cite[Lemmas~3.2--3.4]{gm2019gluing}.  Therefore this holds for all $k$ such that $d(x,y) \geq \sigma + k \epsilon^{\sp{c}}$ simultaneously off an event whose probability tends to $0$ as $\epsilon \to 0$ faster than any power of $\epsilon$.

\begin{figure}[h!]
\centering
\includegraphics[width=.8\textwidth]{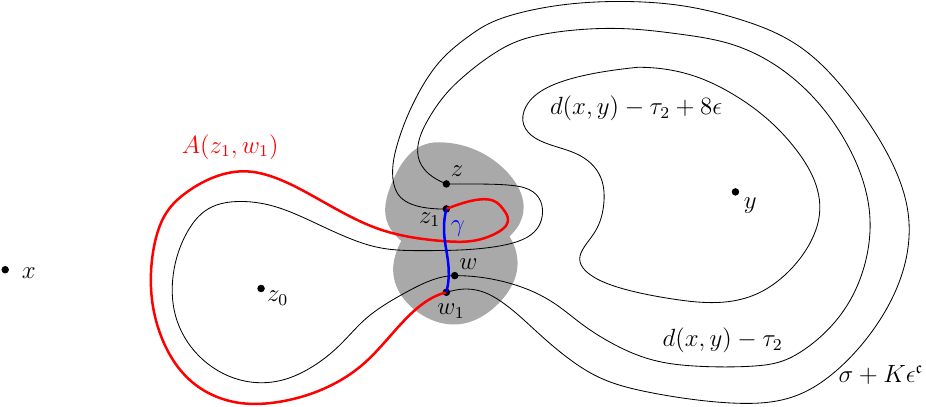}
\caption{Illustration of the setup and proof of Lemma~\ref{lem:mass_in_filled_metric_ball_complement}. We depict in dark grey $\fb{y}{z}{8\eps} \cup \fb{y}{w}{8\eps}$. The arc $A(z_1, w_1)$ is drawn in red and its $\eps^{\sp{a}+\sp{u}/2}$-neighborhood with respect to $d_K$ is drawn in light grey.}
\label{fig:crossing_layer4}
\end{figure}

Let $K$ be such that $\sigma + K \epsilon^{\sp{c}} \leq d(x,y)-\tau_2 \leq \sigma + (K+1)\epsilon^{\sp{c}}$.  
Let $d_K$ denote the interior-internal metric of $\CS \setminus \fb{y}{x}{\sigma+K\eps^{\sp{c}}}$.
Let $z_1, w_1$ be points on $\partial \fb{y}{x}{\sigma + K \epsilon^{\sp{c}}}$ which are closest to $z,w$.  
See Figure~\ref{fig:crossing_layer4}. 
Let $C_0$ be the connected component containing $y$ of $\CS\setminus (\fb{y}{x}{\sigma+K\eps^{\sp{c}}} \cup \fb{y}{z}{8\eps} \cup \fb{y}{w}{8\eps})$. Let us now show that, among the two arcs (clockwise and counterclockwise) on $\partial \fb{y}{x}{\sigma + K \epsilon^{\sp{c}}}$ from $z_1$ to $w_1$, at least one of them is disjoint from $\ol{C}_0$. 
In order to show that at most one of the two arcs intersect $\partial C_0$, it suffices to  show that $\partial C_0 \setminus (\partial \fb{y}{z}{8\eps} \cup \partial \fb{y}{w}{8\eps})$ contains at most one connected component. We will prove this statement by contradiction in the next paragraph.

Suppose that $\partial C_0 \setminus (\partial \fb{y}{z}{8\eps} \cup \partial \fb{y}{w}{8\eps}))$ contains at least two different connected components $A_1$ and $A_2$. See Figure~\ref{fig:crossing_layer6}. Then $A_1$ and $A_2$ are separated by two other arcs $A_3$ and $A_4$ which are both contained in $\partial C_0 \cap (\partial \fb{y}{z}{8\eps} \cup \partial \fb{y}{w}{8\eps})$. Note that $A_3$ and $A_4$ are connected via some path $\xi \subset  \fb{y}{z}{8\eps} \cup  \fb{y}{w}{8\eps}$. Since $d(z,w) \le \eps$ and $d(z_1, w_1) \le 3 \eps$, $\fb{y}{z}{8\eps} \cup \fb{y}{w}{8\eps}$ must contain a geodesic $\gamma$ connecting $z_1$ to $w_1$. 
Therefore we can choose $\xi$ as the concatenation of a curve from $A_3$ to $z_1$, and $\gamma$, and a curve from $w_1$ to $A_4$.
Since $\xi$ is disjoint from $C_0 \cup \{x\}$, it must disconnect either $A_1$ or $A_2$ from $x$ inside $\CS \setminus C_0$. Without loss of generality, suppose that $\xi$ disconnects $A_1$ from $x$ inside $\CS \setminus C_0$. 
For any point $u\in A_1$, a geodesic from $u$ to $x$ must stay in $\fb{y}{x}{\sigma+K\eps^{\sp{c}}}$, hence must intersect $\gamma$ at some point $u_0$.
Note that 
$$d(u_0, x) \ge d (z_1,x) - d(z_1, u_0) \ge d (z_1,x) - 3\eps.$$
Therefore 
$$d(u, u_0) = d(u,x) - d(u_0, x) \le d(u,x) - d (z_1,x) + 3\eps = 3\eps.$$
Therefore $d(u, z) \le d(u,u_0) + d(u_0, z_1) + d(z_1, z) \le 7\eps$, hence $u \in \fb{y}{z}{8\eps}$. This implies that $A_1 \subset  \fb{y}{z}{8\eps}$, which contradicts the fact that $A_1 \subset \partial C_0 \setminus (\partial \fb{y}{z}{8\eps} \cup \partial \fb{y}{w}{8\eps}))$.

\begin{figure}[h!]
\centering
\includegraphics[width=.65\textwidth]{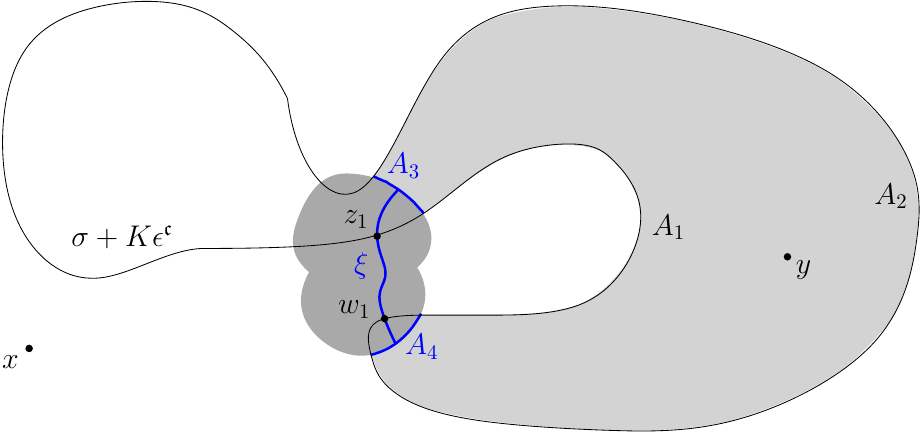}
\caption{Illustration of the proof of Lemma~\ref{lem:mass_in_filled_metric_ball_complement}. We depict in dark grey $\fb{y}{z}{8\eps} \cup \fb{y}{w}{8\eps}$, and in light grey $C_0$.}
\label{fig:crossing_layer6}
\end{figure}

We have now proved that at least one the two arcs (clockwise and counterclockwise) on $\partial \fb{y}{x}{\sigma + K \epsilon^{\sp{c}}}$ from $z_1$ to $w_1$ is disjoint from $\ol{C}_0$. Let $A(z_1, w_1)$ be such an arc.  
We also remark that $C_0$ in fact contains $\CS\setminus \fb{y}{x}{d(x,y) -\tau_2 +8\eps}$, hence all other connected components of $\CS\setminus (\fb{y}{x}{\sigma+K\eps^{\sp{c}}} \cup \fb{y}{z}{8\eps} \cup \fb{y}{w}{8\eps})$ are disjoint from  $\CS\setminus \fb{y}{x}{d(x,y) -\tau_2 +8\eps}$.

Suppose that there does not exist a point $z_0$ as in the statement of the lemma. Then any point in $\CS\setminus (\fb{y}{x}{\sigma+K\eps^{\sp{c}}} \cup \fb{y}{z}{8\eps} \cup \fb{y}{w}{8\eps})$ which is not in $C_0$ has $d_K$-distance at most $\eps^{\sp{a}+\sp{u}} + \eps$ from $A(z_1, w_1)$. For any point $v \in C_0$, any path connecting $v$ to $A(z_1, w_1)$ inside $\CS \setminus \fb{y}{x}{\sigma+K\eps^{\sp{c}}}$ must first hit $\fb{y}{z}{8\eps} \cup \fb{y}{w}{8\eps}$. Therefore, the $d_K$-distance from $v$ to $ A(z_1, w_1)$ is at least  $\max (d(v, z_1), d(v, w_1)) - 8\eps$.

This implies that the $\epsilon^{\sp{a}+\sp{u}/8}$ neighborhood of $A(z_1, w_1)$ with respect to  $d_K$ is contained in the union $U$ of the $2\epsilon^{\sp{a}+\sp{u}}$ neighborhood of  $A(z_1, w_1)$ with respect to $d_K$ together with $B(z,2\epsilon^{\sp{a}+\sp{u}/8})$ and $B(w,2\epsilon^{\sp{a}+\sp{u}/8})$.  Let $L$ be the length of $A(z_1, w_1)$.  We are now going to give upper and lower bounds for $\nu(U)$ which are not in agreement with each other.  
\begin{itemize}
\item (Upper bound.) The $2\epsilon^{\sp{a}+\sp{u}}$ neighborhood of  $A(z_1, w_1)$ with respect to $d_K$ has $\nu$-measure at most $\epsilon^{2\sp{a}+\sp{u}} L$ off an event whose probability tends $0$ as $\epsilon \to 0$ faster than any power of $\epsilon$.
By Lemma~\ref{lem:le_gall_volume}, we know that the $\nu$-measure of $B(z,2\epsilon^{\sp{a}+\sp{u}/8})$ and $B(w,2\epsilon^{\sp{a}+\sp{u}/8})$ is at most $\epsilon^{4\sp{a}+7\sp{u}/16}$ off an event whose probability tends $0$ as $\epsilon \to 0$ faster than any power of $\epsilon$ (the choice $7/16$ is arbitrary; any multiple of $\sp{u}$ smaller than $1/2$ suffices).  Altogether, we deduce that  $\nu(U) \le \epsilon^{2a + \sp{u}} L + \epsilon^{4\sp{a}+7\sp{u}/16}$ off an event whose probability tends $0$ as $\epsilon \to 0$ faster than any power of $\epsilon$.
\item (Lower bound.) On the other hand, the $\nu$-measure of the $\epsilon^{\sp{a}+\sp{u}/8}$ neighborhood of $A(z_1, w_1)$ is at least $\epsilon^{2\sp{a}+3\sp{u}/8} L$ off an event whose probability tends $0$ as $\epsilon \to 0$ faster than any power of $\epsilon$.  Since $L \geq \epsilon^{2\sp{a}}/2$ off an event whose probability tends to $0$ as $\epsilon \to 0$ faster than any power of $\epsilon$, we have that $\nu$-measure of the $\epsilon^{\sp{a}+\sp{u}/8}$ neighborhood of $A(z_1, w_1)$ exceeds $\epsilon^{2a + \sp{u}} L + \epsilon^{4\sp{a}+7\sp{u}/16}$ off an event whose probability tends to $0$ as $\epsilon \to 0$ faster than any power of $\epsilon$.
\end{itemize}
Both of these possibilities cannot occur.  Altogether, this proves the lemma.
\end{proof}

\begin{proof}[Proof of Lemma~\ref{lem:bad_band}]
Suppose that we have the setup described in the statement of the lemma and $z,w \in \partial \fb{y}{x}{d(x,y)-\tau_2}$ have clockwise and counterclockwise boundary length distance at least $\epsilon^{2\sp{a}}$.  Lemma~\ref{lem:mass_in_filled_metric_ball_complement} implies that off an event whose probability decays to $0$ as $\epsilon \to 0$ faster than any power of $\epsilon$ there exists $z_0$ so that $B(z_0,\epsilon^{\sp{a}+\sp{u}}) \subseteq \fb{y}{x}{d(x,y)-\tau_2+\epsilon} \setminus \fb{y}{x}{d(x,y)-\tau_2}$.  We also have that $\nu(B(z_0,\epsilon^{\sp{a}+\sp{u}})) \geq \epsilon^{4\sp{a}+5\sp{u}}$ off an event whose probability tends to $0$ as $\epsilon \to 0$ faster than any power of $\epsilon$.  This implies that $\nu(\fb{y}{x}{d(x,y)-\tau_2+\epsilon} \setminus \fb{y}{x}{d(x,y)-\tau_2})$ is at least $\epsilon^{4\sp{a}+5\sp{u}}$ off an event whose probability tends to $0$ as $\epsilon \to 0$ faster than any power of $\epsilon$.  On the other hand, Lemma~\ref{lem:mass_disconnected} implies that there exists an event $G$ with $\p[G^c] \to 0$ faster than any power of $\epsilon$ so that the expected amount of mass in $\fb{y}{x}{d(x,y)-\tau_2+\epsilon} \setminus \fb{y}{x}{d(x,y)-\tau_2}$ on $G$ is $O(\epsilon^{4/3+o(1)})$.  Therefore by Markov's inequality, the probability of the event in question is $O(\epsilon^{4/3-4\sp{a}-5\sp{u} +o(1)})$.  Since $\sp{u} > 0$ was arbitrary, the result follows. 
\end{proof}

Finally, let us record an estimate on CSBP which implies that the width of the bands considered in Lemma~\ref{lem:bad_band} cannot be too thin.
\begin{lemma}
\label{lem:CSBP_stopping}
Fix $\alpha \in (1,2)$ and suppose that $Y$ is an $\alpha$-CSBP and $Y_0 >0$. Let $\tau$ be the first time that $Y$ hits $Y_0/2$. Then there exists a constant $c_1>0$ depending only on $\alpha$ such that
\begin{align*}
\p[\tau \le \eps Y_0^{\alpha-1}] =O\left( \exp(- c_1 \epsilon^{-1/\alpha}) \right) \quad\text{as}\quad \eps\to 0.
\end{align*}
\end{lemma}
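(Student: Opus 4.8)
\textbf{Proof proposal for Lemma~\ref{lem:CSBP_stopping}.}

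The plan is to use the scaling property of $\alpha$-stable CSBPs together with the exact formula for the extinction time~\eqref{eqn:csbp_extinction_time}. First I would reduce to the case $Y_0 = 1$: by the scaling property, $Y_{C^{\alpha-1}t}$ is equal in distribution to $CY_t$ up to a change of starting point, so if $\wt Y$ is an $\alpha$-CSBP started from $1$ and $\wt\tau$ is the first time $\wt Y$ hits $1/2$, then $\tau$ has the same law as $Y_0^{\alpha-1}\wt\tau$. Hence it suffices to show $\p[\wt\tau \le \eps] = O(\exp(-c_1\eps^{-1/\alpha}))$ as $\eps\to 0$, for a constant $c_1$ depending only on $\alpha$.

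Next I would set up a barrier/iteration argument. Fix a small $\eps>0$ and let $n = \lfloor \eps^{-1+1/\alpha}\rfloor$ or a comparable quantity; the exact power will be dictated by balancing the two estimates below. On the event $\{\wt\tau \le \eps\}$ the process drops from $1$ to $1/2$ within time $\eps$. Consider the times $t_j = j\eps/n$ for $0\le j\le n$ and let $\zeta_j$ be the extinction time of an $\alpha$-CSBP started from $\wt Y_{t_{j-1}}$. Before $\wt\tau$ the process is at least $1/2$, so by~\eqref{eqn:csbp_extinction_time} applied at each $t_{j-1}$, the conditional probability given $\wt Y_{t_{j-1}}\ge 1/2$ that $\wt Y$ does \emph{not} die in the next $\eps/n$ units of time is $\exp(-(c\eps/n)^{1/(1-\alpha)}\wt Y_{t_{j-1}}) \le \exp(-\tfrac12(c\eps/n)^{-1/(\alpha-1)})$. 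Actually the cleaner route is to observe directly that, still using~\eqref{eqn:csbp_extinction_time}, for the process to remain strictly positive on all of $[0,\eps]$ while never exceeding, say, $1$ after some point, one needs survival over a window of length $\eps$; but survival over $[0,\eps]$ from height $\ge 1/2$ costs at least a constant (so that alone is not small). The genuine smallness must come from the \emph{descent}: I would instead bound the probability that the running infimum $\inf_{[0,\eps]}\wt Y$ reaches $1/2$. One way: let $\sigma_0=0$ and inductively $\sigma_{j+1}$ the first time $\wt Y$ decreases by a factor $(1/2)^{1/m}$ from $\wt Y_{\sigma_j}$, where $m$ is chosen so $(1/2)^{(j/m)}$ stays in $[1/2,1]$ for $j\le m$; then $\{\wt\tau\le\eps\}\subseteq\{\sigma_m\le\eps\}$, and the increments $\sigma_{j+1}-\sigma_j$ are, by the Markov property and scaling, i.i.d.\ up to the bounded factor coming from $\wt Y_{\sigma_j}\in[1/2,1]$, each stochastically bounded below by a fixed positive random variable $W$ with $\p[W\le t]\to 0$ as $t\to 0$. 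Choosing $m = \lceil \eps^{-1/(2\alpha)}\rceil$ (or any divergent integer), a Chernoff/large-deviations bound for the sum $\sum_{j<m}(\sigma_{j+1}-\sigma_j)$ of $m$ i.i.d.\ nonnegative variables gives $\p[\sigma_m\le\eps]\le \exp(-c_1 m)$ once $\eps/m$ is small compared to the typical size of $W$; optimizing the choice of $m$ against $\eps$ yields the claimed $\exp(-c_1\eps^{-1/\alpha})$ after matching exponents. Concretely the moment generating function $\E[e^{-\lambda W}]$ is finite and $<1$ for $\lambda>0$, and since $W$ has no atom at $0$, $\E[e^{-\lambda W}]^m$ with $\lambda$ proportional to $m/\eps$ produces a stretched-exponential bound; tracking the powers gives exponent $1/\alpha$.

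The step I expect to be the main obstacle is making the iteration clean despite the fact that each descent step starts from a random value $\wt Y_{\sigma_j}\in[1/2,1]$ rather than a fixed one — one must invoke the strong Markov property at $\sigma_j$ and the scaling property to get a uniform (over the starting value in $[1/2,1]$) stochastic lower bound on $\sigma_{j+1}-\sigma_j$ by a single nondegenerate law, and then a standard exponential Chebyshev bound for sums of i.i.d.\ nonnegative variables finishes it. An alternative, possibly shorter, route that avoids the iteration altogether: use the Lamperti transform~\eqref{eqn:lamperti_csbp_to_levy} to write $\wt Y$ as a time-change of an $\alpha$-stable L\'evy process $X$ with only upward jumps started at $1$, note that while $\wt Y\in[1/2,1]$ the Lamperti clock runs at a bounded rate, so $\{\wt\tau\le\eps\}$ forces $X$ to hit $1/2$ within $O(\eps)$ units of its own time; the probability that a spectrally positive $\alpha$-stable L\'evy process travels a fixed negative distance in time $\eps$ is classical and decays like $\exp(-c\eps^{-1/\alpha})$ (this is the small-time lower-deviation bound for spectrally one-sided stable processes, obtainable from the explicit Laplace transform of the first passage time downwards). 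I would present whichever of these is shortest given the conventions already fixed in the paper; the Lamperti route is likely cleanest since~\eqref{eqn:lamperti_levy_to_csbp} and~\eqref{eqn:lamperti_csbp_to_levy} are already in hand.
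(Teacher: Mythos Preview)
Your Lamperti route is exactly the approach the paper takes, but the key step you state is false as written: on $[0,\wt\tau)$ the process $\wt Y$ is only bounded \emph{below} by $1/2$, not above by $1$, since the $\alpha$-CSBP inherits upward jumps from the underlying L\'evy process. In Lamperti coordinates $ds = Y_u\,du$, so an upper bound on $Y$ is precisely what one would need to conclude that $\{\wt\tau\le\eps\}$ forces the L\'evy first-passage time $T^X_{1/2}=\int_0^{\wt\tau}Y_u\,du$ to be $O(\eps)$; without it, $T^X_{1/2}$ can be arbitrarily large on $\{\wt\tau\le\eps\}$ (an immediate large upward jump followed by a long descent gives small CSBP time but large L\'evy time). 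The paper fills this gap by an excursion decomposition: it records the successive excursions of $X$ from $1$ inside the strip $(1/2,2)$, notes that the number $N$ of such excursions before $X$ exits through $1/2$ is geometric, and that on each such excursion the Lamperti clock rate is genuinely two-sided bounded (since $X\in(1/2,2)$), so $\wt\tau\ge\tfrac12(\tau_N-\sigma_N)$. One then bounds $\p[\tau_N-\sigma_N\le 2\eps]\le \p[N\ge n]+n\exp(-c\eps^{-1/\alpha})$ via \cite[Chapter~VII, Corollary~2]{b1996levy}, and optimises with $n=\eps^{-1/\alpha}$.

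Your first approach (splitting the descent into $m$ multiplicative steps) is also not correct as stated: the increment law you call $W$ is the time for a CSBP started at $1$ to reach $(1/2)^{1/m}$, so it depends on $m$ and degenerates to $0$ as $m\to\infty$. Since you then let $m$ diverge with $\eps$, the Chernoff bound $\E[e^{-\lambda W}]^m$ does not give anything without tracking precisely how the Laplace transform of $W$ behaves near $\lambda=\infty$ as a function of $m$ --- which, after unwinding, reduces back to the small-deviation estimate for the stable first-passage time that you invoke in the Lamperti route anyway.
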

\begin{proof}
By the scaling property of the $\alpha$-CSBP, it suffices to prove the lemma for $Y_0=1$.  Let $X$ be the $\alpha$-stable L\'evy process with only upward jumps obtained by applying the inverse of the Lamperti transform~\eqref{eqn:lamperti_csbp_to_levy} to $Y$.  Let $\sigma_0 = 0$, $\tau_0 = \inf\{t \geq 0 : X_t \notin (1/2,2)\}$, and $\sigma_1 = \inf\{t \geq \tau_0 : X_t = 1\}$.  Given that $\tau_0,\sigma_1,\ldots,\sigma_k,\tau_k$ have been defined, we let $\sigma_{k+1} = \inf\{t \geq \tau_k : X_t = 1\}$ and $\tau_{k+1} = \inf\{t \geq \sigma_{k+1} : X_t \notin (1/2,2)\}$.  Let $N = \min\{k \geq 0 : X_{\tau_k} = 1/2\}$.  Then we have that $N$ is a geometric random variable with parameter $p \in (0,1)$.  Note that for each $k \in \N_0$ we have that
\[ \int_{\sigma_k}^{\tau_k} \frac{1}{X_t} dt \geq \frac{1}{2} (\tau_k-\sigma_k).\]
Therefore
\[ \tau \geq \tfrac{1}{2}(\tau_N- \sigma_N).\]
Moreover, the probability that $\tau_k - \sigma_k \leq 2\epsilon$ on the event that $X|_{[\sigma_k,\infty)}$ hits $1/2$ before $2$ is at most $\exp(-c \epsilon^{-1/\alpha})$ where $c > 0$ is a constant which depends only on $\alpha$ \cite[Chapter~VII, Corollary~2]{b1996levy}.  Altogether, we have that
\begin{align*}
\p[ \tau \leq \epsilon] 
&\leq \p[ N \geq n ] + n \p[ \tau_0 \leq 2 \epsilon]
 \leq p^n + n \exp(-c \epsilon^{-1/\alpha} ).
\end{align*}
Taking $n = \epsilon^{-1/\alpha}$ implies the result.
\end{proof}

\subsection{Proof of Proposition~\ref{prop:disjoint_geodesics}}
\label{subsec:completion_of_proof}

We will divide the proof of Proposition~\ref{prop:disjoint_geodesics} into eight steps each of which are carried out below.  Before we proceed to the details, we will first provide an overview of these steps.  The overall idea of the proof is to relate the behavior of the geodesics towards the point $z$ to the behavior of the geodesics towards $x$.  Namely, we will consider a reverse metric exploration towards $x$ and consider successive metric bands which correspond to when the boundary length drops by a factor of $2$.  We will then argue that off an event whose probability tends to $0$ as $\epsilon \to 0$ sufficiently fast, the geodesics towards $z$ will coincide with geodesics towards $x$ except in possibly a finite number of these metric bands (also called layers).  This will allow us to relate the event considered in Proposition~\ref{prop:disjoint_geodesics} to the event that $k$ independent $3/2$-stable CSBPs hit $0$ within time $\epsilon$ of each other.

In Step 1, we will describe the setup.  The purpose of Steps 2-4 is to describe the three types of ``bad'' layers.  These correspond to ``fat layers'' (the amount of time it takes for the boundary length to go down by a factor of $2$ is too large), ``crossing layers'' (the geodesics towards $z$ intersect the boundary of a layer in a problematic way), and ``non-merging layers'' (the geodesics towards $z$ fail to merge with the geodesics towards $x$).  Since conditioning on the location of the point $z$ as well as on the behavior of the geodesics towards $z$ is not compatible with the Markovian nature of the reverse exploration, we will need to define an exploration which has a good chance of following the geodesics towards $z$.  The purpose of Step 5 is to define such a Markovian exploration. In Step 6 we will show that on an event with sufficiently large probability, we can couple the Markovian exploration with the non-Markovian ``exploration'' that we used to explore the geodesics towards $z$.  In Step 7 we will prove that all but a finite number of layers are good layers off an event which occurs with negligible probability.  Finally, in Step 8, we will explain how to relate the exponent for $k$ independent $3/2$-stable CSBPs hitting $0$ of each other (applied to the good layers) to the event in the statement of Proposition~\ref{prop:disjoint_geodesics}.

Throughout the proof, we will condition  $(\CS,d,\nu,x,y)$ on $d(x,y) >r$.  We will also fix $\sp{b}_1\in(0,1)$ and $\sp{a} \in (0,(1-\sp{b}_1)/6)$. For each $m\in \N_0$, let $\sp{a}_m= \sp{a} (m+2)/(m+1)$. Note that $\sp{a}_m\in (\sp{a}, 2\sp{a}]$ and is decreasing in $m$.

\noindent{\it Step 1: Setup.}
For each $t \geq 0$, we let $Y_t$ be the boundary length of $\partial \fb{y}{x}{d(x,y)-t}$. We inductively define radii as follows.  We let $\tau_0  = \inf\{ s \geq r : Y_s = \rho\}$.  Given that $\tau_0,\ldots,\tau_m$ have been defined, we define $\tau_{m+1}:=\inf\{t \geq \tau_m:\, Y_t \le Y_{\tau_m}/2\}$. Since the process $Y_t$ only has upward jumps, for each $m \in\N$ we have $Y_{\tau_{m}} = Y_{\tau_{m-1}}/2 =2^{-m}\rho$.

Let $m_0$ be the smallest integer $m$ so that $\partial  \fb{y}{x}{d(x,y)-\tau_m}$ has boundary length at most $\epsilon^{2 \sp{b}_1}$.  Since $Y_{\tau_{m_0}} = 2^{-m_0} \rho$ and $\rho\in(\eps^{2\sp{b}_0}, \eps^{-2\sp{b}_0})$, we have
\begin{align*}
 2(\sp{b}_1- \sp{b}_0) \log_2 \eps^{-1} \le \log_2 (\eps^{-2 \sp{b}_1} \rho)  \le m_0 \le 1+ \log_2 (\eps^{-2 \sp{b}_1} \rho) \le 1 +2(\sp{b}_1+\sp{b}_0) \log_2 \eps^{-1}.
\end{align*}
By~\eqref{eqn:csbp_extinction_time}, off an event whose probability decays faster than any power of $\eps$, we have 
\[ B(x, \eps) \subseteq  \fb{y}{x}{d(x,y)-\tau_{m_0}}.\]

For $m\in \{0,\ldots,m_0-1\}$, let $\CF_m$ denote the $\sigma$-algebra generated by the metric measure space $\CS \setminus \fb{y}{x}{d(x,y) -\tau_{m}}$.  Let $\CB_m$ be the metric band $\fb{y}{x}{d(x,y) - \tau_{m}} \setminus \fb{y}{x}{d(x,y) - \tau_{m+1}}$ so that $\CB_m$ is $\CF_{m+1}$-measurable. For each $t\in[r, d(x,y))$, let $\CL_t:= \partial \fb{y}{x}{d(x,y)-t}$.  

Let $A_0$ be the set of points $z\in B(x, \eps)$ such that there are at least $k$ disjoint (except at $z$) geodesics  from $\partial  \fb{y}{x}{d(x,y)-\tau_0}$ to $z$. On the event $E(\eps, k, r, \rho)$ (we will denote this event by $E$ in the sequel), $A_0$ is non empty. 
Let us specify a measurable way of choosing a particular point $z\in \ol{A_0}$ and $k$ disjoint (except at $z$) geodesics $\eta_1, \ldots, \eta_k$ from $\partial  \fb{y}{x}{d(x,y)-\tau_1}$ to $z$.

\emph{Choice of $z$.} 
We first prove that every point $z_0$ in the closure $\ol {A_0}$ also has at least $k$ disjoint (except possibly at $z_0$) geodesics from $\partial  \fb{y}{x}{d(x,y)-\tau_1}$ to $z_0$.
Let $(z_n)_{n\ge 1}$ be a sequence of points in $A_0$ that converges to some $z_0 \in \ol{A_0}$. For each $n\ge 1$ there are $k$ disjoint except at $z_n$ geodesics $\eta_1^{n}, \ldots, \eta_k^{n}$ from  $\partial  \fb{y}{x}{d(x,y)-\tau_0}$ to $z_{n}$.  By possibly extracting subsequences, we can assume that for each $1\le j \le k$ the geodesics $(\eta_j^{n})_{i\ge 1}$ converge to a geodesic $\eta^0_j$ from $\partial  \fb{y}{x}{d(x,y)-\tau_0}$ to $z_0$. By strong confluence (Theorem~\ref{thm:strong_confluence2}), the geodesics $\eta^0_1, \ldots, \eta^0_k$ must be disjoint except at their endpoints: they all intersect at $z_0$, and can possibly intersect at their other endpoints. Consequently, there are $k$ geodesics from $\partial  \fb{y}{x}{d(x,y)-\tau_1}$ to $z_0$ which are all disjoint except at $z_0$. 

For each $\wt z \in \ol{A_0}$, let $\xi_{\wt z}$ be the left-most geodesic from $\wt z$ to $x$, where \emph{left-most} is relative to the unique geodesic from $y$ to $x$. 
We define an order on the set $\ol{A_0}$, so that $z_1 \preceq z_2$ if the geodesic $\xi_{z_1}$ is  \emph{left} to the geodesic $\xi_{z_2}$, namely if $\xi_{z_2}$ merges on the right side of $\xi_{z_1}$ or on the left side of the unique geodesic from $y$ to $x$. It is easy to check that $z_1\preceq z_2$ and $z_2\preceq z_3$ implies $z_1\preceq z_3$. We have $z_1 \preceq z_2$ and $z_2\preceq z_1$ if and only if $z_1\in \xi_{z_2}$ or $z_2\in \xi_{z_1}$. Let $A_1$ be the subset of $\ol{A_0}$ which contains all the minimal points with respect to the relation $\preceq$. Since $\ol{A_0}$ is compact, $A_1$ is non-empty. All the points in $A_1$ belong to a same geodesic to $x$. Let $z\in A_1$ be the point in $A_1$ which has the maximal distance to $x$.

\emph{Choice of $k$ disjoint (except at $z$) geodesics to $z$.} For each $z_0 \in \partial  \fb{y}{x}{d(x,y)-\tau_1}$, let $\ell_{z_0}$ be  the counterclockwise boundary length along $ \partial  \fb{y}{x}{d(x,y)-\tau_1}$ from $z_0$ to the point where the unique geodesic from $y$ to $x$ hits $ \partial  \fb{y}{x}{d(x,y)-\tau_1}$.
Let $U_0$ be the set of $k$-tuples $(z_1, \ldots, z_k)$ such that $z_1, \ldots, z_k\in \partial  \fb{y}{x}{d(x,y)-\tau_1}$ are ordered in a way  that $\ell_{z_i}$ is decreasing with respect to $1\le i \le k$, and that there are $k$ disjoint geodesics (except at $z$) from $z_i$ to $z$ for $1\le i \le k$. Let $\delta(z_1, \ldots, z_k):=\min_{1\le i <j \le k} d(z_i, z_j) > 0$. Let $U_1$ be the set of  $k$-tuples $(z_1, \ldots, z_k)$ in $U_0$ such that $\delta(z_1, \ldots, z_k)$ attains the maximum of $\delta(\wt z_1, \ldots, \wt z_k)$ for all $(\wt z_1, \ldots, \wt z_k)$ in $U_0$. The set $U_1$ is non empty by the argument of extracting a subsequence and using strong confluence. 
Let $U_2$ be the set of $k$-tuples $(z_1, \ldots, z_k)\in U_1$ such that $\ell_{z_1}$ attains $\min_{(\wt z_1, \ldots, \wt z_k)\in U_0} \ell_{\wt z_1}$. For each $1\le i \le k-1$, suppose that we have defined $U_{i+1}$, let $U_{i+2}$ be  set of $k$-tuples $(z_1, \ldots, z_k)\in U_{i+1}$ such that $\ell_{z_{i+1}}$ attains $\min_{(\wt z_1, \ldots, \wt z_k)\in U_0} \ell_{\wt z_{i+1}}$. The sets $U_2, \ldots, U_{k+1}$ are non empty by the argument of extracting a subsequence and using strong confluence. Moreover, the set $U_{k+1}$ contains a unique $k$-tuple $(z^0_1, \ldots, z^0_k)$. Let $V_0$ be the set of $k$-tuples $(\eta_1, \ldots, \eta_k)$ such that  $\eta_1, \ldots, \eta_k$ are $k$ disjoint geodesics (except  at $z$) respectively from $z^0_i$ to $z$ for $1\le i \le k$. 
For $0\le i\le k-1$, suppose that we have defined $V_i$, let $V_{i+1}$ be  the set of  $k$-tuples $(\eta_1, \ldots, \eta_k)$  in $V_i$ such that $\eta_{i+1}$ is the left-most geodesic among all $\wt \eta_{i+1}$ for $(\wt \eta_1, \ldots, \wt \eta_k) \in V_i$. The sets $V_1, \ldots, V_{k}$ are non empty by the argument of extracting a subsequence and using strong confluence. Moreover, the set $V_k$ contains a unique $k$-tuple $(\eta_1, \ldots, \eta_k)$, which is our final choice of $k$ disjoint (except at $z$) geodesics to $z$.

For each $t\in[d(x,y)-\tau_1, d(x,y)-\tau_{m_0}]$ and $1\le j\le k$, let $u_{t, j}$ (resp.\ $v_{t,j}$) be the point in $\eta_{j} \cap \CL_t$ which is the closest (resp.\ furthest) to $z$ (note that there can be more than one point in the intersection). 

Note that for every $1\le j \le k$ and $r\le t < d(x,y)$, for any two points $e_1, e_2 \in \eta_j \cap \CL_t$, we have
\begin{align}\label{eq:d_e1_e2}
d(e_1, e_2) \le 2 \eps.
\end{align}
This is because $|d(e_i, z) - (d(x,y) -t) | \le \eps$ for $i=1,2$, hence $|d(e_1, z) - d(e_2,z)| \le 2\eps$.
Since $e_1, e_2$ are on the same geodesic to $z$, we have~\eqref{eq:d_e1_e2}.
In particular, we have $d(u_{t,j}, v_{t,j} ) \le 2\eps.$

\noindent{\it Step 2: Definition of the fat layers.}
For each $\beta>0$, we call $2\le m \le m_0$ a \emph{$\beta$-fat layer} if 
$$\tau_m -\tau_{m-1} \ge \eps^{-\beta} Y_{\tau_{m-1}}^{1/2}.$$
By~\eqref{eqn:csbp_extinction_time}, we know that the probability for a $3/2$-CSBP started at $1$ to survive until time $t$ is  $O(t^{-2})$ as $t \to \infty$. Since $\tau_m -\tau_{m-1}$ is at most the extinction time, we deduce that the probability for $m$ to be a $\beta$-fat layer is $O(\eps^{2\beta})$.

Fix $\beta_0= (1+\sp{b}_0)(k-1)$. 
Let $D_0$ be the event that there are no $\beta_0$-fat layers among $2 \le m\le m_0$. Then for $\rho\in(\eps^{2\sp{b}_0}, \eps^{-2\sp{b}_0})$, by a union bound on the $m_0$ layers, we have
$$\p[D_0^c] = O(\eps^{2(1+\sp{b}_0)(k-1) +o(1)})= O((\epsilon/\rho^{1/2})^{k-1+o(1)}).$$ 
Fix  $K_0 \in\N$ and $H=(h_1, \ldots, h_{K_0}) \in \N^{K_0}$ with $2\le h_1< \dots, h_{K_0} \le m_0$. Let $D(H)$ be the event  that $h_1, \ldots, h_{K_0}$ are $\beta$-fat layers and all the other layers in $[1,  m_0]$ are not $\beta$-fat layers.
Then we have
$\p[D(H)] = O(\eps^{2 \beta K_0}).$
Fix $\beta\in(0, \beta_0)$ that we will adjust later. Fix $N_0= \lceil (1+\sp{b}_0)k/ (2 \beta) \rceil$ and let $H_{N_0}$ be the union of $D(H)$ such that $|H| > N_0$. Then
\begin{align*}
\p[H_{N_0}] \le \sum_{n={N_0}+1}^{m_0} m_0^n O(\eps^{2\beta n}) = O\big(\eps^{(1+\sp{b}_0)k+o(1)}\big) = O((\epsilon/\rho^{1/2})^{k-1+o(1)}).
\end{align*}
Therefore, to prove the result, it is enough to prove that
$$
\p[E\cap D_0 \cap H_{N_0}^c] = O((\epsilon/\rho^{1/2})^{k-1+o(1)}).
$$
Note that the number of choices of $H$ such that $|H| \le N$ is at most 
$\sum_{n=0}^{N_0} m_0^{n} = \eps^{o(1)}.$
Therefore, to prove the result, it is enough to show that for each $K_0\le N_0$ and each given $H=(h_1, \ldots, h_{K_0})$, we have
\begin{align}\label{eq:fat_layer}
\p[ E \cap D_0 \cap D(H)] = O((\epsilon/\rho^{1/2})^{k-1+o(1)}).
\end{align}

\noindent{\it Step 3: Definition of the crossing layers.}
We call $2 \le m \le m_0$ a \emph{crossing layer} if there exists $1\le j \le k$ such that the boundary length distance between $u_{\tau_m,j}$ and $v_{\tau_m,j}$ is at least $\eps^{2\sp{a}_m} Y_{\tau_m}$.
Fix $K_{1}\in\N$ and $I= (i_1,\ldots, i_{K_1}) \in \N^{K_1}$ with $2\le  i_1 < \cdots < i_{K_1}\le m_0$. Let $C(I)$ be the event that $E$ holds, 
and that $i_1, \ldots, i_{K_1}$ are crossing layers and all the other layers in $[1,  m_0]$ are not crossing layers.
Let us prove that 
\begin{align}\label{eq:pC(I)}
\p[C(I)] =  O\big(\eps^{2(1-\sp{b}_1-6\sp{a}) K_1/3+o(1)} \big).
\end{align}
Note that the exponent on $\eps$ is positive, because we have chosen $\sp{b}_1\in(0,1)$ and $\sp{a} \in(0,(1-\sp{b}_1)/6)$.

Suppose that $m$ is a crossing layer. By~\eqref{eq:d_e1_e2}, we have $d(u_{\tau_m,j}, v_{\tau_m,j}) \le 2\eps$.
Rescaling the distance of the metric band $\fb{y}{x}{d(x,y)-\tau_{m-1}} \setminus \fb{y}{x}{d(x,y)-\tau_{m+1}}$ by $Y_{\tau_{m-1}}^{-1/2}$ and then applying Lemma~\ref{lem:bad_band}, we deduce that the probability of $m$ being a crossing layer is
$$O\left(\big(\eps  Y_{\tau_{m-1}}^{-1/2} \big)^{4/3} \eps^{-4\sp{a}_m + o(1)}\right).$$
For each $0\le m \le m_0$, we have $Y_{\tau_m} \ge \eps^{2\sp{b}_1}/2$, hence the above probability is at most 
$$O(\eps ^{4(1-\sp{b}_1)/3 -4 \sp{a}_m+o(1)}) \le O(\eps ^{4(1-\sp{b}_1)/3 - 8 \sp{a} +o(1)}).$$

Note that there are at least $K_1/2$ layers among $i_1, \ldots, i_{K_1}$ which are all odd or all even. 
By the conditional independence of the metric bands $\fb{y}{x}{d(x,y)-\tau_{2n}} \setminus \fb{y}{x}{d(x,y)-\tau_{2n+2}}$, 
the probability of having $K_1/2$ odd crossing layers is $O(\eps ^{(4(1-\sp{b}_1)/3 - 8 \sp{a}) K_1/2 +o(1)})$.
Similarly,  by the conditional independence of  the metric bands $\fb{y}{x}{d(x,y)-\tau_{2n+1}} \setminus \fb{y}{x}{d(x,y)-\tau_{2n+3}}$, the probability of having $K_1/2$ even crossing layers is also $O(\eps ^{(4(1-\sp{b}_1)/3 - 8 \sp{a}) K_1/2 +o(1)})$.
We have thus proved~\eqref{eq:pC(I)}. 

Fix $N_1=\lceil (3/2) (1+\sp{b}_0) k/ (1-\sp{b}_1-6\sp{a}) \rceil$. Using~\eqref{eq:pC(I)}, arguing like at the end of Step 2, and combining with~\eqref{eq:fat_layer}, we deduce that to prove the proposition, it is enough to show that for each given $H$, $I$ with $|H| \le N_0$ and $|I| \le N_1$, we have
\begin{align}\label{eq:fat_crossing_layer}
\p[D_0 \cap D(H) \cap C(I) ] = O((\epsilon/\rho^{1/2})^{k-1+o(1)}).
\end{align}

\noindent{\it Step 4: Definition of the non-merging layers.}
Fix $K_0\le N_0$ and $K_1 \le N_1$. Fix $H= (h_1,\ldots, h_{K_0}) \in \N^{K_0}$ with $2\le h_1 < \cdots < h_{K_0} \le m_0$. Fix $I= (i_1,\ldots, i_{K_1}) \in \N^{K_1}$ with $2\le i_1 < \cdots < i_{K_1} \le m_0$. Suppose that we are working on the event $D_0 \cap D(H) \cap C(I)$.

For $1\le j\le k$, let $w_{0,j}$ be a point independently and uniformly chosen from the interval on $\partial\fb{y}{x}{d(x,y) -r}$ of length $2 \eps^{2\sp{a}_0} \rho$ centered at $\eta_j(0)$.
For $m\in \{0,\ldots,m_0-1\} \setminus (H\cup I)$, suppose that we have defined $w_{m,1},\ldots, w_{m,k}$. 
For each $1\le j \le k$ let $\eta_{m,j}$ be the geodesic from $w_{m,j}$ to $x$. For each $t>0$, let $e^m_{t,j}$ be the intersection between $\eta_{m,j}$ with $\CL_t$.
\begin{enumerate}[(i)]
\item\label{itm:define_non_merging} If $m+1$ is not a fat layer or a crossing layer (i.e., $m+1\not\in (H \cup I)$), then we have the following two possibilities.
If $e^{m}_{\tau_{m+1}, j} = v_{\tau_{m+1}, j}$ for every $j$, then let $w_{m+1, j}:=e^{m}_{\tau_{m+1}, j} $ for all $j$. Otherwise, we say that $m+1$ is a \emph{non-merging layer}.

\item\label{itm:define_marked_points} If $m+1$ is a non-merging layer, a fat layer or a crossing layer,  let $\ell \ge m+2$ be the first layer after $m+1$ which is not a fat layer or a crossing layer.
Let us define $w_{\ell, 1}, \ldots, w_{\ell,k}$ as follows. 
Fix $1\le j\le k$. Let $N\ge 0$ be the minimal number such that there exist $N$ points  $\wt w_1, \ldots, \wt w_N$ on $\partial\fb{y}{x}{d(x,y) - \tau_{\ell}}$ such that the $N+1$ intervals centered at $e^m_{\tau_{\ell},j}$ and $\wt w_1, \ldots, \wt w_N$ each with boundary length $\eps^{2 \sp{a}_{\ell}} Y_{\tau_\ell}$ cover the set of points on $\CL_{\tau_{\ell}}$ which have distance at most $\tau_{\ell} -\tau_m + \eps^{\sp{a}_m} Y_{\tau_m}^{1/2}$ to $w_{m,j}$. When there is more than one possible choice of $\wt w_1, \ldots, \wt w_N$, we choose them randomly in a way that we will specify later in Step 6.
Let $w_{\ell,j}$ be one of the $N+1$ points $e^m_{\tau_{\ell},j}$ and $\wt w_1, \ldots, \wt w_N$ which has boundary length distance at most $\eps^{2 \sp{a}_{\ell}}Y_{\tau_\ell}$ to $v_{\tau_{\ell},j}$.
\end{enumerate}

We also call a layer a \emph{bad layer} if it is either a fat layer, a crossing layer or a non-merging layer, otherwise we call it a \emph{good layer}.
By induction, we have defined $w_{m,1}, \ldots, w_{m,k}$ for all the good layers in $\{1, \ldots, m_0-1\}$. See Figure~\ref{fig:layers} for an illustration.  We remark that the definition of the non-merging layers depends on both the Brownian map instance $(\CS, d,\nu, x,y)$ and the randomness coming from the choice of the marked points. We also remark that the ``exploration process'' as defined above is not Markovian, because the choice of the marked points $w_{m,j}$ depends on the geodesics $\eta_j$ going to $z$ which are not determined by $\CF_m$.

\begin{figure}[h!]
\centering
\includegraphics[width=.7\textwidth]{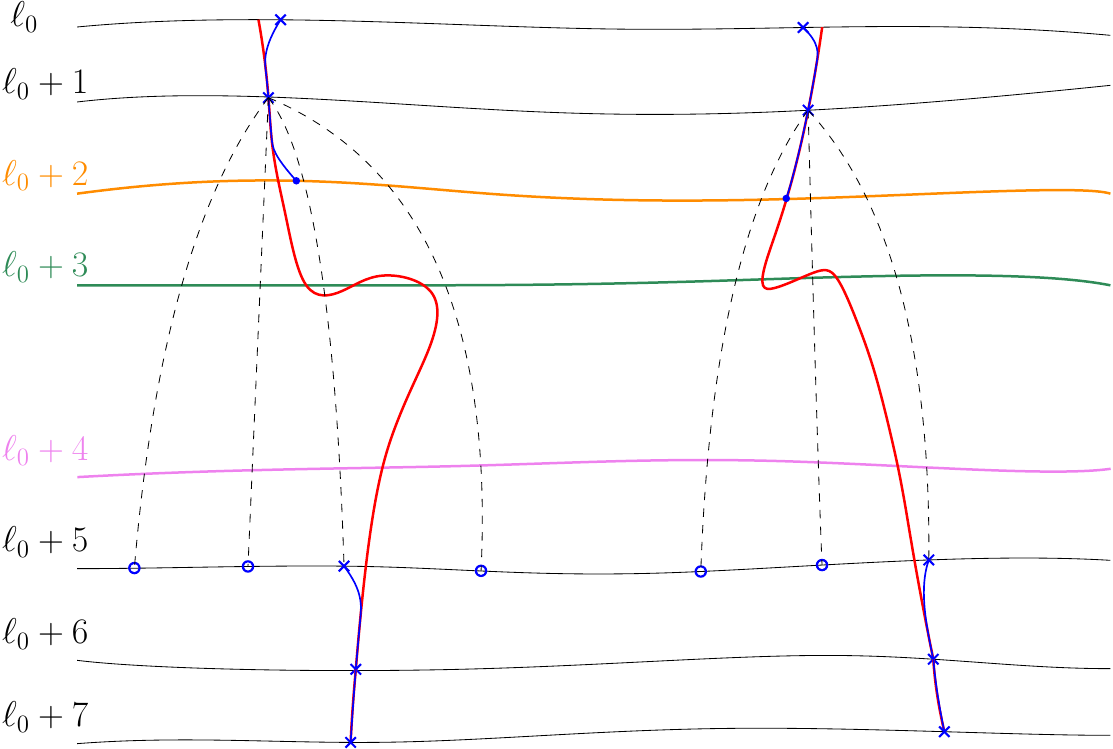}
\caption{We depict the case of $k=2$. The geodesics $\eta_1$ and $\eta_2$ are in red.  In this picture, the layer $\ell_0+2$ is a non-merging layer, the layer $\ell_0+3$ is a crossing layer, and the layer $\ell_0+4$ is a fat layer. 
For each good layer $m$, the points $w_{m,j}$ are drawn with a blue cross.
The points $w_{\ell_0+1, 1}$ and $w_{\ell_0+1, 2}$ have respectively $4$ and $3$ children on the layer $\ell_0+5$ and we have chosen for each of them the child which is the closest to $v_{\tau_{\ell_0+5},j}$.}
\label{fig:layers}
\end{figure}

Fix $K_2\in\N$ and $J= (j_1,\ldots, j_{K_2}) \in \N^{K_2}$  with $2\le  j_1 < \cdots < j_{K_2} \le m_0$.
Let $F(H, I,J)$ be the event that $D_0 \cap D(H) \cap C(I)$ holds and that $j_1, \ldots, j_{K_2}$ are non-merging layers and all the other layers are not non-merging layers.

\noindent{\it Step 5: Definition of a Markovian exploration process.}
Fix $K\in\N$ and $S= (s_1,\ldots, s_{K}) \in \N^{K}$ with $1 = s_1 < \cdots < s_K< s_{K+1} = m_0$. For each $1 \le n \le K$, we will randomly choose $k$ points $x_{s_n, 1}, \ldots, x_{s_n, k} \in \CL_{\tau_{s_n}}$ in a way which is independent from $ \fb{y}{x}{d(x,y) - \tau_{s_n}}$ conditionally on $\CF_{s_n}$.

Let $z_1, \ldots, z_k$ be $k$  points on $\CL_{\tau_0}$ chosen independently and uniformly from the boundary measure and then ordered to be counterclockwise. 
Assuming that we have defined points $z_J \in \CL_{\tau_{s_n}}$ indexed by elements of $\N^{n}$, we inductively define points in $ \CL_{\tau_{s_{n+1}}} $ indexed by elements of $\N^{n+1}$ as follows.  Suppose that $J \in \N^{n}$ and $z_J \in \CL_{\tau_{s_n}}$. For $i \in \N$, we then let $Ji \in \N^{n+1}$ be given by concatenating $J$ and $i$.  
Let $\eta_{z_J}$ be the unique geodesic from $z_J$ to $x$ and let $z_{J0}$ be where it hits $\CL_{\tau_{s_{n+1}}}$.
We then let $z_{J1},\ldots,z_{J\ell}$ be a minimal collection so that the intervals of boundary length $\epsilon^{2\sp{a}_{s_{n+1}}} Y_{\tau_{s_{n+1}}}$ on $ \CL_{\tau_{s_{n+1}}}$ centered at the points $z_{J0}, z_{J1},\ldots,z_{J\ell}$ cover the set of points on $ \CL_{\tau_{s_{n+1}}}$ whose distance to $z_J$ with respect to the interior-internal metric of the band $\fb{y}{x}{d(x,y) -\tau_{s_n}} \setminus \fb{y}{x}{d(x,y) -\tau_{s_{n+1}}}$ is at most $\tau_{s_{n+1}} -\tau_{s_n} + \eps^{\sp{a}_{s_n}} Y_{\tau_{s_n}}^{1/2}$.  The choice of $z_{J1},\ldots,z_{J\ell}$ can be made in a way which is independent of $\fb{y}{x}{d(x,y) -\tau_{s_{n+1}}}$ conditionally on $\CF_{s_{n+1}}$.
We assume that $z_{J0}, z_{J1},\ldots,z_{Jl}$ are ordered counterclockwise and we call them the children of $z_J$.  

We let $x_{0,1}=z_1,\ldots,x_{0,k} = z_k$.  Given that we have defined $x_{s_n,1},\ldots,x_{s_n,k}$ for some $n\in\N_0$, we let $x_{s_{n+1},j}$ for $1 \leq j \leq k$ be one of the children of $x_{s_n,j}$ chosen uniformly at random and independently of everything else among all of the children of $x_{s_n,j}$. It is clear that for each $1\le n \le K$, the points $x_{s_n, 1}, \ldots, x_{s_n, k} $ are conditionally independent of $\fb{y}{x}{d(x,y)-\tau_{s_n}}$ given $\CF_{s_n}$.

Suppose that we are on the event $D_0 \cap D(H)$ for some given $H$ with $|H|\le N_0$. Then for each $1\le n \le K$, the width of the band  $\fb{y}{x}{d(x,y) -\tau_{s_n}} \setminus \fb{y}{x}{d(x,y) -\tau_{s_{n+1}}}$ is at most $Y_{\tau_{s_n}}^{1/2} (\eps^{-\beta_0}N_0 + \eps^{-\beta} (s_{n+1} -s_n)) \le 2 Y_{\tau_{s_n}}^{1/2} \eps^{-\beta_0}N_0$.
If we rescale the metric of this band by $\eps^{\beta_0} Y_{\tau_{s_n}}^{-1/2} N_0^{-1}$ and apply Lemma~\ref{lem:metric_band_epsilon_point2}, then
for each $J\in\N^n$, the event that $z_J$ has $L >1$ children has probability 
$$O\big(\eps^{(L-1)(\sp{a}_{s_n} -\sp{a}_{s_{n+1}}) -\sp{a}_{s_{n+1}} -\beta_0+o(1)} \big).$$  
Let $L_n= \lceil \left((1+\sp{b}_0) k +2\sp{a} + \beta_0\right)/(\sp{a}_{s_{n+1}} -\sp{a}_{s_n}) \rceil +1$. Let $\CL$ be the event that for each $1 \le n \le K$ and  $J\in\N^{n}$, $z_J$ has at most $L_n$ children. Then we have 
\begin{align*}
\p[D_0 \cap D(H) \cap \CL]=1- O(\eps^{(1+\sp{b}_0)k+o(1)}) =1- O((\epsilon/\rho^{1/2})^{k-1+o(1)}).
\end{align*}
Combined with~\eqref{eq:fat_crossing_layer}, we deduce that to prove the proposition, it is enough to show that for each given $H$, $I$ with $|H| \le N_0$ and $|I| \le N_1$, we have 
\begin{align}\label{eq:fat_crossing_layer_L}
\p[ D_0 \cap D(H) \cap \CL \cap C(I) ] = O((\epsilon/\rho^{1/2})^{k-1+o(1)}).
\end{align}

\noindent{\it Step 6: Coupling of the exploration processes in Steps 4 and 5.}
Fix $K_0, K_1, K_2\in\N$ and $H=(h_1, \ldots, h_{K_0})$, $I= (i_1,\ldots, i_{K_1}) \in \N^{K_1}$, $J= (j_1,\ldots, j_{K_2}) \in \N^{K_2}$  with $1\le h_1 < \cdots < h_{K_0} \le m_0$, $1\le i_1 < \cdots < i_{K_1} \le m_0$,  $1\le  j_1 < \cdots < j_{K_2} \le m_0$ such that the following conditions hold, which ensures that $F(H,I,J)$ as defined in Step 4 has positive probability.

\begin{enumerate}
\item\label{itm:HI_not_J} $(H\cup I)\cap J=\emptyset$.  This is because according to (\ref{itm:define_non_merging}), we only define a layer to be a non-merging layer if it is not a fat layer or a crossing layer.
\item  For any $1\le n \le K_0$  we have $h_{n}+1 \not\in J$.  For any $1\le n \le K_1$ we have $i_n+1 \not\in J$. For any $1\le n \le K_2$  we have $j_{n}+1 < j_{n+1}$.
These conditions are due to (\ref{itm:define_marked_points}). If $m$ is a non-merging layer, a fat layer or a crossing layer, then we put marked points $w_{\ell,1}, \ldots, w_{\ell,k}$ on the first layer $\ell\ge m+1$ which is not a fat layer or a crossing layer. In particular, $\ell$ is a good layer and the first non-merging layer after $m$ must be equal or after $\ell+1$ which is at least $m+2$.
\end{enumerate}
Given $H$ and $I$ as above, the union of $F(H, I, J)$ over all possible choices of $J$ satisfying the above conditions covers the event $D_0 \cap D(H) \cap C(I)$.

Let $S$ be the set of $m\in \{1,\ldots,m_0\} \setminus (H \cup I \cup J)$ such that either $m-1$ or $m+1$ is in $H\cup I\cup J$. Note that the layers  $1,\ldots,m_0$ are grouped into consecutive good layers and bad layers, and $S$ is the set of layers which are either the first or the last layer in each group of good layers.
Let $S_1$ be the set of $s\in S$ such that $s+1\in H \cup  I\cup J$. Let $S_2 := S\setminus S_1$. Then each layer in $S_2$ is the first layer in a group of at least two consecutive good layers.
We also consider $S$ as an ordered set $(s_1, \ldots, s_{K}) \in \N^K$ with $s_1<\cdots < s_K$. Note that we always have $s_1=0$ and $K \le 2 K_0+ 2 K_1 + 2 K_2$.

Now, we will show that, it is possible to couple the ``exploration process'' in Step 4 with the Markovian exploration process in Step 5, so that on a certain event $G(H, I, J)$, we have $x_{m,j}=w_{m,j}$  for all $m\in S$ and $1\le j \le k$.
The event $G(H, I, J)$ is also defined so that $F(H, I, J) \cap \CL$ holds on $G(H, I, J)$, and for some constant $c_1>0$ depending only on $k$, $\sp{b}_0$, $K_2$ and $a$, we have
\begin{align}\label{eq:GCK2}
\p[ G(H, I, J)  \mid F(H, I, J) \cap\CL ] \ge  c_1 (\log_2 \eps^{-1} )^{-2k K} \eps^{2k \sp{a}_0}.
\end{align}
Let us now describe the coupling and the event $G(H, I, J)$, and prove~\eqref{eq:GCK2}.
Let us first suppose that on $G(H, I, J)$, the event $F(H, I, J) \cap \CL$ holds, and the boundary length distance between $x_{0,j}$ and $v_{\tau_{0}, j}$ is at most $\epsilon^{2\sp{a}_0} \rho$ for all $1\le j \le k$. The latter event has probability $2^k\eps^{2 k \sp{a}_0} $. Conditionally on this event, we can choose $x_{0,j} = w_{0,j}$ for all $1\le j \le k$.
For $n\in\N$, assume that we have chosen $x_{s_n, j} = w_{s_n, j}$ for all $1\le j \le k$, let us describe the coupling and the event $G(H, I, J)$ in the layer $s_{n+1}$.
\begin{itemize}
\item If $s_n \in S_1$, then $s_{n+1}$ is the first layer after $s_n$ which is not in $H\cup I\cup J$.
For each $j$, let us choose the set of children for $w_{s_n,j}$ on $\CL_{s_{n+1}}$ as defined in Step 4 to be the same as the set of children for $x_{s_n, j}$ on $\CL_{s_{n+1}}$ as defined in Step 5. Moreover, we put into the definition of $G(H, I, J)$ that $x_{s_{n+1},j} = w_{s_{n+1},j}$ for each $1\le j \le k$. 
Since on the event $F(H, I, J) \cap \CL$ there are at most $L_{n}$ children of $x_{s_n, j}$, the conditional probability of choosing $x_{s_{n+1},j} = w_{s_{n+1},j}$ for each $1\le j \le k$ is at least $L_{n}^{-k}$.

\item Otherwise, $s_n\in S_2$ and $s_{n+1}$ is the first layer after $s_n$ which is in $S_1$. In particular, all the layers $\{s_n,\ldots, s_{n+1}\}$ are good layers.  This implies that $w_{s_{n+1}, j}$ is just the point where the geodesic from $w_{s_n, j}$ to $x$ hits $\CL_{\tau_{s_{n+1}}}$, namely $w_{s_{n+1}, j}=v_{\tau_{s_{n+1}},j}$. We put into the definition of  $G(H, I, J)$ that $x_{s_{n+1}, j} =  w_{s_{n+1}, j}$ for each $1\le j \le k$. Since we are on the event $F(H, I, J) \cap \CL$ and $w_{s_{n+1}, j}$ is by definition one of the children of $x_{s_n,j}$, the conditional probability of choosing $x_{s_{n+1}, j}= w_{s_{n+1}, j}$ for each $j$ is at least $L_n^{-k}$.
\end{itemize}
By induction, we have defined the event $G(H, I, J)$ and the coupling. We remark that on $G(H, I, J)$, the boundary length distance between $x_{s_{n},j}$ and $v_{\tau_{s_{n}},j}$ is at most $\epsilon^{2\sp{a}_{s_{n}}} Y_{\tau_{s_{n}}}$ for each $1\le j \le k$ and $1\le n\le K$. Moreover, by the conditional independence across the layers, we have
\[ \p[ G(H, I, J)  \mid F(H, I, J) \cap\CL ] \ge (L_1\cdots L_{K})^{-k}  \eps^{2 k \sp{a}_0 }.\]
Noting that $\sp{a}_{m+1} - \sp{a}_m =a(m+1)^{-1} (m+2)^{-1}$ and that $s_n \le m_0$, we deduce that there exist constants $C_1, C_2>0$ such that for each $1\le n\le K$, 
\[ L_n \le  C_1 (m_0+1) (m_0+2) \le C_2 (\log_2 \eps^{-1} )^2.\] 
Combined with the previous inequality, we have proved~\eqref{eq:GCK2}.

From~\eqref{eq:GCK2}, we can deduce  that  for all $\eps$ small enough, we have
\begin{align}\label{eq:pfij}
\p[F(H, I, J) \cap \CL ] \le \eps^{-2 \sp{a}_0 k+o(1)}  \, \p[G(H, I, J)].
\end{align}

\noindent{\it Step 7: Most layers are good layers.}  Fix $H, I, J$ as in Step 6. We aim to obtain an upper bound on the probability of $G(H, I, J)$, by exploring from $\partial \fb{y}{x}{d(x,y) -r}$ to $x$. This will in turn imply an upper bound on $\p[F(H, I, J) \cap \CL]$ by~\eqref{eq:pfij}. We have already shown in Steps 2 and 3 that off an event with negligible probability, there are at most $N_0$ fat layers and $N_1$ crossing layers. In this step, by estimating the probability $F(H, I, J) \cap \CL $ when $|J|$ is large, we will show that there are also at most a constant number of non-merging layers off an event with negligible probability. All together, we will show that with high probability all but a constant number of layers are good layers.

Let us first show that, for any $m$ such that $m+1\in J$, conditionally on $\CF_{m}$ and on $x_{m, j} =w_{m,j}$ for each $j$, the probability that $m+1$ is a non-merging layer is $O(\eps^{\sp{a}_{m} -\beta +o(1)}).$
Recall that we denote by $\eta_{m,j}$ the geodesic from $w_{m,j}$ (hence also $x_{m, j}$) to $x$, and $e^{m}_{t,j}$ is the intersection of $\eta_{m,j}$ with $\CL_t$. We also let $\wt w_{m,j}$ be the intersection between $\eta_{m,j}$ and $\CL_{m+1}$.
If $m+1$ is a non-merging layer, then there exists  $1\le j \le k$ such that $\wt w_{m,j} \not= v_{\tau_{m+1}, j}$. 
We must be in one of the following two situations.

\begin{figure}[h]
\centering
\includegraphics[width=.52\textwidth]{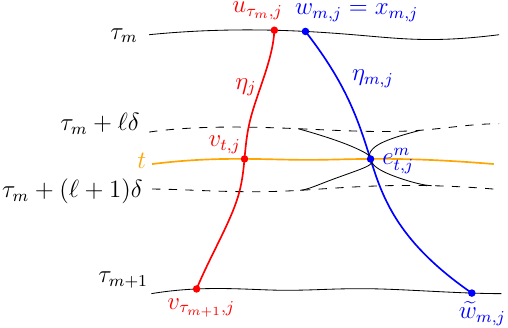}
\caption{One possible configuration when $m+1$ is a non-merging layer, corresponding to the case (\ref{itm:non_merging1}).}
\label{fig:bad_layer1}
\end{figure}

\begin{enumerate}[(1)]
\item\label{itm:non_merging1} The part of $\eta_{m,j}$ from $x_{m, j}$ to $\CL_{m+1}$ is disjoint from $\eta_j$. See Figure~\ref{fig:bad_layer1}.
Fix $0<\sp{b}_2<\sp{b}_3<\sp{b}_4<\sp{a}$ and let $\delta:=\eps^{\sp{b}_3} Y_{\tau_{m}}^{1/2}$. Consider the metric bands 
\[\CB^n_\ell:= \fb{y}{x}{d(x,y) -\tau_{m}- \ell \delta} \setminus \fb{y}{x}{d(x,y) - \tau_{m}-(\ell+1) \delta}\]
for $0\le \ell \le L-1$ where $L:=\lfloor \eps^{-\sp{b}_2} \rfloor$.
By Lemma~\ref{lem:CSBP_stopping}, we know  that off an event whose probability decays faster than any power of $\eps$, we have $\tau_m + L \delta <\tau_{m+1}$, so that the bands $\CB^n_\ell$ are all contained in $\CB_m$.

\smallskip

\noindent
By Lemmas~\ref{lem:X_prob2} and~\ref{lem:good_band_map}, there exist $s_0, s_1, p_1>0$ such that with probability $1-(1-p_1)^L$, there exists $\ell$ such that there is an \X\  along $\eta_{m,j}$ contained in the band $\CB^n_\ell$ which is $(s_0 \delta, s_1 \delta,  2 \delta)$-good.
Let us show that, on the event of this situation (i.e., the part of $\eta_{m,j}$ from $x_{m, j}$ to $\CL_{m+1}$ is disjoint from $\eta_j$) and off an event with probability decaying to $0$ faster than any power of $\eps$, there exists $t\in[\tau_m+ \ell\delta, \tau_m+(\ell+1)\delta]$ such that the boundary distance between $v_{t,j}$ and $e^m_{t,j}$ along $\CL_t$ is greater than $ \eps^{2\sp{b}_4} Y_{\tau_{m}}$.
Suppose that it is not the case, so that there exists $K_4>0$ such that the event $\wt E$ defined in the following occurs with probability at least $\eps^{K_4}$:
\begin{itemize}
\item the part of $\eta_{m,j}$ from $x_{m, j}$ to $\CL_{m+1}$ is disjoint from $\eta_j$
\item and the boundary distance between $v_{t,j}$ and $e^m_{t,j}$ along $\CL_t$ is at most $ \eps^{2\sp{b}_4} Y_{\tau_{m}}$ for all $t\in[\tau_m+ \ell\delta, \tau_m+(\ell+1)\delta]$.
\end{itemize}
For each $k\in \N$ and $\iota\in(0, \sp{b}_4)$, let $E_k$ be the event that there exist two points $u, v$ on $\partial \fb{y}{x}{k\eps}$ such that the boundary length between $u$ and $v$ is at most $\eps^{2 \sp{b}_4 -\iota}$ and $d(u,v) \ge \eps^{\sp{b}_4 -\iota}$. By Lemma~\ref{lem:boundary_length_distance}, we know that $\p(E_k)$ decays to $0$ faster than any power of $\eps$.
Since $\tau_m \le \inf \{s\ge 0, Y_s=0\}$, by \eqref{eqn:csbp_extinction_time} we know that for every $K_2>0$, there exist $K_1>0$ such that $\p(\tau_m + (\ell+1) \delta \ge \eps^{-K_1}) =o (\eps^{K_2})$.
Let $K_3> K_1$ be an arbitrary number. Then we have
\begin{align*}
&\sum_{k\in\N} \p ( k \eps \in [\tau_m+ \ell\delta, \tau_m+(\ell+1)\delta], E_k)\\
\le &  \sum_{k=1}^{\lceil \eps^{- K_1 -1} \rceil}   \p ( k \eps \in [\tau_m+ \ell\delta, \tau_m+(\ell+1)\delta], E_k) +  (2\delta/\eps)\p ( \tau_m+(\ell+1)\delta \ge \eps^{- K_1})\\
\le &  \sum_{k=1}^{\lceil \eps^{- K_1 -1} \rceil}  \p ( k \eps \in [\tau_m+ \ell\delta, \tau_m+(\ell+1)\delta])^{1/2} \p(E_k)^{1/2} + o(\eps^{K_2-1})\\
\le & o(\eps^{K_3})  \sum_{k=1}^{\lceil \eps^{- K_1 -1} \rceil}   \p ( k \eps \in [\tau_m+ \ell\delta, \tau_m+(\ell+1)\delta])^{1/2} + o(\eps^{K_2-1})\\
\le & o(\eps^{K_3 -K_1 -1})  + o(\eps^{K_2-1}).
\end{align*}
Since $K_2, K_3$ are arbitrary, we know that off an event with probability decaying to $0$ faster than any power of $\eps$, we have that for each $k\in\N$ with $k \eps \in[\tau_m+ \ell\delta, \tau_m+(\ell+1)\delta]$, the event $E_k$ does not occur. 
It follows that on $\wt E$ minus an event with probability decaying to $0$ faster than any power of $\eps$, for each $k\in\N$ with $k \eps \in[\tau_m+ \ell\delta, \tau_m+(\ell+1)\delta]$, if we let $t_k= k\eps$, then
$$d(v_{t_k,j}, e^m_{t_k,j}) \le \eps^{\sp{b}_4+o(1)} Y_{\tau_m}^{1/2} < s_0 \eps^{\sp{b}_3} Y_{\tau_{m}}^{1/2} /2 =s_0 \delta/2.$$
Combined with~\eqref{eq:d_e1_e2}, it follows that on this event $\eta_j \cap \CB^n_\ell$ is contained in the $s_0 \delta$-neighborhood of $\eta_{m,j} \cap \CB^n_\ell$. Therefore, on this event, the two branches of \X\ which is at the side of $\eta_j$ must both intersect $\eta_j$, which implies that $\eta_j$ would intersect $\eta_{m,j}$ inside $\CB^n_\ell$, which contradicts the definition of $\wt E$.
\smallskip

\noindent
Consequently, on the event of this situation and off an event with probability decaying faster than any power of $\eps$, in the metric band $\fb{y}{x}{d(x,y) -\tau_m} \setminus \fb{y}{x}{d(x,y) -\tau_m - L \delta}$, there exists $\tau_{m}<t < \tau_{m} + L \delta$ such that the boundary distance between $e^{m}_{t,j}$ and $v_{t, j}$ is greater than $ \eps^{2\sp{b}_4} Y_{\tau_{m}}$.
On the other hand, the boundary length along $\partial \fb{y}{x}{d(x,y) -\tau_{m}} $ between $x_{m,j}$ and $v_{\tau_m,j}$ is at most $\eps^{2\sp{a}_m} Y_{\tau_m}$ by construction.
By Lemma~\ref{lem:band_boundary_distance}, in the band $\CB:=\fb{y}{x}{d(x,y) -\tau_{m}} \setminus \fb{y}{x}{d(x,y)  -t}$, the distance with respect to the interior-internal metric of the band $d_\CB$ between $x_{m,j}$ and $v_{\tau_m,j}$ is at most $\eps^{\sp{a}_m} Y_{\tau_{m}}^{1/2}$, off an event with probability decaying faster than any power of $\eps$. It follows that 
$$d_\CB(v_{t,j}, x_{m,j}) \le d_\CB(v_{t,j}, v_{\tau_m,j}) + d_\CB(v_{\tau_m,j}, x_{m,j}) \le t-\tau_{m} + \eps^{\sp{a}_{m}} Y_{\tau_{m}}^{1/2}.$$ We also know that $d_\CB (e^{m}_{t, j}, x_{m, j})=t-\tau_{m}$. 
If we rescale the band $\fb{y}{x}{d(x,y) -\tau_m} \setminus \fb{y}{x}{d(x,y) -\tau_m - L \delta}$ by $(L\delta)^{-1}$ and apply Lemma~\ref{lem:metric_band_epsilon_point2}, we can deduce that the event of this situation happens with probability 
$O(\eps^{\sp{a}_{m}-2\sp{b}_4 + \sp{b}_3 - \sp{b}_2 + o(1)})$. 
Since $\sp{b}_2, \sp{b}_3, \sp{b}_4$ can be arbitrarily close to $0$, we have shown that this situation occurs with probability  $O(\eps^{\sp{a}_{m} + o(1)})$.

\item Otherwise, the part of $\eta_{m,j}$ from $x_{m, j}$ to $\CL_{m+1}$ intersects $\eta_j$, but then leaves $\eta_j$. Due to Proposition~\ref{prop:no_geodesic_bump}, the part of $\eta_{m,j}$ from $\wt w_{m,j}$ to $\CL_{m+2}$ is disjoint from $\eta_j$. We can then consider the metric bands  
\[ \fb{y}{x}{d(x,y) -\tau_{m+1}- \ell \delta} \setminus \fb{y}{x}{d(x,y) - \tau_{m+1}-(\ell+1) \delta}\]
for $0\le \ell \le L-1$. 
Similarly, off an event with probability decaying faster than any power of $\eps$, there exists an \X\ along $\eta_{m,j}$ in one of those metric bands. Arguing like in the previous case, this implies that the boundary length distance between $\eta_j$ and $\eta_{m,j}$ must get $\eps^{2\sp{b}_4} Y_{\tau_m}$ apart at some layer inside the metric band  $\CB_{m+1}$.
\smallskip

\noindent 
Note that the width of the metric band $\fb{y}{x}{d(x,y) -\tau_{m}} \setminus \fb{y}{x}{d(x,y) - \tau_{m+1} - L \delta}$ is at most $\eps^{-\beta} Y_{\tau_m}^{1/2} + L \delta$, because $m+1 \in J$ is not a fat layer, due to~\eqref{itm:HI_not_J} in Step 6.
If we rescale this metric band by $(\eps^{-\beta} Y_{\tau_m}^{1/2} + L \delta)^{-1}$ and apply Lemma~\ref{lem:metric_band_epsilon_point2}, we can deduce that the event of this situation happens with probability $O(\eps^{\sp{a}_{m} -2 \sp{b}_4 - \max(\beta, \sp{b}_2 - \sp{b}_3)+ o(1)})$. Since $\sp{b}_2, \sp{b}_3, \sp{b}_4$ can be arbitrarily close to $0$, we have shown that this situation occurs with probability  $O(\eps^{\sp{a}_{m} -\beta + o(1)})$.
\end{enumerate}
Altogether, we have proved that conditionally on $\CF_{m}$ and on $x_{m, j} =w_{m,j}$ for each $j$, the probability that $m+1$ is a non-merging layer is $O(\eps^{\sp{a}_{m} - \beta +o(1)}).$
From now on, we fix $\beta= \min(\beta_0, \sp{a}/2)$, so that the preceding probability is $O(\eps^{\sp{a}/2})$.

By conditional independence across the layers, this implies that 
\[ \p[ G(H, I, J)] =O(\eps^{K_2 \sp{a}/2 }).\]
By~\eqref{eq:pfij}, we have
\begin{align*}
\p[ F(H, I, J) \cap \CL] =  O\big(\eps^{-2\sp{a}_0k+ K_2\sp{a}/2 +o(1)} \big).
\end{align*}
Recall that the union of $F(H, I, J)$ over all possible choices of $J$ satisfying the conditions in Step 6 covers the event $D_0 \cap D(H) \cap C(I)$. Let $N_2 =\lceil 2\sp{a}_0 k+ 2(1+\sp{b}_0) k/\sp{a} \rceil$. Arguing like at the end of Step 2, and combining with~\eqref{eq:fat_crossing_layer_L}, we deduce that to prove the proposition, it is enough to show that for each given $H, I, J$ with $|H| \le N_0$, $|I|\le N_1$ and $|J| \le N_2$, we have
\begin{align}\label{eq:enough0}
\p[ F(H, I, J) \cap \CL ] = O((\epsilon/\rho^{1/2})^{k-1+o(1)}).
\end{align}

\noindent{\it Step 8: Conclusion of the proof.}
Suppose that we have fixed $H, I, J$ as in Step 6 with the additional condition that $|H| \le N_0$, $|I|\le N_1$ and $|J| \le N_2$. Our goal is to show
\begin{align}\label{eq:enough}
\p[ F(H, I, J) ] = O((\epsilon/\rho^{1/2})^{k-1+o(1)}).
\end{align}
which implies~\eqref{eq:enough0}. Suppose that we are working on the event $F(H, I, J)$.
Let $S=(s_1,\ldots, s_K)$ and $S_1, S_2$ be defined as in Step 6.  We denote by $\eta_{s_n, j}$ the geodesic  from $x_{s_n,j}$ to $x$, and let $e^{s_n}_{t,j}$ be where  $\eta_{s_n, j}$ hits $\CL_{t}$ for $t\ge \tau_{s_n}$. We also take the convention that $e^{s_n}_{t, k+1} = e^{s_n}_{t, 1}$.

For each $n$ such that $s_n\in S_2$, all the layers in $\{s_n,\ldots, s_{n+1}\}$ are good layers.  Therefore for each $j$, the geodesic $\eta_{s_n, j}$ merges with $\eta_j$ in the band $\fb{y}{x}{d(x,y) - \tau_{s_n+1} } \setminus \fb{y}{x}{d(x,y) - \tau_{s_{n+1}}}$, hence the geodesics $\eta_{s_n, 1}, \ldots, \eta_{s_n, k}$ are disjoint from each other in this band as well as in the larger band $\fb{y}{x}{d(x,y) - \tau_{s_n} } \setminus \fb{y}{x}{d(x,y) - \tau_{s_{n+1}}}$.  Let $Z_s^{s_n,j}$ be the boundary length of the counterclockwise segment of $\CL_{\tau_{s_n} +s}$ from $e^{s_n}_{\tau_{s_n} +s,j}$ to $e^{s_n}_{\tau_{s_n}+ s, j+1}$.  Then we know that the processes $Z^{s_n,1}, \ldots, Z^{s_n,k}$ evolve as independent $3/2$-stable CSBPs.
Let $\Delta_n = \tau_{s_{n+1}} - \tau_{s_n}$ and let $A_n$ be the event that none of the $Z^{s_n, 1},\ldots,Z^{s_n,k}$ hit $0$ in the time interval $[0,\Delta_n]$.  Then we have $F(H, I, J)\subseteq A_n$.
This implies that  
\begin{align}\label{eq:fij_subset}
F(H, I, J) \subseteq \bigcap_{n\in\N, s_n\in S_2} A_n.
\end{align}

On the event $A_n$, we have
\[ \sum_{j=1}^k Z_{\Delta_n}^{s_n, j} = Y_{\tau_{s_{n+1}}}= 2^{-s_{n+1}} \rho .\]
In particular,
\[ \max_{1 \leq j \leq k} Z_{\Delta_n}^{s_n, j} \leq   2^{-s_{n+1}} \rho.\]
It follows from the scaling property for a $3/2$-stable CSBP that the conditional probability that $Z^{s_n, 1},\ldots,Z^{s_n,k}$ all hit $0$ for the first time in the next $\rho^{1/2}  2^{-s_{n+1}/2}$ units of time after $\Delta_n$ is at least some $p_1 > 0$.  On the other hand, the probability that $Z^{s_n, 1},\ldots,Z^{s_n,k}$ all hit $0$ within $\rho^{1/2}  2^{-s_{n+1}/2}$ units of time of each other is at most $c_0 2^{-(s_{n+1}-s_n ) (k-1)/2}$ for some $c_0>0$, due to~\eqref{eqn:csbp_extinction_time} and the independence of these processes. It follows that 
\[ \p[ A_n] \le c_0 2^{-(s_{n+1}-s_n ) (k-1)/2}.\] 
Due to conditional independence across the layers, we have
\[ \p \bigg[ \bigcap_{n\in\N, s_n\in S_2} A_n \bigg] \le \prod_{n\in S_2} c_0 2^{-(s_{n+1}-s_n )  (k-1)/2}.\]
Since we have chosen $H, I, J$ so that $|H|\le N_0, |I|\le N_1, |J|\le N_2$, 
we have 
$$\sum_{n\in S_2} s_{n+1} -s_n \ge m_0 - 2(N_0 + N_1 + N_2) \ge \log_2 (\eps^{-2\sp{b}_1} \rho) -2(N_0 + N_1 + N_2).$$ This implies that
\begin{align*}
\p \bigg[ \bigcap_{n\in\N, s_n\in S_2} A_n \bigg]  = O((\eps^{\sp{b}_1}/\rho^{1/2})^{(k-1)}).
\end{align*}
Note that we can choose $\sp{b}_1$ to be arbitrarily close to $1$. Combined with~\eqref{eq:fij_subset}, we can deduce~\eqref{eq:enough} which completes the proof.\qed

\begin{remark}\label{remark}
Proposition~\ref{prop:disjoint_geodesics} is a main ingredient in Section~\ref{sec:dimension} to prove the dimension upper bounds for $k$-star points. 
We remark that similar techniques can be used to obtain the second moment estimate which is key to obtaining the dimension lower bounds for $k$-star points.

More precisely, for any two typical points $x_1, x_2$,
we need to obtain an upper bound on the probability that for $i=1,2$ there are $k$ geodesics going from the boundary of a certain metric ball centred at $x_i$ to $x_i$ which are disjoint except at $x_i$.
When $x_1$ and $x_2$ are far away from each other, what happens in the metric balls centred at $x_1$ and $x_2$ are almost independent. When $d(x_1, x_2)=\delta$ is small, similar techniques as in the proof of Proposition~\ref{prop:disjoint_geodesics} will allow us to show that the geodesics to $x_1$ and $x_2$ coincide at most layers, before reaching distance $O(\delta)$ of $x_1$ and $x_2$, and that afterwards the geodesics to $x_1$ and $x_2$ evolve independently.
\end{remark}

\section{Completion of proofs of dimension upper bounds}\label{sec:dimension}

In Section~\ref{subsec:n_of_geodesics} we will obtain the dimension upper bound for the set of points from which a given number of geodesics emanate which are otherwise disjoint (Theorem~\ref{thm:maximum_geodesics}). Next, in Section~\ref{subsec:n_of_geodesics2}, we will obtain the dimension upper bound for the pairs of points which are connected by a collection of geodesics with a given topology (Theorem~\ref{thm:finite_number_of_geodesics}), as well as those connected by a given number of geodesics (Theorem~\ref{thm:number_of_geodesics}).

\subsection{Number of disjoint geodesics from a point}  
\label{subsec:n_of_geodesics}
The goal of this section is to complete the proof of Theorem~\ref{thm:maximum_geodesics}. The main input in its proof is Proposition~\ref{prop:disjoint_geodesics}. We first record two lemmas that will be used in the proof.

\begin{lemma}
\label{lem:area_in_complement}
For $\bminflaw$ a.e.\ instance of $(\CS,d,\nu,x,y)$ the following is true.  There exists $r_0 > 0$ so that for all $r \in (0,r_0)$ and $z \in \CS$ the following is true.  Let $U$ be the connected component of $\CS \setminus B(z,r)$ with the largest diameter.  Then $\nu(U) \geq \nu(\CS)/2$.
\end{lemma}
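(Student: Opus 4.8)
The plan is to argue by contradiction, combining the fact that $\CS$ is homeomorphic to $\s^2$ with the (already established) volume concentration for metric balls, Lemma~\ref{lem:bm_volume_estimates}, which gives that small metric balls have negligible $\nu$-mass uniformly in their center.  Recall from Lemma~\ref{lem:diameter_of_complement} that, for $\bminflaw$ a.e.\ instance and every $0<R<\diam(\CS)$, there is $r_0>0$ so that for all $r\in(0,r_0)$ and all $z\in\CS$ there is at most one connected component of $\CS\setminus B(z,r)$ of diameter at least $R$.  So the component $U$ of largest diameter is well defined once $r$ is small, and every other component has diameter $<R$.

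First I would fix $R = \diam(\CS)/4$ and apply Lemma~\ref{lem:diameter_of_complement} to get the associated $r_0$.  Shrinking $r_0$ further if necessary, use Lemma~\ref{lem:bm_volume_estimates} (with $\sp{a}=1$, say) to ensure $\nu(B(z,r)) \le r^{3} < \nu(\CS)/4$ for all $r\in(0,r_0)$ and all $z\in\CS$ (here I am using that on the event $\{1\le\nu(\CS)\le 2\}$ one has $r^3\le \nu(\CS)/4$ for $r$ small, and then invoking scaling as in the other proofs of this type to reduce to that event).  Now fix $r\in(0,r_0)$ and $z\in\CS$, and write $\CS = B(z,r) \sqcup \bigsqcup_i V_i$ where the $V_i$ are the connected components of $\CS\setminus B(z,r)$, with $U=V_1$ the one of largest diameter.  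Every $V_i$ with $i\ge 2$ satisfies $\diam(V_i) < R = \diam(\CS)/4$.

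The main point is then to bound $\nu\big(\bigcup_{i\ge 2} V_i\big)$.  The set $W := \bigcup_{i\ge 2} V_i \cup B(z,r)$ is the complement of $U$, hence it is the $U$-free side; since $\partial B(z,r)$ has diameter at most $2r$ and each $V_i$ ($i\ge 2$) has diameter at most $R$, the whole of $W$ is contained in a single metric ball of radius $R + 2r \le \diam(\CS)/4 + 2r$.  Wait --- more carefully: pick any point $w_0\in\partial B(z,r)$; every point of $B(z,r)$ is within $2r$ of $w_0$, and every $V_i$ with $i\ge 2$ has a point on $\partial B(z,r)$ (hence within $2r$ of $w_0$) and diameter $<R$, so $W \subseteq B(w_0, R+2r)$.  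By the volume upper bound (applied with the radius $R+2r$, which for $r$ small is at most, say, $\diam(\CS)/3$, and using Lemma~\ref{lem:le_gall_volume}/Lemma~\ref{lem:bm_volume_estimates} together with a union bound over a net to control all centers simultaneously at the macroscopic scale $R$), we get $\nu(W)$ bounded away from $\nu(\CS)$; since $R = \diam(\CS)/4$ can be taken small relative to the scale at which a ball captures half the mass, $\nu(W) \le \nu(\CS)/2$, whence $\nu(U) = \nu(\CS) - \nu(W) \ge \nu(\CS)/2$.

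The delicate step is the last one: controlling $\nu(B(w_0,R+2r))$ uniformly over all $w_0$ and showing it is below $\nu(\CS)/2$.  Lemma~\ref{lem:bm_volume_estimates} only gives this for small radii, not for macroscopic radii like $\diam(\CS)/4$.  The correct fix is to choose $R$ not fixed but sufficiently small (depending on the instance): since $\nu$ has no atoms and $\CS$ is connected with $\nu(\CS)>0$, there is a.s.\ $R_1>0$ so that $\sup_{w\in\CS}\nu(B(w,R_1)) < \nu(\CS)/2$ --- this follows because $r\mapsto \sup_w \nu(B(w,r))$ is nondecreasing, right-continuous, and tends to $0$ as $r\to 0$ by Lemma~\ref{lem:bm_volume_estimates}.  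Then set $R := R_1/2$ in the argument above and take $r_0$ small enough that $R + 2r_0 \le R_1$.  With this choice every step goes through: $W \subseteq B(w_0,R_1)$, so $\nu(W) < \nu(\CS)/2$, and therefore $\nu(U) > \nu(\CS)/2 \ge \nu(\CS)/2$.  Finally, as in the other proofs in this section, one reduces from $\bmlaw{a}$ to the conditioned measure $\bminflaw(\cdot\mid 1\le\nu(\CS)\le 2)$ and then invokes the scaling property of the Brownian map to conclude for all $a>0$, hence for $\bminflaw$ a.e.\ instance.
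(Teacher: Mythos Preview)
Your corrected argument (after you abandon the initial choice $R=\diam(\CS)/4$) is correct and is essentially the paper's own proof: the paper picks $\delta>0$ with $\sup_{z}\nu(B(z,\delta))\le\nu(\CS)/2$ via Lemma~\ref{lem:bm_volume_estimates}, applies Lemma~\ref{lem:diameter_of_complement} with threshold $\delta/4$, and observes that all components other than $U$ lie in $B(z,\delta)$; your $R_1$ plays exactly the role of $\delta$, and centering at $w_0\in\partial B(z,r)$ rather than at $z$ is an immaterial difference. Two minor clean-ups: the scaling reduction to $\{1\le\nu(\CS)\le 2\}$ is unnecessary since Lemma~\ref{lem:bm_volume_estimates} already holds for $\bminflaw$ a.e.\ instance, and your final presentation should drop the false start with $R=\diam(\CS)/4$ rather than record and then retract it.
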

\begin{proof}
Suppose that $(\CS,d,\nu,x,y)$ has distribution $\bminflaw$.  Fix $\sp{a} > 0$.  Lemma~\ref{lem:bm_volume_estimates} implies that there exists $\delta_0 > 0$ so that for all $\delta \in (0,\delta_0)$ and $z \in \CS$ we have that $\nu(B(z,\delta)) \leq \delta^{4-\sp{a}}$.  Fix $\delta \in (0,\delta_0)$ sufficiently small so that $\delta^{4-\sp{a}} \leq \nu(\CS)/2$.  Then for every $z \in \CS$ we have that $\nu(B(z,\delta)) \leq \nu(\CS)/2$.  Lemma~\ref{lem:diameter_of_complement} implies that there exists $r_0 > 0$ so that for every $r \in (0,r_0)$ we have that $\CS \setminus B(z,r)$ has at most one component of diameter at least $\delta/4$.  We may assume without loss of generality that $r_0 \leq \delta/4$.  All of the other components of $\CS \setminus B(z,r)$ are contained in $B(z,r+\delta/2) \subseteq B(z,\delta)$.  By our choice of $\delta$, it thus follows that the component of $\CS \setminus B(z,r)$ with the largest diameter has $\nu$-area at least $\nu(\CS)/2$.
\end{proof}

\begin{lemma}
\label{lem:boundary_length_bounded_from_below}
Fix $\sp{a},r > 0$.  Suppose that $(\CS,d,\nu,x,y)$ has distribution $\bminflaw$.  For each $s \geq 0$ we let $Y_s$ be the boundary length of $\partial \fb{y}{x}{d(x,y)-s}$.  The $\bminflaw$ measure of the event that $d(x,y) > r$ and $\sup_{s \in [0,r]} Y_s \leq \epsilon^{\sp{a}}$ decays to $0$ as $\epsilon \to 0$ faster than any power of $\epsilon$.
\end{lemma}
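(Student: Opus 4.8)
The plan is to show that the event in question forces the $3/2$-stable CSBP excursion $(Y_s)$ to stay uniformly small over a window of length $r$, and that this is a very unlikely event for an excursion conditioned to have lifetime at least $r$. First I would recall, from the discussion at the end of Section~\ref{subsubsec:csbp} and from the Lamperti transform, that under $\bminflaw$ the boundary length process $(Y_s)_{s \in [0,d(x,y)]}$ is (up to time-reversal) a $3/2$-stable CSBP excursion, and that $\bminflaw[d(x,y) > r] \in (0,\infty)$, so it suffices to bound the probability under the conditioned (probability) measure $\p_r = \bminflaw[\,\cdot \mid d(x,y) > r]$ that $\sup_{s \in [0,r]} Y_s \le \epsilon^{\sp{a}}$.

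The key step is a Markovian iteration across sub-intervals, in the same spirit as the arguments in the proofs of Lemma~\ref{lem:x_concentration} and Proposition~\ref{prop:single_point_number_of_geodesics}. Partition $[0,r]$ into $\lceil r \epsilon^{-\sp{a}/2} \rceil$ sub-intervals of length $\epsilon^{\sp{a}/2}$. By the Markov property of the CSBP together with the scaling property (if $Y$ is a $3/2$-stable CSBP then $Y_{C^{1/2}t}$ has the law of $C Y_t$ up to starting point), and the extinction-time formula~\eqref{eqn:csbp_extinction_time}, on the event $\{Y_s \le \epsilon^{\sp{a}}\}$ at the left endpoint of a sub-interval there is a probability bounded below by a positive constant (uniformly in $\epsilon$) that $Y$ hits $0$ before the right endpoint of that sub-interval. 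Indeed, with starting value at most $\epsilon^{\sp{a}}$ and a window of length $\epsilon^{\sp{a}/2} \gg \epsilon^{\sp{a}}$ (for small $\epsilon$), \eqref{eqn:csbp_extinction_time} gives $\p[\zeta \le \epsilon^{\sp{a}/2}] = 1 - \exp(-(c\epsilon^{\sp{a}/2})^{-2}\epsilon^{\sp{a}}) = 1 - \exp(-c^{-2}) > 0$. So on the event $\{\sup_{[0,r]} Y \le \epsilon^{\sp{a}}\} \cap \{d(x,y) > r\}$ we have that in each of the $\lceil r\epsilon^{-\sp{a}/2}\rceil$ rounds, the CSBP failed to go extinct despite each round having a uniformly positive extinction chance; conditionally this is dominated by the event that a geometric random variable exceeds $r\epsilon^{-\sp{a}/2}$, which has probability at most $(1-c')^{r\epsilon^{-\sp{a}/2}}$ for some $c' \in (0,1)$. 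This decays to $0$ as $\epsilon \to 0$ faster than any power of $\epsilon$, and since $\bminflaw[d(x,y)>r]$ is a fixed finite positive constant, the same bound (up to that constant) holds for the $\bminflaw$ measure of the event.

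The only mild subtlety — and the place I would be most careful — is handling the interface between the infinite measure $\bminflaw$ and the conditioned probability measure $\p_r$, and making sure the Markov/scaling argument is applied to the process in the correct time direction. One clean way is to note that $\{\sup_{s\in[0,r]} Y_s \le \epsilon^{\sp{a}}\}$ automatically contains the constraint $d(x,y) > r$ only after intersecting with $\{d(x,y)>r\}$, and to just work throughout under $\p_r$; the factor $\bminflaw[d(x,y)>r]^{-1}$ is harmless since it is a constant not depending on $\epsilon$. Alternatively, one can use that off an event of $\bminflaw$-measure tending to $0$ faster than any power of $\epsilon$ the maximum of the CSBP excursion has a $t^{-2}$-type tail (as exploited in the proof of Lemma~\ref{lem:x_concentration}), reducing immediately to bounded boundary length, and then run the iteration. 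Either route closes the argument; the CSBP extinction estimate~\eqref{eqn:csbp_extinction_time} and the scaling property do all the real work, and there is no genuine obstacle beyond bookkeeping.
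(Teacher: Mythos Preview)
Your approach is essentially identical to the paper's: partition $[0,r]$ into order $\epsilon^{-\sp{a}/2}$ sub-intervals of length $\epsilon^{\sp{a}/2}$, use~\eqref{eqn:csbp_extinction_time} to get a uniformly positive extinction probability in each round given $Y\le\epsilon^{\sp{a}}$, and conclude the event has probability at most $(1-c')^{r\epsilon^{-\sp{a}/2}}$. One small slip: from~\eqref{eqn:csbp_extinction_time} with $\alpha=3/2$ one has $\p[\zeta\le t]=\exp(-(ct)^{-2}Y_0)$, not $1-\exp(\cdots)$, so with $Y_0\le\epsilon^{\sp{a}}$ and $t=\epsilon^{\sp{a}/2}$ you get $\p[\zeta\le\epsilon^{\sp{a}/2}]\ge\exp(-c^{-2})>0$; the conclusion is unchanged.
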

\begin{proof}
Suppose that $s > 0$.  On the event that $Y_s \leq \epsilon^{\sp{a}}$, the probability that $Y_s$ hits $0$ in the next $\epsilon^{\sp{a}/2}$ units of time is positive.  It follows that the probability of the event that $Y$ stays below $\epsilon^{\sp{a}}$ in a given interval of time of length $r$ is at most $\exp(-c \epsilon^{-\sp{a}/2})$ where $c > 0$ is a constant which depends only on $r$.
\end{proof}

We are now ready to complete the proof of Theorem~\ref{thm:maximum_geodesics}.

\begin{proof}[Proof of Theorem~\ref{thm:maximum_geodesics}]
Fix $\sp{a}, M>0$. It suffices to prove the theorem for $(\CS, d, \nu, x, y)$ such that $\diam(\CS) \leq M$.  

By Lemma~\ref{lem:diameter_of_complement} and Lemma~\ref{lem:area_in_complement}, for a.e.\ instance of  $(\CS, d, \nu, x, y)$ there exists $r_0>0$ such that for every $r \in (0,r_0)$ there is a unique component $U_{z,r}$ of $\CS \setminus B(z,r)$ with diameter at least $\diam(\CS)/2$, and moreover $\nu(U_{z,r}) \ge \nu(\CS)/2$.

For each $k \in \N$ and $r_1\in(0, r_0)$, let $\Psi_{k, r_1}$ be the set of $z \in \CS$ such that 
there are $k$ geodesics which start from $z$, are otherwise disjoint and all go to $\partial\fb{w}{z}{r_1}$ where $w$ is any point in $U_{z,r_1}$ (the set $\partial \fb{w}{z}{r_1}$ does not depend on the choice of $w \in U_{z,r_1}$).
For any $z\in \Psi_k$, there a.s.\ exists $r_2 \in(0, r_0)$ such that there are $k$ geodesics starting from $z$ of length at least $r_2$ which are otherwise disjoint. 
Then by  Lemma~\ref{lem:diameter_of_complement},  there a.s.\ exists $r_1\in(0, r_2)$ such that these $k$ geodesics all reach $\partial\fb{w}{z}{r_1}$ for $w \in U_{z, r_1}$.  
We therefore have that
\[\Psi_k \subseteq \bigcup_{r_1 \in(0, r_0)} \Psi_{k, r_1}.\]
Note that the set $ \Psi_{k, r_1}$ is increasing as $r_1$ decreases.
By the countable stability of Hausdorff dimensions, to prove that $\dimH(\Psi_k) \leq 5-k$, it thus suffices to show that $\dimH(\Psi_{k, r_1}) \leq 5-k$ for every $r_1> 0$.  

Fix $\epsilon > 0$, $\sp{b}>0$, let $N_\epsilon = \epsilon^{-4-\sp{b}}$, and $M_\epsilon = \epsilon^{-\sp{b}}$.  Let also $(x_i)$, $(y_i)$ be independent i.i.d.\ sequences chosen from $\nu$.  By Lemma~\ref{lem:typical_points_dense}, there exists $\epsilon_0 > 0$ so that for all $\epsilon \in (0,\epsilon_0)$ we have that $\CS \subseteq \cup_{j=1}^{N_\epsilon} B(x_j,\epsilon)$ and $\CS \subseteq \cup_{j=1}^{M_\epsilon} B(y_j,\diam(\CS)/10)$.  We also know that $(\CS,d,\nu,x_i,y_j)$ has the same distribution as $(\CS,d,\nu,x,y)$ for each $i,j$.  

Choose $\eps_0\in(0, r_1/2)$ and let $\eps\in(0, \eps_0)$. Let $\CI_{\eps, k, r_1}$ be the set of $1\le i\le N_\eps$ such that there exists $z\in \Psi_{k, r_1}$ with $d(x_i, z) <\eps$.   Fix $i\in \CI_{\eps, k, r_1}$ and $z\in  \Psi_{k, r_1} \cap B(x_i, \eps)$. By the definition of $r_1$, there exists $y_j \in U_{z,r_1}$ with distance at least $\diam(\CS)/4$ to $z$. Moreover, there are $k$ disjoint geodesics from $z$ to $\partial\fb{y_j}{x_i}{r_1/2}$.  For $0\le n \le \lceil 4M/r_1 \rceil$, let $t_n:= nr_1/4$.  On $\{\diam(\CS) \le M\}$, Lemma~\ref{lem:boundary_length_bounded_from_below} implies that off an event whose $\bminflaw$ measure tends to $0$ as $\epsilon \to 0$ faster than any power of $\epsilon$ the following event occurs for $(\CS,d,\nu,x_i,y_j)$ where $E(\eps, k, t, \rho)$ is the event from Proposition~\ref{prop:disjoint_geodesics} and $\sp{a}>0$.
\begin{align}\label{eq:event_E_x_y}
E(x_i, y_j):= \bigcup_{n=1}^{\lceil 4M/r_1 \rceil} E(\eps, k, t_n, \eps^{2\sp{a}}) \cap\{\diam(\CS) \le M\} \cap \{ d(x_i, y_j) > t_n \}.
\end{align}
For any $1\le n\le \lceil 4M/r_1 \rceil$, we have
\begin{align*}
& \bminflaw [E(\eps, k, t_n, \eps^{2\sp{a}}), \diam(\CS)\le M, d(x_i, y_j) > t_n]\\
\le & \bminflaw [E(\eps, k, t_n, \eps^{2\sp{a}}), \diam(\CS)\le M \mid  d(x_i, y_j) > t_n]\, \bminflaw[d(x_i, y_j) > r_1/4] \\
= & O(\eps^{(k-1)(1-\sp{a})+o(1)}) \,   \bminflaw[d(x_i, y_j) > r_1/4].
\end{align*}
Note that the last line above is due to Proposition~\ref{prop:disjoint_geodesics} and the implicit constant does not depend on $t_n$. Therefore by applying a union bound, we get that $\bminflaw[E(x_i, y_j)] = O(\eps^{(k-1)(1-\sp{a})+o(1)})$ where the implicit constant depends only on $k, M, r_1$.  By applying a union bound over $1\le j \le M_\eps$ and using that $\sp{a} > 0$ is arbitrary, we get that
\[ \bminflaw [i \in \CI_{\eps,k, r_1}, \,  \diam(\CS) \le M] =O(\eps^{-\sp{b}+k-1 +o(1)}).\]

For each $\alpha>0$, let $\CH_\alpha$ denote the Hausdorff-$\alpha$ measure. Note that for $\epsilon \in (0,\epsilon_0)$, the union of $B(x_i,\epsilon)$ for all $i \in \CI_{\eps, k, r_1}$ covers $\Psi_{k, r_1}$.  Therefore for all $\alpha>0$, we have
\begin{align*}
\int \CH_\alpha( \Psi_{k, r_1})   \one_{\eps<\eps_0} \one_{\diam(\CS) \leq M} d\bminflaw \leq (2\epsilon)^\alpha \int |\CI_{\epsilon,k, r_1}|   \one_{\diam(\CS) \leq M} d\bminflaw \\
= O(\epsilon^\alpha) \, N_\eps \, O(\eps^{-\sp{b}+k-1+o(1)}) = O(\eps^{\alpha-4-2\sp{b} + k-1+o(1)}).
\end{align*}
Suppose that $\alpha > 5-k$.  Assume that we have taken $\sp{b} > 0$ sufficiently small so that the exponent of $\epsilon$ above is positive.  Since $\bminflaw( \epsilon \geq \epsilon_0) \to 0$ as $\epsilon \to 0$, it follows that $\CH_\alpha(\Psi_{k, r_1}) = 0$ for $\bminflaw$ a.e.\ instance of $(\CS,d,\nu,x,y)$ on  $\{\diam(\CS) \le M\}$.  This proves the result for $k \leq 5$.  
For $k\ge 6$, we have
\begin{align*}
\bminflaw [ \eps<\eps_0, \diam(\CS)\le M, \Psi_{k, r_1}\not=\emptyset]=O(\eps^{-4-2\sp{b} + k-1+o(1)}),
\end{align*}
where the exponent on the right hand side can be made positive by taking $\sp{b}\in(0,1/2)$.  We can then conclude by letting $\eps$ go to $0$.
\end{proof}

\subsection{Number of geodesics between two points}
\label{subsec:n_of_geodesics2}
The goal of this section is to prove Theorem~\ref{thm:finite_number_of_geodesics} and Theorem~\ref{thm:number_of_geodesics}.

The main input into the proof of Theorem~\ref{thm:finite_number_of_geodesics} is the following lemma, which gives the exponent for the event that two marked points in the Brownian map are within distance $\epsilon$ of a pair of points which are connected by a family of geodesics with a specified structure.

\begin{lemma}
\label{lem:geodesics_splitting_lemma}
Suppose that $(\CS,d,\nu,x,y)$ has distribution $\bminflaw$.  Fix $0 < R_0< R_1 < R_2 $, $\delta, \sp{a}>0$, $k\in\N$ and $2k$ numbers $R_1<r_1<s_1 <r_2<s_2< \cdots <r_k<s_k < R_2$ such that $s_j-r_j <\delta/10$ for all $1\le j\le k$. 
Fix $\xi\in(0,\delta/100)$, $\zeta\in(0, \delta/10)$ and $\epsilon\in(0, \delta /100)$.  
 Fix $n_1, \dots, n_k \in\N$. 
Let 
\begin{align}\label{eq:F_IJK}
F_{I,J,K}\left(R_0, R_1,R_2; (r_j, s_j, n_j)_{1\le j \le k} ; \eps, \delta, \xi, \zeta, \sp{a}\right)
\end{align} 
be the following event.  There exist $u\in B(x, \eps)$ and $v\in B(y,\eps)$ such that
\begin{enumerate}[(i)]
\item\label{it:vu_disj} There are geodesics $\eta_1,\ldots,\eta_J$ from $v$ to $u$ so that the sets $\eta_i((0,R_1))$ for $1 \leq i \leq J$ are disjoint from each other.  Let $z_1, \ldots, z_J$ be where $\eta_1,\ldots,\eta_J$ last hit $\partial \fb{x}{y}{R_0}$. The boundary distance between any two points   $z_i, z_j$ for $i,j$ distinct is at least $\xi^2$.

\item\label{it:split} The event $G \left((r_j, s_j, n_j)_{1\le j \le k}; \eps, \delta, \xi, \zeta, K\right)$ from Lemma~\ref{lem:k_geodesics_nearby} occurs for $u, v$.

\item\label{it:uv_disj} The boundary length of $\partial  \fb{y}{x}{d(x,y)- R_2}$ is at least $\epsilon^{2\sp{a}}$.
There are geodesics $\eta_1,\ldots,\eta_I$ from $u$ to $v$ so that the sets $\eta_i((0,d(x,y)- R_2))$ for $1 \leq i \leq I$ are disjoint from each other.

\end{enumerate}
Then  for each $M>0$, we have that
\begin{align*}
&\bminflaw \left[F_{I,J,K}(R_0, R_1,R_2; (r_j, s_j, n_j)_{1\le j \le k}; \eps, \delta, \xi, \zeta, \sp{a}), \diam(\CS) \le M\right] \\
&= O(\epsilon^{I+2J+K-3 -c \sp{a}+o(1)})
\end{align*}
where $c > 0$ depends only on $I,J,K$ and the implicit constant in the $O$ depends on $R_0, R_1,R_2$, $(r_j, s_j, n_j)_{1\le j \le k}$,  $\delta, \xi, \zeta, \sp{a}$, $M$ and $I,J,K$.
\end{lemma}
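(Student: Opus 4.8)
\textbf{Proof plan for Lemma~\ref{lem:geodesics_splitting_lemma}.}  The strategy is to decompose the Brownian map, via the reverse metric exploration from $y$ to $x$, into three independent regions: an ``outer'' region (the part outside $\fb{y}{x}{d(x,y)-R_2}$, which contains the $I$ disjoint geodesics from $u$), a ``middle'' region (the bands around radii $r_j,s_j$, which carry the splitting points), and an ``inner'' region (the filled metric ball $\fb{y}{x}{d(x,y)-R_1}$ together with $\fb{x}{y}{R_0}$, which contains the $J$ disjoint geodesics from $v$).  Each region will contribute a power of $\epsilon$, and the conditional independence of the successive filled metric balls / bands given their boundary lengths (Section~\ref{sec:exploration} and Section~\ref{subsec:metric_bands}) lets us multiply these contributions.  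Concretely, I would condition on the boundary length processes $(Y_s)$ and on the metric measure structure outside $\fb{y}{x}{d(x,y)-R_2}$; the event in item \eqref{it:uv_disj} (that there are $I$ disjoint geodesics reaching $\partial\fb{y}{x}{d(x,y)-R_2}$ from near $x$) has the form already handled by Proposition~\ref{prop:disjoint_geodesics}, and conditioning on $\{$boundary length $\geq\epsilon^{2\sp a}\}$ there contributes $O(\epsilon^{I-1-c\sp a+o(1)})$.

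Next, for item \eqref{it:split}: this is exactly the event $G$ from Lemma~\ref{lem:k_geodesics_nearby}, and working on that event and conditioning on the boundary lengths of the bands $\CB_j$, the cost of producing the splitting points is $O(\epsilon^{K+o(1)})$ by that lemma.  I would apply Lemma~\ref{lem:k_geodesics_nearby} to the middle region; one technical point is that its estimate is stated as an unconditional $\bminflaw$ bound, so to splice it between the outer and inner estimates I would instead re-run the proof of Lemma~\ref{lem:k_geodesics_nearby} conditionally on the boundary length $L_{R_1}$ of $\innerboundary\CB_1$ and on $L_{R_2}$, which is possible because the proof there already proceeds by conditioning on the band boundary lengths $L_j$ and using the conditional independence of the bands; this gives the $\epsilon^K$ factor uniformly in the conditioned boundary lengths (up to the $o(1)$ and the mild polynomial-in-boundary-length prefactors which are absorbed since $\diam(\CS)\le M$ forces the relevant boundary lengths to be $O_M(1)$ off a super-polynomially unlikely event, by the Lévy/CSBP tail bounds used in Lemma~\ref{lem:metric_band_epsilon_point} and Lemma~\ref{lem:x_concentration}).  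Finally, for item \eqref{it:vu_disj}: the filled metric ball $\fb{x}{y}{R_0}$ together with the region between $\partial\fb{x}{y}{R_0}$ and $\partial\fb{y}{x}{d(x,y)-R_1}$ must contain $J$ geodesics from $v$ which are disjoint on $(0,R_1)$ and whose initial segments hit $\partial\fb{x}{y}{R_0}$ at $J$ points of pairwise boundary distance $\geq\xi^2$.  Reversing the roles of $x$ and $y$, this is again an event of the type controlled by Proposition~\ref{prop:disjoint_geodesics} (or, more precisely, by the reverse-exploration version of it from $x$'s side, which holds by the $x\leftrightarrow y$ re-rooting symmetry of $\bminflaw$), but now the point $v$ is within $\epsilon$ of $x$ and one wants $J$ disjoint geodesics \emph{to} $v$ that remain disjoint down to a macroscopic scale $R_0$: this forces $J$ independent $3/2$-stable CSBPs (the boundary lengths between the geodesics along $\partial\fb{v}{y}{\cdot}$) to all survive and then hit $0$ within $\epsilon$ of each other, contributing $O(\epsilon^{J-1-c\sp a+o(1)})$, and the additional constraint that they reach the fixed radius $R_0$ with boundary-length-separated endpoints costs an extra factor $O(\epsilon^{J+o(1)})$ by the same ``$k$ points on the outer boundary near-minimal distance'' estimate of Lemma~\ref{lem:metric_band_epsilon_point} applied to the band between radii $R_0$ and $R_1$.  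Multiplying the three contributions $\epsilon^{I-1}\cdot\epsilon^{K}\cdot\epsilon^{2J-1}$ gives $\epsilon^{I+2J+K-2}$; the claimed exponent $I+2J+K-3$ comes from allowing one extra free scale (the choice of which layer the splitting occurs in, or equivalently the $O(\epsilon^{-1})$ union bound over the discretized radius of $\fb{y}{x}{\cdot}$ that converts a ``within $\epsilon$ of a fixed radius'' statement into a ``for some radius'' statement), exactly as in the proofs of Proposition~\ref{prop:single_point_number_of_geodesics} and Proposition~\ref{prop:two_point_number_of_geodesics}.

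To make this rigorous I would proceed in the following order.  First, set up the reverse metric exploration from $y$ to $x$ and record the three $\sigma$-algebras generated respectively by $\CS\setminus\fb{y}{x}{d(x,y)-R_2}$, by the intermediate region $\fb{y}{x}{d(x,y)-R_2}\setminus\fb{y}{x}{d(x,y)-R_1}$, and by $\fb{y}{x}{d(x,y)-R_1}$ together with $\fb{x}{y}{R_0}$; use the conditional independence of these pieces given the relevant boundary lengths.  Second, bound the outer contribution by invoking Proposition~\ref{prop:disjoint_geodesics} with the stopping radius $\tau_0$ chosen so that the boundary length at radius $R_2$ equals the conditioned value, noting the constraint in \eqref{it:uv_disj} that this boundary length is $\geq\epsilon^{2\sp a}$ so that the ``$\rho$'' in Proposition~\ref{prop:disjoint_geodesics} is $\geq\epsilon^{2\sp a}$ and the exponent $(I-1)(1-O(\sp a))$ is essentially $I-1$.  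Third, bound the middle contribution by the conditional form of Lemma~\ref{lem:k_geodesics_nearby}.  Fourth, bound the inner contribution by the $x\leftrightarrow y$ mirror of Proposition~\ref{prop:disjoint_geodesics} combined with Lemma~\ref{lem:metric_band_epsilon_point} for the boundary-length separation of the $z_i$ at radius $R_0$.  Fifth, multiply, absorb all boundary-length prefactors using the tail bounds under $\diam(\CS)\le M$, and finally take a union bound over an $\epsilon$-net of possible values of the conditioned boundary lengths $(Y_{R_0},Y_{R_1},Y_{R_2})$ to drop the conditioning; this last union bound is what produces the ``$-3$'' rather than ``$-2$'' and the $-c\sp a$ loss.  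I expect the main obstacle to be the fourth step: Proposition~\ref{prop:disjoint_geodesics} as stated controls geodesics that stay disjoint only until they reach the \emph{small} ball $B(v,\epsilon)$, whereas here I need $J$ geodesics to $v$ that remain disjoint all the way out to the \emph{macroscopic} radius $R_0$ with quantitatively separated landing points on $\partial\fb{x}{y}{R_0}$; this requires combining the ``at most finitely many bad layers'' machinery of Section~\ref{subsec:completion_of_proof} (to transfer from geodesics-to-$v$ to geodesics-to-$x$ at all but a bounded number of scales) with the band estimate of Lemma~\ref{lem:metric_band_epsilon_point} at the fixed outer scale, and care is needed because the relevant boundary lengths here are of order $\epsilon^{2\sp a}$ rather than order $1$, so one must track the dependence of the constants in Proposition~\ref{prop:disjoint_geodesics} on $\rho$ — which is precisely why that proposition was stated with the explicit $\rho$-uniformity and the restriction $\rho\in(\epsilon^{2\sp b_0},\epsilon^{-2\sp b_0})$.
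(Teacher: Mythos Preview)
Your overall architecture --- decompose $\CS$ via the metric exploration into conditionally independent pieces, apply Proposition~\ref{prop:disjoint_geodesics} near $x$ and near $y$, and apply (the proof of) Lemma~\ref{lem:k_geodesics_nearby} in between --- is exactly the paper's approach.  However, your accounting contains two errors that happen to cancel, leaving the argument as written incorrect.

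First, the paper uses \emph{four} regions, not three: $X_1=\fb{x}{y}{R_0}$, $X_2=\CS\setminus(\fb{x}{y}{R_0}\cup\fb{y}{x}{d(x,y)-R_1})$, $X_3=\fb{y}{x}{d(x,y)-R_1}\setminus\fb{y}{x}{d(x,y)-R_2}$, $X_4=\fb{y}{x}{d(x,y)-R_2}$.  On $X_1$, Proposition~\ref{prop:disjoint_geodesics} (with $x,y$ swapped) gives $\epsilon^{(J-1)(1-\sp{a})}$.  On $X_2$, the paper observes that $\CS\setminus\fb{x}{y}{R_0}$ is a marked Brownian disk with marked point $x$, and the $J$ points $z_1,\ldots,z_J$ on its boundary all have distance to $x$ within $4\epsilon$ of minimal; this is Lemma~\ref{lem:num_pinch_points_disk} with $k+1=J$, hence $k=J-1$, giving $\epsilon^{J-1}$ --- not $\epsilon^{J}$ as you wrote.  (The same off-by-one would occur if you used Lemma~\ref{lem:metric_band_epsilon_point}: with $k+1$ points on $\outerboundary\CB$ that lemma gives $\epsilon^{k}$.)  Multiplying the four contributions yields exponent $(J-1)(1-\sp{a})+(J-1)+K+(I-1)(1-\sp{a})=I+2J+K-3-(I+J-2)\sp{a}$ directly, with $c=I+J-2$.

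Second, your ``union bound over an $\epsilon$-net of the conditioned boundary lengths $(Y_{R_0},Y_{R_1},Y_{R_2})$'' is spurious and does not do what you claim.  The estimates in Proposition~\ref{prop:disjoint_geodesics}, Lemma~\ref{lem:num_pinch_points_disk}, and the proof of Lemma~\ref{lem:k_geodesics_nearby} are already uniform in the boundary lengths (or have explicit dependence absorbed on $\{\diam(\CS)\le M\}$), so one conditions and multiplies without any net.  There is no lost power here --- and if you actually unioned over an $\epsilon$-net in three real parameters you would lose far more than $\epsilon^{-1}$, so this mechanism cannot salvage the arithmetic.  Fix the off-by-one in the $X_2$ estimate and drop the union bound, and your plan becomes the paper's proof.
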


\begin{figure}[ht!]
\begin{center}
\includegraphics[scale=0.9]{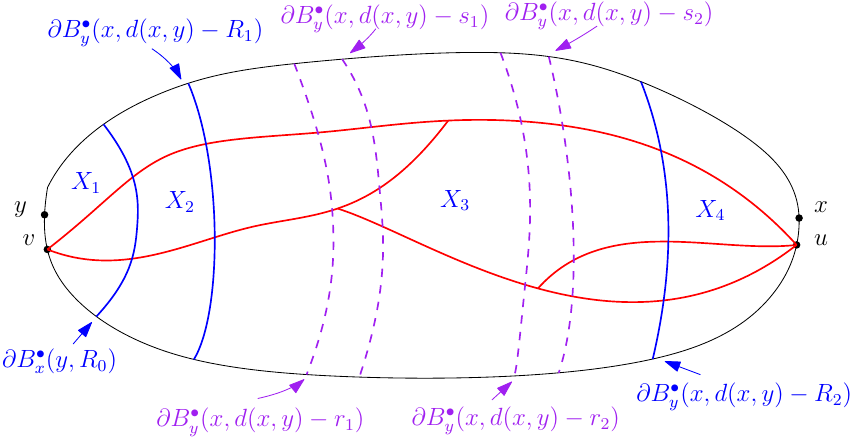}	
\end{center}
\caption{\label{fig:splitting_time_lemma} Illustration of the setup of Lemma~\ref{lem:geodesics_splitting_lemma}.  In the figure, $I=3$, $J=2$, and $K=2$. The event $F_{I,J,K}(R_1,R_2, r_1, s_1, \ldots, r_k, s_k; \xi, \eps, \delta, \sp{a})$ holds.}
\end{figure}

\begin{proof}

Suppose we are working on the event~\eqref{eq:F_IJK} (which we denote by $F_{I, J, K}$ in the rest of the proof) and on $\{\diam(\CS) \le M\}$.
See Figure~\ref{fig:splitting_time_lemma} for an illustration of the setup of the lemma. 

We fix $0<R_0<R_1<R_2$ and  divide $\CS$ into the four sets $X_1=\fb{x}{y}{R_0},$ $X_2 = \CS \setminus (\fb{x}{y}{R_0} \cup \fb{y}{x}{d(x,y)- R_1})$,  $X_3 = \fb{y}{x}{d(x,y)- R_1} \setminus \fb{y}{x}{d(x,y)- R_2}$ and $X_4 =  \fb{y}{x}{d(x,y)- R_2}$.

First of all, part~\eqref{it:vu_disj} implies that there are $J$ disjoint geodesics from $v$ to $\partial \fb{x}{y}{R_0}$. This event is entirely determined by $X_1$. Moreover, on this event, Lemma~\ref{lem:boundary_length_bounded_from_below} implies that off an event whose probability tends to $0$ as $\epsilon \to 0$ faster than any power of $\epsilon$ the following event holds where $E(\eps, k, t, \rho)$ is the event from Proposition~\ref{prop:disjoint_geodesics}.
\begin{align*}
E(x, y):= \bigcup_{n=1}^{\lceil 2M/ R_0 \rceil} E(\eps, k, nR_0/2, \epsilon^{2\sp{a}}) \cap\{\diam(\CS) \le M\} \cap \{ d(x, y) >  n R_0/2 \}.
\end{align*}
This has the same form as~\eqref{eq:event_E_x_y} and has probability $O(\eps^{(J-1)(1-\sp{a}) +o(1)})$.

Conditionally on the boundary length of $\partial \fb{x}{y}{R_0}$, $X_2$ is independent of $X_1$. Let $d_2$ denote the interior-internal metric of $X_2$.  On $F_{I,J,K}$, we have $d(x,y) > R_1$ and the following event holds. There exist $J$ points $z_1, \ldots, z_J$ on $\partial \fb{x}{y}{R_0}$ such that the boundary distance between any two points $z_i, z_j$ for $i,j$ distinct is at least $\xi^2$, and that the distance from $z_i$ to $\partial \fb{y}{x}{d(x,y)-R_1}$ with respect to $d_2$ is at most $R_1- R_0 +4\eps$ for each $i$.  This event depends entirely on $X_2$. Note that $\CS \setminus \fb{x}{y}{R_0}$ is a Brownian disk with marked point $x$. This event implies that the distance from $z_i$ to $x$ with respect to the interior-internal metric of the disk is at most $d(x, \fb{x}{y}{R_0}) + 4\eps$ for each $1\le i\le J$, hence by Lemma~\ref{lem:num_pinch_points_disk}, this event happens with probability $O(\eps^{J-1 + o(1)})$.

Conditionally on the boundary length of $\partial \fb{y}{x}{d(x,y)-R_1}$, $X_3$ is independent from $X_1$ and $X_2$.
In the proof of Lemma~\ref{lem:k_geodesics_nearby}, we have estimated the probability of the event~\eqref{eq:G} (which we will denote by $G$) by successively looking at the metric bands $\fb{y}{x}{d(x,y) - r_j} \setminus \fb{y}{x}{d(x,y) - s_j}$ for $j=1,\ldots,k$ and then using the conditional independence of these bands.
Even though the event $G$ does not only depend on $X_3$, the proof of Lemma~\ref{lem:k_geodesics_nearby} shows that $G$ implies that certain events happen for each of the metric bands $\fb{y}{x}{d(x,y) - r_j} \setminus \fb{y}{x}{d(x,y) - s_j}$ and the overall probability of these events is $O(\eps^{K+o(1)})$. Since these metric bands are all contained in $X_3$, we can deduce that $F_{I,J,K}$ implies that a certain event which depends entirely on $X_3$ happens with probability $O(\eps^{K+o(1)})$.

Finally, conditionally on the boundary length of $\partial \fb{y}{x}{d(x,y)-R_2}$, $X_4$ is independent of $X_1, X_2$ and $X_3$.  Part~\eqref{it:uv_disj} implies that there are $I$ disjoint geodesics from $u$ to $\partial \fb{y}{x}{d(x,y)-R_2}$. Since  the boundary length of $\partial \fb{y}{x}{d(x,y)-R_2}$ is at least $\epsilon^{2\sp{a}}$, we have $u\in X_4$ off an event whose probability decreases faster than any power of $\eps$, so that this event is entirely determined by $X_4$.  Moreover, by Proposition~\ref{prop:disjoint_geodesics} the conditional probability that there are  $I$ disjoint geodesics from $u$ to $\partial \fb{y}{x}{d(x,y)-R_2}$ is given by $O(\epsilon^{(I-1)(1-\sp{a})+o(1) })$.

Combining the above arguments and multiplying the four probabilities together gives the result.
\end{proof}

We are now ready to complete the proofs of Theorems~\ref{thm:finite_number_of_geodesics} and~\ref{thm:number_of_geodesics}.

\begin{proof}[Proof of Theorem~\ref{thm:finite_number_of_geodesics}]
Suppose that $(\CS,d,\nu,x,y)$ has distribution $\bminflaw$.  By scaling, it suffices to prove the result for $\diam(\CS) \le 1$.  Fix $R_0, R_1,R_2$, $(r_j, s_j, n_j)_{1\le j \le k}$, $\delta, \xi, \zeta, \sp{a}$ as in Lemma~\ref{lem:geodesics_splitting_lemma}. Fix $\eps_0\in(0, \delta/100)$. Fix $I, J \in\N$ and $K\in\N_0$. Let 
\begin{align}\label{eq:Z}
\CZ_{I,J,K}\left(R_0, R_1,R_2; (r_j, s_j, n_j)_{1\le j \le k}; \eps_0, \delta, \xi, \zeta, \sp{a}\right)
\end{align} 
be the set of pairs $(u,v) \in \CS$ such that parts~\eqref{it:vu_disj}--\eqref{it:uv_disj} of Lemma~\ref{lem:geodesics_splitting_lemma} hold for $\eps=\eps_0$.  
Note that the union of the sets~\eqref{eq:Z} over all possible choices of rational numbers $R_0, R_1,R_2$, $(r_j, s_j, n_j)_{1\le j \le k}$, $\delta, \xi, \zeta, \sp{a}, \eps_0$ is $\Phi_{I, J, K}$. 

Suppose that $I,J,K \geq 0$ are such that $11-(I+2J+K) \geq 0$. By the countable stability of the Hausdorff dimension, it is enough to show that the Hausdorff dimension of the set~\eqref{eq:Z} is at most $11 - (I+2J+K)$ for any
choice of rational numbers $R_0, R_1,R_2$, $(r_j, s_j, n_j)_{1\le j \le k}$, $\delta, \xi, \zeta, \sp{a}$. In the sequel, we fix these rational numbers and denote the set~\eqref{eq:Z} simply by $\CZ_{I, J, K}$.

Let $N = \epsilon^{-8-\sp{a}}$.  Let $(x_j)$, $(y_j)$ be independent i.i.d.\ sequences in $\CS$ chosen from $\nu$.  By Lemma~\ref{lem:typical_points_dense}, with full $\bminflaw$ measure there exists $\epsilon_1 \in(0, \eps_0)$ so that for all $\epsilon \in (0,\epsilon_1)$ we have for every $u,v \in \CS$ distinct, there exists $1 \leq i,j \leq N$ so that $u \in B(x_i,\epsilon)$ and $v \in B(y_j, \epsilon)$.  Let $G_{i,j}$ be the event~\eqref{eq:F_IJK} from Lemma~\ref{lem:geodesics_splitting_lemma} where we take $x = x_i$ and $y = y_j$.  Since $(\CS,d,\nu,x_i,y_j)$ has distribution $\bminflaw$, it follows from Lemma~\ref{lem:geodesics_splitting_lemma} that 
\[ \bminflaw [G_{i,j}, \diam(\CS) \le 1] = O(\epsilon^{I+2J+K-3-c\sp{a}+o(1)}).\]
Let $\CI$ be the set of pairs $(i,j)$ so that $G_{i,j}$ occurs.  For $\alpha > 0$, we then have that
\[ 
\int \CH_\alpha(\CZ_{I,J,K}) \one_{\epsilon < \epsilon_1} \one_{\diam(\CS)\le 1} d \bminflaw \leq (2\epsilon)^\alpha \int |\CI| \one_{\diam(\CS)\le 1} d\bminflaw = O(\epsilon^{\alpha+I+2J+K-11 - (1+c)\sp{a} +o(1)}).
\]
For any $\alpha > 11 - (I+2J+K)$ we can choose $\sp{a} > 0$ sufficiently small so that the right hand side of the equation above goes to $0$ as $\epsilon \to 0$.  It therefore follows that for $\bminflaw$ a.e.\ instance of $(\CS,d,\nu,x,y)$ we have that $\dimH(\CZ_{I,J,K}) \leq 11 - (I+2J+K)$, as desired.  The same argument implies that $\CZ_{I,J,K} = \emptyset$ if $I+2J+K > 11$.

To finish the proof, it is left to explain why every geodesic contains at most $2$ splitting points from one endpoint towards the other, and the multiplicity of any splitting point is $1$.  
This will follow from Theorem~\ref{thm:ghost} and a result of Le Gall \cite{lg2010geodesics} which says that any geodesic from the root of the map to another point has at most $2$ splitting points and the multiplicity of each splitting point is $1$ (see Figure~\ref{fig:root_geodesics}).
Suppose that $u,v \in \CS$  and $\eta_1,\ldots,\eta_k$ is a maximal collection of geodesics from $v$ to $u$ (note that $k$ is at most a finite and deterministic constant due to Proposition~\ref{prop:two_point_number_of_geodesics}). Suppose that for $1\le k_0 \le k$, $\eta_1, \ldots, \eta_{k_0}$ all pass through a splitting point from $v$ towards $u$ of multiplicity $k_0$.
Let $\epsilon > 0$ be such that $\eta_i|_{[t-\eps,t]} = \eta_j|_{[t-\eps,t]}$ for $1 \leq i,j \leq k_0$ and the sets $\eta_i|_{(t,t+\epsilon)}$ are disjoint.  Fix $s \in (t-\eps,t)$ and let $(x_{n_j})$ be a subsequence of $(x_n)$ defined above which converges to $\eta_1(s)$.  For each $j$, let $\eta_{n_j}$ be a geodesic from $x_{n_j}$ to $u$.  By passing to a subsequence if necessary, we may assume without loss of generality that $\eta_{n_j}$ converges to a geodesic from $\eta_1(s)$ to $u$.  By Theorem~\ref{thm:ghost}, it follows that $\eta_1(t)$ is a splitting point for $x_{n_j}$ towards $u$ for all $j$ large enough.  Since a splitting point from any typical point to any other point has multiplicity $1$, it follows that $\eta_1(t)$ has multiplicity  $1$. Similarly, suppose that $\eta$ is a geodesic from $v$ to $u$ which contains $n$ splitting points from $v$ towards $u$, and let $t>0$ be the smallest splitting time. We can again find a subsequence  $(x_{n_j})$ of $(x_n)$ converging to $\eta(t/2)$ and for some $j$ big enough a geodesic from $x_{n_j}$ to $u$ which contains $n$ splitting points from $v$ towards $u$. Since $x_{n_j}$ is a typical point, we must have $n\le 2$.
\end{proof}

\begin{proof}[Proof of the dimension upper bounds in Theorem~\ref{thm:number_of_geodesics}]
In view of Theorem~\ref{thm:finite_number_of_geodesics}, to prove the dimension upper bounds, we need to identify the possibilities for the number of splitting times and the number of disjoint terminal and initial segments of geodesics between pairs of points which are connected by $j$ geodesics.

Suppose that we have a given $I,J,K$ triple.  Let $M$ be the number of corresponding merging points (i.e., points at which two distinct geodesics connecting common points merge).  By Theorem~\ref{thm:intersection_of_geodesics}, it is not possible for a pair of geodesics which connect a given pair of points to split and then remerge.  We therefore must have that $I = J+K-M$.  In other words, $M = J+K-I$.  For a given $I,J,K$ triple, it is not difficult to see that the number of geodesics connecting $u$ and $v$ is maximized by the collection consisting of an normal $(M+1,K+1)$-network (recall the definition given in Section~\ref{subsec:background_intro}) together with $J-(M+1)$ geodesics which are disjoint except possibly at their endpoints.  Note that a normal $(i,j)$-network corresponds to $I = i$, $J = j$, and $K = i-1$ so that $I + 2J + K = 2(i+j)-1$.  Therefore if $n=I+2J+K$ is odd, then the configuration which maximizes the number of geodesics connecting two points with $n=I+2J+K$ is a normal network and if $n = I+2J+K$ is even it is a normal network together with an additional geodesic disjoint from the others except at its endpoints.  From this, we deduce that the maximal number of $g(n)$ geodesics (as a function of $n=I+2J+K$) that one can have for $3 \leq n \leq 11$ is given by the following:
\[ g(3) = 1,\ g(4) = 0,\ g(5) = 2,\ g(6) = 2,\ g(7) = 4,\ g(8) = 3,\ g(9) = 6,\ g(10) = 5,\ g(11) = 9.\]
By Theorem~\ref{thm:finite_number_of_geodesics}, we thus have that
\[ \dimH(\Phi_1) \leq 8,\ \dimH(\Phi_2) \leq 6,\ \dimH(\Phi_4) \leq 4,\ \dimH(\Phi_6) \leq 2,\ \dimH(\Phi_9) = 0.\]
It also implies that $\dimH(\Phi_3) \leq 4$ (since $g$ exceeds the value $2$ only for $n \geq 7$), $\dimH(\Phi_5) \leq 2$ (since $g$ exceeds the value $5$ only for $n \geq 9$), $\dimH(\Phi_7) = 0$, and $\dimH(\Phi_8) = 0$ (since $g$ exceeds the value $7$ only for $n \geq 11$).  It also implies that $\Phi_j = \emptyset$ for all $j \geq 10$ (since $g$ exceeds the value $10$ only for $n \geq 12$).
\end{proof}

\section{Points connected by $5,7$ and $8$ geodesics}
\label{sec:578}

The goal of this section is to complete the proof of Theorem~\ref{thm:number_of_geodesics}. We have already proved the upper bounds for $\dimH(\Phi_i)$ for all $1\le i \le 9$. The matching lower bounds of $\dimH(\Phi_i)$ for $i\in\{2,3,4,6\}$ and the fact that $\Phi_9$ is countably infinite follow from the results of~\cite{akm2017stability}.
It was also shown in~\cite{akm2017stability} that the sets $\Phi_i$ for $i\in\{2,3,4,6,9\}$ are dense in $\CS^2$.
The only missing part of Theorem~\ref{thm:number_of_geodesics} is the following proposition.

\begin{proposition}
\label{prop:five_geodesics_lbd}
The following holds for $\bminflaw$ a.e.\ instance $(\CS,d,\nu,x,y)$. We have $\dimH(\Phi_5) \geq 2$, and $\Phi_7$, $\Phi_8$ are both countably infinite. Moreover, for each $i\in\{5,7,8\}$, the set of $u \in\CS$ such that there exists $v \in \CS$ with $(u,v) \in \Phi_i$ is dense in $\CS$.
\end{proposition}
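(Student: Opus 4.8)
The plan is to build the relevant configurations of geodesics by combining, in a quasi-independent fashion, a local configuration near $u$ with a local configuration near $v$, following the philosophy of \cite{akm2017stability} but adapted to the fact that our geodesic configurations do not all pass through a single common point. Recall from Le Gall \cite{lg2010geodesics} and the discussion around Figure~\ref{fig:root_geodesics} that for a typical point $w$, the set of points connected to $w$ by exactly $2$ (resp.\ $3$) geodesics has dimension $2$ (resp.\ $0$) and is dense, and in each such case the geodesics have the topology of Figure~\ref{fig:root_geodesics}: they split at a single splitting point and otherwise coincide near $w$. For $\Phi_5$, the optimal configuration of Figure~\ref{fig:optimal_configurations} has $(I,J,K)=(2,3,1)$; the idea is that $u$ lies in the dimension-$2$ set of points joined to a typical point $\rho_1$ by $2$ geodesics, $v$ lies in the dimension-$0$ set of points joined to a typical point $\rho_2$ by $3$ geodesics, and we glue: the $2$ geodesics from $u$ merge into a single geodesic which travels to a common region, and then this geodesic together with the geodesics coming from $v$'s side produces the $3$ terminal segments at $v$, with exactly one splitting point. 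For $\Phi_7$ and $\Phi_8$ the analogous constructions use the optimal triples from Figure~\ref{fig:optimal_configurations}, which are not normal networks, and in both cases the $u$-side and $v$-side pieces each involve the dimension-$0$, countably infinite set of $3$-geodesic endpoints, forcing the resulting $\Phi_i$ to be countable.

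\textbf{Key steps.} First I would set up the gluing construction precisely. Fix a typical point $\rho$; along the a.s.\ unique geodesic $[\rho', \rho]$ from another typical point, one can find points $w$ where the $2$- or $3$-geodesic behavior occurs, and I would use re-rooting invariance of the Brownian map under $\nu$ to transfer statements about geodesics to $\rho$ into statements about geodesics to independently sampled points. Second, I would run a breadth-first (reverse metric) exploration from $\rho_1$ and a separate one from $\rho_2$; by the conditional independence of metric bands (Section~\ref{subsec:metric_bands}) and by choosing $\rho_1,\rho_2$ far apart, the configuration of geodesics near $u$ (inside a small filled metric ball around $\rho_1$) and near $v$ (inside a small filled metric ball around $\rho_2$) are essentially independent, and the geodesics between the two balls coincide with the geodesic $[\rho_1,\rho_2]$ by confluence (Theorem~\ref{thm:strong_confluence2}). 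Third, I would verify that the glued family really does consist of \emph{exactly} $i$ geodesics and no more: this uses Theorem~\ref{thm:intersection_of_geodesics} (no split-then-remerge), Theorem~\ref{thm:finite_number_of_geodesics} (at most $2$ splitting points, each multiplicity $1$, and the dimension/emptiness constraints on $\Phi_{I,J,K}$), and the explicit classification via $g(n)$ from the proof of the dimension upper bounds — so that a configuration with the prescribed $(I,J,K)$ triple cannot secretly support extra geodesics. Fourth, for the lower bound $\dimH(\Phi_5)\ge 2$, I would exhibit an injection (bi-Lipschitz on compacts, or at least dimension-preserving) from the dimension-$2$ set of $2$-geodesic endpoints of $\rho_1$ into $\Phi_5$, obtained by sending such a $u$ to the pair $(u,v)$ where $v$ is a fixed $3$-geodesic endpoint of $\rho_2$; since $v$ ranges over a countable set while $u$ ranges over a dimension-$2$ set, the image has dimension $2$. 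Fifth, for countability of $\Phi_7,\Phi_8$: the $u$-side and $v$-side pieces each lie in a countable set (the $3$-geodesic endpoints, or an analogous dimension-$0$ set dictated by the optimal triple), and the ``middle'' is determined by the two chosen endpoints, so $\Phi_7$ and $\Phi_8$ are contained in countable unions of countable sets; combined with the already-proven $\dimH(\Phi_7)=\dimH(\Phi_8)=0$ and a direct construction showing they are nonempty, we get ``countably infinite.'' Sixth, for density: since the $2$- and $3$-geodesic endpoint sets of any typical point are dense (Le Gall), and since we may choose $\rho_1$ in any prescribed small ball by the density of typical points (Lemma~\ref{lem:typical_points_dense}) and re-rooting, the endpoint $u$ of the glued configuration can be placed in any prescribed open set, giving density of $\{u : \exists v,\ (u,v)\in\Phi_i\}$.

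\textbf{Main obstacle.} The hardest part will be rigorously controlling the gluing so that the constructed family of geodesics is \emph{exactly} the prescribed configuration rather than a sub- or super-configuration. Two things must be checked: (a) that the geodesics we build from $u$ to $v$ are genuinely geodesics (i.e.\ the concatenation of a $u$-side geodesic segment, the middle geodesic $[\rho_1,\rho_2]$-segment, and a $v$-side segment is length-minimizing), which requires carefully matching distances at the gluing interfaces and using the strong confluence estimates to guarantee that no shortcut exists; and (b) that no \emph{additional} geodesics between $u$ and $v$ appear — here one argues that any geodesic from $u$ to $v$ must pass through the narrow ``neck'' region between the two filled balls (by confluence and the bottleneck estimate, Lemma~\ref{lem:bottlenecks}), hence decomposes as a $u$-side piece, a middle piece, and a $v$-side piece, and then the known classification of geodesics to $\rho_1$ and $\rho_2$ (at most $3$, with the topology of Figure~\ref{fig:root_geodesics}) pins down exactly how many such pieces there are. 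Making (a) and (b) precise, while keeping the $u$-side and $v$-side constructions independent enough that dimensions add, is the technical core; everything else (countability, density, the dimension-$2$ lower bound via the injection) then follows fairly directly from \cite{lg2010geodesics} together with Theorems~\ref{thm:intersection_of_geodesics}, \ref{thm:finite_number_of_geodesics}, and~\ref{thm:strong_confluence2}.
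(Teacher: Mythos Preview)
Your overall philosophy is right, and your identification of the main obstacle is accurate, but there is a genuine gap in the construction for $\Phi_5$ (and by extension $\Phi_7,\Phi_8$). The gluing you describe --- two geodesics from $u$ to a typical point $\rho_1$ which merge, a single middle segment, then three branches to $v$ --- produces a \emph{normal $(2,3)$-network}, which has $2\times 3=6$ geodesics, not $5$. The optimal $\Phi_5$ configuration of Figure~\ref{fig:optimal_configurations} has the same triple $(I,J,K)=(2,3,1)$ but a different topology: the merging points coming from the $u$-side and the splitting points coming from the $v$-side must be \emph{interleaved} along a common geodesic rather than separated by a long middle segment. Concretely, in the paper's construction (Lemmas~\ref{lem:two_geo_dim_two}--\ref{lem:five_geo_pos_prob}) one takes a point $u$ with three emanating geodesics $\eta_1,\eta_2,\eta_3$ and a point $v$ with two geodesics $\wt\xi_1,\wt\xi_2$ that hit $\eta_1$ at points $b_1,b_2$, and then conditions on the event that $\eta_2$ merges into $\eta_1$ \emph{before} $b_1$ while $\eta_3$ merges \emph{between} $b_1$ and $b_2$; this interleaving yields $2+3=5$ geodesics rather than $2\times 3=6$. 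Your ``far-apart $\rho_1,\rho_2$'' setup cannot produce this interleaving, because by construction all merging on the $u$-side happens before the middle and all splitting on the $v$-side happens after it.

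Two further ingredients you are missing: first, to rule out extra geodesics the paper does not use confluence or bottlenecks directly, but rather builds a barrier annulus $A$ (Lemma~\ref{lem:geo_merge}) with the property that \emph{every} geodesic crossing $A$ passes through a single point $a$; this forces any geodesic from $u$ to $v$ to route through the prescribed $\eta_i$'s. Second, your countability argument for $\Phi_7,\Phi_8$ only shows that the pairs you \emph{construct} are countable, not that all of $\Phi_7,\Phi_8$ is. The paper's argument is different: for an \emph{arbitrary} $(u,v)\in\Phi_7\cup\Phi_8$, Theorem~\ref{thm:finite_number_of_geodesics} pins down the configuration (Figure~\ref{fig:geo_7_8}), one finds an interior point $w$ on a geodesic where three branches split toward $v$, approximates $w$ by $\nu$-typical points $w_n$, and uses strong confluence to conclude that $v$ has three geodesics to $w_n$; then Le Gall's result that such points form a countable set applies to $v$ (and symmetrically to $u$).
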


The main idea of the proof of Proposition~\ref{prop:five_geodesics_lbd} is to construct the collection of geodesics between $u$ and $v$ with $(u,v)\in\Phi_i$ for each $i\in\{5,7,8\}$ as a concatenation of two sets of independent geodesics, emanating respectively from $u$ and $v$. In Section~\ref{subsec:geo_v}, we will construct the set of geodesics emanating from $v$. To construct $\Phi_5$ (resp.\ $\Phi_7, \Phi_8$), we will need to construct a set $\wt\Phi_2$ (resp.\ $\wt\Phi_3$) of points such that for each $v\in\wt\Phi_2$ (resp.\ $v\in\wt\Phi_3$) there are two (resp.\ three) geodesics emanating from $v$ that are disjoint on some initial time interval (except at $v$) before merging into a common geodesic in a specific way.
In Section~\ref{subsec:geo_u}, we will construct a point $u$ from which emanate three geodesics that are disjoint on some initial time interval (except at $u$), and then try to merge the geodesics emanating from $v$ constructed in Section~\ref{subsec:geo_v} into the geodesics emanating from $u$, so they form the desired configurations of geodesics between $u$ and $v$.
This idea of separating the collection of geodesics between $u$ and $v$ into two independent sets of geodesics is similar to that in the proof of the dimension of the endpoints of normal networks in~\cite{akm2017stability}. However, our case is more complicated, because the set of geodesics between $u$ and $v$ do not pass through a common point (except their common endpoints). To construct the desired sets of geodesics, we will cut the Brownian map into metric bands and metric slices, construct the different parts of the desired geodesics in these (conditionally) independent metric spaces, and then glue them back together in a specific way.
Some technicalities arise, because we need to make sure that the concatenation of geodesics are still geodesics.

Throughout this section (except when we indicate differently), let $(\CB,d_\CB,\nu_\CB,z)$ be a metric band with law $\bandlaw{1}{\infty}$. Let $\eta$ be the a.s.\ unique geodesic from $z$ to $\outerboundary \CB$, which consists of a single point that we call $x_0$.  For each $t > 0$, we let $\CB_t$ be the metric band which corresponds to the set of points disconnected from $\innerboundary \CB$ by $B(x_0,d_0-t)$ where $d_0$ is the distance from $\innerboundary \CB$ to $x_0$. For $t \ge 0$, let $\CF_t$ be the $\sigma$-algebra generated by $\CB\setminus \CB_t$.
For $c\in(0,1)$, let $\tau_c$ be the first time $t$ such that the boundary length of $\innerboundary \CB_{t}$ is equal to $c$. Then $\tau_c$ is an $(\CF_t)$-stopping time.

\subsection{The sets $\wt \Phi_2$ and $\wt \Phi_3$}\label{subsec:geo_v}

\begin{figure}[ht!]
\begin{center}
\includegraphics[width=.95\textwidth]{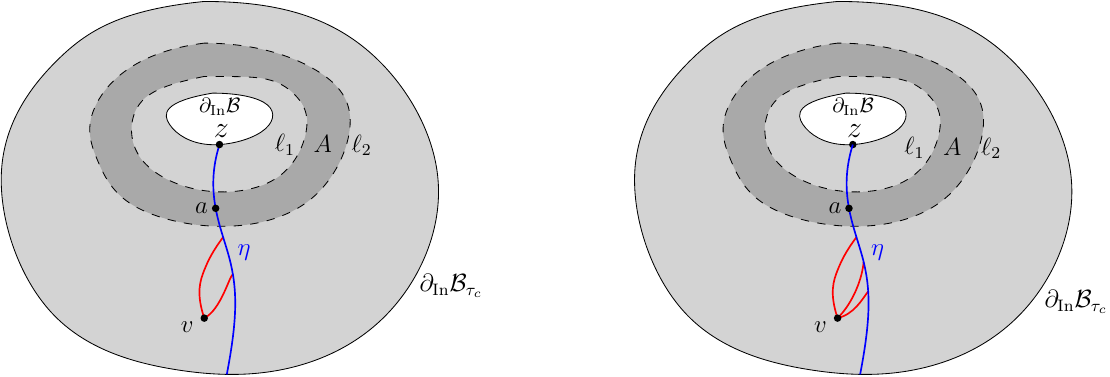}
\end{center}
\caption{\label{fig:geo_def} 
Illustration of the event $E$ in Lemma~\ref{lem:two_geo_dim_two}.
\textbf{Left:} Illustration of a point $v\in \wt\Phi_2$.
\textbf{Right:} Illustration of a point $v\in \wt\Phi_3$.}
\end{figure}

The goal of this subsection is to prove the following lemma.
\begin{lemma}
\label{lem:two_geo_dim_two}
There exist $c\in(0,1)$ and $d_1>0$ such that the following event $E$ occurs with positive probability for the band $ \CB \setminus \CB_{\tau_c}$ (see Figure~\ref{fig:geo_def}). 
\begin{enumerate}[(I)]
\item There exists an annulus $A \subseteq \CB \setminus \CB_{\tau_c}$ which encircles $\innerboundary\CB$ and satisfies the following properties. Let $d_A$ be the interior-internal metric of $d_\CB$ in $A$. Let $\ell_1$ and $\ell_2$ be respectively the inner and outer boundary of $A$.
There exists $a\in A$ such that every geodesic (w.r.t.\ $d_A$) from a point in $\ell_1$ to a point in $\ell_2$ passes through $a$.
\item There exists a set $\wt \Phi_2 \subseteq \CB \setminus \CB_{\tau_c}$ which is disjoint from $A$,  contained in the region between $\ell_2$ and $\innerboundary\CB_{\tau_c}$, and satisfies the following properties.
The distance from any point in  $\wt\Phi_2$ to $z$ is at most $d_1$. The distance from any point in $\wt\Phi_2$ to $\innerboundary\CB_{\tau_c}$ is at least $d_1$. 
We have $\dimH(\wt\Phi_2)\ge 2$. 
For each $v\in \wt\Phi_2$, there are exactly two geodesics $\eta_1$ and $\eta_2$ from $v$ to $z$. Moreover, $\eta_1$ and $\eta_2$ satisfy the following properties.
\begin{enumerate}[(i)]
\item There exists $r > 0$ so that $\eta_1((0,r)) \cap \eta_2((0,r)) = \emptyset$.
\item For $i=1,2$, $\eta_i$ intersects $\eta$ before (or at the same time as) intersecting $\eta_{3-i}$, and then merges with (the time-reversal of) $\eta$.
\item There exist two distinct points $b_1$ and $b_2$ on $\eta$ which remain the same for any choice of $v\in\wt\Phi_2$, such that for $i=1,2$, $\eta_i$ hits $\eta$ at its right side on $b_i$.
\end{enumerate}

\item There exists a set $\wt \Phi_3 \subseteq \CB \setminus \CB_{\tau_c}$ which is disjoint from $A$,  contained in the region between $\ell_2$ and $\innerboundary\CB_{\tau_c}$, and satisfies the following properties.
The distance from $\wt\Phi_3$ to $z$ is at most $d_1$. The distance from $\wt\Phi_3$ to $\innerboundary\CB_{\tau_c}$ is at least $d_1$. 
The set $\wt\Phi_3$ is non-empty.
For each $v\in \wt\Phi_3$, there are exactly three geodesics $\eta_1, \eta_2$ and $\eta_3$ from $v$ to $z$. Moreover, $\eta_1, \eta_2$ and $\eta_3$ satisfy the following properties.
\begin{enumerate}[(i)]
\item There exists $r > 0$ so that $\eta_i((0,r)) \cap \eta_j((0,r)) = \emptyset$ for any $i, j \in\{1,2,3\}$ distinct.
\item For $i, j \in\{1,2,3\}$ distinct, $\eta_i$ intersects $\eta$ before (or at the same time as) intersecting $\eta_{j}$, and then merges with (the time-reversal of) $\eta$.
\item For $i=1,2, 3$, $\eta_i$ hits $\eta$ at its right side.
\end{enumerate}

\end{enumerate}
\end{lemma}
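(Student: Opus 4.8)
The plan is to build the band $\CB\setminus\CB_{\tau_c}$ by first performing a forward metric exploration of the Brownian map from $z$, then decorating the annuli of this exploration with the features described in (I)--(III). The basic principle is the one announced in the outline: one realizes each local configuration (the pinch point $a$ of (I), the multiple-geodesic points of $\wt\Phi_2$, $\wt\Phi_3$) inside a metric band or slice which is conditionally independent of the rest, using the analogues of the constructions in Lemma~\ref{lem:X_prob} (Figure~\ref{fig:construction}). Concretely, I would first fix a large value of the inner boundary length and a small $c$, and work with $\CB\setminus\CB_{\tau_c}$ conditioned to have width bounded below and above so that it contains a definite number of sub-bands at which the boundary length drops by a fixed factor; each of these sub-bands is conditionally independent given its boundary length, which is the key source of the ``positive probability'' statements.

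The construction of (I) is the easiest: one needs an annulus $A$ in which all geodesics from inner to outer boundary pass through a single point. This is exactly what a ``thin'' metric band with a bottleneck looks like, and can be produced with positive probability by an argument essentially identical to the proof of Lemma~\ref{lem:X_prob}: condition on the $3/2$-stable CSBP boundary length process making a suitable excursion so that the band is a topological annulus whose inner and outer boundaries are connected by a path of small diameter, then use that there is a positive chance a single geodesic realizes the bottleneck (alternatively, glue a geodesic slice of small width onto itself). For (II) and (III), I would carve out, in the region between $\ell_2$ and $\innerboundary\CB_{\tau_c}$, a sub-band $\CB'$ in which the desired geodesics live, and inside $\CB'$ reproduce the topology of Figure~\ref{fig:root_geodesics}: a point $v$ with $2$ (resp.\ $3$) geodesics to $z$ which are initially disjoint, meet $\eta$ before meeting each other, and thereafter coincide with (the time-reversal of) $\eta$, always hitting $\eta$ on its right side. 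The ``hitting $\eta$ on the right side'' and ``meet $\eta$ before meeting each other'' requirements are topological constraints that can be arranged by choosing which side of $\eta$ one places the relevant geodesic slices, exactly as the neighborhoods $O_u, O_v$ and the sides $l_1,\dots,l_6$ are chosen in the proof of Lemma~\ref{lem:X_prob}; one also uses the re-rooting invariance of the Brownian map to resample $z$ or an auxiliary point into a region of positive $\nu$-measure so that the geodesics emanate from typical points. The fact that such $v$ form a set of dimension $\ge 2$ for $\wt\Phi_2$ is inherited from Le~Gall's result \cite{lg2010geodesics} that the set of points with (at least) $2$ geodesics to the root has dimension $2$: once the ambient picture near $z$ has the right topology with positive probability, a positive-$\nu$-measure (hence dimension-$2$, by Lemma~\ref{lem:bm_volume_estimates}) portion of the Le~Gall double-geodesic set lies in the relevant subregion. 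For $\wt\Phi_3$ only non-emptiness is needed, which again follows since the Le~Gall triple-geodesic set is a.s.\ non-empty and dense.

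The distance bounds (``distance from $\wt\Phi_2$ to $z$ at most $d_1$, to $\innerboundary\CB_{\tau_c}$ at least $d_1$'', and the disjointness of $A$, $\wt\Phi_2$, $\wt\Phi_3$ from each other) are arranged by fixing, before doing any of the above, a coarse partition of $\CB\setminus\CB_{\tau_c}$ into concentric sub-bands of definite widths and placing the features $A$, $\wt\Phi_2$, $\wt\Phi_3$ in different sub-bands; conditioning on the boundary length process so that these sub-bands exist costs only a positive factor, and conditioning on all the events (I), (II), (III) simultaneously still has positive probability by the conditional independence of the sub-bands given their boundary lengths. One also needs $\wt\Phi_2$ and $\wt\Phi_3$ to lie between $\ell_2$ and $\innerboundary\CB_{\tau_c}$ and to be disjoint from $A$; this is automatic from the layered placement.

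The main obstacle, I expect, is controlling the \emph{global} geodesics carefully enough to guarantee that the locally-constructed pieces actually concatenate into geodesics with \emph{exactly} the asserted number of geodesics and the asserted merging pattern with $\eta$ --- i.e.\ ruling out ``shortcuts'' that would either reduce the count or change the topology. This is the same issue handled in the proof of Lemma~\ref{lem:X_prob} via the compactness/Arzel\`a--Ascoli argument establishing claim $(*)$ there (any geodesic between points in small neighborhoods of $u$ and $v$ is forced to stay in a prescribed region), and I would adapt that argument: take shrinking neighborhoods, extract limiting geodesics, use uniqueness of $\eta$ and of the various auxiliary geodesics, and derive a contradiction from any escaping geodesic. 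Combined with the strong confluence results of Section~\ref{sec:geodesic_structure} (Theorem~\ref{thm:strong_confluence2}, Proposition~\ref{prop:no_geodesic_bump}) and the finiteness of the number of geodesics (Proposition~\ref{prop:two_point_number_of_geodesics}), one upgrades ``at least $2$ (resp.\ $3$) geodesics with the right topology'' to ``exactly $2$ (resp.\ $3$)''. The density statement in Proposition~\ref{prop:five_geodesics_lbd} is then obtained by the usual argument: since the event $E$ has positive probability for $\bandlaw{1}{\infty}$ and such bands appear (after rescaling, via the scaling property of metric bands) densely inside $\bminflaw$-a.e.\ Brownian map along the reverse metric exploration from any typical point, the constructed endpoints $u$ are dense; the countable infiniteness of $\Phi_7,\Phi_8$ will come in the next subsection by combining $\wt\Phi_3$ near $v$ with the dimension-$0$, countably-infinite structure of the relevant point sets near $u$, as in the $\Phi_9$ case of \cite{akm2017stability}.
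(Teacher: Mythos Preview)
Your proposal has a genuine gap in the dimension argument for $\wt\Phi_2$. You write that ``a positive-$\nu$-measure (hence dimension-$2$, by Lemma~\ref{lem:bm_volume_estimates}) portion of the Le~Gall double-geodesic set lies in the relevant subregion''. This is incorrect: the set of points with two (or more) geodesics to the root is the cut locus, which has $\nu$-measure \emph{zero}. Its Hausdorff dimension is indeed $2$, but this is not inherited from volume estimates; it comes from a completely different source. Moreover, to localize a dimension-$2$ piece of a measure-zero set inside a specific subregion you cannot just say ``positive chance it lands there'' --- you need a concrete parametrization of that set.

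The paper's route is quite different from your band-decomposition scheme and uses the Brownian snake/CRT encoding in an essential way. One fixes a branch $\gamma$ of the CRT from a non-leaf point to the dual root and uses Le~Gall's results to see that every point of the projected curve $p(\gamma)$ has exactly $2$ or $3$ geodesics to the root $x$, and that $p(\gamma)$ (and any sub-arc of it) has Hausdorff dimension $2$ \cite[Proposition~3.3]{lg2010geodesics}. This gives $\wt\Phi_2$ as a sub-arc of $p(\gamma)$ with the ``exactly two'' count for free, and $\wt\Phi_3$ as a single triple point on $p(\gamma)$. The side constraint (iii) (both geodesics hit $\eta$ on the right) is obtained not by choosing slices but from the snake picture: the two geodesics from a double point emanate from the two corners of that point in $\CT$ and therefore lie on opposite sides of $p(\gamma)$; one then \emph{resamples} the marked point to a typical $w$ lying in a specific region so that the geodesic $\xi=[w,x]$ enters from one side of this configuration, forcing both geodesics from $v$ to meet $\xi$ on its right. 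The delicate merging order (ii) is secured by a limiting/compactness argument choosing $v_0,v_1$ so that the relevant geodesics merge with the correct one of $\eta^{v_0}_1,\eta^{v_0}_3$ first; this is where the paper does the real work, and your Arzel\`a--Ascoli sketch points in the right direction but would need the CRT-branch scaffolding to have something concrete to argue against. Finally, the annulus $A$ and the band are cut out of the ambient Brownian map only \emph{after} all these objects are built, rather than assembled from independent sub-bands as you propose; your layered scheme does not obviously make the geodesics from $\wt\Phi_2$ to $z$ (which must traverse all intermediate layers, including $A$) behave correctly.
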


In order to prove Lemma~\ref{lem:two_geo_dim_two}, we need to first prove the following lemma.

\begin{lemma}\label{lem:geo_merge}
There exists $c_0\in(0,1)$ such that the following event $F$ occurs with positive probability for $\CB \setminus \CB_{\tau_{c_0}}$.
There exists an annulus $A \subseteq \CB \setminus \CB_{\tau_{c_0}}$ which encircles $\innerboundary\CB$, and satisfies the following properties. Let $d_A$ be the interior-internal metric of $d_\CB$ in $A$. Let $\ell_1$ and $\ell_2$ be respectively the inner and outer boundary of $A$.
There exists $a\in A$ such that every geodesic (w.r.t.\ $d_A$) from a point in $\ell_1$ to a point in $\ell_2$ passes through $a$.
\end{lemma}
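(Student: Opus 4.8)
\textbf{Proof strategy for Lemma~\ref{lem:geo_merge}.}
The plan is to build, with positive probability, a configuration inside a band sampled from $\bandlaw{1}{\infty}$ (stopped at the first time the inner boundary length drops to some $c_0$) in which an annulus $A$ has a ``bottleneck point'' $a$ through which all crossing geodesics must pass. The construction mirrors in spirit the construction of an \X\ in Lemma~\ref{lem:X_prob}: we will first exhibit an instance of the Brownian map (under $\bminflaw$ conditioned on a positive-measure event) which contains such a bottleneck, then use the breadth-first exploration to isolate it inside a metric band, and finally transfer the statement to $\bandlaw{1}{\infty}$ via the scaling property and a slicing argument as at the end of the proof of Lemma~\ref{lem:X_prob}. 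Concretely, I would take $(\CS,d,\nu,x,y)$ distributed as $\bminflaw$ conditioned on $\{d(x,y)>1\}$ and perform a reverse metric exploration from $y$ to $x$; the key point is that in the Brownian map there exist (with positive ``probability'') points $a$ which disconnect an inner metric circle from an outer one, precisely because of the confluence-of-geodesics phenomenon of Le Gall: if we take two concentric filled metric ball boundaries $\partial\fb{y}{x}{s_1}$ and $\partial\fb{y}{x}{s_2}$ with $s_1<s_2$, then by confluence all geodesics from points of $\partial\fb{y}{x}{s_2}$ toward $x$ coalesce before hitting $\partial\fb{y}{x}{s_1}$, and hence there is a point through which a definite fraction of them pass.

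The first step is to make the above quantitative. I would fix small radii and condition on the positive-measure event that the geodesics from a finite collection of boundary points on $\partial\fb{y}{x}{s_2}$ (dense enough to approximate the leftmost and rightmost geodesics from every boundary point) all pass through a common point $a$ strictly between the two circles, and that $a$ has distance bounded below from both circles. This is a positive-probability event: by Le Gall's confluence result \cite{lg2010geodesics}, for any $\epsilon>0$ the set of points on $\partial\fb{y}{x}{s_2}$ whose geodesic toward $x$ has not yet merged with the geodesic from the marked point by the time it reaches $\partial\fb{y}{x}{s_2-\epsilon}$ has positive but small measure, and by taking $\epsilon$ small we get a single merging point $a$ for \emph{all} geodesics crossing from the outer to the inner circle. (One must be slightly careful: ``all geodesics between a point of $\ell_1$ and a point of $\ell_2$'' is more than ``all geodesics toward $x$''; but any geodesic $\gamma$ in $A$ from $\ell_1$ to $\ell_2$ can be extended to a geodesic from a point of $\ell_1$ to $x$ by concatenating with a geodesic from its endpoint on $\ell_2$ inward, or one argues directly that the portion of $A$ between $\ell_1$ and the ``inner side'' of $a$ is a topological disk whose only gateway to the rest is $a$, so that topology forces passage through $a$.) I would phrase $A$ as the closed region of $\CS$ between $\partial\fb{y}{x}{s_1}$ and $\partial\fb{y}{x}{s_2}$ minus the hole containing $y$, equipped with the interior-internal metric, and take $\ell_1=\partial\fb{y}{x}{s_1}$, $\ell_2=\partial\fb{y}{x}{s_2}$.

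The second step is to relocate this configuration into a band of the required law. Having produced, on a positive-$\bminflaw$-measure event, a band $\CB'=\fb{y}{x}{s}\setminus\fb{y}{x}{s'}$ (for suitable $s'<s$ with $s'$ chosen so that $\partial\fb{y}{x}{s'}$ sits strictly inside $A$) containing such an annulus $A$, I would record that there exist $\wt\ell_0,\wt w>0$ and $p>0$ with $\bandlaw{\wt\ell_0}{\wt w}[F]\ge p$; then, exactly as in the last paragraph of the proof of Lemma~\ref{lem:X_prob}, use the scaling property of metric bands to normalize the width and then the slicing/gluing trick (cut the band along geodesics to the outer boundary, glue a sub-slice of total inner length $c_0$) to deduce $\bandlaw{c_0}{\infty}[F]\ge p$ for a suitable $c_0\in(0,1)$, and finally re-express this in terms of the band $\CB\setminus\CB_{\tau_{c_0}}$ where $\CB\sim\bandlaw{1}{\infty}$, since conditionally on $\tau_{c_0}<\infty$ the band $\CB\setminus\CB_{\tau_{c_0}}$ (reverse exploration) has inner boundary length $c_0$ — and $\tau_{c_0}<\infty$ with positive probability by \eqref{eqn:csbp_extinction_time}.

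\textbf{Main obstacle.} The delicate point is the same one flagged in the proof of $(*)$ inside Lemma~\ref{lem:X_prob}: controlling \emph{all} geodesics crossing $A$, not merely a preselected finite family, and not merely those heading to $x$. The resolution is to use that the leftmost and rightmost geodesics from each boundary point are limits of geodesics from nearby points (so the finite dense family really does sandwich all of them), that there are at most $3$ geodesics from any point to the center of a filled ball \cite[Theorem~1.4]{lg2010geodesics}, and that the topology of $A$ (a topological annulus, with $a$ a cut point separating it into a disk adjacent to $\ell_1$ and the rest) forces any $\ell_1$-to-$\ell_2$ path, in particular any geodesic, to pass through $a$. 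I would carry out this topological step by a compactness/Arzel\`a–Ascoli argument essentially identical to the proof of the claim $(*)$: if some geodesic in $A$ avoided $a$, a limiting argument and the uniqueness of geodesics to $x$ near $a$ would produce a contradiction.
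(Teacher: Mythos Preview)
Your overall architecture (build the configuration inside a Brownian map instance, then cut out a band) matches the paper, but the core step has a genuine gap, and the paper resolves it by a different choice of annulus together with a stronger tool.

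\textbf{The gap.} Your annulus sits between two \emph{concentric} filled-ball boundaries $\partial\fb{y}{x}{s_1}$ and $\partial\fb{y}{x}{s_2}$, and you invoke only Le~Gall's confluence of geodesics toward the root. That result controls geodesics from $\ell_2$ \emph{toward $x$}; the lemma is about \emph{all} $d_A$-geodesics from $\ell_1$ to $\ell_2$. Neither of your fixes works. The extension argument fails: concatenating a $d_A$-geodesic from $q\in\ell_2$ to an arbitrary $p\in\ell_1$ with a geodesic from $p$ to $x$ gives a path of length $d_A(q,p)+s_1$, which equals $s_2=d(q,x)$ only when $p$ lies on a geodesic from $q$ to $x$; for other $p$ the concatenation is not a geodesic, so Le~Gall's result says nothing about it. The topological argument fails because a single point is never a cut point of a topological annulus; removing $a$ from $A$ leaves it connected, so nothing forces a crossing path through $a$. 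Your final Arzel\`a--Ascoli suggestion does not help with your annulus either: the endpoints of an avoiding geodesic lie on two fixed circles around $x$, so there is no limit sending it to a geodesic between two marked points to which uniqueness applies.

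\textbf{What the paper does differently.} The paper chooses a different annulus: $A_n$ is the region between $\partial\fb{y}{x}{1/n}$ (near $x$) and $\partial\fb{x}{y}{1/n}$ (near $y$), and the candidate point is $a=\eta(T/2)$, the midpoint of the a.e.\ unique $x$--$y$ geodesic $\eta$. If for every $n$ some $d_{A_n}$-geodesic avoided $\eta(T/2)$, a subsequential limit would be a $d$-geodesic from $x$ to $y$, hence equal to $\eta$ by uniqueness. The paper then invokes the \emph{strong confluence} Theorem~\ref{thm:strong_confluence2} (proved earlier in the paper, and strictly stronger than Le~Gall's classical confluence) to force the approximating geodesics to actually contain $\eta(T/2)$, giving the contradiction. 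Strong confluence is precisely the ingredient that upgrades ``Hausdorff-close to $\eta$'' to ``passes through $\eta(T/2)$'' for \emph{arbitrary} geodesics; this is what your sketch is missing.

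\textbf{Minor issues in the transfer step.} You write that $\CB\setminus\CB_{\tau_{c_0}}$ has inner boundary length $c_0$; in fact its inner boundary length is $1$ and its outer boundary length is $c_0$, and $\tau_{c_0}<\infty$ almost surely (not merely with positive probability). The paper realizes $\CB\setminus\CB_{\tau_{c_0}}$ directly as $\fb{x}{y}{d(x,y)-T_c}\setminus\fb{x}{y}{d(x,y)-T_{c,c_0}}$ by stopping the boundary-length process at levels $c$ and $cc_0$, so the slicing/gluing trick from Lemma~\ref{lem:X_prob} is not needed here.
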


\begin{proof}
Let  $(\CS,d,\nu,x,y)$ be distributed according to $\bminflaw$, and let $\eta:[0, T] \to \CS$ be the a.e.\ unique geodesic between $x$ and $y$. We condition on the event $T =d(x,y) >1$ (which has positive and finite $\bminflaw$ measure).  For $n\in\N$, let $A_n$ be the annulus with boundaries $\partial \fb{y}{x}{1/n}$ and $\partial \fb{x}{y}{1/n}$.
We aim to show that there a.s.\ exists $N\in\N$ such that every geodesic (w.r.t.\ $d_{A_N}$) from a point in $\partial \fb{y}{x}{1/N}$ to a point in $\partial \fb{x}{y}{1/N}$ passes through $\eta(T/2)$.
Suppose that this is not the case. Then for every $n\in\N$, there exists $a_n \in \partial \fb{y}{x}{1/n}$, $b_n \in \partial \fb{x}{y}{1/n}$ and a geodesic $\eta_n$ from $a_n$ to $b_n$ (w.r.t.\ $d_{A_n}$) which does not pass through $\eta(T/2)$. By extracting a subsequence, we can assume that $\eta_n$ converges w.r.t.\ the Hausdorff distance to a limit $\xi$. Then $\xi$ must be a geodesic from $x$ to $y$ in $\CS$. Since there is a.s.\ a unique geodesic from $x$ to $y$, we have $\xi=\eta$.
By the strong confluence of geodesics (Theorem~\ref{thm:strong_confluence2}), there exists $n$ big enough such that $\eta_n$ coalesces with $\eta$ away from its endpoints. In particular, we have $\eta(T/2) \in \eta_n$, leading to a contradiction. We have thus proved the existence of $N$. 

It remains to cut out a band in $\CS$ which contains $A_N$. For each $c>0$, let $T_c$ be the first time $t$ such that the boundary length of $\partial\fb{x}{y}{d(x,y)-t}$ reaches $c$. Given $T_c$ and $c_0\in(0,1)$, let $T_{c, c_0}$ be the first time $t>T_c$ such that the boundary length of $\partial\fb{x}{y}{d(x,y)-t}$ reaches $cc_0$. The metric band $\fb{x}{y}{d(x,y)-T_c}\setminus \fb{x}{y}{d(x,y)-T_{c, c_0}}$ has the same law as $\CB \setminus \CB_{\tau_{c_0}}$ (after rescaling areas by $c^{-1/2}$, boundary lengths by $c^{-1}$, and distances by $c^{-2}$).
Note that $\partial\fb{x}{y}{d(x,y)-T_c}$ tends to $x$ as $c\to 0$, and $\partial  \fb{x}{y}{d(x,y)-T_{c, c_0}}$ tends to $y$ as $c_0\to 0$. Therefore, there exist $c, c_0$ such that $\fb{x}{y}{d(x,y)-T_c}\setminus \fb{x}{y}{d(x,y)-T_{c, c_0}}$ contains $A_N$ with positive probability. We have therefore constructed the event $F$ as in the statement of the lemma, for the band $\fb{x}{y}{d(x,y)-T_c}\setminus \fb{x}{y}{d(x,y)-T_{c, c_0}}$, the annulus $A_N$, and $a=\eta(T/2)$.
\end{proof}

\begin{figure}[ht!]
\begin{center}
\includegraphics[width=.9\textwidth]{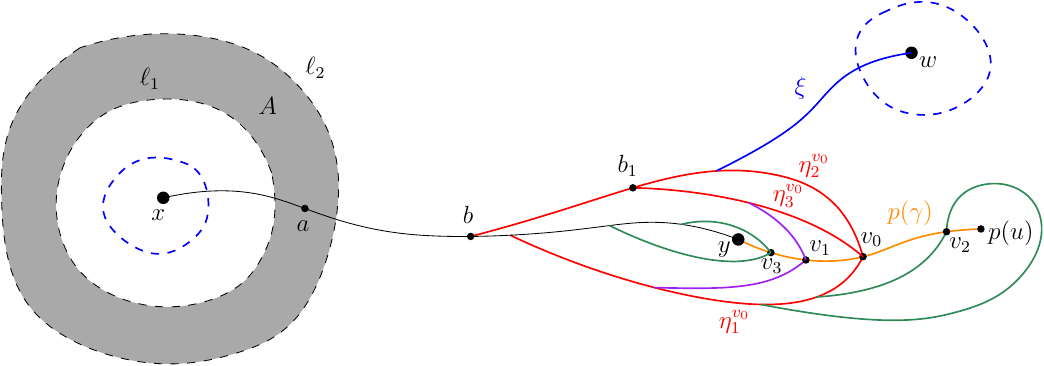}
\end{center}
\caption{\label{fig:two_geo_dim_two} Illustration of the setup to prove Lemma~\ref{lem:two_geo_dim_two}.  The path $p(\gamma)$ is shown in orange and geodesics from $v_0, v_1$ to the root $x$ are shown in red and purple.  The point $w$ is chosen according to $\nu$ independently of everything else.}
\end{figure}

We are now ready to prove Lemma~\ref{lem:two_geo_dim_two}.

\begin{proof}[Proof of Lemma~\ref{lem:two_geo_dim_two}]
We will prove the lemma in three steps.
Let  $(\CS,d,\nu,x,y)$ be distributed according to $\bminflaw$.  We will first construct an event for $(\CS,d,\nu,x,y)$ with positive $\bminflaw$ measure, as well as the sets $A$ and $\wt \Phi_2$,  $\wt \Phi_3$ (Steps 1 and 2). Then we will cut out a metric band from $\CS$ (Step 3) which contains $A$ and $\wt \Phi_2$, $\wt \Phi_3$ so that the event  in the statement of the lemma holds for this band.
See Figure~\ref{fig:two_geo_dim_two} for an illustration of the setup and proof.  

\emph{Step 1: Constructing the annulus $A$.}
We perform a reverse metric exploration from $x$ to $y$.  Fix $c>0$, and let $T_c$ be the first time $t$ such that the boundary length of $\partial\fb{x}{y}{d(x,y)-t}$ reaches $c$ (we condition on the event that $T_c <\infty$ which has positive and finite $\bminflaw$ measure).  Let $c_0\in(0,1)$ be given by Lemma~\ref{lem:geo_merge}, and let $T_{c, c_0}$ be the first time $t>T_c$ such that the boundary length of $\partial\fb{x}{y}{d(x,y)-t}$ reaches $cc_0$.
We further condition on the event $F$ from Lemma~\ref{lem:geo_merge} (which occurs with positive probability for the band $\fb{x}{y}{d(x,y)-T_c}\setminus \fb{x}{y}{d(x,y)-T_{c, c_0}}$). This constructs the annulus $A$, together with $\ell_1, \ell_2$ and $a\in A$.
Moreover, given the boundary length of $\partial \fb{x}{y}{d(x,y)-T_{c, c_0}}$, the space $\fb{x}{y}{d(x,y)-T_{c, c_0}}$ is independent of $\CS \setminus \fb{x}{y}{d(x,y)-T_{c, c_0}}$.

\emph{Step 2: constructing the sets $\wt\Phi_2$ and $\wt\Phi_3$.}
Let $(X,Y)$ be the Brownian snake encoding of $(\CS,d,\nu,x,y)$.  That is, $X \colon [0,T] \to \R_+$ is a Brownian excursion and its length is equal to $\nu(\CS)$.  Let $\CT$ be the CRT instance which is encoded by $X$ and let $p \colon \CT \to \CS$ be the projection map.  Let $\rho_\CT$ be the root of $\CT$ (so that $p(\rho_\CT) = y$).
Almost surely (w.r.t.\ the conditional probability), there exists a point $u \in \CT$ such that 
\begin{enumerate}[(i)]
\item $\CT \setminus \{u\}$ is disconnected, i.e., $u \in \CT$ is not a leaf (nor the root);
\item Let $\gamma$ be the geodesic in $\CT$ from $u$ to $\rho_\CT$.  Then $p(\gamma) \subseteq \fb{x}{y}{d(x,y)-T_{c, c_0}}$.
\end{enumerate}
This is because 
for any $u \in\CT$ which is not a leaf (nor the root), each point on the path $\gamma$ from $u$ to $\rho_\CT$ (except the point $\rho_\CT$) is also not a leaf (nor the root). If $p(\gamma)$ is not contained in $\fb{x}{y}{d(x,y)-T_{c, c_0}}$, then we can choose $u$ to be another point on $\gamma$ which is closer to $\rho_\CT$ so that the new path is contained in $\fb{x}{y}{d(x,y)-T_{c, c_0}}$.
From now on, we fix such $u,\gamma$ (we note that we can choose $u$, $\gamma$ in a measurable way by starting with the longest branch in $\CT$ from $\rho_\CT$ and then taking $\gamma$ to be the sub-branch starting from $u$ so that it satisfies the above properties).  

By \cite[Theorem~1.4]{lg2010geodesics}, we know that for each $v \in p(\gamma)$ there are either two or three geodesics from $v$ to $x$. Let $P_2(\gamma)$ (resp.\ $P_3(\gamma)$) be the set of points $v \in p(\gamma)$ from which there are exactly two (resp.\ three) geodesics to $x$. 
It follows from \cite[Proposition~3.3]{lg2010geodesics} that it is a.s.\ the case that for any $v_0, v_1 \in p(\gamma)$ distinct, the part of $p(\gamma)$ between $v_0$ and $v_1$ has Hausdorff dimension $2$.  The set $P_3(\gamma)$ is countable and dense in $p(\gamma)$. 
For any $v_0, v_1 \in p(\gamma)$ distinct, the points on the part of $p(\gamma)$ between $v_0$ and $v_1$ which belong to $P_2(\gamma)$ also has Hausdorff dimension $2$.
In addition, we have the following facts.

\begin{itemize}
\item For each $v\in P_2(\gamma)$, let $\eta_1^v$ and $\eta_2^v$ be the two geodesics from $v$ to $x$.
There exists $r>0$ such that $\eta_1^v((0,r)) \cap \eta_2^v((0,r)) = \emptyset$. 
By \cite[Lemma 7.3]{lg2010geodesics}, we know that $\eta_1^v,\eta_2^v$ do not intersect $p(\gamma)$ except at their starting point.
In fact, we further know that, by possibly relabelling, $\eta_1^v$ and $\eta_2^v$ respectively hit $p(\gamma)$ at its left and right sides. This is because that,  the geodesics $\eta_1^v$, $\eta_2^v$  are branches in the geodesic tree, and emanate from different corners of $v$ in the dual tree $\CT$, hence stay at the two sides of $p(\gamma)$. (This can for example be seen from the picture where one obtains the Brownian map by gluing the processes $(X,Y)$ as described in \cite{ms2015axiomatic}.)
\smallskip

\item For each $v\in P_3(\gamma)$, let $\eta_1^v, \eta_2^v$ and $\eta_3^v$ be the three geodesics from $v$ to $x$.
There exists $r>0$ such that $\eta_i^v((0,r)) \cap \eta_j^v((0,r)) = \emptyset$ for $i,j \in\{1,2,3\}$ distinct. By \cite[Lemma 7.3]{lg2010geodesics}, we know that $\eta_1^v,\eta_2^v, \eta_3^v$ do not intersect $p(\gamma)$ except at their starting point. In fact, we further know that, by possibly relabelling, $\eta_1^v$ and $\eta_2^v$ respectively hit $p(\gamma)$ at its left and right sides ($\eta_3^v$ can hit $p(\gamma)$ at either side). This is because that, the points $v\in P_3(\gamma)$ correspond to those points that have exactly three corners in the dual tree $\CT$. Exactly two of the three corners are at the same side of the branch $p(\gamma)$.
The geodesics $\eta_1^v$, $\eta_2^v$ and $\eta_3^v$  are branches in the geodesic tree, and exactly two of them would stay at the same side of $p(\gamma)$. (This can for example be seen from the picture where one obtains the Brownian map by gluing the processes $(X,Y)$ as described in \cite{ms2015axiomatic}. The points $v\in P_3(\gamma)$ correspond to the local minima of $X$.)

\end{itemize}

Fix  $v_0\in P_3(\gamma)$ that we will adjust later. We already know that there exist two geodesics $\eta^{v_0}_1, \eta^{v_0}_2$ from $v_0$ to $x$ that hit $p(\gamma)$ respectively at its left and right sides. Suppose that $\eta^{v_0}_1$ and $\eta^{v_0}_2$ merge at $b$. Then the two sub-geodesics of  $\eta^{v_0}_1, \eta^{v_0}_2$ from $v_0$ to $b$ form a closed contour which separates $\CS$ into two connected components $\CC_1$ and $\CC_2$. Exactly one of the two endpoints of $p(\gamma)$ will be in the same connected component as $x$. Let us choose $v_0$ so that $p(u)$ is in the same connected component as~$x$, and we denote this connected component by $\CC_2$. This choice implies that $y\in\CC_1$. (In Figure~\ref{fig:two_geo_dim_two}, the point $v_2$ does not satisfy the criteria of this choice.) Such a point $v_0$ exists, because otherwise, there would be a sequence of points $(v_n)_{n\ge 4}$ in $P_2(\gamma)$ tending to $y$, and geodesics $\eta_n$ from $v_n$ to $x$, such that a subsequence of $\eta_n$ would converge to a geodesic from $y$ to $x$ which is distinct from $\eta$, leading to a contradiction.
Without loss of generality, we can assume that the third geodesic $\eta^{v_0}_3$ merges with  $\eta^{v_0}_2$ (at a point that we denote by $b_1$) before merging with $\eta^{v_0}_1$. By possibly relabelling $\eta^{v_0}_2$ and $\eta^{v_0}_3$, we can suppose that the part of $\eta^{v_0}_3$ from $v_0$ to $b_1$ is contained in $\CC_1$. 
From now on, let $\wt\Phi_3=\{v_0\}$.

Now let us prove that there a.s.\ exists $v_1\in P_2(\gamma)$ which is on the path $p(\gamma)$ between~$y$ and~$v_0$, with the property that for any point $v\in p(\gamma)$ which is between~$v_1$ and~$v_0$,  every geodesic from $v$ to $x$ which hits $p(\gamma)$ at its left (resp.\ right) merges with $\eta^{v_0}_1$ (resp.\ $\eta^{v_0}_3$) before hitting $\eta^{v_0}_3$ (resp.\ $\eta^{v_0}_1$). 
(In Figure~\ref{fig:two_geo_dim_two}, the point $v_3$ does not satisfy this condition.) 
If it is not the case, then there exists a sequence $(v_n)_{n\ge 4}$ of points on the path $p(\gamma)$ between $y$ and $v_0$, tending to $v_0$, with the property that for each $n\ge 4$, there exists a geodesic $\eta^{v_n}$ from $v_n$ to $x$ which hits $p(\gamma)$ at its left (or right) but merges with $\eta^{v_0}_3$ (or $\eta^{v_0}_1$)  before hitting $\eta^{v_0}_1$ (or $\eta^{v_0}_3$). By passing to a subsequence, without loss of generality, we can assume that for all $n\ge 4$, $\eta^{v_n}$ hits $p(\gamma)$ at its right and merges with $\eta^{v_0}_1$ before hitting $\eta^{v_0}_3$. Let $\wt\eta$ be a subsequential limit of $\eta^{v_n}$, then $\wt\eta$ must be disjoint from the part of $\eta^{v_0}_3$ between $v_0$ and $b$, because otherwise by Theorem~\ref{thm:strong_confluence2}, there would exist $n\ge 4$ such that $\eta^{v_n}$ hits the part of $\eta^{v_0}_3$ between $v_0$ and $b$, hence $\eta^{v_n}$ merges with $\eta^{v_0}_3$ before $\eta^{v_0}_1$, which contradicts our assumption. By topological reasons, $\wt\eta$ must also be disjoint from the part of $\eta^{v_0}_2$ between $v_0$ and $b$. 
In particular, $\wt\eta$ is distinct from $\eta^{v_0}_2$ and $\eta^{v_0}_3$. Since $\wt\eta$ hits $p(\gamma)$ at its right, it is also distinct from $\eta^{v_0}_1$.
This leads to four distinct geodesics from $v_0$ to $x$, leading to a contradiction. Therefore, we have proved the existence of $v_1$. 

Let $\wt\Phi_2$ be the set of points on the path $p(\gamma)$ between $v_0$ and $v_1$ which belong to $P_2(\gamma)$.
We know that $\dimH(\wt\Phi_2)\ge 2$.
Since the distance from $\wt\Phi_2\cup \wt\Phi_3$ to $x$ is a.s.\ finite, there exists $d_1>0$ such that this distance is at most $d_1$ with positive probability.

\emph{Step 3: End of the proof.}
Let $U$ be the connected component of $\CS \setminus (\eta^{v_0}_1 \cup \eta^{v_0}_2)$ with $x$ on its boundary.  Let $w$ be a point sampled from $\nu$ independently of everything else.  Then it is a positive conditional probability event (given everything else) that the a.s.\ unique geodesic $\xi$ from $w$ to $x$ hits $\partial U$ first at a point on the part of $\eta^{v_0}_2$ between $v_0$ and $b_1$.
Note that $(\CS,d,\nu,x,w)$ has the same distribution as $(\CS,d,\nu,x,y)$ by the root invariance of the Brownian map.
Moreover, for each $v\in \wt\Phi_2$, there are exactly two geodesics $\eta^v_1$ and $\eta^v_2$ from $v$ to $x$, and they merge with $\xi$ in a way that satisfies the conditions (i) (ii) (iii) in the item (II) of the statement of the lemma, for $(\eta^v_1, \eta^v_2, \xi)$ instead of $(\eta_1, \eta_2, \eta)$. Note that $\eta^v_1$ and $\eta^v_2$ hit $\xi$ at two points $b_1$ and $b_2:=b$, for any choice of $v\in \wt \Phi_2$.
For the point $v_0\in\wt\Phi_3$, there are exactly three geodesics $\eta^v_1, \eta^v_2$ and $\eta^v_3$ from $v$ to $x$, and they merge with $\xi$ in a way that satisfies the conditions (i) (ii) (iii) in the item (III) of the statement of the lemma, for $(\eta^v_1, \eta^v_2, \eta^v_3, \xi)$ instead of $(\eta_1, \eta_2, \eta_3, \eta)$. 

It remains to cut out a metric band from $\CS$ which contains the union of $A$, $\wt\Phi_2, \wt\Phi_3$ and all the geodesics from points in $\wt\Phi_2 \cup \wt\Phi_3$ to $b$. Note that this union a.s.\ has positive distance from $x$ and $w$. Also note that $$\inf_{v\in\wt\Phi_2}\{ d(v,w) + d(w,x) -d(v,x)\} >0,$$
because otherwise there would exist $v$ in the closure of $\wt\Phi_2$ (which is equal to the part of $p(\gamma)$ from $v_1$ to $v_0$) such that there is a geodesic from $v$ to $x$ which passes through $w$, which is not the case.

Let us do a reverse metric exploration from $x$ to $w$.
 Fix $c_1>0$, and let $T_{1}$ be the first time $t$ such that the boundary length of $\partial\fb{x}{w}{d(x,w)-t}$ reaches $c_1$. Fix $c_2 \in(0,1)$, and let $T_2$  be the first time $t \ge  T_1$ such that the boundary length of $\partial\fb{x}{w}{d(x,w)-t}$ reaches $c_1c_2$. Then we have the following facts.
\begin{itemize}
\item As $c_1\to 0$, we have $T_{1}$ tends to $0$ and $\partial \fb{x}{w}{d(x,w)- T_{1}}$ shrinks to $x$. 
\item For each given $c_1>0$, as $c_2\to 0$,  we have $T_{2}$ tends to $d(x,w)$ and $\partial \fb{x}{w}{d(x,w)- T_{2}}$ shrinks to $w$.
\end{itemize}
This implies that there exist $c_1, c_2>0$ such that with positive (conditional) probability, $ \fb{x}{w}{d(x,w)-T_1} \setminus \fb{x}{w}{d(x,w)- T_{2}}$ contains the union of $A$, $\wt\Phi_2, \wt\Phi_3$ and all the geodesics from points in $\wt\Phi_2 \cup \wt\Phi_3$ to $b$, and 
\begin{align}\label{eq:geo_cond_length}
d(x,w) - T_{2} <\frac12\, \inf_{v\in\wt\Phi_2}\{ d(v,w) + d(w,x) -d(v,x)\}.
\end{align}
We further condition on this event. 
 
 Fix $c_3< c_2$, and let $T_3$ be the first time $t\ge T_2$ such that  the boundary length of $\partial\fb{x}{w}{d(x,w)-t}$ reaches $c_1 c_3$. The band $ \fb{x}{w}{d(x,w)-T_2} \setminus \fb{x}{w}{d(x,w)-T_3}$ is independent from $\CS \setminus \fb{x}{w}{d(x,w)-T_2}$, and it is a positive probability event that $T_3 - T_2 > d_1$. We further condition on this event. We remark that such a conditioning on $\CS$ can \emph{a priori} modify the geodesics from $v\in\wt\Phi_2 \cup \wt\Phi_3$ to $x$ in $\CS$, but~\eqref{eq:geo_cond_length} ensures that  every geodesic $\wt\xi$ from $v\in\wt\Phi_2 \cup \wt\Phi_3$ to $x$ in $\CS \setminus \fb{x}{w}{d(x,w)-T_2}$ is also a geodesic in $\CS \setminus \fb{x}{w}{d(x,w)-T_3}$, because $\wt\xi$ cannot intersect $\partial  \fb{x}{w}{d(x,w)-T_2}$ otherwise it will be too long.
 
 Altogether, we have constructed the event in the statement of the lemma for the band $ \fb{x}{w}{d(x,w)- T_{1}} \setminus \fb{x}{w}{d(x,w)-T_3}$. This event has positive probability. The band $ \fb{x}{w}{d(x,w)- T_{1}} \setminus \fb{x}{w}{d(x,w)-T_3}$, after rescaling, has the same law as $\CB \setminus \CB_{\tau_{c_3}}$. This completes the proof of the lemma. 
\end{proof}

\subsection{Completion of the proof}\label{subsec:geo_u}
In this subsection, we will first prove Lemmas~\ref{lem:five_geo_pos_prob}, \ref{lem:seven_geo_pos_prob} and~\ref{lem:eight_geo_pos_prob}, and then complete the proof of Proposition~\ref{prop:five_geodesics_lbd}.

\begin{figure}[ht!]
\begin{center}
\includegraphics[scale=0.85]{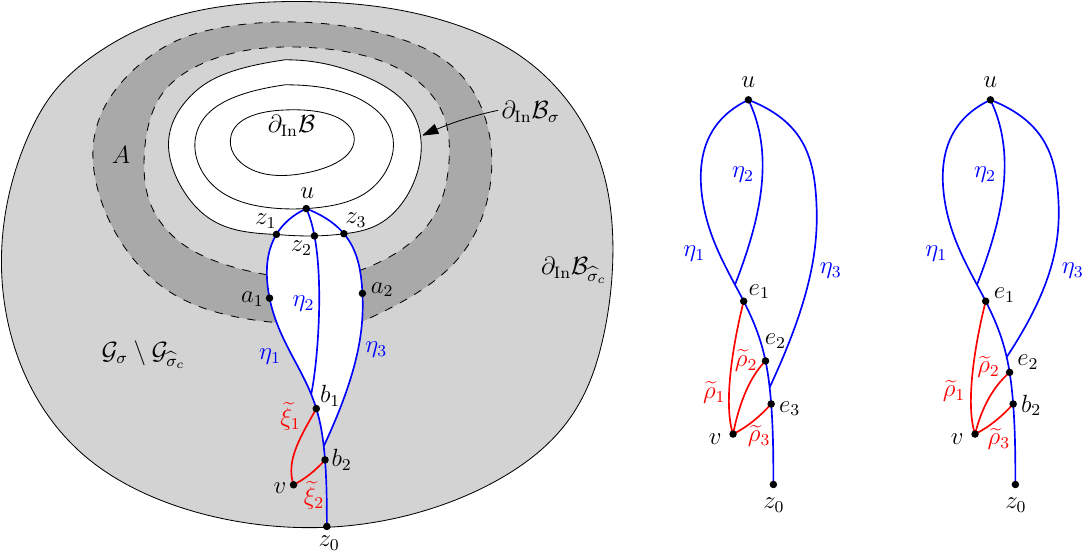}
\end{center}
\caption{\label{fig:5_7_8_geo_band} {\bf Left:} Setup of the proof and configuration to get $5$ geodesics.  {\bf Middle:}  Configuration to get $7$ geodesics.  {\bf Right:} Configuration to get $8$ geodesics.}
\end{figure}

\begin{lemma}
\label{lem:five_geo_pos_prob}
There exist $p, c_4 \in(0,1)$ and $d_2>0$ such that with probability at least $p$ the following event holds for the band $\CB\setminus \CB_{\tau_{c_4}}$.  There exists a set $\wh\Phi_5 \subseteq (\CB\setminus \CB_{\tau_{c_4}})^2$ such that $\dimH(\wh\Phi_5) \geq 2$, and that for each $(u,v) \in \wh\Phi_5$,
\begin{enumerate}[(i)]
\item $u$ and $v$ are connected by exactly $5$ geodesics; 
\item both $u$ and $v$ have distance at least $d_2$ from $\innerboundary\CB \cup \innerboundary\CB_{\tau_{c_4}}$, and  $d_{\CB}(u,v) \le d_2/2$.
\end{enumerate}
\end{lemma}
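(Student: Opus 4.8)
The plan is to construct, inside the band $\CB$, a single ``meeting point'' $u$ from which three geodesics emanate that are disjoint on an initial segment, arrange the family $\wt\Phi_2$ from Lemma~\ref{lem:two_geo_dim_two} so that its geodesics to $z$ can be concatenated with the geodesics from $u$ to produce exactly five geodesics between $u$ and each $v \in \wt\Phi_2$, and then cut out a slightly larger band in which all of this is visible. First I would invoke Lemma~\ref{lem:two_geo_dim_two}: on the positive-probability event $E$ for the band $\CB\setminus\CB_{\tau_c}$, we are handed the annulus $A$ with its bottleneck point $a$, the set $\wt\Phi_2$ of dimension at least $2$ lying in the region between $\ell_2$ and $\innerboundary\CB_{\tau_c}$ and bounded away from $\innerboundary\CB_{\tau_c}$, and the geodesic $\eta$ from $z=\innerboundary\CB$-side to $\outerboundary\CB = \{x_0\}$, with the property that the two geodesics $\eta_1^v,\eta_2^v$ from each $v\in\wt\Phi_2$ to $z$ merge into $\eta$ from its right side after crossing $A$ through $a$. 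The key point is that by item (I) every geodesic from $\ell_1$ to $\ell_2$ passes through $a$, so any geodesic from $v$ (which lies outside $A$, between $\ell_2$ and $\innerboundary\CB_{\tau_c}$) all the way through $A$ to the far side must factor through $a$; this is the mechanism that will force geodesics to coalesce and lets us ``splice.''

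Next I would construct the point $u$. The idea is to take $u$ to be a $3$-star point of the ambient Brownian map (these exist by Le Gall's lower bound \cite{lg2021star}, but for a soft positive-probability statement one can instead build $u$ directly as a point at which three geodesics to a typical point split, using the dual-tree structure as in the proof of Lemma~\ref{lem:two_geo_dim_two}), positioned on the $\innerboundary\CB_{\tau_c}$ side of $A$ relative to $\wt\Phi_2$, with its three disjoint initial geodesic segments routed so that: one of them, call it $\xi_0$, goes around the ``far side'' and hits $\ell_1$, passes through $a$, and then continues along the branch of $\eta$; and the other two, $\xi_1,\xi_2$, stay on the near side. Then for $v\in\wt\Phi_2$: the geodesic from $v$ to $z$ along $\eta_1^v$ (resp.\ $\eta_2^v$) merges into $\eta$, and by the bottleneck at $a$ and confluence (Theorem~\ref{thm:strong_confluence2}, Proposition~\ref{prop:no_geodesic_bump}) a geodesic from $v$ to $u$ must, on the $u$-side, agree with a prefix of one of $\xi_0,\xi_1,\xi_2$; counting the combinatorial possibilities — two ``branches of $\eta$'' seen by $v$ times the branches at $u$, together with the constraint from Theorem~\ref{thm:intersection_of_geodesics} that no split-then-remerge can occur — should give exactly $5$ geodesics between $u$ and $v$. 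Concretely one expects a configuration with $(I,J,K)=(3,2,2)$ as in Figure~\ref{fig:optimal_configurations}, realized by a normal $(3,2)$-network-like picture whose two ``$v$-branches'' split off below a single merge point. The parameters $d_2$ and $c_4$ are then obtained by the same band-cutting argument as at the end of the proof of Lemma~\ref{lem:two_geo_dim_two}: reverse-explore from $x_0$ inward, pick radii so that the resulting band contains $A$, $\wt\Phi_2$, $u$, and all the relevant geodesic pieces, and so that boundary-length conditioning does not destroy minimality of the spliced geodesics (using an inequality of the form \eqref{eq:geo_cond_length} to guarantee the concatenations remain geodesics); since all the individual events are positive-probability and the dimension bound $\dimH(\wt\Phi_2)\ge 2$ is preserved when we set $\wh\Phi_5 = \{u\}\times\wt\Phi_2$ (or a dimension-$2$ subset thereof), the lemma follows with some $p>0$.

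The main obstacle I anticipate is the exact geodesic bookkeeping: showing that the spliced paths (a geodesic from $v$ to $z$ concatenated with a reversed geodesic from $z$ to $u$, or truncations thereof at $a$) are genuinely geodesics from $v$ to $u$ and that there are \emph{exactly} five of them, not four or six. The ``at most'' direction requires ruling out, via Theorem~\ref{thm:intersection_of_geodesics} and the confluence results, any geodesic from $v$ to $u$ whose behavior near $u$ does not match one of $\xi_0,\xi_1,\xi_2$ and near $v$ does not match $\eta_1^v$ or $\eta_2^v$; the ``at least'' direction requires verifying additivity of distances along the splice, which is where the bottleneck point $a$ and the careful routing of $\xi_0$ through $a$ do the work — any geodesic realizing $d(u,v)$ must cross $A$ and hence pass through $a$, so $d(u,v) = d(u,a) + d(a,v)$, and the various prefixes/suffixes through $a$ are then manifestly geodesic. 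Getting the topology of the routing right (so that $\xi_1,\xi_2$ do not accidentally intersect $\wt\Phi_2$'s geodesics on the wrong side, forcing extra merges) is the delicate planar combinatorics; this is exactly the kind of configuration-counting that Theorem~\ref{thm:finite_number_of_geodesics} was designed to constrain, so one can cross-check that $(3,2,2)$ indeed yields $g(3+4+2)=g(9)$-compatible counts, i.e.\ at most $6$, and that the disjointness hypotheses we impose cut this down to exactly $5$.
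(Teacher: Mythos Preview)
Your high-level picture is right — take $\wh\Phi_5=\{u\}\times\wt\Phi_2$ with $u$ a $3$-star point and $\wt\Phi_2$ coming from Lemma~\ref{lem:two_geo_dim_two} — but the mechanism you describe has two genuine gaps that make the argument, as written, not go through.

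First, the role of the annulus $A$ is not what you say. You claim every $u$--$v$ geodesic must cross $A$ and hence pass through $a$, so $d(u,v)=d(u,a)+d(a,v)$. But this is internally inconsistent with your own routing: you want $\xi_1,\xi_2$ to ``stay on the near side'' and not pass through $a$, yet be geodesics. And even if all geodesics did factor through $a$, the count would be multiplicative, and $5$ does not factor. In the paper's proof $A$ is used only negatively, to rule out a spurious fourth initial branch from $u$ and to confine the $v$-side of any geodesic to the slice; the five geodesics themselves do \emph{not} go through $a$.

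Second, and more seriously, you are missing the structural device that makes the concatenation work and the count come out to exactly five. The paper first locates $u$ on some $\innerboundary\CB_{\sigma_0}$ with its three geodesics $\eta_1,\eta_2,\eta_3$ to $\outerboundary\CB$, then takes the slice $\CG_\sigma$ of $\CB_\sigma$ bounded by $\eta_1,\eta_3$, glues its two geodesic sides to form a band $\wt\CB$ with marked geodesic $\wt\eta=\eta_1\sim\eta_3$, and only then applies Lemma~\ref{lem:two_geo_dim_two} inside $\wt\CB$. This guarantees that the two geodesics $\wt\xi_1,\wt\xi_2$ from $v\in\wt\Phi_2$ to $\wt z$ hit $\wt\eta$ (hence $\eta_1$) at points $b_1,b_2$, so they can be concatenated with $\eta_1|_{[u,b_i]}$ to give honest geodesics $u\to v$. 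Then — and this is the step your proposal has no analogue of — one conditions on the event $E_1$ that $\eta_2$ merges into $\eta_1$ strictly between $z_1$ and $b_1$, and $\eta_3$ merges into $\eta_1$ strictly between $b_1$ and $b_2$. This event depends only on the complementary slice $\CB_\sigma\setminus\CG_\sigma$, so it has positive probability independently of everything built in $\CG_\sigma$, and it is exactly what forces the count to be five: $\eta_1,\eta_2$ can each reach $v$ via either $\wt\xi_1$ or $\wt\xi_2$ (four geodesics), while $\eta_3$ can reach $v$ only via $\wt\xi_2$ (one more). Without pinning down the merge locations relative to $b_1,b_2$ you would get four or six.

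In short: reverse the order (build $u$ and its three geodesics first, then invoke Lemma~\ref{lem:two_geo_dim_two} in the glued slice), and replace the ``everything goes through $a$'' heuristic by the explicit merging-pattern conditioning on the complementary slice.
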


\begin{lemma}
\label{lem:seven_geo_pos_prob}
There exist $p, c_4 \in(0,1)$ and $d_2>0$ such that with probability at least $p$, the following event holds for the band $\CB\setminus \CB_{\tau_{c_4}}$.  There exists a non-empty set $\wh\Phi_7 \subseteq (\CB\setminus \CB_{\tau_{c_4}})^2$ such that for each $(u,v) \in \wh\Phi_7$,
\begin{enumerate}[(i)]
\item $u$ and $v$ are connected by exactly $7$ geodesics; 
\item both $u$ and $v$ have distance at least $d_2$ from $\innerboundary\CB \cup \innerboundary\CB_{\tau_{c_4}}$, and  $d_{\CB}(u,v) \le d_2/2$.
\end{enumerate}
\end{lemma}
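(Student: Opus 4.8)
\textbf{Proof proposal for Lemma~\ref{lem:seven_geo_pos_prob}.}
The plan is to follow the same strategy as for Lemma~\ref{lem:five_geo_pos_prob}, but starting from a point of $\wt\Phi_3$ rather than $\wt\Phi_2$. First I would invoke Lemma~\ref{lem:two_geo_dim_two} to obtain, with positive probability, an annulus $A\subseteq\CB\setminus\CB_{\tau_c}$ through which every geodesic from $\ell_1$ to $\ell_2$ must pass through the single point $a$, together with the set $\wt\Phi_3$, which is disjoint from $A$, contained in the region between $\ell_2$ and $\innerboundary\CB_{\tau_c}$, has distance at most $d_1$ from $z$, distance at least $d_1$ from $\innerboundary\CB_{\tau_c}$, and is nonempty. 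Fix $v\in\wt\Phi_3$. By the conclusions of item (III) of Lemma~\ref{lem:two_geo_dim_two}, there are exactly three geodesics $\eta_1,\eta_2,\eta_3$ from $v$ to $z$, disjoint on an initial interval, each of which hits $\eta$ on its right side, intersecting $\eta$ before intersecting one another, and then merging with the time-reversal of $\eta$.

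Next I would construct the ``$u$-side''. Resampling the root of the band (using the invariance of the Brownian map under re-rooting together with the conditional independence of metric bands given their boundary lengths), I would, with positive probability, cut out a point $u$ sitting on the appropriate side of $\eta$ near $z$ from which emanate three geodesics $\xi_1,\xi_2,\xi_3$ that are disjoint on an initial interval and then merge into $\eta$ (reading $\eta$ in the forward direction now). The geometry is arranged exactly as in Figure~\ref{fig:5_7_8_geo_band} (Middle): each of the three geodesics $\eta_i$ from $v$ can be continued past its merge point with $\eta$ along $\eta$ and then will split off along one of the three branches $\xi_j$ on the $u$-side. Because every geodesic crossing $A$ must pass through $a$, and because $\wt\Phi_3$ and $u$ are both outside $A$ on the same side, any geodesic from $u$ to $v$ is forced to run along the common ``trunk'' through $a$; the only freedom is which of the three branches it uses on each end, so the total number of geodesics from $u$ to $v$ is $3\times 3 - (\text{merges forced by Theorem~\ref{thm:intersection_of_geodesics}})$. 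As in the five-geodesic count, the configuration here is a normal network variant giving exactly $7$ geodesics (this is the $(I,J,K)$ triple with $I+2J+K=10$ from Figure~\ref{fig:optimal_configurations}); I would verify the count of $7$ by the same bookkeeping as in Lemma~\ref{lem:five_geo_pos_prob}, using Theorem~\ref{thm:intersection_of_geodesics} to rule out split-then-remerge and Proposition~\ref{prop:two_point_number_of_geodesics} / Theorem~\ref{thm:finite_number_of_geodesics} to confirm that no extra geodesics can appear. Taking $\wh\Phi_7=\{(u,v)\}$ gives a nonempty set (no dimension lower bound is claimed here, unlike the $\dimH\ge 2$ in Lemma~\ref{lem:five_geo_pos_prob}, because $\wt\Phi_3$ itself is only known to be nonempty).

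Finally I would handle the metric bookkeeping: choose $c_4<c$ (or glue on an extra thin band, exactly as in the last step of the proof of Lemma~\ref{lem:two_geo_dim_two}) so that $\CB\setminus\CB_{\tau_{c_4}}$ contains $A$, $\wt\Phi_3$, the point $u$, and all the geodesic segments connecting $u$ and $v$; and pick $d_2>0$ small enough that, on the positive-probability event just described, $u$ and $v$ both have distance at least $d_2$ from $\innerboundary\CB\cup\innerboundary\CB_{\tau_{c_4}}$ while $d_\CB(u,v)\le d_2/2$. The re-rooting and band-surgery arguments are routine given Lemma~\ref{lem:two_geo_dim_two} and Section~\ref{subsec:metric_bands}. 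I expect the main obstacle to be the same as in Lemma~\ref{lem:five_geo_pos_prob}: carefully verifying that the concatenation of a geodesic segment from $v$, a stretch of $\eta$, and a geodesic segment into $u$ is actually a geodesic (and that one gets exactly $7$, not more or fewer), which requires using item (III)(ii) of Lemma~\ref{lem:two_geo_dim_two} — that each $\eta_i$ hits $\eta$ before hitting the other geodesics from $v$ — together with the analogous property on the $u$-side, so that the ``trunk'' along $\eta$ is traversed without detours, and then invoking Theorem~\ref{thm:intersection_of_geodesics} to see that the branches on the two ends combine without any forced extra merging.
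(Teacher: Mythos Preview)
Your high-level instinct—run the same argument as for five geodesics but start from a point $v\in\wt\Phi_3$ rather than $\wt\Phi_2$—is exactly what the paper does, and you are right that no dimension lower bound is needed here. However, two substantive pieces are missing or mis-ordered.

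First, the construction of $u$. In the paper the order is the \emph{reverse} of yours: the point $u$ with three disjoint initial geodesic segments $\eta_1,\eta_2,\eta_3$ (toward $\outerboundary\CB$) is found \emph{first}, using the density of points with three geodesics to the center and a stopping-time argument in the reverse metric exploration. One then takes the geodesic slice $\CG_\sigma$ bounded by $\eta_1$ and $\eta_3$, glues its two sides to obtain a band $\wt\CB$, and applies Lemma~\ref{lem:two_geo_dim_two} \emph{inside that slice} to produce the annulus $A$ and the set $\wt\Phi_3$. Your proposal applies Lemma~\ref{lem:two_geo_dim_two} first and then tries to manufacture $u$ by ``resampling the root''; but root invariance alone does not hand you a specific $3$-star point on the correct side, and once you have conditioned the band to carry $A$ and $\wt\Phi_3$ you have lost the independence you would need to place $u$ afterward. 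The paper's order is what makes the independence clean: the complementary slice $\CB_\sigma\setminus\CG_\sigma$ (which contains $\eta_2$ and governs how $\eta_2,\eta_3$ merge into $\eta_1$) is conditionally independent of $\CG_\sigma$ (which contains $A$, $v$, and the three geodesics $\wt\rho_1,\wt\rho_2,\wt\rho_3$ from $v$ hitting $\eta_1$ at $e_1,e_2,e_3$).

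Second, and more importantly, the mechanism that forces the count to be \emph{exactly} $7$ is not ``$3\times 3$ minus merges forced by Theorem~\ref{thm:intersection_of_geodesics}''. Theorem~\ref{thm:intersection_of_geodesics} does not force any merging; a genuine $(3,3)$ normal network has $9$ geodesics. What pins the count at $7$ is a further \emph{conditioning} on an event $E_2$ concerning the positions along $\eta_1$ at which $\eta_2$ and $\eta_3$ merge relative to $e_1,e_2,e_3$: one requires $\eta_2$ to merge with $\eta_1$ between $z_1$ and $e_1$, and $\eta_3$ to merge between $e_2$ and $e_3$. Then from $u$ one has $\{\eta_1,\eta_2\}\times\{\wt\rho_1,\wt\rho_2,\wt\rho_3\}$ giving $6$ geodesics, while $\eta_3$ reaches only $e_3$ and contributes $1$, for a total of $7$. (Conditioning instead on $\eta_3$ merging between $e_1$ and $e_2$ yields $8$, which is how Lemma~\ref{lem:eight_geo_pos_prob} is obtained.) This event has positive probability because the merging heights are governed by independent $3/2$-stable CSBP extinction times with positive densities, and it depends only on the complementary slice, hence is independent of the event from Lemma~\ref{lem:two_geo_dim_two}. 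The verification that there are no \emph{other} geodesics then uses the annulus $A$ and the choice of $u$ to force every geodesic from $u$ to $v$ to start along one of $\eta_1,\eta_2,\eta_3$ and to end along one of $\wt\rho_1,\wt\rho_2,\wt\rho_3$; this is the part your proposal sketches correctly. (Incidentally, the relevant $(I,J,K)$ satisfies $I+2J+K=11$, not $10$.)
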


\begin{lemma}
\label{lem:eight_geo_pos_prob}
There exist $p, c_4 \in(0,1)$ and $d_2>0$ such that with probability at least $p$, the following event holds for the band $\CB\setminus \CB_{\tau_{c_4}}$.  There exists a non-empty set $\wh\Phi_8 \subseteq (\CB\setminus \CB_{\tau_{c_4}})^2$ such that for each $(u,v) \in \wh\Phi_8$,
\begin{enumerate}[(i)]
\item $u$ and $v$ are connected by exactly $8$ geodesics; 
\item both $u$ and $v$ have distance at least $d_2$ from $\innerboundary\CB \cup \innerboundary\CB_{\tau_{c_4}}$, and  $d_{\CB}(u,v) \le d_2/2$.
\end{enumerate}
\end{lemma}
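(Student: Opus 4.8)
\textbf{Proof proposal for Lemma~\ref{lem:eight_geo_pos_prob}.}
The plan is to carry out the same kind of construction as in Lemma~\ref{lem:five_geo_pos_prob} and Lemma~\ref{lem:seven_geo_pos_prob}, but now merging \emph{two} copies of the three-geodesic configuration encoded by $\wt\Phi_3$ with a three-geodesic star emanating from a point $u$.  Concretely, I would first use Lemma~\ref{lem:two_geo_dim_two} to produce, with positive probability, an annulus $A$ (with inner/outer boundaries $\ell_1,\ell_2$ and bottleneck point $a$ through which every $d_A$-geodesic from $\ell_1$ to $\ell_2$ must pass) together with the set $\wt\Phi_3$, all contained in $\CB\setminus\CB_{\tau_c}$, with the prescribed distance bounds from $z$ and from $\innerboundary\CB_{\tau_c}$.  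The role of $A$ and $a$ is exactly as in the companion lemmas: since $\innerboundary\CB$ sits inside $\ell_1$ and the target point $u$ (to be constructed below) will lie outside $\ell_2$, every geodesic from a point near $\innerboundary\CB$ to $u$ is forced through $a$; this means the ``highway'' part of all our geodesics is shared, so counting geodesics between $u$ and $v$ reduces to counting the disjoint initial pieces at each end.

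Next I would construct the point $u$ together with a $3$-geodesic star at $u$.  This is where I would invoke Lemma~\ref{lem:two_geo_dim_two} a second time, or rather build $u$ by hand inside a further metric band cut out of $\CB\setminus\CB_{\tau_c}$ that is (conditionally on its boundary length) independent of the band carrying $\wt\Phi_3$: I want a point $u$ from which three geodesics $\xi_1,\xi_2,\xi_3$ emanate, are disjoint on an initial interval (except at $u$), and then funnel into the common geodesic through $a$ towards $\innerboundary\CB$.  The construction of such a $3$-star point is available from the arguments already used in this section (it is the $u$-side analogue of the $\wt\Phi_3$ construction, using \cite[Theorem~1.4]{lg2010geodesics} on geodesics to the root and the gluing picture of \cite{ms2015axiomatic}), and it occurs with positive probability.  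I would then take $v\in\wt\Phi_3$, so that $v$ has exactly three geodesics $\eta^v_1,\eta^v_2,\eta^v_3$ to $z$, each of which, after possibly intersecting the common geodesic $\eta$ on its right side, merges into $\eta$.  By choosing the relative positions of the three $\xi_i$ and the three $\eta^v_j$ so that the six ``half-geodesics'' pair up into distinct routes — this is a planarity/topology bookkeeping step — the concatenations of $\eta^v_j$-initial-pieces with $\xi_i$-initial-pieces through $a$ will give the geodesics between $u$ and $v$.  With a $(3,3)$ pairing one generically gets a normal $(3,3)$-network, i.e.\ $9$ geodesics; to land on exactly $8$ I need one forced merging among the $\xi_i$'s or among the $\eta^v_j$'s before reaching $a$, i.e.\ I should choose the configuration so that $I+2J+K$ realizes the value giving $g(n)=8$ (namely $n=I+2J+K=10$, e.g.\ the configuration indicated in Figure~\ref{fig:optimal_configurations} for $i=8$): concretely this means two of the three $\eta^v_j$ merge, or two of the three $\xi_i$ merge, one step ``earlier'' than in the $(3,3)$ case, while all other routes stay disjoint down to $a$.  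Theorem~\ref{thm:intersection_of_geodesics} guarantees that once two of these geodesics have merged they cannot split again, so the count is stable, and Proposition~\ref{prop:two_point_number_of_geodesics} together with Theorem~\ref{thm:finite_number_of_geodesics} guarantees there are no \emph{extra} geodesics beyond the ones we have exhibited, so the number is \emph{exactly} $8$.

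Having produced the configuration in an ambient Brownian map instance, the final step — exactly as in Lemmas~\ref{lem:five_geo_pos_prob} and~\ref{lem:seven_geo_pos_prob} — is to cut out a single metric band from $\CS$ that contains $A$, the point $u$, the point $v$, and all the geodesics between $u$ and $v$, and to check that conditioning on the band does not destroy any of the geodesics.  This uses the reverse metric exploration: one takes a band of the form $\fb{x}{w}{d(x,w)-T_1}\setminus\fb{x}{w}{d(x,w)-T_3}$ for suitable stopping times $T_1<T_2<T_3$, chosen so that with positive probability the band contains everything relevant and so that a length inequality analogous to \eqref{eq:geo_cond_length} holds; the latter inequality forces every geodesic between $u$ and $v$ (computed in the truncated space) to stay inside the band, hence to coincide with the one we built.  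Rescaling, this band has the law $\CB\setminus\CB_{\tau_{c_4}}$ for an appropriate $c_4$, and the distance bounds in (ii) are inherited from the distance bounds in Lemma~\ref{lem:two_geo_dim_two} (the ``$d_1$'' there) after rescaling, giving the required $d_2$; taking $\wh\Phi_8=\{(u,v)\}$ (a single point suffices since we only need nonemptiness) finishes the proof.

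\textbf{Main obstacle.}  The genuinely delicate point is the topological bookkeeping in the second paragraph: one must verify that after gluing the two three-geodesic bundles (at $u$ and at $v$) along the shared highway through $a$, the resulting routes are pairwise distinct geodesics and that exactly one extra merging occurs so that the count is $8$ rather than $9$ or $7$.  This requires keeping careful track of which side of the common geodesic $\eta$ each $\eta^v_j$ and each $\xi_i$ approaches from (Lemma~\ref{lem:two_geo_dim_two} controls the $\eta^v_j$ side; the analogous statement must be arranged for the $\xi_i$), and then using the sphere topology together with Theorem~\ref{thm:intersection_of_geodesics} and Proposition~\ref{prop:no_geodesic_bump} to rule out accidental coincidences or re-splittings.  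Everything else is a positive-probability chaining of conditionally independent events, already standard in this section.
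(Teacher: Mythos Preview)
Your overall strategy (build a $3$-star at $u$, a $3$-star at $v$ via $\wt\Phi_3$, and splice them along a shared piece of geodesic) is the right one, but the mechanism you propose for landing on exactly $8$ does not work, and this is the crux of the lemma.

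You picture all six half-geodesics meeting at the single bottleneck point $a$, and then argue that ``one forced merging among the $\xi_i$'s or among the $\eta^v_j$'s before reaching $a$'' drops the count from $9$ to $8$. But if all routes pass through a common point $a$, the number of geodesics from $u$ to $v$ is simply $(\#\text{geodesics } u\to a)\times(\#\text{geodesics } a\to v)$; having two of the $\xi_i$ merge with each other earlier does not change the number of distinct geodesics from $u$ to $a$ (there are still three), so you still get $3\times 3=9$. No premature merging of this type produces $8$. Relatedly, your claimed value $I+2J+K=10$ is incorrect: $g(10)=5$, and the minimal $n$ with $g(n)\ge 8$ is $n=11$ (see the list after the proof of Theorem~\ref{thm:finite_number_of_geodesics}).

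The paper's construction is genuinely different in geometry. The point $u$ is taken to be a $3$-star point towards $\outerboundary\CB$ found on some $\innerboundary\CB_t$ (using density of such points from \cite{lg2010geodesics}); the three geodesics $\eta_1,\eta_2,\eta_3$ from $u$ go towards the root and bound a slice $\CG_\sigma$. Lemma~\ref{lem:two_geo_dim_two} is applied \emph{inside this slice} (after gluing its sides to form $\wt\CB$), producing $v\in\wt\Phi_3$ with three geodesics $\wt\rho_1,\wt\rho_2,\wt\rho_3$ that hit the identified boundary geodesic $\wt\eta$ (i.e.\ $\eta_1$ in the unglued picture) at points $e_1,e_2,e_3$. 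The complementary slice $\CB_\sigma\setminus\CG_\sigma$ is independent, and the count of $8$ comes from conditioning (event $E_3$) on a specific \emph{interleaving} along $\eta_1$: $\eta_2$ merges into $\eta_1$ before $e_1$, while $\eta_3$ merges into $\eta_1$ strictly between $e_1$ and $e_2$. Then starting from $u$: along $\eta_1$ or $\eta_2$ you can branch to any of $e_1,e_2,e_3$ (six geodesics), but along $\eta_3$ you have already passed $e_1$ when you join $\eta_1$, so you can only branch to $e_2$ or $e_3$ (two geodesics). Total $8$. The geodesics do \emph{not} all pass through a single common interior point.

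The annulus $A$ is not used as a highway for the constructed geodesics; it is used in the uniqueness step to rule out a hypothetical geodesic $\xi$ from $u$ to $v$ that starts disjoint from $\eta_1,\eta_2,\eta_3$: such a $\xi$ would be forced through $a_1\in\eta_1$ or $a_2\in\eta_3$, contradicting disjointness. That, plus a second topological step confining the tail of $\xi$ to the slice, shows the eight geodesics are the only ones, without appealing to Theorem~\ref{thm:finite_number_of_geodesics} or Proposition~\ref{prop:two_point_number_of_geodesics}. Finally, there is no need to embed back into an ambient Brownian map and cut via \eqref{eq:geo_cond_length}; the whole argument takes place in $\CB$, and one simply fixes $c_4$ with $\tau_{c_4}>\wh\tau_c$ at the end.
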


We remark that, for Lemmas~\ref{lem:five_geo_pos_prob}, \ref{lem:seven_geo_pos_prob} and~\ref{lem:eight_geo_pos_prob}, if we embed $\CB\setminus \CB_{\tau_{c_4}}$ into an ambient Brownian map, then the geodesics between $(u,v) \in \wh\Phi_i$ for $i=5,7,8$ are also geodesics in the Brownian map. Indeed, for a geodesic from $u$ to $v$ to exit $\CB\setminus \CB_{\tau_{c_4}}$, it must traverse distance at least $2d_2$, which is greater than $d_{\CB}(u,v)$.

\begin{proof}[Proof of Lemmas~\ref{lem:five_geo_pos_prob}, \ref{lem:seven_geo_pos_prob} and~\ref{lem:eight_geo_pos_prob}]
We will prove the three lemmas together. See Figure~\ref{fig:5_7_8_geo_band} for an illustration of the setup of the proof.  We will complete the proof in two steps.

\emph{Step 1: Construction of $\wh\Phi_5, \wh\Phi_7, \wh\Phi_8$ and proving that these sets satisfy the condition (ii) in each of Lemmas~\ref{lem:five_geo_pos_prob}, \ref{lem:seven_geo_pos_prob} and~\ref{lem:eight_geo_pos_prob}.}
This step is the same for Lemmas~\ref{lem:five_geo_pos_prob}, \ref{lem:seven_geo_pos_prob} and~\ref{lem:eight_geo_pos_prob}.

Fix $d_2>0$ that we will adjust later.  
First, we take the metric band $(\CB,d_\CB,\nu_\CB,z)$ with law $\bandlaw{1}{\infty}$, and condition on the positive probability event that $\tau_{3/4} > d_2$.
Conditionally on this event, the metric band $\CB_{\tau_{3/4}}$ is independent of $\CB \setminus \CB_{\tau_{3/4}}$ and has law $\bandlaw{3/4}{\infty}$.

Fix $\eps\in(0, d_2/100)$ that we will adjust later. Let $\sigma_0$ be the first time $t \ge \tau_{3/4}+ \eps$ that there exists $u \in \innerboundary \CB_t$ from which there emanate $3$ geodesics $\eta_1,\eta_2,\eta_3$ from $u$ to $\outerboundary \CB$ with $\eta_i((0,\epsilon)) \cap \eta_j((0,\epsilon)) = \emptyset$ for $i,j \in \{1,2,3\}$ distinct.   According to \cite{ms2015axiomatic}, the metric net of the Brownian map is distributed as a $3/2$-stable L\'evy net. The points on the metric net which have three distinct geodesics going to the root corresponds to the local minima of a $3/2$-stable L\'evy process (see in particular the proof of \cite[Proposition~3.4]{ms2015axiomatic} -- points with three distinct geodesics correspond to equivalence classes of Type~III). If we do a breadth-first exploration of the L\'evy net, then these local minima occur for a dense set of times. For each $\eps>0$, if we restrict to those local minima from which there are $3$ geodesics to the root which are disjoint on an initial time interval of length $\eps$, then we will encounter these local minima at a finite set of times. Let $\sigma = \sigma_0+\epsilon$.  
Note that  $\sigma$ is an $(\CF_t)$-stopping time. If we choose $\eps$ sufficiently small, then there exist $c_2\in(0,3/4)$ such that with positive probability $\sigma \leq \tau_{c_2}$.
From now on, we further condition on the event $\sigma \leq \tau_{c_2}$. The conditional law of $\CB_\sigma$ given $\CB \setminus \CB_\sigma$ is a metric band with boundary length equal to that of $\innerboundary \CB_\sigma$  and with width $\infty$.

Assume that $\eta_1,\eta_2,\eta_3$ are ordered from left to right, and that they hit $\innerboundary\CB_\sigma$ respectively at $z_1, z_2, z_3$.  Note that $\eta_1, \eta_2$ and $\eta_3$ divide $\CB_\sigma$ into three slices. Let $\CG_\sigma$ be the counterclockwise slice between $\eta_3$ and $\eta_1$. Given the boundary length of $\innerboundary \CG_\sigma$, the slice  $\CG_\sigma$ is independent of $\CB \setminus \CB_{\sigma}$ and $\eta_1, \eta_2, \eta_3$.  In addition, the boundary length of $\innerboundary\CG_\sigma$ is a positive random variable. Hence there exists $c_3>0$ such that with positive (conditional) probability, this boundary length is in the interval $(c_3, 9c_3/4)$.

For each $t > \sigma$, we let $\CG_t$ be the slice which corresponds to the set of points disconnected from $\innerboundary \CG_\sigma$ by the $d_0-t$ neighborhood of $\outerboundary \CG_\sigma$ where $d_0$ is the distance from $\innerboundary \CG_\sigma$ to $\outerboundary \CG_\sigma$.
Let $c\in(0,1)$ be the constant from Lemma~\ref{lem:two_geo_dim_two}. Let $\wh\tau_c$ be the first time $t>\sigma$ that the boundary length of $\innerboundary\CG_t$ reaches $c$ times the boundary length of $\innerboundary \CG_\sigma$. 

Now, take the slice $\CG_{\sigma} \setminus \CG_{\wh\tau_c}$, and glue its left and right boundaries together, so that we obtain a new metric band $\wt\CB$. The points $z_1$ and $z_3$ are identified, and form a new marked point $\wt z\in \innerboundary\wt\CB$. The geodesics $\eta_1$ and $\eta_3$ are also identified, and form the marked geodesic $\wt\eta$ from $\wt z$ to $\outerboundary\wt\CB$.  If we rescale the metric of $\wt\CB$ by the boundary length of $\innerboundary\CG_\sigma$  to the power $-1/2$ (and areas and distances accordingly), then its law is the same as $\CB \setminus \CB_{\tau_c}$.
Let $d_1$ be the constant from Lemma~\ref{lem:two_geo_dim_two} and let $\wt d_1$ be equal to $d_1$ times the boundary length of $\innerboundary\CG$ to the power $-1/2$.
Therefore, by Lemma~\ref{lem:two_geo_dim_two}, with positive probability, there exists an annulus $A \subseteq \wt\CB$, $a \in A$ and $\wt\Phi_2, \wt\Phi_3\subseteq \CB$ which satisfy the properties in the statement of Lemma~\ref{lem:two_geo_dim_two}, for $\wt d_1$ in place of $d_1$. In particular, the following facts hold.
\begin{itemize}
\item We must have $a\in \wt\eta$, since $\wt\eta$ is also a geodesic which traverses $A$. Let $a_1 \in\eta_1$ and $a_2 \in \eta_3$ be the points in $\CB$ which correspond to $a$.
\item For each $v\in \wt\Phi_2$, there are exactly two geodesics $\wt \xi_1$ and $\wt \xi_2$ (w.r.t.\ $d_{\wt\CB}$) from $v$ to $\wt z$.  For $i=1,2$, let $b_i$ be the point where $\wt\xi_i$ first intersects $\wt\eta$.
Since $\wt \xi_1$ and $\wt \xi_2$ hit $\wt\eta$ at its right side. Then $b_1$ and $b_2$ correspond to points on $\eta_1$ in $\CB$.

\noindent For each $v\in \wt\Phi_3$, there are exactly three geodesics $\wt \rho_1, \wt\rho_2$ and $\wt \rho_3$ (w.r.t.\ $d_{\wt\CB}$) from $v$ to $\wt z$.  For $i=1,2,3$, let $e_i$ be the point where $\wt\rho_i$ first intersects $\wt\eta$.
Since $\wt \rho_1, \wt\rho_2$ and $\wt \rho_3$ hit $\wt\eta$ at its right side. Then $e_1, e_2$ and $e_3$ correspond to points on $\eta_1$ in $\CB$. 

\item For each $v \in\wt\Phi_2 \cup \wt\Phi_3$, the distance from $v$ to $\wt z$ is at most $\wt d_1$. The distance from $\wt\Phi_2 \cup \wt\Phi_3$ to $\outerboundary\wt\CB$ is at least $\wt d_1$.
Recall that we have conditioned on the event that the boundary length of $\innerboundary\CG_{\sigma}$ is in the interval $(c_3, 9c_3/4)$, hence $2d_1/ 3c_3 < \wt d_1 <d_1/c_3$.
\end{itemize}
By an abuse of notation, we use the same notation for many objects in $\wt\CB$ and in $\CG_{\sigma} \setminus \CG_{\wh\tau_c}$. Note that $ \wt\xi_1$, $\wt\xi_2$ and $\wt\rho_1, \wt\rho_2, \wt\rho_3$ are also geodesics in $\CG_{\sigma} \setminus \CG_{\wh\tau_c}$ (w.r.t.\ $d_{\CG_{\sigma} \setminus \CG_{\wh\tau_c}}$). The above event for $\wt\CB$ naturally corresponds to an event for $\CG_{\sigma} \setminus \CG_{\wh\tau_c}$.
From now on, we further condition on this event.

Let $\wh\Phi_5$ be the set $\{u\} \times \wt\Phi_2$ and let $\wh\Phi_7, \wh\Phi_8$ be both equal to $\{u\}\times \wt\Phi_3$. Since $\dimH(\wt\Phi_2)\ge 2$, we also have $\dimH(\wh\Phi_5)\ge 2$. Since $\wt\Phi_3$ is non-empty, $\wh\Phi_7, \wh\Phi_8$ are also non-empty.

Let $d_2=4d_1/(5c_3)$. It follows that each $\wh\Phi_5$, $\wh\Phi_7$ and $\wh\Phi_8$ respectively satisfy the condition (ii) in each of Lemmas~\ref{lem:five_geo_pos_prob}, \ref{lem:seven_geo_pos_prob} and~\ref{lem:eight_geo_pos_prob}, for $\wh\tau_c$ in place of $\tau_{c_4}$ (we will later fix some $c_4\in(0,1)$ with $\tau_{c_4}> \wh\tau_c$). It remains to define $c_4$, and then further condition on an event with positive probability (such an event will be different for each of Lemmas~\ref{lem:five_geo_pos_prob}, \ref{lem:seven_geo_pos_prob} and~\ref{lem:eight_geo_pos_prob}) for which the condition (i) in the statement of each lemma holds for the band $\CB\setminus \CB_{\tau_4}$.

\emph{Step 2: Getting the $5,7,8$ geodesics for pairs of points in $\wh\Phi_5$, $\wh\Phi_7$ and $\wh\Phi_8$.}
Let us now consider the geodesic slice $\CB_{\sigma} \setminus \CG_{\sigma}$, which is independent from $\CG_{\sigma}$. The following events $E_1, E_2$ and $E_3$ depend only on the slice $\CB_{\sigma} \setminus \CG_{\sigma}$. Let $E_1$ be the event
\begin{itemize}
\item  $\eta_2$ merges with $\eta_1$ at some point between $z_1$ and $b_1$;
\item $\eta_3$ merges with $\eta_1$ at some point between $b_1$ and $b_2$.
\end{itemize}
 Let $E_2$ be the event
\begin{itemize}
\item  $\eta_2$ merges with $\eta_1$ at some point between $z_1$ and $e_1$;
\item $\eta_3$ merges with $\eta_1$ at some point between $e_2$ and $e_3$.
\end{itemize}
 Let $E_3$ be the event
\begin{itemize}
\item  $\eta_2$ merges with $\eta_1$ at some point between $z_1$ and $e_1$;
\item $\eta_3$ merges with $\eta_1$ at some point between $e_1$ and $e_2$.
\end{itemize}
The events $E_1, E_2$ and $E_3$ each occur with positive probability, because the times at which $\eta_2,\eta_3$ merge with $\eta_1$ are determined by the times at which the corresponding $3/2$-stable CSBPs die, which have a positive density with respect to the Lebesgue measure. 

\begin{enumerate}[(A)]
\item \emph{The following applies for the proof of Lemma~\ref{lem:five_geo_pos_prob}.}
From now on, we further condition on $E_1$.
For $i=1,2$, let $\xi_i$ be the concatenation of the part of $\eta_1$ from $z_1$ to $u$ with (the time reversal of) $\wt\xi_i$.  We aim to show that, for $i=1,2$, $\xi_i$ is also a geodesic from $u$ to $v$ in $\CB$ w.r.t.\ $d_{\CB}$. If this is true, then the union of $\xi_1, \xi_2$ together with $\eta_1, \eta_2, \eta_3$ contains exactly $5$ geodesics (w.r.t.\ $d_{\CB}$) from $u$ to $v$. We will also show that there are no other geodesics in $\CB$ from $u$ to $v$. 

\item \emph{The following applies for the proof of Lemma~\ref{lem:seven_geo_pos_prob}.}
From now on, we further condition on $E_2$.
For $i=1,2$, let $\rho_i$ be the concatenation of the part of $\eta_1$ from $z_1$ to $u$ with (the time reversal of) $\wt\rho_i$.  We aim to show that, for $i=1,2,3$, $\rho_i$ is also a geodesic from $u$ to $v$ in $\CB$ w.r.t.\ $d_{\CB}$. If this is true, then the union of $\rho_1, \rho_2, \rho_3$ together with $\eta_1, \eta_2, \eta_3$ contains exactly $7$ geodesics (w.r.t.\ $d_{\CB}$) from $u$ to $v$. We will also show that there are no other geodesics in $\CB$ from $u$ to $v$. 

\item \emph{The following applies for the proof of Lemma~\ref{lem:eight_geo_pos_prob}.}
From now on, we further condition on $E_3$.
For $i=1,2$, let $\xi_i$ be the concatenation of the part of $\eta_1$ from $z_1$ to $u$ with (the time reversal of) $\wt\xi_i$.  We aim to show that, for $i=1,2,3$, $\rho_i$ is also a geodesic from $u$ to $v$ in $\CB$ w.r.t.\ $d_{\CB}$. If this is true, then the union of $\rho_1, \rho_2, \rho_3$ together with $\eta_1, \eta_2, \eta_3$ contains exactly $8$ geodesics (w.r.t.\ $d_{\CB}$) from $u$ to $v$. We will also show that there are no other geodesics in $\CB$ from $u$ to $v$. 
\end{enumerate}

The proofs of the claims in (A), (B), (C) are almost the same, and we will carry them out in the following two steps.
To prove the claims of (A), suppose that $v\in\wt\Phi_2$. To prove the claims of (B), (C), suppose that $v\in\wt\Phi_3$.
\begin{enumerate}[1.]
\item If $\xi$ is a geodesic from $u$ to $v$, then there exists $r>0$, such that $\xi([0,r])$ is contained in union of $\eta_1, \eta_2$ and $\eta_3$.
Suppose that it is not the case, and that there is a  geodesic $\xi$ which starts being disjoint from $\eta_1, \eta_2, \eta_3$ (except at $u$). Then $\xi$ cannot hit $\eta_1, \eta_2$ or $\eta_3$ again, because otherwise there will be four geodesics from $u$ to $\outerboundary\CB$, which is impossible. So $\xi$ must be entirely disjoint from $\eta_1, \eta_2, \eta_3$ (except at $u$). In this case, for topological reasons, $\xi$ must traverse the region $A$. As a consequence, $\xi$ must intersect either $a_1$ or $a_2$. This leads to a contradiction.

\item Suppose that $\xi$ is a geodesic from $u$ to $v$, and let $z_4=\xi(T)$ where $T$ is  the last time $t$ that $\xi(t) \in \eta_1\cup\eta_2 \cup \eta_3$. Then for topological reasons,  the part of $\xi$ from $z_4$ to $v$ must be contained in $\CG_\sigma$, since it is not allowed to intersect $\eta_1, \eta_2, \eta_3$ again. In fact, we further have that the part of $\xi$ from $z_4$ to $v$ must be contained in $\CG_\sigma\setminus\CG_{\wh\tau_c}$. If it is not the case, then  the part of $\xi$ from $z_4$ to $v$
\begin{itemize}
\item either intersects $\innerboundary \CG_\sigma$, hence must traverse $A$ again before reaching $v$, and must intersect $a_1$ or $a_2$, leading to a contradiction.
\item either intersects $\innerboundary \CG_{\wh\tau_c}$. However, we know that the distance from $v$ to $\innerboundary \CG_{\wh\tau_c}$ is at least $d_2$. If $\xi$ intersects  $\innerboundary \CG_{\wh\tau_c}$, then it will have length more than $d_\CB(u,v)$, also leading to a contradiction.
\end{itemize}
For the case of Lemma~\ref{lem:five_geo_pos_prob}, this implies that  the part of $\xi$ from $z_4$ to $v$ must coincide with $\wt\xi_1$ or $\wt\xi_2$, because otherwise, there would be at least $3$ geodesics from $v$ to $\wt z$ in $\wt\CB$, which contradicts the definition of $\wt\Phi_2$.

\noindent For the cases of Lemmas~\ref{lem:seven_geo_pos_prob} and~\ref{lem:eight_geo_pos_prob}, this implies that  the part of $\xi$ from $z_4$ to $v$ must coincide with $\wt\rho_1, \wt\rho_2$ or $\wt\rho_3$, because otherwise, there would be at least $4$ geodesics from $v$ to $\wt z$ in $\wt\CB$, which contradicts the definition of $\wt\Phi_3$.
\end{enumerate}
Altogether, we have proved the claims in (A), (B), (C).
To complete the proof, note that there exists $c_4\in(0,1)$ such that with positive probability, $\tau_{c_4}> \wh\tau_c$. The event in each of Lemmas~\ref{lem:five_geo_pos_prob}, \ref{lem:seven_geo_pos_prob} and~\ref{lem:eight_geo_pos_prob} then holds with positive probability for the band $\CB\setminus\CB_{\tau_{c_4}}$. 
\end{proof}

\begin{figure}[ht!]
\begin{center}
\includegraphics[scale=0.85]{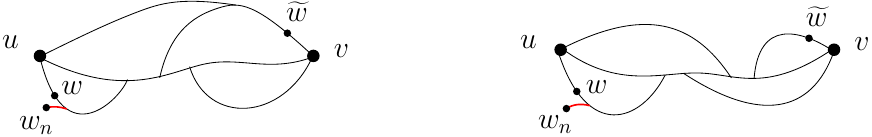}
\end{center}
\caption{\label{fig:geo_7_8} The configurations of $7$ and $8$ geodesics between $u$ and $v$. The endpoint $v$ belongs to the set of points which are connected to $w_n$ (which is a $\nu$-typical point) by exactly $3$ geodesics, which is countable. Similarly, the endpoint $u$ also belongs to a countable set.}
\end{figure}

\begin{proof}[Proof of Proposition~\ref{prop:five_geodesics_lbd}]
Suppose that $(\CS,d,\nu,x,y)$ has distribution $\bminflaw$. For each $r > 0$ we let $Y_r$ be the boundary length of $\partial \fb{y}{x}{d(x,y)-r}$.
Let $T_1 = \inf\{r\ge 0 : Y_r =1\}$. Without loss of generality, we can condition on the event that $T_1<\infty$. The statement for general $\CS$ follows by scaling.

Let $c,p$ be the constants from Lemma~\ref{lem:five_geo_pos_prob}.  For $n\in\N$, let $T_n= \inf\{r\ge 0; Y_r =c^n\}$.  Lemma~\ref{lem:five_geo_pos_prob} implies that for each $n\in\N$, with probability at least $p$,  there is a set of pairs of points in  $\fb{y}{x}{d(x,y)-T_n} \setminus \fb{y}{x}{d(x,y)-T_{n+1}}$ with dimension at least $2$ which are connected by exactly $5$ geodesics.
By the independence across these bands, we have $\dimH(\Phi_5)\ge 2$ with probability $1$.
Similarly, Lemmas~\ref{lem:seven_geo_pos_prob} and~\ref{lem:eight_geo_pos_prob} will imply that $\Phi_7, \Phi_8$ are non-empty with probability $1$.

Now, fix $j\in\{5,7,8\}$. Let us show that the set of $u \in\CS$ such that there exists $v \in \CS$ with $(u,v) \in \Phi_j$ is dense in $\CS$. Let $(z_i)_{i\in\N}$ be a sequence of points chosen i.i.d.\ according to $\nu$. Then for each $z_i$ and $\eps>0$, by considering successive metric bands centered at $z_i$ (coming from the reverse metric exploration from $y$ to $z_i$) and by arguing like in the previous paragraphs, we can deduce that there are a.s.\ two points $u,v \in \fb{y}{z_i}{\eps}$ which are connected by exactly $j$ geodesics. This proves the claim.

The previous paragraph implies in particular that the sets $\Phi_7$ and $\Phi_8$ are infinite. Let us now show that they are countable. Theorem~\ref{thm:finite_number_of_geodesics} implies that the only configurations which give rise to $7$ and $8$ geodesics are the ones in the following Figure~\ref{fig:geo_7_8}. Fix $j \in\{7,8 \}$. Suppose that $u$ and $v$ are connected by exactly $j$ geodesics. Then there exists a point $w$ which is  on exactly $3$ geodesics $\eta_1, \eta_2, \eta_3$ from $u$ to $v$, in a way that $\eta_1, \eta_2, \eta_3$ coincide with each other in a neighborhood of $u$ and splits into $3$ disjoint geodesics before meeting again at $v$ (see Figure~\ref{fig:geo_7_8}).  Let $(w_n)_{n\in\N}$ be a sequence of $\nu$-typical points tending to $w$. Let $\xi_n$ be a geodesic from $w_n$ to $v$.  Any subsequential limit of $(\xi_n)$ must coincide with one of the geodesics $\eta_1, \eta_2, \eta_3$, because otherwise there would be more than $j$ geodesics between $u$ and $v$. The strong confluence of geodesics (Theorem~\ref{thm:strong_confluence2}) then implies that for all $n$ sufficiently large, $\xi_n$ must coalesce with $\eta_1, \eta_2, \eta_3$ at a point very close to $w$. This gives rise to exactly $3$ geodesics between $w_n$ and $v$. Note that $w_n$ is a $\nu$-typical point, and it was shown in~\cite{lg2010geodesics} that the set of points which are connected to a $\nu$-typical point by $3$ geodesics is countable. Consequently, $v$ must be a subset of a countable set.  Similarly, one can find a point $\wt w$  which is  on exactly $3$ geodesics $\wt \eta_1, \wt \eta_2, \wt \eta_3$ from $v$ to $u$, in a way that $\wt\eta_1, \wt\eta_2, \wt\eta_3$ coincide with each other in a neighborhood of $v$ and splits into $3$ disjoint geodesics before meeting again at $u$ (see Figure~\ref{fig:geo_7_8}). The same arguments imply that $u$ must also be a subset of a countable set. Therefore, $(u,v)$ is also a subset of a countable set, hence $\Phi_7, \Phi_8$ are both countable. This completes the proof.
\end{proof}

\bibliographystyle{abbrv}
\bibliography{geodesics}

\begin{thebibliography}{10}

\bibitem{MR3498001}
C.~Abraham.
\newblock Rescaled bipartite planar maps converge to the {B}rownian map.
\newblock {\em Ann. Inst. Henri Poincar\'{e} Probab. Stat.}, 52(2):575--595,
  2016.

\bibitem{MR3706731}
L.~Addario-Berry and M.~Albenque.
\newblock The scaling limit of random simple triangulations and random simple
  quadrangulations.
\newblock {\em Ann. Probab.}, 45(5):2767--2825, 2017.

\bibitem{ada2019oddangulations}
L.~{Addario-Berry} and M.~{Albenque}.
\newblock {Convergence of odd-angulations via symmetrization of labeled trees}.
\newblock {\em arXiv e-prints}, page arXiv:1904.04786, Apr. 2019.

\bibitem{ald1991crt}
D.~Aldous.
\newblock The continuum random tree. {I}.
\newblock {\em Ann. Probab.}, 19(1):1--28, 1991.

\bibitem{MR1207226}
D.~Aldous.
\newblock The continuum random tree. {III}.
\newblock {\em Ann. Probab.}, 21(1):248--289, 1993.

\bibitem{ambjorn1997quantum}
J.~Ambj{\o}rn, B.~Durhuus, T.~Jonsson, and O.~Jonsson.
\newblock {\em Quantum geometry: a statistical field theory approach}.
\newblock Cambridge University Press, 1997.

\bibitem{a2003peeling}
O.~Angel.
\newblock Growth and percolation on the uniform infinite planar triangulation.
\newblock {\em Geom. Funct. Anal.}, 13(5):935--974, 2003.

\bibitem{akm2017stability}
O.~Angel, B.~Kolesnik, and G.~Miermont.
\newblock Stability of geodesics in the {B}rownian map.
\newblock {\em Ann. Probab.}, 45(5):3451--3479, 2017.

\bibitem{MR2013797}
O.~Angel and O.~Schramm.
\newblock Uniform infinite planar triangulations.
\newblock {\em Comm. Math. Phys.}, 241(2-3):191--213, 2003.

\bibitem{MR3580091}
E.~Baur, G.~Miermont, and L.~Richier.
\newblock Geodesic rays in the uniform infinite half-planar quadrangulation
  return to the boundary.
\newblock {\em ALEA Lat. Am. J. Probab. Math. Stat.}, 13(2):1123--1149, 2016.

\bibitem{b1996levy}
J.~Bertoin.
\newblock {\em L\'{e}vy processes}, volume 121 of {\em Cambridge Tracts in
  Mathematics}.
\newblock Cambridge University Press, Cambridge, 1996.

\bibitem{bbck2018martingales}
J.~Bertoin, T.~Budd, N.~Curien, and I.~Kortchemski.
\newblock Martingales in self-similar growth-fragmentations and their
  connections with random planar maps.
\newblock {\em Probab. Theory Related Fields}, 172(3-4):663--724, 2018.

\bibitem{bck2018growth}
J.~Bertoin, N.~Curien, and I.~Kortchemski.
\newblock Random planar maps and growth-fragmentations.
\newblock {\em Ann. Probab.}, 46(1):207--260, 2018.

\bibitem{bet2016geodesics}
J.~Bettinelli.
\newblock Geodesics in {B}rownian surfaces ({B}rownian maps).
\newblock {\em Ann. Inst. Henri Poincar\'{e} Probab. Stat.}, 52(2):612--646,
  2016.

\bibitem{MR3256874}
J.~Bettinelli, E.~Jacob, and G.~Miermont.
\newblock The scaling limit of uniform random plane maps, {\it via} the
  {A}mbj\o rn-{B}udd bijection.
\newblock {\em Electron. J. Probab.}, 19:no. 74, 16, 2014.

\bibitem{bm2017disk}
J.~Bettinelli and G.~Miermont.
\newblock Compact {B}rownian surfaces {I}: {B}rownian disks.
\newblock {\em Probab. Theory Related Fields}, 167(3-4):555--614, 2017.

\bibitem{bg2008geodesics}
J.~Bouttier and E.~Guitter.
\newblock Statistics in geodesics in large quadrangulations.
\newblock {\em J. Phys. A}, 41(14):145001, 30, 2008.

\bibitem{MR3783207}
A.~Caraceni and N.~Curien.
\newblock Geometry of the uniform infinite half-planar quadrangulation.
\newblock {\em Random Structures Algorithms}, 52(3):454--494, 2018.

\bibitem{MR2031225}
P.~Chassaing and G.~Schaeffer.
\newblock Random planar lattices and integrated super{B}rownian excursion.
\newblock {\em Probab. Theory Related Fields}, 128(2):161--212, 2004.

\bibitem{MR1465814}
L.~Chaumont.
\newblock Excursion normalis\'{e}e, m\'{e}andre et pont pour les processus de
  {L}\'{e}vy stables.
\newblock {\em Bull. Sci. Math.}, 121(5):377--403, 1997.

\bibitem{MR638363}
R.~Cori and B.~Vauquelin.
\newblock Planar maps are well labeled trees.
\newblock {\em Canadian J. Math.}, 33(5):1023--1042, 1981.

\bibitem{cl2014plane}
N.~Curien and J.-F. Le~Gall.
\newblock The {B}rownian plane.
\newblock {\em J. Theoret. Probab.}, 27(4):1249--1291, 2014.

\bibitem{clg2016plane}
N.~Curien and J.-F. Le~Gall.
\newblock The hull process of the {B}rownian plane.
\newblock {\em Probab. Theory Related Fields}, 166(1-2):187--231, 2016.

\bibitem{MR3606744}
N.~Curien and J.-F. Le~Gall.
\newblock Scaling limits for the peeling process on random maps.
\newblock {\em Ann. Inst. Henri Poincar\'{e} Probab. Stat.}, 53(1):322--357,
  2017.

\bibitem{MR3963287}
N.~Curien and L.~M\'{e}nard.
\newblock The skeleton of the {UIPT}, seen from infinity.
\newblock {\em Ann. H. Lebesgue}, 1:87--125, 2018.

\bibitem{dddf2019tightness}
J.~Ding, J.~Dub\'{e}dat, A.~Dunlap, and H.~Falconet.
\newblock Tightness of {L}iouville first passage percolation for {$\gamma \in
  (0,2)$}.
\newblock {\em Publ. Math. Inst. Hautes \'{E}tudes Sci.}, 132:353--403, 2020.

\bibitem{gm2017uihpq}
E.~Gwynne and J.~Miller.
\newblock Scaling limit of the uniform infinite half-plane quadrangulation in
  the {G}romov-{H}ausdorff-{P}rokhorov-uniform topology.
\newblock {\em Electron. J. Probab.}, 22:Paper No. 84, 47, 2017.

\bibitem{gm2019gluing}
E.~Gwynne and J.~Miller.
\newblock Metric gluing of {B}rownian and {$\sqrt{8/3}$}-{L}iouville quantum
  gravity surfaces.
\newblock {\em Ann. Probab.}, 47(4):2303--2358, 2019.

\bibitem{gm2019metric}
E.~Gwynne and J.~Miller.
\newblock Existence and uniqueness of the {L}iouville quantum gravity metric
  for {$\gamma\in(0,2)$}.
\newblock {\em Invent. Math.}, 223(1):213--333, 2021.

\bibitem{MR0101554}
M.~Ji\v{r}ina.
\newblock Stochastic branching processes with continuous state space.
\newblock {\em Czechoslovak Math. J.}, 8 (83):292--313, 1958.

\bibitem{krikun}
M.~Krikun.
\newblock A uniformly distributed infinite planar triangulation and a related
  branching process.
\newblock {\em Zapiski Nauchnykh Seminarov POMI}, 2, 01 2005.

\bibitem{kyp2006levy}
A.~E. Kyprianou.
\newblock {\em Introductory lectures on fluctuations of {L}\'{e}vy processes
  with applications}.
\newblock Universitext. Springer-Verlag, Berlin, 2006.

\bibitem{MR208685}
J.~Lamperti.
\newblock Continuous state branching processes.
\newblock {\em Bull. Amer. Math. Soc.}, 73:382--386, 1967.

\bibitem{MR0217893}
J.~Lamperti.
\newblock The limit of a sequence of branching processes.
\newblock {\em Z. Wahrscheinlichkeitstheorie und Verw. Gebiete}, 7:271--288,
  1967.

\bibitem{MR1127710}
J.-F. Le~Gall.
\newblock Brownian excursions, trees and measure-valued branching processes.
\newblock {\em Ann. Probab.}, 19(4):1399--1439, 1991.

\bibitem{MR2336042}
J.-F. Le~Gall.
\newblock The topological structure of scaling limits of large planar maps.
\newblock {\em Invent. Math.}, 169(3):621--670, 2007.

\bibitem{lg2010geodesics}
J.-F. Le~Gall.
\newblock Geodesics in large planar maps and in the {B}rownian map.
\newblock {\em Acta Math.}, 205(2):287--360, 2010.

\bibitem{lg2013bm}
J.-F. Le~Gall.
\newblock Uniqueness and universality of the {B}rownian map.
\newblock {\em Ann. Probab.}, 41(4):2880--2960, 2013.

\bibitem{lg2014sphere}
J.-F. Le~Gall.
\newblock Random geometry on the sphere.
\newblock In {\em Proceedings of the {I}nternational {C}ongress of
  {M}athematicians---{S}eoul 2014. {V}ol. 1}, pages 421--442. Kyung Moon Sa,
  Seoul, 2014.

\bibitem{lg2019disksnake}
J.-F. Le~Gall.
\newblock Brownian disks and the {B}rownian snake.
\newblock {\em Ann. Inst. Henri Poincar\'{e} Probab. Stat.}, 55(1):237--313,
  2019.

\bibitem{lg2020equivalent}
J.-F. Le~Gall.
\newblock The {B}rownian disk viewed from a boundary point.
\newblock {\em Ann. Inst. Henri Poincar\'{e} Probab. Stat.}, 58(2):1091--1119,
  2022.

\bibitem{lg2021star}
J.-F. Le~Gall.
\newblock Geodesic stars in random geometry.
\newblock {\em Ann. Probab.}, 50(3):1013--1058, 2022.

\bibitem{lgp2008sphere}
J.-F. Le~Gall and F.~Paulin.
\newblock Scaling limits of bipartite planar maps are homeomorphic to the
  2-sphere.
\newblock {\em Geom. Funct. Anal.}, 18(3):893--918, 2008.

\bibitem{lg2020spine}
J.-F. Le~Gall and A.~Riera.
\newblock Spine representations for non-compact models of random geometry.
\newblock {\em Probab. Theory Related Fields}, 181(1-3):571--645, 2021.

\bibitem{lgw2006conditioned}
J.-F. Le~Gall and M.~Weill.
\newblock Conditioned {B}rownian trees.
\newblock {\em Ann. Inst. H. Poincar\'{e} Probab. Statist.}, 42(4):455--489,
  2006.

\bibitem{MR2294979}
J.-F. Marckert and A.~Mokkadem.
\newblock Limit of normalized quadrangulations: the {B}rownian map.
\newblock {\em Ann. Probab.}, 34(6):2144--2202, 2006.

\bibitem{m2008sphere}
G.~Miermont.
\newblock On the sphericity of scaling limits of random planar
  quadrangulations.
\newblock {\em Electron. Commun. Probab.}, 13:248--257, 2008.

\bibitem{m2013bm}
G.~Miermont.
\newblock The {B}rownian map is the scaling limit of uniform random plane
  quadrangulations.
\newblock {\em Acta Math.}, 210(2):319--401, 2013.

\bibitem{m2014stflour}
G.~Miermont.
\newblock Aspects of random planar maps, 2014.

\bibitem{m2018fan}
J.~Miller.
\newblock Dimension of the {SLE} light cone, the {SLE} fan, and {${\rm
  SLE}_\kappa(\rho)$} for {$\kappa \in (0,4)$} and {$\rho \in$}
  {$[{\frac{\kappa}{2}}-4,-2)$}.
\newblock {\em Comm. Math. Phys.}, 360(3):1083--1119, 2018.

\bibitem{ms2015lqgtbm1}
J.~Miller and S.~Sheffield.
\newblock Liouville quantum gravity and the {B}rownian map {I}: the {${\rm
  QLE}(8/3,0)$} metric.
\newblock {\em Invent. Math.}, 219(1):75--152, 2020.

\bibitem{ms2015axiomatic}
J.~Miller and S.~Sheffield.
\newblock An axiomatic characterization of the {B}rownian map.
\newblock {\em J. \'{E}c. polytech. Math.}, 8:609--731, 2021.

\bibitem{ms2016lqgtbm2}
J.~Miller and S.~Sheffield.
\newblock Liouville quantum gravity and the {B}rownian map {II}: {G}eodesics
  and continuity of the embedding.
\newblock {\em Ann. Probab.}, 49(6):2732--2829, 2021.

\bibitem{ms2016lqgtbm3}
J.~Miller and S.~Sheffield.
\newblock Liouville quantum gravity and the {B}rownian map {III}: the conformal
  structure is determined.
\newblock {\em Probab. Theory Related Fields}, 179(3-4):1183--1211, 2021.

\bibitem{ry1999mg}
D.~Revuz and M.~Yor.
\newblock {\em Continuous martingales and {B}rownian motion}, volume 293 of
  {\em Grundlehren der Mathematischen Wissenschaften [Fundamental Principles of
  Mathematical Sciences]}.
\newblock Springer-Verlag, Berlin, third edition, 1999.

\bibitem{schaeffer98}
G.~Schaeffer.
\newblock Conjugaison d'arbres et cartes combinatoires al\'eatoires.
\newblock {\em Ph.D. thesis, Universit\'e Bordeaux I}, 1998.

\bibitem{WATABIKI1995119}
Y.~Watabiki.
\newblock Construction of non-critical string field theory by transfer matrix
  formalism in dynamical triangulation.
\newblock {\em Nuclear Physics B}, 441(1):119 -- 163, 1995.

\end{thebibliography}

\end{document}